\documentclass{memo-l}


\usepackage{amsmath,amssymb,mathrsfs,tikz,multirow,hyperref}

\newtheorem{theorem}{Theorem}[chapter]
\newtheorem{lemma}[theorem]{Lemma}

\newtheorem{proposition}[theorem]{Proposition}
\newtheorem{corollary}[theorem]{Corollary}

\theoremstyle{definition}
\newtheorem{definition}[theorem]{Definition}

\theoremstyle{remark}
\newtheorem{remark}[theorem]{Remark}

\numberwithin{section}{chapter}
\numberwithin{equation}{chapter}
\numberwithin{table}{chapter}
\newcommand{\PSL}{\mathrm{PSL}}
\newcommand{\PSU}{\mathrm{PSU}}
\newcommand{\SU}{\mathrm{SU}}
\newcommand{\GU}{\mathrm{GU}}
\newcommand{\PGL}{\mathrm{PGL}}
\newcommand{\GL}{\mathrm{GL}}
\newcommand{\SL}{\mathrm{SL}}
\newcommand{\PSp}{\mathrm{PSp}}
\newcommand{\Sp}{\mathrm{Sp}}
\newcommand{\SO}{\mathrm{SO}}
\newcommand{\GO}{\mathrm{GO}}
\newcommand{\Spin}{\mathrm{Spin}}
\newcommand{\POmega}{\mathrm{P}\Omega}
\newcommand{\cf}{\mathrm{cf}}

\newcommand{\Aut}{\mathrm{Aut}}
\newcommand{\Inn}{\mathrm{Inn}}
\newcommand{\F}{\mathbb{F}}
\newcommand{\Out}{\mathrm{Out}}
\newcommand{\gen}[1]{\langle#1\rangle}
\newcommand{\Hom}{\mathrm{Hom}}
\newcommand{\soc}{\mathrm{soc}}
\newcommand{\rad}{\mathrm{rad}}
\newcommand{\Ext}{\mathrm{Ext}}
\newcommand{\Alt}{\mathrm{Alt}}
\newcommand{\Sym}{\mathrm{Sym}}

\newcommand{\I}{\mathrm{i}}
\newcommand{\subs}{\subseteq}
\newcommand{\normal}{\trianglelefteq}

\newcommand{\bG}{\mathbf{G}}

\newcommand{\bX}{\mathbf{X}}

\begin{document}

\frontmatter

\newcommand{\ms}[1]{\mathscr{#1}}
\newcommand{\mc}[1]{\mathcal{#1}}
\newcommand{\HSpin}{\mathrm{HSpin}}
\renewcommand{\top}{\mathrm{top}}
\newcommand{\mb}[1]{\mathbf{#1}}
\newcommand{\red}[1]{\textcolor{red}{#1}}

\title{On medium-rank Lie primitive and maximal subgroups of exceptional groups of Lie type}
\author{David A. Craven}
\address{School of Mathematics, University of Birmingham, Birmingham, B15 2TT, United Kingdom}
\email{d.a.craven@bham.ac.uk}
\thanks{The author is a Royal Society University Research Fellow, and gratefully acknowledges the financial support of the Society.}

\date{7th December, 2019}

\subjclass[2020]{Primary 20D06, 20E28, 20G41}

\keywords{Maximal subgroups, exceptional groups, finite simple groups}


\begin{abstract}
We study embeddings of groups of Lie type $H$ in characteristic $p$ into exceptional algebraic groups $\mb G$ of the same characteristic. We exclude the case where $H$ is of type $\PSL_2$. A subgroup of $\mb G$ is \emph{Lie primitive} if it is not contained in any proper, positive-dimensional subgroup of $\mb G$.

With a few possible exceptions, we prove that there are no Lie primitive subgroups $H$ in $\mb G$, with the conditions on $H$ and $\mb G$ given above. The exceptions are for $H$ one of $\PSL_3(3)$, $\PSU_3(3)$, $\PSL_3(4)$, $\PSU_3(4)$, $\PSU_3(8)$, $\PSU_4(2)$, $\PSp_4(2)'$ and ${}^2\!B_2(8)$, and $\mb G$ of type $E_8$. No examples are known of such Lie primitive embeddings.

We prove a slightly stronger result, including stability under automorphisms of $\mb G$. This has the consequence that, with the same exceptions, any almost simple group with socle $H$, that is maximal inside an almost simple exceptional group of Lie type $F_4$, $E_6$, ${}^2\!E_6$, $E_7$ and $E_8$, is the fixed points under the Frobenius map of a corresponding maximal closed subgroup inside the algebraic group. 

The proof uses a combination of representation-theoretic, algebraic group-theoretic, and computational means.
\end{abstract}

\maketitle

\tableofcontents


\mainmatter

%

\chapter{Introduction}

This paper deals with four interconnected questions, progress on all of which can be achieved with the same analysis:
\begin{enumerate}
\item Classify the maximal subgroups of finite simple groups of Lie type;
\item Understand the subgroup structure of the simple algebraic groups;
\item Study the \emph{morphism extension problem}: given a simple algebraic group $\mb G$ and an algebraic group $\mb H$, and a Frobenius endomorphism $\sigma$ of $\mb H$, when does a homomorphism $\mb H^\sigma\to \mb G$ extend to a morphism $\mb H\to \mb G$ of algebraic groups;
\item Generalize Steinberg's restriction theorem for $\GL_n$ (that every simple module for a simple group of Lie type---in other words, an irreducible subgroup of $\GL_n$---is the restriction of one for the algebraic group) to arbitrary semisimple algebraic groups.
\end{enumerate}
Here we analyse the first problem for $G$ a finite exceptional group of Lie type and a potential maximal subgroup that is the normalizer of a simple group of Lie type $H$ in the same characteristic as $G$, but is not of type $\PSL_2$. We more or less give a complete answer for such maximal subgroups, leaving only eight possible maximal subgroups in $E_8$.

\medskip

The determination of the maximal subgroups of the exceptional groups of Lie type is one of the major outstanding problems in our understanding of the structure of the finite simple groups. This task has occupied the endeavours of mathematicians over several decades; this paper is the third in a series by the author, whose eventual goal is the completion of this determination.

By a result of Borovik \cite{borovik1989} and Liebeck--Seitz \cite{liebeckseitz1990}, all maximal subgroups of almost simple exceptional groups of Lie type that are not themselves almost simple groups are known, so we apply the classification of finite simple groups. Alternating maximal subgroups were nearly classified in \cite{craven2017}, and there has been more work since then (there are no examples in $F_4$, $E_6$ and ${}^2\!E_6$, and there is one conjugacy class of maximal $\Alt(7)$ in $E_7(5)$, but $\Alt(6)$ in $E_7(q)$ and $\Alt(6),\Alt(7)$ in $E_8$ are unresolved), and cross-characteristic Lie type and sporadic groups that embed in exceptional groups were determined in \cite{liebeckseitz1999}, with some analysis of these in various papers in characteristic $0$, and particularly \cite{litterickmemoir} in positive characteristic. (These will be treated in a later paper.) The remaining potential maximal subgroups, which form the bulk of the groups to be examined, are Lie type groups in the same characteristic as the exceptional group.

Broadly speaking, all embeddings of a group of Lie type $H=H(q)$ into an exceptional algebraic group $\mb G$ of characteristic $p\mid q$ are contained in proper, $\sigma$-stable, positive-dimensional subgroups of $\mb G$ if $q>9$ (and $q\neq 16$ for $H$ of type $\PSL_3$ and $\PSU_3$) or if the rank of $H$ is more than half of the rank of $\mb G$. If $H$ is of type $\PSL_2$, ${}^2\!B_2$ or ${}^2\!G_2$ then much larger bounds than $9$ are needed to guarantee the result, for example $q>2624$ for $\mb G=E_8$. (This was proved in \cite{liebeckseitz1998}.)

This paper does not deal with the case of $\PSL_2$ (which for $\mb G=F_4,E_6,E_7$, but not $E_8$, was considered in \cite{craven2015un}), but with all of the other groups $H$. Recall that a finite subgroup $H$ of an algebraic group is called \emph{Lie primitive} if it does not lie in a proper positive-dimensional subgroup.

We prove the following result. In fact, we prove something slightly stronger that involves stability under automorphisms of $\mb G$ (which is needed for the application to maximal subgroups), but this will suffice for applications to the morphism extension problem.

\begin{theorem}\label{thm:maximale8} Let $\mb G$ be the exceptional algebraic group $E_8$ in characteristic $p>0$, over an algebraically closed field. Let $H=H(q)$ be a simple group of Lie type with $p\mid q$, and suppose that $H$ is a subgroup of $\mb G$. Furthermore, suppose that $H$ does not have the same type as $\mb G$, and that $H$ is not $\PSL_2(q)$. If $N_{\mb G}(H)$ is Lie primitive in $\mb G$, then $H$ is one of the groups $\PSL_3(q)$ for $q=3,4$, $\PSU_3(q)$ for $q=3,4,8$,  $\PSp_4(2)'$, ${}^2\!B_2(8)$, and $\PSU_4(2)$.

Moreover, if $\sigma$ is a Frobenius endomorphism on $\mb G$ and $H\leq \mb G^\sigma$, then there is a $\sigma$-stable proper positive-dimensional subgroup $\mb X$ such that $N_{\mb G^\sigma}(H)\leq \mb X^\sigma$ unless $H$ is one of the above groups.
\end{theorem}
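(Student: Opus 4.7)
The plan is to reduce to a finite list of candidate pairs $(H,q)$ and then settle each by a representation-theoretic analysis of the adjoint action on $L(\mb G)$. By the Liebeck--Seitz theorem cited in the introduction, the Lie primitivity hypothesis forces $q\leq 9$ (with $q=16$ allowed for $H$ of type $\PSL_3$ or $\PSU_3$) and the rank of $H$ to be at most half of the rank of $\mb G$, hence at most $4$. This leaves a short and explicit list: the small-rank classical types $\PSL_n$, $\PSU_n$, $\PSp_n$ for $n\in\{3,4,5,6\}$, the exceptional types $G_2$, ${}^3\!D_4$ and ${}^2\!F_4$, and the Suzuki and Ree groups, each for the handful of admissible values of $q$.

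For each fixed candidate $H$, the central object is the restriction $L(\mb G)|_H$ of the $248$-dimensional adjoint module. Since $L(\mb G)$ carries a non-degenerate $\mb G$-invariant form, this restriction is self-dual, and its composition factors must be irreducible $\F_qH$-modules in defining characteristic whose dimensions sum to $248$. Using the known lists of small-dimensional simple modules for the groups on the list, one enumerates the finitely many composition-factor multisets that can occur.

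Given a surviving multiset, the task is to exhibit a proper, $\sigma$-stable, positive-dimensional subgroup $\mb X\leq \mb G$ containing $N_{\mb G}(H)$ (respectively $N_{\mb G^\sigma}(H)$). Three standard levers, developed in the earlier papers of this series, apply. If a trivial composition factor appears and the action of $H$ on $L(\mb G)$ is semisimple, then $H$ fixes a nonzero vector and hence lies in a positive-dimensional centralizer. If the action is non-semisimple on a suitable proper subspace, then Borel--Tits together with the parabolic-embedding machinery of Liebeck--Seitz places $H$ inside a proper parabolic. Otherwise, one matches the multiset, augmented by the Jordan block data of a chosen unipotent element of $H$ on $L(\mb G)$, against the possible restrictions of $L(\mb G)$ to the positive-dimensional reductive subgroups of $E_8$ catalogued by Liebeck--Seitz; this normally pinpoints a candidate $\mb X$, whose existence as an overgroup of $H$ is then verified by explicit construction. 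The $\sigma$-stable refinement follows in each case because $\mb X$ is produced canonically from $H$ (as a centralizer, a stabilizer of a canonically defined subspace or tensor structure, or a uniquely determined member of a finite $\mb G$-conjugacy class), so that any Frobenius endomorphism of $\mb G$ which fixes $H$ setwise must also preserve $\mb X$.

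The principal obstacle is the small-$q$ tail. For the groups listed in the conclusion, the irreducible modules in defining characteristic are small, the relevant $\Ext$ groups between them are often non-zero, and one cannot rule out self-dual $248$-dimensional $H$-modules whose composition factors and unipotent signatures fail to match any restriction of $L(\mb G)$ to a known positive-dimensional subgroup of $\mb G$. Excluding a Lie primitive embedding in those cases would require either the explicit construction of a putative $H$-invariant Lie bracket on the given module together with a verification that the resulting Lie algebra is not of type $E_8$, or a finite-group-theoretic argument built from element orders, power maps and fusion data; both routes rely heavily on computer algebra. These are precisely the cases that remain open in the statement.
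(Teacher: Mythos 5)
Your reduction to $q\leq 9$ (with $q=16$ for $\PSL_3$, $\PSU_3$) and untwisted rank at most $4$ is indeed the paper's starting point, but from there the proposal has genuine gaps. First, two of your three ``levers'' do not work as stated. A trivial composition factor together with semisimplicity of $L(\mb G){\downarrow_H}$ is much weaker than what is needed: the paper's pressure argument (Proposition \ref{prop:pressure} with Lemma \ref{lem:fix1space}) produces a trivial \emph{submodule}, hence a stabilized line, in many situations where the action is far from semisimple, and most cases are killed this way. More seriously, ``non-semisimple action on a suitable subspace $\Rightarrow$ Borel--Tits $\Rightarrow$ $H$ lies in a parabolic'' is simply false: Borel--Tits requires $H$ to normalize a nontrivial unipotent subgroup, and plenty of $\mb G$-irreducible subgroups act non-semisimply on $L(\mb G)$. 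What the paper actually uses here is the non-$\mb G$-cr criterion (Lemma \ref{lem:nonGcr}) and the line-stabilizer result (Lemma \ref{lem:maxrankorpara}), which have different hypotheses from yours.

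Second, and decisively, your third lever conflates a \emph{warrant} with a proof of imprimitivity. Matching the composition-factor multiset (even augmented by Jordan block data) with the restriction of $L(\mb G)$ to some positive-dimensional $\mb X$ only shows that such factors \emph{could} arise inside $\mb X$; it does not show that the given copy of $H$, with unknown module structure, is conjugate into $\mb X$. Bridging exactly this gap is the bulk of the paper: blueprints via semisimple elements of large order and the ``roots trick'' (Theorem \ref{thm:largeorderss}, Lemmas \ref{lem:allcasesstronglyimp}, \ref{lem:stronglyimppsl316}), generic unipotent elements (Lemma \ref{lem:genericmeansblueprint}), pyx constructions constraining socle structure, restrictions to subgroups already known to be imprimitive, and the long bespoke arguments for $\PSL_5(2)$, $\PSL_3(5)$ and $\PSU_3(4)$ in Chapter \ref{chap:diff}. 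Your treatment of the ``moreover'' clause is similarly too quick: containment of $N_{\mb G^\sigma}(H)$ in a $\sigma$-stable $\mb X^\sigma$ is obtained in the paper via strong imprimitivity, i.e.\ stabilizing an $N_{\Aut^+(\mb G)}(H)$-orbit of subspaces (Theorem \ref{thm:intersectionorbit}), not merely by asserting that $\mb X$ is ``canonical''. Finally, minor but symptomatic: your candidate list omits $F_4$, $\PSp_8$ and the rank-$4$ orthogonal groups while including rank-$5$ groups such as $\PSL_6$, and enumerating $248$-dimensional self-dual modules by dimension alone, without the conspicuity (trace/eigenvalue) conditions on semisimple elements, would leave an unmanageable and partly spurious list of cases.
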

(Note that there are no known examples of such groups.)

We will define the slight strengthening of this result now. Let $\mb G$ be a simple, simply connected algebraic group and $\sigma$ a Frobenius endomorphism of $\mb G$. A subgroup $K$ of $G=\mb G^\sigma$ is \emph{strongly imprimitive} if there is some $\sigma$-stable positive-dimensional subgroup $\mb X$ of $\mb G$ such that $N_{\bar G}(K)$ is contained in $N_{\bar G}(\mb X^\sigma)$ for all almost simple groups $\bar G$ with socle $G/Z(G)$. Of course, if $H=N_{\bar G}(H)$ then strongly imprimitive is the same as Lie imprimitive. The theorem above is also true with strong imprimitivity in place of Lie imprimitivity.

The second statement of this theorem (with strong imprimitivity) yields an application to maximal subgroups of the finite groups $E_8(q)$, namely that the maximal subgroups of the form $N_{\bar G}(H)$ for $H$ a Lie type group in characteristic $p\mid q$ are known except for $H$ on the list above, and $H\cong \PSL_2(q_0)$ for $p\mid q_0$.

The author believes that one may be able to use the techniques in this  paper to solve the outstanding cases of $\PSU_4(2)$ and $\PSU_3(8)$, but that $\PSp_4(2)$ and ${}^2\!B_2(8)$ cannot be resolved using these methods. For the rest he is not sure.

We also consider the other exceptional groups $F_4$, $E_6$ and $E_7$, obtaining the same theorem as above, but with a very short list of possibilities. (We do not need to consider $G_2$ because, by work of Kleidman \cite{kleidman1988} and Cooperstein \cite{cooperstein1981} all maximal subgroups of the finite groups $G_2(q)$ are known, and hence it is easy to determine all Lie primitive subgroups. Indeed, we use this case as an example in Section \ref{sec:example}.) Because of \cite{craven2015un} we can also include $\PSL_2$ subgroups in the list. (In the case of $F_4$ and $p\geq 5$ this result was obtained by Magaard in \cite{magaardphd}, and for $E_6$ and $H\not\cong \PSL_2(11),\PSL_3(3),\PSU_3(3)$ this result was obtained by Aschbacher in \cite{aschbacherE6Vun}.)

\begin{theorem}\label{thm:exceptionalmaximal} Let $\mb G$ be the exceptional algebraic group $F_4$, $E_6$ or $E_7$ in characteristic $p>0$, over an algebraically closed field. Let $H=H(q)$ be a simple group of Lie type with $p\mid q$, and suppose that $H$ is a subgroup of $\mb G$. Furthermore, suppose that $H$ does not have the same type as $\mb G$. If $H$ is not strongly imprimitive in $\mb G$, then $\mb G=E_7$ and $H$ is one of $\PSL_2(7)$, $\PSL_2(8)$, and $\PSL_2(25)$.
\end{theorem}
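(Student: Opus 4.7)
The plan is to combine the strategy of Theorem~\ref{thm:maximale8} with the $\PSL_2$ analysis of \cite{craven2015un}, exploiting the substantially smaller rank of $F_4, E_6, E_7$ relative to $E_8$ to reduce the case analysis dramatically. Applying the Liebeck--Seitz bounds recalled in the introduction, one may assume $\rk(H) \leq \lfloor \rk(\mb G)/2 \rfloor$ and $q \leq 9$ (with $q \leq 16$ for $H$ of type $\PSL_3$ or $\PSU_3$, and larger ranges for $\PSL_2$, ${}^2B_2$, ${}^2G_2$), since otherwise $H$ is automatically contained in a proper $\sigma$-stable positive-dimensional subgroup. Under the hypothesis that $H$ does not have the same type as $\mb G$, this leaves a short finite list: rank-$\leq 2$ groups in $F_4$, and rank-$\leq 3$ groups in $E_6$ and $E_7$, together with a bounded list of $\PSL_2(q)$ candidates.

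Most candidates can then be disposed of by invoking existing work. The case $H \cong \PSL_2(q_0)$ is settled by \cite{craven2015un}, yielding precisely the three residual exceptions in $E_7$; Magaard's thesis \cite{magaardphd} settles $F_4$ for $p \geq 5$; and Aschbacher \cite{aschbacherE6Vun} settles $E_6$ except for $\PSL_3(3)$ and $\PSU_3(3)$. What remains is $F_4$ in characteristic $2$ and $3$ with a handful of non-$\PSL_2$ rank-$2$ candidates, the two leftover groups in $E_6$, and all non-$\PSL_2$ rank-$\leq 3$ candidates in $E_7$. For each remaining pair $(H, \mb G)$ I would run the analysis of the $E_8$ proof on a smaller scale: enumerate the composition factor structures of $L(\mb G)\downarrow_H$ and of the minimal module $V\downarrow_H$ compatible with the $p$-modular character table of $H$; use traces of selected semisimple and unipotent elements of $H$ to rule out most patterns by matching against the conjugacy classes of $\mb G$; and, for each surviving structure, identify an $H$-fixed line or direct summand whose stabilizer in $\mb G$ is a proper positive-dimensional subgroup. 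Strong imprimitivity is then obtained by verifying that the overgroup can be chosen $\sigma$-stable and stable under the relevant graph and diagonal automorphisms of $\mb G$.

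The main obstacle will be the $E_7$ analysis in characteristic $2$, where rank-$3$ candidates such as $\PSL_4(q)$, $\PSU_4(q)$, $\PSp_6(2)$ and the smaller rank-$2$ candidates admit many small irreducible $\F_2$-modules, producing an a priori large number of composition factor patterns. The rigidity of the trace data on the $133$-dimensional adjoint module and the $56$-dimensional minimal module should nevertheless force a nontrivial $H$-fixed subspace in every such case, from which a positive-dimensional overgroup can be extracted as in the $E_8$ analysis. The three $\PSL_2$ exceptions in $E_7$ genuinely resist this approach, because the abundance of small irreducibles of $\PSL_2$ in natural characteristic leaves too much freedom in the module decomposition; they remain as potential Lie primitive embeddings exactly as identified in \cite{craven2015un}.
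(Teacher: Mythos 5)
Your overall strategy is the one the paper actually follows: reduce to the finite list of candidates via the Liebeck--Seitz bounds and the blueprint criterion of Theorem \ref{thm:largeorderss}, quote \cite{craven2015un} for the $\PSL_2$ candidates (giving exactly the three $E_7$ exceptions), and for the remaining rank-$\leq 3$ groups run the $E_8$-style analysis of conspicuous composition factors, pressure, unipotent Jordan blocks and radicals of projectives to exhibit a stabilized line or a blueprint, taking care over $\sigma$- and graph-stability (for $F_4$ with $p=2$ one works with $L(\lambda_1)\oplus L(\lambda_4)$ rather than $M(F_4)$, as in Lemma \ref{lem:F4ignoregraph}). That is essentially Chapters \ref{chap:subsine7}--\ref{chap:subsinf4}.

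There is, however, one genuine gap: the two leftover $E_6$ groups $\PSL_3(3)$ and $\PSU_3(3)$ in characteristic $3$ (and, if you do not lean on Aschbacher, also ${}^2\!G_2(3)'$) cannot be handled by the ``find an $H$-fixed line or summand with positive-dimensional stabilizer'' mechanism at all, so your claim that the rigidity of the trace data will ``force a nontrivial $H$-fixed subspace in every such case'' fails precisely here. These groups act irreducibly on $M(E_6)$, and the genuine copies (inside an irreducible $G_2$) stabilize no line on $L(E_6)^\circ$ either, since a line stabilizer would place them in a maximal-rank or parabolic subgroup by Lemma \ref{lem:maxrankorpara}, which the relevant $A_2\leq G_2$ does not admit; moreover, being irreducible on $M(E_6)$, they are trivially blueprints for $M(E_6)\oplus M(E_6)^*$, so that criterion gives nothing. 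The paper has to abandon the representation-theoretic toolkit for exactly these cases and instead works with the invariant symmetric trilinear form on $M(E_6)$ (Chapter \ref{chap:trilinear}): one fixes a suitable soluble subgroup $L\leq H$, classifies the $\GL_{27}$-conjugates of $H$ containing $L$ that preserve the form by solving the resulting cubic equations, and shows each such $H$ lies in a $G_2(3)$ inside an irreducible algebraic $G_2$; strong imprimitivity then comes from the uniqueness of that $G_2$ overgroup under $N_{\Aut^+(\mb G)}(H)$, not from a stabilized subspace. Without an argument of this kind (or an explicit appeal to a result covering these irreducible embeddings), your proposed proof does not close the $E_6$ case.
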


(The group $\PSL_2(25)$ is always strongly imprimitive, and this case is resolved in work of the author and Alexander Ryba. It will form part of a partial classification of maximal subgroups of groups of type $E_7$. No example is known of the other two possible subgroups.)

The author believes that these theorems represent close to the limit of what can be achieved simply using the techniques developed in the series of papers \cite{craven2017,craven2015un} he has produced (others on $\PSL_2$ in $E_8$ and non-defining characteristic embeddings are planned). More or less these techniques involve studying the restriction of small-dimensional modules for $\mb G$ to $H$, the eigenvalues of semisimple elements and Jordan block structures of unipotent elements, and proper subgroups of $H$ known to be in positive-dimensional subgroups, to find a subspace of a module, normally either the Lie algebra $L(\mb G)$ or the minimal module for $\mb G$, that is stabilized by both $H$ and a positive-dimensional subgroup $\mb X$ of $\mb G$. Thus $\gen{\mb X,H}$ is not all of $\mb G$, so $H$ is contained in a proper positive-dimensional subgroup of $\mb G$. If one chooses the subspace judiciously, one gets the refined statement on strong imprimitivity. To push these theorems still further probably requires more detailed use of the subgroup structure of exceptional algebraic groups, and perhaps computer proofs that explicitly compute inside finite groups of Lie type, as has been done for certain other cases.

\medskip

The structure of this article is as follows: the next chapter sets up the notation and a few preliminary results on elements of algebraic groups, while results on the subgroup structure of exceptional algebraic groups are in Chapter \ref{chap:subgroupstructure}. Chapter \ref{chap:techniques} goes through the techniques used to prove the results in this article. We give information about modules for small-rank groups of Lie type over small fields, i.e., the possible subgroups $H$, in the following chapter, and then we move on to the proof. Chapter \ref{chap:rank4ine8} considers rank-$4$ subgroups of $E_8$, Chapter \ref{chap:rank3ine8} deals with subgroups of rank $3$, and Chapter \ref{chap:rank2ine8} with subgroups of rank $2$. After this, Chapters \ref{chap:subsine7}, \ref{chap:subsine6} and \ref{chap:subsinf4} consider the subgroups of $E_7$, $E_6$ and $F_4$ respectively. Chapter \ref{chap:diff} gives the proofs of the difficult cases that were too long to fit in the general flow of the text. For the cases for $E_6$ that act irreducibly on $M(E_6)$ (or whose normalizer does), so $\PSL_3(3)$, $\PSU_3(3)$ and ${}^2\!G_2(3)'$, we have to approach them using completely different methods, and this takes place in Chapter \ref{chap:trilinear}.

\medskip

There are a large number of supplementary materials that come with this article, consisting of Magma programs, and also list of traces of semisimple elements of various orders on low-dimensional modules. For many sections, these prove the assertions about module structures, actions of unipotent elements, and so on. In Chapter \ref{chap:trilinear} they also provide the matrices of the groups involved and prove the assertions in that chapter.

\medskip

The author would like to thank the anonymous referee for their fast, detailed and thoughtful report, which has greatly improved the exposition of this paper.

\newpage
\chapter{Notation and preliminaries}

Our aim for this chapter is to collate some notation and some general results that have been used in the past, particularly in \cite{craven2017,craven2015un}, to prove that a given subgroup is not maximal in a finite exceptional group of Lie type.

For the rest of this paper, $p$ will be a prime and $\mb G$ will denote a simple, simply connected exceptional algebraic group over an algebraically closed field $k$ of characteristic $p$. Let $\sigma$ denote a Frobenius endomorphism of $\mb G$ and let $G=\mb G^\sigma$ denote the fixed points of $\mb G$ under $\sigma$, a finite group of Lie type. The precise types of groups $G$ that we are interested in are $F_4(q)$, $E_6(q)$, ${}^2\!E_6(q)$, $E_7(q)$ and $E_8(q)$. Note that these are the simply connected groups, so for example $E_7(q)$ has a non-trivial centre for $q$ odd. Write $\ms X$ for the set of maximal closed, positive-dimensional subgroups of $\mb G$, and let $\ms X^\sigma$ denote the set of fixed points $\bX^\sigma$ for $\bX$ a $\sigma$-stable member of $\ms X$. The members of the set $\ms X^\sigma$, together with subgroups of the same type as $\mb G$ (e.g., $E_6(q_0)$ and ${}^2\!E_6(q_0)$ inside $E_6(q)$) form almost all of the maximal subgroups of $G$. If $\bar G$ denotes an almost simple group such that $G/Z(G)\cong F^*(\bar G)$ via some isomorphism $\phi$, we extend our notation of $\ms X^\sigma$ to denote the set of subgroups $N_{\bar G}((Z(G)\mb X^\sigma/Z(G))\phi)$ for $\bX\in \ms X^\sigma$. For $\bX$ an algebraic group, write $\mb X^\circ$ for the connected component of the identity. Write $o(x)$ for the order of an element $x$ of a group.

A subgroup of $\mb G$ is \emph{Lie primitive} if it does not lie in any proper, positive-dimensional subgroup of $\mb G$, it is called \emph{$\mb G$-irreducible} if it does not lie in any proper parabolic subgroup of $\mb G$, and \emph{$\mb G$-completely reducible} (often abbreviated to $\mb G$-cr) if, whenever it lies in a parabolic subgroup, it lies in a corresponding Levi subgroup. Of course, Lie primitive subgroups are $\mb G$-irreducible, and both types of subgroup are vacuously $\mb G$-completely reducible.

For a given (simple) subgroup $H$ of $\mb G$, we prove that $H$ is contained in a member of $\ms X$, that if $H$ is contained in the fixed points $G=\mb G^\sigma$ then $H$ is contained in a member of $\ms X^\sigma$, and furthermore that $N_{\bar G}(H)$ is contained inside a member of the collection $\ms X^\sigma$ for any $\bar G$. This is a strong form of Lie imprimitivity, which we will unimaginatively refer to as \emph{strong imprimitivity}. Normally we will prove this using general theorems---for example, if $H$ centralizes a line on the non-trivial composition factor of the adjoint module then $H$ is strongly imprimitive (Lemma \ref{lem:fix1space})---but occasionally we will have to prove strong imprimitivity via the stabilization of more complicated subspaces than lines.

\medskip

Our notation for modules is the same as in the previous papers in this series \cite{craven2017,craven2015un}. Write $\soc(V)$ for the socle of a module $V$ and $\soc^i(V)$ for the preimage in $V$ of the socle $\soc(V/\soc^{i-1}(V))$ of the quotient module $V/\soc^{i-1}(V)$, $\rad(V)$ for the radical of a module $V$, $\top(V)$ for the top of a module $V$, i.e., the quotient module $V/\rad(V)$, $V{\uparrow^H}$ for the induction of $V$ to $H$ and $V{\downarrow_H}$ for the restriction of $V$ to $H$. Write $V^H$ for the fixed points of $V$ under the action of $H$. Write $P(V)$ for the projective cover of $V$. The \emph{heart} of a module $V$ is the quotient module $\rad(V)/\soc(\rad(V))$. Write $k$ or $1$ for the trivial $kH$-module. If $V$ is a $kH$-module, we often consider its $1$-cohomology, i.e., the module $\Ext_{kH}^1(k,V)$. For brevity we sometimes conflate the $1$-cohomology itself with its dimension, as it is only the dimension of $1$-cohomology that is of interest to us.

In order to save space, when we give a module's socle layers, we distinguish between the layers using `/', so that a module with socle $A$ and second socle $B\oplus C$ would be written $B,C/A$. We also use the notation $B/A$ for the quotient of $B$ by $A$, but whenever we do so we say that we mean the quotient. Write $\cf(V)$ for the multiset of composition factors of $V$.

If $I$ is a set of irreducible $kH$-modules, write $I'$ for the set of (isomorphism class representatives of) irreducible $kH$-modules that are not in $I$. The \emph{$I$-radical} of a $kH$-module $V$ is the largest submodule of $V$ whose composition factors lie in $I$, and the \emph{$I$-residual} is the smallest submodule whose quotient only has composition factors in $I$. The \emph{$I$-heart} of $V$ is the $I'$-residual of the quotient of $V$ by its $I'$-radical. (Thus the socle and top of the $I$-heart consist solely of modules in $I$. Note that the $I$-heart is only an analogue of the heart, and there is no choice of $I$ for which the $I$-heart coincides with the heart.) Write $\Lambda^i(V)$ and $S^i(V)$ for the $i$th exterior and symmetric power of $V$ respectively. Let $V^*$ denote the dual of $V$, and if we mean either $V$ or its dual, write $V^\pm$.

An important exception to this is when we consider submodules of projectives. We will often be interested in the $I$-radical of a projective cover $P(V)$ of an irreducible $kH$-module $V$, but with $V$ not itself in $I$, so of course the $I$-radical is $0$. What we want is the preimage in $P(V)$ of the $I$-radical of the quotient module $P(V)/V$, but rather than writing out that whole phrase, we simply write the $I$-radical of $P(V)$, accepting that this means the slightly different module above.

For a given dominant weight $\lambda$, write $L(\lambda)$ for the simple module and $W(\lambda)$ for the Weyl module with that highest weight. Denote by $M(\mb G)$ a non-trivial Weyl module of smallest dimension for $\mb G$, and by $L(\mb G)$ the adjoint module for $\mb G$. For most primes $M(\mb G)$ and $L(\mb G)$ are irreducible, but for $p=2,3$ and certain $\mb G$ they have a single trivial composition factor and a single non-trivial composition factor. Moreover, if $p=3$ and $\mb G=G_2$, or $p=2$ and $\mb G=F_4$, then $L(\mb G)$ does not have a unique non-trivial composition factor. In these cases the adjoint module has two composition factors of the same dimension, one of which is $M(\bG)$. If $M(\mb G)$ has a unique non-trivial composition factor, then denote it by $M(\mb G)^\circ$, and similarly for $L(\mb G)^\circ$, and set $L(\mb G)^\circ=L(\mb G)$ otherwise. The module $M(\bG)^\circ$ is called the \emph{minimal module} in many of our references, and here. We give the description of these in terms of highest weights and the dimension of their simple constituent now. (The labelling of highest weights is consistent with \cite{bourbakilie2}, and is used by most of our references.)
\begin{center}\begin{tabular}{ccccc}
\hline $\mb G$ & $M(\mb G)$ & $L(\mb G)$ & $\dim(M(\mb G)^\circ)$ & $\dim(L(\mb G)^\circ)$
\\ \hline $G_2$ & $W(10)$ & $W(01)$ & $7-\delta_{p,2}$ & $14$
\\ $F_4$ & $W(\lambda_4)$ & $W(\lambda_1)$ & $26-\delta_{p,3}$ & $52$
\\ $E_6$ & $L(\lambda_1)$ or $L(\lambda_6)$ & $W(\lambda_2)$ & $27$ & $78-\delta_{p,3}$
\\ $E_7$ & $L(\lambda_7)$ & $W(\lambda_1)$ & $56$ & $133-\delta_{p,2}$
\\ $E_8$ & N/A & $L(\lambda_8)$ & N/A & $248$
\\\hline
\end{tabular}\end{center}

For both $M(\mb G)$ and $L(\mb G)$, the decomposition of the action of a unipotent element $u$ into Jordan blocks was given in \cite{lawther1995}, although when $M(\mb G)$ or $L(\mb G)$ has a trivial composition factor the action on $M(\mb G)^\circ$ or $L(\mb G)^\circ$ was not recorded there, but was given in \cite[Lemmas 6.1--6.3]{craven2015un}. Table \ref{t:unipotentF4} lists the unipotent classes for $F_4$ in characteristic $3$ and their actions on $M(F_4)^\circ$, $M(F_4)$ and $M(E_6)$. Table \ref{t:unipotentE6} lists some unipotent classes of $E_6$ in characteristic $3$, and for all other classes one obtains the action on $L(E_6)^\circ$ by removing a block of size $1$ from the action on $L(E_6)$. For $E_7$ in characteristic $2$, one always obtains the action of a unipotent element on $L(E_7)^\circ$ by removing a block of size $1$ from the action on $L(E_7)$. We use the Bala--Carter--Pommerening label for unipotent classes as stated in \cite{lawther1995} without any further comment throughout this text.

\medskip

We will now define the concept of encapsulation, which we need occasionally in our arguments. If $(a_1,\dots,a_r)$ and $(b_1,\dots,b_s)$ are sequences with $a_i\geq a_{i+1}>0$ and $b_i\geq b_{i+1}>0$, then $(a_i)$ \emph{encapsulates} $(b_i)$ if $r\geq s$ and $a_i\geq b_i$ for all $1\leq i\leq s$. If $u$ is a unipotent element of a group $G$ and $V$ and $W$ are $kG$-modules, then the action of $u$ on $V$ \emph{encapsulates} that of $W$ if, when written as non-increasing sequences, the sizes of Jordan blocks of the action of $u$ on $V$ encapsulates that of $W$. (Note that this is not the same as the dominance order. Indeed, no partition encapsulates any other partition of the same size.)

The reason we introduce this is the following easy, but important, lemma.

\begin{lemma}\label{lem:encapsulates} Let $G$ be a finite group and let $u$ be a $p$-element of $G$. Let $V$ be a $kG$-module and let $W$ be a subquotient of $V$. The action of $u$ on $V$ encapsulates that of $W$.
\end{lemma}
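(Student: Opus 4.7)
The plan is to restrict the action to the cyclic $p$-group $\langle u\rangle$, over which $V$ becomes a direct sum of Jordan blocks $J_1,\dots,J_{|u|}$ (the only indecomposable $k\langle u\rangle$-modules in characteristic $p$). Writing $N=u-1$ and $r_j(X)=\dim\ker N^j|_X$ for a $k\langle u\rangle$-module $X$, the conjugate of the Jordan partition $\lambda_X$ is given by $\lambda'_{X,j}=r_j(X)-r_{j-1}(X)$, which counts the Jordan blocks of size at least $j$. Padding the shorter of $\lambda_V,\lambda_W$ with zeroes, encapsulation of $\lambda_W$ by $\lambda_V$ is equivalent to $\lambda'_{V,j}\geq\lambda'_{W,j}$ for every $j\geq 1$, and this is the inequality I would establish.

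Since any subquotient of $V$ is a quotient of a submodule of $V$, and encapsulation is visibly transitive, it suffices to treat the two cases $W\leq V$ (submodule) and $W=V/W'$ (quotient) separately.

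For the submodule case, the identity $\ker N^j|_W=W\cap\ker N^j|_V$ shows that the projection $\pi\colon V\to V/W$ restricted to $\ker N^j|_V$ has kernel $\ker N^j|_W$, so $\dim\pi(\ker N^j|_V)=r_j(V)-r_j(W)$. The subspaces $\ker N^j|_V$ increase with $j$, hence so do the images $\pi(\ker N^j|_V)$, and therefore $j\mapsto r_j(V)-r_j(W)$ is non-decreasing. Taking first differences yields $\lambda'_{V,j}-\lambda'_{W,j}=(r_j(V)-r_j(W))-(r_{j-1}(V)-r_{j-1}(W))\geq 0$, as required.

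For the quotient case I would dualize: each $J_n$ is self-dual as a $k\langle u\rangle$-module (the transpose of a unipotent Jordan block of size $n$ is again such a block), so the Jordan partition of a module and its dual coincide. A quotient of $V$ dualizes to a submodule of $V^\vee$, and the submodule case applied to $W^\vee\leq V^\vee$ gives the conclusion. No step is a genuine obstacle; the only real content is the reformulation in terms of conjugate partitions, which converts the combinatorial encapsulation condition into the clean monotonicity statement about ranks of powers of $N$.
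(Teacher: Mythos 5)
Your proof is correct, but it takes a genuinely different route from the paper's. The paper argues by minimal counterexample: using transitivity of encapsulation it reduces to a submodule (after dualising) of codimension one in $V$, strips off any direct summand of $V$ contained in $W$, and then checks directly that the only modules of dimension $n+1$ containing a Jordan block of size $n$ as a submodule are those of type $(n+1)$ and $(n,1)$, both of which encapsulate $(n)$. You instead observe that encapsulation is exactly containment of Young diagrams, pass to conjugate partitions via $\lambda'_{X,j}=\dim\ker N^j|_X-\dim\ker N^{j-1}|_X$ with $N=u-1$, and prove containment for a submodule from the monotonicity in $j$ of $\dim\pi(\ker N^j|_V)=r_j(V)-r_j(W)$, handling quotients by self-duality of Jordan blocks over $k\langle u\rangle$ and general subquotients by transitivity. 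Both arguments are complete; yours replaces the induction and case analysis by a single rank computation, yields the slightly stronger and more standard linear-algebra fact that the Jordan type of any subquotient of a nilpotent operator is contained (as a Young diagram) in that of the ambient space — no group theory is used beyond nilpotency of $u-1$ — and makes the "at least as many blocks" clause of the definition automatic via $\lambda'_{\cdot,1}$. The one step you assert without proof, that componentwise inequality of partitions is equivalent to componentwise inequality of their conjugates, is just invariance of diagram containment under transposition, so it is not a gap; the paper's route, by contrast, avoids partition combinatorics entirely at the cost of the minimal-counterexample bookkeeping.
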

\begin{proof} We may of course assume that $G=\gen u$. Let $V$ be a minimal counterexample to the statement. As encapsulation is transitive, we may assume that $W$ is a submodule or quotient of dimension $1$ less than that of $V$. By taking duals if necessary, $W$ is a submodule of $V$. By choice of minimal counterexample, we may remove any summand of $V$ that is contained in $W$. Thus $W$ is indecomposable (equivalent to having $1$-dimensional socle), and in this case it is clear from Jordan normal form that there are exactly two modules of dimension $n+1$ that contain a block of size $n$ as a submodule, namely $(n+1)$ and $(n,1)$, both of which encapsulate $(n)$.
\end{proof}

\begin{table}
\begin{center}
\begin{tabular}{lccc}
\hline Class in $F_4$ & Act. on $M(F_4)^\circ$ & Act. on $25/1$ & Act. on $M(E_6)=1/25/1$
\\\hline $A_1$ & $2^6,1^{13}$ & $2^6,1^{14}$ & $2^6,1^{15}$
\\ $\tilde A_1$ & $3,2^8,1^6$ & $3,2^8,1^7$ & $3,2^8,1^8$
\\ $A_1+\tilde A_1$ & $3^3,2^6,1^4$ & $3^3,2^6,1^5$ & $3^3,2^6,1^6$
\\ $A_2$ & $3^6,1^7$ & $3^6,1^8$ & $3^6,1^9$
\\ $A_2+\tilde A_1$ & $3^7,2^2$ & $3^7,2^2,1$ & $3^7,2^2,1^2$
\\ $\tilde A_2$, $\tilde A_2+A_1$ & $3^8,1$ & $3^8,2$ & $3^9$
\\ \hline $B_2$ & $5,4^4,1^4$ & $5,4^4,1^5$ & $5,4^4,1^6$
\\ $C_3(a_1)$ & $5^2,4^2,3,2^2$ & $5^2,4^2,3,2^2,1$ & $5^2,4^2,3,2^2,1^2$
\\ $F_4(a_3)$ & $5^3,3^3,1$ & $5^3,3^3,1^2$ & $5^3,3^3,1^3$
\\ $B_3$ & $7^3,1^4$ & $7^3,1^5$ & $7^3,1^6$
\\ $C_3$, $F_4(a_2)$ & $9,6^2,3,1$ & $9,6^2,3,2$ & $9,6^2,3^2$
\\ $F_4(a_1)$ & $9^2,7$ & $9^2,7,1$ & $9^2,7,1^2$
\\ \hline $F_4$ & $15,9,1$ & $15,9,2$ & $15,9,3$
\\ \hline
\end{tabular}
\end{center}
\caption{Actions of unipotent elements on $M(F_4)^\circ$ and its extensions for $F_4$ in characteristic $3$. (Horizontal lines separate elements of different orders.)}
\label{t:unipotentF4}\end{table}

\begin{table}
\begin{center}
\begin{tabular}{lcc}
\hline Class in $E_6$ & Action on $L(E_6)^\circ$ & Action on $L(E_6)$
\\\hline $2A_2$&$3^{23},1^8$&$3^{23},2,1^7$
\\$2A_2+A_1$&$3^{24},2^2,1$&$3^{24},2^3$
\\$A_5$&$9^3,8^2,6^4,3^2,1^4$&$9^3,8^2,6^4,3^2,2,1^3$
\\$E_6(a_3)$&$9^4,7,6^4,3^3,1$&$9^4,7,6^4,3^3,2$
\\$E_6(a_1)$&$9^8,5$&$9^8,6$
\\$E_6$&$19,15^2,9^3,1$&$19,15^2,9^3,2$
\\\hline
\end{tabular}
\end{center}
\caption{Actions of unipotent elements on $L(E_6)^\circ$ and $L(E_6)$ for $E_6$ in characteristic $3$, where one does not obtain the former from the latter by removing a trivial Jordan block}
\label{t:unipotentE6}
\end{table}

Looking at the tables in \cite{lawther1995}, one sees that for all but finitely many primes the Jordan blocks of a given unipotent class on $M(\bG)$, the `generic' action, and similarly with $L(\bG)$. If a unipotent class acts on one of these modules with the generic action, then $u$ is said to be \emph{generic} for this module. Note that if $u$ is generic for one of $M(\bG)$ and $L(\bG)$ then it need not be generic for the other. The non-generic classes of elements of order $p$ for $\mb G=E_8$ and $p=3,5,7$ are given in Tables \ref{t:unipe8p3}, \ref{t:unipe8p5} and \ref{t:unipe8p7}, and are transcribed from \cite{lawther1995}, given here for the reader's convenience. (We also include unipotent elements of orders $4$ and $8$ in Tables \ref{t:unipe8p4} and \ref{t:unipe8p8}.)
\begin{small}\begin{table}
\begin{center}
\begin{tabular}{cc}
\hline Class & Action on $L(E_8)$
\\\hline $3A_1$ & $3^{31},2^{50},1^{55}$
\\ $4A_1$ & $3^{44},2^{40},1^{36}$
\\ $A_2$ & $3^{57},1^{77}$
\\ $A_2+A_1$ & $3^{58},2^{20},1^{34}$
\\ $A_2+2A_1$ & $3^{65},2^{16},1^{21}$
\\ $A_2+3A_1$ & $3^{70},2^{14},1^{10}$
\\ $2A_2$ & $3^{78},1^{14}$
\\ $2A_2+A_1$ & $3^{79},2^2,1^7$
\\ $2A_2+2A_1$ & $3^{80},2^4$
\\ \hline
\end{tabular}\caption{Non-generic unipotent classes of elements of order $3$ in $E_8$ in characteristic $3$}\label{t:unipe8p3}
\end{center}
\end{table}

\begin{table}
\begin{center}
\begin{tabular}{cccc}
\hline Class & Action on $L(E_8)$ & Class & Action on $L(E_8)$
\\\hline $A_2$ & $4^2,3^{54},1^{78}$ & $A_3+2A_1$ & $4^{46},2^{30},1^4$
\\ $A_2+A_1$ & $4^{14},3^{30},2^{34},1^{34}$ & $A_3+A_2$ & $4^{50},3^{10},2^6,1^6$
\\ $A_2+2A_1$ & $4^{22},3^{14},2^{52},1^{14}$ & $D_4(a_1)$ & $4^{54},3^2,1^{26}$
\\ $A_2+3A_1$ & $4^{26},3^6,2^{62},1^2$  & $A_3+A_2^{(2)}$ & $4^{54},3^2,2^{10},1^6$
\\ $2A_2$ & $4^{28},3^{36},1^{28}$& $D_4(a_1)+A_1$ & $4^{54},3^2,2^{10},1^6$
\\ $2A_2+A_1$ & $4^{40},3^{12},2^{18},1^{16}$& $A_3+A_2+A_1$ & $4^{54},3^2,2^{12},1^2$
\\ $2A_2+2A_1$ & $4^{44},3^4,2^{28},1^4$ & $D_4(a_1)+A_2$ & $4^{56},3^8$
\\ $A_3$ & $4^{46},2^{10},1^{44}$& $2A_3$ & $4^{60},2^4$
\\ $A_3+A_1$ & $4^{46},2^{24},1^{16}$ &&
\\ \hline
\end{tabular}\caption{Unipotent classes of elements of order $4$ in $E_8$ in characteristic $2$}\label{t:unipe8p4}
\end{center}
\end{table}

\begin{table}
\begin{center}
\begin{tabular}{cccc}
\hline Class & Action on $L(E_8)$ & Class & Action on $L(E_8)$
\\\hline $A_3$ & $5^{13},4^{32},1^{55}$ & $D_4(a_1)+A_2$ & $5^{36},3^{20},1^8$
\\ $2A_2+A_1$ & $5^{14},4^{14},3^{23},2^{18},1^{17}$ & $2A_3$ & $5^{38},4^{12},1^{10}$
\\ $2A_2+2A_1$ & $5^{18},4^{12},3^{20},2^{20},1^{10}$ & $A_4$ & $5^{45},1^{23}$
\\ $A_3+A_1$ & $5^{21},4^{16},3^9,2^{14},1^{24}$ & $A_4+A_1$ & $5^{45},3,2^6,1^8$
\\ $A_3+2A_1$ & $5^{25},4^{10},3^{14},2^{14},1^{13}$ & $A_4+2A_1$ & $5^{45},3^4,2^4,1^3$
\\ $D_4(a_1)$ & $5^{29},3^{25},1^{28}$ & $A_4+A_2$ & $5^{46},3^5,1^3$
\\ $D_4(a_1)+A_1$ & $5^{29},4^6,3^{14},2^{14},1^9$ & $A_4+A_2+A_1$ & $5^{46},4^2,3^2,2^2$
\\ $A_3+A_2$ & $5^{30},4^8,3^{13},2^8,1^{11}$ & $A_4+A_3$ & $5^{48},4^2$
\\ $A_3+A_2+A_1$ & $5^{32},4^8,3^{10},2^{10},1^6$ & & 
\\ \hline
\end{tabular}\caption{Non-generic unipotent classes of elements of order $5$ in $E_8$ in characteristic $5$}
\label{t:unipe8p5}
\end{center}
\end{table}

\begin{table}
\begin{center}\begin{tabular}{ccccccc}
\hline Class & Action on $L(E_8)$ & Class & Action on $L(E_8)$
\\ \hline $A_4$ & $7^{13},5^{20},3^{11},1^{24}$ & $D_5(a_1)$ & $7^{28},3^7,2^8,1^{15}$
\\ $A_4+A_1$ & $7^{13},6^6,5^8,4^8,3^8,2^8,1^9$ & $D_5(a_1)+A_1$ & $7^{28},4^2,3^6,2^{10},1^6$
\\ $2A_3$ & $7^{14},5^{10},4^{16},3^6,2^4,1^{10}$ & $D_4+A_2$ & $7^{28},5,3^{13},1^8$
\\ $A_4+2A_1$ & $7^{15},6^4,5^8,4^8,3^9,2^8,1^4$ & $D_5(a_1)+A_2$ & $7^{28},5^3,4^2,3^6,2^4,1^3$
\\ $A_4+A_2$ & $7^{19},5^{11},3^{18},1^6$ & $E_6(a_3)$ & $7^{28},5^7,3,1^{14}$
\\ $A_4+A_2+A_1$ & $7^{19},6^2,5^7,4^8,3^7,2^6,1^3$ & $E_6(a_3)+A_1$ & $7^{28},6^2,5^3,4^2,3^2,2^4,1^3$
\\ $A_5$ & $7^{21},6^{14},1^{17}$ &  $D_6(a_2)$ & $7^{29},5^4,4^4,3,1^6$
\\ $A_4+A_3$ & $7^{24},6^2,5^3,4^6,3^6,2^4,1^3$ &$E_7(a_5)$ & $7^{29},6^2,5,4^4,3^3,1^3$
\\ $A_5+A_1$ & $7^{25},6^6,5^4,3,2^4,1^6$ & $E_8(a_7)$ & $7^{30},5^4,3^6$&
\\ $D_4$ & $7^{28},1^{52}$ & $A_6$ & $7^{35},1^3$
\\ $D_4+A_1$ & $7^{28},3,2^{14},1^{21}$ & $A_6+A_1$ & $7^{35},3$
\\ \hline
\end{tabular}\end{center}
\caption{Non-generic unipotent classes of elements of order $7$ in $E_8$ in characteristic $7$}
\label{t:unipe8p7}
\end{table}

\begin{table}
\begin{center}
\begin{tabular}{cccc}
\hline Class & Action & Class & Action
\\ \hline $D_4$ & $8,6^{27},2^{26},1^{26}$ & $D_6(a_2)$ & $8^{14},6^{18},2^{12},1^4$
\\ $D_4+A_1$ & $8,6^{27},2^{36},1^6$ & $E_6(a_3)$ & $8^{20},5^{12},4^2,3^2,1^{14}$
\\ $A_4$ & $8^2,7^{10},5^{20},4^2,3^{10},1^{24}$ & $E_6(a_3)+A_1$ & $8^{20},6^4,5^4,4^6,3^2,2^6,1^2$
\\ $A_4+A_1$ & $8^4,7^6,6^8,5^8,4^{10},3^6,2^{10},1^8$ & $E_7(a_5)$ & $8^{20},6^6,4^8,3^2,2^6,1^2$
\\ $A_4+2A_1$ & $8^6,7^2,6^{12},5^4,4^{14},3^2,2^{16}$ & $E_8(a_7)$ & $8^{20},7^4,4^{12},3^4$
\\ $A_4+A_2$ & $8^6,7^{10},5^4,4^{20},3^8,1^6$ & $A_6$ & $8^{22},7^6,5^4,3^2,1^4$
\\ $D_4+A_2$ & $8^7,6^{15},4^{14},3^6,2^{14}$ & $A_6+A_1$ & $8^{24},7^2,6^4,4^2,3^2,2^2$
\\ $A_4+A_2+A_1$ & $8^{10},7^2,6^4,5^4,4^{24},2^6,1^2$ & $D_5$ & $8^{27},6,2^6,1^{14}$
\\ $D_5(a_1)$ & $8^{11},6^7,4^{26},1^{14}$ & $D_5+A_1$ & $8^{27},6,2^{12},1^2$
\\ $D_5(a_1)+A_1$ & $8^{11},6^7,4^{26},2^6,1^2$ & $D_5+A_2$ & $8^{27},6,4^4,3^2,2^2$
\\ $D_4+A_2^{(2)}$ & $8^{11},6^7,4^{26},2^6,1^2$ & $D_6(a_1)$ & $8^{27},6,4^5,2,1^4$
\\ $A_4+A_3$ & $8^{12},7^2,6^2,4^{30},3^2$ & $E_7(a_4)$ & $8^{27},6,4^5,2,1^4$
\\ $D_5(a_1)+A_2$ & $8^{13},6^3,4^{30},3^2$ & $D_5+A_2^{(2)}$ & $8^{27},6,4^5,2^3$
\\ $A_5$ & $8^{14},6^{18},2^6,1^{16}$ & $D_7(a_2)$ & $8^{28},6^4$
\\ $A_5+A_1$ & $8^{14},6^{18},2^{12},1^4$ & $A_7$ & $8^{30},4^2$
\\ \hline
\end{tabular}\end{center}
\caption{Unipotent classes of elements of order $8$ in $E_8$ in characteristic $2$}
\label{t:unipe8p8}
\end{table}

\begin{table}
\begin{center}
\begin{tabular}{cccc}
\hline Class & Action on $L(\lambda_1)$ & Action on $L(\lambda_4)$, $p=3$ & Action on $L(\lambda_4)$, $p=5$
\\ \hline $A_1$ & $2^2,1^6$ & $2^4,1^8$ & $2^4,1^8$
\\ $D_2$ & $3,1^7$ & $2^8$ & $2^8$
\\ $2A_1$ & $2^4,1^2$ & $3,2^4,1^5$ & $3,2^4,1^5$
\\ $A_1+D_2$ & $3,2^2,1^3$ & $3^2,2^4,1^2$ & $3^2,2^4,1^2$
\\ $A_2$ & $3^2,1^4$ & $3^4,1^4$ & $3^4,1^4$
\\ $D_3$ & $5,1^5$ & $4^4$ & $4^4$
\\ $A_2+A_1$ & $3^2,2^2$ & $3^4,2,1^2$ & $4,3^2,2^2,1^2$
\\ $A_2+D_2$ & $3^3,1$ & $3^4,2^2$ & $4^2,2^4$
\\ $A_3$ & $4^2,1^2$ & $5,4^2,1^3$ & $5,4^2,1^3$
\\ $A_1+D_3$ & $5,2^2,1$ & $5,4^2,3$ & $5,4^2,3$
\\ $D_4(a_1)$ & $5,3,1^2$ & $5^2,3^2$ & $5^2,3^2$
\\ $D_4$ & $7,1^3$ & $7^2,1^2$ & $7^2,1^2$
\\ $A_4$ & $5^2$ & $7,5,3,1$ & $5^3,1$
\\ $D_5(a_1)$ & $7,3$ & $8,6,2$ & $8,6,2$
\\ $D_5$ & $9,1$ & $9,7$ & $11,5$
\\ \hline
\end{tabular}
\end{center}
\caption{Unipotent classes in $D_5$, with actions on $L(\lambda_1)$ and $L(\lambda_4)$ for $p=3,5$}
\label{t:unipd5}
\end{table}

\begin{table}
\begin{center}
\begin{tabular}{ccc}
\hline Class & Eigenspace dimensions on $L(E_8)$ & Centralizer
\\ \hline 2A & $136,112$ & $E_7A_1$
\\ 2B & $120,128$ & $D_8$
\\\hline 3A & $80,84,84$ & $A_8$
\\ 3B & $86,81,81$ & $E_6A_2$
\\ 3C & $92,78,78$ & $D_7T_1$
\\ 3D & $134,57,57$ & $E_7T_1$
\\ \hline 5A[2] & $64,56,36,36,56$ & $A_7T_1$
\\ 5B[2] & $52,49,49,49,49$ & $A_6A_1T_1$
\\ 5C & $48,50,50,50,50$ & $A_4A_4$
\\ 5D[2] & $54,51,46,46,51$ & $A_2D_5T_1$
\\ 5E[2] & $82,54,29,29,54$ & $E_6A_1T_1$
\\ 5F[2] & $92,64,14,14,64$ & $D_7T_1$
\\ 5G & $68,45,45,45,45$ & $D_6T_2$
\\ 5H[2] & $134,56,1,1,56$ & $E_7T_1$
\\ \hline
\end{tabular}
\end{center}
\caption{Classes of semisimple elements of small orders in $E_8$ and their centralizers. (The number in brackets is the number of conjugacy classes in the subgroup generated by an element.)}
\label{t:semie8}
\end{table}\end{small}

\medskip

In this work we will be considering embeddings of finite groups into algebraic groups $\mb G$. A significant restriction on a homomorphism $H\to \mb G$ is to consider modules $V$ for $\mb G$, and compute the possible composition factors for the restriction $V{\downarrow_H}$ of $V$ to $H$. The traces of semisimple elements of $H$ on $V$ must match those of semisimple classes of $\mb G$, i.e., to each $H$-conjugacy class $C$ we must assign a $\mb G$-conjugacy class $\bar C$ of semisimple elements such that the traces of elements of $C$ and $\bar C$ on $V$ are the same. Moreover, by requiring that the power map is consistent with this assignment, i.e., that if $x\in C$ then the class assigned to $x^n$ is the class containing $y^n$ for $y\in \bar C$, we obtain a slightly stronger restriction than merely consistency with traces. (This is equivalent to requiring that the eigenvalues of $x$ and $y$ on $V$ coincide.)

A putative set of composition factors for $V{\downarrow_H}$ is called \emph{conspicuous} if all semisimple elements satisfy the requirements above. In practice, we will sometimes only be able to check this condition on some element orders, as we only have data on the eigenvalues for some orders, in which case we will say that the set is conspicuous for some set of semisimple elements of $H$, for example of all elements of order at most $m$ for some $m$. Our first restriction when proving our results will normally be to calculate the conspicuous sets of composition factors for $M(\mb G){\downarrow_H}$ or $L(\mb G){\downarrow_H}$.

The conspicuous sets of composition factors for $V{\downarrow_H}$ may be split into two collections. A \emph{warrant} for a set of composition factors for $V{\downarrow_H}$ is a proper positive-dimensional subgroup $\mb X<\mb G$ and a homomorphism $\phi:H\to \mb X$ such that the composition factors of $V{\downarrow_{H\phi}}$ coincide with our given set. If $H<\mb G$ and the composition factors of $V{\downarrow_H}$ are unwarranted, then $H$ is necessarily Lie primitive in $\mb G$.

Thus if we are presented with a warranted set of composition factors we need to show that all corresponding copies of $H$ in $\mb G$ must be (strongly) imprimitive in $\mb G$, and if we have an unwarranted set of composition factors we need to show that there is no copy of $H$ in $\mb G$ with those factors. In practice, in the second case we will often show that $H$ must be imprimitive instead, and therefore indirectly show that $H$ does not exist.

While for small groups $H$ and $\mb G$ it is possible to perform the computations needed to determine the conspicuous sets of composition factors by hand, for larger $H$ and $\mb G$, and in practice even for small $H$ and $\mb G$, we will use a computer to avoid errors in hand solutions to linear algebra problems.

\medskip

A subgroup $H\leq \mb G$ is a \emph{blueprint} for a module $V$ if there exists a positive-dimensional subgroup $\mb X$ of $\mb G$ stabilizing the same subspaces of $V$ as $H$. An element $x\in \mb G$ is called a blueprint for $V$ if $\langle x\rangle$ is a blueprint for $V$. Of course, if $L\leq H\leq \mb G$ and $L$ is a blueprint for $V$, so is $H$.

If $H$ acts irreducibly on $V$ then $H$ is always a blueprint, but this information is often not helpful. From \cite{liebeckseitz2004a}, we know all possibilities for a finite subgroup acting irreducibly on either $M(\mb G)^\circ$ or $L(\mb G)^\circ$, and since in this article we are interested in $H\leq \mb G$ with $H$ a Lie type group in characteristic $p$ other than $\PSL_2(q)$, the only possibilities for $H$ are $\PSL_3(3)$, $\PSU_3(3)$ or ${}^2G_2(3)'$ with $p=3$ and $\mb G=E_6$. These cannot be dealt with using blueprints, and indeed cannot be dealt with at all using the standard methods in this paper, and we have to resort to more underhand means. The case $H\cong {}^2\!G_2(3)$ and its derived subgroup was proved to always lie in an irreducible $G_2$ subgroup of $E_6$ in \cite{aschbacherE6Vun}, but the proof involves an explicit analysis of the trilinear form on $M(E_6)$, and there does not appear to be a way to deal with it representation-theoretically. These three groups will be considered in Chapter \ref{chap:trilinear}.

\medskip

If $u$ is a unipotent element, then there is an easy sufficient criterion for $u$ to be a blueprint for either $M(\mb G)$ or $L(\mb G)$.

\begin{lemma}[{{\cite[Lemmas 1.2 and 1.3]{craven2017}}}]\label{lem:genericmeansblueprint} Let $u$ be a non-trivial unipotent element in $\mb G$ and let $V$ be either $M(\mb G)\oplus M(\mb G)^*$ or $L(\mb G)$. If $u$ is generic for $V$ then $u$ is a blueprint for $V$.
\end{lemma}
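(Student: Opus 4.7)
The plan is, given a generic unipotent $u\in\bG$ and module $V$ as in the statement, to produce a closed connected positive-dimensional subgroup $\mb X$ of $\bG$ containing $u$ and stabilising exactly the same subspaces of $V$ as $\gen u$. Once such an $\mb X$ exists, $u$ is by definition a blueprint for $V$.

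First I would unpack the hypothesis. By the Bala--Carter--Pommerening classification together with the tables in \cite{lawther1995}, asking that $u$ be generic for $V$ forces $p$ to be good for the unipotent class of $u$ and guarantees that the Jordan block decomposition of $u$ on $V$ agrees with its characteristic-$0$ value. Under these constraints the Jacobson--Morozov theorem, in its good-characteristic form, embeds $u$ into a closed $A_1$-type subgroup $\bY$ of $\bG$, and hence into the unipotent radical of a Borel of $\bY$, which is a closed one-parameter subgroup $\mb U\cong \mb G_a$ of $\bG$; I take $\mb X=\mb U$ when $u$ has order $p$ and $\mb X=\bY$ otherwise.

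Next I would compare invariant subspaces on $V$. If $u$ has order $p$ then $u=\exp(e)$ for a nilpotent $e\in L(\bG)$, and $\mb U(t)=\exp(te)$ acts on $V$ as $\exp(t\rho(e))$, where $\rho$ is the differential of the representation. A subspace $W\subseteq V$ is $\mb U$-stable iff $\rho(e)W\subseteq W$ (differentiate at $t=0$ and re-exponentiate), while $W$ is $u$-stable iff $\exp(\rho(e))W\subseteq W$; since $\exp(\rho(e))-I=\rho(e)\Phi(\rho(e))$ with $\Phi$ invertible on the commutative subalgebra generated by $\rho(e)$ in good characteristic, the two conditions on $W$ agree. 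For $u$ of order $p^k>p$, I would instead argue via the direct-sum decomposition of $V{\downarrow_{\bY}}$ into indecomposable tilting modules for the $A_1$-type subgroup $\bY$, and use rank-one representation theory in characteristic $p$ to match the $\bY$-invariant lattice of each summand with the $\gen u$-invariant lattice on the same summand.

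The main obstacle is precisely this second case: for $u$ of order $p^k>p$ one cannot literally write $u=\exp(e)$, so one must carry out an explicit block-by-block analysis of $V{\downarrow_{\bY}}$, showing summand-wise that every $\gen u$-invariant subspace is already $\bY$-invariant. This is where the generic hypothesis, combined with the concrete Jordan block data of \cite{lawther1995} for both $M(\bG)\oplus M(\bG)^*$ and $L(\bG)$, is indispensable: it rules out small-characteristic aberrations in the $A_1$-module structure of $V{\downarrow_{\bY}}$ that would otherwise create additional $\gen u$-invariants not fixed by $\bY$.
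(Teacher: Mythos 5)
There are genuine gaps, and they sit exactly where the content of the lemma lies. First, your reduction to good characteristic is not available: being generic for $V$ does not force $p$ to be good (for $\mb G$ or in any other usable sense). For instance, the classes $A_1$ and $2A_1$ of $E_8$ are generic for $L(E_8)$ in characteristics $3$ and $5$, both bad primes for $E_8$, and the lemma is invoked in this paper precisely for such classes at such primes; so you cannot appeal to a good-characteristic Jacobson--Morozov statement (which in any case produces an $\slf_2$-triple, not a closed $A_1$ through a given unipotent element -- that is Testerman's theorem, again requiring $p$ good and $o(u)=p$) to manufacture the overgroup $\bY$. Second, even granting a one-parameter unipotent subgroup $\mb U=\{x(t)\}\leq\mb G$ with $x(1)=u$, your key identification ``$\mb U$ acts on $V$ as $t\mapsto\exp(t\rho(e))$'' is asserted, not proved, and it is false in general: in characteristic $p$ a one-parameter subgroup acts through divided powers, and $V{\downarrow_{\bY}}$ can have Frobenius-twisted constituents on which the differential acts trivially while $\mb U$ does not, in which case $\gen u$-invariant subspaces (e.g.\ diagonal ones across constituents with identical $u$-action but different $\mb U$-actions) are not $\mb U$-invariant. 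This is exactly the step where genericity must be used -- it guarantees that the characteristic-zero Jordan blocks of the class on $V$ all have size at most $p$, so that the relevant one-parameter subgroup acts by a genuine truncated exponential, hence by polynomials in $u$ -- but apart from the invalid ``$p$ is good'' inference, your order-$p$ argument never uses the hypothesis (the invertibility of $\Phi$ is automatic and is not the issue). A proof that does not use genericity cannot be right, since non-generic elements of order $p$ need not be blueprints; that is the whole reason the paper tabulates non-generic classes.

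The case $o(u)=p^k>p$ is broken at the outset: every unipotent element of a group of type $A_1$ in characteristic $p$ has order dividing $p$, so such a $u$ lies in no $A_1$-type subgroup $\bY$ at all. Moreover, even with a reductive overgroup whose action on $V$ is understood, comparing invariant subspaces ``summand by summand'' of $V{\downarrow_{\bY}}$ is insufficient: the lattice of invariant subspaces of a direct sum is not determined by the lattices of the summands, and the dangerous subspaces for the blueprint property are exactly the diagonal ones across different summands. Note finally that this paper does not prove the lemma but quotes it from \cite[Lemmas 1.2 and 1.3]{craven2017}; any correct argument along your lines has to run through the exponential description of a one-parameter overgroup, with the generic block sizes (all at most $p$) doing the work that your write-up omits.
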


Work on whether semisimple elements are blueprints has been done in the past, and we are able to give some results that prove that a semisimple element $x\in \mb G$ is always a blueprint for $M(\mb G)$ or $L(\mb G)$ if its order is sufficiently large. The next result is a combination of results from the author in \cite[Theorem 5.9 and Proposition 6.10]{craven2015un} for $M(\mb G)$, and Liebeck--Seitz and Lawther from \cite{liebeckseitz1998} and \cite{lawther2014} for $L(\mb G)$.

\begin{definition}\label{defn:T(G)} Let $\mb G$ be one of $G_2$, $F_4$, $E_6$, $E_7$, $E_8$. Define the set $T(\mb G)$ of positive integers as follows: the odd elements of $T(\mb G)$ are
\[ T(\mb G)_{\mathrm{odd}}=\begin{cases} \{1,\dots,9\} & \mb G=G_2,\\\{1,\dots,57\} & \mb G=F_4,\\\{1,\dots,105\} & \mb G=E_6,\\\{1,\dots,317\} & \mb G=E_7,\\ \{1,\dots,1093,1097,1099,1103,1105,1113,1115,1117,
\\ 1121,1123,1127,1129,1147,1153,1165,1189\} & \mb G=E_8,\end{cases}\]
and the even elements of $T(\mb G)$ are
\[ T(\mb G)_{\mathrm{even}}=\begin{cases} \{2,\dots,12\} & \mb G=G_2,\\\{2,\dots,68\} & \mb G=F_4,\\\{2,\dots,120,124\} & \mb G=E_6,\\\{2,\dots,364,370,372,388\} & \mb G=E_7,\\ \{2,\dots,1262,1268,1270,1284,1298,1312\} & \mb G=E_8.\end{cases}\]
\end{definition}

\begin{theorem}\label{thm:largeorderss} Let $x$ be a semisimple element in $\mb G$.
\begin{enumerate}
\item If the image of $x$ in the adjoint form of $\mb G$ has order not in $T(\mb G)$, then $x$ is a blueprint for $L(\mb G)$.
\item If $\mb G=E_7$ and $o(x)$ is odd and at least $75$, then $x$ is a blueprint for $M(E_7)$.
\item If $\mb G=E_7$ and $o(x)>30$, and $x$ centralizes a $6$-space on $M(E_7)$, then $x$ is a blueprint for $M(E_7)$.
\item If $\mb G=E_6$ and $o(x)$ is odd and at least $75$, then $x$ is a blueprint for $M(E_6)\oplus M(E_6)^*$.
\item If $\mb G=E_6$, $x$ is real, and $o(x)>18$, then $x$ is a blueprint for $M(E_6)\oplus M(E_6)^*$.
\item If $\mb G=F_4$ and $o(x)>18$, then $x$ is a blueprint for $M(F_4)$.
\item If $\mb G=G_2$ and $o(x)>4$, then $x$ is a blueprint for $M(G_2)$.
\end{enumerate}
\end{theorem}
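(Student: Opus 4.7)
The plan is to establish each clause by exhibiting an explicit positive-dimensional subgroup $\mb X$ sharing the same stabilized subspaces as $\langle x\rangle$, and the natural candidate is a maximal torus $\mb T$ containing $x$. The central observation is: for a semisimple element $x\in\mb T$ acting on a module $V$ with $\mb T$-weight space decomposition $V=\bigoplus_\mu V_\mu$, the subspaces stabilized by $\langle x\rangle$ coincide with those stabilized by $\mb T$ precisely when $x$ separates weights, meaning $\mu(x)\neq \nu(x)$ for all distinct weights $\mu,\nu$ of $\mb T$ on $V$. Indeed, if $x$ acts as a distinct scalar on each weight space then the eigenspaces of $x$ are exactly the weight spaces, and both groups stabilize an arbitrary direct sum of subspaces thereof. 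So the work is to show that when $o(x)\notin T(\mb G)$ (and under the stated extra hypotheses in cases (ii)--(vii)), weight separation must occur for some maximal torus.

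The key step is the translation into a numerical condition. For each ordered pair of distinct $\mb T$-weights $\mu\neq\nu$ on $V$, the locus $\{t\in\mb T : (\mu-\nu)(t)=1\}$ is a proper closed subtorus of codimension $1$. Failure of $x$ to separate weights forces $x$ to lie in the union of these finitely many subtori, and bounding $o(x)$ in such a union reduces, via the explicit list of weights of $L(\mb G)$ or $M(\mb G)$, to a finite integer-arithmetic computation: the set of orders that can occur is exactly the set of divisors of the pairwise weight differences, intersected with the possible exponents of elements of $\mb T$. Running this computation module by module produces the explicit sets $T(\mb G)_{\mathrm{odd}}$ and $T(\mb G)_{\mathrm{even}}$ of Definition \ref{defn:T(G)}; the anomalously large odd entries in $T(E_8)$ just below $1190$ reflect specific pairs of weights of $L(E_8)$ whose difference is divisible by a small prime to a large power.

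For the remaining clauses one performs the same analysis on $M(E_7)$, $M(E_6)\oplus M(E_6)^*$, $M(F_4)$, and $M(G_2)$, where the much smaller weight sets force the thresholds down dramatically. The additional hypotheses---$x$ odd order, $x$ real, $x$ centralizing a $6$-space on $M(E_7)$---eliminate further failure modes: for instance, imposing reality or a large fixed space excludes precisely those conjugacy classes of $x$ that lie on the awkward codimension-$1$ subtori, allowing the bound to be lowered to $18$ or $30$. For cases (ii), (iv), (vi), (vii), I would invoke \cite[Theorem 5.9 and Proposition 6.10]{craven2015un} directly, and for (i) and the $L(\mb G)$ parts I would quote \cite{liebeckseitz1998} and \cite{lawther2014}, where the subtorus-avoidance calculations are carried out in full.

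The main obstacle is the sheer size of the enumeration for $\mb G=E_8$: the adjoint module has $240$ nonzero weights, so there are on the order of $240^2$ weight-difference characters to inspect, and one must verify by machine which integers $n\leq 1200$-ish arise as orders of elements in the kernel of at least one such character while also failing every attempt to twist $x$ into a better-placed conjugate inside a different maximal torus. The subtlety is that the same abstract element $x$ may lie in many conjugate tori, and one must rule out weight separation in all of them before declaring $o(x)\in T(\mb G)$; the resulting sporadic large entries in $T(E_8)_{\mathrm{odd}}$ are precisely those orders that persistently obstruct blueprint status across all containing tori.
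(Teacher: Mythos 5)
The paper does not actually prove this theorem: it is stated as a direct combination of the cited results --- \cite[Theorem 5.9 and Proposition 6.10]{craven2015un} for the minimal modules and \cite{liebeckseitz1998}, \cite{lawther2014} for $L(\mb G)$ --- and since you ultimately invoke exactly those references for every clause, your proposal is in substance the same as the paper's. One caution about your surrounding sketch: weight separation is sufficient but far too strong a criterion, and the arguments in those references only need the subgroup of a maximal torus preserving the $x$-eigenspace decomposition (the intersection of the kernels of the characters $\mu-\nu$ over pairs with $\mu(x)=\nu(x)$) to be positive-dimensional, so an element failing to separate weights can still be a blueprint; correspondingly $T(\mb G)$ is not ``the set of divisors of the pairwise weight differences intersected with torus exponents'' --- for instance $T(E_8)_{\mathrm{odd}}$ contains every odd integer up to $1093$ --- but rather the set of orders for which some semisimple element of that order escapes this analysis. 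Since you delegate the real work to the citations this does not create a gap in establishing the statement, but the sketch as written would not reproduce Definition \ref{defn:T(G)} if carried out literally.
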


We also consider the concept of pressure, and we take the definition and main result from \cite[Lemma 2.2]{craven2015un}. Let $\mc M$ be a collection of simple modules for a group $H$, such that $\Ext_{kH}^1(M,M')=0$ for all $M,M'\in \mc M$. The \emph{$\mc M$-pressure} of a $kH$-module $V$ is the quantity
\[ \left(\sum_{W\in \cf(V)} \sum_{M\in \mc M} \dim(\Ext_{kH}^1(M,W))\right)-|\{W\in \cf(V)\mid W\in \mc M\}|,\]
where as mentioned above $\cf(V)$ denotes the multiset of composition factors of $V$.

\begin{proposition}\label{prop:pressure} Let $\mc M$ be a set as above, and let $V$ be a module.
\begin{enumerate}
\item If the $\mc M$-pressure of $V$ is negative then $V$ has a simple submodule that is a member of $\mc M$.
\item Suppose that $\Ext_{kH}^1(A,B)=\Ext_{kH}^1(B,A)$ for all simple modules $A,B$ in $\cf(V)$. (This is the case whenever there is an automorphism that sends all simple modules to their duals, hence in all simple groups of Lie type in defining characteristic, where the graph automorphism or the identity does this.)
\begin{enumerate}
\item If the $\mc M$-pressure of $V$ is $0$ then $V$ has either a submodule or quotient that is a member of $\mc M$.
\item If the $\mc M$-pressure of $V$ is equal to $n>0$, and there are no submodules or quotients of $V$ that are members of $\mc M$, then the $\mc M$-pressure of any subquotient of $V$ lies in the interval $[-n,n]$.
\end{enumerate}
\end{enumerate}
\end{proposition}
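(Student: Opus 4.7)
The plan is to prove (1) by induction on $\dim V$, exploiting the long exact sequence in $\Ext$ arising from $0\to\soc V\to V\to \bar V\to 0$, where $\bar V=V/\soc V$. If $V$ has no simple submodule in $\mc M$, I want to show $P(V)\geq 0$ (writing $P$ for $\mc M$-pressure). Let $A$ be the sum of all simple submodules of $\bar V$ that lie in $\mc M$, and let $k$ be the number of its composition factors. Because $\Ext^1(M,M')=0$ for $M,M'\in\mc M$, the definition gives $P(A)=-k$ directly. For each $M\in\mc M$, the hypothesis that $\soc V$ has no $\mc M$-composition factor gives $\Hom(M,V)=0$, and the relevant piece of the long exact sequence yields an injection $\Hom(M,\bar V)\hookrightarrow \Ext^1(M,\soc V)$. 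Summing $\dim\Hom(M,\bar V)\geq\dim\Hom(M,A)$ over $M\in\mc M$ gives $P(\soc V)\geq k$. Iterating the construction (or inducting on $\bar V/A$, which after saturating by successive $\mc M$-submodules inherits the hypothesis) yields $P(\bar V/A)\geq 0$, so $P(\bar V)\geq -k$, and consequently $P(V)=P(\soc V)+P(\bar V)\geq 0$.

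For (2)(b), the key observation is that the $\Ext$-symmetry hypothesis makes (1) and its dual (with ``submodule'' replaced by ``quotient'') truly dual, since under this hypothesis $P_{\mc M^*}(V^*)=P_{\mc M}(V)$ after restricting to $\mc M\cap\cf(V)$ (which only decreases pressure). For a submodule $V_1\leq V$, the absence of a simple submodule of $V$ in $\mc M$ passes to $V_1$, so (1) gives $P(V_1)\geq 0$; the absence of a simple quotient of $V$ in $\mc M$ passes to $V/V_1$, and the dualised (1) gives $P(V/V_1)\geq 0$. Since $P(V_1)+P(V/V_1)=n$, both lie in $[0,n]$, and the same holds for every submodule $V_2\leq V$. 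For a general subquotient $V_1/V_2$ one then has $P(V_1/V_2)=P(V_1)-P(V_2)\in[-n,n]$.

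For (2)(a), I would run the proof of (1) in the equality case $P(V)=0$: each of the inequalities above must be an equality, so $P(\bar V/A)=0$ and by induction $\bar V/A$ has a simple submodule or quotient in $\mc M$. The first possibility contradicts the maximality of $A$ (the new simple $\mc M$-submodule would have been absorbed), while the second, combined with the $\Ext$-symmetry, descends to a simple quotient of $V$ in $\mc M$. The main obstacle is precisely this equality analysis in (2)(a): tracing how the forced equalities $\dim\Ext^1(M,\soc V)=\dim\Hom(M,\bar V)$ for each $M\in\mc M$, together with the symmetry, rigidly control the interaction between the socle and top of $V$ so that the desired quotient (rather than a fresh submodule) is produced.
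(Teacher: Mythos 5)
Parts (1) and (2)(b) of your proposal are correct. For (1), the injection $\Hom(M,\bar V)\hookrightarrow\Ext_{kH}^1(M,\soc V)$ together with the observation that, because $\Ext^1$ vanishes between members of $\mc M$, your $A$ (the sum of the simple $\mc M$-submodules of $\bar V$) is the full $\mc M$-radical of $\bar V$ — so that $\bar V/A$ again has \emph{no} simple $\mc M$-submodule and the induction on dimension applies directly, with no further ``saturation'' needed — gives exactly what you want. For (2)(b), your reduction to $\mc M\cap\cf(V)$ is precisely the right device to make the dual of (1) available (the symmetry hypothesis only covers composition factors of $V$), and since this restriction leaves the $\mc M$-factor count unchanged and can only shrink the $\Ext$ sum, the resulting lower bounds transfer back to the $\mc M$-pressure; the additivity argument then closes (2)(b). (The paper itself only cites this result from an earlier reference, so there is no in-text proof to compare against.)

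The genuine gap is (2)(a), as you yourself flag. Your induction applies (2)(a) to $\bar V/A$, but that module may have \emph{no} composition factor in $\mc M$ at all, and then the inductive statement yields nothing: a simple module $W\notin\mc M$ with $\Ext_{kH}^1(M,W)=0$ for all $M\in\mc M$ has pressure $0$ and no $\mc M$-submodule or quotient, so (2)(a) tacitly assumes $V$ has at least one composition factor in $\mc M$ (as it does in every application in the paper), and your equality analysis cannot be closed by the induction alone. Here is the missing argument. Suppose $V$ has neither an $\mc M$-submodule nor an $\mc M$-quotient. By (1) and its dual, every submodule and every quotient of $V$ has nonnegative pressure, and since the pressures of $W$ and $V/W$ sum to $P(V)=0$, every submodule of $V$ has pressure exactly $0$. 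Apply this to the largest submodule $R$ of $V$ having no composition factor in $\mc M$: then $P(R)$ is the sum of the nonnegative quantities $\dim\Ext_{kH}^1(M,W)$ over $M\in\mc M$ and $W\in\cf(R)$, so all of them vanish and $\Ext_{kH}^1(M,R)=0$ for every $M\in\mc M$. By maximality of $R$ every simple submodule of $V/R$ lies in $\mc M$, and $V/R\neq 0$ because $V$ has an $\mc M$-factor; the preimage in $V$ of such a simple submodule is an extension of a member of $\mc M$ by $R$, which splits by the $\Ext$-vanishing just established, producing an $\mc M$-submodule of $V$ — a contradiction. This, rather than a refinement of the equality case of (1), is what ``rigidly controls'' the socle/top interaction you were looking for.
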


This is normally used in the case where $\mc M$ is simply the set $\{k\}$ containing the trivial module, in which case the $\mc M$-pressure is just referred to as \emph{pressure} (see \cite[Lemma 1.8]{craven2017}, and also \cite[Lemma 1.2]{lst1996} for what might be the original use of the idea). Note that if $H$ has no quotient of order $p$, i.e., $O^p(H)=H$, then $\Ext_{kH}^1(k,k)=0$ and so $\{k\}$ satisfies the required condition. In particular, if $H$ is a simple group and $V$ is a module for $H$ with negative pressure, then $H$ stabilizes a line on $V$. This is important because the line stabilizers of $M(\mb G)$ and $L(\mb G)$ are known to be positive dimensional.

\medskip

Given $H\leq \mb G$, and a proper subgroup $L$ of $H$, we might already know that $L$ is imprimitive. Thus we want to find all possibilities for $L$ (knowing that it lies inside some $\mb X$), and we want to use information about $L$ to restrict the possibilities for $H$. The next proposition shows that we normally need only consider connected positive-dimensional subgroups $\mb X$, rather than for example the normalizer of a torus. This result may be found in \cite[Proposition 3.11]{litterickmemoir}.

\begin{proposition}\label{prop:inconnected} Let $H$ be a simple subgroup of $G$. If $H$ is contained inside a member of $\ms X^\sigma$, then there is some $\sigma$-stable member $\mb Y$ of $\ms X$ such that $H\leq \mb Y^\circ$, possibly unless $\mathbf{G}=E_8$ and $H\cong\mathrm{Alt}(5),\mathrm{PSL}_2(7)$, $\mathbf{G}=E_7$ and $H\cong\mathrm{PSL}_2(7),\mathrm{PSL}_2(8),\mathrm{PSU}_3(3),\mathrm{PSp}_6(2)$, or $\mathbf{G}=E_6$ and $H\cong\mathrm{PSU}_4(2)$.
\end{proposition}

The next result finds a subquotient in the `middle' of a self-dual module.

\begin{lemma}\label{lem:oddandodd} Let $V$ be a self-dual module for a finite group $G$ over a field $k$, and suppose that $S_1,\dots,S_r$ are non-isomorphic, simple, self-dual $kG$-modules, each appearing in $V$ with odd multiplicity. Then $S_1\oplus S_2\oplus \cdots\oplus S_r$ is a subquotient of $V$.
\end{lemma}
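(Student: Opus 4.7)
The plan is to induct on $\dim V$, with $\dim V = 0$ (so $r=0$) vacuous.

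First, by Krull--Schmidt and $V \cong V^*$, I would decompose $V = W \oplus W'$, where $W$ is the sum of the self-dual indecomposable summands of $V$ and $W'$ is the sum of the dual pairs $U \oplus U^*$ arising from non-self-dual indecomposables $U$. Each such pair contributes an even multiplicity to every self-dual simple (the multiplicity of $S$ in $U^*$ equals that of $S^* = S$ in $U$), so $W$ is self-dual and each $S_i$ retains odd multiplicity in $W$; if $W' \neq 0$ then $\dim W < \dim V$ and the induction hypothesis applied to $W$ finishes. Hence I may assume $V$ is a direct sum of self-dual indecomposables.

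Next, if $V = V_1 \oplus \cdots \oplus V_m$ with $m \geq 2$, I would apply the induction hypothesis to each summand $V_j$ separately to obtain a semisimple subquotient $\bigoplus_{S \in T_j} S$, where $T_j$ is the set of self-dual simples of odd multiplicity in $V_j$. Since $n_i$ is the sum over $j$ of the multiplicities of $S_i$ in $V_j$ and is odd, each $S_i$ occurs in an odd, hence positive, number of the $T_j$, so the combined subquotient $\bigoplus_j \bigoplus_{S \in T_j} S$ of $V$ contains each $S_i$, and $\bigoplus_i S_i$ appears as a sub-direct-summand, hence as a subquotient of $V$.

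It remains to treat $V$ self-dual indecomposable. If $V$ is simple the conclusion is immediate ($r \leq 1$), so assume $V$ non-simple; indecomposability forces no simple direct summand and hence $\soc V \subseteq \rad V$, so the heart $H := \rad V / \soc V$ is a proper subquotient with $\dim H < \dim V$. I would establish $H \cong H^*$ by dualizing $0 \to \soc V \to \rad V \to H \to 0$ and invoking $(\soc V)^* \cong V/\rad V$ and $(\rad V)^* \cong V/\soc V$ (both standard consequences of $V \cong V^*$), recovering in the dualized sequence the natural sequence $0 \to H \to V/\soc V \to V/\rad V \to 0$ whose kernel is $H$. For any self-dual simple $S$, the equality $\dim \Hom(S,V) = \dim \Hom(V,S)$ (from $V \cong V^*$ together with $S \cong S^*$) shows that $S$ has equal multiplicity in $\soc V$ and in $V/\rad V$; consequently its multiplicity in $H$ equals $n_S - 2 M_S$, preserving the parity of $n_S$. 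Thus each $S_i$ has odd multiplicity in $H$, and induction yields $\bigoplus_i S_i$ as a subquotient of $H$, hence of $V$. The main obstacle is the bookkeeping: verifying self-duality of the heart and the parity-preservation on passage to $H$, both of which reduce to standard manipulations of the duality functor on short exact sequences and $\Hom$-spaces.
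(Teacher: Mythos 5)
Your proof is correct and follows essentially the same route as the paper: induct on dimension, reduce to an indecomposable self-dual module, and pass to the heart $\rad(V)/\soc(V)$, which is self-dual and in which each $S_i$ keeps odd multiplicity because its multiplicities in $\soc(V)$ and $\top(V)$ agree. Your Krull--Schmidt step pairing off the non-self-dual indecomposable summands is a somewhat more careful treatment of the decomposable case than the paper's brief one, but the argument is the same in substance.
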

\begin{proof} Proceed by induction on $\dim(V)$, the case $\dim(V)=1$ being clear. We may assume without loss of generality that $V$ is equal to its $\{S_1,\dots,S_r\}$-heart. If $V\cong V_1\oplus V_2$ then each $S_i$ appears with odd multiplicity in exactly one of the $V_j$, whence induction applied to each $V_j$ yields a subquotient of each $V_j$, whose sum is a subquotient of $V$ and has the required form.

Therefore we may assume that $V$ is indecomposable. In particular, $\soc(V)\leq \rad(V)$, and $\soc(V)$ and $\top(V)$ are isomorphic. Thus the heart of $V$, which is the quotient $\rad(V)/\soc(V)$, again has each of the $S_i$ as a composition factor with odd multiplicity. The heart of $V$ is a module of smaller dimension satisfying the conditions of the lemma, hence satisfies the conclusion, and therefore $V$ does. (Note that $V$ cannot have only two socle layers as it contains an odd number of composition factors isomorphic to each $S_i$, and hence the heart of $V$ is non-zero.)
\end{proof}

This has a nice corollary, using pressure.

\begin{corollary}\label{cor:trivialoddmult}  Let $V$ be a self-dual module for a finite group $G=O^p(G)$ over a field $k$, and suppose that $S_1,\dots,S_r$ are non-isomorphic, simple, self-dual $kG$-modules, each appearing in $V$ with odd multiplicity. If none of the $S_i$ is the trivial module, and the pressure of the sum of the $S_i$ is greater than the pressure of $V$, then $G$ stabilizes a line on $V$.
\end{corollary}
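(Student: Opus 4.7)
The plan is to combine Lemma \ref{lem:oddandodd} with Proposition \ref{prop:pressure}, applied with $\mc M = \{k\}$. The hypothesis $O^p(G) = G$ ensures $\Ext_{kG}^1(k,k) = 0$, so $\mc M = \{k\}$ satisfies the condition required to define the $\mc M$-pressure, which in this case is exactly the classical pressure. By Lemma \ref{lem:oddandodd}, the semisimple module $W = S_1 \oplus \cdots \oplus S_r$ occurs as a subquotient of $V$, and since none of the $S_i$ is trivial, its pressure is simply $\sum_i \dim \Ext_{kG}^1(k, S_i)$, with no subtraction coming from trivial composition factors.

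Now suppose for contradiction that $G$ stabilizes no line on $V$; equivalently, $V$ has no trivial submodule. Self-duality of $V$ then also forces $V$ to have no trivial quotient, since a trivial quotient of $V$ dualizes to a trivial submodule of $V^* \cong V$. By Proposition \ref{prop:pressure}(1) the pressure of $V$ is therefore nonnegative, and by part (2)(a) it cannot be zero, so it equals some $n > 0$. Part (2)(b) then implies that every subquotient of $V$ has pressure lying in $[-n, n]$; applied to $W$, this contradicts the hypothesis that the pressure of $W$ exceeds the pressure $n$ of $V$. Hence $G$ fixes a line on $V$, as required.

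The argument is essentially immediate once Lemma \ref{lem:oddandodd} is available, so there is no substantive obstacle in the proof itself. The only delicate point is that invoking part (2) of Proposition \ref{prop:pressure} requires the $\Ext$-symmetry $\Ext_{kG}^1(A,B) = \Ext_{kG}^1(B,A)$ for simple composition factors $A, B$ of $V$. In the intended applications this is automatic because $G$ is a finite simple group of Lie type in defining characteristic, where the graph automorphism (or the identity) sends every simple module to its dual; one should either treat this symmetry as a standing hypothesis of the corollary or restrict its formal statement to that setting.
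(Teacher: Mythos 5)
Your proof is correct and is exactly the paper's argument: the paper likewise deduces the corollary immediately from Lemma \ref{lem:oddandodd} followed by Proposition \ref{prop:pressure}, and your write-up merely spells out the pressure bookkeeping (together with the reasonable observation that the $\Ext$-symmetry needed for part (2) holds in all intended applications).
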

\begin{proof} This is immediate upon application of Lemma \ref{lem:oddandodd}, and then Proposition \ref{prop:pressure}.
\end{proof}

We can also obtain a result about modules that will be useful occasionally, which is that in a self-dual module, `half of the composition factors are in the bottom of the module'. This incredibly vague statement can be firmed up as the following.

\begin{proposition}\label{prop:bottomhalf} Let $V$ be a self-dual $kH$-module, let $I$ be a set of simple $kH$-modules, closed under duality, and let $W$ denote the $I$-heart of $V$. Let $V_0$ denote the $I'$-radical of $V$. Then $W$ is a submodule of the quotient $V/V_0$, and if $S$ is a self-dual simple module not in $I$ and $W$ contains no copies of $S$, then the multiplicity of $S$ in $V$ is at most twice that of $S$ in $V_0$.
\end{proposition}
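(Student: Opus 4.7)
The first assertion is a tautology from the definition, since by construction $W = \res_{I'}(V/V_0)$ sits inside $V/V_0$. The content lies in the second assertion.

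The plan is to show $m_S((V/V_0)/W) \le m_S(V_0)$, so that decomposing
\[ m_S(V) = m_S(V_0) + m_S(W) + m_S((V/V_0)/W) \]
and using $m_S(W) = 0$ yields $m_S(V) \le 2m_S(V_0)$.

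To see this, set $V_1 = \res_{I'}(V) \le V$, and first show using the self-duality $V \cong V^*$, together with the closure of $I$ (hence of $I'$) under duality, that $V_1$ is the orthogonal complement $V_0^\perp$ of $V_0$ under the pairing $V \cong V^*$. Indeed, if $M \leq V$ is any submodule, then under this pairing $M^\perp \cong (V/M)^*$ and $V/M^\perp \cong M^*$, so $V/V_0^\perp \cong V_0^*$ has all composition factors in $I'$, forcing $V_1 \le V_0^\perp$ by minimality of $V_1$; symmetrically, $V_1^\perp \cong (V/V_1)^*$ has composition factors in $I'$, so $V_1^\perp \le V_0$, and taking perps gives the reverse inclusion. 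Hence $V_1 = V_0^\perp$ and $V/V_1 \cong V_0^*$, so for the self-dual simple $S$ one has $m_S(V/V_1) = m_S(V_0)$.

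Next I would identify $W$ with $(V_0 + V_1)/V_0$. The quotient $V/(V_0+V_1)$ is a quotient of $V/V_1$, so its composition factors lie in $I'$; hence $(V_0+V_1)/V_0$ contains the $I'$-residual $W$ of $V/V_0$. Conversely, if $X$ is the preimage in $V$ of $W$, then $V/X \cong (V/V_0)/W$ has composition factors in $I'$, so $V_1 \le X$, whence $X \supseteq V_0+V_1$. This yields $W \cong V_1/(V_0 \cap V_1)$ and the short exact sequence
\[ 0 \to V_0/(V_0 \cap V_1) \to V/V_1 \to (V/V_0)/W \to 0, \]
from which $m_S((V/V_0)/W) \le m_S(V/V_1) = m_S(V_0)$, completing the argument.

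The main obstacle is the perp/duality identification $V_1 = V_0^\perp$, which requires that $I'$ be closed under duality (given by hypothesis) and a careful use of the non-degenerate pairing coming from $V\cong V^*$; once that is in place, everything else is bookkeeping with composition factor multiplicities in the short exact sequence above.
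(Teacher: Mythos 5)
Your proof is correct and takes essentially the same route as the paper: you introduce the $I'$-residual $V_1$, use self-duality to obtain $V/V_1\cong V_0^*$, and identify $W$ with $V_1/(V_0\cap V_1)$, exactly as the paper does. The only difference is cosmetic bookkeeping — the paper counts $S$-multiplicities along the filtration $0\subseteq V_0\cap V_1\subseteq V_1\subseteq V$, while you count along $V_0\subseteq V_0+V_1\subseteq V$ via your short exact sequence — so there is nothing to add.
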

\begin{proof} It is clear that $W$ is a submodule of $V/V_0$. For the second part, since $V$ is self-dual there is a submodule $V_1$ such that the quotient $V/V_1$ is isomorphic to $V_0^*$, i.e., $V_1$ is the $I$-residual of $V$. We have that the quotient $V_1/(V_0\cap V_1)$ is isomorphic to $W$, hence $V$ has a filtration
\[ 0\subs V_0\cap V_1\subs V_1\subs V.\]
All composition factors of $V$ isomorphic to $S$ appear either in $V_0\cap V_1\subset V_0$ or in $V/V_1\cong V_0^*$, and so the result holds.
\end{proof}

We include a lemma, which is certainly well-known, but which evaded the author's brief search of the literature. (The lemma is trivial for $p\neq 2$.)

\begin{lemma}\label{lem:orthogfixpoint} Let $p=2$. The module $S^2(M(D_n))$ has a trivial submodule. Consequently, if $V$ is a self-dual $kH$-module for a finite group $H$ and $S^2(V)^H=0$, then there is no homomorphism from $H$ to $D_n$ via the module $V$.
\end{lemma}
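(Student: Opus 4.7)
The plan is to exhibit the trivial submodule of $S^2(M(D_n))$ as the $D_n$-invariant quadratic form on the natural module. Set $V=M(D_n)$. The algebraic group of type $D_n$ is, by construction, the stabilizer of a non-degenerate quadratic form $Q$ on $V$, and in every characteristic, quadratic forms on $V$ (viewed as homogeneous degree-$2$ polynomial functions) correspond canonically to elements of $S^2(V^*)$. Hence $Q$ provides a nonzero $D_n$-fixed vector in $S^2(V^*)$, that is, a trivial submodule.

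To transfer this into $S^2(V)$ itself, I would use that the natural module $V$ is self-dual as a $D_n$-module: the polarization $B(v,w)=Q(v+w)-Q(v)-Q(w)$ is a non-degenerate $D_n$-invariant bilinear form (alternating in characteristic $2$), yielding an isomorphism $V\cong V^*$ of $D_n$-modules. Applying $S^2$ gives $S^2(V)\cong S^2(V^*)$ as $D_n$-modules, so $S^2(V)$ also contains a trivial submodule, proving the first assertion.

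For the consequence, suppose for contradiction that some homomorphism $\varphi:H\to D_n$ realizes $V$, i.e.\ $M(D_n){\downarrow_H}\cong V$ as $kH$-modules. Restricting the trivial $D_n$-submodule of $S^2(M(D_n))$ along $\varphi$ produces a trivial $kH$-submodule of $S^2(V)$, contradicting $S^2(V)^H=0$.

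The main point to handle carefully is the characteristic-$2$ identification of quadratic forms with $S^2(V^*)$. In odd characteristic this space is naturally identified with symmetric bilinear forms via polarization, but in characteristic $2$ the polarization map has a nontrivial kernel, so one must use the polynomial-function description of $S^2(V^*)$; it is essential because $D_n$ is defined as the stabilizer of the quadratic form $Q$ itself, not merely of its (alternating, and hence rather degenerate as a symmetric invariant) polarization. Apart from keeping this identification straight, the argument is entirely formal.
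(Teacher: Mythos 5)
Your proof is correct, but it takes a genuinely different route from the paper. The paper argues structurally: in characteristic $2$ the symmetric square has a short exact sequence with submodule the Frobenius twist of $M(D_n)$ (spanned by the squares) and quotient $\Lambda^2(M(D_n))$; the quotient has a trivial submodule (coming from the invariant alternating form), and the vanishing of $H^1(D_n,M(D_n))$ from \cite{jonesparshall1976} is used to lift that trivial submodule into $S^2(M(D_n))$. You instead exhibit the invariant vector directly: the defining quadratic form $Q$ is a nonzero $D_n$-fixed element of $S^2(M(D_n)^*)$ (quadratic forms being exactly the degree-$2$ part of the coordinate ring, which is the correct identification in characteristic $2$), and the non-degenerate polar form gives an equivariant isomorphism $M(D_n)\cong M(D_n)^*$, hence $S^2(M(D_n))\cong S^2(M(D_n)^*)$ also has a trivial submodule; the consequence then follows by restriction exactly as in the paper. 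Your care over the distinction between $S^2(V^*)$ and the space of symmetric bilinear forms is exactly the point that needs checking, and it is handled correctly. What your approach buys is that it is self-contained and explicit -- no cohomological vanishing is needed and the fixed vector is identified concretely -- while the paper's argument is a quick consequence of standard filtration and $1$-cohomology facts it already has on hand; either proof suffices for the applications (where one shows $S^2(V)^H=0$ to rule out an embedding into $D_n$).
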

\begin{proof} There are many proofs of this. The quickest is to note that $\Lambda^2(M(D_n))$ of course does have a trivial submodule (as $p=2$), and that $S^2(M(D_n))$ has a submodule (the image under the Frobenius endomorphism of) $M(D_n)$ with quotient $\Lambda^2(M(D_n))$. Since the natural module for $D_n$ has no $1$-cohomology (see \cite{jonesparshall1976}) it must be that $S^2(M(D_n))$ has a trivial submodule. The consequence is clear.\end{proof}

We need one easy result from non-abelian cohomology, the generalization to arbitrary $p$-groups of the standard result from cohomology that all complements in a semidirect product are conjugate if and only if the $1$-cohomology is zero. Although we state it in the case where $P$ is a finite $p$-group, the same proof works for algebraic groups.

\begin{lemma}\label{lem:nonabcohomzero} Let $P$ be a finite $p$-group and let $H$ be a finite group that acts on $P$, with $G=P\rtimes H$. Suppose that there is an $H$-stable series
\[ 1=P_0\leq P_1\leq \cdots \leq P_r=P\]
of normal subgroups of $P$ such that $P_i/P_{i-1}$ is elementary abelian for all $1\leq i\leq r$. If, viewed as an $\F_pH$-module, we have that $H^1(H,P_i/P_{i-1})=0$ for all $1\leq i\leq r$, then all complements to $P$ in $G$ are conjugate.
\end{lemma}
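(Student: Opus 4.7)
The plan is to prove the lemma by induction on the length $r$ of the $H$-stable series. The base case $r=1$ is the familiar abelian statement: if $P$ is elementary abelian and $H^1(H,P)=0$, then all complements to $P$ in $P\rtimes H$ are conjugate.

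For the inductive step, assume the result for every series of length less than $r$, and let $K_1,K_2$ be two complements to $P$ in $G$. Since $P_1$ is $H$-stable and normal in $P$, it is normal in $G$, and the series $1=P_1/P_1\leq P_2/P_1\leq\cdots\leq P_r/P_1=P/P_1$ is an $H$-stable series in $G/P_1=(P/P_1)\rtimes H$ of length $r-1$ satisfying the cohomological hypothesis (the quotients $(P_i/P_1)/(P_{i-1}/P_1)\cong P_i/P_{i-1}$ have trivial $1$-cohomology by assumption). The images $K_1 P_1/P_1$ and $K_2 P_1/P_1$ are complements to $P/P_1$ in $G/P_1$, so by induction there exists $g\in G$ such that $(gK_1g^{-1})P_1=K_2P_1$. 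Replacing $K_1$ by its $g$-conjugate we may assume $K_1P_1=K_2P_1$.

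Now set $Q=K_1P_1=K_2P_1$. Because $K_i\cap P=1$ and $P_1\leq P$, we have $K_i\cap P_1=1$, so $K_1$ and $K_2$ are both complements to $P_1$ in $Q\cong P_1\rtimes H$. The hypothesis supplies $H^1(H,P_1)=0$; since $K_1\cong H$ as abstract groups and acts on $P_1$ via the embedding $K_1\hookrightarrow G$ (which is conjugate in $G$ to the embedding $H\hookrightarrow G$ used to define the $H$-module structure on $P_1$), the two $\F_p$-modules are isomorphic as modules over $H$, so $H^1(K_1,P_1)=0$ as well. The base case applied inside $Q$ therefore yields an element of $P_1\leq G$ conjugating $K_1$ to $K_2$, completing the induction.

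The only subtlety—hence the step requiring the most care—is checking that the cohomological vanishing is preserved under the two reductions: first, that the hypothesis transfers from the series $(P_i)$ in $P$ to the series $(P_i/P_1)$ in $P/P_1$ needed for the inductive call, which is immediate from the isomorphism $(P_i/P_1)/(P_{i-1}/P_1)\cong P_i/P_{i-1}$ of $H$-modules; and second, that after replacing $K_1$ by a conjugate, the action of $K_1$ on $P_1$ still yields the vanishing $H^1(K_1,P_1)=0$, which holds because conjugation in $G$ identifies the two actions on the normal subgroup $P_1$. Everything else is formal manipulation of semidirect products, and the same argument goes through verbatim when $P$ is a connected unipotent algebraic group with an $H$-stable filtration by closed $H$-invariant subgroups having vector-group quotients, as remarked in the statement.
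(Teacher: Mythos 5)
Your proof is correct and follows essentially the same route as the paper: induct on the length of the series, pass to $G/P_1$ to arrange $K_1P_1=K_2P_1$ after conjugating, then apply the abelian base case to the two complements of $P_1$ in $K_1P_1$. The extra care you take in checking that the cohomological hypothesis transfers to the quotient series and to the conjugated complement is a sound (if routine) elaboration of steps the paper leaves implicit.
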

\begin{proof} Let $H_1$ and $H_2$ be complements to $P$ in $G$. Proceed by induction on $r$, the case $r=1$ being the standard result from cohomology theory. Note that $P_1$ is normal in $G$, so by induction the result holds for $G/P_1$. Since $H_1P_1/P_1$ and $H_2P_1/P_1$ are complements to $P/P_1$ in $G/P_1$, there exists $g\in G$ such that $(H_1P_1)^g=H_2P_1$. Thus we may assume that $H_1P_1=H_2P_1$. But then $H_1$ and $H_2$ are complements to $P_1$ in $H_1P_1$, whence since $H^1(H,P_1)=0$ we have that $H_1$ and $H_2$ are conjugate, as claimed.
\end{proof}

\newpage

\chapter{Subgroup structure of exceptional algebraic groups}
\label{chap:subgroupstructure}

Let $\mb G$ be a simple, simply connected, exceptional algebraic group, so one of $G_2$, $F_4$, $E_6$, $E_7$ and $E_8$. The collection of maximal closed, positive-dimensional subgroups of $\mb G$, i.e., the set $\ms X$, is known by work of Liebeck and Seitz \cite{liebeckseitz2004}\footnote{There is a maximal subgroup missing in \cite{liebeckseitz2004}, namely a copy of $F_4$ lying in $E_8$ in characteristic $3$. It acts on $L(E_8)$ as the direct sum of $L(\lambda_1)$ of dimension $52$ and $L(\lambda_3)$ of dimension $196$. This subgroup was very recently discovered by the author, and details are in \cite{cst2022}.}. Broadly speaking, it consists of parabolic subgroups, maximal-rank subgroups, and a small collection of $\mb G$-irreducible reductive subgroups.

To move from subgroups of $\mb G$ to subgroups of $G$, and more generally subgroups of $\bar G$, we need the following theorem.

\begin{theorem}[Borovik, Liebeck--Seitz \cite{borovik1989,liebeckseitz1990}]\label{thm:classmaximalsubgroup} Let $H$ be a maximal subgroup of $\bar G$. One of the following holds:
\begin{enumerate}
\item[(IMP)] $H$ is a member of $\ms X^\sigma$;
\item[(EX)] $H$ is an \emph{exotic $r$-local subgroup} of $\bar G$.
\item[(BOR)] $H$ is the Borovik subgroup $(\Alt(5)\times \Sym(6))\rtimes 2$;
\item[(AS)] $H$ is an almost simple group.
\end{enumerate}
\end{theorem}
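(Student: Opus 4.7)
The plan is to stratify by the generalised Fitting subgroup $F^*(H)$. First I would dispose of the case $F(H)\neq 1$, splitting on whether the prime $r$ with $O_r(H)\neq 1$ equals the defining characteristic $p$ or not. If $r=p$, then $O_p(H)$ is a non-trivial $p$-subgroup of $\bar G$, hence unipotent in $\mb G$, and the Borel--Tits theorem forces $N_{\mb G}(O_p(H))$ into a proper parabolic subgroup $\mb P$; since $\mb P$ is $\sigma$-stable, maximality of $H$ gives $H = N_{\bar G}(\mb P^\sigma) \in \ms X^\sigma$, so we are in case (IMP). If $r\neq p$, then $R=O_r(H)$ is a normal abelian semisimple $r$-subgroup of $\mb G$. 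One analyses the connected normaliser $N_{\mb G}(R)^\circ$: either this is a proper positive-dimensional subgroup of $\mb G$ (so that $H$ lies in its $\sigma$-fixed points, again yielding (IMP)), or $R$ is an elementary abelian $r$-group inside a maximal torus whose normaliser in $\mb G$ has trivial connected component, producing the short list of exotic $r$-local subgroups in case (EX).

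Next I would treat $F(H)=1$, so that $F^*(H)=E(H)$ is a central product of quasisimple components $L_1,\dots,L_t$ permuted by $H$, with $C_H(E(H))=Z(E(H))$. If $t=1$, then $H$ embeds into $\Aut(L_1)$ modulo the centre and is almost simple with socle $L_1/Z(L_1)$; this either lands in case (IMP) (if that socle is contained in a proper positive-dimensional subgroup of $\mb G$) or else in case (AS). If $t\ge 2$, maximality forces $H$ to permute the components transitively. The key numerical input is that $\dim L(\mb G)\le 248$, so the central product $L_1\ast\cdots\ast L_t$ must admit a faithful projective representation of dimension at most $248$; via the known lower bounds for minimal faithful dimensions of finite simple groups, this sharply restricts both $t$ and the isomorphism types of the $L_i$. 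A case analysis, together with the classification of maximal subgroups of the classical groups that appear as tensor factors, then shows that every surviving configuration with $t\ge 2$ either normalises a proper reductive subgroup of $\mb G$ (returning to (IMP)) or reduces to the single exception $(\Alt(5)\times \Sym(6))\rtimes 2\le E_8$, case (BOR).

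The main obstacle is the arithmetic underlying the two genuinely exceptional cases. For (EX), one must enumerate the elementary abelian $r$-subgroups $R\le \mb G$ with $N_{\mb G}(R)^\circ\le R$, check that each such $R$ gives rise to an $r$-local subgroup that is both $\sigma$-stable and maximal, and verify that no larger positive-dimensional overgroup has been missed; the classification of these exotic locals proceeds prime-by-prime and root-system-by-root-system. For (BOR), one must show that the Borovik configuration genuinely embeds in $E_8$ rather than lying inside some proper positive-dimensional subgroup, and that it is the only surviving configuration with $t\ge 2$ that is not of the form (IMP). Both steps rely essentially on the classification of finite simple groups, together with detailed small-dimensional representation theory of the candidate components and careful bookkeeping on $\sigma$-stability.
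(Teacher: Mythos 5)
The paper does not prove this statement at all: it is quoted from Borovik and Liebeck--Seitz \cite{borovik1989,liebeckseitz1990}, so the only meaningful comparison is with their published arguments, which your outline does track at the top level (dichotomy on $F^*(H)$, Borel--Tits when $O_p(H)\neq 1$, local analysis for $r\neq p$, component analysis when $F(H)=1$, with the single multi-component survivor in $E_8$).

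Two steps, however, would fail as written. First, in the $r\neq p$ branch you assert that $R=O_r(H)$ is abelian and that the exotic configurations come from elementary abelian $r$-subgroups \emph{inside a maximal torus} whose normaliser in $\mb G$ has trivial connected component. Both halves are wrong: $O_r(H)$ need not be abelian (the radical $2^{5+10}$ of the exotic local $2^{5+10}\cdot\SL_5(2)$ of $E_8$ is not), so one must first pass to a characteristic elementary abelian subgroup such as $\Omega_1(Z(O_r(H)))$; and if $R$ lies in a maximal torus then $N_{\mb G}(R)\supseteq C_{\mb G}(R)$ contains that torus, so its connected component cannot be trivial --- the exotic locals of \cite{clss1992} arise precisely from \emph{non-toral} elementary abelian subgroups with finite normaliser, and the genuine dichotomy is whether $N_{\mb G}(R)^\circ$ is positive-dimensional (giving (IMP)) or finite. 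Second, the case $t\geq 2$ is not settled by the bound $\dim L(\mb G)\leq 248$ together with minimal faithful degrees: components such as $\Alt(5)$, $\Alt(6)$ or $\PSL_2(7)$ have tiny projective representations, so a dimension count leaves a large number of configurations alive; the real content of Borovik's theorem is the centraliser analysis (each component lies in the centraliser of the product of the others, and one must show that this centraliser is positive-dimensional in every case except the one $E_8$ configuration), which depends on CFSG and detailed information about centralisers of small simple subgroups rather than on bookkeeping of representation degrees. As a sketch of the cited theorem your plan is a reasonable roadmap, but these two points are where the published proofs do essentially all of the work.
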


The members of $\ms X$ are known from \cite{liebeckseitz2004}, and in theory from here it is possible to write down a complete list of the members of (IMP). This has been done in the case of maximal-rank subgroups in \cite{liebecksaxlseitz1992}, but there are some minor errors in the tables there with regards to the `decorations' that are placed on the group. (For example, a maximal-rank subgroup of type $A_8$ in $E_8$ is of the form $(3\cdot \PSL_9(q)\cdot 3).2$, if $3\mid (q-1)$, and the decorations are the $3$s and $2$.) The members of (EX) are known from \cite{clss1992}, and the Borovik subgroup is unique up to conjugation.

A priori, there are infinitely many possibilities in (AS), as $\PSL_2(p^a)$ lies inside $E_8$ in characteristic $p$ for all $a$. If $H$ is not a Lie type group in characteristic $p$, the (finitely many) possibilities are given in \cite{liebeckseitz1999} (note that ${}^2\!B_2(8)$ for $p=5$ is missing from $E_7$). The work of Litterick \cite{litterickmemoir} and the author \cite{craven2017} has eliminated some of the subgroups in the list from \cite{liebeckseitz1999}, and various papers that have appeared in the literature have considered specific examples, often in characteristic $0$. 

If $H$ is a Lie type group in characteristic $p$ in (AS) above, then in \cite{liebeckseitz1998} Liebeck and Seitz proved that $H$ must be one of the following:
\begin{enumerate}
\item[(MED)] $H=H(q)$ for $q\leq 9$ and $H$ of untwisted rank (i.e., the rank of the untwisted group) at most half of that of $\mb G$;
\item[(LU3)] $H\cong \PSL_3(16)$ or $H\cong \PSU_3(16)$;
\item[(RK-1)] $H\cong \PSL_2(q)$, ${}^2\!B_2(q)$ or ${}^2\!G_2(q)$ for $q\leq \gcd(2,p-1)\cdot t(\mb G)$.
\end{enumerate}

In \cite{craven2015un} the author considered $H\cong \PSL_2(q)$ for $\mb G=F_4$, $E_6$ and $E_7$, proving that there are no such examples in (AS) for $\mb G=F_4$ and $\mb G=E_6$, and for $\mb G=E_7$ we may restrict $q$ to be one of $7$, $8$ and $25$. (There are still examples of $\PSL_2(q)$ in (IMP) for $F_4$ and $E_7$.) Recent work of Alexander Ryba and the author has eliminated the case $q=25$.

This article eliminates possibility (LU3), eliminates ${}^2\!G_2(q)$ from (RK-1), severely restricts the examples that may occur in (MED) to fewer than ten groups, all for $E_8$, and eliminates ${}^2\!B_2(q)$ from (RK-1) unless $\mb G=E_8$ and $q=8$.

In order to do this, we must show that for any simple subgroup $H$ of $\bar G$, $N_{\bar G}(H)$ lies inside a member of $\ms X^\sigma$, or an exotic $r$-local subgroup, or in the Borovik subgroup. In fact, we will always show that $H\leq \mb G$ is strongly imprimitive, so we require some techniques to prove this.

\medskip

The primary method is to prove that $H$ stabilizes a subspace of some module $V$ for $\mb G$ whose stabilizer is proper and positive dimensional. This proves that $H$ lies in a member of $\ms X$, and if $H=H^\sigma$ and if the subspace is $\sigma$-stable then $H$ lies inside a member of $\ms X^\sigma$, but this still says nothing about $N_{\bar G}(H)$, the potential maximal almost simple group in Theorem \ref{thm:classmaximalsubgroup}. For that we need to prove that $H$ stabilizes each subspace in an orbit under the action of $\bar G$ of subspaces of some module $V$, and the intersection of the stabilizers of all subspaces in the orbit is positive dimensional. Note that in order for this to be even well defined, the module $V$ must be stabilized by $\bar G$; as diagonal automorphisms stabilize all highest weight modules, and field automorphisms act as semilinear transformations of highest-weight modules, in practice this means checking that the module is graph-stable for $E_6$ and $F_4$ ($p=2$), and there is no condition for $E_7$ and $E_8$. Because semilinear transformations are allowed, a \emph{graph-stable} module can be taken to mean a semisimple $k\bG$-module $V$ whose highest weights are invariant, up to applying a power of the Frobenius endomorphism, under the graph automorphism.

Thus for $E_6$ we examine $L(\mb G)$ or $M(\mb G)\oplus M(\mb G)^*$, which is graph-stable, and for $F_4$ and $p=2$ we choose the sum of the composition factors of $L(\mb G)$, i.e., $L(\lambda_1)\oplus L(\lambda_4)$, which is also graph-stable. (Compare with the modules mentioned in Theorem \ref{thm:largeorderss} above.) Notice that in both of the later cases the graph automorphism will interchange the two submodules.

The theorem which proves this in its most general form is found in \cite[Proposition 4.3]{craven2015un}, but it is based entirely on work of Liebeck and Seitz \cite{liebeckseitz1998}, and was also featured in the work of Litterick \cite{litterickmemoir}. Define $\Aut^+(\mb G)$ to be the group generated by inner, diagonal, graph, and $p$-power field automorphisms of $\mb G$. As in \cite[Chapter 3]{craven2015un}, when $p=2$ and $\mb G=F_4$ we need to be a bit more careful, as the graph automorphism powers to a field automorphism, so we can add a single graph automorphism to $\Aut^+(\mb G)$, but not all of them simultaneously. This distinction is purely academic, but it means that when we consider $N_{\Aut^+(\mb G)}(H)$-stability for a subgroup $H\leq \mb G^\sigma$, then the definition of $\Aut^+(\mb G)$ technically depends on $\sigma$. This will never be of importance from now on, and so we will never mention it again.

\begin{theorem}\label{thm:intersectionorbit} Let $\mb G$ be a simple algebraic group over an algebraically closed field, and let $V$ be a graph-stable module. If $\phi\in N_{\Aut^+(\mb G)}(H)$ then $\phi$ permutes the $H$-invariant subspaces of $V$. If $\mathcal W$ is a $\phi$-orbit of $H$-invariant subspaces and $\mb G_{\mathcal W}$ denotes the intersection of the stabilizers $\mb G_W$ for $W\in \mathcal W$, then $H\leq \mb G_{\mathcal W}$ and $\mb G_{\mathcal W}$ is $\phi$-stable.
\end{theorem}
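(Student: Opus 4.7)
The plan is to reduce all three assertions to a single technical point: because $V$ is graph-stable, every $\phi \in \Aut^+(\mb G)$ should lift to a (possibly semilinear) additive bijection $\hat\phi: V \to V$ intertwining the $\mb G$-action with $\phi$, meaning $\hat\phi(gv) = \phi(g)\hat\phi(v)$ for all $g \in \mb G$ and $v \in V$. The first task is to construct $\hat\phi$ case by case: inner and diagonal automorphisms act through elements of $\mb G$ itself; Frobenius automorphisms lift via the standard semilinear map on $V$; and for a graph automorphism (of $E_6$, or the exceptional $p=2$ case for $F_4$), graph-stability gives an isomorphism $V \cong V^\phi$, supplying $\hat\phi$, unique up to a scalar that is irrelevant for stabilizer computations. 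Composing these covers all of $\Aut^+(\mb G)$. This is where the graph-stability hypothesis is actually used, and it is the only genuinely delicate step.

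Assuming $\hat\phi$ is in hand, the permutation assertion follows cleanly: if $W$ is $H$-invariant and $h \in H$, then $h = \phi(h')$ for some $h' \in H$ because $\phi$ normalizes $H$, and then $h\cdot \hat\phi(W) = \hat\phi(h'\cdot W) = \hat\phi(W)$, so $\hat\phi(W)$ is $H$-invariant. The containment $H \leq \mb G_{\mathcal W}$ is then immediate: every $W \in \mathcal W$ is an iterated $\hat\phi$-image of some $H$-stable subspace and is therefore itself $H$-stable, so $H$ lies in each $\mb G_W$ and hence in their intersection.

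For the $\phi$-stability of $\mb G_{\mathcal W}$, I would take $g \in \mb G_{\mathcal W}$ and $W' \in \mathcal W$, write $W' = \hat\phi(W_0)$ for some $W_0 \in \mathcal W$ (possible since $\mathcal W$ is a $\phi$-orbit), and compute
\[
\phi(g)\cdot W' \;=\; \phi(g)\hat\phi(W_0) \;=\; \hat\phi(g\cdot W_0) \;=\; \hat\phi(W_0) \;=\; W'.
\]
Hence $\phi(g) \in \mb G_{\mathcal W}$; running the identical argument with $\phi^{-1}$ completes the proof that $\mb G_{\mathcal W}$ is $\phi$-stable. As remarked above, the hard part is purely in setting up $\hat\phi$ correctly in the graph-automorphism case; the rest is bookkeeping with the intertwining identity.
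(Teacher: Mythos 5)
Your proposal is correct and follows the same mechanism the paper relies on: lifting each $\phi\in\Aut^+(\mb G)$ to a (semilinear) intertwiner $\hat\phi$ of the graph-stable module $V$ (which is exactly why graph-stability is hypothesised, cf.\ Lemma \ref{lem:semilinearfield}) and then reading off all three assertions from the identity $\hat\phi(gv)=\phi(g)\hat\phi(v)$. Note that the paper does not reprove the theorem but cites \cite[Proposition 4.3]{craven2015un}, whose argument is essentially yours; the only small imprecision is your "unique up to a scalar" remark, which holds only when $V$ is irreducible, but your proof never uses uniqueness, so nothing is affected.
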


Equipped with this, we must find some subspaces for $H$ to stabilize. If $H$ is a blueprint for a graph-stable module $V$ (and does not stabilize exactly the same subspaces as $\mb G$), then certainly $H$ is contained in a member of $\ms X$, and the intersection of all subspaces it stabilizes, in particular a single orbit, is positive dimensional. In addition, if $H=H^\sigma$ then $N_{\bar G}(H)$ is also contained in a member of $\ms X^\sigma$ (of the almost simple group $\bar G$).

\begin{corollary} Let $H$ be a subgroup of $\mb G$, and let $V$ be a graph-stable $k\mb G$-module. If $H$ is a blueprint for $V$ then either $H$ is strongly imprimitive or $H$ and $\mb G$ stabilize the same subspaces of $V$. In particular, if $V$ is irreducible then $H$ is strongly imprimitive or $H$ acts irreducibly on $V$.
\end{corollary}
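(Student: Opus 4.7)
The plan is to derive the dichotomy directly from the blueprint hypothesis, proving strong imprimitivity in the non-trivial case by explicitly constructing a witness subgroup and then invoking Theorem \ref{thm:intersectionorbit}. Let $\mb X$ be the positive-dimensional subgroup of $\mb G$ supplied by the blueprint property, so $\mb X$ and $H$ stabilize exactly the same subspaces of $V$. If $\mb G$ also stabilizes exactly these subspaces, the second alternative of the dichotomy already holds and we are done. Otherwise there is an $H$-invariant subspace $W \leq V$ that is not $\mb G$-invariant, and the remaining task is to produce a proper, $\sigma$-stable, positive-dimensional subgroup $\mb Y \leq \mb G$ with $N_{\bar G}(H) \leq N_{\bar G}(\mb Y^\sigma)$.

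The candidate I would take is
\[ \mb Y = \bigcap_{U} \mb G_U, \]
where $U$ runs over all $H$-invariant subspaces of $V$. I would verify three properties in sequence. First, $\mb Y$ is positive-dimensional, because the blueprint hypothesis forces $\mb X$ to stabilize every $H$-invariant subspace, hence $\mb X \leq \mb Y$. Second, $\mb Y$ is proper in $\mb G$, since $W$ appears in the intersection and $\mb G_W \neq \mb G$. Third, $\mb Y$ is $\sigma$-stable and normalized by $N_{\bar G}(H)$: because $V$ is graph-stable and $H = H^\sigma$, both $\sigma$ and any element $\phi \in N_{\Aut^+(\mb G)}(H)$ permute the collection of $H$-invariant subspaces (the latter being exactly the content of Theorem \ref{thm:intersectionorbit}), and therefore carry the intersection of their stabilizers to itself. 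These three properties together give the proper $\sigma$-stable positive-dimensional subgroup witnessing strong imprimitivity of $H$.

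The ``in particular'' clause then follows at once: when $V$ is irreducible, the only $\mb G$-invariant subspaces are $0$ and $V$, so either $H$ also stabilizes only $0$ and $V$ (in which case $H$ acts irreducibly on $V$) or $H$ has some further invariant subspace, which then serves as the required $W$ in the main argument and yields strong imprimitivity.

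I do not expect a genuine obstacle: the statement is essentially a packaging of Theorem \ref{thm:intersectionorbit} against the blueprint definition. The only point to watch is that the intersection is taken over \emph{all} $H$-invariant subspaces rather than an $N_{\bar G}(H)$-orbit of the single $W$, but this causes no trouble because $N_{\bar G}(H)$ and $\sigma$ still permute this larger collection, and taking the full intersection simultaneously handles $\sigma$-stability and normalization by every automorphism of $\mb G$ normalizing $H$.
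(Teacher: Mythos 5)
Your proposal is correct and is essentially the argument the paper intends: the blueprint witness $\mb X$ makes the intersection of stabilizers of the $H$-invariant subspaces positive-dimensional, properness comes from the subspace not stabilized by $\mb G$, and Theorem \ref{thm:intersectionorbit} supplies stability under $\sigma$ and $N_{\Aut^+(\mb G)}(H)$, giving strong imprimitivity. This matches the discussion immediately preceding the corollary in the paper, so there is nothing further to add.
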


The other possibility is that $H$ stabilizes a line on $M(\mb G)^\circ$ or $L(\mb G)^\circ$. If $H$ stabilizes a line on $L(\mb G)^\circ$ then we can say something about $H$.

\begin{lemma}[{{\cite[(1.3)]{seitz1991}}}]\label{lem:maxrankorpara} If $H$ stabilizes a line on $L(\mb G)^\circ$ then $H$ is contained in either a maximal-rank subgroup or a maximal parabolic subgroup of $\mb G$.
\end{lemma}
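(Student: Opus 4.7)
The plan is to pick a nonzero $v$ spanning the $H$-invariant line in $L(\mb G)^\circ$ and exploit its abstract Jordan decomposition $v = v_s + v_n$ in the Lie algebra, where $v_s$ is semisimple, $v_n$ is nilpotent, and $[v_s, v_n] = 0$. For every $h \in H$, the adjoint action gives $h \cdot v = \lambda(h) v$ for some $\lambda(h) \in k^\times$. Since the Jordan decomposition is unique and preserved by every algebraic automorphism of $L(\mb G)$, we must have $h \cdot v_s = \lambda(h) v_s$ and $h \cdot v_n = \lambda(h) v_n$, so $H$ normalizes each of $\langle v_s \rangle$ and $\langle v_n \rangle$. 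The proof then splits into cases according to whether $v_s$ vanishes.

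If $v_s \neq 0$, then $\mb C := C_{\mb G}(v_s)^\circ$ is a closed connected reductive subgroup of $\mb G$ of maximal rank, since centralizers of semisimple elements of $L(\mb G)$ are reductive and contain a maximal torus. Because $C_{\mb G}(\mu v_s) = C_{\mb G}(v_s)$ for all $\mu \in k^\times$, $H$ normalizes $\mb C$ and therefore lies inside $N_{\mb G}(\mb C)$, which has maximal rank and is contained in some member of $\ms X$ of maximal rank.

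If $v_s = 0$, then I would attach to the nonzero nilpotent $v$ a canonical parabolic $P(v)$ via the associated-cocharacter construction of Bala--Carter--Pommerening (extended to all characteristics by Premet): $v$ lies in the Lie algebra of the unipotent radical of $P(v)$, and $P(v)$ is self-normalizing. Crucially, the cocharacter grading on $L(\mb G)$ is unchanged on replacing $v$ by $\mu v$, so $P(v) = P(\mu v)$ and $P(v)$ depends only on the line $\langle v \rangle$. Consequently $H \leq N_{\mb G}(P(v)) = P(v)$, which lies in some maximal parabolic of $\mb G$.

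The main obstacle is the nilpotent case: making the canonical-parabolic construction work uniformly across all characteristics, and verifying its scaling invariance, so that $H$-normalization of the line $\langle v \rangle$ genuinely propagates to $H$-normalization of $P(v)$. The minor bookkeeping for those primes where $L(\mb G)^\circ$ differs from $L(\mb G)$ is handled by lifting an $H$-invariant line on a quotient, or restricting from a submodule, in either case preserving the Jordan decomposition argument.
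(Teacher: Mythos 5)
Your skeleton — take a spanning vector, split it into semisimple and nilpotent parts, send the semisimple case to $N_{\mb G}(C_{\mb G}(v_s))$ (maximal rank) and the nilpotent case to the canonical parabolic of the nilpotent line — is exactly the argument behind Seitz's (1.3), which the paper only cites, so there is no divergence of route. The genuine gap is in your last sentence, the reduction from $L(\mb G)^\circ$ to $L(\mb G)$. For the simply connected groups used here, in the cases where $L(\mb G)$ has a trivial composition factor ($E_6$ with $p=3$, $E_7$ with $p=2$) that trivial factor is the \emph{socle} $\mathfrak{z}=\mathrm{Lie}(Z(\mb G))$ and $L(\mb G)^\circ$ is the quotient $L(\mb G)/\mathfrak{z}$; an $H$-invariant line there corresponds to a two-dimensional $H$-submodule $U\supset\mathfrak{z}$ of $L(\mb G)$, and since this extension need not split as a $kH$-module there is in general no $H$-invariant line of $L(\mb G)$ lifting it. So ``lifting an $H$-invariant line on a quotient'' is a step that can fail, and these are precisely cases in which the paper applies the lemma (e.g.\ to $L(E_6)^\circ$ in characteristic $3$). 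The repair is to argue with a coset rather than a line: take $v\in U\setminus\mathfrak{z}$, so $h\cdot v=\lambda(h)v+z(h)$ with $z(h)\in\mathfrak{z}$. Since $\mathfrak{z}$ consists of central semisimple elements, uniqueness of the Jordan decomposition still gives $h\cdot v_n=\lambda(h)v_n$, so the nilpotent case runs verbatim; and if $v=v_s$ is semisimple, then $C_{\mb G}(h\cdot v)=C_{\mb G}(\lambda(h)v+z(h))=C_{\mb G}(v)$, so $H$ normalizes $C_{\mb G}(v)$, which contains a maximal torus and is proper because $L(\mb G)^{\mb G}=\mathfrak{z}$ and $v\notin\mathfrak{z}$. (That properness point — $v_s$ not being a $\mb G$-fixed vector — is also missing from your semisimple case even when $L(\mb G)^\circ=L(\mb G)$, though there it is automatic since those modules have no trivial factors.)

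A secondary point: for the nilpotent case you lean on Premet's extension of Bala--Carter--Pommerening, but that is a good-characteristic result, whereas this paper lives at $p\in\{2,3,5\}$, which are bad primes for the exceptional groups. What your argument actually needs is an equivariant, scaling-invariant assignment $e\mapsto P(e)$ with $N_{\mb G}(\langle e\rangle)\le P(e)=N_{\mb G}(P(e))$, and this holds in every characteristic: either via the Kempf--Rousseau optimal destabilizing parabolic of the unstable vector $e$, or via the all-characteristics theory of associated cocharacters in Liebeck--Seitz's book on unipotent and nilpotent classes. With that substitution and the repaired reduction above, your proof is correct and coincides with the standard one.
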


If $H$ stabilizes a line on either $L(\mb G)^\circ$ or $M(\mb G)^\circ$, \cite[Propositions 4.5 and 4.6]{craven2015un} prove that $H$ is strongly imprimitive, with some mild conditions, which are always going to be satisfied by an almost simple subgroup such that $N_{\bar G}(H)$ is maximal.

\begin{lemma}\label{lem:fix1space} Let $H$ be a finite subgroup of $\mb G$. Suppose that $C_{\mb G}(H)=Z(H)$ and $H$ has no subgroup of index $2$. If $H$ stabilizes a line on $L(\bG)^\circ$ or $M(\bG)^\circ$ then $H$ is strongly imprimitive. If $\bG=F_4$ and $p=2$, and either $H$ or its image under the graph automorphism stabilizes a line on $M(F_4)$, then $H$ stabilizes a line or hyperplane on $L(F_4)$ and is strongly imprimitive.
\end{lemma}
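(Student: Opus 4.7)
The plan is to apply Theorem~\ref{thm:intersectionorbit} to the $N_{\Aut^+(\bG)}(H)$-orbit of an $H$-stable line in a graph-stable module, following the strategy of \cite[Propositions 4.5 and 4.6]{craven2015un}. Suppose $H$ stabilizes a line $\ell$ on $V$, where $V = L(\bG)^\circ$, or $V = M(\bG)^\circ$ (resp.\ $V = M(\bG) \oplus M(\bG)^*$ when $\bG = E_6$, to ensure graph-stability). By Lemma~\ref{lem:maxrankorpara} in the adjoint case, or by the Liebeck--Seitz classification of line stabilizers on the minimal module in the other case, $\bG_\ell^\circ$ is a proper positive-dimensional subgroup. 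Let $\mathcal W$ be the $N_{\Aut^+(\bG)}(H)$-orbit of $\ell$ and set $\bX := \bigcap_{\ell' \in \mathcal W} \bG_{\ell'}$; by Theorem~\ref{thm:intersectionorbit} this contains $H$ and is stable under $N_{\Aut^+(\bG)}(H)$. Strong imprimitivity will follow once we verify that $\bX^\circ$ is positive-dimensional and proper.

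Properness is automatic, since $V$ has no trivial composition factor in the cases of interest. For positive-dimensionality, the crucial points are that $\mathcal W$ is finite (the image of $\Aut^+(\bG)/\Inn(\bG)$ in $\Out(H)$ is finite) and that all of its members lie in a low-dimensional $H$-subspace of $V$. The hypothesis $C_{\bG}(H) = Z(H)$ keeps the centralizer finite and so forces $\bX^\circ$ to detect the line-stabilizer structure rather than collapse into a central torus, while the hypothesis that $H$ has no subgroup of index $2$ eliminates obstructive $1$-dimensional $kH$-modules that would otherwise create additional stabilized lines coming from graph or outer-diagonal automorphisms (notably for $\bG = E_6$ and for $\bG = F_4$ with $p = 2$). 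Together these ensure that $\mathcal W$ is small enough that $\bX^\circ$ retains positive dimension.

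For the final clause, when $\bG = F_4$ and $p = 2$, the adjoint module $L(F_4)$ has composition factors $L(\lambda_1) = M(F_4)$ and $L(\lambda_4)$, swapped by the graph automorphism $\tau$, with $L(\lambda_4) \cong L(\lambda_1)^*$ up to a Frobenius twist. A line on $M(F_4)$ stabilized by $H$ corresponds under $\tau$ to a line on $L(\lambda_4)$, equivalently to a hyperplane on $L(\lambda_1) = M(F_4)$ stabilized by $H^\tau$, so in either formulation $H$ stabilizes a line or a hyperplane on $L(F_4)$. The orbit-intersection argument applied to the graph-stable module $L(\lambda_1) \oplus L(\lambda_4)$ then yields strong imprimitivity as before. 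The main obstacle throughout is the positive-dimensionality check for $\bX^\circ$, since intersecting many line stabilizers can in principle collapse to a finite subgroup; ruling this out requires the combination of both hypotheses on $H$ together with the explicit $\bG$-orbit structure on low-dimensional subspaces of the relevant modules.
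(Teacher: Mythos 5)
The paper does not reprove this lemma: the main statement is quoted directly from \cite[Propositions 4.5 and 4.6]{craven2015un}, and the only argument actually given in the text is the parenthetical one for the final clause ($\bG=F_4$, $p=2$): since $M(F_4)$ is self-dual, a stabilized line gives a stabilized hyperplane, and since the two composition factors of $L(F_4)$ are $M(F_4)$ and its image under the graph automorphism, $H$ stabilizes a line or hyperplane on $L(F_4)$, whence strong imprimitivity. Your proposal instead tries to prove the main statement from scratch via Theorem \ref{thm:intersectionorbit}, and that is where it breaks down.

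The gap is the positive-dimensionality step. After forming the $N_{\Aut^+(\mb G)}(H)$-orbit $\mathcal W$ of the stabilized line and setting $\mb X=\bigcap_{\ell'\in\mathcal W}\mb G_{\ell'}$, you assert that $\mb X^\circ$ is positive-dimensional because $\mathcal W$ is finite, each individual line stabilizer is positive-dimensional, and the hypotheses keep $\mathcal W$ ``small enough''. That is not an argument: an intersection of finitely many positive-dimensional closed subgroups can be finite, and nothing in the hypotheses $C_{\mb G}(H)=Z(H)$ or ``no subgroup of index $2$'' bounds $|\mathcal W|$ or controls how the stabilizers intersect. Theorem \ref{thm:intersectionorbit} only gives that $\mb X$ contains $H$ and is stable under the relevant automorphisms; producing a positive-dimensional such subgroup is precisely the content of \cite[Propositions 4.5 and 4.6]{craven2015un}, which use the specific structure of line stabilizers on $M(\bG)^\circ$ and $L(\bG)^\circ$ rather than any counting of the orbit, and the two hypotheses are the ``mild conditions'' under which those propositions apply, not devices for shrinking the set of stabilized lines — your description of their roles is speculative. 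A smaller inaccuracy: in the $F_4$, $p=2$ clause the two $26$-dimensional factors of $L(F_4)$ are both self-dual and are exchanged by the exceptional graph morphism; they are not duals (or Frobenius twists) of one another, and the paper's argument needs only the self-duality of $M(F_4)$ together with the fact that the composition factors of $L(F_4)$ are $M(F_4)$ and its graph image. Your conclusion in that clause is right in spirit, but the main assertion of the lemma is not established by your argument.
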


(The consequence is simply because if $H$ stabilizes a line on $M(F_4)$ then it also stabilizes a hyperplane. The composition factors of $L(F_4)$ are $M(F_4)$ and its image under the graph automorphism, and so $H$ must also stabilize a line or hyperplane on $L(F_4)$.)

This means that if the composition factors of $M(\mb G){\downarrow_H}$ or $L(\mb G){\downarrow_H}$ are unwarranted, and also force $H$ to stabilize a line on the module, then $H$ cannot embed with those factors. One example is the following.

\begin{corollary}\label{cor:no246} There does not exist a finite subgroup of $E_8$ over any field whose composition factors on $L(E_8)$ have dimensions $246,1,1$.
\end{corollary}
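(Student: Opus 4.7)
The plan is a contradiction argument. Let $V = L(E_8){\downarrow_H}$; this is self-dual (since the adjoint module is) with composition factor multiset $\{S, k, k\}$, where $S$ is a self-dual simple $kH$-module of dimension $246$. First I would classify the self-dual $kH$-modules with these composition factors: the only possibilities are the semisimple $S\oplus k\oplus k$, the module $S\oplus U$ (with $U$ the non-split self-extension of $k$, which requires $\Ext_{kH}^1(k,k)\neq 0$), and the uniserial module $k/S/k$. Any other candidate structure (for instance one with socle $S$, or with $S$ appearing in the socle together with a single copy of $k$) has a dual with a different socle, and so cannot be self-dual. In each of the three admissible cases, the socle contains a trivial summand, so $V^H \neq 0$.

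Hence $H$ stabilizes a line on $L(\mb G)^\circ = L(E_8)$, and by Lemma~\ref{lem:maxrankorpara} it is contained in some maximal-rank or maximal parabolic subgroup $\mb X$ of $E_8$. The contradiction now follows from the classification of such subgroups \cite{liebecksaxlseitz1992}: in every case, the composition factors of $L(E_8){\downarrow_{\mb X}}$ have dimension at most $133$ (the maximum being $L(E_7)$ inside $L(E_8){\downarrow_{A_1E_7}}$, or the analogous contribution from the Levi of a maximal parabolic). Since every composition factor of $L(E_8){\downarrow_H}$ appears as a composition factor of $W{\downarrow_H}$ for some irreducible $\mb X$-summand $W$ of $L(E_8){\downarrow_{\mb X}}$, every $H$-composition factor has dimension at most $133 < 246$, contradicting $\dim S = 246$.

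The main obstacle is the final enumeration: one needs an explicit list of composition factor dimensions of $L(E_8){\downarrow_{\mb X}}$ for every class of maximal-rank and maximal parabolic subgroup $\mb X$ of $E_8$, together with a check that component groups of non-connected maximals do not fuse factors up to dimension at least $246$ (the worst case is the two $84$-dimensional factors in $A_8$ potentially fusing into one of dimension $168$, still well below $246$). Both ingredients are routine from the standard references.
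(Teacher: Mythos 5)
Your argument is correct and follows essentially the same route as the paper: a self-dual module with factors $246,1,1$ has a trivial submodule, so $H$ lies in a proper positive-dimensional (maximal-rank or maximal parabolic) subgroup, none of whose composition factors on $L(E_8)$ comes anywhere near dimension $246$. The paper compresses this into two lines, invoking Lemma~\ref{lem:fix1space} and the tables of \cite{thomas2016} where you use Lemma~\ref{lem:maxrankorpara} and an explicit enumeration, but the underlying idea is identical.
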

\begin{proof} Let $H$ be such a subgroup. Clearly $H$ stabilizes a line on $L(E_8)$, so by Lemma \ref{lem:fix1space} lies in a member of $\ms X$. However, no proper positive-dimensional subgroup of $E_8$ has a composition factor of dimension at least $246$ (see for example the tables in \cite{thomas2016}) so we obtain a contradiction.
\end{proof}

On one occasion we will show that a subgroup is contained in a parabolic subgroup but not inside a Levi complement of it (it is not $\mb G$-completely reducible). The next result comes from \cite[Proposition 2.2 and Remark 2.4]{liebeckmartinshalev2005}, and is stated in our language in \cite[Proposition 4.1]{craven2015un}.

\begin{lemma}\label{lem:nonGcr} Let $H$ be a finite subgroup of $\mb G$. If $H$ is not $\mb G$-completely reducible then $H$ is strongly imprimitive.
\end{lemma}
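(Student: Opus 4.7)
The plan is to produce a proper parabolic subgroup $\mb P$ of $\mb G$ that contains $H$ and is normalized by every automorphism of $\mb G$ that normalizes $H$; this $\mb P$ will then serve as the witnessing subgroup $\mb X$ for strong imprimitivity. The essential input is \cite[Proposition 2.2 and Remark 2.4]{liebeckmartinshalev2005}, which constructs, for a non-$\mb G$-cr subgroup $H$, a proper parabolic $\mb P\leq \mb G$ with $H\leq \mb P$ and $N_{\mb G}(H)\leq N_{\mb G}(\mb P)=\mb P$. The construction is canonical, in the Kempf--Rousseau sense of an optimal destabilising parabolic attached to $H$, and this canonicality is the only real content: once it is in place, everything else is formal.

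With $\mb P$ in hand, I would upgrade its $N_{\mb G}(H)$-stability to the stability properties needed for strong imprimitivity. Since $\mb P$ is canonically attached to $H$, any algebraic-group automorphism of $\mb G$ that normalises $H$ must permute the parabolics associated to $H$ compatibly with the canonical construction, and in particular must normalise $\mb P$. Applying this to the Frobenius endomorphism $\sigma$ (which normalises $H$, since $H\leq \mb G^\sigma$) gives that $\mb P$ is $\sigma$-stable; applying it to any element of $N_{\mathrm{Aut}^+(\mb G)}(H)$ shows that $\mb P$, and hence the finite parabolic $\mb P^\sigma$, is normalised by every such element.

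To conclude, I would verify the definition of strong imprimitivity directly with $\mb X:=\mb P$. The subgroup $\mb X$ is proper, connected, positive-dimensional and $\sigma$-stable, so $\mb X^\sigma$ is a proper parabolic of $G=\mb G^\sigma$ containing $H$. For any almost simple $\bar G$ with socle $G/Z(G)$, every element of $N_{\bar G}(H)$ induces on $\mb G$ an element of $N_{\mathrm{Aut}^+(\mb G)}(H)$ and therefore normalises $\mb X$, whence also $\mb X^\sigma$. Thus $N_{\bar G}(H)\leq N_{\bar G}(\mb X^\sigma)$, which is the defining condition of strong imprimitivity.

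The main obstacle is the canonicality of $\mb P$: one must produce the parabolic in a way that is simultaneously stable under $N_{\mb G}(H)$, under $\sigma$, and under graph and field automorphisms of $\mb G$ that normalise $H$. This is precisely what is supplied by \cite[Proposition 2.2]{liebeckmartinshalev2005} together with Remark~2.4 there, and once it is imported the rest of the argument is essentially bookkeeping.
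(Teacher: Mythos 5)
Your proposal is correct and follows essentially the same route as the paper, which simply quotes \cite[Proposition 2.2 and Remark 2.4]{liebeckmartinshalev2005} (restated as \cite[Proposition 4.1]{craven2015un}): the canonical optimal destabilising parabolic attached to a non-$\mb G$-cr subgroup is stable under $\sigma$ and under all automorphisms in $N_{\Aut^+(\mb G)}(H)$, and strong imprimitivity follows formally. Your reconstruction of the bookkeeping behind that citation is accurate.
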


When $p=2$ and $\mb G=F_4$, the presence of a graph automorphism means we often have to be careful not just to find a positive-dimensional subgroup containing our given subgroup $H$, but to find one that is stable under the automorphisms in $N_{\bar G}(H)$, to eliminate almost simple groups from being potential maximal subgroups. The next lemma gives us a criterion that enables us to ignore the graph automorphism when proving normalizer-stability. We could always prove the results a different way, but using this makes it significantly easier.

\begin{lemma}\label{lem:F4ignoregraph} Let $p=2$. Let $H$ be a subgroup of the algebraic group $F_4$. If $H$ is stabilized by $\phi\in\Aut^+(\mb G)$ such that $\phi$ induces a graph automorphism on $\mb G^\sigma$ (i.e., $\phi$ does not act as a semilinear automorphism on $M(F_4)=L(\lambda_4)$), then there is an automorphism $\tau$ of $H$ such that the module $L(\lambda_4){\downarrow_H}$ is isomorphic to the conjugate module $L(\lambda_1){\downarrow_{H^\tau}}$.

In particular, if the composition factors of $H$ on $M(F_4)$ and the quotient module $L(F_4)/M(F_4)$ do not have the same dimensions then $N_{\Aut^+(\mb G)}(H)$ cannot induce a graph automorphism on $G$.
\end{lemma}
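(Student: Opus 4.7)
The plan is first to use the hypothesis that $\phi$ stabilises $H$ setwise to produce an automorphism $\tau\colon H\to H$ by restriction, $\tau(h)=\phi(h)$; this is the only role of the normalising hypothesis. The second, and main, step is to unpack the module-theoretic content of ``$\phi$ induces a graph automorphism and does not act semilinearly on $M(F_4)$''. In characteristic $2$ the exceptional isogeny of $\bG=F_4$ interchanges the two non-isomorphic $26$-dimensional simple modules $L(\lambda_1)$ and $L(\lambda_4)$ modulo a Frobenius twist, and these are precisely the two composition factors of $L(F_4)$. Any $\phi\in\Aut^+(\bG)$ inducing a graph automorphism on $G=\bG^\sigma$ is, up to inner and diagonal automorphisms, the composition of this isogeny with a power of Frobenius, and the assumption that $\phi$ does not act semilinearly on $M(F_4)$ is precisely the assertion that the residual Frobenius twist is trivial. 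Hence $L(\lambda_4)^{(\phi)}\cong L(\lambda_1)$ as $k\bG$-modules, and I obtain a $k$-linear isomorphism $\alpha\colon L(\lambda_4)\to L(\lambda_1)$ satisfying $\rho_1(\phi(g))\circ\alpha=\alpha\circ\rho_4(g)$ for every $g\in\bG$, where $\rho_i$ denotes the representation on $L(\lambda_i)$.

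Restricting this intertwining relation to $h\in H$ and substituting $\phi(h)=h^\tau$, the map $\alpha$ becomes an isomorphism of $kH$-modules between $L(\lambda_4){\downarrow_H}$ and $L(\lambda_1){\downarrow_{H^\tau}}$, which is the first assertion of the lemma. For the ``in particular'' consequence, observe that an automorphism $\tau$ of $H$ permutes the isomorphism classes of simple $kH$-modules and so preserves the multiset of composition factor dimensions; in particular $L(\lambda_1){\downarrow_{H^\tau}}$ and $L(\lambda_1){\downarrow_H}$ share the same such multiset. Since $L(F_4)$ has exactly the two $26$-dimensional composition factors $L(\lambda_1)$ and $L(\lambda_4)=M(F_4)$ in this characteristic, the composition factors of $(L(F_4)/M(F_4)){\downarrow_H}$ are those of $L(\lambda_1){\downarrow_H}$. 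Thus a mismatch between the multisets of composition factor dimensions of $M(F_4){\downarrow_H}$ and $(L(F_4)/M(F_4)){\downarrow_H}$ obstructs the existence of $\alpha$, and hence no element of $N_{\Aut^+(\bG)}(H)$ can induce a graph automorphism on $G$.

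The only point requiring genuine care is the identification in the first paragraph of ``$\phi$ does not act semilinearly on $M(F_4)$'' with the vanishing of the residual Frobenius twist in the pullback $L(\lambda_4)^{(\phi)}$. This is essentially a bookkeeping exercise, using the relation $\gamma^{2}=F$ for the exceptional isogeny $\gamma$ of $F_4$ in characteristic $2$ to track which powers of Frobenius are absorbed into $\phi$; given the conventions already fixed for $\Aut^+(\bG)$ earlier in the chapter, this reduces to checking the definitions. Once this is settled, the rest is a short intertwining calculation.
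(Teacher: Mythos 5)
Your overall strategy (restrict $\phi$ to get $\tau$, identify the pullback of $L(\lambda_4)$ along $\phi$, transport the intertwiner to $H$, then compare composition factor dimensions) is the natural one, and indeed the paper records this lemma with no written argument at all. But the step you yourself flag as "bookkeeping" is exactly where your argument breaks. The hypothesis that $\phi$ does not act semilinearly on $M(F_4)$ says only that $\rho_4\circ\phi$ is not a Frobenius twist of $\rho_4$; since $\phi\in\Aut^+(\mb G)$, it must then be a Frobenius twist of $\rho_1$, but nothing forces that twist to be trivial. Concretely, $\phi=\gamma\circ F^a$ (the special isogeny $\gamma$, with $\gamma^2=F$ the standard Frobenius, composed with any power of $F$ and possibly inner automorphisms) satisfies the hypothesis for every $a\geq 0$, and the hypotheses give you no way to reduce to $a=0$: if, say, $H\leq\mb G^{F^m}$, then normalization of $H$ by $\gamma F^a$ only lets you alter $a$ modulo $m$, and $F$ itself need not normalize $H$. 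For such $\phi$ one has $\rho_4\circ\phi\cong\rho_1^{[2^a]}=L(2^a\lambda_1)$ with $a>0$, so your intertwiner $\alpha$ is only semilinear; taken linearly it identifies $L(\lambda_4){\downarrow_{H^\tau}}$ with $L(2^a\lambda_1){\downarrow_H}$, not with $L(\lambda_1){\downarrow_{H^\tau}}$, and since the ambient Frobenius need not normalize $H$ the residual twist cannot in general be absorbed into an automorphism of $H$. So your claimed equivalence "not semilinear on $M(F_4)$ iff the residual twist is trivial" is false, and your proof of the first assertion covers only the untwisted case.

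The consequence that is actually used later does survive, by a small modification: from $\rho_4(\tau(h))=s\,\rho_1(h)\,s^{-1}$ with $s$ a $2^a$-semilinear bijection, the map $s$ carries the lattice of $H$-submodules of $L(\lambda_1){\downarrow_H}$ bijectively and dimension-preservingly onto the lattice of $H$-submodules of $L(\lambda_4){\downarrow_H}$ (the image subspaces are $H$-invariant because $\tau(H)=H$), so the multisets of composition factor dimensions of $M(F_4){\downarrow_H}$ and of $(L(F_4)/M(F_4)){\downarrow_H}\cong L(\lambda_1){\downarrow_H}$ agree whenever such a $\phi$ exists; this yields the "in particular" statement, which is the contrapositive the paper applies. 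You should therefore either prove the first assertion in this semilinear form (isomorphism up to a Frobenius twist, or equality of the submodule lattices and composition factor dimensions), or supply an additional argument that $a$ can be taken to be $0$; as written, that step is a genuine gap rather than a matter of checking conventions.
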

\begin{proof} Immediate.\end{proof}

In a similar vein, we need to understand a bit more carefully how semilinear transformations act on vector spaces, which are needed in the case where $\bar G$ induces field automorphisms on $G$.

\begin{lemma}\label{lem:semilinearfield}
Let $H$ be a finite group and let $V$ be a faithful $kH$-module. Suppose that $\sigma$ is a semilinear transformation of $V$ that is $H$-equivariant. If $W$ and $W'$ are $kH$-submodules of $V$ such that $W^\sigma=W'$, then there exists an automorphism $\phi$ of $H$ such that $W'\cong W^\phi$, and then $V=V^\phi$, and in particular the composition factors of $V$ are invariant under $\phi$.
\end{lemma}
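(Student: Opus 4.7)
The plan is to identify $\sigma$ with an automorphism of $H$, via the faithful representation $\rho : H \hookrightarrow GL(V)$, whose effect on $kH$-submodules of $V$ mirrors that of $\sigma$. Let $\alpha \in \mathrm{Aut}(k)$ be the field automorphism associated to $\sigma$, so that $\sigma(\lambda v) = \alpha(\lambda)\sigma(v)$ for all $\lambda \in k$ and $v \in V$. The $H$-equivariance and invertibility of $\sigma$ ensure that it permutes the $kH$-submodules of $V$, and the restriction $\sigma|_W \colon W \to W^\sigma$ is an additive, $\alpha$-semilinear, $H$-equivariant bijection. In chosen bases $\{w_i\}$ of $W$ and $\{\sigma(w_i)\}$ of $W^\sigma$, this displays $W^\sigma$ as the $kH$-module obtained from $W$ by applying $\alpha$ entry-wise to each representing matrix of $H$; equivalently, $W^\sigma$ is the Frobenius twist $W^{(\alpha)}$.

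Next, I realise this twist as precomposition with an automorphism of $H$. Viewing $\sigma$ as an element of the semilinear group $\Gamma L(V) = GL(V) \rtimes \mathrm{Aut}(k)$, the fact that $\sigma$ carries $\rho(H)$-invariant subspaces to $\rho(H)$-invariant subspaces forces $\sigma$ to normalise $\rho(H)$ inside $\Gamma L(V)$. Conjugation by $\sigma$ then induces an automorphism of $\rho(H)$, which transports along $\rho^{-1}$ to an automorphism $\phi$ of $H$. Unwinding the definition yields $h \cdot \sigma(w) = \sigma(\phi^{-1}(h)\cdot w)$ for every $h \in H$ and $w \in V$, so that the bijection induced by $\sigma$ identifies $W^\sigma$ with the twisted module $W^\phi$ as $kH$-modules.

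Finally, taking $W = V$ gives $V^\sigma = V$ as a set, and the identification above yields $V \cong V^\phi$ as $kH$-modules; in particular, $\phi$ permutes the isomorphism classes of simple $kH$-modules and hence the multiset $\cf(V)$ of composition factors of $V$ is $\phi$-invariant. The most delicate step will be the normalisation argument: making it rigorous that the $\Gamma L(V)$-element $\sigma$ actually preserves $\rho(H)$ as a subset, so that the induced $\phi$ is a genuine automorphism of $H$ rather than merely of some conjugate. The cleanest way to handle this is to exploit faithfulness via the matrix identity $\alpha(\rho(h)) = A^{-1}\rho(h) A$, obtained upon writing $\sigma = A \cdot \tau_\alpha$ in a fixed basis (where $\tau_\alpha$ is the entry-wise action of $\alpha$ and $A \in GL(V)$), which together with the coincidence of the two homomorphic images $\alpha(\rho(H))$ and $A^{-1}\rho(H)A$ of $H$ forces $A$ to normalise $\rho(H)$.
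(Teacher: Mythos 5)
Your overall route is the paper's: regard $\sigma$ as an element of the semilinear group of $V$, identify $H$ with its image $\rho(H)$ using faithfulness, observe that $H$ is normal in $\gen{H,\sigma}$, let $\phi$ be the automorphism induced by conjugation by $\sigma$, and transport $W^\sigma=W'$ and $V^\sigma=V$ into $W'\cong W^\phi$ and $V\cong V^\phi$. That is exactly the paper's (three-line) proof, so the architecture matches; the difference is only your attempt to justify the normalisation, and that is where the write-up goes astray.

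The identity $\alpha(\rho(h))=A^{-1}\rho(h)A$ that you extract from $\sigma=A\tau_\alpha$ is precisely the statement $\sigma\rho(h)\sigma^{-1}=\rho(h)$, i.e.\ that $\sigma$ \emph{centralizes} $\rho(H)$. If that were the meaning of ``$H$-equivariant'', normalisation would indeed be automatic, but the induced automorphism would be the identity, and your own first paragraph then shows $W^\sigma\cong W^{(\alpha)}$, a field twist, which for an abstract finite group need not be isomorphic to any $W^\phi$ -- so the lemma's conclusion would fail under that reading. The reading the lemma needs, and the one the paper uses (in the applications $\sigma$ comes from $N_{\Aut^+(\mb G)}(H)$ and induces a genuinely nontrivial, often outer, automorphism of $H$), is that $\sigma$ normalises the image of $H$, i.e.\ $\sigma\rho(h)\sigma^{-1}=\rho(\phi(h))$; with that reading your ``delicate step'' is simply the hypothesis and needs no proof. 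Moreover the inference you propose is misdirected: from $\alpha(\rho(h))=A^{-1}\rho(h)A$ one only gets $A^{-1}\rho(H)A=\alpha(\rho(H))$, which need not equal $\rho(H)$, so $A$ itself need not normalise $\rho(H)$; what normalises $\rho(H)$ is $\sigma=A\tau_\alpha$, and that is all that is required. (A smaller point, which your argument shares with the paper's: $\sigma|_W$ is $\alpha$-semilinear, so strictly it identifies $W'$ with a field twist of a $\phi$-twist of $W$; in the intended applications $H$ is of Lie type in defining characteristic, where field twists of modules are themselves realised by automorphisms of $H$, so the stated conclusion is unaffected.)
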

\begin{proof} Since $\sigma$ is $H$-equivariant, we may form the subgroup $\gen{H,\sigma}$ of the group of semilinear transformations of $V$. This group has $H$ as a normal subgroup, and hence $\sigma$ induces an automorphism $\phi$ of $H$. Furthermore, $V^\sigma=V$, so $V^\phi=V$ and the result follows.
\end{proof}

With some rank-$2$ subgroups $H$ of $E_8$ for $q\geq 8$ we will find an element $x$ that certainly isn't in $N_{\mb G}(H)$ and stabilizes a subspace of $L(E_8)$ also stabilized by $H$. Hence we can take the subgroup $\gen {H,x}$, which is larger than $H$ and smaller than $\mb G$, to show that $H$ is not maximal in a finite group. With a bit more care, this can be used to show that $H$ is strongly imprimitive. We do this now.

The proof of this uses forward references. The reason for this is that we place $H$ inside a finite group $K$, and want that $K$ is strongly imprimitive. We will only use this result in Chapter \ref{chap:rank2ine8}, so can use results for ranks $3$ and $4$ from the previous chapters. Forward referencing is not strictly required, but proving this result without it takes considerably longer, as one has to determine precisely which groups can contain which, and then prove results for these groups.

\begin{lemma}\label{lem:allcasesstronglyimp} Let $H$ be a finite simple group. Let $\mb G=E_8$, and let $x$ be a semisimple element of $\mb G$. Suppose that one of the following holds:
\begin{enumerate}
\item $H\cong \PSL_3(8)$ and $x$ has order $189$;
\item $H\cong \PSL_3(9),\PSU_3(9)$ and $x$ has order $160$;
\item $H\cong \PSU_3(16)$ and $x$ has order $765$;
\item $H\cong \PSp_4(8)$ and $x$ has order $195$;
\item $H\cong {}^2\!B_2(32)$ and $x$ has order $93$;
\item $H\cong {}^2\!B_2(128)$ and $x$ has order $435$;
\item $H\cong \PSL_2(81)$ and $x$ has order $164$;
\item $H\cong \PSU_5(4)$ and $x$ has order $195$, with $x^3\in H$.
\end{enumerate}
If $\mc O$ is a $\sigma$-stable, $N_{\Aut^+(\mb G)}(H)$-orbit of subspaces of $L(E_8)$, each subspace stabilized by $H$, and the members of $\mc O$ are also stabilized by $x$, then $H$ is strongly imprimitive. If $\mc O$ is stabilized by $H$ and $x$, but not necessarily by $N_{\Aut^+(\mb G)}(H)$, then $H$ is Lie imprimitive, but not necessarily strongly imprimitive (i.e., $H$ lies in a member of $\ms X$).
\end{lemma}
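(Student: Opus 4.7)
The plan is to form the closed subgroup $\mb Y = \bigcap_{W\in\mc O}\mb G_W$, the intersection over the orbit $\mc O$ of point stabilizers in $\mb G$. By hypothesis $H$ stabilizes every $W\in\mc O$, so $H\leq\mb Y$; since the members of $\mc O$ are also $x$-stable, $x\in\mb Y$, and hence the finite group $K=\langle H,x\rangle$ is contained in $\mb Y$. When $\mc O$ is $\sigma$-stable and $N_{\Aut^+(\mb G)}(H)$-stable, Theorem~\ref{thm:intersectionorbit} guarantees $\mb Y$ inherits both stabilities, and consequently so does the characteristic subgroup $\mb Y^\circ$. Thus to prove strong imprimitivity it suffices to show that $\mb Y$ is proper and positive-dimensional: we may then take $\mb X=\mb Y^\circ$ (or a suitable positive-dimensional subgroup of it). Properness is immediate, since $L(E_8)=L(\lambda_8)$ is irreducible for $\mb G$, so $\mb G$ stabilizes no $W\in\mc O$.

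The crux is positive-dimensionality, and this reduces to identifying $K$. In each of the eight cases the prescribed order $o(x)$ is chosen so that $x$ (respectively, in case~(8), the element $x$ together with the requirement $x^3\in H$) cannot be absorbed into any conjugate of $H$ in $\mb G^\sigma$. For instance, the maximum element order in $\PSL_3(8)$ is $73$, which is coprime to $189$; similar torus-order checks for each row show $K$ strictly contains $H$. Moreover, the orders have been engineered so that $K$ is forced to lie in the fixed points of a $\sigma$-stable, positive-dimensional reductive subgroup of $\mb G$ whose rank-$3$ or rank-$4$ simple subgroup cases have been settled in Chapters~\ref{chap:rank4ine8} and~\ref{chap:rank3ine8}. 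Concretely, working with an appropriate power $x^m$ of prime or small order, Theorem~\ref{thm:largeorderss} together with the semisimple class data in Table~\ref{t:semie8} pins down the reductive connected centralizer $\mb C=C_{\mb G}(x^m)^\circ$; one then observes that $H$ normalizes $\mb C$ and, combined with $x\in \mb C\langle x\rangle$, the group $K$ is identified as a specific rank-$3$ or rank-$4$ finite group of Lie type.

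Granting this identification, the argument concludes as follows. Strong imprimitivity of $K$, established in the earlier chapters, produces a proper, positive-dimensional, $\sigma$-stable subgroup $\mb Z<\mb G$ containing $K$ and stabilizing the same subspaces of $L(E_8)$ as $K$; in particular $\mb Z$ stabilizes every $W\in\mc O$, so $\mb Z\leq\mb Y$ and $\mb Y$ is positive-dimensional. Under the stronger hypothesis that $\mc O$ is $N_{\Aut^+(\mb G)}(H)$-stable, $\mb Y$ is normalized by $N_{\Aut^+(\mb G)}(H)$, so $H$ is strongly imprimitive. Under the weaker hypothesis that only $H$ and $x$ stabilize the members of $\mc O$, the same argument still exhibits a proper, positive-dimensional, $\sigma$-stable overgroup of $H$, yielding Lie imprimitivity without the normalizer statement.

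The main obstacle is the case-by-case identification of $K$ in the second paragraph: one must verify, for each pair $(H,o(x))$ on the list, both that no element of the prescribed order lies in $H$ (or normalizes $H$ acting inner-diagonally) and that the unique finite overgroup of $H$ in $\mb G^\sigma$ containing such an $x$ is of one of the small-rank types treated earlier. This uses the classification of semisimple centralizers in $E_8$, the subgroup structure of the reductive groups that arise as these centralizers, and Proposition~\ref{prop:inconnected} to reduce to connected overgroups; the remaining bookkeeping is routine but essential, and is the reason the lemma is stated with this explicit list of eight cases rather than as a single general statement.
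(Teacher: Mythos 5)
Your opening moves match the paper's: form the intersection $\bar K$ of the stabilizers of the members of $\mc O$, note $H\leq\gen{H,x}\leq \bar K$, and observe that if $\bar K$ is infinite then Theorem~\ref{thm:intersectionorbit} gives strong imprimitivity. But the heart of the lemma is what to do when $\bar K$ is \emph{finite}, and here your argument has a genuine gap. You propose to identify $K=\gen{H,x}$ as "a specific rank-$3$ or rank-$4$ finite group of Lie type" by taking a power $x^m$, computing $\mb C=C_{\mb G}(x^m)^\circ$ from Table~\ref{t:semie8}, and asserting that $H$ normalizes $\mb C$. That assertion is false in general: $x^m$ is not centralized (or normalized as a subgroup) by $H$, only by $C_H(x^m)$, so $H$ need not normalize $\mb C$; and there is no reason at all why the abstract group generated by $H$ and a semisimple torus element should be a finite group of Lie type, let alone one of small rank. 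The paper never identifies $K$. Instead it first checks, case by case using $\Aut(H)$ (not just element orders of $H$ -- for $\PSp_4(8)$, $\PSL_2(81)$ and $\PSU_5(4)$ the outer automorphism group must be analysed via centralizers of field automorphisms), that $x$ cannot normalize $H$, so $\bar K\not\leq N_{\mb G}(H)$. Then, assuming $\bar K$ finite and $H$ not strongly imprimitive, it applies Theorem~\ref{thm:classmaximalsubgroup} to place $\bar K$ inside an exotic local subgroup, the Borovik subgroup, or an almost simple group; the first two are excluded because they do not contain $H$ \cite{clss1992}, the cross-characteristic socles are excluded by the list in \cite{liebeckseitz1999} (no elements of order $o(x)$), and the defining-characteristic socles are dealt with by rank and field size: large rank or $q>9$ gives a blueprint (contradicting finiteness of $\bar K$), line stabilization passes to $H$ via Lemma~\ref{lem:fix1space}, ranks $3$ and $4$ are handled by the earlier chapters together with Section~\ref{sec:difl52}, and the remaining rank-$2$ possibility $G_2(8)$, $G_2(9)$ stabilizes a line by Proposition~\ref{prop:g2ine8}.

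A second, independent error: even if you could identify $K$ and quote its strong imprimitivity from the earlier chapters, strong imprimitivity does \emph{not} produce a positive-dimensional subgroup "stabilizing the same subspaces of $L(E_8)$ as $K$" -- that is the blueprint property, which is strictly stronger. So your deduction that the resulting $\mb Z$ lies in $\mb Y=\bar K$, making $\bar K$ positive-dimensional, does not follow. The paper's route avoids this by arguing with $\bar K$ itself: either one reaches a direct contradiction with the finiteness of $\bar K$ (blueprint cases), or one obtains a conclusion (a stabilized line, or containment in a member of $\ms X^\sigma$) that transfers directly to $H$. Your proof needs to be restructured along these lines; the case-by-case "engineering of orders" you allude to is not the bookkeeping that makes the lemma work -- the classification of the possible finite overgroups $\bar K$ is.
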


\begin{proof} We first check that $x$ cannot normalize $H$, i.e., $H\not\normal \gen{H,x}$.

By \cite[pp.74, 78, 79]{atlas}, we see that $\Aut(H)$ contains no element of order $o(x)$ for $H$ one of $\PSL_3(8)$, $\PSL_3(9)$, $\PSU_3(9)$, and since $3\nmid |\Out(\PSU_3(16))|$ we see that the same holds for $H\cong \PSU_3(16)$. For $H\cong \PSp_4(8)$, $\Out(H)$ is cyclic of order $3$ so $|H\cap \gen x|$ is either $65$ or $195$, so $65$ as $195$ is not the order of an element of $H$. However, $x$ induces the field automorphism on $H$, whose centralizer is $\PSp_4(2)$ and must contain $x^3$ of order $65$, a contradiction.

The Suzuki groups famously do not have elements of order $3$, and their automorphisms consist only of field automorphisms, which have orders $5$ and $7$ in the cases above, so this proves the claim for them. For $H\cong \PSL_2(81)$, the intersection $H\cap \gen x$ must have order $41$; $\Out(H)$ is dihedral of order $8$, with the field automorphism of order $2$ as the centre of it. But the centralizer in $H$ of the field involution is $\PSL_2(9)$, which of course does not contain an element of order $41$, so we are done in this case as well. Finally, $3\nmid |\Out(\PSU_5(4))|$ and $\PSU_5(4)$ does not contain an element of order $195$.

Write $K=\gen{H,x}$, and therefore by the above $K\not\leq N_{\mb G}(H)$. Let $\bar K$ be the intersection of the stabilizers of the members of $\mc O$, and note that $H\leq K\leq \bar K$.

If $\bar K$ is infinite then we apply Theorem \ref{thm:intersectionorbit} to see that $H$ is strongly imprimitive. Thus we may assume that $K$ and $\bar K$ are both finite. By Theorem \ref{thm:classmaximalsubgroup}, if $\bar K$ (and hence $H$) is not strongly imprimitive, then $\bar K$ lies inside an exotic local subgroup, or the Borovik subgroup, or $\bar K$ is almost simple.

The non-cyclic composition factors of the exotic local subgroups (from \cite{clss1992}) are $\PSL_5(2)$ and $\PSL_3(5)$, neither of which contains $H$, so cannot contain $\bar K$, and similarly $H$ is not contained in the Borovik subgroup either. Thus $\bar K$ is almost simple. If $\bar K$ is not of Lie type in characteristic $p$, then the possibilities for $\bar K$ appear in \cite{liebeckseitz1999}; none of these contains $H$ (certainly they don't contain elements of order $o(x)$) and so $\bar K$ must be Lie type in characteristic $p$, $\bar K(q)$ for some $q$, and this is not $H$. If $q>9$ or the rank of $\bar K$ is greater than $4$ then $\bar K$ is a blueprint for $L(E_8)$ as we saw above, and we are done. If $\bar K$ stabilizes a line on $L(E_8)$ then so does $H$ and again $H$ is strongly imprimitive by Lemma \ref{lem:fix1space}. If the rank of $\bar K$ is $3$ or $4$ then either $\bar K$ is strongly imprimitive or it is $\PSU_4(2)$, which does not contain $H$, by the collection of propositions from Chapters \ref{chap:rank4ine8}, \ref{chap:rank3ine8} and \ref{sec:difl52}, so $\bar K$ has rank $2$. The only possibility now is that $q=8,9$ and $\bar K=G_2(q)$, but this stabilizes a line on $L(E_8)$ by Proposition \ref{prop:g2ine8} below, hence so does $H$ and we are done again by Lemma \ref{lem:fix1space}.
\end{proof}

For $H\cong \PSL_3(16)$, we want the order of $x$ to be $255$, but there is an element of order $255$ in $\PGL_3(16)$ (but not in $\PSL_3(16)$). We have to be slightly more careful: since $\Out(H)$ has a normal subgroup of order $3$ and index $8$, all elements of order $255$ lie in $\PGL_3(16)\leq \Aut(H)$. Furthermore, the centralizer in $\PGL_3(16)$ of a cyclic subgroup $\gen{x^3}$ of order $85$ (unique up to conjugacy) is of order $255$ (it is a Singer cycle) and hence there are exactly two elements of order $255$ in $\Aut(H)$ cubing to a given element of order $85$ in $\PSL_3(16)$. The analogue of Lemma \ref{lem:allcasesstronglyimp} is therefore the following, with an identical proof. Similar statements also hold for $H\cong \PSp_4(9)$.

\begin{lemma}\label{lem:stronglyimppsl316} Let $H$ be one of $\PSL_3(16)$ and $\PSp_4(9)$. Suppose that $H$ is a subgroup of $\mb G=E_8$, and let $x_1,x_2,x_3$ be three distinct elements of $\mb G$ of order $255$ if $H\cong \PSL_3(16)$, and order $82$ if $H\cong \PSp_4(9)$. If $\mc O$ is a $\sigma$-stable, $N_{\Aut^+(\mb G)}(H)$-orbit of subspaces of $L(E_8)$, each subspace stabilized by $H$, and the members of $\mc O$ are also stabilized by all $x_i$, then $H$ is strongly imprimitive.
\end{lemma}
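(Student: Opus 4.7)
The plan is to mimic the proof of Lemma \ref{lem:allcasesstronglyimp} almost verbatim, replacing only the opening argument that the added element does not normalize $H$. Set $K=\langle H,x_1,x_2,x_3\rangle$ and let $\bar K$ denote the intersection in $\mb G$ of the subspace stabilizers of the members of $\mc O$. By hypothesis $H\leq K\leq \bar K$, and $\bar K$ is both $\sigma$-stable and $N_{\Aut^+(\mb G)}(H)$-stable. If $\bar K$ is infinite, Theorem \ref{thm:intersectionorbit} immediately yields that $H$ is strongly imprimitive, so I may assume $\bar K$ is finite.

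The single genuinely new step is to show $K\not\leq N_{\mb G}(H)$. For $H\cong \PSL_3(16)$, the map $N_{\mb G}(H)/C_{\mb G}(H)\hookrightarrow \Aut(H)$ places each element of order $255$ into $\PGL_3(16)$, and by the calculation stated just before the lemma, for every $y\in H$ of order $85$ there are precisely two elements of order $255$ in $\Aut(H)$ cubing to $y$, all lying in the unique Singer cycle of $\PGL_3(16)$ containing $y$. If all three $x_i$ lay in $N_{\mb G}(H)$, their images would give three distinct elements of order $255$ in $\Aut(H)$, and the two-per-cube count together with the rigidity imposed by the common stabilization of the orbit $\mc O$ (forcing the three elements into a common Singer-cycle normalizer) is incompatible with this. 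The analogous count in $\Aut(\PSp_4(9))$ for elements of order $82$ disposes of the $\PSp_4(9)$ case in the same way.

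Having secured $K\not\leq N_{\mb G}(H)$, the remainder is verbatim from Lemma \ref{lem:allcasesstronglyimp}. By Theorem \ref{thm:classmaximalsubgroup}, $\bar K$ sits inside a maximal subgroup of type (EX), (BOR), or (AS). Neither the exotic $r$-local subgroups of \cite{clss1992} nor the Borovik subgroup contains $H$, so $\bar K$ is almost simple; and since $K\not\leq N_{\mb G}(H)$, the socle of $\bar K$ cannot equal $H$. Thus $\bar K$ sits inside an almost simple Lie type group $\bar K(q)$ in characteristic $p$, distinct in type from $H$. If $q>9$ or the rank of $\bar K$ is at least $5$, Theorem \ref{thm:largeorderss} shows $\bar K$ is a blueprint for $L(E_8)$; if $\bar K$ stabilizes a line on $L(E_8)$, Lemma \ref{lem:fix1space} finishes. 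The rank-$3$ and rank-$4$ cases are absorbed by the propositions of Chapters \ref{chap:rank3ine8} and \ref{chap:rank4ine8}, and the rank-$2$ residue reduces to $G_2(q)$, which fixes a line on $L(E_8)$ by Proposition \ref{prop:g2ine8}.

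The main obstacle is the cube-counting step ensuring $K\not\leq N_{\mb G}(H)$. In Lemma \ref{lem:allcasesstronglyimp}, a single element whose order does not occur in $\Aut(H)$ suffices; here, because $\Aut(\PSL_3(16))$ does contain elements of order $255$ (and $\Aut(\PSp_4(9))$ contains elements of order $82$), one genuinely needs three distinct witnesses $x_i$ and must exploit the Singer-cycle description of their centralizers to force at least one of the $x_i$ outside $N_{\mb G}(H)$.
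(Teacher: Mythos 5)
Your overall strategy is exactly the paper's: the paper states that the proof is identical to that of Lemma \ref{lem:allcasesstronglyimp}, the only new ingredient being the counting fact recorded in the paragraph immediately preceding the lemma. The problem is that your version of that new ingredient --- the step $K\not\leq N_{\mb G}(H)$ --- does not hold up. The count "exactly two elements of order $255$ in $\Aut(H)$ cubing to a given element of order $85$" only produces a contradiction if the three elements $x_1,x_2,x_3$ are cube roots of one \emph{fixed} element $x\in H$ of order $85$ (square roots of a fixed element of order $41$ for $\PSp_4(9)$). That is how the lemma is actually invoked (see Remark \ref{rem:howusestrongimp} and the $q=16$ case of Proposition \ref{prop:sl3ine8}: ``more than two elements of order $255$ cubing to $x$''), and it is what the sentence before the lemma is setting up; but it is not among the hypotheses you work from, and you do not derive it. Your substitute --- that ``the rigidity imposed by the common stabilization of the orbit $\mc O$'' forces the three elements into a common Singer-cycle normalizer --- is an assertion, not an argument, and it is false under the hypotheses as you state them: if $x_1\in N_{\mb G}(H)$ has order $255$ (such elements exist, since $\PGL_3(16)$ contains them) and stabilizes the members of $\mc O$, then $x_2=x_1^2$ and $x_3=x_1^4$ satisfy every hypothesis you use, yet $K=\gen{H,x_1,x_2,x_3}\leq N_{\mb G}(H)$, and your later step ``the socle of $\bar K$ cannot equal $H$'' collapses. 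Note also that even granting all $x_i\in N_{\mb G}(H)$, their images in $\Aut(H)$ need not be distinct nor of order $255$ unless one knows $x_i^3\in H$ and controls $C_{\mb G}(H)$, so ``three distinct elements of order $255$ in $\Aut(H)$'' is not automatic and, in any case, is no contradiction by itself: $\Aut(\PSL_3(16))$ contains many such elements.

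What the paper uses, and what your proof needs, is the extra (implicit) hypothesis that the $x_i$ are three distinct roots in $\mb G$ of a single $y=\gen{x_i}\cap H$ of order $85$ (resp.\ $41$); then the two-per-cube (resp.\ two-per-square) count in $\Aut(H)$ forces at least one $x_i$ outside $N_{\mb G}(H)$, so $K\not\leq N_{\mb G}(H)$, and from there your remaining steps --- infinite $\bar K$ via Theorem \ref{thm:intersectionorbit}, finite $\bar K$ via Theorem \ref{thm:classmaximalsubgroup}, elimination of exotic local and Borovik subgroups, cross-characteristic groups, blueprints for large rank or field size, and the rank $\leq 4$ propositions together with Proposition \ref{prop:g2ine8} --- do run verbatim as in Lemma \ref{lem:allcasesstronglyimp}. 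As written, however, the pivotal normalizer step is a genuine gap.
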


\begin{remark}\label{rem:howusestrongimp} This is how we will use Lemmas \ref{lem:allcasesstronglyimp} and \ref{lem:stronglyimppsl316}. Let $H$ be a subgroup of $\mb G$, and let $S_1,\dots,S_r$ be a set of representatives for the composition factors of $L(E_8){\downarrow_H}$. For some of these, say $S_i$ for $i\in I$, we can find an element $x_i\in \mb G$ satisfying the order hypothesis in one of these two lemmas, and stabilizing the eigenspaces of $y$ on $L(E_8){\downarrow_H}$ that comprise $S_i$, where $y$ is a generator of $\gen{x_i}\cap H$. Thus if $L(E_8){\downarrow_H}$ possesses a submodule $S$ isomorphic to $S_i$ then $x_i$ stabilizes it. (We may have that different $S_i$ can be stabilized by the same $x_i$.)

By one of the two lemmas, this means that $H$ is Lie imprimitive. If $S$ is stable under $N_{\Aut^+(\mb G)}(H)$ then $H$ is also strongly imprimitive, but this will not always be the case. For this to not be the case, by Lemma \ref{lem:semilinearfield} the composition factors of $L(E_8){\downarrow_H}$ must be stable under an outer automorphism of $H$ (in fact one induced by an element of $N_{\Aut^+(\mb G)}(H)$). Furthermore, the image of $S$ under the elements of $N_{\Aut^+(\mb G)}(H)$ are other simple submodules that must, by symmetry, also appear in $I$. If there is an element $x$ that stabilizes all of the eigenspaces comprising all of these different modules $S$ simultaneously, then $H$ is strongly imprimitive. Otherwise we may assume that both $S$ and its image under the potential element of $N_{\Aut^+(\mb G)}(H)$ are submodules of $L(E_8){\downarrow_H}$ when analysing further, and indeed there is an automorphism of $H$ that maps $S$ to this other module in its induced action on $L(E_8){\downarrow_H}$.

In conclusion, if the composition factors of $L(E_8){\downarrow_H}$ are not stable under an outer automorphism of $H$, then we obtain that $H$ is strongly imprimitive or $\soc(L(E_8){\downarrow_H})$ consists solely of simple submodules $S_i$ for $i\not\in I$. If there is such an outer automorphism, then we still obtain this conclusion if there is an element stabilizing all of the eigenspaces from the orbit of $S$ simultaneously, and otherwise we may only conclude that $H$ is Lie imprimitive or the condition above on the socle holds. In this case though, we may assume that every simple module in the orbit of $S$ under this outer automorphism appears in the socle of $L(E_8){\downarrow_H}$.
\end{remark}

We now describe the composition factors of copies of $\Alt(6)$ in $E_8$ on $L(E_8)$ for $k$ of characteristic $2$. The simple modules for $\Alt(6)$ are $1$, $4_1$, $4_2$, $8_1$ and $8_2$, with the two $8$s merging into a $16$ in $\Sym(6)$. If we have equal numbers of $8_1$ and $8_2$, we write $16$ to make reading the list easier.

\begin{proposition}\label{prop:compfactorsalt6} Let $H\cong \Alt(6)$ be a subgroup of $\mb G=E_8$ in characteristic $2$. The warranted sets of composition factors of $L(E_8){\downarrow_H}$ are as follows:
\[ 16^9,4_1^9,4_2^9,1^{32},\quad  16^5,4_1^{14},4_2^{20},1^{32},\quad 
16^5,4_1^{20},4_2^{14},1^{32},\quad 16^4,4_1^{19},4_2^{19},1^{32},\]
\[ 16^4,4_1^{22},4_2^{16},1^{32},\quad 16^4,4_1^{16},4_2^{22},1^{32},\quad
4_1^{33},4_2^{12},1^{68},\quad
4_1^{12},4_2^{33},1^{68}\]
\[ 8_1^8,8_2^3,4_1^{16},4_2^{16},1^{32},\quad 8_1^3,8_2^8,4_1^{16},4_2^{16},1^{32},\quad 8_1^{25},4_1^2,4_2^2,1^{32},\quad 8_2^{25},4_1^2,4_2^2,1^{32}\]
If $H$ embeds in $E_8$ then the composition factors of $L(E_8){\downarrow_H}$ are either warranted or possibly $16^6,4_1^{16},4_2^{16},1^{24}$. (No example is known with these factors.)
\end{proposition}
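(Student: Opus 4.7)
The plan is to follow the standard two-step procedure set up in the preliminaries: first enumerate the conspicuous sets of composition factors of $L(E_8){\downarrow_H}$ for $H\cong\Alt(6)$, and then for each conspicuous set, decide whether it is warranted (realized inside some proper positive-dimensional subgroup) or not; the unwarranted ones must then be shown not to arise from any embedding, except possibly for the single exceptional set.

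For the enumeration step, I would iterate over all nonnegative integer tuples $(a,b,c,d,e)$ satisfying $a+4b+4c+8d+8e=248$, representing multiplicities of $1,4_1,4_2,8_1,8_2$ respectively. For each such candidate, I would compute the Brauer character values on the $2'$-classes of $\Alt(6)$ (elements of orders $3$ and $5$) and require that each value coincide with the trace on $L(E_8)$ of some semisimple class in $\mb G$ of the matching order, with the power map consistent. The relevant trace data for $E_8$-classes of orders $3$ and $5$ is tabulated in Table \ref{t:semie8}. This is a finite check done via a Magma program, producing the list of conspicuous sets, among which the twelve listed in the proposition (plus the exception $16^6,4_1^{16},4_2^{16},1^{24}$) will appear.

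For the warranted sets, I would exhibit an explicit copy of $\Alt(6)$ inside each of the relevant reductive subgroups from the Liebeck--Seitz classification and compute $L(E_8){\downarrow_H}$ via branching. Natural candidates include $\Alt(6)\leq D_8$ (via the $8$-dimensional permutation representation, which is reducible in characteristic $2$), $\Alt(6)\leq A_8$ (via an embedding coming from $\Alt(6)<\Alt(9)$), embeddings via $A_4A_4$, $D_4^2$, $E_6A_2$, $A_5A_1$, and their subsystem relatives. Each such positive-dimensional overgroup produces a specific set of composition factors, and matching these against the conspicuous list verifies that the twelve listed sets are warranted.

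The main obstacle is the elimination of unwarranted conspicuous sets. For those of negative pressure, i.e., with $b+c-a<0$ where $\Ext^1_{kH}(k,4_i)$ is one-dimensional for $i=1,2$ (and $0$ otherwise), Proposition \ref{prop:pressure} forces a trivial submodule, so by Lemma \ref{lem:fix1space} the subgroup is strongly imprimitive and its composition factors must coincide with one achieved inside a member of $\ms X$; this recovers a warranted set and eliminates the configuration. For unwarranted sets with nonnegative pressure, I would argue via the self-duality of $L(E_8)$ combined with Proposition \ref{prop:bottomhalf} and Lemma \ref{lem:oddandodd}: parity constraints on composition-factor multiplicities force certain simple constituents to lie outside the $\{k\}$-radical, and then applying the Ext information between $\Alt(6)$-simples and pressure arguments to the resulting self-dual module rules out the configuration by the same line-stabilizer mechanism. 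The exceptional set $16^6,4_1^{16},4_2^{16},1^{24}$ has pressure $8$ and a symmetric distribution between $4_1$ and $4_2$ and between $8_1$ and $8_2$, which defeats all of these structural arguments, and so I would expect (consistently with the statement) that it cannot be excluded by the present techniques.
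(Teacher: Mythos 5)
There is a genuine gap, in two places. First, the proposition's main assertion is that the twelve listed sets are \emph{exactly} the warranted sets, and your plan only addresses one direction: exhibiting an embedding of $\Alt(6)$ realizing each listed set shows those sets are warranted, but it does not show that no other conspicuous set is warranted. The paper's proof is almost entirely devoted to this completeness: assuming $H$ lies in a positive-dimensional subgroup, it runs systematically through every member of $\ms X$ — $D_8$ (with a table of all possible actions on $M(D_8)$ and the resulting factors on $L(E_8)$), $A_8$ reduced into $D_8$ via a line on $M(A_8)$, the $E_7$-parabolic chain pushed down through $E_6$, $F_4$ into $C_4$ (with a second table of actions on $M(C_4)$, $L(\lambda_4)$, $L(\lambda_2)$), the remaining maximal subgroups reduced by stripping $A_1$, $A_2$, $G_2$ factors, and $A_4A_4$ reduced to $A_7$ — and, crucially, it also treats the triple cover $3\cdot\Alt(6)$ embedding in $A_8$ and $E_6A_2$ (the 3-part of the Schur multiplier matters because those subsystem groups carry a centre of order $3$ not present in $\mb G$). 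Your proposal omits this exhaustive branching analysis and the central-extension cases entirely, so it cannot establish that the list of warranted sets is complete.

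Second, your elimination step for unwarranted conspicuous sets of nonnegative pressure is only a sketch, and it is not plausible as stated: you invoke Proposition \ref{prop:pressure}, Lemma \ref{lem:oddandodd} and Proposition \ref{prop:bottomhalf} to "rule out the configuration by the same line-stabilizer mechanism," but pressure and parity arguments alone demonstrably cannot decide these cases (the surviving set $16^6,4_1^{16},4_2^{16},1^{24}$ has pressure $8$, and several warranted sets also have positive pressure, so no such soft argument separates them). The paper does not attempt this here at all: the dichotomy "warranted or $16^6,4_1^{16},4_2^{16},1^{24}$" is imported wholesale from \cite[Proposition 6.3]{craven2017}, where it was proved by a detailed analysis (unipotent actions and module structure) specific to $\Alt(6)$. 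If you intend a self-contained proof rather than a citation, you would need to supply that analysis; as written, the step that every embedding with positive pressure and factors outside your list cannot exist is unsupported.
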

\begin{proof}By \cite[Proposition 6.3]{craven2017}, the composition factors of $L(E_8){\downarrow_H}$ are either warranted or $16^6,4_1^{16},4_2^{16},1^{24}$, thus we may assume that $H$ lies inside a positive-dimensional subgroup $\mb X$ of $\mb G$.

If $\mb X$ is $D_8$ then we use a computer to determine the action of $H$ on the half-spin module. Since $L(D_8)$ has the same composition factors as $\Lambda^2(M(D_8))$ this is easy to compute, and $L(E_8)$ is the sum of $L(D_8)$ and a half-spin module. This yields the following table, where representations of $\Alt(6)$ (taken up to automorphism for brevity) that extend to $\Sym(6)$ are given first, and then those that cannot (inside $D_8$) are given afterwards.
\begin{center}
\begin{tabular}{cc}
\hline Factors on $M(D_8)$ & Factors on $L(E_8)$
\\ \hline $16$ & $16^4,4_1^{19},4_2^{19},1^{32}$
\\ $4_1^4$ & $16^4,4_1^{22},4_2^{16},1^{32}$ or $4_1^{12},4_2^{33},1^{68}$
\\ $4_1^2,4_2^2$ & $16^4,4_1^{19},4_2^{19},1^{32}$
 or $16^9,4_1^9,4_2^9,1^{32}$
\\ $4_1^3,1^4$ & $16^4,4_1^{19},4_2^{19},1^{32}$
\\ $4_1^2,4_2,1^4$ & $16^4,4_1^{22},4_2^{16},1^{32}$
\\ $4_1^2,1^8$ & $4_1^{33},4_2^{12},1^{68}$
\\ $4_1,4_2,1^8$ & $16^9,4_1^9,4_2^9,1^{32}$
\\ $4_1,1^{12}$ & $4_1^{12},4_2^{33},1^{68}$
\\ \hline $8_1^2$ & $8_1^8,8_2^3,4_1^{16},4_2^{16},1^{32}$
\\ $8_1,4_1^2$ & $16^5,4_1^{14},4_2^{20},1^{32}$
\\ $8_1,4_1,1^4$ & $16^5,4_1^{20},4_2^{14},1^{32}$
\\ $8_1,1^8$ & $8_1^{25},4_1^2,4_2^2,1^{32}$
\\ \hline 
\end{tabular}
\end{center}
(There is a third set of factors, $16^9,4_1^{12},4_2^6,1^{32}$, that has a correct Brauer character on $L(E_8)$ for the second row of the table, but since the factors on $\Lambda^2(M(D_8))$ are $4_1^6,4_2^{16},1^{32}$, a submodule of $L(E_8){\downarrow_{D_8}}$, we see that this cannot occur.)

If $H\leq A_8$ then, although $H$ need not stabilize a line on $M(A_8)$, there is another copy of $H$ with the same composition factors on $M(A_8)$---and hence on $L(E_8)$---that does stabilize a line on $M(A_8)$, hence inside an $A_7$-parabolic subgroup of $A_8$, which is inside $D_8$.

If $H$ is contained in an $E_7$-parabolic subgroup of $E_8$ then as we only consider composition factors we may assume that $H$ is contained in the $E_7$-Levi subgroup. By \cite[Proposition 6.3]{craven2017}, $H$ stabilizes a line on $M(E_7)$, hence lies inside a $D_6$-parabolic of $E_7$---contained in $D_8$---or an $E_6$-parabolic of $E_7$, thus an $E_6$-Levi by the arguments above. The same proposition shows that $H$ stabilizes a line on $M(E_6)$ as well, hence lies inside a $D_5$-parabolic of $E_6$---again, inside $D_8$---or inside $F_4$. Once again, the same \cite[Proposition 6.3]{craven2017} states that up to graph automorphism $H$ stabilizes a line on $M(F_4)$, hence lies in a parabolic subgroup of $F_4$ or $C_4$, hence inside $C_4$.

There are four embeddings of $H$ into $C_4$, and we list the actions of $H$ on $M(C_4)$, together with the $16$- and $26$-dimensional modules $L(\lambda_4)$ and $L(\lambda_2)$. As $L(E_8)$ restricts to $F_4$ as the sum of eight copies of $M(F_4)$, one of $M(F_4)$ twisted by the graph automorphism, and fourteen trivial factors, it is easy to compute the factors of $H$ on $L(E_8)$. (We use the fact that $C_4$ acts on $M(F_4)$ as $L(\lambda_2)$ and on $L(F_4)$ with factors $L(\lambda_2)$, $L(\lambda_4)$, $L(0)^2$ and $L(2\lambda_1)$, note the twist on this last module.) We again need only work up to automorphism. Thus we need the composition factors on $M(C_4)$, $L(\lambda_4)$ and $L(\lambda_2)$, from which we can deduce the two possible actions on $L(E_8)$. These are as follows.
\begin{center}\begin{small}
\begin{tabular}{ccccc}
\hline $M(C_4)$ & $L(\lambda_4)$ & $L(\lambda_2)$ & First action on $L(E_8)$ & Second action on $L(E_8)$
\\ \hline $4_1^2$ & $4_1^2,4_2,1^4$ & $4_1,4_2^4,1^6$ & $4_1^{33},4_2^{12},1^{68}$ & $4_1^{12},4_2^{33},1^{68}$
\\ $4_1,4_2$ & $16$ & $16,4_1,4_2,1^2$ & $16^9,4_1^9,4_2^9,1^{32}$ & $16^9,4_1^9,4_2^9,1^{32}$
\\ $4_1,1^4$ & $4_2^4$ & $4_1^4,4_2,1^6$ & $4_1^{12},4_2^{33},1^{68}$& $4_1^{33},4_2^{12},1^{68}$
\\ \hline $8_1$ & $8_2^2$ & $8_1,4_1^2,4_2^2,1^2$ & $8_2^{25},4_1^2,4_2^2,1^{32}$ & $8_1^8,8_2^3,4_1^{16},4_2^{16},1^{32}$
\\ \hline
\end{tabular}
\end{small}
\end{center}

Finally, suppose that $H$ embeds in any other maximal positive-dimensional subgroup $\mb X$. If $\mb X$ has a factor $A_1$, $A_2$ or $G_2$ then it may be removed, and if the resulting subgroup may be placed inside a previously considered subgroup, then we are done. This completes the proof for $\mb X$ one of $E_7A_1$, $E_6A_2$, $F_4G_2$, and all but one parabolic subgroup, leaving us with $A_4A_4$ and the $A_4A_3$-parabolic of $A_4A_4$, so just $A_4A_4$. In this case, $H$ stabilizes a line on both $M(A_4)$s, so $H$ is contained in an $A_3A_3$-parabolic of $A_4A_4$, which is contained in $A_7$, completing the proof for embeddings of $\Alt(6)$.

We need also consider the triple cover $3\cdot \Alt(6)$, embedding in a Lie type group whose simply connected version has a centre of order $3$ but that does not occur in $\mb G$, i.e., $A_8$ and $E_6A_2$. The faithful simple modules for $3\cdot \Alt(6)$ are $3_1$, $3_2$ and $9$ (and their duals, on which the centre acts as the inverse). If $H$ acts irreducibly on $M(A_8)$, then the composition factors of $L(E_8){\downarrow_H}$ are $16^4,4_1^{19},4_2^{19},1^{32}$, which we have seen before. Otherwise $H$ lies in a copy of $(A_2)^3$ which lies inside $E_6A_2$.

Inside $E_6A_2$, we obtain from the proof of Proposition \ref{prop:sp4ine6} below the composition factors of $3\cdot\Alt(6)$ on $M(E_6)$ and $L(E_6)$, and together with the action of $H$ on $M(A_2)$ being $3_1^*$ or $3_2^*$, we obtain the following.
\begin{center}
\begin{tabular}{ccc}
\hline Action on $M(E_6)$ & Action on $L(E_6)$ & Action on $L(E_8)$
\\ \hline $3_1^8,3_2$ & $8_1^8,1^{14}$ & $8_1^8,8_2^3,4_1^{16},4_2^{16},1^{32}$ or $8_1^{25},4_1^2,4_2^2,1^{32}$
\\ $9,3_1^3,3_2^3$ & $8_1,8_2,4_1^6,4_2^6,1^{14}$ & $8_1^8,8_2^3,4_1^{16},4_2^{16},1^{32}$ or $8_1^3,8_2^8,4_1^{16},4_2^{16},1^{32}$
\\ \hline
\end{tabular}
\end{center}
These have already occurred, so we gain no new possibilities.
\end{proof}

We also need the stabilizers of lines on the minimal module for $E_6(q)$ and $E_7(q)$. (See \cite[Lemmas 5.4 and 4.3]{liebecksaxl1987}.)

\begin{lemma}\label{lem:e6linestabs} Let $G=E_6(q)$. There are three orbits of lines of the action of $G$ on $M(E_6)$, with line stabilizers as follows:
\begin{enumerate}
\item $F_4(q)$ acting on $M(E_6)$ as $L(\lambda_4)\oplus L(0)$ ($L(0)/L(\lambda_4)/L(0)$ in characteristic $3$);
\item a $D_5$-parabolic subgroup; $q^{16}D_5(q).(q-1)$, acting uniserially as \[L(\lambda_1)/L(\lambda_4)/L(0);\]
\item a subgroup $q^{16}.B_4(q).(q-1)$ acting indecomposably as \[L(0)/L(\lambda_4)/L(0),L(\lambda_1).\]
\end{enumerate}\end{lemma}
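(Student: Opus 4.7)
The plan is to invoke the classical three-orbit structure of $\mathbf{E}_6$ on $M(\mathbf{E}_6)$ and identify the stabilizers case by case, following \cite[Lemma~4.3]{liebecksaxl1987}. The group $\mathbf{E}_6$ preserves a non-degenerate cubic form $f$ on $M(\mathbf{E}_6)$ together with an associated quadratic ``sharp'' map $\#\colon M(\mathbf{E}_6)\to M(\mathbf{E}_6)^*$, and the non-zero orbits split into ``rank $3$'' ($f(v)\neq 0$), ``rank $2$'' ($f(v)=0$, $v^\#\neq 0$), and ``rank $1$'' ($v^\#=0$, the highest-weight orbit). A Lang-cohomology argument (using that each algebraic-group stabilizer is connected) shows each descends to a single $G$-orbit, giving exactly three line orbits on $M(E_6)\setminus\{0\}$.

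For the rank-$3$ orbit, by the classical theorem of Jacobson--Freudenthal the stabilizer of a nonzero value of $f$ is a full $F_4$-subgroup, giving $F_4(q)$ at the finite level. The linearization $df_v$ cuts out a codimension-$1$ $F_4$-submodule $L(\lambda_4)$ that splits off $\langle v\rangle$ outside characteristic $3$. In characteristic $3$, $\dim L(\lambda_4)=25$, and the only self-dual extension of $L(\lambda_4)$ by two trivials that can occur as a $27$-dimensional $F_4$-module embedded in $M(E_6)$ is the uniserial $L(0)/L(\lambda_4)/L(0)$, forcing that shape.

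For the rank-$1$ orbit, the stabilizer is by construction the derived group of the maximal parabolic $P_1$ with Levi type $D_5 T_1$, giving $q^{16}D_5(q).(q-1)$. The three-step filtration of $M(E_6)$ with successive quotients $L(\lambda_1), L(\lambda_4), L(0)$ (dimensions $10, 16, 1$) comes from the $T_1$-grading, and indecomposability follows because the abelian unipotent radical (itself isomorphic as a Levi-module to the half-spin module) shifts $T_1$-weights strictly downward. For the rank-$2$ orbit, orbit-stabilizer on $\mathbf{E}_6$ acting on the rank-$2$ variety (of dimension $26$) gives a line stabilizer of dimension $53$; the stabilizer lies inside the appropriate maximal parabolic, and the stabilizer of the additional datum carried by $v$ inside $D_5$ is the subgroup $\mathrm{Spin}(9)=B_4$, producing $q^{16}B_4(q).(q-1)$. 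To compute the restriction of $M(E_6)$ I would restrict the $P_1$-filtration: the middle $L(\lambda_4)$ stays irreducible as the $B_4$-spin module, while the top $L(\lambda_1)$ (natural $D_5$-module) decomposes under $B_4 < D_5$ as $L(\lambda_1)_{B_4}\oplus L(0)$, combining with the bottom trivial to yield the indecomposable shape $L(0)/L(\lambda_4)/L(0),L(\lambda_1)$.

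The hard part will be pinning down the precise indecomposable structures, especially the characteristic-$3$ fusion in orbit (i) and the compound top in orbit (ii), which requires a careful check that the relevant extensions do not split; this is best done by tracking the abelian unipotent radical of $P_1$ acting on consecutive filtration steps and using self-duality of $M(E_6){\downarrow_{F_4}}$. The orbit classification, stabilizer shapes, and dimension counts are all classical, so the cleanest route is to cite \cite[Lemma~4.3]{liebecksaxl1987} and let the sketch above stand in for the details.
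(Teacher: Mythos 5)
The paper gives no proof of this lemma at all: it is quoted directly from \cite[Lemmas 5.4 and 4.3]{liebecksaxl1987}, which is also where your sketch ultimately defers, so in that sense your route coincides with the paper's. However, two steps of the sketch are wrong as written. In case (iii) you restrict ``the $P_1$-filtration'', i.e.\ the filtration with $L(0)$ at the socle and $L(\lambda_1)$ on top; restricting that filtration to the preimage of $B_4$ can only produce the dual shape $L(0),L(\lambda_1)/L(\lambda_4)/L(0)$, since the unipotent radical still maps the top $L(\lambda_1)$ non-trivially into the middle $L(\lambda_4)$, so the $9$-dimensional piece cannot drop into the socle. The rank-two line stabilizer in fact lies in a $D_5$-parabolic of the \emph{other} class --- the stabilizer of the rank-one line $\gen{v^\#}$ of $M(E_6)^*$ --- and on $M(E_6)$ that parabolic acts uniserially with $L(\lambda_1)$ at the bottom and $L(0)$ at the top; restricting \emph{this} filtration to $q^{16}.B_4(q).(q-1)$ gives socle $L(0)\oplus L(\lambda_1)$ and top $L(0)$, as the lemma states. (This is precisely the remark made after Lemma \ref{lem:e7linestabs}: the $B_4$ sits in the Levi of the other class of $D_5$-parabolic.)

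Second, the Lang-theorem justification ``each algebraic-group stabilizer is connected'' is not true for the rank-three \emph{line} stabilizer when $p\neq 3$: in the simply connected group it is $F_4\times Z$ with $Z\cong\mu_3$ the centre acting by scalar cube roots of unity, so it is disconnected, and the class of $f(v)$ in $\F_q^\times/(\F_q^\times)^3$ is the associated invariant; this has to be addressed before asserting exactly three line orbits of $E_6(q)$, and that bookkeeping is part of what the citation to \cite{liebecksaxl1987} is carrying. A smaller slip of the same vector-versus-line kind occurs in case (ii): the line stabilizer is the full parabolic $q^{16}D_5(q).(q-1)$ (the group you display), not its derived group --- the derived group is essentially the stabilizer of a highest-weight \emph{vector}.
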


\begin{lemma}\label{lem:e7linestabs}  Let $G=E_7(q)$. There are five orbits of lines of the action of $G$ on $M(E_7)$, with line stabilizers as follows:
\begin{enumerate}
\item $E_6(q).2$ (the graph automorphism) acting semisimply with composition factors of dimensions $54,1,1$;
\item ${}^2\!E_6(q).2$ (the graph automorphism) acting semisimply with composition factors of dimensions $54,1,1$;
\item an $E_6$-parabolic subgroup $q^{27}.E_6(q).(q-1)$ acting uniserially as \[L(0)/L(\lambda_1)/L(\lambda_6)/L(0);\]
\item a subgroup $q^{1+32}.B_5(q).(q-1)$ acting indecomposably as \[L(\lambda_1),L(0)/L(\lambda_5)/L(\lambda_1),L(0);\]
\item a subgroup $q^{26}.F_4(q).(q-1)$ acting indecomposably as \[L(0),L(0)/L(\lambda_4)/L(\lambda_4)/L(0),L(0).\]
\end{enumerate}\end{lemma}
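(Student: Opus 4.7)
The plan is to classify $\mb G$-orbits on non-zero vectors of $V = M(E_7) = L(\lambda_7)$ at the level of the algebraic group $\mb G = E_7$ and then descend to $G = E_7(q)$ via the Lang--Steinberg theorem. Recall that $V$ carries a $\mb G$-invariant symplectic form and a $\mb G$-invariant quartic form $q$ (the Freudenthal quartic), and the $\mb G$-orbits on $V\setminus\{0\}$ are classified by \emph{Freudenthal rank} $r\in\{1,2,3,4\}$, giving exactly four orbits; these descend to four orbits of lines. Each point on a generic line is a rank-$4$ element, and points of lower rank are characterised by vanishing of appropriate covariants of $q$.

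Next I would identify the connected component of each line stabiliser in $\mb G$. The rank-$1$ orbit is the highest-weight orbit, so its line stabiliser is the maximal parabolic $P_7$ omitting the end node of the short branch of the $E_7$ Dynkin diagram, of shape $q^{27}.E_6(q).(q-1)$ after descent. The rank-$2$ and rank-$3$ line stabilisers are the two other maximal $\sigma$-stable non-reductive subgroups of $\mb G$ fixing a non-parabolic flag; their Levi factors are of type $F_4$ and $B_5$ respectively (by analysing the Lie algebra centraliser of a representative), and the unipotent radicals have the stated shapes $q^{26}$ and $q^{1+32}$ (the latter being non-abelian in odd characteristic because the bracket pairing on the short-root space takes values in a $1$-dimensional quotient). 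The rank-$4$ (generic) stabiliser is a reductive subgroup of shape $E_6\cdot 2$, where the outer involution is induced by the graph automorphism of $E_6$; this is classical (e.g., via the isomorphism $E_7/\mu_2 \cong \mathrm{Aut}(\mathcal{J})$ for the exceptional Jordan algebra $\mathcal J$, with $E_6\cdot 2$ the stabiliser of the cubic norm on $\mathcal J$).

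For the descent to $G$, each $\sigma$-stable connected stabiliser contributes a single $G$-orbit via Lang--Steinberg, giving (iii), (iv) and (v). The generic stabiliser has component group $\Z/2\Z$, so $H^1(\sigma,\Z/2\Z)$ produces two $G$-orbits: one in which $\sigma$ acts trivially on the component group, yielding stabiliser $E_6(q).2$ (case (i)), and one in which $\sigma$ acts via the outer element, yielding stabiliser ${}^2\!E_6(q).2$ (case (ii)). This accounts for all five orbits.

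Finally, the module structures are extracted from Levi decomposition. For (iii), restrict $V$ through $P_7$: the Levi $E_6\cdot T_1$ acts on the graded pieces of the radical filtration with $T_1$-weights $\pm 3, \pm 1$ and factors $L(0), L(\lambda_1), L(\lambda_6), L(0)$ (computed from the action of the Levi on $V$ via its embedding in $\mb G$), and indecomposability of the filtration follows because $V$ is generated as a $P_7$-module by its highest weight vector. Cases (iv) and (v) proceed analogously, using the restriction of $V$ to the Levi $B_5 T_1$ or $F_4 T_1$ (computed from the embeddings $B_5\hookrightarrow D_6\hookrightarrow E_7$ and $F_4\hookrightarrow E_6\hookrightarrow E_7$) together with the commutator structure of the unipotent radical to force the stated indecomposable socle layers. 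The main obstacle is checking that these filtrations really are indecomposable in the form claimed (as opposed to splitting off a trivial summand): this requires verifying, for (iv) and (v), that the pairing of the top layer with the socle induced by the symplectic form on $V$ is non-degenerate, so that the fixed $T_1$ witnesses no trivial direct summand; the dimension checks ($11+1+32+11+1 = 56$ in (iv) and $1+1+26+26+1+1 = 56$ in (v), with the char-$2$ adjustment using the $10$-dimensional $B_5$-natural absorbing an extra trivial) then force the precise shape given.
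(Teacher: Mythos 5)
First, a point of comparison: the paper does not prove this lemma at all — it is quoted from Liebeck and Saxl \cite{liebecksaxl1987} (the citation given just before Lemma \ref{lem:e6linestabs}) — so the economical fix is simply to cite that source; judged as a proof in its own right, your sketch follows the natural strategy (stratify $M(E_7)$ by rank, identify algebraic stabilisers, descend by Lang--Steinberg), but the steps you defer are exactly where the content lies, and two of them are wrong as stated. The most serious is the descent for the open stratum. For $q$ odd the centre of the simply connected $\mb G$ acts on $M(E_7)$ as $-1$, so it lies in every line stabiliser but not in the connected stabiliser $E_6$ (it is the involution in the $T_1$ factor of the $E_6$-Levi); together with an element inducing the graph automorphism, the component group of the generic line stabiliser therefore has order four, not two. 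If $\sigma$ acted trivially on it, Lang--Steinberg would give four rational orbits on the open stratum and seven in total, contradicting the statement; the correct count of two, and the separation into $E_6(q).2$ and ${}^2\!E_6(q).2$, comes from a nontrivial $\sigma$-action on this order-four component group (equivalently, from a careful bookkeeping of the centre), none of which appears in your sketch. You also assert, but do not verify, that the $q^{26}F_4T_1$- and $q^{1+32}B_5T_1$-type stabilisers are connected (needed for ``one orbit each''), and the appeal to ``$E_7/\mu_2\cong\mathrm{Aut}(\mathcal J)$'' is garbled: $\mathrm{Aut}(\mathcal J)=F_4$; the relevant classical fact is that adjoint $E_7$ is the automorphism group of the Freudenthal triple system on $k\oplus\mathcal J\oplus\mathcal J\oplus k$, with $E_6$ the isometry group of the cubic norm.

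Second, the algebraic input and the stabiliser identifications are not established. There are not ``exactly four orbits'' of $\mb G$ on $V\setminus\{0\}$ (the quartic takes every value on rank-$4$ vectors, giving a one-parameter family); what you need is four orbits on \emph{lines}, i.e.\ transitivity on each rank stratum up to scalars, and this must be proved in every characteristic, including $2$ and $3$, where the usual Freudenthal-rank formalism and Lie-algebra arguments need care (stabiliser schemes need not be smooth: in characteristic $3$ the annihilator in the Lie algebra of a generic vector already picks up the extra toral direction). The identification of the rank-$2$ and rank-$3$ stabilisers is deferred entirely to an unperformed ``Lie algebra centraliser'' computation, and as written your matching is backwards: the rank-$2$ lines form the $44$-dimensional orbit whose stabiliser is the $q^{1+32}.B_5(q).(q-1)$ group ($133-89=44$), while the rank-$3$ lines — the generic points of the quartic hypersurface — have stabiliser $q^{26}.F_4(q).(q-1)$ ($133-79=54$); a representative for the latter is an invertible element of the $27$ in the $T_1$-grading $1\oplus 27\oplus\overline{27}\oplus 1$, for which the computation is short and would have revealed the switch. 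Finally, the module structures: $M(E_7)$ is \emph{not} generated as a $P_7$-module by a highest-weight vector (the stabilised line is the socle of the restriction and generates only itself), so your reason for uniseriality in (iii) fails and must be replaced, e.g.\ by noting that the fixed space of the unipotent radical on each successive quotient is the next layer; and in (v) you treat characteristic $2$ but not characteristic $3$, where $L(\lambda_4)$ is $25$-dimensional and the displayed factors no longer sum to $56$ without extra trivial composition factors.
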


In the algebraic group, there are still three orbits for $E_6$, with stabilizers: $F_4$; one of the two conjugacy classes of $D_5$-parabolic subgroup, and the preimage of $B_4$ in the Levi subgroup of the other class of $D_5$-parabolic subgroups. There are four orbits of lines on $M(E_7)$, with stabilizers: an $E_6$-Levi subgroup; an $E_6$-parabolic subgroup; the preimage of $B_5$ in the Levi subgroup of a $D_6$-parabolic; the preimage of $F_4$ in the Levi subgroup of an $E_6$-parabolic subgroup. This can easily be seen from the previous two lemmas.

\newpage

\chapter{Techniques for proving the results}
\label{chap:techniques}

This short chapter, as with \cite[Chapter 9]{craven2015un}, aims to give a general overview of the ideas used in proving Theorems \ref{thm:maximale8} and \ref{thm:exceptionalmaximal}. We will outline the strategy in this subsection, and then use the example of $G_2$ to illustrate the methods in the next subsection.

\section{The strategy}

First, we use Theorem \ref{thm:classmaximalsubgroup} to define which particular subgroups we need to show are strongly imprimitive. These are: groups of semisimple rank at most half that of $\mb G$ and field size at most $9$, $\PSL_3(16)$ and $\PSU_3(16)$, and groups $\PSL_2(q)$, ${}^2\!B_2(q)$ and ${}^2\!G_2(q)$ for all $q$ such that the groups do not contain a semisimple element not in $T(\mb G)$ (see Definition \ref{defn:T(G)}) using Theorem \ref{thm:largeorderss}.

For those groups $H$ that remain, we show that $H$ always stabilizes the members of some $N_{\Aut^+(\mb G)}(H)$-orbit of subspaces of either $M(\mb G)$ or $L(\mb G)$ that is also stabilized by a positive-dimensional subgroup. We then apply Theorem \ref{thm:intersectionorbit} to see that $H$ is strongly imprimitive, as desired. In particular, if $H$ is a blueprint for some module then this is usually enough.

If $\mb G$ is not $E_8$ then we can use Theorem \ref{thm:largeorderss} to prove that some of the groups above are blueprints for the minimal module. However, there are two issues with this plan. Now it is possible for our subgroup $H$ to act irreducibly on $M(\mb G)$, whereas it cannot act irreducibly on $L(\mb G)$, and so it is obviously a blueprint. (Such groups are enumerated in \cite{liebeckseitz2004a}, and include for example $\PSL_3(3)$ in $E_6$ in characteristic $3$.) In addition, the minimal module is not always stable under $\Aut^+(\mb G)$, in particular when there is a graph automorphism, so we cannot always use this. With these two caveats, the remaining groups that possess a semisimple element as described in Theorem \ref{thm:largeorderss} are strongly imprimitive.

Thus we assume that we have a `small' group $H$. Using the tables of known character values that semisimple elements of $\mb G$ can take on $M(\mb G)$ and $L(\mb G)$, we produce a list of candidate possibilities for the composition factors for $M(\mb G){\downarrow_H}$ and $L(\mb G){\downarrow_H}$, the conspicuous sets. (If $\mb G$ is $E_8$ then we just have $L(\mb G)$.) Note that these have to be consistent in two different ways. First, the traces need to be consistent with the power map. This is equivalent to stating that the eigenvalues of semisimple elements of $H$ on the modules need to match those of semisimple classes of $\mb G$, not just the traces. The second is that specifying traces of elements on $M(\mb G)$ will force the elements of $H$ to belong to particular classes of $\mb G$, possibly with some ambiguity if more than one $\mb G$-class has the same eigenvalues on $M(\mb G)$. These classes have corresponding eigenvalues on $L(\mb G)$, and those must correspond to a conspicuous set of factors as well.

If we have elements $x$ of large order, say much larger than $30$ or $40$, we do not tend to have tables of their eigenvalues, as there are too many to compute easily. What we can do, if the order of $x$ is composite, is use the \emph{roots trick}: if $x^n$ has smaller order than $x$, then find an element of a maximal torus $\mb T$ in the class to which $x^n$ belongs (which we know from the eigenvalues of $x^n$) and then consider all $n$th roots of $x^n$ in $\mb T$ of order that of $x$. This much smaller set can be enumerated and eigenvalues found. They can then be used to compute the conspicuous sets of factors. If this is still too many, if $x$ has order $mn$ then we can specify the eigenvalues of $x^n$ and $x^m$, run through all elements of order $n$ in $\mb T$ with eigenvalues that of $x^m$ and multiply them by our given element of $\mb T$ corresponding to $x^n$ to find all possibilities for the eigenvalues of $x$. (This is used exactly once in this article, for ${}^2\!B_2(512)$. Usually $n$ is one of $2,3,5,7$, and so there are few enough $n$th roots to just use all of them. In the one case where we don't, $n=13$.)

We therefore obtain a list of sets of composition factors for $M(\bG)$ and/or $L(\bG)$. We then must go through each in turn, so we will call them `Case 1', `Case 2', and so on. Many proofs subdivide into these cases, and we will write \textbf{Case 1} and so on at the start of each new set of composition factors.

\medskip

Thus at this point we may assume that we are given a conspicuous set of composition factors for $M(\mb G)$ or $L(\mb G)$. We first compute its pressure, so the sum of the $1$-cohomologies of the composition factors, minus the number of trivial composition factors. If we have a trivial composition factor and this pressure is non-positive, then $H$ stabilizes a line or hyperplane on the module by Proposition \ref{prop:pressure}, and then $H$ is strongly imprimitive by Lemma \ref{lem:fix1space}. Many conspicuous sets of composition factors fail this easy test.

\medskip

We are then left with those cases where we need to study the potential module structure of $M(\mb G){\downarrow_H}$ or $L(\mb G){\downarrow_H}$. For this we have a few tools: importantly, $L(\bG)^\circ$ is always self-dual (unless $\bG=F_4$ and $p=2$), and $M(\bG)^\circ$ is as well unless $\bG=E_6$. In particular, if $H$ does not stabilize a line on $L(\bG)^\circ$ then $H$ does not stabilize a hyperplane either. Often we argue by contradiction that $H$ must stabilize a line on $L(\bG)^\circ$, and assuming the contrary will necessarily mean that $H$ does not stabilize a hyperplane either. We will usually not explicitly note this each time we use this extra information, as we will use it so frequently.

The possible Jordan block structures of unipotent elements of $\mb G$ are tabulated in \cite{lawther1995}, so our unipotent elements of $H$ must act in a way in those tables. Furthermore, Lemma \ref{lem:genericmeansblueprint} states that if $H$ contains an element in some of the unipotent classes of $\mb G$, the generic ones, then $H$ is always a blueprint for that module. Thus we may assume that $H$ acts irreducibly on the module in question, or is strongly imprimitive.

We also have subgroups of $H$, particularly centralizers of semisimple elements. The centralizers of semisimple elements of low order in exceptional groups are known, and tabulated in various places, for example \cite{frey1998a} for $E_8$. If a proper subgroup $L$ of $H$ is known to be imprimitive, either because we will prove it in this paper or for some other reason, one may work through the members of $\ms X$ to try to understand the module structures of $M(\mb G){\downarrow_L}$ and $L(\mb G){\downarrow_L}$. This must be compatible with the module structures for $H$.

In particular, if we can show that some $kL$-submodule of the space is stabilized by both $H$ and a positive-dimensional subgroup $\mb X$ (but not the whole of $\mb G$), then $\gen{H,\mb X}$ also stabilizes that space, and must be proper in $\mb G$. This proves that $H$ is not Lie primitive, and if the same holds for an orbit of spaces under $N_{\Aut^+(\mb G)}(H)$, then this shows that $H$ is strongly imprimitive by Theorem \ref{thm:intersectionorbit}. One good way to do this is in the case $L=\gen x$: we look for elements of $\mb T$ that stabilize all of the eigenspaces of the action of $x$ that appear in the $k\gen x$-submodule. Then those elements of $\mb T$ must also stabilize the submodule. Find enough such elements and the stabilizer of the subspace must be positive dimensional. The roots trick is a useful tool to look for these elements of $\mb T$.

To constrain the structure of $M(\mb G){\downarrow_H}$ and $L(\mb G){\downarrow_H}$, we often must use the computer package Magma, which allows us to construct the largest possible module with a given socle and certain composition factors or other restrictions. The main idea is that of a `pyx'. If $W$ is a $kH$-module, then a \emph{pyx} for $W$ is simply any module $V$ such that $W$ is a subquotient of $V$. Normally, we will construct a pyx $V$ for $W$ by forcing $\soc(V)=\soc(W)$, and then build $W$ according to some universal property. The most obvious one is that $V$ is the $\cf(W)$-radical of $P(\soc(W))$, which of course must be an overmodule of $W$. More generally, if we have a filtration of $W$, then we may construct $V$ by taking repeated radicals with composition factors according to the filtration of $W$. This will in general be smaller than the $\cf(W)$-radical, and hopefully small enough that one can gain information about $W$ from $V$.

We will now see these techniques in action, as we (almost) prove that every defining-characteristic subgroup of $G_2$ is strongly imprimitive.

\section{An example}
\label{sec:example}
We use the techniques and preliminary lemmas on a toy example: $G_2$. While the maximal subgroups of $G_2(q)$ are known \cite{cooperstein1981,kleidman1988}, this will illustrate some of the points involved. We also deliberately do not do it in the easiest way in order to show more of the techniques we will use.

Let $\mb G$ be of type $G_2$ in characteristic $p$, and let $H\leq \mb G$ be a simple group that is of Lie type in characteristic $p$. We suppose that $H\cong \PSL_2(q)$ with $q$ a power of $p$.

\medskip

\noindent\textbf{Step 1}: Use Theorem \ref{thm:largeorderss} to bound $q$.

\medskip \noindent Theorem \ref{thm:largeorderss} states that $H$ is a blueprint for $L(G_2)$, and hence strongly imprimitive, if $H$ contains a semisimple element of order at least $13$. Since $H$ contains a semisimple element of order $(q+1)/2$ (unless $p=2$, in which case it possesses an element of order $q+1$), we have that $H$ is blueprint for $L(G_2)$ whenever $q\geq 25$. Thus we may assume from now on that $q\leq 23$ (and we can exclude $q=16$ as well). So our set of $q$ is
\[\{ 4,5,7,8,9,11,13,17,19,23\}.\]

\noindent\textbf{Step 2}: Switch to $M(G_2)$ and use Theorem \ref{thm:largeorderss} to exclude more cases.

\medskip \noindent We cannot use Theorem \ref{thm:largeorderss} when $p=3$ as $M(G_2)$ is not graph stable in this case. Also, if $p\geq 7$ then from \cite[Table 1.2]{liebeckseitz2004a} we see that $H$ can act irreducibly on $M(G_2)$. If $H$ contains a semisimple element of order at least $5$ then Theorem \ref{thm:largeorderss} applies, but that is true for $\PSL_2(q)$ for all $q$ except $q=5,7$. Thus in all other cases we may assume that $H$ acts irreducibly on $M(G_2)$ (and thus this excludes $q=4,8$).

\medskip

\noindent \textbf{Step 3}: Use unipotent actions to eliminate more cases.

\medskip\noindent Let $u$ be an element of order $p$ in $H$. If $u$ is generic for $L(G_2)$ then $u$ is a blueprint for $L(G_2)$ by Lemma \ref{lem:genericmeansblueprint}, and hence so is $H$. Thus $H$ is strongly imprimitive. Consulting \cite[Table 2]{lawther1995}, we see that this is the case whenever $p\geq 11$. If $q=5$, we see from \cite[Table 1]{lawther1995} that $u$ is generic for $M(G_2)$, and $H$ cannot act irreducibly on $M(G_2)$, so $H$ is a blueprint for $M(G_2)$ and again $H$ is strongly imprimitive.

Thus $q$ must be either $7$ or $9$.

\medskip

\noindent \textbf{Step 4}: Completing $q=9$ using pressure and graph automorphism.

\medskip\noindent We now look at the conspicuous sets of composition factors for $M(G_2){\downarrow_H}$. The simple modules for $\PSL_2(9)$ are of dimensions $1$, $3$, $3$, $4$ and $9$. If there is a trivial factor then the dimensions must be $3^2,1$ or $3,1^4$; since the $3$s have zero $1$-cohomology, in both cases $H$ has negative pressure on $M(G_2)$ and so $H$ stabilizes a line on $M(G_2)$ by Proposition \ref{prop:pressure}, hence is strongly imprimitive by Lemma \ref{lem:fix1space}. (In fact, we note that if $H$ stabilizes a line on $M(G_2)$ then its stabilizes a line on $L(G_2)$, and hence is strongly imprimitive. This distinction will be used for $F_4$ and $p=2$ in Chapter \ref{chap:subsinf4}.)

The other case is that the composition factors are of dimensions $4$ and $3$, and up to field automorphism of $H$ there is a unique such set of composition factors. As $M(G_2)$ is self-dual, the restriction to $H$ must be semisimple.

\begin{table}\begin{center}\begin{tabular}{cccccc}
\hline Module &1A & 2A & 4A&5A&5B
\\ \hline $1$&$1$&$1$&$1$&$1$&$1$
\\ $3_1$&$3$&$-1$&$1$&$\lambda+1$&$\bar\lambda+1$
\\ $3_2$&$3$&$-1$&$1$&$\bar\lambda+1$&$\lambda+1$
\\ $4$&$4$&$0$&$-2$&$-1$&$-1$
\\ $9$&$9$&$1$&$1$&$-1$&$-1$
\\ \hline
\end{tabular}\end{center}
\caption{Brauer character table for $\PSL_2(9)$ and $p=3$. Here, $\lambda$ is the sum of a fifth root of unity and its inverse.}
\label{t:brauercharsofpsl29}
\end{table}
The Brauer character table of $\PSL_2(9)$ is given in Table \ref{t:brauercharsofpsl29}. If $H$ acts on $M(G_2)$ as $3_1\oplus 4$, then we can use this table, and the traces of semisimple elements on $M(G_2)$ and $L(G_2)$, to deduce the composition factors of the image of $H$ under the graph automorphism on $M(G_2)$. This is because $L(G_2)$ has as composition factors $M(G_2)$ and its image under the graph automorphism.

The traces of elements of orders $1$, $2$, $4$, $5$ and $5$ of $H$ on $M(G_2)$ are $7$, $-1$, $-1$, $\lambda$ and $\bar\lambda$ respectively. This uniquely determines the semisimple classes of $G_2$ to which these elements of $H$ belong, and their traces on $L(G_2)$ are given by $14$, $-2$, $2$, $\bar\lambda+2$ and $\lambda+2$ respectively. These traces yield composition factors $1,3_1,3_2,3_2,4$, and so the action of $H$ on the factor of $L(G_2)$ other than $M(G_2)$ is $1\oplus 3_2\oplus 3_2$. In particular, either $H$ or its image under the graph automorphism stabilizes a line on $M(G_2)$. Hence $H$ always stabilizes a line on $L(G_2)$, and is strongly imprimitive by Lemma \ref{lem:fix1space}.

\medskip

\noindent \textbf{Step 5}: Completing $q=7$ using $L(G_2)$.

\medskip\noindent As $H$ acts irreducibly on $M(G_2)$ we cannot do much, so we switch to $L(G_2)$. As with Step 4, we compute the traces of elements of $H$ on $M(G_2)$, which are $7$, $-1$, $1$ and $-1$ for orders $1$, $2$, $3$ and $4$ respectively. These correspond to traces on $L(G_2)$ of $14$, $-2$, $-1$ and $2$ respectively.

\begin{table}\begin{center}\begin{tabular}{ccccc}
\hline Module &1A & 2A & 3A&4A
\\ \hline $1$&$1$&$1$&$1$&$1$
\\ $3$&$3$&$-1$&$0$&$1$
\\ $5$&$5$&$1$&$-1$&$-1$
\\ $7$&$7$&$-1$&$1$&$-1$
\\ \hline
\end{tabular}\end{center}
\caption{Brauer character table for $\PSL_2(7)$ and $p=7$.}
\label{t:brauercharsofpsl27}
\end{table}
We normally use a computer to calculate the conspicuous sets of composition factors, as it is well suited to linear algebra problems.  In this case the set of composition factors is $5,3,3,3$, as we can see from Table \ref{t:brauercharsofpsl27}.

To understand how these composition factors assemble into the module for $L(G_2){\downarrow_H}$, we use the action of the unipotent element $u$. This acts on the module $7$ with a single Jordan block, hence lies in class $G_2$ by \cite[Table 1]{lawther1995}, i.e., it is regular unipotent. The action of this class on $L(G_2)$ is given in \cite[Table 2]{lawther1995}, and the blocks are $7^2$. Thus $H$ acts projectively on $L(G_2)$, and thus must act as the projective module $P(3)$, which has structure
\[ 3/3,5/3.\]

At this stage the representation theory has done all that it can: it has determined completely the actions of $H$ on the minimal and adjoint modules. This type of subgroup was referred to as a \emph{Serre embedding} in \cite{craven2015un}, and is not amenable to our methods, being solved in general in \cite{burnesstesterman2019}. (Note that this case is mentioned in \cite{liebeckseitz2004a} but does not seem to appear in their proof, although it is of course handled in \cite{kleidman1988}.)

\medskip

With such a small case we cannot showcase all of our techniques, in particular the construction of a pyx. In the final step the projective was the whole module, so the pyx is just the projective module itself.

We also cannot use the roots trick to find elements of large order that stabilize the subspace $3$ above. Although they exist (the maximal $A_1$ subgroup stabilizes a subspace decomposition $3/8/3$) since they do not stabilize \emph{all} copies of $3$ in the module, one will normally not be able to prove that some semisimple element in $\mb T$ of large order stabilizes exactly the $3$ in the socle. Indeed, in this case we cannot.

\newpage

\chapter{Modules for groups of Lie type}
\label{chap:labelling}

In this chapter we will discuss facts about the subgroups $H$ that we will consider. In particular, we will give the dimensions of the simple modules for $H$ when $q=2,3,5,7$ (the dimensions for $q=4,8,9,16$ can be deduced from Steinberg's tensor product theorem) and the dimensions of the $1$-cohomologies of the simple modules for $q$ a prime, and for $q=8,9,16$ for those groups that we need. For $H$ of ranks $2$ and $3$, and rank $4$ for $q=2,3,5$, we give the dimensions of the modules and the dimensions of the $1$-cohomologies of simple modules of dimension at most $124$, and of self-dual modules of dimension at most $248$. These are given in Tables \ref{t:modules2}, \ref{t:modules3}, \ref{t:modules5} and \ref{t:modules7}.

For $\mb G=E_7$, we also need the dimensions of all faithful modules (of dimension at most $28$, or self-dual and at most $56$) for a central extension $2\cdot H$ for $H$ of rank at most $3$ and $q$ odd, which we give in Table \ref{t:mods2centre}. Finally, for $\mb G=E_6$, we need dimensions of faithful modules (of dimension at most $27$) for a central extension $3\cdot H$ for $H$ of rank at most $3$ and $q$ not a power of $3$, which are in Table \ref{t:mods3centre}.

One may compute $\Ext^1$ between simple modules in Magma easily: if $\mathtt{A}$ and $\mathtt{B}$ are modules, $\mathtt{Ext(A,B)}$ computes the space of extensions. Furthermore, extensions between simple modules for $\PSL_3(q)$, $\PSU_3(q)$, $\PSp_4(q)$ and ${}^2\!B_2(q)$ for $q$ even are given in \cite{sin1992,sin1992b}, and extensions for $G_2(q)$ and ${}^2\!G_2(q)$ for $q$ a power of $3$ in \cite{sin1993}. We tabulate $\Ext^1(M,k)$ here for $M$ a simple module, as these are the extensions that we need the most. However, we will use information about extensions between other simple modules, and we use Magma to compute this information, together with modules realizing the extensions, using the commands
\begin{verbatim}
V,rho:=Ext(A,B); E:=MaximalExtension(A,B,V,rho);
\end{verbatim} 
(This requires that $B$, but not necessarily $A$, is simple.)

\begin{table}
\begin{center}
\begin{tabular}{cc}
\hline Group & Modules of dimension at most $248$
\\ \hline $\PSL_3(2)$&$1,\mb{3}^\pm,8$
\\ $\PSp_4(2)'$&$1,\mb{4_1},\mb{4_2},8_1,8_2$
\\ $G_2(2)'$&$1,\mb{6},14,32_1,32_2$
\\\hline$\PSL_4(2)$&$1,4^\pm,\mb{6},\mb{14},\mb{20}^\pm,64$
\\$\PSU_4(2)$&$1,\mb{4^\pm},6,\mb{14},20^\pm,64$
\\$\PSp_6(2)$&$1,\mb{6},8,14,\mb{48},64,112$
\\\hline $\PSL_5(2)$&$1,5^\pm,10^\pm,24,\mb{40_1^\pm},40_2^\pm,\mb{74}$
\\$\PSU_5(2)$&$1,5^\pm,\mb{10^\pm},24,40_1^\pm,40_2^\pm,\mb{74}$
\\$\PSp_8(2)$&$1,\mb{8},16,\mb{26},\mb{48},128,160,\mb{246}$
\\ $\POmega^+_8(2)$&$1,8_1,8_2,8_3,\underline{26},\mb{48_1},\mb{48_2},\mb{48_3},160_1,160_2,160_3,\mb{246}$
\\ $\POmega^-_8(2)$&$1,8_1,8_2,8_3,\underline{26},\mb{48_1},48_2,48_3,160_1,\mb{160_2},\mb{160_3},\mb{246}$
\\ ${}^3\!D_4(2)$&$1,8_1,8_2,8_3,\underline{26},48_1,48_2,48_3,\mb{160_1},\mb{160_2},\mb{160_3},\mb{246}$
\\ $F_4(2)$ & $1,26_1,26_2,\mb{246_1},\mb{246_2}$
\\ ${}^2\!F_4(2)'$ & $1,26,246$
\\ \hline
\end{tabular}\end{center}
\caption{Simple modules of dimension at most $248$ (or dual pair up to dimension $124$) for groups over a field of two elements. Bold indicates a $1$-dimensional $1$-cohomology, underlined indicates a $2$-dimensional.}
\label{t:modules2}
\end{table}

\begin{table}\begin{center}
\begin{tabular}{cc}
\hline Group & Modules of dimension at most $248$
\\ \hline $\PSL_3(3)$&$1,3^\pm,6^\pm,\mb 7,\mb{15}^\pm,27$
\\ $\PSU_3(3)$&$1,3^\pm,\mb 6^\pm,\mb 7,15^\pm,27$
\\$\PSp_4(3)$&$1,\mb 5,10,\mb{14},25,81$
\\ $G_2(3)$&$1,7_1,7_2,27_1,27_2,\underline{49},189_1,189_2,729$
\\ ${}^2\!G_2(3)'$&$1,\mb 7,9_1,9_2,9_3$
\\\hline$\PSL_4(3)$&$1,6,10^\pm,15,\mb{19},\underline{44},45^\pm,\mb{69},156$
\\$\PSU_4(3)$&$1,15,\mb{\underline{19}},45^\pm,\mb{69},156$
\\$\Omega_7(3)$&$1,7,21,27,35,\mb{63},\mb{\underline{132}},189_1,189_2$
\\$\PSp_6(3)$&$1,\mb{13},21,57,84,\mb{90},189$
\\\hline $\PSL_5(3)$&$1,5^\pm,10^\pm,15^\pm,24,30^\pm,45_1^\pm,45_2^\pm,51,65^\pm,105^\pm,\mb{199}$
\\$\PSU_5(3)$&$1,5^\pm,10^\pm,15^\pm,24,30^\pm,\mb{45_1^\pm},45_2^\pm,51,65^\pm,105^\pm,\mb{199}$
\\$\Omega_9(3)$&$1,9,36,\mb{43},84,126,147$
\\$\PSp_8(3)$&$1,27,36,\mb{41}$
\\ $\POmega^+_8(3)$&$1,28,35_1,35_2,35_3,\mb{\underline{195}}$
\\ $\POmega^-_8(3)$&$1,8,28,35_1,35_2,35_3,56,104,\mb{195},224_1,224_2$
\\ ${}^3\!D_4(3)$&$1,8_i,28,35_i,56_i,104_i,195,224_j$
\\ $F_4(3)$&$1,\mb{25},52,196$
\\ \hline
\end{tabular}\end{center}
\caption{Simple modules of dimension at most $248$ (or dual pair up to dimension $124$) for groups over a field of three elements. Bold indicates a $1$-dimensional $1$-cohomology, underlined indicates $2$-dimensional, and bold and underlined indicates $3$-dimensional. Here $1\leq i\leq 3$ and $1\leq j\leq 6$ for ${}^3\!D_4(3)$.}
\label{t:modules3}
\end{table}

\begin{table}
\begin{center}
\begin{tabular}{cc}
\hline Group & Modules of dimension at most $248$
\\ \hline $\PSL_3(5)$&$1,3^\pm,6^\pm,8,10^\pm,15_1^\pm,15_2^\pm,18^\pm,19,35^\pm,\mb{39}^\pm,60^\pm,\mb{63},90^\pm,125$
\\ $\PSU_3(5)$&$1,8,10^\pm,\underline{19},35^\pm,\mb{63},125$
\\$\PSp_4(5)$&$1,5,10,\mb{13},30,35_1,35_2,55,\mb{68},86,105,\mb{115}$
\\ $G_2(5)$&$1,7,14,27,64,77_1,77_2,97,182,189$
\\\hline$\PSL_4(5)$&$1,15,20,35^\pm,45^\pm,\mb{83},85,175$
\\$\PSU_4(5)$&$1,6,10^\pm,15,20,35^\pm,45^\pm,50,58,\mb{68^\pm},70^\pm,\mb{83},85,175,184$
\\$\Omega_7(5)$&$1,7,21,27,35,77,105,141,168,182$
\\$\PSp_6(5)$&$1,14,21,63,70,90,126,189$
\\\hline $\PSL_5(5)$&$1,5^\pm,10^\pm,15^\pm,\mb{23},35^\pm,40^\pm,45^\pm,50^\pm,$
\\ &$70_1^\pm,70_2^\pm,75,103^\pm,105^\pm,121^\pm,200$
\\$\PSU_5(5)$&$1,5^\pm,10^\pm,15^\pm,\mb{23},35^\pm,40^\pm,45^\pm,50^\pm,$
\\ &$70_1^\pm,70_2^\pm,75,103^\pm,105^\pm,121^\pm,200$
\\$\Omega_9(5)$&$1,9,36,44,84,126,156,231$
\\$\PSp_8(5)$&$1,27,36,42$
\\ $\POmega^+_8(5)$&$1,28,35_1,35_2,35_3$
\\ $\POmega^-_8(5)$&$1,8,28,35_1,35_2,35_3,56,104,160,168_1,168_2$
\\ ${}^3\!D_4(5)$&$1,8_i,28,35_i,56_i,104_i,160_i,168_j,$
\\ $F_4(5)$&$1,26,52$
\\ \hline
\end{tabular}\end{center}
\caption{Simple modules of dimension at most $248$ (or dual pair up to dimension $124$) for groups over a field of five elements. Bold indicates a $1$-dimensional $1$-cohomology, underlined indicates a $2$-dimensional. Here $1\leq i\leq 3$ and $1\leq j\leq 6$ for ${}^3\!D_4(5)$.}
\label{t:modules5}
\end{table}

\begin{table}
\begin{center}
\begin{tabular}{cc}
\hline Group & Modules of dimension at most $248$
\\ \hline $\PSL_3(7)$&$1,8,10^\pm,27,28^\pm,35^\pm,37,\mb{71}^\pm,117,215$
\\ $\PSU_3(7)$&$1,3^\pm,6^\pm,8,10^\pm,15_1^\pm,15_2^\pm,21^\pm,24^\pm,27,28^\pm,33^\pm,35^\pm,$
\\ &$\mb{36}^\pm,37,42^\pm,63^\pm,71^\pm,75^\pm,105^\pm,114^\pm,117,\mb{215}$
\\$\PSp_4(7)$&$1,5,10,14,25,35_1,35_2,\mb{54},71,84,91,105,140,\mb{149},154,199,206,231$
\\ $G_2(7)$&$1,7,14,\mb{26},38,77_1,77_2,182,189,248$
\\\hline$\PSL_4(7)$&$1,6,10^\pm,15,20,35^\pm,45^\pm,50,64,70^\pm,84_1,84_2^\pm,105,175,196,231,236$
\\$\PSU_4(7)$&$1,15,20,35^\pm,45,45^\pm,84,105,175,231$
\\$\Omega_7(7)$&$1,7,21,\mb{26},35,77,105,168,182,189$
\\$\PSp_6(7)$&$1,14,21,70,84,\mb{89},126,171,189$
\\ \hline
\end{tabular}\end{center}
\caption{Simple modules of dimension at most $248$ (or dual pair up to dimension $124$) for groups over a field of seven elements. Bold indicates a $1$-dimensional $1$-cohomology.}
\label{t:modules7}
\end{table}

\begin{table}
\begin{center}
\begin{tabular}{cc}
\hline Group & Modules with non-zero $1$-cohomology
\\\hline  $\PSL_5(4)$ & $50_{i,j}^\pm,74_i$
\\ $\PSp_8(4)$ & $8_1,8_2,26_1,26_2,246_1,246_2$
\\ $\Omega_8^\pm(4)$, ${}^3\!D_4(4)$ & $\boldsymbol{26_i},208_{i,j},246_i$
\\ $\PSL_4(q)$, $q=4,8$ & $14_i,24_{i,i-1}^\pm,84_{i,i-1}$
\\ $\PSU_4(q)$, $q=4,8$ & $14_i,24_{i,i-1}^\pm,84_{i,i-1}$
\\ $\PSp_6(q)$, $q=4,8$ & $6_i,48_i,84_{i,i-1}$
\\ $\PSL_3(4)$ & $\mb{9^\pm}$
\\ $\PSL_3(q)$, $q=8,16$ & $9_{i,i+1}^\pm$
\\ $\PSU_3(q)$, $q=4,8,16$ & $9_{12}$ and images under field automorphism
\\ $\PSp_4(q)$, $q=4,8$ & $4_i$
\\ ${}^2\!B_2(q)$, all $q$ & $4_i$
\\ $G_2(4)$ & $6_i,36,84_{i,j}$
\\ $G_2(8)$ & $6_i,84_{i,i-1}$
\\ \hline $\PSL_3(9)$, $\PSU_3(9)$ & $7_i,21_{i,j}^\pm$
\\ $\PSp_4(9)$ & $64_{i,j},125_{i,j}$
\\ $G_2(9)$, ${}^2\!G_2(q)$, $q>3$ & $49$ (not all, see \cite{sin1993})
\\ \hline
\end{tabular}\end{center}
\caption{Simple modules of dimension at most $248$ (or dual pair up to dimension $124$) with non-zero $1$-cohomology for groups over a field of four, eight, nine and sixteen elements. Bold indicates a $2$-dimensional $1$-cohomology. Only those groups for which this information is needed in this article are listed.}
\label{t:modules48916}
\end{table}

\begin{table}
\begin{center}
\begin{tabular}{cc}
\hline Group & Modules of dimension at most $56$
\\\hline $\Sp_4(3)$&$4,16,40$
\\ $\Sp_4(5)$&$4,12,20,40,52$
\\ $\Sp_4(7)$&$4,16,20,24,44,56$
\\\hline $\SL_4(3)$&$4^\pm,16^\pm$
\\ $2\cdot \PSL_4(5)$&$6,10^\pm,50$
\\ $\SL_4(7)$&$4^\pm,20_1^\pm,20_2^\pm$
\\ $2\cdot \PSU_4(3)$&$6,10^\pm,44$
\\ $\SU_4(5)$&$4^\pm,20_1^\pm,20_2^\pm$
\\ $2\cdot \PSU_4(7)$&$6,10^\pm,50$
\\ $\Sp_6(3)$&$6,14,50$
\\ $\Sp_6(q)$, $q=5,7$&$6,14,56$
\\ $\Spin_7(q)$, $q=3,5$&$8,48$
\\ $\Spin_7(7)$&$8,40$
\\ \hline
\end{tabular}\end{center}
\caption{Simple modules of dimension at most $56$ (or dual pair up to dimension $28$) for $2$-fold central extensions of groups of rank at most $3$.}
\label{t:mods2centre}
\end{table}

\begin{table}
\begin{center}
\begin{tabular}{cc}
\hline Group & Modules of dimension at most $27$
\\ \hline $3\cdot \PSL_3(4)$&$3_1,3_2^*,24_1,24_2^*$
\\ $3\cdot \PSL_3(7)$&$3,6,15_1,15_2,21,24$
\\ $3\cdot \PSU_3(5)$&$3,6,15_1,15_2,18$
\\ $3\cdot \PSp_4(2)'$&$3_1,3_2^*,9$
\\ \hline
\end{tabular}\end{center}
\caption{Simple modules of dimension at most $27$ for $3$-fold central extensions of groups of rank at most $3$. (We only include modules from the one of the two (dual) faithful blocks.)}
\label{t:mods3centre}
\end{table}

\medskip

If a simple module $M$ is of dimension $n$, and is the only $kH$-module of that dimension (e.g., the Steinberg module) then we just denote it by $n$.

If $M$ is not self-dual but it and its dual are the only modules of this dimension then we label $M$ and its dual by $n$ and $n^*$, and use $n^\pm$ to mean both or either $n$ and $n^*$ (it is clear from context which we mean). However, when giving precise module structures, we will need to be specific about exactly which of the two modules is $n$ and which is $n^*$. Of course, we can make a global choice for one module (as duality induces an automorphism, so we can never fix modules precisely, only up to duality of all modules simultaneously), but then all other modules are fixed. We now describe, for each group, the choices that we make in this article. (We only include the modules that actually appear in our sets of composition factors, as these are all that is required. Thus, for example, for $\PSU_3(7)$ we do not need a label for $33^\pm$.)

If there are multiple modules of the same dimension up to duality then we use subscripts to distinguish them. We place self-dual modules first, so for example $20_1$, $20_2$ and $20_2^*$ would be three modules of dimension $20$.

For $\PSL_3(q)$ and $\PSU_3(q)$, we can be guided by the fact that $3$ lies in a non-principal block for $q=7$, and so we could like to label $6$ and $3$ so as to lie in the same block. With these kind of guidelines, we make the following choices.

\begin{itemize}
\item $\PSL_3(2)$: only $3,3^*$ to fix, so nothing to do.
\item $\PSL_3(3)$ and $\PSU_3(3)$. Fix $3$. Then $6=S^2(3^*)$ and $15=\Lambda^2(6^*)$.
\item $\PSL_3(5)$. Fix $3$. Then $6=S^2(3^*)$, $10$ lies in $6\otimes 3^*$, $15_1$ lies in $S^2(6^*)$, $15_2=\Lambda^2(6^*)$, $18^*$ is the socle of $3\otimes 15_1$, and $35$ lies in $\Lambda^2(10^*)$. Finally, $39$ is the socle of $\Lambda^2(15_1^*)$.
\item $\PSU_3(5)$. Fix signs so that $3,6,15_1,15_2,18,39$ all lie in the same block. Then $10$ lies in $6\otimes 3^*$, $15_1$ lies in $S^2(6^*)$, $15_2=\Lambda^2(6^*)$ and then $35$ lies in $\Lambda^2(10^*)$.
\item $\PSL_3(7)$. Fix $10$. Then $28$ lies in $S^2(10^*)$ and $35$ lies in $\Lambda^2(10^*)$. The module $\Lambda^2(27)$ has a summand $10^*/71/10^*$.
\item $\PSU_3(7)$. Fix $3$. Set $6=S^2(3^*)$, $10$ lies in $3\otimes 6^*$, $15_1$ lies in $S^2(6^*)$ and $15_2=\Lambda^2(6^*)$. Both $21$ and $24$ lie in $3^*\otimes 15_1^*$, and $28$ lies in $S^2(10^*)$. We have that $35$ lies in $\Lambda^2(10^*)$, $42$ lies in $\Lambda^2(15_1^*)$, and $71$ lies in $\Lambda^2(28^*)$.
\item $\PSL_4(2)$ and $\PSU_4(2)$. Fix $4$. Then $20$ lies in $4\otimes 6$.
\item $\PSL_4(3)$. In the principal block, we only need to fix $10^\pm$ and $45^\pm$. Fix $10$. Then $45=\Lambda^2(10^*)$.
\item $\PSU_4(3)$. In the principal block, only $45$ needs to be fixed.
\item $\PSL_4(q)$ and $\PSU_4(q)$ for $q=5,7$. We do not need to specify the modules here.
\item $\PSL_5(2)$ and $\PSU_5(2)$. Fix $5$. Set $10=\Lambda^2(5^*)$. Then $40_1$ lies in $\Lambda^2(10)$, and $40_2$ lies in $5\otimes 10^*$.
\item $\PSL_5(3)$ and $\PSU_5(3)$. Fix $5$. Set $10=\Lambda^2(5^*)$ and set $30$ via $5\otimes 10^*=10/30/10$. We have $S^2(10)=5/45_1/5$ and $\Lambda^2(10)=45_2$.
\item $\PSL_5(q)$ and $\PSU_5(q)$ for $q=5,7$. We do not need to specify the modules here.
\end{itemize}

We also have some labelling difficulties for $\Sp_4$ and $G_2$.

For $G_2(5)$ and $G_2(7)$, there are two $77$-dimensional simple modules, which we label $77_1$ and $77_2$. The first of these has highest weight label 30 and the second 02. For $G_2(3)$, fix $7_1$ and let $27_i$ be defined by $S^2(7_i)\cong 27_i\oplus 1$.

For $\Sp_4(5)$ and $\Sp_4(7)$ there are two simple modules of dimension $35$. The module $35_1$ has highest weight 21 and the module $35_2$ has highest weight 40. For $\Sp_4(2^n)$ and the Suzuki groups, fix $4_1$ and let a generator for the outer automorphism group cycle the $4_i$, so that $\Lambda^2(4_i)\cong 1/4_{i+1}/1$. This is the one case where we deviate from the convention below that the image under the standard Frobenius map of $4_i$ should be $4_{i+1}$, it is $4_{i+2}$ in this case.

\medskip

When $p=2,3$, we will also have to consider modules for groups $H(q)$ for $q$ a proper power of $p$. In this case we need a notation for the modules. As in \cite{craven2015un}, and above, we give the dimension of the module, and with a subscript if there is more than one module of that dimension. The subscript is chosen as follows:

If $M$ is a $p$-restricted module of dimension $n$, we write $n_1$ for this module (and $n_1^*$ for its dual if it is not self-dual). The image of $n_1$ under the field automorphism will be denoted $n_2$, then $n_3$, and so on.

Modules that are not twists of $p$-restricted modules are a tensor product of twists of $p$-restricted modules. If $M$ is the tensor product of $n_i$ and $m_j$, and $n\leq m$, then $M$ is denoted by $nm_{i,j}$, and so on for products of more than two modules. The dual of this module is again denoted by $nm_{i,j}^*$. (It looks like it is impossible to recover $n$ and $m$ from this, but the dimensions of simple modules are sparse and $n$ and $m$ are both dimensions of simple modules.)

If neither $n_i$ nor $m_j$ is self-dual, then we need to distinguish between the four modules
\[ n_i\otimes m_j,\quad n_i^*\otimes m_j^*,\quad n_i\otimes m_j^*,\quad n_i^*\otimes m_j.\]
We denote the first two by $nm_{i,j}$ and $nm_{i,j}^*$, as stated before. The other two are denoted by $\overline{nm}_{i,j}$ and $\overline{nm}_{i,j}^*$ respectively. Thus, for $\PSL_3(9)$, the module $\overline{18}_{12}$ denotes the tensor product $3_1\otimes 6_2^*$, and $\overline{18}_{21}$ denotes the tensor product $3_2\otimes 6_1^*$.

None of our subscripts exceeds $9$, and therefore we may also omit the comma in the subscript and write $18_{12}$, for example. We will usually do so for space reasons.

Although this is of course not enough to determine all modules uniquely for all groups of Lie type, it is enough to determine all modules in this paper uniquely, except for $\PSL_5(4)$, where we state exactly what we mean in that section.

(We could of course use highest-weight notation, which is unambiguous. However, highest-weight notation suffers from a difficulty, in that it does not easily show you the dimension of the module, which is important to us here. Since we also occasionally deal with groups that are not of Lie type, we prefer to keep a general module notation that is not tied to algebraic groups. When discussing positive-dimensional subgroups of algebraic groups $\mb G$ on the other hand, we will usually stick to highest-weight notation, with the exception of the minimal and Lie algebra modules $M(\mb G)$ and $L(\mb G)$.)

\medskip

We occasionally deviate from these guidelines when expedient, which is when the presence of a centre for the simply connected group means that not all modules are faithful. For example, in $\PSL_3(4)$, there are only two modules of dimension $9$, swapped by the graph automorphism, and we can label them simply as $9$ and $9^*$, rather than as $9_{12}$ and $9_{12}^*$. One other case is for $\PSL_3(q)$ and $\PSU_3(q)$ for $q=8,16$ as well, where we label the $27$-dimensional modules using different conventions depending on the group, and each time we describe our conventions.

Finally, for $\PSL_5(4)$ and $\PSU_5(4)$ we state in Chapter \ref{chap:rank4ine8} what our notation is, as there are four simple modules for $\PSL_5(2)$ and $\PSU_5(2)$ of dimension $40$, so our above notation cannot be used.

\newpage

\chapter{Rank 4 groups for \texorpdfstring{$E_8$}{E8}}
\label{chap:rank4ine8}
As rank-$4$ subgroups of $E_8$ are the easiest case to consider, we do these first. This chapter considers Lie type groups of untwisted rank $4$, so $\PSL_5$, $\PSU_5$, $\POmega_9$, $\PSp_8$, $\POmega_8^+$, $\POmega_8^-$, ${}^3\!D_4$, $F_4$ and ${}^2\!F_4$. In many of these cases, there are elements of order roughly $q^4$ (but note that we might need to divide by the order of the centre). Theorem \ref{thm:largeorderss} states that if $H$ has an element of order greater than $1312$ then $H$ is a blueprint for $L(E_8)$, and $7^4=2401$. Thus for fields of size $7$, $8$ and $9$, there is a good chance that we can find an element of order greater than $1312$.

In this chapter we prove that $H$ is one of the groups above over a field of size at most $9$, and $H\leq \mb G$, then either $H$ is strongly imprimitive or $H\cong \PSL_5(2)$. This latter case is significantly more difficult than the other cases dealt with in this chapter, indeed the whole article, and is postponed until Section \ref{sec:difl52}. In that section we will prove that $\PSL_5(2)$ is also strongly imprimitive.

Throughout this chapter, and the rest of the article, $u$ will denote a unipotent element of $H$ of order $p$ coming from the smallest class, i.e., with the largest centralizer. (This is the class most likely to belong to a generic class of $\mb G$.)

\section{\texorpdfstring{$\PSL_5$}{PSL 5}}

\begin{proposition}\label{prop:sl5ine8} Let $H\cong \PSL_5(q)$ for some $q\leq 9$.
\begin{enumerate}
\item If $q=2$ then either $H$ stabilizes a line on $L(E_8)$ or the composition factors of $L(E_8){\downarrow_H}$ are
\[ 40_1,40_1^*,40_2,40_2^*,24^2,10,10^*,(5,5^*)^2.\]
\item If $q=3,5,7,8,9$ then $H$ is a blueprint for $L(E_8)$.
\item If $q=4$ then either $H$ stabilizes a line on $L(E_8)$ or $H$ is a blueprint for $L(E_8)$.
\end{enumerate}
\end{proposition}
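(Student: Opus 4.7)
For $q \in \{7, 8, 9\}$ I would dispatch the result immediately via Theorem \ref{thm:largeorderss}(i). Since $\gcd(5, q-1) = 1$ in each of these cases, $H = \SL_5(q)$ contains a Singer cyclic subgroup of order $(q^5-1)/(q-1)$, equal to $2801$, $4681$, and $7381$ respectively. Each of these strictly exceeds $1189$, the largest odd member of $T(E_8)$, so the Singer generator is a blueprint for $L(E_8)$ by that theorem. A subgroup containing a blueprint element is itself a blueprint (one extends the finite subgroup by the positive-dimensional $\mb X$ produced by the theorem), so $H$ is a blueprint as well.

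The cases $q = 3$ and $q = 5$ are more delicate, because the Singer orders $121$ and $781$ both lie in $T(E_8)$. Here I would enumerate by computer the conspicuous sets of composition factors of $L(E_8){\downarrow_H}$, by matching traces of semisimple elements of $H$ against those of semisimple classes of $E_8$ on $L(E_8)$, using the simple modules listed in Tables \ref{t:modules3} and \ref{t:modules5}. For each conspicuous set I would read off the Jordan block structure, on $L(E_8)$, of the transvection $u$ of order $p$ in $H$ (the $\SL_5$-class of Jordan type $[2,1^3]$ on the natural module) by concatenating its action on each composition factor. I expect that in every surviving set the resulting partition matches that of a generic unipotent class of $E_8$, i.e.\ a class not appearing in Table \ref{t:unipe8p3} or Table \ref{t:unipe8p5}. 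Then Lemma \ref{lem:genericmeansblueprint} shows that $u$, and therefore $H$, is a blueprint for $L(E_8)$.

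For $q = 4$, the same conspicuous-set enumeration applies. For each set I would either check, via Proposition \ref{prop:pressure} together with Lemma \ref{lem:fix1space}, that non-positive pressure forces $H$ to stabilize a line on $L(E_8)$, or verify that the transvection acts with a generic Jordan type on $L(E_8)$, yielding the blueprint alternative via Lemma \ref{lem:genericmeansblueprint}. For $q = 2$, the analogous enumeration via the Brauer characters of $\PSL_5(2)$ in Table \ref{t:modules2} produces a short list of conspicuous sets. Every set having non-positive pressure and a trivial composition factor yields a line stabilizer as above. I expect that elimination by this mechanism leaves exactly the single set $40_1,40_1^*,40_2,40_2^*,24^2,10,10^*,(5,5^*)^2$ of strictly positive pressure, establishing the dichotomy of part (i).

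The principal obstacle should be the cases $q \in \{3, 5\}$: there may be several surviving conspicuous sets, and for each I need to carry out the Jordan-type bookkeeping carefully to confirm that the transvection always maps into a generic class of $E_8$. Unlike the clean Singer argument for $q \in \{7,8,9\}$ and the straightforward pressure/enumeration dichotomy for $q \in \{2,4\}$, this step is non-uniform; if some set produces a non-generic transvection image, a secondary argument---for example, using a semisimple element of composite order together with the roots trick, or locating $H$ inside a positive-dimensional subgroup whose action on $L(E_8)$ matches the conspicuous set---would be required to rescue the blueprint conclusion.
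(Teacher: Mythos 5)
Your treatment of $q=7,8,9$ is exactly the paper's argument (Singer element of order $(q^5-1)/(q-1)\notin T(E_8)$, then Theorem \ref{thm:largeorderss}), and your plan for $q=2,3,5$ follows the paper's route in outline. Two caveats, one minor and one fatal. The minor one: the Jordan type of $u$ on $L(E_8)$ is \emph{not} simply the concatenation of its actions on the composition factors unless the restriction is semisimple; since $23$ (for $q=5$) and $30$ (for $q=3$) admit non-split extensions with other factors, the paper first splits off the extension-free part and argues from those blocks alone (for $q=3$, Case $45_2,45_2^*,30,30^*,24^2,\ldots$ it also uses that the two remaining summands are dual, so their blocks occur in pairs, which pins $u$ to the generic class $2A_1$). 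Your closing caveat essentially anticipates the need for such repairs, so this is a matter of execution rather than of idea.

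The fatal gap is at $q=4$. The two positive-pressure conspicuous sets there,
\[ 40_{1,1},40_{1,1}^*,40_{1,2},40_{1,2}^*,24_1^2,10_1,10_1^*,(5_1,5_1^*)^2\quad\text{and}\quad 50_{1,2},50_{1,2}^*,\overline{50}_{2,1},\overline{50}_{2,1}^*,24_1,24_2,\]
have no trivial composition factors, so the pressure branch of your dichotomy is vacuous, and the other branch cannot work: the transvection has order $2$, and in characteristic $2$ no unipotent class of $E_8$ meeting $H$ in involutions acts on $L(E_8)$ with its generic Jordan pattern (the generic patterns for $A_1,\dots,4A_1$ contain blocks of size $3$, impossible for an element of order $2$), so Lemma \ref{lem:genericmeansblueprint} is simply unavailable via $u$. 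The paper resolves these cases with a completely different tool: it takes $x\in H$ of order $315$, checks by the roots trick that $x^{45},x^9,x^3,x$ are each determined up to $\mb G$-conjugacy by their eigenvalues on $L(E_8)$, realizes $H$ with these factors inside the $A_4A_4$ maximal-rank subgroup (hence inside a copy of $\SL_5(16)$), and exhibits there an element of order $4095=315\cdot 13$ powering to $x$ with the same number of distinct eigenvalues on $L(E_8)$; since $4095\notin T(E_8)$, this forces $x$, and hence $H$, to be a blueprint. Some argument of this semisimple-element type (or an explicit identification of $H$ inside a positive-dimensional subgroup) is needed for $q=4$; the unipotent-genericity mechanism you propose cannot deliver part (iii).
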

\begin{proof} $\boldsymbol{q=7,8,9}$: These contain an element of order $(q^5-1)/(q-1)$, which is not in $T(E_8)$ (see Definition \ref{defn:T(G)}). Hence $H$ is a blueprint for $L(E_8)$ by Theorem \ref{thm:largeorderss}.

\medskip

\noindent $\boldsymbol{q=5}$: We use the traces of elements of order up to $13$ to find conspicuous sets of composition factors and obtain two possibilities for $H$ on $L(E_8)$, namely
\[ 23,(10,10^*)^5,(5,5^*)^{10},1^{25},\qquad 45,45^*,40,40^*,23^2,10,10^*,5,5^*,1^2.\]
From Table \ref{t:modules5}, we see that $L(E_8){\downarrow_H}$ has non-positive pressure, and therefore in both cases $H$ stabilizes a line on $L(E_8)$ by Proposition \ref{prop:pressure}.

In fact, we can prove more. The only extension between the composition factors above is between $23$ and $1$, so most of these factors are summands. The action of the unipotent element $u$ on $(10\oplus 10^*)^{\oplus 5}\oplus (5\oplus 5^*)^{\oplus 10}\oplus 1^{\oplus 23}$ and $45\oplus 45^*\oplus 40\oplus 40^*\oplus 10\oplus 10^*\oplus 5\oplus 5^*$ has Jordan blocks $2^{50},1^{123}$ and $3^{12},2^{52},1^{60}$ respectively. This means that $u$ must lie in class $A_1$ and $2A_1$ respectively by \cite[Table 9]{lawther1995}, both of which are generic (see also Table \ref{t:unipe8p5} to see that $u$ must be generic). Thus $H$ is a blueprint for $L(E_8)$ by Lemma \ref{lem:genericmeansblueprint}.

\medskip

\noindent $\boldsymbol{q=3}$: We use the traces of elements of order up to $13$ to find conspicuous sets of composition factors, and obtain two conspicuous sets of composition factors for $H$ on $L(E_8)$, namely
\[ 24,(10,10^*)^5,(5,5^*)^{10},1^{24},\qquad 45_2,45_2^*,30,30^*,24^2,(10,10^*)^2,5,5^*.\]
This yields two cases.

\medskip

\noindent \textbf{Case 1}: None of the composition factors has an extension with any other, so $L(E_8){\downarrow_H}$ is semisimple. The element $u$ acts on $L(E_8)$ with Jordan blocks $3,2^{56},1^{133}$. Hence $u$ comes from the generic class $A_1$ (see \cite[Table 9]{lawther1995}), and therefore $H$ is a blueprint for $L(E_8)$ by Lemma \ref{lem:genericmeansblueprint}.

\medskip

\noindent \textbf{Case 2}: The modules $24$, $45_2$ and $45_2^*$ have no extensions with other composition factors so split off as summands. The only extensions arise from $\Ext_{kH}^1(30,5)$ and $\Ext_{kH}^1(30,10)$ (and their duals), both $1$-dimensional, and so we must have two other summands, one consisting of $30,10,10,5$, and one of $30^*,10^*,10^*,5^*$.

Thus $45_2\oplus 45_2^*\oplus 24^{\oplus 2}$ is a summand of $L(E_8){\downarrow_H}$, with $u$ acting on this module with blocks $3^8,2^{36},1^{42}$. The two remaining summands are dual to one another, so the remaining blocks in the action of $u$ on $L(E_8)$ must come in pairs. The only possibility in \cite[Table 9]{lawther1995} is the generic class $2A_1$, and hence $H$ is a blueprint for $L(E_8)$ by Lemma \ref{lem:genericmeansblueprint} again.

\medskip

\noindent $\boldsymbol{q=2}$: There are three conspicuous sets of composition factors for $L(E_8){\downarrow_H}$:
\[ 24,(10,10^*)^5,(5,5^*)^{10},1^{24},\quad 74,40_1,40_1^*,(10,10^*)^3,(5,5^*)^3,1^4,\] \[40_1,40_1^*,40_2,40_2^*,24^2,10,10^*,(5,5^*)^2.\]
(The second of these is an artefact of the fact that there is a non-rational class of elements of order $5$ in $\mb G$ with trace $2$ on $L(E_8)$, so `looks' rational. The second set of factors therefore cannot exist.)

From Table \ref{t:modules2}, $40_1$, $40_1^*$ and $74$ are the only composition factors with non-zero $1$-cohomology. Thus the first two cases have negative pressure (and the second does not exist), and hence $H$ stabilizes a line on $L(E_8)$ in the first case by Proposition \ref{prop:pressure}. Thus we are left with the third case, as claimed.

\medskip

\noindent $\boldsymbol{q=4}$: Up to field automorphism there are three sets of composition factors for $L(E_8){\downarrow_H}$ that are conspicuous for elements of order up to $11$:
\[ 24_1,(10_1,10_1^*)^5,(5_1,5_1^*)^{10},1^{24},\quad 40_{1,1},40_{1,1}^*,40_{1,2},40_{1,2}^*,24_1^2,10_1,10_1^*,(5_1,5_1^*)^2,\] \[50_{1,2},50_{1,2}^*,\overline{50}_{2,1},\overline{50}_{2,1}^*,24_1,24_2,\]
where $40_{1,i}^\pm$ are the four $2$-restricted, $40$-dimensional modules---$L(1010)$, $L(0101)$, $L(0011)$ and $L(1100)$---and up to field automorphism the four $50$-dimensional modules are $L(2100)$, $L(0012)$, $L(1020)$ and $L(0201)$, consistent with our notation from Chapter \ref{chap:labelling}.

\medskip

\noindent \textbf{Case 1}: None of $24_1$, $10_1^\pm$ and $5_1^\pm$ has non-zero $1$-cohomology, so $H$ has negative pressure, and thus stabilizes a line on $L(E_8)$ by Proposition \ref{prop:pressure}.

\medskip

\noindent \textbf{Cases 2 and 3}: Let $x\in H$ have order $315$. In these cases, $x^{45}$, $x^9$, $x^3$ and $x$, of orders $7$, $35$, $105$ and $315$ are all uniquely determined up to conjugacy by their eigenvalues on $L(E_8)$, by a computer check using the roots trick (see Chapter \ref{chap:techniques}).

We find a copy of $H$ inside the $A_4A_4$ maximal-rank subgroup acting with these composition factors (in the second case the actions of $H$ on each $M(A_4)$ factor are $L(0002)$ and $L(0001)$, and in the third case $H$ acts as $L(0001)$ on both factors), and it clearly lies inside a copy of $\SL_5(16)$. We therefore find, in $\SL_5(16)$, an element of order $4095=315\times 13$ that powers to our element in $H$ and with the same number of distinct eigenvalues---$171$ in the second case and $149$ in the third---as the element of order $315$ on $L(E_8)$. This proves that $x$, and hence $H$, is a blueprint for $L(E_8)$, as claimed.
\end{proof}

\section{\texorpdfstring{$\PSU_5$}{PSU 5}}

We continue with our definition of $u$ from the start of the chapter.

\begin{proposition}\label{prop:su5ine8} Let $H\cong \PSU_5(q)$ for some $q\leq 9$.
\begin{enumerate}
\item If $q=2$ then either $H$ stabilizes a line on $L(E_8)$ or $H$ stabilizes a $10$-space of $L(E_8)$ stabilized by a positive-dimensional subgroup of $\mb G$, and in addition $H$ is strongly imprimitive.
\item If $q=3,4,5,7,8,9$ then $H$ is a blueprint for $L(E_8)$.
\end{enumerate}
\end{proposition}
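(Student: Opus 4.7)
The plan follows the template of Proposition \ref{prop:sl5ine8}, handling each value of $q$ in turn. For $q \in \{7,8,9\}$ the unitary torus of order $(q^5+1)/(q+1) = q^4-q^3+q^2-q+1$ in $\SU_5(q)$, after dividing by the centre of order $\gcd(5,q+1)$, produces elements of $H$ of orders $2101$, $3641$ and $1181$ respectively. Each lies outside $T(E_8)$ of Definition \ref{defn:T(G)}, so Theorem \ref{thm:largeorderss} gives immediately that $H$ is a blueprint for $L(E_8)$.

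For $q \in \{3,5\}$ I would enumerate conspicuous sets of composition factors for $L(E_8){\downarrow_H}$ using trace and eigenvalue data on semisimple elements of order at most $13$. The cases with non-positive pressure yield a line stabilizer by Proposition \ref{prop:pressure} and hence strong imprimitivity by Lemma \ref{lem:fix1space}; for the remaining semisimple cases (which I expect to split as a direct sum of a small number of summands determined by the $\Ext^1$-structure, as occurred for $\PSL_5$), the combined action of the order-$p$ element $u$ on the summands should match a generic unipotent class of $\mb G$ from Table \ref{t:unipe8p3} or \ref{t:unipe8p5}, giving blueprint status via Lemma \ref{lem:genericmeansblueprint}. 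For $q=4$ the plan is the roots trick exactly as in Proposition \ref{prop:sl5ine8}(iii): embed $H$ in the $A_4 A_4$ maximal-rank subgroup, lift to an element of order divisible by $13$ inside a containing $\SU_5(16)$, and verify by computation that this element has the same number of distinct eigenvalues on $L(E_8)$ as the element of $H$ it powers to, so that the element, and hence $H$, is a blueprint.

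The case $q=2$ is the main obstacle and is where the statement genuinely differs from Proposition \ref{prop:sl5ine8}. I would first enumerate conspicuous composition factor multisets for $L(E_8){\downarrow_{\PSU_5(2)}}$. The crucial difference from $\PSL_5(2)$ is visible in Table \ref{t:modules2}: for $\PSU_5(2)$ the modules with nonzero $1$-cohomology are $10^\pm$ and $74$, rather than $40_1^\pm$ and $74$. Consequently the pressure computation rules out, via Proposition \ref{prop:pressure} and Lemma \ref{lem:fix1space}, precisely those sets in which $10$ and $10^*$ appear with small multiplicity relative to the trivial multiplicity, giving a line stabilizer and so strong imprimitivity. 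For the residual sets of composition factors (expected to be those where $10$ or $10^*$ appears enough times to push pressure positive), I expect the natural module to be forced to lie in the socle of $L(E_8){\downarrow_H}$ by a pyx argument, building the pyx as the $\cf(L(E_8){\downarrow_H})$-radical of $P(10) \oplus P(10^*)$ and comparing multiplicities. The geometric input is that $H = \PSU_5(2)$ preserves a non-degenerate Hermitian form on its natural $\mathbb{F}_4$-module, whose $\mathbb{F}_2$-form $10^{\pm}$ carries a canonical non-degenerate orthogonal form; any $H$-submodule of $L(E_8)$ isomorphic to $10^\pm$ therefore embeds $H$ into a positive-dimensional $D_5$ subgroup of $\mb G$, and this $D_5$ stabilizes the same $10$-space. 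Strong imprimitivity is then obtained from Theorem \ref{thm:intersectionorbit} after checking that the $N_{\Aut^+(\mb G)}(H)$-orbit of the $10$-space has positive-dimensional total stabilizer.

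The hardest step will be the last one: verifying in every residual $q=2$ case that $L(E_8){\downarrow_H}$ actually contains a $10$-dimensional submodule of the right isomorphism type and then identifying the $D_5$ (or subsystem subgroup containing it) whose line- or $10$-space stabilizer coincides with that of $H$. The remaining parts—trace enumeration, pressure, and the roots trick—are largely routine adaptations of the methods already used for $\PSL_5$.
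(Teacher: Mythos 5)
Your outline for $q=7,8,9$ and the general shape for $q=3,5$ match the paper, but the two cases that carry the real content — $q=2$ and $q=4$ — have genuine gaps. For $q=2$, the step "an $H$-submodule $10^{\pm}$ carries a non-degenerate orthogonal form, therefore $H$ embeds in a positive-dimensional $D_5$ subgroup of $\mb G$ stabilizing that $10$-space" does not follow: stabilizers of subspaces of $L(E_8)$ of dimension greater than one need not be positive-dimensional (only line stabilizers are covered by Lemmas \ref{lem:maxrankorpara} and \ref{lem:fix1space}), and the abstract $\SO_{10}$ of the $10$-space is not a subgroup of $E_8$, so no $D_5\leq\mb G$ is produced. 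The paper has to exhibit the positive-dimensional overgroup explicitly: it restricts to $L'\cong \SU_4(2)$ (the derived centralizer of an order-$3$ element $z$ with $C_{\mb G}(z)=D_7T_1$), shows $L'$ acts semisimply on $M(D_7)$ (Lemma \ref{lem:orthogfixpoint} kills the non-split self-dual option $4/6/4^*$), conjugates $L'$ into the $A_7$-Levi, and builds an algebraic $A_3$ overgroup $\mb X$ stabilizing every semisimple $kL'$-submodule of $L(E_8)$; it then proves $10$ is a \emph{summand} of $L(E_8){\downarrow_H}$ by ruling out each possible extension $10/5$, $10/24$, $10/40_1^{\pm}$ on restriction to $L'$. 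Your proposed pyx over $P(10)\oplus P(10^*)$ is not a substitute: the surviving set of composition factors has no trivial factors, so pressure gives nothing, and a pyx with socle $10\oplus 10^*$ constrains modules that already have $10^{\pm}$ in the socle rather than forcing $10^{\pm}$ into the socle in the first place.

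For $q=4$ the direct transfer of the $\PSL_5(4)$ roots trick fails. The paper's computation for $\PSU_5(4)$ shows that the order-$65$ element $x$ has order-$195$ roots whose minimal number of distinct eigenvalues on $L(E_8)$ is $91$, strictly more than $x$ can have, so no root preserves all eigenspaces of $x$ and there is no "same number of distinct eigenvalues" certificate (also, $\SU_5(4)\not\leq \SU_5(16)$; the relevant overgroup sits inside $\SL_5(16)$/the twisted diagonal $A_4$). Instead the paper first proves $24_1\oplus 24_2$ is a summand (a short $\Ext$ argument), uses the roots that stabilize the eigenspaces comprising $24_i$ together with Lemma \ref{lem:allcasesstronglyimp} to place $H$ in a member of $\ms X$, and only then runs through $\ms X$ to see $A_4A_4$ is the unique possibility, whence $L(E_8){\downarrow_H}$ is semisimple and $H$ is a blueprint. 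Two smaller points: the statement for $q=3,5$ asserts blueprint, not merely a line stabilizer, so for the non-positive-pressure sets you must still pin $u$ into a generic class (for $q=5$ the paper uses that $23/1$ is the only non-split extension); and for $\PSU_5(3)$ the second set has an extension $30/30^*$, so the module need not split into dual halves as for $\PSL_5(3)$ — the paper excludes the non-generic class $3A_1$ by an explicit pyx before applying Lemma \ref{lem:genericmeansblueprint}.
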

\begin{proof}
$\boldsymbol{q=7,8,9}$: Note that $\PSU_5(7)$ contains elements of order $2400$, the group $\PSU_5(8)$ contains elements of order $3641$, and $\PSU_5(9)$ contains elements of order $1181$. None of these lies in $T(E_8)$ (see Definition \ref{defn:T(G)}), so if $H\cong \PSU_5(q)$ for $q=7,8,9$ then $H$ is a blueprint for $L(E_8)$ by Theorem \ref{thm:largeorderss}.

\medskip

\noindent $\boldsymbol{q=5}$: There are two sets of composition factors for $L(E_8){\downarrow_H}$ conspicuous for elements of order at most $13$, which are
\[ 23,(10,10^*)^5,(5,5^*)^{10},1^{25},\qquad 45,45^*,40,40^*,23^2,10,10^*,5,5^*,1^2.\]
There is an extension between $23$ and $1$, but there are no other extensions between composition factors of $L(E_8){\downarrow_H}$, so in both cases $H$ stabilizes a line on $L(E_8)$, and moreover most of the module is semisimple. The action of $u$ on the summand consisting of all factors that are not $23$ or $1$ has blocks $2^{50},1^{100}$ and $3^{12},2^{52},1^{60}$ respectively. Thus $u$ comes from class $A_1$ in the first case, and $3A_1$ in the second, as we can see from \cite[Table 9]{lawther1995}. (See Table \ref{t:unipe8p5} to see that they are generic, at least.) These two classes are generic, so $H$ is a blueprint for $L(E_8)$ by Lemma \ref{lem:genericmeansblueprint}.

\medskip

\noindent $\boldsymbol{q=3}$: We use the traces of elements of order up to $10$ to find conspicuous sets of composition factors, and obtain two sets of factors:
\[ 24,(10,10^*)^5,(5,5^*)^{10},1^{24},\qquad 45_2,45_2^*,30,30^*,24^2,(10,10^*)^2,5,5^*.\]
\noindent \textbf{Case 1}: There are no extensions between the composition factors of $L(E_8){\downarrow_H}$, so $L(E_8){\downarrow_H}$ is semisimple; then $u$ acts on $L(E_8)$ with blocks $3,2^{56},1^{133}$, so comes from class $A_1$, which is generic. Thus $H$ is a blueprint for $L(E_8)$ by Lemma \ref{lem:genericmeansblueprint}.

\medskip

\noindent \textbf{Case 2}: The extensions are slightly different to the $\PSL_5(3)$ case: the $45$- and $24$-dimensional factors have no extensions with other composition factors, so break off as summands, as before. However, this time the only extensions are between $5$ and $30$, $10$ and $30$, and $30$ and $30^*$ (and their duals), so we do not necessarily obtain two separate summands.

The action of $u$ on $45_2\oplus 45_2^*\oplus 24^{\oplus 2}$ is $3^8,2^{36},1^{42}$. If $u$ comes from the generic class $2A_1$ then $H$ is a blueprint for $L(E_8)$ by Lemma \ref{lem:genericmeansblueprint}, so we assume not. Examining \cite[Table 9]{lawther1995} (or Table \ref{t:unipe8p3}), we see that $u$ comes from class $3A_1$, acting with Jordan blocks $3^{31},2^{50},1^{55}$. Thus $u$ acts on the remaining summand $V$ of $L(E_8){\downarrow_H}$---with composition factors $30,30^*,(10,10^*)^2,5,5^*$---with Jordan blocks $3^{23},2^{14},1^{13}$. Suppose that $30/30^*$ or its image under the graph automorphism is not a subquotient of $V$: then as in the case of $\PSL_5(3)$, $V$ breaks up as two dual summands, $V_1\oplus V_2$, but then we cannot have thirteen blocks of size $1$, a contradiction. Thus (without loss of generality) $30/30^*$ is a subquotient of $V$.

We now construct a module $W$ by first building the largest submodule of $P(30^*)$ with $30/30^*$ as a submodule and whose quotient has composition factors only from $5,5^*,10,10^*$, yielding the module $5^*,10^*,30/30^*$. We then take the dual of this, so $30/5,10,30^*$, and perform the same action, placing as many copies of $5,5^*,10,10^*$ on top of this module as we can, while remaining a submodule of $P(5\oplus 10\oplus 30^*)$. This yields the module
\[ W=5^*,10^*,30/5,10,30^*.\]
A pyx for $V$ is a sum of $W$ and a semisimple module consisting of copies of $10^\pm$ and $5^\pm$, on which $u$ acts with blocks $2^3,1^4$ and $2,1^3$ respectively. As $u$ acts on $W$ with Jordan blocks $3^8,2^{25},1^{16}$, it is not possible to produce the 23 blocks of size $3$ needed for the action of $u$ on $V$, which is a contradiction. Thus $u$ comes from the generic class $2A_1$, as needed.

\medskip

\noindent $\boldsymbol{q=2}$: There are two conspicuous sets of composition factors for $L(E_8){\downarrow_H}$:
\[ 24,(10,10^*)^5,(5,5^*)^{10},1^{24},\quad 40_1,40_1^*,40_2,40_2^*,24^2,10,10^*,(5,5^*)^2.\]
The first has pressure $-14$ and hence $H$ stabilizes a line on $L(E_8)$ by Proposition \ref{prop:pressure}, so we concentrate on the second case. Let $L$ be a copy of $3\times \SU_4(2)$ inside $H$: the trace of the central element $z\in L$ is $14$ on $L(E_8)$, and so $C_{\mb G}(z)$ has type $D_7T_1$ (see Table \ref{t:semie8}). Thus $L'\cong \SU_4(2)$ lies inside $D_7$. The composition factors of $D_7$ on $L(E_8)$ are two copies of $M(D_7)$, one of $L(D_7T_1)$ (which in characteristic $2$ has the form $1/90/1$, but this is not important), and one copy of each of the half-spin modules. The $1$-eigenspace of the action of $z$ on $L(E_8)$ must be the $1/90/1$, so we can compute which actions of $L'$ on $M(D_7)$ yield the correct composition factors on $L(D_7T_1)$ and on $L(E_8)$. The composition factors $14$ and $6,4,4^*$ for $M(D_7){\downarrow_{L'}}$ yield the correct composition factors for $L(D_7T_1){\downarrow_{L'}}$, namely
\[ 20,20^*,14^2,6^2,4,4^*,1^4.\]
However, the trace of elements of order $3$ in $L'$ on $L(E_8)$ does not match the case where $M(D_7){\downarrow_{L'}}=14$, so the composition factors are $6,4,4^*$. There is a module $4/6/4^*$, but the symmetric square of this module does not have a trivial submodule, whence $L'$ does not lie in $D_7$ with this module by Lemma \ref{lem:orthogfixpoint}. The only other self-dual module with those composition factors is the semisimple module $6\oplus 4\oplus 4^*$.

In particular, this means that $L'$ lies inside the $D_3A_3$-Levi subgroup of $D_7$, and this can be conjugated to lie inside the $A_7$-Levi subgroup of $E_8$. (This is so we can more easily understand the action of $L'$ on the half-spin modules; it is very possible to do this directly.) It's even easier to consider $L'$ inside $A_8$, which acts on $L(E_8)$ with summands $\Lambda^3(M(A_8))$, $\Lambda^3(M(A_8)^*)$ and $M(A_8)\otimes M(A_8)^*$ minus a trivial summand. The action of $L'$ on $M(A_8)$ is either $4\oplus 4\oplus 1$ or $4\oplus 4^*\oplus 1$, both yield the same action on $L(E_8)$. Embedding $L'$ into a copy $\mb X$ of $A_3$ acting as $L(100)\oplus L(100)\oplus L(000)$ or $L(100)\oplus L(001)\oplus L(000)$, in either case the action of $\mb X$ on $L(E_8)$ is
\begin{multline*} (000/101/000)^{\oplus 4}\oplus (010/200/010)\oplus (010/002/010)\\\oplus (100\oplus 001)^{\oplus 6}\oplus (010)^{\oplus 4}\oplus (110\oplus 011)^{\oplus 2}.\end{multline*}
(Here we have suppressed the `$L(-)$' to save space. The module $000/101/000$ is just $L(100)\otimes L(001)$, and the module $010/200/010$ is $L(100)^{\otimes 2}$.)

The reason for writing this like this is that we see that $L'$ acts in exactly the same way except that the $200$ and $002$ in the second socle layer are isomorphic to $001$ and $100$, so in particular any semisimple submodule of $L(E_8){\downarrow_{L'}}$ is stabilized by $\mb X$. If there is a submodule of $L(E_8){\downarrow_H}$ that restricts to $L'$ (equivalently $L$) semisimply then we have proved that $H$ is not Lie primitive, and since our condition is invariant under automorphisms, strongly imprimitive via Lemma \ref{lem:semilinearfield}.

The restrictions to $L'$ of $5$, $5^*$, $10$, $10^*$, $40_1$ and $40_1^*$ are all semisimple, and we in fact show that there is a submodule $10$ of $L(E_8){\downarrow_H}$. The module $10$ has extensions with $5$, $24$, $40_1$ and $40_1^*$, so we show that none of these extensions is allowed in $L(E_8){\downarrow_H}$. The restriction of $10/24$ to $L'$ is
\[ 4^*/1/6,14/1,4,4^*,\]
which obviously is not a subquotient of $L(E_8){\downarrow_{L'}}$. The restrictions of $10/40_1$ and $10/40_1^*$ to $L'$ are
\[ (4^*/6)\oplus (6/20)\oplus 14,\qquad (4^*/6)\oplus (6/14)\oplus 20^*,\]
and since $6/20$ and $6/14$ are not subquotients of $L(E_8){\downarrow_{L'}}$, these extensions cannot appear in $L(E_8){\downarrow_H}$ either. Finally the restriction of $10/5$ to $L'$ is
\[ 1\oplus 4^*\oplus (6/4),\]
which is a subquotient of $L(E_8){\downarrow_{L'}}$. However, the restriction of $40_2^\pm$ to $L'$ already contains a summand $6/4^\pm/6$, so there cannot be more than one subquotient $6/4$, and therefore $10/5$ is not a subquotient of $L(E_8){\downarrow_H}$ either. Since the only composition factors of $L(E_8){\downarrow_H}$ that $10$ has an extension with are $5$, $24$, $40_1$ and $40_1^*$, we have proved that $10$ is actually a summand of $L(E_8){\downarrow_H}$, and this is stabilized by $\mb X$, as needed.

\medskip

\noindent $\boldsymbol{q=4}$: The simple modules for $H$ of dimension at most $124$, or self-dual and of dimension at most $248$, are of dimension $1$, $24$ (two of these), $50$ (four) and $74$ (two). There is a single set of composition factors for $L(E_8){\downarrow_H}$ conspicuous for elements of order $3$ and $5$, and it is the set of all the simple modules of dimension $24$ or $50$, i.e.,
\[50_1,50_1^*,50_2,50_2^*,24_1,24_2,\]
where $50_i^\pm$ are the four simple modules for $\PSU_5(4)$ of that dimension.
(Such an embedding exists in the $A_4A_4$ maximal-rank subgroup, and we also saw it for $\PSL_5(4)$.)

We first show that $L(E_8){\downarrow_H}$ possesses a submodule of dimension $24$. Note that $24_1$ has a single extension with $50_1^\pm$, and none with $50_2^\pm$, for some labelling for the modules of dimension $50$. However, one cannot construct a module of the form $50_1/24_1/50_1^*$ or $50_1^*/24_1/50_1$, and so $H$ must always stabilize a $24$-space on $L(E_8)$. (To see this last step, if the socle is $50_1\oplus 50_2^\pm$ then it is clear, and if it is simple, remove the socle and top to leave a module that must have $24_1\oplus 24_2$ as a submodule.) Hence $24_1\oplus 24_2$ is in fact a summand of $L(E_8){\downarrow_H}$.

Let $x$ be an element of order $65$. By a computer check, the class of $x$ in $E_8$ is determined by its eigenvalues on $L(E_8)$. Next, we check the $6560$ possible elements of order $195=65\times 3$ that power to $x$ inside the normalizer of a torus, and find that there are four elements, $\hat x_1,\hat x_2,\hat x_3,\hat x_4$, of order $195$, such that $\hat x_i^3=x$ and with exactly $91$ distinct eigenvalues on $L(E_8)$. (This is the minimal number for a root of $x$.) Two of these stabilize all eigenspaces of $x$ on $24_1$, and two of them stabilize all eigenspaces of $x$ on $24_2$. Therefore we see that if either $24_i$ is a submodule of $L(E_8){\downarrow_H}$ (and we know that both are from above) then $H$ is contained in the stabilizer of these $24$-spaces, which contains elements of order $195$ and a copy of $\PSU_5(4)$. Thus $H$ is contained in a member of $\ms X$ by Lemma \ref{lem:allcasesstronglyimp}, and in a member of $\ms X^\sigma$ if $k$ contains $\F_4$, as otherwise the $24_i$ might not be defined.

We check now that $H$ is a blueprint for $L(E_8)$, completing the proof. Let $\mb X$ be a member of $\ms X$ containing $H$. The dimensions of the composition factors of $\mb X$ on $L(E_8)$ must be coarser than $50^4,24^2$, and this eliminates all parabolic subgroups, $E_7A_1$ and $D_8$ (as they have trivial factors on $L(E_8)$ in characteristic $2$), $A_8$ (has factors of dimension $84^2,80$), $A_2E_6$ (has a factor of dimension $8$) and $F_4G_2$ (has a factor of dimension $14$), leaving the $A_4A_4$ maximal-rank subgroup, which has factors of dimension $50^4,24^2$. As this $A_4A_4$ is semisimple, so is $L(E_8){\downarrow_H}$, and $\mb X$ and $H$ stabilize the same subspaces of $L(E_8)$, as needed.
\end{proof}

\section{\texorpdfstring{$\Omega_9$}{Omega 9}}

The following proposition is easy, assuming that we have shown that $\Omega_7(q)$ is a blueprint for $L(E_8)$, which we do in Proposition \ref{prop:omega7}.

\begin{proposition}\label{prop:omega9ine8} Let $H\cong \Omega_9(q)$ for some odd $q\leq 9$. Then $H$ is a blueprint for $L(E_8)$.
\end{proposition}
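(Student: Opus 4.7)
The plan is to exploit the fact that $H \cong \Omega_9(q)$ contains a natural subgroup $L \cong \Omega_7(q)$, namely the pointwise stabilizer of a non-degenerate $2$-space in the $9$-dimensional orthogonal module for $H$. Since $q$ is odd and $q \leq 9$, the hypotheses of Proposition \ref{prop:omega7} apply, so $L$ is a blueprint for $L(E_8)$ whenever it embeds as a subgroup of $E_8$. In particular, the restriction $L \leq H \leq E_8$ puts us in this situation.

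Next, I would invoke the fact noted in Chapter \ref{chap:techniques} that the blueprint property is inherited by overgroups. Concretely, if $\mb X$ is a positive-dimensional subgroup of $\mb G = E_8$ stabilizing exactly the $L$-invariant subspaces of $L(E_8)$, then the intersection $\mb Y$ of the stabilizers of all $H$-invariant subspaces of $L(E_8)$ contains both $H$ (which stabilizes its own invariant subspaces) and $\mb X$ (since the collection of $H$-invariant subspaces is a subset of the collection of $L$-invariant subspaces). Hence $\mb Y$ is positive-dimensional, and by construction the subspaces stabilized by $\mb Y$ are precisely the $H$-invariant subspaces. Therefore $H$ is a blueprint for $L(E_8)$.

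The hard part, of course, is not in this deduction but in the forward-referenced Proposition \ref{prop:omega7}, which establishes the blueprint property for $\Omega_7(q)$; that will require the standard analysis of conspicuous composition factors of $L(E_8){\downarrow_{\Omega_7(q)}}$, together with unipotent Jordan block and semisimple eigenvalue computations as in the proofs for $\PSL_5$ and $\PSU_5$ in this chapter. Once that is in hand, the present proposition is an immediate corollary requiring no further module-theoretic analysis.
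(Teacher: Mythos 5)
Your proposal is correct and is exactly the paper's argument: the paper also passes to the subgroup $\Omega_7(q)\leq \Omega_9(q)$, cites Proposition \ref{prop:omega7} (proved later in the chapter on rank-3 groups), and uses the fact, recorded with the definition of blueprint, that if $L\leq H\leq \mb G$ and $L$ is a blueprint for $V$ then so is $H$. Your spelled-out justification of that inheritance (intersecting the stabilizers of the $H$-invariant subspaces, which contains both $H$ and the positive-dimensional subgroup attached to $L$) is precisely the reason the paper treats it as immediate.
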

\begin{proof} As $L\cong \Omega_7(q)$ is contained in $H$, and $L$ is a blueprint for $L(E_8)$ by Proposition \ref{prop:omega7}, so is $H$. This completes the proof.
\end{proof}

\section{\texorpdfstring{$\PSp_8$}{PSp 8}}

We continue with our definition of $u$ from the start of the chapter.

\begin{proposition}\label{prop:sp8ine8} Let $H\cong \PSp_8(q)$ for some $q\leq 9$.
\begin{enumerate}
\item If $q=2$ then either $H$ stabilizes a line on $L(E_8)$, or $H$ is contained in an $A_7$-parabolic subgroup of $\mb G$ and is strongly imprimitive.
\item If $q=3,5,7,8,9$ then $H$ is a blueprint for $L(E_8)$.
\item If $q=4$ then $H$ stabilizes a line on $L(E_8)$.
\end{enumerate}
\end{proposition}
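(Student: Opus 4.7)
The plan is to stratify by $q$ and apply progressively sharper tools from Chapters~\ref{chap:subgroupstructure} and~\ref{chap:techniques}.

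For $q = 7, 8, 9$, I would exhibit a semisimple element of $H$ of order outside the set $T(E_8)$ of Definition~\ref{defn:T(G)}. The Coxeter-type maximal torus of $\PSp_8(q)$ yields a cyclic subgroup of order $(q^4+1)/\gcd(2,q-1)$, giving elements of orders $1201$, $4097$ and $3281$ for $q=7,8,9$ respectively; none of these lies in $T(E_8)$, since $T(E_8)_{\mathrm{odd}}$ contains no entry above $1189$ and $T(E_8)_{\mathrm{even}}$ contains no entry above $1312$. Theorem~\ref{thm:largeorderss} then gives the blueprint conclusion immediately.

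For $q = 3, 5$, Tables~\ref{t:modules3} and~\ref{t:modules5} show the simple $kH$-modules of dimension at most $248$ are $\{1,27,36,41\}$ for $q = 3$ (only $41$ has nonzero $1$-cohomology) and $\{1,27,36,42\}$ for $q = 5$ (none has nonzero $1$-cohomology, so $L(E_8){\downarrow_H}$ is semisimple). I would enumerate the conspicuous sets of composition factors for $L(E_8){\downarrow_H}$ using traces of small-order semisimple elements; for each set, compute the Jordan block structure of the smallest-class unipotent element $u$ acting on $L(E_8)$ and compare with Tables~\ref{t:unipe8p3} and~\ref{t:unipe8p5}. I expect $u$ to always land in a generic class of $\mb G$ (the dimensional and $1$-cohomological sparsity of the factors leaves little room for non-generic behaviour), so Lemma~\ref{lem:genericmeansblueprint} closes the case.

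For $q = 2$ and $q = 4$, enumerate conspicuous composition factor sets via trace computations. From Table~\ref{t:modules2}, the simple modules for $\PSp_8(2)$ have dimensions in $\{1,8,16,26,48,128,160,246\}$, with $1$-cohomology concentrated on $8,26,48,246$; for $\PSp_8(4)$ one takes Frobenius-twist tensor products of the $2$-restricted modules, with $1$-cohomology concentrated on the twists of $8_i, 26_i, 246_i$. For $q = 4$, every conspicuous set should carry enough trivial factors relative to factors with nonzero $1$-cohomology to have nonpositive pressure, and Proposition~\ref{prop:pressure} then yields the claimed line-stabilization on $L(E_8)$. For $q = 2$, most conspicuous sets similarly dispense via pressure (line stabilization, with strong imprimitivity following by Lemma~\ref{lem:fix1space}); any remaining conspicuous set will be handled by exhibiting $H$ inside the $A_7$-parabolic via the natural embedding $\PSp_8(2) \hookrightarrow \SL_8(2) \leq A_7$-Levi, matching the factors of $L(E_8){\downarrow_{A_7T_1}}$ against the composition factors of $L(E_8){\downarrow_H}$, and then upgrading to strong imprimitivity by applying Theorem~\ref{thm:intersectionorbit} to an $N_{\Aut^+(\mb G)}(H)$-stable orbit of $H$-invariant subspaces whose common stabilizer contains the unipotent radical of the parabolic.

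The main obstacle is the $q = 2$ case: identifying precisely which conspicuous composition factor pattern survives the pressure filter, verifying realisability inside the $A_7$-parabolic (likely via a pyx construction of $L(E_8){\downarrow_H}$ in Magma to locate an $H$-stable subspace whose stabiliser contains the positive-dimensional unipotent radical), and then ensuring $N_{\Aut^+(\mb G)}(H)$-stability of an appropriate orbit so as to conclude strong imprimitivity rather than mere Lie imprimitivity.
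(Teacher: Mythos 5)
Your treatment of $q=3,5,7,8,9$ matches the paper (same large-order elements for $q=7,8,9$; same semisimplicity/unipotent-class argument for $q=3,5$). The genuine gap is at $q=2$, in exactly the case the proposition is really about. After the pressure filter, two conspicuous sets survive: $128,48,26^2,8^2,1^4$ (pressure $1$) and $48^2,26^4,8^5,1^8$ (pressure $2$). Your plan for the surviving case is to ``exhibit $H$ inside the $A_7$-parabolic via the natural embedding $\PSp_8(2)\hookrightarrow \SL_8(2)$''. That only produces a warrant: it shows one copy of $H$ with those factors lies in an $A_7$-parabolic, whereas the proposition requires that \emph{every} copy of $H$ with factors $48^2,26^4,8^5,1^8$ and no fixed line is contained in an $A_7$-parabolic. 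The paper's proof of this is the substantial part of the argument: it takes $L\cong\Omega_8^+(2)$ and $M\cong\Sym(10)$ inside $H$ with $L\cap M\cong\Sym(9)$, uses that each stabilizes a line on $L(E_8)$ (Proposition \ref{prop:omega8+} and \cite[Section 10]{craven2017}) to force each into an $A_7$-parabolic, then analyses the intersection of two such parabolics via double cosets to place $L\cap M$ in an $A_7$-Levi, restricts $L(E_8)$ to $L\cap M$ to constrain the socle of $L(E_8){\downarrow_H}$ (no $26$ in the socle, positive-dimensional stabilizers when $48$ or $8^{\oplus 2}$ occur), and finally runs a Frobenius-reciprocity argument with the permutation module $1_M{\uparrow^H}$ plus pyx constructions to kill the case $\soc(L(E_8){\downarrow_H})=8$. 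None of this machinery appears in your plan, and the natural-embedding observation cannot substitute for it. (Your other surviving set, $128,48,26^2,8^2,1^4$, is also not dispatched by pressure nor realizable in the $A_7$-parabolic; the paper kills it by explicit radical/pyx computations in $P(26)$ and $P(48)$ showing the module cannot be assembled, concluding line stabilization.) Note also that the paper upgrades to strong imprimitivity via Lemma \ref{lem:nonGcr} (non-$\mb G$-cr) or Lemma \ref{lem:fix1space}, which is simpler than your proposed use of Theorem \ref{thm:intersectionorbit} with the unipotent radical.

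There is a smaller gap at $q=4$: it is not true that every conspicuous set has nonpositive pressure. There is a set with no trivial factors at all (dimensions $128,64,48,8$), which the paper eliminates because the unipotent element would act with blocks $2^{93},1^{62}$, not occurring in \cite[Table 9]{lawther1995}; and the remaining set $48_1^2,26_1^4,8_1^4,8_2,1^8$ has pressure $1$ (the $26_1$ and $8_i$ still carry $1$-cohomology over $\F_4$), so Proposition \ref{prop:pressure} alone does not give a fixed line. The paper needs the explicit $\{48_1,26_1,8_1,8_2,1\}$-radicals of $P(8_1)$ and $P(26_1)$ to rule out the possible socles; your plan as written would stop short of this.
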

\begin{proof}
$\boldsymbol{q=7,8,9}$: The group $\PSp_8(7)$ contains elements of order $1201$, $\PSp_8(8)$ contains elements of order $4097$, and $\PSp_8(9)$ contains elements of order $3281$. None of these lies in $T(E_8)$ (see Definition \ref{defn:T(G)}), so $H$ is a blueprint for $L(E_8)$ by Theorem \ref{thm:largeorderss}.

\medskip

\noindent $\boldsymbol{q=5}$: The only set of composition factors for $L(E_8){\downarrow_H}$ that is conspicuous for elements of orders $2$ and $3$ is $42,36,27^6,1^8$, and there are no extensions between any of the composition factors, so $L(E_8){\downarrow_H}$ must be semisimple. Furthermore, the unipotent element $u$ acts on this module with Jordan blocks $3,2^{56},1^{133}$, so lies in the generic class $A_1$. Thus $H$ is a blueprint for $L(E_8)$ by Lemma \ref{lem:genericmeansblueprint}.

\medskip

\noindent $\boldsymbol{q=3}$: There is a unique set of composition factors for $L(E_8){\downarrow_H}$ that is conspicuous for elements of orders $2$ and $4$, which is
\[ 41,36,27^6,1^9.\]
This has at least seven trivial summands as the only extension between composition factors is between $41$ and $1$.

To show that $H$ is a blueprint for $L(E_8)$, note that $36\oplus 27^{\oplus 6}\oplus 1^{\oplus 7}$ must be a summand of $L(E_8){\downarrow_H}$ and an element $u$ from the smallest unipotent class of $H$ acts on this with Jordan blocks $3,2^{42},1^{118}$. There is no non-generic class with this many trivial summands from Table \ref{t:unipe8p3} so $u$ is generic, and indeed $u$ must lie in class $A_1$ by \cite[Table 9]{lawther1995}. (In fact, using other unipotent classes, one can check that the only embedding must be
\[ 36\oplus 27^{\oplus 6}\oplus (1/41/1)\oplus 1^{\oplus 7},\]
as the semisimple possibility has unipotent elements acting with blocks that do not occur in \cite[Table 9]{lawther1995}.)

Thus $H$ is a blueprint for $L(E_8)$ by Lemma \ref{lem:genericmeansblueprint}.

\medskip

\noindent $\boldsymbol{q=2}$: There are five conspicuous sets of composition factors for $L(E_8){\downarrow_H}$:
\[ 246,1^2,\quad 128,48,26^2,8^2,1^4,\quad 48^2,26^4,8^5,1^8,\quad 26^8,16,8,1^{16},\quad 26,16^8,8^8,1^{30}.\]

\noindent \textbf{Cases 1, 4 and 5}: The first case does not occur by Corollary \ref{cor:no246}. The modules $48$, $26$ and $8$ each have $1$-dimensional $1$-cohomology, with $16$ and $1$ having none (see Table \ref{t:modules2}), and so the fourth and fifth set of composition factors have pressures $-7$ and $-21$ respectively. Thus $H$ stabilizes a line on $L(E_8)$ in these cases by Proposition \ref{prop:pressure}.

\medskip

\noindent \textbf{Case 2}: By Table \ref{t:modules2}, we see that $L(E_8){\downarrow_H}$ has pressure $1$, so its socle must be either $26$ or $8$, if we assume that $H$ does not stabilize a line on $L(E_8)$. Since $128$ has no extensions with the other composition factors, it must split off as a summand. The other summand $W$ must have the form
\[ 8/1/26/1/48/1/26/1/8\quad\text{or}\quad 26/1/8/1/48/1/8/1/26.\]

Suppose first that the socle of $W$ is $26$, and we construct a pyx for $W$ in the obvious way: add all copies of $1$, then $8$, then $1$, then $48$ on top of $26$. Already at this stage the result of this is
\[ 1/1,8,48/1,8,48/26,\]
so a pyx cannot be built. Similarly, we reverse the roles of $8$ and $26$, and again attempt to build a pyx. This time we start with $8$, then add all copies of $1$, then $26$, then $1$, then $48$, then $1$, and finally $26$. This yields a module
\[ 1,26/1,26,48/1,48,48/1,26/1,26/8\]
so again a pyx cannot be built (as there are no $26$s in the $7$th socle layer). Thus $H$ stabilizes a line on $L(E_8)$, as needed.

\medskip

\noindent \textbf{Case 3}: Assume that $H$ does not stabilize a line on $L(E_8)$, and that $H$ does not lie in an $A_7$-parabolic subgroup. We will show that neither $26$ nor $48$ may lie in the socle of $L(E_8){\downarrow_H}$, so that it must be $8^{\oplus i}$ for some $i$.

Let $L$ denote a copy of $\SO_8^+(2)$ in $H$, and let $M$ denote a copy of $\Sym(10)$ in $H$, arranged so that $L\cap M\cong \Sym(9)$. Note that each of $1$, $8$, $26$ and $48$, the simple modules for $H$, restrict irreducibly to each of $L$, $M$ and $L\cap M$. By Proposition \ref{prop:omega8+} below, and \cite[Section 10]{craven2017}, we know that $L$ and $M$ both stabilize lines on $L(E_8)$, and hence lie inside members of $\ms X$. We claim that both $L$ and $M$ lie inside (different) $A_7$-parabolic subgroups of $\mb G$. (If they were the same $A_7$-parabolic subgroup, then $H=\gen{L,M}$ also lies in it.)

We prove that any copy of $L$, $M$ or $H$ with these composition factors that lies inside a member of $\ms X$ lies inside an $A_7$-parabolic subgroup, so assume that one of these does not. Notice that $L$, $M$ and $H$ all require at least rank $4$ for an overgroup, and do not embed in a subgroup of $A_6$ or $C_3$, so if those factors appear in $\mb X$ in $\ms X$, they may be removed. Thus we reduce to the case where, if $\mb X$ is a parabolic subgroup of $\mb G$, then it is of type either $E_7$ or $D_7$. If $H$, $L$ or $M$ lies inside a $D_7$-parabolic subgroup then it must act on $M(D_7)$ with factors $8,1^6$, but $M(D_7)$ appears twice in $L(E_8){\downarrow_{D_7}}$, so $H$ cannot embed in the $D_7$-parabolic subgroup with these factors.

If $H$, $L$ or $M$ embeds in the $E_7$-parabolic subgroup, then the image in the $E_7$-Levi subgroup must embed inside a proper positive-dimensional subgroup of $E_7$ by Theorem \ref{thm:classmaximalsubgroup} and the remarks after for $H$ and $L$, and \cite[Theorem 1]{litterickmemoir} for $M$. This proper positive-dimensional subgroup can be the $A_7$ maximal-rank subgroup, with our subgroup acting irreducibly on $M(A_7)$. But this $A_7$ acts on $L(E_8)$ with, up to duality, four copies of $\Lambda^2(M(A_7))$ and one of $L(A_7)$. Together they contribute six copies of $26$, which is too many. Our subgroup can embed in the $D_6$-parabolic, but then this lies in the $D_7$-parabolic above, so we embed in the $E_6$-parabolic subgroup. But this acts on $L(E_8)$ with (up to duality) six copies of $M(E_6)$, and no non-trivial composition factor appears that often.

Thus $H$, $L$ or $M$ embeds in a maximal-rank subgroup, and it suffices to rule out $A_8$ and $D_8$ as options. If $\mb X=A_8$ then $H$, $L$ or $M$ stabilize either a line or hyperplane on $M(A_8)$, so we lie in an $A_7$-parabolic, and if $\mb X=D_8$ then it embeds as the sum of two $8$s on $M(D_8)$ (else we lie in a $D_7$-parabolic), but this lies in an $A_7$-parabolic again, and we are done.

\medskip

Therefore $L$ and $M$ lie in different parabolics, so that $L\cap M$ lies in the intersection of two parabolics, which are described in \cite[Section 2.8]{carterfinite} using double cosets in the Weyl group. However, in this case $L\cap M$ must act irreducibly on $M(A_7)$, so $L\cap M$ projects irreducibly onto the Levi factor of the $A_7$-parabolic subgroup. Hence the subgroup containing $L\cap M$ must have $A_7$-Levi factor, and therefore the two parabolics involved are either the same (explicitly ruled out) or opposite (as the corresponding double coset must lie in the normalizer of $W(A_7)$ inside $W(E_8)$, and this is $2\times \Sym(8)$); in this latter case, $L\cap M$ lies in the $A_7$-Levi subgroup. In particular, $L\cap M$ has a known action on $L(E_8)$ via $A_7$, given by
\[ L(\lambda_1)\oplus L(\lambda_2) \oplus L(\lambda_3) \oplus L(\lambda_5) \oplus L(\lambda_6) \oplus L(\lambda_7) \oplus (L(\lambda_1)\otimes L(\lambda_7)),\]
and this restricts to $L\cap M$ as
\[ 8^{\oplus 5}\oplus 48^{\oplus 2}\oplus (1/26/1)^{\oplus 2}\oplus (1/26/1/1/26/1).\]
The $\Sp_8(2)$ subgroup of the $A_7$-Levi subgroup containing $L\cap M$ acts as
\[ 8^{\oplus 4}\oplus 48^{\oplus 2}\oplus (1/26/1)^{\oplus 2}\oplus (1/26/1/8/1/26/1),\]
and the $C_4$ subgroup of the $A_7$ acts similarly, as
\[ 8^{\oplus 4}\oplus 48^{\oplus 2}\oplus (1/26/1)^{\oplus 2}\oplus (1/26/1/8^\sigma/1/26/1),\]
where $8^\sigma$ denotes the image of the natural module under the Frobenius automorphism, so $L(2\lambda_1)$, where $M(C_4)=L(\lambda_1)$.

Therefore we see the following facts:
\begin{enumerate}
\item $26$ cannot be a composition factor of $\soc(L(E_8){\downarrow_H})$, as it does not appear in $\soc(L(E_8){\downarrow_{L\cap M}})$;
\item if $48$ lies in $\soc(L(E_8){\downarrow_H})$ then its stabilizer in $\mb G$ is positive dimensional, containing $C_4$, and this subgroup stabilizes all $48$-dimensional simple submodules of $L(E_8){\downarrow_H}$;
\item if $8^{\oplus 2}$ lies in $\soc(L(E_8){\downarrow_H})$ then one of the $8$s in this socle must also be stabilized by the $C_4$ subgroup, so its stabilizer is positive dimensional. (We cannot say anything about the orbit of $8$-dimensional submodules, so we only obtain that $H$ is contained in a member of $\ms X$. However, we have already shown that this means that $H$ is contained in an $A_7$-parabolic subgroup.)
\end{enumerate}
Thus in order to not satisfy any of these criteria, $\soc(L(E_8){\downarrow_H})$ is exactly $8$.

Recall that $M$ stabilizes a line on $L(E_8)$ by \cite[Section 10]{craven2017}, and therefore by Frobenius reciprocity there is a map from the permutation module $P_M=1_M{\uparrow^H}$ to $L(E_8){\downarrow_H}$. We thus consider $P_M$, which has dimension $13056$. The quotient by the $\{1,8,26,48\}$-residual of this is much smaller, and it and its dual have socle layers
\[ 1/48/1/26/1,26/8,8/1,1\quad \text{and}\quad 1/1,8/1,8/26/1,26/48/1.\]
We cannot have a copy of $26$ in the second socle layer of $L(E_8){\downarrow_H}$ without $H$ stabilizing a line on $L(E_8)$, by the structures above, so with this condition we obtain a unique quotient
\[ 1/48/1/26/1/8\]
of this module. Thus this module is a submodule of $L(E_8){\downarrow_H}$.

We therefore try to add modules on top of this to build a pyx, and look for a contradiction. Let $W$ denote this submodule.

We have $\Ext_{kH}^1(1,W)=0$ and $\Ext_{kH}^1(48,W)=0$, so $1$ and $48$ cannot lie in the socle of the quotient module $L(E_8){\downarrow_H}/W$. We also have $\Ext_{kH}^1(26,W)\cong k$, with the extension being
\[ 1/48/1/26/1,26/8,\]
which is not allowed, as we saw above. Thus the socle of the quotient module $L(E_8){\downarrow_H}/W$ must consist solely of $8$s. There is an extension with submodule $8$ and quotient $W$, yielding a module
\[ 1/8,48/1/26/1/8;\]
this still has no extensions with $1$, and $\Ext_{kH}^1(48,8)=0$ so it cannot have an extension with $48$ either. Hence there must be an extension with $26$. There is now a $2$-dimensional $\Ext_{kH}^1$-group, yielding the module
\[ 1,26/8,48/1/26/1,26/8;\]
the restriction of this to $L\cap M$ results in both of the new $26$s splitting off as summands, so since $L(E_8){\downarrow_{L\cap M}}$ has no $26$ submodule, neither of these can occur in $L(E_8){\downarrow_H}$.

Hence we cannot construct $L(E_8){\downarrow_H}$ with a single $8$ in the socle. This proves that $H$ is contained in a member of $\ms X$, and as we saw above, this implies that $H$ lies in an $A_7$-parabolic subgroup, as claimed.

To obtain the statement that $H$ is strongly imprimitive, we simply apply Lemma \ref{lem:nonGcr}, which states that if $H$ is non-$\mb G$-cr, then $H$ is strongly imprimitive. (In fact, since $\Out(H)=1$ we only need $\sigma$-stability to obtain strong imprimitivity.) If $H$ lies in the $A_7$-Levi subgroup though, $H$ stabilizes a line on $L(E_8)$ and is strongly imprimitive again, this time by Lemma \ref{lem:fix1space}.

\medskip

\noindent $\boldsymbol{q=4}$: The modules of dimension at most $248$ with non-zero $1$-cohomology have dimension $8$, $26$ and $246$. Letting $L=\Sp_8(2)\leq H$, we see that the composition factors of $8\otimes 8$ for $L$ are $26^2,8,1^4$, while $8\otimes 16=128$ and $8\otimes 26=48\oplus 160$. Hence the composition factors of $L(E_8){\downarrow_H}$ have dimension $1$, $8$, $16$, $26$, $48$ and $64$, with only the last of these restricting reducibly to $L$.

Note that, for $H$, although $8_i$ and $26_i$ still have $1$-dimensional $1$-cohomology, $48_i$ no longer does, and $64$ does not either. If $L(E_8){\downarrow_H}$ has positive pressure, then $L(E_8){\downarrow_L}$ must have factors $128,48,26^2,8^2,1^4$ or $48^2,26^4,8^5,1^8$, and so $L(E_8){\downarrow_H}$ has composition factors of dimension one of the following:
\[ 128,64,48,8,\quad 48^2,26^4,8^5,1^8,\quad 64,48^2,26^2,8^4,1^4,\quad 64^2,48^2,8^3.\]
If the factors are $128,64,48,8$ then $L(E_8){\downarrow_H}$ is semisimple, and the restriction to $L$ is determined uniquely. The action of $u$ on this module has Jordan blocks $2^{93},1^{62}$, which does not appear in \cite[Table 9]{lawther1995}. Thus this case cannot occur.

The only set of composition factors with either of the other two multisets of dimensions that are conspicuous for elements of order at most $9$ is
\[ 48_1^2,26_1^4,8_1^4,8_2,1^8,\]
up to field automorphism. The only extensions between composition factors in $L(E_8){\downarrow_H}$ arise from $\Ext_{kH}^1(1,8_i)$, $\Ext_{kH}^1(1,26_1)$ and $\Ext_{kH}^1(26_1,48_1)$, so $L(E_8){\downarrow_H}$ has pressure $1$. Suppose that $H$ does not stabilize a line on $L(E_8)$. Thus, quotienting out by any $48_1$ in the socle, we may assume that the socle is either $8_1$ or $26_1$.

The $\{48_1,26_1,8_1,8_2,1\}$-radical of $P(8_1)$ is
\[ 8_1/1,1/8_1,26_1/1,1,1/8_1,8_2,26_1/1,48_1/26_1/1/8_1.\]
Hence the socle cannot be $8_1$. Similarly, the $\{48_1,26_1,8_1,8_2,1\}$-radical of $P(26_1)$ is
\[ 8_1/1,1/8_1,26_1/1,1,1/8_1,8_2,26_1/1,48_1/26_1,\]
so the socle cannot be $26_1$ either. This completes the proof that $H$ stabilizes a line on $L(E_8)$.
\end{proof}

\section{\texorpdfstring{$\Omega_8^+$}{Omega 8+}}

We continue with our definition of $u$ from the start of the chapter.

\begin{proposition}\label{prop:omega8+}
Let $H\cong \POmega_8^+(q)$ for some $q\leq 9$.
\begin{enumerate}
\item If $q=2,4$ then $H$ stabilizes a line on $L(E_8)$.
\item if $q=3,5,7,8,9$ then $H$ is a blueprint for $L(E_8)$.
\end{enumerate}
\end{proposition}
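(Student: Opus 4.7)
The plan is to split on $q$ and exploit subgroup structure rather than attack $H$ directly. For $q\in\{3,5,7,9\}$ odd, the group $H=\POmega_8^+(q)$ contains $\Omega_7(q)$ as the stabilizer of a non-singular vector in the natural $8$-dimensional orthogonal module. By Proposition \ref{prop:omega7} this $\Omega_7(q)$ is a blueprint for $L(E_8)$, and the blueprint property is inherited by any overgroup, so $H$ is a blueprint as well. For $q=8$ the same reduction applies using $\Omega_7(8)\cong \Sp_6(8)\leq H$; alternatively, one can exhibit a semisimple element of $H$ lying in a torus of order such as $(q^3+1)(q+1)=4617$ or $(q^4-1)/(q-1)=585\cdot\text{something}$, and pick one of order exceeding the largest entry of $T(E_8)$, whereupon Theorem \ref{thm:largeorderss} applies directly.

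For $q=2$, I would enumerate the conspicuous sets of composition factors of $L(E_8){\downarrow_H}$ using trace data of semisimple elements of small order together with the eigenvalue/power-map consistency from Chapter \ref{chap:techniques}. The simple modules available are those in Table \ref{t:modules2}, and the set containing the $246$ is killed immediately by Corollary \ref{cor:no246}. For each remaining conspicuous set, I would compute the $\{k\}$-pressure using the $1$-cohomology data from that table (only $26$ with $2$-dimensional, and the $48_i$ with $1$-dimensional cohomology contribute). Each such set should turn out to have non-positive pressure, since any appearance of $160_i$ or $8_i$ as a composition factor contributes nothing to the pressure but forces further trivial factors to balance the total dimension to $248$. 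Proposition \ref{prop:pressure} then produces either a simple submodule or quotient isomorphic to $k$, which by self-duality of $L(E_8)$ gives a fixed line, establishing strong imprimitivity through Lemma \ref{lem:fix1space}.

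For $q=4$, the plan is parallel but more delicate. Table \ref{t:modules48916} indicates that among the low-dimensional simple modules for $\POmega_8^+(4)$ only the three Frobenius twists $26_i$ have nonzero $1$-cohomology in the range we need, so the pressure tool is blunter here. I would first compute conspicuous sets of composition factors of $L(E_8){\downarrow_H}$ conspicuous for elements of order up to about $15$, dispatch those with non-positive pressure by Proposition \ref{prop:pressure}, and for the survivors restrict to the subgroup $L\cong\POmega_8^+(2)\leq H$ (acting naturally on the same $8$-space). By the $q=2$ result just proved, $L$ stabilizes a line on $L(E_8)$; combining this with a pyx analysis for $L(E_8){\downarrow_H}$ in the style of the $\PSp_8(2)$ case of Proposition \ref{prop:sp8ine8} should force either a trivial submodule or a trivial quotient for $H$ itself.

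The main obstacle will be the $q=4$ case. The blueprint route fails because element orders are too small for Theorem \ref{thm:largeorderss}, and the pressure route may fail for at least one conspicuous set because the $48_i$, $160_i$ and $246_i$ lose their $1$-cohomology on passing from $\F_2$ to $\F_4$. Here the restriction to $\POmega_8^+(2)$ is essential, and the Ext computations needed to build and rule out the pyx candidates will almost certainly need to be done in \textsc{Magma}. In contrast, the $q=2$ case I expect to be entirely mechanical, and the $q\in\{3,5,7,8,9\}$ cases reduce cleanly to Proposition \ref{prop:omega7}.
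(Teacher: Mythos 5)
Your reduction for $q=3,5,7,9$ via $\Omega_7(q)\leq H$ and Proposition \ref{prop:omega7} is exactly the paper's argument, and for $q=8$ the paper likewise invokes Theorem \ref{thm:largeorderss}, using an element of order $8^4-1=4095\notin T(E_8)$; note, however, that your primary suggestion for $q=8$ via $\Omega_7(8)\cong\Sp_6(8)$ does not work, since Proposition \ref{prop:omega7} is stated and proved only for odd $q$, and the paper shows $\PSp_6(q)$ with $q$ even stabilizes a line rather than being a blueprint, so only your fallback (the large semisimple element) is viable there.

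The genuine gap is at $q=2$: your expectation that every conspicuous set of composition factors has non-positive pressure is false. The paper finds five conspicuous sets (up to automorphism), and two of them have strictly positive pressure, namely $48_1^2,26^4,8_1^5,1^8$ of pressure $2$ and $48_1,48_2,48_3,26^2,8_1^2,8_2^2,8_3^2,1^4$ of pressure $3$ (using the $1$-cohomology data you quote: $2$ for $26$ and $1$ for each $48_i$); your heuristic that $160_i$ or $8_i$ factors force extra trivials is beside the point, as neither positive-pressure set involves a $160$. Handling these two sets is where essentially all the work lies: for the pressure-$2$ set the paper runs a pyx analysis on the $\{1,8_1,26,48_1\}$-radicals of $P(48_1)$ and $P(26)$ to show a trivial submodule or quotient is unavoidable, and for the pressure-$3$ set it uses three $\Alt(9)$ subgroups $L_i$ (chosen so that $48_i{\downarrow_{L_i}}$ stays irreducible), the fact from \cite{craven2017} that each $L_i$ fixes a line on $L(E_8)$, and the permutation module $P_{L_i}$ to force subquotients $1/48_i$ for all $i$, which by self-duality demands too many trivial composition factors. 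Without an argument of this kind your $q=2$ case is not "mechanical" but simply incomplete, and this also undermines your $q=4$ plan as stated, since it leans on the $q=2$ conclusion; for the record, the paper's $q=4$ proof avoids any pyx work, instead using the restriction to $\Omega_8^+(2)$ to bound the possible factor dimensions, a pressure count (needing more than half as many $26$s as trivials) to force $64$-dimensional factors, and then a check that none of the three resulting dimension patterns $64^3,48,8$; $64^2,48^2,8^3$; $64,48^3,8^5$ is conspicuous for elements of order at most $5$.
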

\begin{proof} $\boldsymbol{q=3,5,7,9}$: In these cases $\Omega_7(q)$ is contained in $H$, and $\Omega_7(q)$ is a blueprint for $L(E_8)$ by Proposition \ref{prop:omega7} below, hence $H$ is.

\medskip

\noindent $\boldsymbol{q=8}$: In this case $H$ contains an element of order $8^4-1=4095$, not in $T(E_8)$, so $H$ is a blueprint for $L(E_8)$ by Theorem \ref{thm:largeorderss}.

\medskip

\noindent $\boldsymbol{q=2}$: There are (up to automorphism) five conspicuous sets of composition factors for $L(E_8){\downarrow_H}$:
\[26,8_1^8,8_2^8,8_3^8,1^{30},\quad 26^8,8_1,8_2,8_3,1^{16},\quad 48_1^2,26^4,8_1^5,1^8,\]
\[48_1,48_2,48_3,26^2,8_1^2,8_2^2,8_3^2,1^4,\quad 246,1^2.\]
The pressures of the five cases are $-28$, $0$, $2$, $3$ and $-1$ respectively.

\medskip

\noindent \textbf{Cases 1, 2 and 5}: The fifth case cannot occur by Corollary \ref{cor:no246}, and the first two cases lead to $H$ stabilizing a line on $L(E_8)$ by Proposition \ref{prop:pressure}.

\medskip

\noindent \textbf{Case 3}: Suppose that $L(E_8){\downarrow_H}$ has no trivial submodule, and let $W$ denote the $\{26,48_1\}$-heart of $L(E_8){\downarrow_H}$. The socle of $W$ must be $26$ or $48_1$ (as $26$ has $2$-dimensional $1$-cohomology, and so the socle cannot be $26\oplus 48_1$, and if it were $48_1^{\oplus 2}$ then this would be the entirety of $W$).

Suppose first that $\soc(W)=48_1$. The $\{1,8_1,26\}$-radical of $P(48_1)$ is
\[ 1/26/1,1,8_1/26/1,8_1/48_1,\]
which does not have eight trivial factors. Since this is a pyx for $W$---with the $48_1$ at the top removed---we obtain a contradiction. Thus $\soc(W)=26$.

The $\{1,8_1,26,48_1\}$-radical of $P(26)$ is
\[ 1/26/1,1,1,8_1/26,48_1/1,1/26,48_1/1,1,8_1/26\]
and must be a pyx for $W$. This pyx has eight trivial composition factors but has a trivial quotient. Any trivial quotient of $W$ becomes a trivial quotient of $L(E_8){\downarrow_H}$, so $W$ cannot have eight trivial composition factors in the third case, another contradiction. Thus $H$ stabilizes a line on $L(E_8)$.

\medskip

\noindent \textbf{Case 4}: Suppose that $H$ does not stabilize a line on $L(E_8)$. There are three conjugacy classes of subgroups of $H$ isomorphic with $\Alt(9)$, and for exactly one of them the restriction of $48_i$ to it remains irreducible. Let $L_i$ denote an $\Alt(9)$ subgroup such that $48_i$ remains irreducible on restriction to $L_i$. Since $L(E_8){\downarrow_{L_i}}$ has a trivial submodule by \cite[Proposition 9.2]{craven2017}, there is a non-trivial homomorphism from the permutation module $P_{L_i}$ of $H$ on $L_i$ to $L(E_8){\downarrow_H}$.

The module $P_{L_i}$ has dimension $960$, but the quotient by the $\{1,8_i,26,48_i\}$-residual has dimension $302$. The quotient modulo its second radical layer is the (unique up to isomorphism) uniserial module $1/48_i$, and so $1/48_i$ must be a subquotient of $L(E_8){\downarrow_H}$ as $H$ does not stabilize a line on $L(E_8)$. This shows that $48_i$ cannot lie in the socle of $L(E_8){\downarrow_H}$, since then it would be a summand.

Let $W$ denote the quotient of $L(E_8){\downarrow_H}$ by its $\{1,8_i,26\}$-radical. This has socle $48_1\oplus 48_2\oplus 48_3$, and so since each $L_i$ must stabilize a line on $L(E_8)$, we actually see that $W$ must have the submodule
\[ (1/48_1)\oplus (1/48_2)\oplus (1/48_3).\]
From here it is easy to obtain a contradiction, and therefore see that $H$ stabilizes a line on $L(E_8)$. The most obvious way is to recall that $L(E_8)$ is self-dual, and hence there is also a subquotient $48_i/1$, so we need at least six trivial composition factors in $L(E_8){\downarrow_H}$, which is a contradiction.

\medskip

\noindent $\boldsymbol{q=4}$: Suppose that $H$ does not stabilize a line on $L(E_8)$. By restricting to $L=\Omega_8^+(2)$, we see that there are no composition factors of dimension $160$ or $208$ (this is $8_i\otimes 26$, which restricts to $L$ as $48_i\oplus 160_i$), or $246$. Thus we may assume that the factors of $L(E_8){\downarrow_H}$ are of dimensions $1$, $8$, $26$, $48$ and $64$. As $H^1(H,26_i)$ has dimension $2$ and all other simple modules have zero $1$-cohomology, we need more than half as many $26$-dimensional composition factors as $1$s in $L(E_8){\downarrow_H}$. This means we need $64$s in $L(E_8){\downarrow_H}$, otherwise the dimensions of the factors of $L(E_8){\downarrow_H}$ match those of $L(E_8){\downarrow_L}$, and we are done.

For $L$, $8_1\otimes 8_1$ has factors $26^2,8_2,1^4$, and $8_1\otimes 8_2$ has factors $48_3,8_3^2$. Hence it is easy to see that we need to have no trivial factors or $26$-dimensional factors in $L(E_8){\downarrow_H}$, and so the dimensions of the factors are one of
\[ 64^3,48,8,\quad 64^2,48^2,8^3,\quad 64,48^3,8^5.\]
However, none of these is conspicuous for elements of order at most $5$, and so $H$ must always stabilize a line on $L(E_8)$.
\end{proof}

\section{\texorpdfstring{$\Omega_8^-$}{Omega 8-}}

We now turn to $\POmega_8^-(q)$, where we use the same embedding and element order as for $\POmega_8^+(q)$ to deal with the cases $q=3,5,7,8,9$. We continue with our definition of $u$ from the start of the chapter.

\begin{proposition}
Let $H\cong \POmega_8^-(q)$ for some $q\leq 9$.
\begin{enumerate}
\item If $q=2,4$ then $H$ stabilizes a line on $L(E_8)$.
\item if $q=3,5,7,8,9$ then $H$ is a blueprint for $L(E_8)$.
\end{enumerate}
\end{proposition}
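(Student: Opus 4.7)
\medskip

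\noindent\textbf{Proof plan.} The case analysis mirrors that for $\POmega_8^+(q)$, with the main differences arising from the different simple module dimensions and the different torus orders.

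For $q = 3, 5, 7, 9$ the plan is to note that the stabilizer in $\Omega_8^-(q)$ of a nonsingular vector of the appropriate norm gives an embedding $\Omega_7(q) \leq H$, and $\Omega_7(q)$ is a blueprint for $L(E_8)$ by Proposition \ref{prop:omega7}; hence so is $H$. For $q = 8$, a Singer-type element in $\POmega_8^-(8)$ has order dividing $q^4 + 1 = 4097 = 17 \cdot 241$, which lies outside $T(E_8)$ from Definition \ref{defn:T(G)}; then Theorem \ref{thm:largeorderss} finishes this case. (One needs only to check that such an element is indeed present in the projective simple quotient, which is immediate since $\gcd(4, q^4+1) = 1$ for $q=8$.)

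For $q = 2$, the first step is to enumerate the conspicuous sets of composition factors of $L(E_8)\downarrow_H$ using the traces of semisimple elements; the simple $kH$-modules of dimension at most $248$ and their $1$-cohomologies are listed in Table \ref{t:modules2}. One then eliminates the set containing $246$ by Corollary \ref{cor:no246}, and eliminates those of non-positive pressure by Proposition \ref{prop:pressure} together with Lemma \ref{lem:fix1space}. I expect the remaining cases, if any, will resemble Cases 3 and 4 of the $\POmega_8^+(2)$ proof: one constructs a pyx for the $\{1, 8_i, 26, 48_i\}$-heart of $L(E_8)\downarrow_H$ inside the appropriate projective cover, and verifies that no such pyx can accommodate the required number of composition factors with the correct structure. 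Where a pyx argument is not immediately decisive, one restricts to an $\Alt(9)$ or an $\Omega_7(2)$ subgroup $L\leq H$; since these subgroups must themselves stabilize a line on $L(E_8)$ (by \cite{craven2017} or by Proposition \ref{prop:omega7}), Frobenius reciprocity provides a non-trivial map from a permutation module $1_L{\uparrow^H}$ into $L(E_8)\downarrow_H$, which heavily constrains the possible socle layers and yields a contradiction with $H$ not stabilizing a line.

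For $q = 4$, the plan is to restrict to $L = \Omega_8^-(2) \leq H$. Applying Steinberg's tensor product theorem, every simple $kH$-module is a tensor product of Frobenius twists of simple $kL$-modules, so each composition factor of $L(E_8)\downarrow_H$ restricts to a known (possibly reducible) $kL$-module whose composition factors must appear in one of the lists already determined for the $q=2$ case. Comparing with the required dimensions and using that the only simple $kH$-modules of small enough dimension with non-zero $1$-cohomology are the twists of $26$ (analogously to the $\POmega_8^+(4)$ argument), one restricts the possible dimensions of composition factors to a short list, then checks that no combination is conspicuous for elements of order at most $9$, forcing $H$ to stabilize a line on $L(E_8)$.

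The principal obstacle will be the $q = 2$ case: the modules $26$ (with $2$-dimensional $1$-cohomology) together with $48_1$, $160_2$, $160_3$ (each with $1$-dimensional $1$-cohomology) create significantly more room for positive-pressure configurations than in the $\PSp_8(2)$ analysis, so the pyx constructions for $P(26)$ and $P(48_1)$ will need to be carried out in full and the socle carefully pinned down using permutation-module information from the $\Omega_7(2)$ and $\Alt(9)$ subgroups.
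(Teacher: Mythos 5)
Your plan follows the paper's proof essentially step for step: $\Omega_7(q)\leq H$ plus Proposition \ref{prop:omega7} for odd $q$, a semisimple element of order outside $T(E_8)$ for $q=8$ (the paper uses $8^4-1=4095$; your $q^4+1=4097$ works just as well), Corollary \ref{cor:no246} and pressure for most $q=2$ cases with a pyx computation in $P(26)$ and $P(48_1)$ for the one positive-pressure set $48_1^2,26^4,8_1^5,1^8$, and restriction to $\Omega_8^-(2)$ together with a conspicuity check for elements of order at most $9$ when $q=4$. One small caveat: your proposed fallback for $q=2$ of restricting to $\Alt(9)$ or invoking Proposition \ref{prop:omega7} for $\Omega_7(2)$ is not actually available ($\Alt(9)$ preserves a plus-type form on $\F_2^8$, so it does not embed in $\Omega_8^-(2)$, and Proposition \ref{prop:omega7} assumes $q$ odd), but it is also unnecessary, since the pyx argument you propose first settles that case exactly as in the paper.
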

\begin{proof} $\boldsymbol{q=3,5,7,9}$: In these cases $\Omega_7(q)$ is contained in $H$, and $\Omega_7(q)$ is a blueprint for $L(E_8)$ by Proposition \ref{prop:omega7}, hence $H$ is.

\medskip

\noindent $\boldsymbol{q=8}$: In this case $H$ contains an element of order $8^4-1=4095$, which is not in $T(E_8)$, so $H$ is a blueprint for $L(E_8)$ by Theorem \ref{thm:largeorderss}.

\medskip

\noindent $\boldsymbol{q=2}$: There are six conspicuous sets of composition factors up to automorphism, given by
\[ 26,8_1^8,8_2^8,8_3^8,1^{30},\quad 26^8,8_1,8_2,8_3,1^{16},\quad 48_1^2,26^4,8_1^5,1^8,\]
\[ 48_2^2,26^4,8_2^4,8_3,1^8,\quad 48_1,48_2,48_3,26^2,8_1^2,8_2^2,8_3^2,1^4,\quad 246,1^2.\]
As $26$ has $2$-dimensional $1$-cohomology, and $48_1$ has $1$-dimensional $1$-cohomology (see Table \ref{t:modules2}), in all but the third case $H$ must stabilize a line on $L(E_8)$ by pressure arguments (Proposition \ref{prop:pressure}). (The last case does not occur by Corollary \ref{cor:no246}, and while the fifth case has pressure $1$, it has a composition factor with $2$-dimensional $1$-cohomology.)

In the third case, suppose that $H$ does not stabilize a line on $L(E_8)$. The pressure is $2$ so the socle of $L(E_8){\downarrow_H}$, modulo the $\{8_1\}$-radical, is either $26$ or $48_1$. Let $W$ denote the quotient by the $\{8_1\}$-radical. The $\{1,8_1,26,48_1\}$-radical of $P(26)$ is
\[ 1,8_1/26,48_1/1,1/26,48_1/1,1,8_1/26,\]
and if $\soc(W)=26$ then this must be a pyx for $W$. But this module does not have eight trivial factors. Thus $26$ cannot be the socle. On the other hand, the $\{1,8_1,26\}$-radical of $P(48_1)$ is
\[ 1/26/1,1,8_1/26/1,8_1/48_1.\]
Again, if $\soc(W)=48_1$ then this must be a pyx for $W$, and does not have enough trivial factors either. Hence $48_1$ cannot be the socle. Thus $H$ stabilizes a line on $L(E_8)$, as needed.

\medskip

\noindent $\boldsymbol{q=4}$: This is similar to the proof for $\Omega_8^+(4)$, so assume that $H$ does not stabilize a line on $L(E_8)$. The simple modules of dimension at most $64$ that have non-zero $1$-cohomology are of dimension $26$, with $2$-dimensional $1$-cohomology. Thus, just as for $\Omega_8^+(4)$, we see that the possible dimensions of the composition factors of $L(E_8){\downarrow_H}$ are
\[ 64^3,48,8,\quad 64^2,48^2,8^3,\quad 64,48^3,8^5.\]
As with $\Omega_8^+(4)$, there are no sets of composition factors that are conspicuous for elements of order at most $9$, so $H$ must stabilize a line in this case as well, completing the proof.\end{proof}

\section{\texorpdfstring{${}^3\!D_4$}{3D4}}

We continue with our definition of $u$ from the start of the chapter.

\begin{proposition}
Let $H\cong {}^3\!D_4(q)$ for some $q\leq 9$.
\begin{enumerate}
\item If $q=2,4$ then $H$ stabilizes a line on $L(E_8)$.
\item if $q=3,5,7,8,9$ then $H$ is a blueprint for $L(E_8)$.
\end{enumerate}
\end{proposition}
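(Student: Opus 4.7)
The proof naturally splits by $q$. For $q\in\{7,8,9\}$ the group ${}^3\!D_4(q)$ contains a cyclic maximal torus of order $\Phi_{12}(q)=q^4-q^2+1$, giving elements of orders $2353$, $4033$ and $6481$ respectively. None of these orders lies in $T(E_8)$ (Definition \ref{defn:T(G)}), so Theorem \ref{thm:largeorderss}(1) makes $H$ a blueprint for $L(E_8)$ immediately.

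For $q\in\{3,5\}$ one has $\Phi_{12}(q)\in T(E_8)_{\mathrm{odd}}$, so Theorem \ref{thm:largeorderss} does not apply directly. I would compute the conspicuous sets of composition factors of $L(E_8){\downarrow_H}$ from the Brauer character data, then look at the smallest unipotent class of $H$: for ${}^3\!D_4(q)$ in defining characteristic $q=3$ or $5$ the minimal unipotent class in $H$ should induce an element of $E_8$ whose Jordan block structure on $L(E_8)$ is not in Tables \ref{t:unipe8p3} or \ref{t:unipe8p5}, i.e.\ lies in a generic class, whereupon Lemma \ref{lem:genericmeansblueprint} finishes the job. In any residual case I would apply the roots trick to an element $x$ of order $\Phi_{12}(q)$: choose all $x$ with the prescribed eigenvalues on $L(E_8)$, pass to a $\Phi_{12}(q)m$-th root in a suitable torus for some small $m$ (with $m\mid q^6-1$ to realise the root inside the Singer-type subgroup of the ambient ${}^3\!D_4(q^{a})$), and check that the resulting element has the same eigenspace decomposition as $x$ on $L(E_8)$, forcing $x$ to be a blueprint.

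For $q=2$ the strategy mirrors the $\POmega_8^\pm(2)$ arguments just above. Read off from Table \ref{t:modules2} that only $26$ (two-dimensional $1$-cohomology), $160_i$ and $246$ (one-dimensional each) contribute to pressure, and enumerate the conspicuous sets of composition factors of $L(E_8){\downarrow_H}$; this list is expected to be close to the corresponding $\POmega_8^-(2)$ list, modified by the interchange of $1$-cohomology between $48_i$ and $160_i$. For each set either (a) the pressure is non-positive and Proposition \ref{prop:pressure} combined with Lemma \ref{lem:fix1space} produces a line, or (b) Corollary \ref{cor:no246} handles the $246,1^2$ case, or (c) a pyx for the $\{1,8_i,26,48_i,160_i\}$-heart of $L(E_8){\downarrow_H}$, built as the appropriate radical of a projective cover, cannot accommodate the required multiplicities of trivial and $26$-dimensional factors without producing a trivial submodule.

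For $q=4$ the group $L={}^3\!D_4(2)$ lies in $H$, and the previous case shows that $L$ stabilises a line on $L(E_8)$. Restricting the candidate simple $H$-modules (of dimensions $1$, $26_i$, $48_i$, $64$, $208_{i,j}$, $246_i$ and their tensor-product cousins) to $L$ and comparing against the possible $L$-composition factors of $L(E_8){\downarrow_L}$ cuts the dimensions of $H$-composition factors down to the short list $\{1,8,26,48,64\}$, exactly as for $\POmega_8^\pm(4)$. A check for conspicuousness against elements of order up to $9$ in ${}^3\!D_4(4)$ should leave no non-trivial candidate, so $H$ stabilises a line on $L(E_8)$. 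The main obstacle throughout is the pair $q\in\{3,5\}$: the torus element of order $\Phi_{12}(q)$ is not large enough to trigger Theorem \ref{thm:largeorderss}, so one must instead rely on the somewhat delicate interplay between the generic unipotent structure in characteristic $p=q$ and the roots trick on $\Phi_{12}(q)$-elements.
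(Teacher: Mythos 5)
Your outline is essentially the paper's proof for most values of $q$: the $\Phi_{12}(q)$-torus argument for $q=7,8,9$, the pressure computation for $q=2$ (where indeed all five conspicuous sets have non-positive pressure precisely because $48_i$ has zero $1$-cohomology for ${}^3\!D_4(2)$, so no pyx argument and no appeal to Corollary beyond the $246,1^2$ case is needed), and for $q=4$ the restriction to $L={}^3\!D_4(2)$ forcing no trivial factors and dimensions among $64^a,48^b,8^c$, which then fail conspicuousness (the paper only needs a single element of order $5$, whose $1$-eigenspace has dimension $76$ or $84$, impossible in $E_8$). For $q=3$ your plan is exactly what the paper does: the four conspicuous sets (up to field automorphism) are forced to be semisimple by the vanishing of the relevant $\Ext^1$ groups, the unipotent element acts with blocks $3^{14},2^{64},1^{78}$ or $3,2^{56},1^{133}$, hence lies in the generic class $2A_1$ or $A_1$, and Lemma \ref{lem:genericmeansblueprint} applies. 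The one genuine divergence is $q=5$, which you flag as delicate and propose to attack by the same conspicuous-set/unipotent analysis plus a roots-trick fallback: the paper instead disposes of it in one line by observing $G_2(5)\leq {}^3\!D_4(5)$ and quoting Proposition \ref{prop:g2ine8}, which shows $G_2(5)$ is already a blueprint for $L(E_8)$, so the containment transfers the blueprint property upwards. Your route would likely also succeed (mirroring $q=3$), but it is unexecuted — you would still have to produce the conspicuous sets for ${}^3\!D_4(5)$ and verify genericity — whereas the subgroup argument removes the case entirely; note also that the roots trick as you describe it does not require realising the larger-order element inside a bigger finite group ${}^3\!D_4(q^a)$, only inside a maximal torus of $E_8$.
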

\begin{proof} $\boldsymbol{q=7,8,9}$: The group ${}^3\!D_4(q)$ contains elements of order $\Phi_{12}(q)=q^4-q^2+1$, which for $q=7,8,9$ is equal to $2353$, $4033$ and $6481$, and of course none of these is in $T(E_8)$. Thus $H$ is a blueprint for $L(E_8)$ by Theorem \ref{thm:largeorderss}.

\medskip

\noindent $\boldsymbol{q=5}$: The group $H$ contains $G_2(5)$, which is a blueprint for $L(E_8)$ by Proposition \ref{prop:g2ine8} below.

\medskip

\noindent $\boldsymbol{q=3}$: The simple modules of dimension at most $248$ for $H$ are $1$, $8_i$, $28$, $35_i$, $56_i$, $104_i$ and $224_j$, where $1\leq i\leq 3$ and $1\leq j\leq 6$. There are six sets of composition factors that are conspicuous for elements of order at most $13$, four up to field automorphism, and these are
\[ 56_1,56_2,56_3,28^2,8_1,8_2,8_3,\qquad 56_1^2,35_1,28^3,8_1^2,1.\]
\[ 35_1,35_2,35_3,28^5,1^3,\qquad 28,8_1^8,8_2^8,8_3^8,1^{28}.\]
Each of these is semisimple because there are no extensions between composition factors. The unipotent element $u$ acts on $L(E_8)$ with blocks $3^{14},2^{64},1^{78}$ in the first three cases and $3,2^{56},1^{133}$ in the last case. This means that $u$ lies in class $A_1$ or $2A_1$ by \cite[Table 9]{lawther1995}, both of which are generic. Thus $H$ is a blueprint for $L(E_8)$ by Lemma \ref{lem:genericmeansblueprint}.

\medskip

\noindent $\boldsymbol{q=2}$: There are up to field automorphism five conspicuous sets of composition factors for $L(E_8){\downarrow_H}$:
\[ 246,1^2,\qquad 48_1,48_2,48_3,26^2,8_1^2,8_2^2,8_3^2,1^4,\qquad 48_1^2,26^4,8_1^4,8_2,1^8,\]
\[ 26^8,8_1,8_2,8_3,1^{16},\qquad 26,8_1^8,8_2^8,8_3^8,1^{30}.\]
Each of these has non-positive pressure (see Table \ref{t:modules2}), so $H$ stabilizes a line on $L(E_8)$ by Proposition \ref{prop:pressure}, as needed. (The first case cannot occur by Corollary \ref{cor:no246}.)

\medskip

\noindent $\boldsymbol{q=4}$: Assume that $H$ does not stabilize a line on $L(E_8)$. Let $L$ be the subgroup ${}^3D_4(2)$ of $H$. As with $\Omega_8^+(2)$, we use the restriction to $L$ to determine the factors of $L(E_8){\downarrow_H}$.

From Table \ref{t:modules48916} we have $H^1(H,M)=0$ if $\dim(M)=1,8,48$ and $H^1(H,26_i)$ is $2$-dimensional. Hence if the composition factors involved in $L(E_8){\downarrow_H}$ restrict irreducibly to $L$ then $L(E_8){\downarrow_H}$ has non-positive pressure, so has a trivial submodule.

For $L$, we have $8_1^{\otimes 2}=1/26/1,1,8_2/26/1$, and $8_1\otimes 8_2=8_3/48_3/8_3$, and the $64$-dimensional modules for $H$ have zero $1$-cohomology, so in order for $H$ to not stabilize a line on $L(E_8)$ there must be no trivial composition factors. Thus the dimensions of the composition factors of $L(E_8){\downarrow_H}$ are
\[ 64,48^3,8^5,\quad 64^2,48^2,8^3,\quad 64^3,48,8.\]
(This should look familiar from $\Omega_8^\pm(4)$.) None of the $185868$ such possibilities for $L(E_8){\downarrow_H}$ is conspicuous for elements of order $5$, so $H$ must stabilize a line on $L(E_8)$, as needed.\end{proof}

The current version of Magma (V2.24) at the time of writing will not directly compute inside ${}^3\!D_4(4)$ as it is given, so we explain how to coerce it to do so.

Assume that we implement ${}^3\!D_4(4)$ using \texttt{ChevalleyGroup("3D",4,4)}: this is generated by $x$ of order $63$ and $y$ of order $12$. If one tries to compute $\Ext^1$ between simple modules using \texttt{Ext} it will return an error, but instead implement ${}^3\!D_4(4)$ as follows:

\begin{verbatim}
G1:=ChevalleyGroup("3D",4,4);
M:=CompositionFactors(ExteriorSquare(GModule(G1)))[2];
G:=WriteOverSmallerField(sub<GL(26,64)|ActionGenerators(M)>,GF(4));
G:=sub<GL(26,4)|
    ActionGenerators(CompositionFactors(GModule(G))[1])>;
\end{verbatim}
For this copy of \texttt{G} one may now use \texttt{Ext} between modules.

However, one still cannot use the command \texttt{ConjugacyClasses} on \texttt{G}. Thus we take a specific element, namely \texttt{g:=(xy)\textasciicircum9}, where \texttt{x} and \texttt{y} are the generators of the group as given in Magma.

This element has order $5$: it has a $4$-dimensional $1$-eigenspace on each $8$-dimensional simple module, and a $16$-dimensional $1$-eigenspace on all $48$- and $64$-dimensional simple modules. From the dimensions above, this means that the $1$-eigenspace of $g$ on $L(E_8)$ has dimension $76$ or $84$. However, from the list of semisimple classes of $E_8$, we find that the dimension of the $1$-eigenspace is one of 48, 52, 54, 64, 68, 82, 92 and 134. This contradiction means that there is no such conspicuous set of composition factors.

\section{\texorpdfstring{$F_4$ and ${}^2\!F_4$}{F4 and 2F4}}

Let $H\cong F_4(q)$ for some $2\leq q\leq 9$. We also require $H\cong {}^2\!F_4(q)$ for $q=2,8$. We continue with our definition of $u$ from the start of the chapter.

\begin{proposition} Let $H\cong F_4(q)$ for some $q\leq 9$, or $H\cong {}^2F_4(q)'$ for some $q=2,8$.
\begin{enumerate}
\item If $q=2,4,5$ then $H$ stabilizes a line on $L(E_8)$.
\item If $q=3,7,8,9$ then $H$ is a blueprint for $L(E_8)$.
\end{enumerate}
\end{proposition}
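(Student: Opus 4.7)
My plan is to treat the two conclusions separately. For the blueprint cases $q=3,7,8,9$, the strategy is to exhibit a semisimple element of order outside $T(E_8)$ and apply Theorem \ref{thm:largeorderss}. Both $F_4(q)$ and ${}^2\!F_4(q)$ possess a maximal torus of order $\Phi_{12}(q) = q^4-q^2+1$, equal to $2353$, $4033$ and $6481$ for $q = 7,8,9$ respectively, all of which are odd and exceed $1189$, so none lies in $T(E_8)$ (Definition \ref{defn:T(G)}). The value $\Phi_{12}(3) = 73$ does lie in $T(E_8)$, so for $q=3$ I would instead invoke the $B_4$ subsystem of $F_4$: for $q$ odd this descends (the centre of $\mathrm{Spin}_9$ must map trivially into the centreless $F_4$) to give $\Omega_9(3) \leq F_4(3)$, and Proposition \ref{prop:omega9ine8} has already established that $\Omega_9(3)$ is a blueprint for $L(E_8)$. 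As the blueprint property ascends to overgroups, $F_4(3)$ is also a blueprint.

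For the line-stabilisation cases $q=2,4,5$, I would follow the standard template of this chapter: enumerate the conspicuous sets of composition factors of $L(E_8){\downarrow_H}$ using traces of small-order semisimple elements, exclude any $246$-dimensional factor via Corollary \ref{cor:no246}, and then apply Proposition \ref{prop:pressure}. The key observation is that the simple modules of dimension at most $248$ for the groups in question are very few---$\{1,26_1,26_2,246_1,246_2\}$ for $F_4(2)$ and $\{1,26,246\}$ for ${}^2\!F_4(2)'$ from Table \ref{t:modules2}, and $\{1,26,52\}$ for $F_4(5)$ from Table \ref{t:modules5}---and in each of these tables the only simple modules with non-zero $1$-cohomology are the $246$s, which Corollary \ref{cor:no246} already excludes. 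Consequently the pressure of $L(E_8){\downarrow_H}$ equals $-a$, where $a$ is the number of trivial composition factors, which is non-positive, so Proposition \ref{prop:pressure} together with the self-duality of $L(E_8)$ forces a trivial submodule and hence the desired line. For $q=4$, I would decompose the simple modules for $F_4(4)$ via Steinberg's tensor product theorem and consult Table \ref{t:modules48916} for the relevant $1$-cohomology data, with the same mechanism (sparse $1$-cohomology combined with the dimension constraint $\sum a_i\dim(S_i) = 248$) again forcing non-positive pressure.

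The main obstacle I anticipate is enumerating the conspicuous sets for $q=4$: a wider range of simple modules becomes available via tensor products of Frobenius twists, so trace-consistency over all $E_8$-classes of semisimple elements of small order (Table \ref{t:semie8}) needs to be checked by computer to trim the list to manageable size. As a safety net for $q=5$, should a stubborn conspicuous set resist the direct pressure argument, the inclusion $\Omega_9(5) \leq F_4(5)$ from the $B_4$ subsystem together with Proposition \ref{prop:omega9ine8} actually upgrades the conclusion to blueprint status, which is stronger than required. No fundamentally new technique is needed beyond those developed in Chapter \ref{chap:techniques}.
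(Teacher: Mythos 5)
Your blueprint argument breaks down exactly at the hardest case, ${}^2\!F_4(8)$. The group ${}^2\!F_4(q)$ does \emph{not} contain a cyclic torus of order $\Phi_{12}(q)=q^4-q^2+1$: its order has cyclotomic factorization $\Phi_1^2\Phi_2^2\Phi_4^2\Phi_6\Phi_{12}$, and the two relevant cyclic maximal tori have orders $q^2\pm\sqrt{2q^3}+q\pm\sqrt{2q}+1$, which for $q=8$ are $109$ and $37$. Every semisimple element order in ${}^2\!F_4(8)$ lies comfortably inside $T(\mb G)$, so Theorem \ref{thm:largeorderss} cannot be applied directly, and (unlike $F_4(8)$, where your $\Phi_{12}$-torus, or the paper's $\Phi_8$-torus of order $q^4+1$, works) no single large-order element is available. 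This is precisely where the paper has to work hardest: one cannot even compute the $1$-cohomology of the $26$-dimensional modules for ${}^2\!F_4(8)$, so the paper pins down the unique conspicuous set of factors $26_2^8,26_1,1^{14}$, identifies the class of an element of order $91$ from its eigenvalues on $L(E_8)$ via the roots trick, and then produces roots of orders $273$ and $1365\not\in T(\mb G)$ with the same eigenspaces, to conclude blueprint status. Your proposal contains no substitute for this step.

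The $q=3$ argument also has a genuine flaw: the $B_4$ subsystem subgroup of $F_4(3)$ is $\Spin_9(3)\cong 2\cdot\Omega_9(3)$, and the centre of $\Spin_9$ does \emph{not} die in $F_4$ --- it maps to a non-central involution of $F_4$ (indeed the involution whose centralizer is this very subgroup), so triviality of $Z(F_4)$ gives you nothing. Hence $\Omega_9(3)$ is not (by this route) a subgroup of $F_4(3)$, Proposition \ref{prop:omega9ine8} cannot be quoted, and "blueprint ascends" has no base to ascend from; the paper itself works with $L=\Spin_9(3)$, not $\Omega_9(3)$. The paper's actual route for $q=3$ is different: pressure forces a line unless the factors are $196,52$; for $52,25^7,1^{21}$ it shows the unipotent element has a single Jordan block of size $3$, so lies in the generic class $A_1$ and Lemma \ref{lem:genericmeansblueprint} applies, while for $196,52$ it places $\Spin_9(3)$ inside $D_8$ and finds an algebraic $B_4$ stabilizing the same $52\oplus196$ decomposition. (The same $\Spin_9$ versus $\Omega_9$ issue afflicts your "safety net" for $q=5$, though it is not needed there since the pressure argument suffices.) Your line-stabilization cases $q=2,4,5$ are essentially the paper's argument, except that Table \ref{t:modules48916} does not cover $F_4(4)$; the required bound on $H^1$ of the $26$-dimensional modules for $F_4(q)$, $q$ even, comes from the Jones--Parshall computation cited in the paper.
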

$\boldsymbol{q=7,8,9}$: There exists an element of order $\Phi_8(q)=q^4+1$ in $H$, and so if $q\geq 7$ then this element has order greater than $1312$, so that $H$ a blueprint for $L(E_8)$ for $q=7,9$ by Theorem \ref{thm:largeorderss}.

\begin{proof} $\boldsymbol{q=2,4,8}$: The simple modules for $H$ of dimension at most $248$ have dimensions $1$, $26$ and $246$. If there is a $246$ then the factors are $246,1^2$, and $H$ cannot occur by Corollary \ref{cor:no246}.

Thus there are at least fourteen trivial composition factors and at most nine $26$-dimensional factors, since $248\equiv 14\bmod 26$. (Using a rational element of order $5$, which exists in $H$ and always has trace $+1$ on any simple module of dimension $26$, and trace $-2,3,23$ on $L(E_8)$, we see that $26^9,1^{14}$ is the only possibility, but we do not need this.)

If the $1$-cohomology of $26$-dimensional modules is at most $1$ then $H$ has negative pressure and we are done. For $H\cong {}^2\!F_4(2)'$ this is an easy computer calculation, for $F_4(q)$ for $q=2,4,8$ this is in \cite[Section 6]{jonesparshall1976}, and this leaves ${}^2\!F_4(8)$. Magma currently cannot compute this $1$-cohomology, so we need to show directly that $H$ is a blueprint for $L(E_8)$.

Up to field automorphism, the unique set of composition factors conspicuous for an element of order $7$ (a torus element, a generator of $H$ in the Magma implementation) is $26_2^8,26_1,1^{14}$. In the Magma implementation of $H$, the product $g$ of the two generators has order $91$, and has $47$ distinct eigenvalues on $L(E_8)$.

The traces of semisimple elements of $\mb G$ of order $13$ are known, and a conjugacy class is determined by its eigenvalues on $L(E_8)$. Fixing an element $g'$ in the group $X=(C_{2^{12}-1}^8)\rtimes W$ for $W$ the Weyl group of type $E_8$ with the same eigenvalues on $L(E_8)$ as $g$, we check the $5764801$ elements of order $91$ in $X$ whose seventh power is $g'$, find $120$ that have the same eigenvalues as $g$, and check that they are all conjugate in $X$, so let $g''$ denote one of them. Thus $g$ is determined up to conjugacy by its eigenvalues on $L(E_8)$. Furthermore, we can check the $6561$ roots of $g''$ of order $91\times 3=273$ and see that there are $26$ non-conjugate elements that have the same number of distinct eigenvalues on $L(E_8)$ as $g''$, so each stabilize the same subspaces of $L(E_8)$, so clearly $H$ will be a blueprint for $L(E_8)$. We make sure though, and for each one of these we can construct an element of order $1365=273\times 5$ in $X$ whose $15$th power is $g''$ and with $47$ eigenvalues on $L(E_8)$. Since $1365\not\in T(E_8)$, this proves that $H$ is a blueprint for $L(E_8)$ by Theorem \ref{thm:largeorderss}. (There are in fact many such elements powering to each of the $26$ roots.)

\medskip

\noindent $\boldsymbol{q=3}$: We have that $\dim(\Ext_{kH}^1(25,1))=1$ \cite[Section 6]{jonesparshall1976}, and $\Ext_{kH}^1(52,1)=0$ by \cite[Corollary 2]{volklein1989} (see also Table \ref{t:modules3}). If the composition factors of $L(E_8){\downarrow_H}$ are not $196,52$, then by simple counting we must have more trivial composition factors than $25$-dimensional ones. Thus $L(E_8){\downarrow_H}$ has negative pressure, so $H$ stabilizes a line on $L(E_8)$ by Proposition \ref{prop:pressure}.

To prove that $H$ also is a blueprint for $L(E_8)$, first note that $\Ext_{kH}^1(25,52)=0$ and so any $52$-dimensional composition factors must break off into their own summand. The largest module with composition factors $1$ and $25$ is $1/25/1$, and any indecomposable module with these factors is a subquotient of this. This is the action of $H$ on $M(E_6)$, so we know how the unipotent elements in $H$ act on it, namely according to Table \ref{t:unipotentF4}; in particular, the element $u\in H$ acts with no blocks of size $3$ on the summand of $L(E_8){\downarrow_H}$ whose composition factors have dimension $1$ and $25$. However, the only conspicuous set of composition factors is $52,25^7,1^{21}$, and so $u$ has exactly one block of size $3$ on $L(E_8)$ by \cite[Tables 4 and 5]{lawther1995}. This proves that $u$ lies in class $A_1$ of $\mb G$, which is generic, so $H$ is a blueprint for $L(E_8)$ by Lemma \ref{lem:genericmeansblueprint}.

For $196,52$, let $L$ denote the subgroup $\Spin_9(3)$ of $H$. This centralizes an involution of $H$, and it has trace $-8$ on $L(E_8)$, so $L\leq D_8$. The composition factors of $L$ on $L(E_8)$ have dimensions $16$, $36$, $84$ and $112$. This determines $L$ uniquely up to conjugacy inside $D_8$, as $L$ acts irreducibly on the $16$-dimensional module $M(D_8)$. Of course, $L$ is therefore contained in a $B_4$ subgroup $\bX$, which also has composition factors of the dimensions on $L(E_8)$. Thus $\gen{H,\bX}$ stabilizes the $52\oplus 196$ decomposition of $L(E_8)$. Hence $H$ is a blueprint for $L(E_8)$, as claimed.

\medskip

\noindent $\boldsymbol{q=5}$: The proof for $q=5$ is easier than for $q=3$: now the simple modules of dimension at most $248$ are $1$, $26$ and $52$, and all three of these have zero $1$-cohomology by \cite[Section 6]{jonesparshall1976} and \cite[Corollary 2]{volklein1989} (see also Table \ref{t:modules5}). Since $248\equiv 14\bmod 26$ there are at least fourteen trivial composition factors in $L(E_8){\downarrow_H}$, which must be trivial summands, as claimed.
\end{proof}

We do not prove that $H$ is a blueprint for $q$ even, since for the Tits group there is only one simple module of dimension $26$, yielding many diagonal subspaces that are not stabilized by an algebraic $F_4$ that acts with two different $26$-dimensional composition factors on $L(E_8)$.

To show that $F_4(5)$ is a blueprint for $L(E_8)$, one first shows that it must be contained in the $E_7$-parabolic, then inside the $E_7$-Levi subgroup; at this point, a summand of $L(E_8){\downarrow_H}$ must be $26^{\oplus 2}\oplus 1^{\oplus 14}$, and $u$ must act on $L(E_8)$ with at least twelve blocks of size $2$ and $42$ blocks of size $1$, proving that it lies in a generic class by consulting \cite[Table 9]{lawther1995}. Hence $H$ is a blueprint for $L(E_8)$ by Lemma \ref{lem:genericmeansblueprint}.

Alternatively, one takes an $\SL_2(5)$-Levi subgroup $L$ of $H$ containing $u$, and notes that the composition factors of this on $L(E_8)$ are $3,2^{56},1^{133}$. The only way to construct a block of size $5$ in the action of $u$ is from a module $1/3/1$ in the action of $L$, hence $u$ can have at most a single block of size $5$ on $L(E_8)$. Thus $u$ is generic by consulting \cite[Table 9]{lawther1995}, and again a blueprint by Lemma \ref{lem:genericmeansblueprint}.

\newpage

\chapter{Rank 3 groups for \texorpdfstring{$E_8$}{E8}}
\label{chap:rank3ine8}
This chapter considers the groups $\PSL_4(q)$, $\PSU_4(q)$, $\Omega_7(q)$ and $\PSp_6(q)$ for $2\leq q\leq 9$ (with $q$ odd for $\Omega_7(q)$).

We can prove that there are no Lie primitive copies of $H$ except for $\PSU_4(2)$. 
 This probably does stabilize a line on $L(E_8)$, but it could be difficult to prove using these methods. There is a single set of composition factors that remains unresolved, and it is warranted.

As with Chapter \ref{chap:rank4ine8}, $u$ will denote a unipotent element of $H$ of order $p$ coming from the smallest class, i.e., with the largest centralizer.

\section{\texorpdfstring{$\PSL_4$}{PSL 4}}

We cannot use large element orders any more, unlike rank $4$, so we need to work in every case. For $\PSL_4(q)$ we have a complete answer.

\begin{proposition}\label{prop:sl4ine8}Let $H\cong \PSL_4(q)$ for some $q\leq 9$.
\begin{enumerate}
\item If $q=2,4,8$ then $H$ stabilizes a line on $L(E_8)$.
\item If $q=3,7$ then $H$ is a blueprint for $L(E_8)$.
\item If $q=5,9$ then $H$ does not embed in $\mb G$.
\end{enumerate}
\end{proposition}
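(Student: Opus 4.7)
The plan is to handle the seven values $q\in\{2,3,4,5,7,8,9\}$ separately, following the standard strategy of Chapter \ref{chap:techniques}: enumerate the conspicuous sets of composition factors for $L(E_8){\downarrow_H}$ (using the Brauer characters tabulated in Chapter \ref{chap:labelling} and the traces of semisimple classes of $\mb G$ on $L(E_8)$, subject to power-map consistency), then eliminate each set via one of pressure, unipotent genericity, or subgroup analysis.

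For $q=2,4,8$ the list of simple $kH$-modules of relevant dimension with non-zero $1$-cohomology is very short: apart from the trivial module only those of dimensions $6$, $14$, $24^\pm$, $84$ and their twists/duals (Tables \ref{t:modules2} and \ref{t:modules48916}). In each conspicuous decomposition of $L(E_8)$ into $H$-composition factors there must therefore be enough trivial composition factors to force the pressure to be non-positive; Proposition \ref{prop:pressure} then produces a fixed line on $L(E_8)$, as required. For $q=2$ one may alternatively appeal directly to \cite[Section 9]{craven2017} via $\PSL_4(2)\cong\Alt(8)$.

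For $q=3,7$ the plan is to prove that $H$ is a blueprint by showing that a transvection $u\in H$ (the smallest unipotent class, Jordan type $2,1^2$ on the natural module) lies in a generic unipotent class of $\mb G$ for $L(E_8)$. The Jordan type of $u$ on each $H$-composition factor $M$ of $L(E_8)$ is known from the tables of Chapter \ref{chap:labelling}; Lemma \ref{lem:encapsulates} controls what can happen across non-split extensions, so one obtains sharp bounds on the Jordan type of $u$ on $L(E_8){\downarrow_H}$. A comparison with the non-generic lists in Tables \ref{t:unipe8p3} and \ref{t:unipe8p7}, and with \cite[Table 9]{lawther1995}, should show $u$ lies in a generic class (typically $A_1$ or $2A_1$), whence Lemma \ref{lem:genericmeansblueprint} gives the blueprint conclusion. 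If some conspicuous set forces $u$ into a non-generic class, one falls back on Theorem \ref{thm:largeorderss} applied to a semisimple element of maximal torus order in $H$ (using the roots trick described in Chapter \ref{chap:techniques} to lift, via a torus of $\mb G$, to an order outside $T(E_8)$).

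For $q=5,9$ every conspicuous set must be excluded. Sets of non-positive pressure would force $H$ to stabilize a line on $L(E_8)$ and hence, by Lemmas \ref{lem:fix1space} and \ref{lem:maxrankorpara}, to lie in a maximal-rank or maximal-parabolic subgroup $\mb X$ of $\mb G$; restricting each such $\mb X$ to $L(E_8)$ and matching composition-factor dimensions should rule them out in turn ($D_8$, $E_7A_1$, $A_8$, $E_6A_2$, $A_4A_4$, $F_4G_2$ are the only non-parabolic candidates, and for parabolics one argues via Levi factors). Sets of positive pressure are eliminated by showing that the forced Jordan structure of $u$ on $L(E_8)$ does not appear in \cite[Table 9]{lawther1995}, using Lemma \ref{lem:encapsulates} and pyx constructions to pin down the module enough to read off the action of $u$. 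The hardest case I expect is $q=9$: the semisimple element orders in $\PSL_4(9)$ are at most roughly $820$, well inside $T(E_8)$, so blueprint arguments via Theorem \ref{thm:largeorderss} do not apply; moreover the subgroup $\PSL_4(3)<\PSL_4(9)$ is a blueprint by the $q=3$ analysis, sitting inside some specific positive-dimensional subgroup $\mb Y$, and the remaining task is to show that $\mb Y$ cannot be extended by a Frobenius endomorphism of $\PSL_4(9)$-type. Carrying out that last step, which requires a fine analysis of the identity component of the relevant stabilizer and its admissible $\sigma$-actions, is likely the main obstacle in the entire proposition.
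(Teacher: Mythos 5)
Your treatment of $q=2,3,7$ is essentially sound (for $q=2$ the paper indeed just cites \cite{craven2017}, and for $q=7$ the semisimplicity-plus-generic-unipotent argument is exactly what is done; for $q=3$ the paper handles the set of factors with no trivials by restricting to $\PSp_4(3)$ and invoking Proposition \ref{prop:sp4ine8}, rather than your roots-trick fallback, but that is a difference of route, not a gap). The real problems are at $q=4,8$ and $q=5,9$. For $q=4,8$ your central claim --- that every conspicuous set of composition factors has non-positive pressure, so Proposition \ref{prop:pressure} finishes --- is simply false. Up to field automorphism there are conspicuous sets of pressure $2$, for instance $64_1^2,24_{21},24_{21}^*,14_1^4,6_2^2,1^4$ (for both $q=4$ and $q=8$) and, for $q=4$, the set $24_{12},24_{12}^*,24_{21},24_{21}^*,16_{12},16_{12}^*,\bar{16}_{12},\bar{16}_{12}^*,14_1,14_2,6_1^2,6_2^2,(4_1,4_1^*)^2,(4_2,4_2^*)^2,1^4$. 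Eliminating these requires genuine module-structure work: computing $\{1,6_2,14_1,24_{21}^\pm\}$-radicals of $P(14_1)$ and $P(24_{21})$ to rule out the possible socles in the first case, and in the second case first showing only a $1$-, $4$- or $6$-dimensional submodule can occur, then using the roots trick with elements of order $255$ (and $1785$) to place $H$ in a member of $\ms X$, and finally running through every maximal positive-dimensional subgroup (parabolics, $A_7$, $D_7$, $D_8$, $A_8$, $A_4A_4$, etc.) to force a stabilized line. None of this is captured by a pressure count, so as written your $q=4,8$ argument does not go through.

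For $q=5,9$ your plan rests on a mistaken expectation. There is nothing to ``eliminate'': the paper's proof is that no conspicuous set of composition factors exists at all --- for $q=5$ none of the $549$ candidate factor multisets is consistent with the traces of elements of orders $2,3,4$, and for $q=9$ none of the $30245$ candidates is consistent with traces of elements of order at most $13$ --- so non-embedding is immediate from the Brauer-character check, and the elaborate $q=9$ analysis you anticipate (extending the positive-dimensional overgroup of the blueprint $\PSL_4(3)$ by a Frobenius endomorphism) is both unnecessary and not an argument the paper's framework supports. Note also a logical issue with your scheme even if conspicuous sets had existed: showing a set has non-positive pressure only yields that $H$ would be Lie imprimitive, which is not by itself a contradiction to existence; to convert imprimitivity into non-embedding you must additionally verify that no admissible positive-dimensional subgroup contains such an $H$ with those factors, and your proposal only gestures at this. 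So the $q=5,9$ part needs to be replaced by the (computational) statement that no conspicuous sets exist.
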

\begin{proof}
$\boldsymbol{q=2}$: The result was proved in \cite{craven2017}.

\medskip

\noindent $\boldsymbol{q=3}$: There are two conspicuous sets of composition factors for $L(E_8){\downarrow_H}$, which are
\[19,15^7,(10,10^*)^2,6^{12},1^{12},\qquad 45,45^*,44^2,15^2,(10,10^*)^2.\]
The former of these has pressure $-11$ (as only $19$ has non-zero $1$-cohomology), so $L(E_8){\downarrow_H}$ has at least ten trivial summands.

Furthermore, the seven $15$s must break off as summands as they have no extensions with the other composition factors. For $6$, we have that $\Ext_{kH}^1(6,19)$ has dimension $2$, and there are no other extensions involving it, so $L(E_8){\downarrow_H}$ has at least eight $6$s as summands as well; thus we have $15^{\oplus 7}\oplus 6^{\oplus 8}\oplus 1^{\oplus 10}$ as a summand. The unipotent element $u\in H$ acts on this module with blocks $3^7,2^{44},1^{54}$, so the unipotent class of $\mb G$ to which $u$ belongs can only be $2A_1$ or $3A_1$ from \cite[Table 9]{lawther1995}. The class $2A_1$ is generic, but $3A_1$ is not: since $3A_1$ acts on $L(E_8)$ with blocks $3^{31},2^{50},1^{55}$, and $u$ acts on each of $1$, $6$ and $10$ with a block of size $1$, the remaining composition factors $19,(10,10^*)^2,6^4,1^2$ must form an indecomposable module if $u$ is from class $3A_1$. All extensions between these factors involve $19$, so the only indecomposable module with these composition factors must be of the form
\[ 1,6,6,10,10^*/19/1,6,6,10,10^*.\]
While it is possible to make the module $19/1,6,6,10,10^*$, one cannot construct a module with three socle layers and with these first two socle layers. Thus $u$ comes from class $2A_1$ in $\mb G$, which is generic, and so $H$ is a blueprint for $L(E_8)$ by Lemma \ref{lem:genericmeansblueprint}.

For the second case, we restrict to $L\cong \PSp_4(3)$ inside $H$, and see that it has no trivial composition factors. In Proposition \ref{prop:sp4ine8} below we see that $L$ is a blueprint for $L(E_8)$, and so therefore is $H$.

\medskip

\noindent $\boldsymbol{q=9}$: There are many simple modules of dimension at most $248$. However, although there are 30245 possible sets of composition factors for the self-dual module $L(E_8){\downarrow_H}$, none of these is conspicuous for traces of elements of order at most $13$ in $\mb G$. Thus $H$ does not embed in $\mb G$.

\medskip

\noindent $\boldsymbol{q=5}$: There are $549$ possible sets of composition factors for a module of dimension $248$, none of which is conspicuous for elements of orders $2$, $3$ and $4$.

\medskip

\noindent $\boldsymbol{q=7}$: Since there are no modules with non-zero $1$-cohomology of dimension at most $248$, any trivial composition factors must be summands. Using the traces of elements of order up to $6$, there are two conspicuous sets of composition factors for $H$,
\[ 64^2,45,45^*,15^2,\qquad 20,15^7,(10,10^*)^2,6^{12},1^{11};\]
both of these are semisimple because there are no extensions between the composition factors. The element $u$ acts with Jordan blocks $4^8,3^{28},2^{48},1^{36}$, thus lies in the generic class $4A_1$ of $\mb G$, and $3^{14},2^{64},1^{78}$, thus lies in the generic class $2A_1$, respectively. Thus $H$ is a blueprint for $L(E_8)$ in both cases by Lemma \ref{lem:genericmeansblueprint}.

\medskip

\noindent $\boldsymbol{q=4,8}$: If $L=\PSL_4(2)$, then we saw in \cite[Proposition 4.2]{craven2017} that the possible composition factors of $L(E_8){\downarrow_L}$ are
\[ 14,6^{10},(4,4^*)^{16},1^{46},\quad 14^8,6^{17},(4,4^*)^2,1^{18},\] \[(20,20^*)^2,14^4,6^8,(4,4^*)^7,1^8,\quad 64^2,20,20^*,14^4,6^2,4,4^*,1^4.\]
The dimensions of the composition factors of the restrictions to $L$ of simple $kH$-modules are as follows.
\begin{center}
\begin{tabular}{ccc}
\hline Dimension of module for $H$ & Is self-dual? & Restriction of module to $L$
\\ \hline $16$ & No & $6^2,4$ or $14,1^2$
\\ $24$ & No & $20,4$
\\ $36$ & Yes & $14^2,6,1^2$
\\ $56$ & No & $20^2,14,1^2$
\\ $64$ ($q=8$) & No & $20^2,6^2,4^3$ or $20^2,14,4^2,1^2$
\\ $80$ & No & $64,6^2,4$ or $20,14^3,6^2,4,1^2$
\\ $84$ & Yes & $64,6^2,4^2$
\\ $96$ ($q=8$) & No & $20,14^4,6^2,4,1^4$ or $64,6^4,4^2$
\\ $120$ & No & $20^4,14^2,4^2,1^4$
\\ $196$ & Yes & $20^4,14^5,6^4,4^4,1^6$
\\\hline
\end{tabular}
\end{center}
From this it is easy to see, first that modules of dimension $196$, $120$, $96$ and $80$ cannot occur in $L(E_8){\downarrow_H}$ (as factors of dimensions $120$, $96$ and $80$ must occur in pairs), and second that in the first two sets of composition factors for $L(E_8){\downarrow_L}$, any such module for $H$ has negative pressure (from Table \ref{t:modules48916} the modules with non-zero $1$-cohomology are of dimensions $14$, $24$ and $84$), so $H$ stabilizes a line on $L(E_8)$ by Proposition \ref{prop:pressure}.

For $q=4$, with these restrictions on the dimensions of the composition factors of $L(E_8){\downarrow_H}$ one manages to fairly easily compute the conspicuous sets of composition factors for elements of order at most $21$, finding six up to field automorphism. These are
\[14_1,6_1^{10},(4_1,4_1^*)^{16},1^{46},\quad 14_1^8,6_1^{16},6_2,(4_2,4_2^*)^2,1^{18},\] \[(20_1,20_1^*)^2,14_1^4,6_1^8,(4_1,4_1^*)^6,4_2,4_2^*,1^8,\]
\[ 36,(16_{12},16_{12}^*)^2,(\bar{16}_{12},\bar{16}_{12}^*)^2,14_1,14_2,6_1^4,6_2^4,1^8,\quad 64_1^2,24_{21},24_{21}^*,14_1^4,6_2^2,1^4,\]
\[ 24_{12},24_{12}^*,24_{21},24_{21}^*,16_{12},16_{12}^*,\bar{16}_{12},\bar{16}_{12}^*,14_1,14_2,6_1^2,6_2^2,(4_1,4_1^*)^2,(4_2,4_2^*)^2,1^4,\]
Since the only simple modules above with non-zero $1$-cohomology are $14_i$ and $24_{i,j}^\pm$ (each $1$-dimensional) we see that the pressures of these modules are $-45$, $-10$, $-4$, $-6$, $2$ and $2$ respectively. Thus in the first four cases $H$ stabilizes a line on $L(E_8)$ by Proposition \ref{prop:pressure}. (This tallies with our earlier remarks as the first two and fourth sets of factors for $H$ come from the first two sets for $L$.)

For $q=8$ there are many more possible $248$-dimensional modules, but imposing the requirement that the restriction to $L$ must be one of the two remaining cases above brings the number down to something manageable. Up to field automorphism, there are three sets of composition factors that are conspicuous for elements of order at most $21$:
\[(20_1,20_1^*)^2,14_1^4,6_1^8,(4_1,4_1^*)^6,4_2,4_2^*,1^8,\quad  64_1^2,24_{21},24_{21}^*,14_1^4,6_2^2,1^4.\]
\[ 24_{12},24_{12}^*,24_{21},24_{21}^*,16_{12},16_{12}^*,\bar{16}_{12},\bar{16}_{12}^*,14_1,14_2,6_1^2,6_2^2,(4_1,4_1^*)^2,(4_2,4_2^*)^2,1^4,\]
(Notice that these are Cases 3, 5 and 6 from the $q=4$ case above.) As $24_{12}^\pm$ no longer has non-zero $1$-cohomology when $q=8$ (but $24_{21}^\pm$ still does, see Table \ref{t:modules48916}), we see that we need only consider the set of factors involving $64_1$ for $q=8$.

Thus we assume in the remaining cases that $H$ does not stabilize a line on $L(E_8)$. Let $W$ denote the $\{14_i,24_{i,j}^\pm\}$-heart of $L(E_8){\downarrow_H}$, with all simple summands removed. Note that, if $H$ does not stabilize a line on $L(E_8)$ then $W$ contains all trivial composition factors in $L(E_8){\downarrow_H}$. Note also that $\soc(W)$ consists solely of modules of dimensions $14$ and $24$.

\medskip

\noindent \textbf{$\boldsymbol{q=4}$, Case 5, and $\boldsymbol{q=8}$}: The same proof works in both cases. Since $64_1$ has no extensions with the other composition factors, the two copies break off into a separate summand. The $\{1,6_2,14_1,24_{21}^\pm\}$-radical of $P(14_1)$ is
\[ 14_1,14_1/1,1,6_2,6_2/14_1,24_{21},24_{21}^*/1,6_2/14_1,\]
so $14_1$ cannot be the socle of the module $W$ defined above. Suppose next that $14_1\oplus 14_1$ is the socle of $W$, and attempt to build a pyx for $W$. Since there are only two copies of $6_2$ in $L(E_8){\downarrow_H}$, we cannot have both copies of $6_2$ in the second socle layer of $W$ without them both being in the second radical layer (as $W$ is self-dual), whence we remove a quotient $(14_1/6_2)^{\oplus 2}$ from $W$. The $\{1,24_{21}^\pm\}$-radical of $P(14_1)$ is simply $1/14_1$, so since $W$ should have four trivial factors, we obtain a contradiction. This also shows that the second socle layer cannot have no copies of $6_2$ either, for then we cannot build a pyx for $W$ at all.

Thus we can start with the module $(6_2/14_1)\oplus 14_1$ and then add as many copies of $1$, $24_{21}$ and $24_{21}^*$ as possible. Doing so yields the module
\[ (1,1/24_{21},24_{21}^*/1,6_2/14_1)\oplus (1/14_1),\]
but this has three trivial quotients, so one of the trivial factors cannot occur for pressure reasons, using Proposition \ref{prop:pressure}. Thus we cannot build a pyx in this case either.

Thus, up to graph automorphism we may assume that the socle of $W$ is either $24_{21}$ or $24_{21}\oplus 14_1$. For $q=4,8$ the $\{1,6_2,14_1\}$-radical of $P(24_{21})$ is simply $14_1/1,6_2/24_{21}$, so the socle cannot be $24_{21}$, as we cannot build a pyx with four trivial factors. If the socle is $24_{21}\oplus 14_1$ then the $\{1,6_2,14_1\}$-radical of $P(24_{21})\oplus P(14_1)$ is
\[ (14_1/1,6_2/24_{21})\oplus (14_1/1,6_2/14_1),\]
and again we fail to build a pyx for $W$. Thus $W$ cannot have four trivial composition factors, and hence $H$ stabilizes a line on $L(E_8)$.
\medskip

\noindent \textbf{$\boldsymbol{q=4}$, Case 6}: 
We first show that $L(E_8){\downarrow_H}$ possesses a submodule of dimension $1$, $4$ or $6$. Thus suppose that none of $1$, $4_i^\pm$ and $6_i$ lies in the socle of $L(E_8){\downarrow_H}$. Since the $14_i$ appear with multiplicity $1$, removing from the socle all composition factors with zero $1$-cohomology, and all simple summands, we obtain a module $W$, which has up to field and graph automorphism one of $24_{12}$, $24_{12}\oplus 24_{21}$ and $24_{12}\oplus 24_{21}^*$ as socle and four trivial composition factors.

If the socle of $W$ is $24_{12}$ then we consider the $\{1,4^\pm_i,6_i,14_i,16^\pm_{12},\bar{16}_{12}^\pm,24_{21}^\pm\}$-radical of $P(24_{12})$, which is of the form
\[24_{21}^*/6_2/4_1^*/1,6_1,6_2/4_2,14_1,14_2,24_{21}^*/1,6_1,\bar{16}_{12}^*/24_{12}:\]
this has only two trivial composition factors. Thus there must be two $24$-dimensional factors in the socle.

The $\{1,4^\pm_i,6_i,14_i,16_{12}^\pm,\bar{16}_{12}^\pm\}$-radical of $P(24_{12})$ is
\[ 1,6_1/4_2,14_1,14_2/1,6_1,\bar{16}_{12}^*/24_{12}.\]
On this we can place $24_{21}^*$ or $24_{12}^*$.

Notice that only $4_2$ appears here out of the $4$-dimensional modules. Thus $W$ cannot have both $4_2$ and $4_2^*$ in it (as $W$ is a submodule of the sum of this and (up to graph automorphism) the image of this under the field automorphism), so must have neither as $W$ is self-dual. Removing these, we take the $\{1,6_i,14_i,16_{12}^\pm,\bar{16}_{12}^\pm\}$-radical instead of the $\{1,4^\pm_i,6_i,14_i,16_{12}^\pm,\bar{16}_{12}^\pm\}$-radical, and this is
\[ 1/14_1,14_2/1,6_1,\bar{16}_{12}^*/24_{12}.\]

We now let $V$ denote the $\{4^\pm_i,6_i,16^\pm_{i,j}\}$-radical of $L(E_8){\downarrow_H}$. As $W$ possesses at most two $6$-dimensional composition factors, $V$ must have at least one composition factor of dimension $6$. The $\{4^\pm_i,6_i,16^\pm_{i,j}\}$-radicals of $P(16_{12})$ and $P(\bar{16}_{12})$ are simply
\[ 4_1^*,4_2^*/16_{12},\qquad \bar{16}_{12},\]
and so there must be a $4$- or $6$-dimensional submodule of $V$, hence of $L(E_8){\downarrow_H}$, as we originally claimed.

\medskip

We now check that there exist elements of order $255=85\times 3$ that cube to a given element $x$ of order $85$ in $\PSL_4(4)$ and that stabilize the eigenspaces comprising the composition factors $4_i^\pm$ and $6_i$. Using the roots trick, we find 80 elements of order $255$ powering to $x$ and stabilizing the eigenspaces comprising a given $4$-dimensional module, and 2186 elements that stabilize the eigenspaces for each $6_i$ (728 of which stabilize the eigenspaces for both $6_i$). Indeed, one obtains more, and finds an element of order $1785=7\times 255$ that powers to an element of order $255$, has the same number of distinct eigenvalues on $L(E_8)$ as its seventh power, and stabilizes the eigenspaces comprising both $4_1$ and $6_1$. Thus in fact the stabilizer of a $1$-, $4$- or $6$-space is positive dimensional, and therefore $H$ is certainly contained in an element of $\ms X$.

\medskip

In fact, we wish to show that $H$ actually stabilizes a line on $L(E_8)$, so assume that this is not the case. Since we have shown that $H$ lies in a member of $\ms X$, but not necessarily $\ms X^\sigma$ (and shown nothing for $N_{\mb G}(H)$), this extra step will allow us to extend our results to $N_{\mb G}(H)$ and $N_{\bar G}(H)$.

Let $\mb X\in \ms X$ and suppose that $H\leq \mb X$. By Proposition \ref{prop:inconnected} we may assume that $\mb X$ is connected. If $\mb X$ is a maximal parabolic subgroup, then $\mb X$ cannot be an $E_7$-parabolic, either because there are not enough composition factors appearing with multiplicity $2$ in $L(E_8){\downarrow_H}$ to appear in the two copies of the $56$-dimensional factor of $L(E_8){\downarrow_{\mb X}}$, or because $\mb X$ stabilizes a line on $L(E_8)$ anyway, and that is what we are trying to prove. This also deals with the $A_1E_6$, $A_1A_6$, $A_2D_5$ and $A_1A_2A_4$-parabolics, since the projection of $H$ onto the Levi subgroup must lie inside a factor contained in the $E_7$-Levi subgroup (up to conjugation). Similarly, for $A_3A_4$, $H$ must lie inside the $A_3A_3$-parabolic, hence inside the $A_7$-parabolic subgroup.

If $\mb X$ is the $A_7$-parabolic subgroup, then it is an easy check that the embedding of $H$ into the $A_7$-Levi subgroup must have factors $4_1\oplus 4_2$ or $4_1\oplus 4_2^*$ on the natural module. There is a filtration of $L(E_8){\downarrow_{\mb X}}$ with the layers being as follows:
\[L(\lambda_1),L(\lambda_6),L(\lambda_3),L(\lambda_1)\otimes L(\lambda_7),L(\lambda_5),L(\lambda_2),L(\lambda_7).\]
The layers $L(\lambda_1),L(\lambda_6)$ have no trivial factors and no factors with non-zero $1$-cohomology, and the same for their duals at the top of the module, so whether $H$ stabilizes a line on $L(E_8)$ depends on the middle three layers. The middle layer restricts to $H$ with structure
\[ (1/14_1/1)\oplus (1/14_2/1)\oplus 16\oplus 16^*,\]
where $16$ is one of the $16$-dimensional modules (depending on the action of $H$ on $M(A_7)$). The layer above and below are semisimple, with $24$-dimensional and $4$-dimensional factors. Thus in our proof above, when we construct the module $W$, the $6_1$ cannot lie in $W$. This is enough to remove the possibility that those three layers can be combined to form a module with no trivial submodule or quotient, as there is no module of the form
\[ 24/1/14/1,16/24.\]
Thus there is a trivial submodule (and quotient) of those middle three layers, and hence of the whole module $L(E_8){\downarrow_H}$.

The last parabolic is the $D_7$-parabolic, so let $\mb X$ be a copy of this in $E_8$. Here the action of $\mb X$ on $L(E_8)$ has factors the two half-spin modules $L(\lambda_6)$ and $L(\lambda_7)$, a trivial module, two natural modules $L(\lambda_1)$, and the exterior square $W(\lambda_2)$ of the natural (this is $L(\lambda_2)$ and $L(0)$).

The only possibility for the action of $H$ on $L(\lambda_1)$ with the right composition factors even on the sum $L(\lambda_1)^{\oplus 2}\oplus L(\lambda_2)$ is (up to field automorphism) $4_1\oplus 4_1^*\oplus 6_2$. (This is the only self-dual module with these factors.) This therefore lies in a parabolic subgroup of $D_7$, since $4_1$ must be a totally isotropic subspace of $M(D_7)$ and stabilizers of totally isotropic subspaces are parabolic subgroups of $D_7$. Hence $H$ lies inside a different parabolic subgroup of $\mb G$.

If $\mb X$ is a (connected) maximal-rank subgroup, then $\mb X=A_8$ places $H$ inside the $A_7$-parabolic, $A_4A_4$ places $H$ inside the $A_3A_3$-parabolic, and $A_1E_7$ and $A_2E_6$ place $H$ inside the $E_7$-parabolic, whence $H$ cannot embed in $\mb X$. For $G_2F_4$, $H$ must lie in the $F_4$ factor, hence inside $E_7$, again a contradiction. The last case is $\mb X=D_8$, which in characteristic $2$ stabilizes a line on $L(E_8)$, so $H$ stabilizes a line on $L(E_8)$.

Thus $H$ stabilizes a line on $L(E_8)$ in all cases.
\end{proof}

\section{\texorpdfstring{$\PSU_4$}{PSU 4}}

For $H\cong \PSU_4(q)$, unless $q=2$ we are able to prove that $H$ is always strongly imprimitive in $\mb G$. However, for this last case of $q=2$, there is one set of composition factors for $L(E_8){\downarrow_H}$, which is warranted, that we cannot deal with here. We continue with our definition of $u$ from the start of the chapter.

\begin{proposition}\label{prop:su4ine8} Let $H\cong \PSU_4(q)$ for some $q\leq 9$.
\begin{enumerate}
\item If $q=2$ then either $H$ stabilizes a line on $L(E_8)$ or the composition factors of $L(E_8){\downarrow_H}$ are
\[ (20,20^*)^2,14^4,6^8,(4,4^*)^7,1^8.\]
\item If $q=3,7$ then $H$ does not embed in $\mb G$.
\item If $q=4,8$ then $H$ stabilizes a line on $L(E_8)$.
\item If $q=5,9$ then $H$ is a blueprint for $L(E_8)$.
\end{enumerate}
\end{proposition}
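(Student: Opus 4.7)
The plan is to proceed case-by-case, paralleling the proof of Proposition \ref{prop:sl4ine8}, and exploiting the isomorphism $\PSU_4(q) \cong \POmega_6^-(q)$, which supplies a subgroup $L \cong \Omega_5(q) \cong \PSp_4(q)$ of $H$ (the stabilizer of an anisotropic point in the natural $6$-dimensional orthogonal representation of $\POmega_6^-$). For each $q$, I would first compute the conspicuous sets of composition factors of $L(E_8){\downarrow_H}$ by running through traces of semisimple elements of $\mb G$ of small orders on the low-dimensional simple $kH$-modules from Tables \ref{t:modules2}--\ref{t:modules7}, checking compatibility with the power map.

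For $q = 5, 9$, I would invoke Proposition \ref{prop:sp4ine8} (to be proved in the rank $2$ chapter) to see that the subgroup $L \cong \PSp_4(q)$ of $H$ is a blueprint for $L(E_8)$, hence $H$ is too. Alternatively, one could directly show that in each conspicuous set the unipotent element $u$ is forced into a generic class of $\mb G$ (using \cite[Table 9]{lawther1995}), and apply Lemma \ref{lem:genericmeansblueprint}. For $q = 3, 7$, a computer enumeration of all potential sets of composition factors that are consistent with both the traces of semisimple classes of order at most $13$ and the power map should yield no conspicuous sets at all, so $H$ cannot embed in $\mb G$.

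For $q = 4, 8$, I would restrict to $L \cong \PSU_4(2) \leq H$, using the $q = 2$ analysis below to constrain the dimensions of composition factors of $L(E_8){\downarrow_L}$, and hence those of $L(E_8){\downarrow_H}$. Consulting Table \ref{t:modules48916}, only simple $kH$-modules of small dimension ($14_i$ and $24_{i,j}^\pm$) have non-zero $1$-cohomology, so in every case the pressure of $L(E_8){\downarrow_H}$ is non-positive, or a pyx argument on $P(S)$ for candidate socle constituents $S$ (of dimensions $14$ and $24$) excludes the requisite number of trivial composition factors. Either way, $H$ stabilizes a line on $L(E_8)$ by Proposition \ref{prop:pressure}, paralleling the treatment of $\PSL_4(q)$ for $q = 4, 8$. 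For $q = 2$, the simple $kH$-modules of dimension at most $248$ have dimensions $1, 4^\pm, 6, 14, 20^\pm, 64$, and only $4^\pm$ and $14$ carry non-zero $1$-cohomology (Table \ref{t:modules2}); enumerating the conspicuous sets yields a short list matching the four cases treated for $\PSL_4(2)$ in \cite[Proposition 4.2]{craven2017}, and pressure arguments eliminate all but the single warranted set $(20, 20^*)^2, 14^4, 6^8, (4, 4^*)^7, 1^8$.

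The main obstacle will be the $q = 2$ case: the surviving warranted set is genuinely intractable by the techniques of this paper and is exactly the $\PSU_4(2)$ exception appearing in Theorem \ref{thm:maximale8}. A secondary difficulty is verifying exhaustiveness of the conspicuous-set enumerations for $q = 3, 7$ and for $q = 4, 8$; for the latter, the pyx construction must be executed with some care because the candidate socle of the $\{14_i, 24_{i,j}^\pm\}$-heart can involve either a single module of dimension $14$ or $24$ or a sum of two such, and each socle must be excluded separately, much as in the $\PSL_4(q)$ analysis.
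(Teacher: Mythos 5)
Your reduction for $q=5,9$ does not go through as stated. Proposition \ref{prop:sp4ine8} does not assert that $\PSp_4(5)$ or $\PSp_4(9)$ is a blueprint for $L(E_8)$: for $q=5$ it gives strong imprimitivity (two of the three conspicuous sets stabilize lines, the third is handled by an explicit algebraic $A_1A_1$), and for $q=9$ it gives only ``stabilizes a line \emph{or} is a blueprint''. Of these properties only the blueprint property passes from a subgroup $L$ to an overgroup $H$; line-stabilization and strong imprimitivity do not, so nothing about $\PSU_4(q)$ follows from them. Your fallback via generic unipotent classes and Lemma \ref{lem:genericmeansblueprint} is exactly what the paper does for $q=5$, but it is not adequate for $q=9$: there the paper fixes a semisimple element $x$ of order $328$, checks by the roots trick that its $\mb G$-class is determined by its eigenvalues, and produces an element of order $1640\notin T(E_8)$ with the same eigenspaces, so that $H$ is a blueprint by Theorem \ref{thm:largeorderss} --- note that the conclusion for $q=9$ is ``blueprint'' even for the conspicuous set with twelve trivial composition factors, where a restriction to $\PSp_4(9)$ or a unipotent argument would at best give a stabilized line, which is not the statement being proved.

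The even cases are also substantially harder than ``non-positive pressure or a pyx''. For $q=2$ there are seventeen conspicuous sets, eleven of positive pressure, so pressure alone does not cut the list down to the warranted set (and the list does not match the four sets for $\PSL_4(2)$); the paper eliminates most of them by restricting to an $\Alt(6)$ subgroup and invoking Proposition \ref{prop:compfactorsalt6}, and the two surviving positive-pressure sets (pressure $2$) then need a genuine radical analysis inside $P(14)$ and $P(4)$, including passing to the $\{1\}'$-residual. For $q=4$, two of the six conspicuous sets have pressure $2$: one is closed using the Jordan blocks of an element of order $4$ (class $2A_3$, via Table \ref{t:unipe8p4} and Lemma \ref{lem:encapsulates}), and the set $24_{12},24_{12}^*,24_{21},24_{21}^*,16_{12},16_{12}^*,\bar{16}_{12},\bar{16}_{12}^*,\dots,1^4$ cannot be finished by pressure or a pyx at all: the paper first forces a submodule of dimension $1$, $4$ or $6$, then finds torus elements of order $195$ (indeed $1365$) stabilizing the relevant eigenspaces so the stabilizer is positive dimensional, and finally runs through every member of $\ms X$ to upgrade this to a stabilized line. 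Saying you will parallel $\PSL_4(4)$ points in the right direction, but the mechanism you actually describe would not close these cases, nor the $64_1^2,24_{21},24_{21}^*,14_1^4,6_2^2,1^4$ case that recurs for $q=8$.
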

\begin{proof} $\boldsymbol{q=3,7}$: There are no sets of composition factors for $L(E_8){\downarrow_H}$ that are conspicuous for elements of order at most $7$, so $H$ does not embed in $\mb G$.

\medskip

\noindent $\boldsymbol{q=5}$: There are two sets of composition factors for $L(E_8){\downarrow_H}$ that are conspicuous for elements of order at most $8$:
\[ 20,15^7,(10,10^*)^2,6^{12},1^{11},\quad 58^2,45,45^*,15^2,6^2.\]
In the first case, there are no extensions between any of the composition factors, so $L(E_8){\downarrow_H}$ is semisimple, and $u$ acts on $L(E_8)$ with blocks $4^8,3^{28},2^{48},1^{36}$. Thus $u$ lies in the generic class $4A_1$ of $\mb G$, so $H$ is a blueprint for $L(E_8)$ by Lemma \ref{lem:genericmeansblueprint}.

In the second case, if $L(E_8){\downarrow_H}$ is semisimple then $u$ acts with blocks $3^{14},2^{64},1^{78}$, so lies in the generic class $2A_1$, and again we apply Lemma \ref{lem:genericmeansblueprint}. Furthermore, $\Ext_{kH}^1(6,58)\cong \Ext_{kH}^1(58,6)\cong k$, and all other extensions between composition factors are split, so the $15$s and $45$s must split off. As $u$ acts on $45\oplus 45^*\oplus 15^{\oplus 2}$ with blocks $4^4,3^{12},2^{24},1^{20}$, and there are no non-generic unipotent classes with those blocks (see Table \ref{t:unipe8p5}), we see that $u$ must belong to a generic class. Hence $H$ is a blueprint for $L(E_8)$, as needed.

\medskip

\noindent $\boldsymbol{q=9}$: There are five sets of composition factors that are conspicuous for elements of order up to $16$, three up to field automorphism. These are
\[ 19_1,15_1^7,(10_1,10_1^*)^2,6_1^{12},1^{12},\quad 36,(16_1,16_1^*)^2,(16_2,16_2^*)^2,15_1,15_2,6_1^4,6_2^4,1^6,\] \[45_1,45_1^*,44_1^2,(10_1,10_1^*)^2,15_1^2.\]
Using a computer program and the roots trick (see Chapter \ref{chap:techniques}), we check that in each case an element $x$ of order $328$ in $H$ is determined by its eigenvalues up to conjugacy in $E_8$, at least up to taking powers, and then that there is an element of order $1640=324\times 5$ with the same number of distinct eigenvalues on $L(E_8)$ as $x$. Since $1640\not\in T(E_8)$ (see Definition \ref{defn:T(G)}), this shows that $H$ is a blueprint for $L(E_8)$ by Theorem \ref{thm:largeorderss}, as claimed.

\medskip

\noindent $\boldsymbol{q=2}$: There are seventeen conspicuous sets of composition factors for $L(E_8){\downarrow_H}$, eleven of which have positive pressure. These are
\[ 64^2,20,20^*,14^4,6^2,4,4^*,1^4,\quad
64,(20,20^*)^2,14^4,6^5,(4,4^*)^2,1^2,\]
\[ 64,(20,20^*)^2,14^4,6^6,4,4^*,1^4,\quad
64,(20,20^*)^2,14^3,6^4,(4,4^*)^4,1^6,\]
\[ 64,20,20^*,14^5,6^6,(4,4^*)^4,1^6,\quad 64,20,20^*,14^4,6^5,(4,4^*)^6,1^{10},\]
\[ (20,20^*)^3,14^4,6^9,(4,4^*)^2,1^2,\quad (20,20^*)^3,14^3,6^8,(4,4^*)^4,1^6,\]
\[ (20,20^*)^2,14^5,6^{10},(4,4^*)^4,1^6,\quad (20,20^*)^2,14^4,6^8,(4,4^*)^7,1^8,\]
\[ (20,20^*)^2,14^4,6^9,(4,4^*)^6,1^{10}.\]
We begin by using Proposition \ref{prop:compfactorsalt6}, and restricting these sets of composition factors to the copy of $L\cong \Alt(6)$ in $H$. The restrictions are as follows:
\begin{center}
\begin{tabular}{cccc}
\hline Case & Restriction & Case & Restriction
\\ \hline $1$ & $16^4,4_1^{22},4_2^{16},1^{32}$ & $7$ & $16^6,4_1^{17},4_2^{14},1^{28}$
\\$2$ & $16^5,4_1^{19},4_2^{16},1^{28}$ & $8$ & $16^6,4_1^{14},4_2^{17},1^{28}$
\\$3$ & $16^5,4_1^{20},4_2^{14},1^{32}$ & $9$ & $16^4,4_1^{20},4_2^{17},1^{36}$
\\$4$ & $16^5,4_1^{16},4_2^{19},1^{28}$ & $10$ & $16^4,4_1^{16},4_2^{22},1^{32}$
\\$5$ & $16^3,4_1^{22},4_2^{19},1^{36}$ & $11$ & $16^4,4_1^{17},4_2^{20},1^{36}$
\\$6$ & $16^3,4_1^{19},4_2^{22},1^{36}$ &&
\\ \hline
\end{tabular}
\end{center}
From this we can see that only Cases 1, 3 and 10 can exist.

\medskip

\noindent \textbf{Cases 1 and 3}: Assume that $H$ does not stabilize a line on $L(E_8)$, and let $W$ denote the $\{4^\pm,14\}$-heart of $L(E_8){\downarrow_H}$. Since $4,4^*,14$ are the modules with non-zero $1$-cohomology, this module also has no trivial submodule or quotient, and is self-dual. To attack the first and third cases, which have pressure $2$, we construct the following submodule $W'$ of $P(14)$:
\begin{enumerate}
\item Take the $\{1,6,14,20^\pm\}$-radical of $P(14)$;
\item On this add as many copies of $4$ as possible;
\item Again, add as many copies of $1$, $6$, $14$, $20$ and $20^*$ as possible;
\item On this add as many copies of $4^*$ as possible;
\item Again, add as many copies of $1$, $6$, $14$, $20$ and $20^*$ as possible.
\end{enumerate}
Up to application of the graph automorphism (which swaps $4$ and $4^*$), if $\soc(W)=14$ then $W$ is a submodule of $W'$ in the first and third cases, so $W'$ is a pyx for $W$. This module has structure
\[ 14/1,6/14,14,14,20,20^*/1,1,6,6,6/4,4^*,14,20,20^*/1,6/14,\]
so has four trivial composition factors, but also has a trivial quotient. Thus $\soc(W)\neq 14$, so must be (up to graph automorphism) $4\oplus 14$ or $14^{\oplus 2}$. If it is $4\oplus 14$, then we take the $\{1,6,14,20^\pm\}$-radical of $P(4)\oplus P(14)$ to obtain
\[ (14/6/14,20,20^*/6,6/14,20,20^*/1,6/14)\oplus(1,6/14,20^*/1,6/4),\]
which has only three trivial composition factors. Thus the socle must be $14^{\oplus 2}$: the $\{1,4^\pm,6,20^\pm\}$-radical of $P(14)$, then with as many $14$s placed on top as possible, then with all quotients other than $14$ removed,
is
\[ 14,14/1,6/4,4^*,14,20,20^*/1,6/14.\]
Two copies of this would be a pyx for $W$, but all four trivial composition factors in this pyx would need to be present in $W$. However, we now take the $\{1\}'$-residual of this pyx, which must lie inside $W$, and this is
\[ (1/4,4^*/1/14)^{\oplus 2};\]
this has too many composition factors of dimension $4$, which is a contradiction. Thus $H$ stabilizes a line on $L(E_8)$ in the first and third cases.

\medskip

\noindent \textbf{Case 10}: This is the case remaining in the proposition. 

\medskip

\noindent $\boldsymbol{q=4}$: There are, up to field automorphism, six sets of composition factors for $L(E_8){\downarrow_H}$ that are conspicuous for elements of order at most $17$. These are
\[ 14_1,6_1^{10},(4_1,4_1^*)^{16},1^{46},\quad 14_1^8,6_1^{16},6_2,(4_2,4_2^*)^2,1^{18},\] \[(20_1,20_1^*)^2,14_1^4,6_1^8,(4_1,4_1^*)^6,4_2,4_2^*,1^8,\]
\[ 36,(16_{12},16_{12}^*)^2,(\bar{16}_{12},\bar{16}_{12}^*)^2,14_1,14_2,6_1^4,6_2^4,1^8,\quad  64_1^2,24_{21},24_{21}^*,14_1^4,6_2^2,1^4,\]
\[24_{12},24_{12}^*,24_{21},24_{21}^*,16_{12},16_{12}^*,\bar{16}_{12},\bar{16}_{12}^*,14_1,14_2,6_1^2,6_2^2,(4_1,4_1^*)^2,(4_2,4_2^*)^2,1^4.\]

The only simple modules from these with non-zero $1$-cohomology are $14_i$ and $24_{i,j}^\pm$. Thus the fifth and sixth sets of factors have pressure $2$, and the others have negative pressure, so $H$ stabilizes a line in the first four cases.

\medskip

\noindent \textbf{Case 5}: Suppose that $H$---which has pressure 2---does not stabilize a line on $L(E_8)$, and let $W$ denote the $\{14_i,24_{21}^\pm\}$-heart of $L(E_8){\downarrow_H}$. The copies of $64_1$ must split off, as they have no extensions with the other composition factors. Suppose that $\soc(W)$ is either $14_1$ or $24_{21}$. We consider the $\{1,6_2,14_1,24_{21}^\pm\}$-radical of $P(14_1)$, which is
\[ 14_1,14_1/1,1,6_2,6_2/14_1,24_{21},24_{21}^*/1,6_2/14_1,\]
and the $\{1,6_2,14_1\}$-radical of $P(24_{21})$, which is
\[ 14_1/1,6_2/24_{21}.\]
As neither of these contains enough trivial composition factors, we cannot have that $\soc(W)$ is simple.

If it is $14_1\oplus 24_{21}$, then we simply note that the $\{1,6_2,14_1\}$-radical of $P(14_1)$ is $14_1/1,6_2/14_1$, so a pyx for $W$ cannot have enough composition factors. Finally, if $\soc(W)$ is $14_1^{\oplus 2}$, then we need the $\{1,6_2,24_{21}^\pm\}$-radical of  $P(14_1)$, which is
\[ 1,1,6_2/24_{21},24_{21}^*/1,6_2/14_1.\]
Notice that this means that we must have a submodule $(1/14_1)^{\oplus 2}$, and a quotient module $(14_1/1)^{\oplus 2}$, and that the kernel of the quotient module contains the submodule (so that one may remove them both).

Let $v$ denote an element of order $4$ in $H$ that acts with a single Jordan block (of size $4$) on the $4$-dimensional simple modules $4_i^\pm$. Its actions on $64_1$ and $24_{21}^\pm$ have Jordan blocks $4^{16}$ and $4^6$, and on $14_1$ and $6_2$ it acts with blocks $4^3,2$ and $4,2$. Therefore $v$ acts on $L(E_8)$ with at least 58 blocks of size $4$. Consulting Table \ref{t:unipe8p4}, we see that $v$ belongs to class $2A_3$ and acts on $L(E_8)$ with blocks $4^{60},2^4$. However, the action of $v$ on $1/14_1$ (and its dual) has blocks $4^3,3$, so the action of $v$ on $L(E_8)$ has at least four blocks of size at least $3$, in addition to the 58 of size $4$. However, this now contradicts Lemma \ref{lem:encapsulates}, so we cannot build a pyx for $W$.

Thus $H$ stabilizes a line on $L(E_8)$ in this case.

\medskip

\noindent \textbf{Case 6}: We follow the corresponding proof of the case for $\PSL_4(4)$, at least initially. Again, $H$ has pressure 2 on $L(E_8)$. Again, we will show that $L(E_8){\downarrow_H}$ possesses a submodule of dimension $1$, $4$ or $6$, so suppose the contrary.

Since the $14_i$ appear with multiplicity $1$, removing from the socle all composition factors with zero $1$-cohomology (which are all but $14_i$ and $24_{i,j}^\pm$, see Table \ref{t:modules48916}), and all simple summands, we obtain a module $W$, which has up to field and graph automorphism one of $24_{12}$, $24_{12}\oplus 24_{21}$ and $24_{12}\oplus 24_{21}^*$ as socle.

Suppose first that $\soc(W)$ is simple, and therefore $24_{12}$, so $\top(W)\cong 24_{12}^*$. The $\{1,4_i^\pm,6_i,14_i,16_{12}^\pm,\bar{16}_{12}^\pm,24_{21}^\pm\}$-radical of $P(24_{12})$ is
\[ 1,6_1/4_2,4_2^*,14_1,14_2/1,6_1,\bar{16}_{12}^*/24_{12}.\]
Since this has only two trivial factors, we cannot produce a pyx for $W$, so the socle cannot be simple. Thus we may assume that the socle of $W$ has two factors. Hence we need the $\{1,4_i^\pm,6_i,14_i,16_{12}^\pm,\bar{16}_{12}^\pm\}$-radical of $P(24_{12})$, which of course is exactly the same module.

On this we cannot place a copy of $24_{21}$ or $24_{21}^*$ (of course), but we can place a single copy of $24_{12}^*$. This copy therefore must be on top of it in $W$. This yields a module
\[ 24_{12}^*/1,6_1/4_2,4_2^*,14_1,14_2/1,6_1,\bar{16}_{12}^*/24_{12},\]
but it has quotients other than $24_{12}^*$. Indeed, removing all quotients that are not of dimension $24$ (which cannot occur in $W$) leaves the much smaller module
\[24_{12}^*/1/14_1/1,6_1/24_{12}.\]
Thus a pyx for $W$ is the sum of this module and its image under either the field or field-graph automorphism. In fact, this must be $W$ itself, not just a pyx for $W$. Hence $W$ contains no $4$-dimensional factors and at most two $6$-dimensional factors. Thus we must find at least one $6$-dimensional composition factor in the $\{4_i^\pm,6_i,16_{12}^\pm,\bar{16}_{12}^\pm\}$-radical of $L(E_8){\downarrow_H}$.

The $\{4_i^\pm,6_i,16_{12}^\pm,\bar{16}_{12}^\pm\}$-radicals of $P(16_{12})$ and $P(\bar{16}_{12})$ are simply $4_1^*/16_{12}$ and $4_2/\bar{16}_{12}$, so as with $\PSL_4(4)$, there must be a $4$- or $6$-dimensional submodule of $L(E_8){\downarrow_H}$, as we claimed.

\medskip

We check that there exist elements of order $195=65\times 3$ that cube to a given element $x$ of order $65$ in $\PSU_4(4)$ and that stabilize eigenspaces comprising the composition factors $4_i^\pm$ and $6_i$. We find two elements of order $195$ powering to $x$ and stabilizing the eigenspaces comprising a given $4$-dimensional module, and 80 elements that stabilize the eigenspaces for each $6_i$. Although this is enough to show that $H$ is contained in a member of $\ms X$, one even finds elements of order $1365=195\times 7$ powering to one of the elements of order $195$ and with the same number of distinct eigenvalues on $L(E_8)$ as the element of order $195$. This element is a blueprint for $L(E_8)$ by Theorem \ref{thm:largeorderss}, and so the stabilizer of a $4$- or $6$-dimensional submodule of $L(E_8){\downarrow_H}$ is positive dimensional.

\medskip

We now have to do exactly the same as for $\PSL_4(4)$, and check all members of $\ms X$ to see that $H$ stabilizes a line on $L(E_8)$, not just a $4$- or $6$-space. The exact same proof works for $\PSU_4(4)$ as for $\PSL_4(4)$, so we see that $H$ stabilizes a line on $L(E_8)$.

\medskip

\noindent $\boldsymbol{q=8}$: We look for sets of composition factors that are conspicuous for elements of order at most $19$, and that have either no trivials or more modules of dimension $14$, $24$ and $84$ than of dimension $1$. (All simple modules of dimension at most $128$, or self-dual with dimension at most $248$, with non-zero $1$-cohomology have these dimensions, as we see from Table \ref{t:modules48916}.) Since there are still 9320 possible dimension sets for the composition factors, and billions of possible modules of dimension $248$, we have to narrow down the possibilities. The six cases with non-positive pressure for $L\cong \PSU_4(2)$ that were omitted before are
\[ 14,6^{10},(4,4^*)^{16},1^{46},\quad 14^8,6^{17},(4,4^*)^2,1^{18},\quad 64,(20,20^*)^2,14^2,6^4,(4,4^*)^5,1^{12},\]
\[ 64^2,20,20^*,14^2,(4,4^*)^5,1^{12},\quad 64,20,20^*,14^4,6^5,(4,4^*)^6,1^{10},\] \[64^2,20,20^*,14^3,6^2,(4,4^*)^2,1^{10}.\]

The dimensions of the composition factors of the restrictions of simple modules for $H$ to $L$ are slightly different from $\PSL_4(8)$, and are as follows.
\begin{center}
\begin{tabular}{ccc}
\hline Dimension of module for $H$ & Is self-dual? & Restriction of module to $L$
\\ \hline $16$ & No & $6^2,4$ or $14,1^2$
\\ $24$ & No & $20,4$
\\ $36$ & Yes & $14^2,6,1^2$
\\ $56$ & No & $20^2,6^2,4$
\\ $64$ & No & $20^2,6^2,4^3$ or $20^2,14,4^2,1^2$
\\ $80$ & No & $64,6^2,4$ or $20,14^3,6^2,4,1^2$
\\ $84$ & Yes & $64,6^2,4^2$
\\ $96$ & No & $20,14^4,6^2,4,1^4$ or $64,6^4,4^2$
\\ $120$ & No & $20^4,6^4,4^4$
\\ $196$ & Yes & $20^4,14^5,6^4,4^4,1^6$
\\\hline
\end{tabular}
\end{center}
We see from this that to remove two trivial composition factors from $L(E_8){\downarrow_H}$ that appear in $L(E_8){\downarrow_L}$, we need at least one $14$. We can in particular note that of the six sets of factors of non-positive pressure for $L(E_8){\downarrow_L}$ above, only the fifth could yield positive pressure for $L(E_8){\downarrow_H}$, and then only with the dimensions
\[84,24^2,16^4,6^3,4^8,1^2.\]
There are no such conspicuous sets of composition factors. We therefore assume that $L(E_8){\downarrow_L}$ comes from one of the three possible sets of factors of positive pressure from earlier in this proof.

From this it is easy to see first that modules of dimension $196$, $120$, $96$ and $80$ cannot occur in $L(E_8){\downarrow_H}$, then that the number of composition factors of dimension $14$ and $36$ put together does not exceed 4, that there are at most eight trivial factors, and so on. These sorts of easy restrictions can bring the number of possible sets of dimensions down to something manageable, a couple of dozen. The total number of modules is around 4 million, and we can easily check the traces of these modules.

We find, up to field automorphism, two such sets of factors, but one, namely
\[ 24_{12},24_{12}^*,24_{21},24_{21}^*,16_{12},16_{12}^*,\bar{16}_{12},\bar{16}_{12}^*,14_1,14_2,6_1^2,6_2^2,(4_1,4_1^*)^2,(4_2,4_2^*)^2,1^4,\]
has pressure $0$ because $24_{12}^\pm$ does not have non-zero $1$-cohomology.

The other conspicuous set of composition factors for $L(E_8){\downarrow_H}$ is
\[ 64_1^2,24_{21},24_{21}^*,14_1^4,6_2^2,1^4,\]
which appeared in the case $q=4$, again having pressure $2$. The proof is identical to the $q=4$ case, even down to the radicals of $P(14_1)$ and $P(24_{21})$ being the same. Hence $H$ stabilizes a line on $L(E_8)$, as needed.
\end{proof}

\section{\texorpdfstring{$\Omega_7$}{Omega 7}}

In this section we let $H\cong \Omega_7(q)$ for some $q\leq 9$. We continue with our definition of $u$ from the start of the chapter.

\begin{proposition}\label{prop:omega7}
If $H\cong \Omega_7(q)$ for some odd $q\leq 9$, then $H$ is a blueprint for $L(E_8)$.
\end{proposition}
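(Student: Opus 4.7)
The plan is to split the proof by $q\in\{3,5,7,9\}$, following the blueprint strategy developed in earlier sections of the chapter.

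For $q=9$: The quickest route is to observe that $\Omega_7(3)$ embeds in $\Omega_7(9)$ as a subfield subgroup, and that blueprints pass up to overgroups (stated just before Lemma \ref{lem:genericmeansblueprint}). So the $q=9$ case reduces to $q=3$, provided $q=3$ is handled. If the subfield embedding inside our fixed copy of $\mb G$ is an issue, an alternative is to locate a semisimple element $x$ of order $\Phi_3(9)(q{-}1)/\gcd = 91\cdot 8$ type or some element of order approaching $q^3\pm 1$, and apply the roots trick: take an $n$th root $\hat{x}\in\mb G$ lying in a torus containing $x$ such that $\hat x^n = x$ and $\hat x$ has the same number of distinct eigenvalues on $L(E_8)$ as $x$, while $o(\hat x)\notin T(E_8)$. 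By Theorem \ref{thm:largeorderss}, $\hat x$ is a blueprint for $L(E_8)$, and hence $\langle x\rangle\leq\langle\hat x\rangle$ is too, giving $H$ blueprint.

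For $q=7$: No element of $H$ has order outside $T(E_8)$ directly (the largest element orders are around $(7^3\pm 1)/2$, well inside the bound), so I would use exactly the roots trick described above. Concretely, pick $x\in H$ of order $(q^3+1)/2=172$ (or a convenient alternative from a torus of type $\Phi_6\cdot\Phi_1$, $\Phi_3\cdot\Phi_2$, etc.), verify via a computer search over the Weyl group action on a matching maximal torus of $\mb G$ that there is $\hat x\in\mb G$ of order a suitable multiple of $o(x)$ (e.g.\ $o(x)\cdot 11$ or $\cdot 13$, putting $o(\hat x)$ outside $T(E_8)$) with $\hat x^n=x$ and the same distinct-eigenvalue count on $L(E_8)$. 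Apply Theorem \ref{thm:largeorderss} to $\hat x$ and conclude as above.

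For $q=3,5$: Follow the template used for $\PSp_8(q)$ and $\Omega_9(q)$ earlier in the chapter. Compute all conspicuous sets of composition factors of $L(E_8){\downarrow_H}$ from the trace data of semisimple elements of $\mb G$ on $L(E_8)$ against the Brauer characters of $H$, using the simple modules listed in Tables \ref{t:modules3} and \ref{t:modules5} (dimensions $1,7,21,27,35,63,132,189_i$ for $q=3$, and $1,7,21,27,35,77,105,141,168,182$ for $q=5$). The number of conspicuous sets should be very small. For each set, show via Ext$^1$ calculations (or by inspection, since most pairs of these simple modules have vanishing Ext$^1$) that $L(E_8){\downarrow_H}$ decomposes as a largely semisimple module, and then compute the Jordan block structure of the unipotent element $u$ of order $p$ from the smallest class of $H$ on each resulting summand. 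Compare to \cite[Table 9]{lawther1995} and Tables \ref{t:unipe8p3}, \ref{t:unipe8p5}: in each case $u$ should end up in a generic class (typically $A_1$, $2A_1$, $3A_1$ or $A_2$), and then Lemma \ref{lem:genericmeansblueprint} delivers that $u$, and hence $H$, is a blueprint for $L(E_8)$.

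The main obstacles are twofold. First, for $q=7$ one must genuinely verify the existence of a large-order root $\hat x$ with matching eigenvalue multiplicities; this is a finite but potentially delicate computer check involving enumeration of roots of $x$ inside the normalizer of a maximal torus of $\mb G$ of the right type. Second, for $q=3,5$ the subtle step is to rule out non-generic unipotent classes of $\mb G$ with the same overall block partition as $u$ — typically this requires further Ext-based arguments to exhibit an indecomposable submodule whose contribution to the Jordan form is incompatible with the non-generic option, mirroring the treatment of $\PSL_5(3)$, Case 2 in Proposition \ref{prop:sl5ine8}.
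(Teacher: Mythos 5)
Your treatment of $q=3,5,9$ is essentially the paper's: for $q=9$ the reduction to $\Omega_7(3)\leq\Omega_7(9)$ via the fact that blueprints pass to overgroups is exactly what is done, and for $q=3,5$ the paper finds a single conspicuous set of composition factors, $35^2,27,21^5,7^6,1^4$, with no extensions between any pair of factors, so $L(E_8){\downarrow_H}$ is semisimple and $u$ acts with blocks $3^{14},2^{64},1^{78}$, which pins down the generic class $2A_1$ and gives the blueprint conclusion via Lemma \ref{lem:genericmeansblueprint}. Your worry about ruling out non-generic classes with the same partition does not materialise here: once the module is semisimple the block structure matches only $2A_1$, so nothing like the $\PSL_5(3)$ Case 2 analysis is needed.

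The genuine gap is your handling of $q=7$. You abandon the composition-factor argument there and instead propose a roots-trick search for an element $\hat x$ of order outside $T(E_8)$ powering to an element of order $172$ with the same number of distinct eigenvalues on $L(E_8)$; but you neither perform this search nor give any reason such a root must exist, so as written the $q=7$ case is not proved (and the class of $x$ in $\mb G$ would first have to be pinned down, which already requires the conspicuous-set computation you are trying to avoid). The detour is also unnecessary: the same argument as for $q=3,5$ works. For $q=7$ there is again a unique conspicuous set, $35^2,26,21^5,7^6,1^5$; the only non-split extension among these factors is between $26$ and $1$ (note $26$, not $27$, occurs and has $1$-dimensional $1$-cohomology by Table \ref{t:modules7}), so $35^{\oplus 2}\oplus 21^{\oplus 5}\oplus 7^{\oplus 6}\oplus 1^{\oplus 3}$ splits off as a summand, on which $u$ acts with blocks $3^{11},2^{58},1^{71}$. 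Comparison with \cite[Table 9]{lawther1995} forces $u$ into the generic class $2A_1$, and Lemma \ref{lem:genericmeansblueprint} finishes the case without any large-order semisimple element.
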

\begin{proof} $\boldsymbol{q=3}$: There is only one set of composition factors for $L(E_8){\downarrow_H}$ that is conspicuous for elements of orders $2$ and $4$, and that is
\[ 35^2,27,21^5,7^6,1^4.\]
Since none of these composition factors has an extension with any other, $L(E_8){\downarrow_H}$ is semisimple. The unipotent element $u$ acts on $L(E_8)$ with blocks $3^{14},2^{64},1^{78}$; thus $u$ comes from the generic class $2A_1$, proving that $H$ is a blueprint for $L(E_8)$ by Lemma \ref{lem:genericmeansblueprint}.

\medskip

\noindent $\boldsymbol{q=9}$: As $\Omega_7(3)$ is contained in $\Omega_7(9)$, $H$ is a blueprint for $L(E_8)$.

\medskip

\noindent $\boldsymbol{q=5}$: There is a single set of composition factors that is conspicuous for elements of orders $2$ and $3$, and it is the same as for $q=3$. Again, this is semisimple, and $u$ acts on $L(E_8)$ with blocks $3^{14},2^{64},1^{78}$; thus $u$ comes from the generic class $2A_1$, proving that $H$ is a blueprint for $L(E_8)$ via Lemma \ref{lem:genericmeansblueprint}.

\medskip

\noindent $\boldsymbol{q=7}$: Again, elements of orders $2$ and $3$ yield a single conspicuous set of composition factors,
\[ 35^2,26,21^5,7^6,1^5.\]
The only extension this time is between $26$ and $1$, so we have at least $35^{\oplus 2}\oplus 21^{\oplus 5}\oplus 7^{\oplus 6}\oplus 1^{\oplus 3}$, on which $u$ acts with blocks $3^{11},2^{58},1^{71}$. Examining \cite[Table 9]{lawther1995}, we see that $u$ must come from the generic class $2A_1$ again, so $H$ is a blueprint for $L(E_8)$ by Lemma \ref{lem:genericmeansblueprint}.
\end{proof}

\section{\texorpdfstring{$\PSp_6$}{PSp 6}}

In this section we consider the groups $\PSp_6(q)$ for $q\leq 9$.

\begin{proposition} Let $H\cong \PSp_6(q)$ for some $q\leq 9$.
\begin{enumerate}
\item If $q$ is even then $H$ stabilizes a line on $L(E_8)$.
\item If $q$ is odd then $H$ does not embed in $\mb G$.
\end{enumerate}
\end{proposition}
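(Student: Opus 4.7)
The plan is to follow the template used throughout this chapter: compute the sets of composition factors for $L(E_8){\downarrow_H}$ that are conspicuous for semisimple elements of small order (using the known trace data on $L(E_8)$ of semisimple classes of $\mb G$ together with the Brauer characters of $H$), and then for each either rule out the embedding directly or apply Proposition \ref{prop:pressure} and Lemma \ref{lem:fix1space} to produce a line stabilizer. Since $H$ contains $\PSp_4(q)$ and, for $q=9$, also $\PSp_6(3)$, some cases can be reduced to earlier results.

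For $q$ odd, I would first dispose of $q = 9$ by the observation that $\PSp_6(9)$ contains $\PSp_6(3)$, so it suffices to rule out an embedding of $\PSp_6(3)$. For $q \in \{3,5,7\}$, I would enumerate by computer all multisets of simple $kH$-modules (of the dimensions listed in Tables \ref{t:modules3}, \ref{t:modules5}, \ref{t:modules7}) summing to $248$ that are consistent with the traces, and in fact the eigenvalues, of semisimple elements of $H$ of order at most $13$ acting on $L(E_8)$, assigning each to a semisimple conjugacy class of $\mb G$ with matching character values. I anticipate the outcome to be that no such multiset is conspicuous, and hence $H$ cannot embed in $\mb G$.

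For $q$ even, I would first treat $q = 2$. The only simple $k\PSp_6(2)$-modules of dimension at most $248$ with non-zero $1$-cohomology are $6$ and $48$ (see Table \ref{t:modules2}), so for each conspicuous dimension profile the pressure will be low and in most cases non-positive; any such case is then dispatched by Proposition \ref{prop:pressure} and Lemma \ref{lem:fix1space}. For $q = 4, 8$, only the modules $6_i$, $48_i$ and $84_{i,i-1}$ have non-zero $1$-cohomology (see Table \ref{t:modules48916}); to cut down the enumeration I would restrict to $L = \PSp_6(2) \leq H$, use the action of $L$ on $L(E_8)$ to restrict the dimension profiles of the composition factors of $L(E_8){\downarrow_H}$ via the branching tables of simple $kH$-modules to $kL$-modules, and then repeat the conspicuousness and pressure argument.

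The main obstacle I anticipate is handling any conspicuous set of composition factors whose pressure is strictly positive. In that situation the pressure count alone does not yield a line stabilizer, and one must construct an explicit pyx inside $P(6)$, $P(48)$ or $P(84_{i,j})$ -- in the same style as the pyx arguments used for $\PSp_8(2)$ in Proposition \ref{prop:sp8ine8} and for $\PSU_4(2)$ and $\PSU_4(4)$ in Proposition \ref{prop:su4ine8} -- in order to show that no self-dual $kH$-module with the given composition factors and no trivial submodule can actually be realised as $L(E_8){\downarrow_H}$. Combined with the self-duality of $L(E_8)$, this should force the existence of a trivial submodule and complete the argument.
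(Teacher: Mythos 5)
Your outline matches the paper's strategy for the easy parts: for $q$ odd the conspicuousness check does indeed return nothing (the paper only needs elements of order at most $4$), the reduction of $q=9$ to $\PSp_6(3)$ is exactly what is done, for $q=2$ the low-pressure cases are dispatched by Proposition \ref{prop:pressure}, and for $q=4,8$ the restriction to $\Sp_6(2)$ is used in the same way to cut down the dimension profiles. The pressure-one cases for $q=2$ (factors $48^2,14^4,8^4,6^9,1^{10}$ and $64,48,14^5,8^3,6^6,1^6$) and the surviving pressure-one case for $q=8$ are also handled in the paper by radical/pyx computations inside $P(6)$ and $P(48)$, as you anticipate.

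The gap is in the pressure-two cases, which your plan proposes to settle by pyx constructions alone; the paper does not (and apparently cannot) do this. For $q=2$ with factors $48^2,14^4,8^5,6^8,1^8$, the pressure is $2$, so the socle of the $\{6,48\}$-heart can contain two factors and the alternating-layer bookkeeping that drives the pressure-one pyx arguments breaks down. The paper instead passes to a subgroup $L\cong\Alt(8)$ of $H$: by \cite[Proposition 8.5]{craven2017} $L$ stabilizes a line on $L(E_8)$, so by Lemma \ref{lem:maxrankorpara} it lies in a maximal-rank or parabolic subgroup, and a case-by-case analysis of the members of $\ms X$ forces $L$ into an $A_7$-parabolic; the resulting structural fact that no $14$ can lie in the socle of $L(E_8){\downarrow_L}$ is what makes the subsequent module-theoretic argument close. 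Similarly, for $q=4$ the surviving conspicuous set $64_{12},48_1,48_2,14_1,14_2,8_1^2,8_2^2,6_1^2,6_2^2,1^4$ has pressure $2$ (both $48_i$ carry cohomology here, unlike the $q=8$ analogue), and the paper resorts to the roots trick — elements of order $255$ cubing to an element of order $85$ and stabilizing the eigenspaces comprising the $6_i$ and $8_i$ — to show $H$ lies in a member of $\ms X$, and then runs through $\ms X$ to force $H\leq D_8$, which stabilizes a line. Your proposal contains neither the auxiliary-subgroup/positional argument nor the blueprint-style eigenspace argument, and the assertion that a pyx inside $P(6)$, $P(48)$ or $P(84_{i,j})$ "should force" a trivial submodule in these cases is unsubstantiated precisely where the pressure exceeds $1$.
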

\begin{proof} $\boldsymbol{q}$\textbf{ odd}: For $q=3,5,7$, there are no sets of composition factors for $L(E_8){\downarrow_H}$ that are conspicuous for elements of order at most $4$, so $H$ does not embed in $\mb G$ for those values. Since $\PSp_6(3)$ does not embed in $\mb G$, neither does $\PSp_6(9)$.

\medskip

\noindent $\boldsymbol{q=2}$: There are seven conspicuous sets of composition factors for $L(E_8){\downarrow_H}$, which are
\[ 14,8^{16},6^{10},1^{46},\quad 14^8,8^2,6^{17},1^{18},\quad 48^2,14^4,8^4,6^9,1^{10},\quad 48^2,14^4,8^5,6^8,1^8,\] \[64,48,14^5,8^3,6^6,1^6,\quad 
64,48^2,14^3,8^2,6^4,1^6,\quad 
64^2,48,14^4,6^2,1^4.\]
The $1$-cohomologies of these modules are given in Table \ref{t:modules2}. The pressures of these sets of composition factors are $-36$, $-1$, $1$, $2$, $1$, $0$ and $-1$ respectively.

\medskip

\noindent \textbf{Cases 1, 2, 6 and 7}:  These all have non-positive pressure,  so $H$ must stabilize a line on $L(E_8)$ by Proposition \ref{prop:pressure}.

\medskip

\noindent \textbf{Case 3}: Assume that $H$ does not stabilize a line on $L(E_8)$. Let $W$ denote the $\{6,48\}$-heart of $L(E_8){\downarrow_H}$. The socle of $W$ is either $6$ or $48$ since $W$ has pressure $1$, by Proposition \ref{prop:pressure}.

The $\{1,6,8,14\}$-radical of $P(48)$ is
\[ 1,1,8/6,6/1,1,14,14/6,6/1,8,8/6,6/1,14,14/6/1/48.\]
This has only seven trivial composition factors, thus cannot be a pyx for $W$ (without the copy of $48$ on top). Thus the socle cannot be $48$.

The dimension of $\Ext^1_{kH}(M_1,M_2)$ for $M_i$ simple is $0$ or $1$, and it is $1$ only for $(M_1,M_2)$ (or $(M_2,M_1)$) one of the following pairs: $(1,6)$, $(1,48)$, $(6,8)$, $(6,14)$, $(14,64)$, $(64,64)$. Thus if $64$ is not a factor then the socle layers alternate between being $6$ and $48$, and being $1$, $8$ and $14$. Thus as the pressure is $1$, every other socle layer of $W$ is either $6$ or $48$.

The $\{1,6,8\}$-radical of $P(6)$ is $1/6/1,8/6$, but both trivials are quotients of this, hence there cannot be three socle layers of $W$ of the form $6/1,8/6$ or $6/1/6$. Thus whenever there are two $6$s in the $i$th and $(i+2)$th socle layers, there must be a $14$ in between. For the third case, this situation occurs more than four times in the socle layers of $W$, but there are only four $14$s in $W$, a contradiction. Thus $H$ stabilizes a line on $L(E_8)$.

\medskip

\noindent \textbf{Case 5}: Suppose that $H$ does not stabilize a line on $L(E_8)$. This has pressure $1$, so let $W$ be as in Case 3, and note that as there is a unique $48$ in the composition factors of $W$, $\soc(W)=6$. Ignoring any $8$s and $64$s, the socle structure of $L(E_8){\downarrow_H}$ must be
\[ 6/1,14/6/1,14/6/1/48/1/6/1,14/6/1,14/6,\]
with another $14$ somewhere in the module. In particular, consider the quotient of $L(E_8){\downarrow_H}$ by the $\{48\}'$-radical of $L(E_8){\downarrow_H}$. We see that it cannot contain $64$ as both $48$ and $64$ appear exactly once. This quotient must have the socle structure $6/1,14/6/1,14/6/1/48$, ignoring $8$s. This is not a submodule of $P(48)$, as we can see from the submodule in Case 3, hence $H$ stabilizes a line on $L(E_8)$.

\medskip

\noindent \textbf{Case 4}: Suppose that $H$ does not stabilize a line on $L(E_8)$, and let $W$ be as before. Let $L$ denote a copy of $\Alt(8)$ inside $H$, and note that the composition factors of $L(E_8){\downarrow_L}$ are
\[ (20,20^*)^2,14^4,8^9,6^8,1^8.\]
Since $L$ stabilizes a line on $L(E_8)$ by \cite[Proposition 8.5]{craven2017}, it lies inside a maximal-rank or parabolic subgroup of $\mb G$ by Lemma \ref{lem:maxrankorpara}.

If $L$ lies inside an $A_7$-parabolic subgroup, it acts as either $4^{\oplus 2}$ or $4\oplus 4^*$ on $M(A_7)$, and in either case the module $(1/14/1)^{\oplus 4}$ is a summand of the action of the projection of $L$ inside the $A_7$-Levi subgroup on $L(E_8)$. In particular, this means that no $14$ may lie in the socle of $L(E_8){\downarrow_L}$, and this severely restricts the possible submodules of $L(E_8){\downarrow_H}$. We suppose that this is the case, and prove that $H$ must stabilize a line on $L(E_8)$ in this situation.

Let $V_1$ denote the $\{1,6,8\}$-radical of $W$, and $V_2$ the preimage in $W$ of the $\{14\}$-radical of the quotient $W/V_1$. The corresponding submodule of $P(6)$ has structure
\[ 1,14/6/1,8,14,14/6,\]
and on restriction to $L$ there is no subquotient $14/1$. Therefore none of these $14$s may appear in $W$, i.e., $V_1=V_2$. In particular, $\soc(W/V_1)$ is either $48$ or $48^{\oplus 2}$.

Since $W$ has pressure $2$, and the submodule $1/6/1,8/6$ of $P(6)$ has $1/6/8/6$ as a submodule, we see that $V_1$ may have at most two trivial composition factors. Writing $\bar W$ for the $\{48\}$-heart of $L(E_8){\downarrow_H}$, we therefore see that it must possess all four $14$s and at least four of the eight trivial composition factors.

Therefore we consider the $\{1,6,8,14\}$-radical of $P(48)$ above, and note that it only possesses four $14$s, so all must occur in $\bar W$. Remove any quotients that are not $14$ from this, and we find a module
\[ 14,14/6,6/8,8/6,6/1,14,14/6/1/48,\]
which must be a submodule of $\bar W$. This however cannot be a subquotient of $L(E_8){\downarrow_H}$, since it has a pressure $4$ subquotient, namely $6,6/8,8/6,6$. This yields a contradiction, and so $H$ stabilizes a line on $L(E_8)$ whenever $L$ lies in an $A_7$-parabolic subgroup of $\mb G$.

\medskip

Thus we may assume that $L$ does not lie inside an $A_7$-parabolic subgroup of $\mb G$. Let $\mb X$ be an $A_3A_3$-Levi subgroup of $\mb G$, and suppose that $L$ lies in it. If $L$ lies in one of the $A_3$ factors then $L$ has at least 24 composition factors on $L(E_8)$, so this is not correct. Thus $L$ embeds diagonally in $\mb X$. In particular, it acts on the two modules $M(A_3)$ as either $L(100)$ or $L(001)$. Since $L(E_8){\downarrow_{\mb X}}$ has composition factors $(101,000)$, $(000,101)$, and $(100,100)$, $(100,001)$ and their duals, we see that $L(E_8){\downarrow_L}$ contains four subquotients $1/14/1$ (which is $4\otimes 4^*$). Thus we may assume that $L$ does not lie in any $A_3A_3$-parabolic subgroup of $\mb G$.

Since $L$ does not lie in $A_1$, $A_2$, $B_2$ or $G_2$, if $L$ lies in a central product of one of these with another group $\mb X$, then $L$ lies in $\mb X$. By \cite[Proposition 4.2]{craven2017}, $L$ is not contained in an $E_7$-parabolic subgroup, and so $L$ is not contained in $E_7A_1$ or $E_6A_2$ either. If $L\leq A_4A_4$, then $L$ must have a trivial summand on $M(A_4)$, so $L$ lies inside an $A_3A_3$-Levi subgroup, and we use the previous paragraph. If $L$ lies inside $A_8$ then $L$ stabilizes a line on $M(A_8)$, so again lies inside an $A_7$-parabolic subgroup. This deals with all maximal-rank subgroups except for $D_8$.

For the parabolic subgroups, we have eliminated $E_6$, can eliminate $A_3A_4$ as above, and excluding $A_1$ and $A_2$ factors we see that lying inside $A_1A_2A_4$-parabolic means $L$ lies in the $A_7$-parabolic, lying in the $A_2D_5$- or $A_1E_6$-parabolics means $L$ lies in the $E_7$-parabolic subgroup. Since $L$ must stabilize a line or hyperplane on $M(A_6)$, hence if $L$ lies in the $A_1A_6$-parabolic then it lies in the $A_7$- or $E_7$-parabolic subgroup.

If $L$ lies inside the $D_7$-parabolic subgroup, then as we have considered all other parabolics of $\mb G$, we may assume that $L$ is $D_7$-irreducible. In particular, any subspace of $M(D_7)$ that $L$ stabilizes and acts irreducibly on must be non-singular, as totally isotropic subspaces have parabolic stabilizers, contradicting the $D_7$-irreducibility of $L$. In particular, there cannot be a subspace $4^\pm$ of $M(D_7){\downarrow_L}$, as $L$ cannot act non-singularly on a non-self-dual module, and there cannot be a trivial submodule.

Thus $L$ acts with composition factors on $M(D_7)$ as $14$ or $6^2,1^2$ or $6,1^8$. In the first case the composition factors of $L$ on $L(E_8)$ are $64^2,20,20^*,14^4,6^2,4,4^*,1^4$, by checking traces of elements of orders $3$. (In other words, $L$ acts irreducibly on the half-spin modules as well as $M(D_7)$.) In the second and third, $L$ stabilizes a line on $M(D_7)$ so is not $D_7$-irreducible.

If $L$ lies inside the $D_8$ maximal-rank subgroup, then we may assume that $L$ is $D_8$-irreducible, hence again $4$ is not in the socle of $M(D_8){\downarrow_L}$, and neither is $1$. There are no sets of composition factors that can satisfy these conditions, and so we have proved that $L$ always lies in an $A_7$-parabolic subgroup, completing the proof.

\medskip

\noindent $\boldsymbol{q=4,8}$: The composition factors of tensor products of simple modules for $L=\Sp_6(2)$ whose product has dimension at most $248$ are in the table below.
\begin{center}
\begin{tabular}{cc}
\hline Tensor product & Factors
\\ \hline $6\otimes 6$ & $14^2,6,1^2$
\\ $6\otimes 8$ & $48$
\\ $8\otimes 8$ & $14^2,8,6^4,1^4$
\\ $6\otimes 14$ & $64,8,6^2$
\\ $8\otimes 14$ & $112$
\\ $14\otimes 14$ & $48^2,14^5,6^4,1^6$
\\ $6^{\otimes 3}$ & $64^2,14^2,8^2,6^7,1^2$
\\\hline\end{tabular}
\end{center}
By comparing with the conspicuous sets of composition factors for $L(E_8){\downarrow_L}$, we therefore see that the dimension of any factor of $L(E_8){\downarrow_H}$ is at most $84$. Furthermore, if $M$ is a simple $kH$-module then the dimension of $H^1(H,M)$ is at most $1$ and it is non-zero only if $M$ has dimension $6$, $48$ or $84$ (not all modules of dimension $48$ and $84$ have non-zero $1$-cohomology).

If $L(E_8){\downarrow_H}$ has no trivial composition factors, then from the possibilities for $L(E_8){\downarrow_L}$ and the table above, we see that the dimensions of the factors of $L(E_8){\downarrow_H}$ are
\[48^2,64^2,8^3,\qquad 64^2,48,36,14,8^2,6,\qquad 64^2,48,36^2.\]
There are no sets of composition factors with these dimensions that are conspicuous for elements of order at most $13$.

If $L(E_8){\downarrow_H}$ has trivial factors then we can consider its pressure, so we eliminate all sets of dimensions that must have non-positive pressure. For each set of factors for $L(E_8){\downarrow_L}$ one obtains a small possible set of dimensions for $L(E_8){\downarrow_H}$, and it is easy to check for conspicuous sets of factors for each of these in turn.

For $q=4,8$ we obtain, up to field automorphism, two sets of composition factors for $L(E_8){\downarrow_H}$ that are conspicuous for elements of order up to $13$ and have more factors of dimension $6$, $48$ and $84$ than trivial factors. The first can be written down the same for both $q=4$ and $q=8$, and it is
\[ 48_{21}^2,14_2^4,8_1^4,8_2,6_2^8,1^8;\]
this has pressure $0$, because the $48$-dimensional modules that have non-zero $1$-cohomology are $48_i=6_i\otimes 8_i$, not $48_{i,j}=6_i\otimes 8_j$.

The second set of composition factors changes a little between $q=4$ and $q=8$: for $q=8$ it is
\[64_{13},48_1,48_{23},14_1,14_2,8_1^2,8_3^2,6_1^2,6_2^2,1^4.\]
We aim to show that $H$ stabilizes a line on $L(E_8)$, so suppose that this is not the case. This has pressure $1$, and as $64_{13}$, $48_1$, $48_{23}$, $14_1$ and $14_2$ all appear with multiplicity $1$, if they are in the socle then they are summands, so can be ignored. Thus we may assume that the socle is either $6_1$ or $6_2$, by quotienting out by any $8_i$. The $\{1,6_1,6_2,8_1,8_3\}$-radicals of $P(6_1)$ and $P(6_2)$ are $6_1/1,8_1/6_1$ and $1/6_2$ respectively. Notice that the $\{1,6_1,6_2,8_1,8_3\}$-heart of $L(E_8){\downarrow_H}$ is a module whose socle consists of modules with multiplicity $1$, hence is semisimple. But this means that $H$ must stabilize a line on $L(E_8)$ because there are not enough trivials above the $6_i$. This completes the proof for $q=8$.

\medskip

When $q=4$, the other set of factors is
\[64_{12},48_1,48_2,14_1,14_2,8_1^2,8_2^2,6_1^2,6_2^2,1^4.\]
One can check that there are elements of order $255$ cubing to a given element $x$ of order $85$ in $H$, and that stabilize all of the constituent eigenspaces of $6_1$, $6_2$, $8_1$ and $8_2$. Indeed, there are 2186 roots $\hat x$ such that $\hat x^3=x$ and $\hat x$ stabilizes the eigenspaces comprising $6_i$, and 80 for $8_i$. Moreover, 728 roots stabilize both $6_1$ and $6_2$. Indeed, to show that the stabilizer of each $6_i$ and each $8_i$ is positive dimensional, one finds an element of order $1985=7\times 255$ whose seventh power $\hat x$ has the same number of distinct eigenvalues on $L(E_8)$ as it, and such that $\hat x$ is an element stabilizing the eigenspaces comprising both $6_1$ and $8_1$.

Thus if $L(E_8){\downarrow_H}$ has $1$, $6_i$ or $8_i$ as a submodule then $H$ is contained in a member of $\ms X$, but of course these are the only composition factors that appear more than once. Hence at least one of them must be in the socle of $L(E_8){\downarrow_H}$, so $H$ lies in a member of $\ms X$.

We want to show that $H$ stabilizes a line on $L(E_8)$. To see this, we run through the members of $\ms X$, and find that $H$ must lie inside a $D_8$ maximal-rank subgroup, which stabilizes a line on $L(E_8)$.

As with $\PSL_4(4)$, $H$ cannot lie inside the $E_7$-parabolic subgroup as there are not enough factors with multiplicity $2$ in $L(E_8){\downarrow_H}$. The possibilities for embedding $H$ in the $A_7$-parabolic subgroup---acting on the natural module with factors $6_1,1^2$ or $8_1$---yield the wrong factors on $L(E_8)$. For the $D_7$-parabolic subgroup, the only embedding of $H$ acts on the natural as $8_1\oplus 1^{\oplus 6}$, clearly yielding the wrong factors. As with $\PSL_4(4)$, we eliminate all other parabolics by projecting onto the Levi factor, embedding in a subgroup, and using containment of parabolics.

For the reductive subgroups, if $H\leq G_2F_4$ then $H\leq F_4\leq E_7$, and $H$ is not contained in $A_4A_4$, or the $A_8$ subgroup as it is not in the $A_7$-parabolic subgroup, and of course not in $A_1E_7$ or $A_2E_6$ either. The only subgroup left is $D_8$, which of course stabilizes a line on $L(E_8)$, so we are done. (In this case, $H$ must act on the natural module as $8_1\oplus 8_2$.)
\end{proof}

\newpage
\chapter{Rank 2 groups for \texorpdfstring{$E_8$}{E8}}
\label{chap:rank2ine8}

This chapter considers the groups $\PSL_3(q)$, $\PSU_3(q)$, $\PSp_4(q)$ and $G_2(q)$ for $2\leq q\leq 9$, together with the Suzuki groups ${}^2\!B_2(q)$ for $q=8,32,128,512$ and ${}^2\!G_2(q)$ for $q=3,27$. In addition, we consider the derived groups when these are not simple, which are $\PSp_4(2)$, $G_2(2)$, ${}^2\!G_2(3)$. We do not consider $\PSU_3(2)$, which is soluble. 


As to be expected, there are more groups in this situation that we cannot prove are always strongly imprimitive. In particular, we cannot for $\PSL_3(q)$ for $q=3,4$, for $\PSU_3(q)$ for $q=3,4,8$, and for $\PSp_4(q)$ for $q=2$. In addition, for ${}^2\!B_2(q)$ we do not manage to prove any results for $q=8$. With the exception of these seven groups, we prove that every one of the subgroups above is strongly imprimitive when embedded in $E_8$. One case of $\PSU_3(4)$ is delayed until Section \ref{sec:diffpsu34} (although there is another case that we cannot do), and the proof for the most difficult case of $\PSL_3(5)$ is delayed until Section \ref{sec:diffpsl35}.

Throughout this chapter, as with the previous two, we let $u$ denote an element of order $p$ in the smallest conjugacy class of $H$.

\section{\texorpdfstring{$\PSL_3$}{PSL 3}}

For $\PSL_3(q)$, when $q=2,7$ we can prove that $H$ always stabilizes a line on $L(E_8)$, but for $q=3,4$ we cannot show that $H$ is always Lie imprimitive, never mind strongly imprimitive. For $q=5,8,9,16$, there are some cases where $H$ is a blueprint for $L(E_8)$, some where it stabilizes a line on $L(E_8)$, and others where all we can show is that there is an orbit of subspaces under the action of $N_{\Aut^+(\mb G)}(H)$ whose simultaneous stabilizer is positive dimensional, i.e., it is strongly imprimitive via Theorem \ref{thm:intersectionorbit}.

This proposition proves all of the results that we want except for the single case for $q=5$, which is delayed until Section \ref{sec:diffpsl35}.

\begin{proposition}\label{prop:sl3ine8} Let $H\cong \PSL_3(q)$ for some $2\leq q\leq 9$ or $q=16$.
\begin{enumerate}
\item If $q=2,7$ then $H$ stabilizes a line on $L(E_8)$.
\item If $q=3$ then either $H$ stabilizes a line on $L(E_8)$ or the composition factors of $L(E_8){\downarrow_H}$ are one of
\[ (15,15^*)^5,7^8,6,6^*,(3,3^*)^3,1^{12},\quad (15,15^*)^3,7^{10},(6,6^*)^3,(3,3^*)^7,1^{10},\] \[27,(15,15^*)^4,7^8,6,6^*,(3,3^*)^4,1^9.\]
\item If $q=4$ then either $H$ stabilizes a line on $L(E_8)$ or the composition factors of $L(E_8){\downarrow_H}$ are one of
\[ (9,9^*)^8,8_1^8,8_2^3,1^{16},\quad 64^3,(9,9^*)^2,8_1,8_2,1^4.\]
\item If $q=5$ then either $H$ stabilizes a line on $L(E_8)$ or the composition factors of $L(E_8){\downarrow_H}$ are
\[ 39,39^*,35,35^*,18,18^*,15_2,15_2^*,8^2,6,6^*,3,3^*.\]
\item If $q=8,9,16$ then $H$ is strongly imprimitive.
\end{enumerate}
\end{proposition}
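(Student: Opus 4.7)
The strategy is to enumerate, for each $q$, the conspicuous sets of composition factors of $L(E_8){\downarrow_H}$ determined by the possible traces and power maps of semisimple elements of $\mb G$, then dispatch each set via pressure (Proposition~\ref{prop:pressure} together with Lemma~\ref{lem:fix1space}), via forcing the unipotent element $u$ into a generic class of $\mb G$ (Lemma~\ref{lem:genericmeansblueprint}), via the blueprint theorem for semisimple elements (Theorem~\ref{thm:largeorderss}), or via the strong-imprimitivity lemmas (Lemmas~\ref{lem:allcasesstronglyimp} and~\ref{lem:stronglyimppsl316}). Whatever cannot be killed by these methods becomes an entry in the exceptional lists in the statement.

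The easy extremes are $q=2,7$. The simple $kH$-modules of dimension at most $248$ with non-zero $1$-cohomology are, by Tables~\ref{t:modules2} and~\ref{t:modules7}, only $3^\pm$ for $q=2$ and only $71^\pm$ for $q=7$. For each of these two values I would compute the short list of conspicuous sets and check that the pressure is non-positive in every case, so that $H$ stabilizes a line on $L(E_8)$ by Proposition~\ref{prop:pressure} and Lemma~\ref{lem:fix1space}. For several $q=7$ sets the Jordan-block data of $u$ from \cite{lawther1995} also forces $u$ into a generic unipotent class of $\mb G$, giving a blueprint conclusion by Lemma~\ref{lem:genericmeansblueprint}; but the pressure argument already suffices.

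The cases $q=3,4,5$ require more work. After pressure has pruned the candidate list, the survivors are analyzed by first using Jordan-block data for $u$ on $L(E_8)$ (Tables~\ref{t:unipe8p3} and~\ref{t:unipe8p5}) to check whether $u$ is forced into a generic class of $\mb G$; when it is, Lemma~\ref{lem:genericmeansblueprint} finishes the case. For the remaining survivors I would construct pyxes inside the relevant projective covers $P(M)$ of candidate socle modules $M$, along the lines outlined in Chapter~\ref{chap:techniques}: if no indecomposable overmodule of $M$ of the right socle structure and total dimension can be built within the appropriate $\{\cf(V)\}$-radical, then $M$ cannot appear in the socle of $L(E_8){\downarrow_H}$. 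Iterating over all candidate socle constituents should leave behind exactly the three multisets for $q=3$, the two multisets for $q=4$, and the one multiset for $q=5$ listed in the proposition; the $q=5$ survivor is genuinely difficult and I would postpone it to the later chapter treating the hardest cases.

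Finally, for $q=8,9,16$ I would again enumerate conspicuous sets and eliminate most by pressure and by forcing $u$ to a generic class. For each surviving set I would apply Remark~\ref{rem:howusestrongimp}: after picking a composition factor $S$ of $L(E_8){\downarrow_H}$ that must lie in $\soc(L(E_8){\downarrow_H})$, together with a torus element $y$ of $H$ whose eigenspaces exhibit $S$, I would use the roots trick to locate an element $x\in \mb G$ of order $189$ (resp.\ $160$, $255$), as dictated by Lemmas~\ref{lem:allcasesstronglyimp} and~\ref{lem:stronglyimppsl316}, with $x^n=y$ for the appropriate $n$ and stabilizing all eigenspaces of $y$ comprising $S$. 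Verifying that the $N_{\Aut^+(\mb G)}(H)$-orbit of the $S$-isotypic submodule of $L(E_8){\downarrow_H}$ is simultaneously $x$-stable then delivers strong imprimitivity by those lemmas. The main obstacle across the proposition will be $q=3,4$: the pyx computations needed to pin down the exact surviving composition factor multisets require careful bookkeeping around the small-dimensional modules (in particular $9^\pm$ for $\PSL_3(4)$, whose $1$-cohomology is $2$-dimensional by Table~\ref{t:modules48916}, so larger indecomposable overmodules survive than a naive pressure count would predict).
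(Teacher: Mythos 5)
There are two genuine gaps. First, your treatment of $q=2$ does not work as stated: it is false that every conspicuous set of composition factors has non-positive pressure. Since $3^\pm$ has $1$-dimensional $1$-cohomology, the sets $8^{8},(3,3^*)^{27},1^{22}$, $8^{11},(3,3^*)^{24},1^{16}$ and $8^{14},(3,3^*)^{21},1^{10}$ all have pressure $32$, so Proposition~\ref{prop:pressure} gives nothing there. The paper instead argues structurally: the $8$s split off as projective summands, and since $P(3)\cong 3/\bigl(1\oplus(3^*/3/3^*)\bigr)/3$, the only indecomposable $kH$-modules with a trivial composition factor but no trivial submodule or quotient are $P(3)$ and $P(3^*)$; hence avoiding a fixed line would require at least five times as many $3$-dimensional factors as trivial factors, which fails in every conspicuous set. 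Without an argument of this kind your $q=2$ case is unproved.

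Second, for $q=4$ an iteration of pyx/socle computations inside $H$ cannot reduce the list to the two stated multisets. The paper first restricts to an $\Alt(6)$ subgroup and uses Proposition~\ref{prop:compfactorsalt6} to rule out four positive-pressure conspicuous sets outright (their restrictions do not occur), and then, for the set $64^2,(9,9^*)^3,8_1^4,8_2^4,1^2$ (pressure $10$), it proves \emph{non-existence}: it pins down the exact action on $L(E_8)$ of a $\PSL_3(2)$ subgroup $L$ (a sum of projectives for $L$, with $L(E_8)^L$ of the forced dimension) and then runs through the members of $\ms X$ to show no positive-dimensional subgroup of $\mb G$ can contain such an $L$. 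These cases are eliminated because the embedding does not exist, not because a fixed line is forced — abstract module structures with those factors and no trivial submodule do exist — so no construction of overmodules within projective covers of $kH$-modules can detect them, and your plan would leave extra ``exceptions'' beyond the statement. (Your outline for $q=3$, for $q=5$ with its single non-pressure survivor postponed, and for $q=8,9,16$ via the order-$189/160/255$ root elements and Lemmas~\ref{lem:allcasesstronglyimp} and~\ref{lem:stronglyimppsl316}, does follow the paper's route, modulo the extra radical computations the paper needs when the root elements do not stabilize all the relevant eigenspaces.)
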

\begin{proof} $\boldsymbol{q=7}$: The conspicuous sets of composition factors for $L(E_8){\downarrow_H}$ are
\[ 71,71^*,37,27,(10,10^*)^2,1^2,\;\; 117,(28,28^*)^2,8^2,1^3,\;\; 37,(35,35^*)^2,27,(10,10^*)^2,1^4,\]
\[ 35,35^*,27^6,8,1^8,\quad 27^3,(10,10^*)^5,8^8,1^3,\quad 10,10^*,8^{25},1^{28}.\]
Each of these has non-positive pressure (see Table \ref{t:modules7}), so $H$ stabilizes a line on $L(E_8)$ by Proposition \ref{prop:pressure}.

\medskip

\noindent $\boldsymbol{q=5}$: There are fifteen conspicuous sets of composition factors for $L(E_8){\downarrow_H}$, fourteen of which have a trivial composition factor. The fourteen that have a trivial factor are:
\[ 8,(3,3^*)^{27},1^{78},\quad 10,10^*,8^{25},1^{28},\quad 8^8,(6,6^*)^3,(3,3^*)^{21},1^{22},\] \[15_2,15_2^*,8^9,(6,6^*)^7,(3,3^*)^8,1^{14},\quad  19,(15_2,15_2^*)^3,(10,10^*)^2,8^2,(6,6^*)^6,1^{11},\] \[35,35^*,19^6,8^7,1^8,\quad 63,(35,35^*)^2,(10,10^*)^2,1^5,\]
\[39,39^*,19^2,(15_1,15_1^*)^2,10,10^*,8^3,(6,6^*)^2,1^4,\] \[63,39,39^*,18,18^*,15_1,15_1^*,10,10^*,(3,3^*)^3,1^3,\]
\[ 19,18,18^*,(15_2,15_2^*)^3,(10,10^*)^2,8^3,(6,6^*)^2,(3,3^*)^2,1^3,\] \[125,19^4,8,(6,6^*)^2,(3,3^*)^2,1^3,\quad 63^3,15_2,15_2^*,10,10^*,3,3^*,1^3,\]
\[ 19^3,(10,10^*)^5,8^{11},1^3,\quad (15_2,15_2^*)^3,10,10^*,8^8,(6,6^*)^3,(3,3^*)^6,1^2.\]
As only $39^\pm$ and $63$ have non-zero $1$-cohomology, and each is $1$-dimensional (see Table \ref{t:modules5}), we see that all of these sets of composition factors have non-positive pressure, hence $H$ stabilizes a line on $L(E_8)$ in those cases by Proposition \ref{prop:pressure}. 

We thus have the remaining case, where the composition factors of $L(E_8){\downarrow_H}$ are
\[ 39,39^*,35,35^*,18,18^*,15_2,15_2^*,8^2,6,6^*,3,3^*,\]
the unique conspicuous set with no trivial factor. This is as stated in the proposition (and dealt with in Section \ref{sec:diffpsl35}).
\medskip

\noindent $\boldsymbol{q=3}$: There are fifteen conspicuous sets of composition factors for $L(E_8){\downarrow_H}$:
\[ 7,(3,3^*)^{27},1^{79},\quad 7^{27},3,3^*,1^{53},\quad 7^8,(6,6^*)^3,(3,3^*)^{21},1^{30},\]
\[ 15,15^*,7^9,(6,6^*)^7,(3,3^*)^8,1^{23},\quad 27^5,(15,15^*)^2,7^3,6,6^*,3,3^*,1^{14},\] \[(15,15^*)^5,7^8,6,6^*,(3,3^*)^3,1^{12},\quad 27,(15,15^*)^3,7^5,(6,6^*)^6,(3,3^*)^2,1^{12},\]
\[27^3,7^{18},(3,3^*)^5,1^{11},\quad 27^6,15,15^*,7^3,6,6^*,(3,3^*)^2,1^{11},\] \[(15,15^*)^3,7^{10},(6,6^*)^3,(3,3^*)^7,1^{10},\quad 27,(15,15^*)^4,7^8,6,6^*,(3,3^*)^4,1^9,\]
\[27^2,(15,15^*)^2,7^5,(6,6^*)^6,(3,3^*)^3,1^9,\quad 27^7,7^3,6,6^*,(3,3^*)^3,1^8,\]
\[ 27^3,(15,15^*)^3,7^3,(6,6^*)^4,1^8,\quad 27^3,15,15^*,7^5,(6,6^*)^6,(3,3^*)^4,1^6.\]
The pressures of the sets of composition factors are $-78$, $-26$, $-22$, $-12$, $-7$, $6$, $-1$, $7$, $-6$, $6$, $7$, $0$, $-5$, $1$ and $1$ respectively.

\medskip

\noindent \textbf{Cases 1, 2, 3, 4, 5, 7, 9, 12 and 13}:  In these cases $H$ stabilizes a line on $L(E_8)$ by Proposition \ref{prop:pressure}.

\medskip

\noindent \textbf{Cases 14 and 15}: If the pressure of $L(E_8){\downarrow_H}$ is $1$ then, upon quotienting out by the $\{7,15^\pm\}'$-radical, we obtain a submodule of (up to duality) either $P(7)$ or $P(15)$. There are five and four trivial composition factors of these modules respectively, so there must be a trivial module in the $\{7,15\}'$-radical. This means there is a trivial submodule, so $H$ stabilizes a line on $L(E_8)$ in these two cases as well.

\medskip

\noindent \textbf{Case 8}: Of course, as $27$ is the Steinberg module and hence projective, it splits off as a summand. The $\{1,3^\pm,7\}$-radical of $P(7)$ is
\[ 3,3^*/7,7/1,3,3^*/7,\]
so we need at least twice as many $7$s as $1$s in order for $H$ not to stabilize a line. In this case we do not have that many, so $H$ stabilizes a line on $L(E_8)$.

\medskip

\noindent \textbf{Cases 6, 10 and 11}: These are left over, and are stated in the proposition.

\medskip

\noindent $\boldsymbol{q=9}$: Up to field automorphism, there are 27 sets of composition factors for $L(E_8){\downarrow_H}$ that are conspicuous for elements of order at most $20$. Thirteen of these are of non-positive pressure (the simple modules for $H$ with non-zero $1$-cohomology have dimensions $7$ and $21$), so $H$ stabilizes a line on $L(E_8)$ by Proposition \ref{prop:pressure}.

First, we note that by a computer calculation, the eigenvalues on $L(E_8)$ of an element $x\in H$ of order $80$ determine the semisimple class of $\mb G$ containing $x$. If there exists an element $\hat x$ in $\mb G$ that squares to $x$ and shares the same eigenspaces of $L(E_8)$ as $x$, then we can use Lemma \ref{lem:allcasesstronglyimp} to conclude that $H$ is strongly imprimitive.

Fourteen of the 27 sets of factors have this property, but there is some overlap with the sets of non-positive pressure, and we are left with eight sets of factors where there is no such element $\hat x$, and $L(E_8){\downarrow_H}$ has positive pressure. These are
\[ 45_{12},45_{12}^*,27_2,(18_{12},18_{12}^*)^2,7_1,7_2^5,(3_1,3_1^*)^2,1^5,\] \[\bar{45}_{12},\bar{45}_{12}^*,27_2,(\bar{18}_{12},\bar{18}_{12}^*)^2,7_1,7_2^5,(3_1,3_1^*)^2,1^5,\] 
\[\begin{split}21_{21},21_{21}^*,18_{21},18_{21}^*,\bar{18}_{21},\bar{18}_{21}^*,15_1,15_1^*,&9_{12},9_{12}^*,\bar 9_{12},\bar 9_{12}^*,
\\ &7_1^3,7_2,6_1,6_1^*,(3_1,3_1^*)^2,(3_2,3_2^*)^2,1^4,\end{split}\]
\[\begin{split}21_{12},21_{12}^*,21_{21},21_{21}^*,18_{12},18_{12}^*,\bar{18}_{21},\bar{18}_{21}^*,9_{12},&9_{12}^*,\bar 9_{12},\bar 9_{12}^*,
\\ &7_1^2,7_2^2,(3_1,3_1^*)^2,(3_2,3_2^*)^2,1^4,\end{split}\]
\[ 21_{12},21_{12}^*,18_{12},18_{12}^*,(15_2,15_2^*)^3,(\bar 9_{12},\bar 9_{12}^*)^2,7_2^4,(3_1,3_1^*)^2,1^4,\]
\[ 49,42_{12},42_{12}^*,21_{12},21_{12}^*,15_1,15_1^*,7_1^2,7_2^2,(3_1,3_1^*)^2,1^3,\] \[ 49,45_{12},45_{12}^*,18_{12},18_{12}^*,18_{21},18_{21}^*,7_1,7_2^3,3_1,3_1^*,1^3,\]
\[49,\bar{45}_{12},\bar{45}_{12}^*,\bar{18}_{12},\bar{18}_{12}^*,\bar{18}_{21},\bar{18}_{21}^*,7_1,7_2^3,3_1,3_1^*,1^3.\]
(Recall the labelling conventions from Chapter \ref{chap:labelling}.)

The pressures of these modules are $1$, $1$, $2$, $4$, $2$, $3$, $1$ and $1$ respectively. Let $W$ denote the $\{7_i,21_{21}^\pm\}$-heart of $L(E_8){\downarrow_H}$.

\medskip

\noindent \textbf{Cases 1, 2, 7 and 8}: These all have pressure $1$, and by Lemma \ref{lem:oddandodd} they each contain a subquotient $7_1\oplus 7_2$, which has pressure $2$. Thus $H$ stabilizes a line on $L(E_8)$ by Proposition \ref{prop:pressure}.

\medskip

\noindent \textbf{Case 3}: Suppose that $H$ does not stabilize a line on $L(E_8)$. If $21_{21}^\pm$ lies in the socle of $W$, then we need to take the $\{1,3_i^\pm,6_1^\pm,7_i,9_{12}^\pm,\bar 9_{12}^\pm,15_1^\pm,18_{21}^\pm,\bar{18}_{21}^\pm\}$-radicals of $P(7_1)$ and $P(21_{21})$, and these are
\[ 7_1,7_1/1,3_2,3_2^*/7_1,\qquad 1/7_1,7_2/1,3_2,3_2^*/21_{21}.\]
The sum of these two must be a pyx for $W$, but only has two trivial composition factors, so $H$ must stabilize a line on $L(E_8)$, a contradiction. Hence $\soc(W)$ must consist solely of copies of $7_1$, so in fact only one as we only have three copies of $7_1$ in $L(E_8){\downarrow_H}$. Thus we take the $\{1,3_i^\pm,6_1^\pm,7_i,9_{12}^\pm,\bar 9_{12}^\pm,15_1^\pm,18_{21}^\pm,\bar{18}_{21}^\pm,21_{21}^\pm\}$-radical of $P(7_1)$, which is
\begin{equation} 1/7_1,7_1,7_2/1,1,3_2,3_2,3_2^*,3_2^*/7_1,7_1,21_{21},21_{21}^*/1,3_2,3_2^*/7_1.\label{eq:smallermoduleforCase3}\end{equation}

In fact, this is not quite true. This module has an extension with $21_{21}^\pm$, but when you place either of those modules on top and then take the $\{21_{21}^\pm\}'$-residual, one finds that both copies of $21_{21}^\pm$ remain in the residual. Since only one copy of $21_{21}^\pm$ appears in $L(E_8){\downarrow_H}$, this second copy at the top cannot appear in $W$.

Thus we may work in the smaller module in (\ref{eq:smallermoduleforCase3}). This module does not have enough trivial composition factors, so $L(E_8){\downarrow_H}$ has more trivial factors than $W$. Hence $H$ stabilizes a line on $L(E_8)$.

\medskip

\noindent \textbf{Case 5}: Assume that $H$ does not stabilize a line on $L(E_8)$. This case has composition factors a subset of the composition factors of the third case (twisted by the field automorphism), so the radicals above eliminate $7_2$, $21_{12}$ and $7_2\oplus 21_{12}$ from being socles of $W$, leaving only $7_2^{\oplus 2}$. In this case we take the $\{1,3_1^\pm,21_{12}^\pm\}$-radical (we don't need the other modules by the above radical) of $P(7_2)$ to obtain the module
\[ 1/21_{12},21_{12}^*/1,3_1,3_1^*/7_2.\]
Two copies of $7_2$ may be placed on top of this module, but they fall into the third socle layer, so the top trivial remains a quotient in $W$, a contradiction. Thus $H$ stabilizes a line on $L(E_8)$.

\medskip

The remaining two cases have pressures $3$ and $4$. The easiest way to proceed here is to find elements of order $160$ that stabilize the eigenspaces comprising certain composition factors of $L(E_8){\downarrow_H}$, as we have done before.

\medskip

\noindent \textbf{Case 4}: There are sixteen elements of order $160$ stabilizing the eigenspaces comprising $3_1\oplus 3_1^*\oplus 7_2$, and four that stabilize the eigenspaces comprising $3_2\oplus 3_2^*\oplus 7_1\oplus \bar 9_{12}\oplus \bar 9_{12}^*\oplus 21_{21}\oplus 21_{21}^*$. Furthermore, note that $18_{12}^\pm$ and $\bar{18}_{21}^\pm$ lie in different $\Out(H)$-orbits, so no element of $N_{\Aut^+(\mb G)}(H)$ can swap such subspaces. In particular, $N_{\Aut^+(\mb G)}(H)$ cannot induce a field automorphism on $L(E_8)$ so, for example, an $N_{\Aut^+(\mb G)}(H)$-orbit of a subspace of type $3_1$ can only contain types $3_1$ and $3_1^*$. In particular, this means that we may use Lemma \ref{lem:allcasesstronglyimp} to state that $H$ is strongly imprimitive if any of the modules $3_i^\pm$, $7_i$, $\bar 9_{12}^\pm$ and $21_{21}^\pm$ lie in the socle of $L(E_8){\downarrow_H}$.

First, $9_{12}^\pm$ has no extension with any composition factor in $L(E_8){\downarrow_H}$, so they must split off as summands. The $\cf(L(E_8){\downarrow_H})$-radicals of $P(18_{12})$, $P(\bar{18}_{21})$ and $P(21_{12})$ are
\[ 18_{12}/\bar 9_{12}^*,\bar{18}_{21}/18_{12},\quad \bar{18}_{21}/3_1^*/21_{12}^*/3_1^*,18_{12}/\bar{18}_{21},\]
\[ 7_2/1,3_1,3_1^*/21_{12},21_{12}^*/1/7_1,7_2,\bar{18}_{21}^*/1,3_1,3_1^*/21_{12}.\]
The sum of these modules only has three trivial factors and $L(E_8){\downarrow_H}$ has four, so the socle must contain a module other than one of these. Thus $H$ is strongly imprimitive by Lemma \ref{lem:allcasesstronglyimp}, as needed.

\medskip

\noindent \textbf{Case 6}: There are four elements of order $160$ stabilizing the eigenspaces comprising $1\oplus 3_1\oplus 3_1^*\oplus 7_1\oplus 7_2\oplus 21_{12}\oplus 21_{12}^*$, so if any of these modules lie in the socle of $L(E_8){\downarrow_H}$ then $H$ is strongly imprimitive by Lemma \ref{lem:allcasesstronglyimp}.

Thus the socle of $L(E_8){\downarrow_H}$ must consist of some of $49$, $42_{12}^\pm$ and $15_1^\pm$ if $H$ is not strongly imprimitive, which we will assume. If $49$ is a submodule of the socle then it splits off as a summand as it appears exactly once and is self-dual, so we may ignore this. The $\cf(L(E_8){\downarrow_H})\setminus\{15_1^\pm\}$-radical of $P(15_1)$ is
\[ 1,3_1,3_1^*/7_1,7_2/42_{12}^*,49/15_1,\]
and the $\cf(L(E_8){\downarrow_H})\setminus\{42_{12}^\pm\}$-radical of $P(42_{12})$ is
\[ 1/3_1^*,7_1,15_1^*/42_{12}.\]
These clearly have far too few composition factors, so $\soc(L(E_8){\downarrow_H})$ contains some other module, and we are done.

\medskip

\noindent $\boldsymbol{q=2}$: There are seven conspicuous sets of composition factors for $L(E_8){\downarrow_H}$, which are
\[ 8,(3,3^*)^{27},1^{78},\quad 8^{22},(3,3^*)^{6},1^{36},\quad 8^{25},(3,3^*)^{3},1^{30},\quad 8^{28},1^{24},\]
\[ 8^{8},(3,3^*)^{27},1^{22},\quad 8^{11},(3,3^*)^{24},1^{16},\quad 8^{14},(3,3^*)^{21},1^{10}.\]
The $8$s split off, and the projective cover $P(3)$ is
\[ 3/\left((3^*/3/3^*)\oplus 1\right)/3.\]
From this we can see that the only indecomposable modules that possess a trivial composition factor but no trivial submodule or quotient are $P(3)$ and $P(3^*)$; thus we need at least five times as many $3$-dimensional factors as trivial factors, else we stabilize a line on $L(E_8)$. Examining the sets above, this is not the case, so $H$ stabilizes a line on $L(E_8)$.

\medskip

\noindent $\boldsymbol{q=4}$: There are eight conspicuous sets of composition factors for $L(E_8){\downarrow_H}$ up to field automorphism:
\[ 9,9^*,8_1^{25},1^{30},\quad 64,(9,9^*)^7,8_1^5,1^{18},\quad (9,9^*)^8,8_1^8,8_2^3,1^{16},\]
\[ 64^2,(9,9^*)^5,8_1,8_2,1^{14},\quad 64,(9,9^*)^6,8_1^4,8_2^4,1^{12},\quad (9,9^*)^7,8_1^7,8_2^7,1^{10},\]
\[ 64^3,(9,9^*)^2,8_1,8_2,1^4,\quad 64^2,(9,9^*)^3,8_1^4,8_2^4,1^2.\]

We begin by letting $M$ denote a copy of $\Alt(6)$ inside $H$, restricting these to $M$, and then applying Proposition \ref{prop:compfactorsalt6}.
\begin{center}
\begin{tabular}{cc}
\hline Case & Restriction to $M$
\\ \hline $1$ & $8_1^{25},(4_1,4_2)^2,1^{32}$
\\ $2$ & $8_1^6,8_2,(4_1,4_2)^{19},1^{40}$
\\ $3$ & $8_1^8,8_2^3,(4_1,4_2)^{16},1^{32}$
\\ $4$ & $8_1^3,8_2^3,(4_1,4_2)^{20},1^{40}$
\\ $5$ & $8_1^5,8_2^5,(4_1,4_2)^{17},1^{32}$
\\ $6$ & $8_1^7,8_2^7,(4_1,4_2)^{14},1^{24}$
\\ $7$ & $8_1^4,8_2^4,(4_1,4_2)^{19},1^{32}$
\\ $8$ & $8_1^6,8_2^6,(4_1,4_2)^{16},1^{24}$
\\\hline
\end{tabular}
\end{center}
\noindent \textbf{Cases 2, 4, 5 and 6}: These all have restrictions to $M$ that do not exist by Proposition \ref{prop:compfactorsalt6}, so $H$ does not embed in $\mb G$ with these composition factors.

\medskip

\noindent\textbf{Case 1}: This has pressure $-26$, so $H$ stabilizes a line on $L(E_8)$ by Proposition \ref{prop:pressure}.

\medskip

\noindent \textbf{Case 8}: Note that the restriction to $M$ is not warranted (but might exist), so we show that $H$ does not exist. We may number the conjugacy classes $L_1,L_2,L_3$ of subgroups $\PSL_3(2)$ and $M_1,M_2,M_3$ of subgroups $\Alt(6)$ so that the permutation module $P_{L_i}$ on the cosets of $L_i$ is the direct sum of the permutation module $P_{M_i}$ on $M_i$ and a copy of $64$. Thus if we exclude the summands $64$ on $L(E_8){\downarrow_H}$ to make a module $W$, the fixed-point spaces of $L_i$ and $M_i$ on $W$ have the same dimension. Fixing $i$, and writing $L$ and $M$ for $L_i$ and $M_i$ respectively, since $M$ must be Lie primitive, this means that the fixed-point spaces satisfy $W^M=W^L=0$.

As the composition factors of $W{\downarrow_L}$ are $(3,3^*)^9,8^8,1^2$, we see that $W{\downarrow_L}$ is exactly
\[ P(3)\oplus P(3^*)\oplus 8^{\oplus 8}\oplus A,\]
where $A$ is a module with composition factors $(3,3^*)^4$. Since an element of order $4$ in $L$ acts on $L(E_8)$ with at least 56 blocks of size $4$, from Table \ref{t:unipe8p4} we see that it acts with Jordan blocks $4^{56},3^8$ or $4^{60},2^4$. This means that $A$ is either semisimple or the sum of four indecomposable modules of the form either $3/3^*$ or $3^*/3$. In particular, since $9$ restricts to $L$ as $3/3^*/3$, this means that $\soc(W)$ is a submodule of $9\oplus 9^*$ (ignoring copies of $8_i$). Indeed, we may take the $\{8_i\}'$-heart of $W$ and this still holds (but with a different number of copies of $8$), so we replace $W$ by its $\{8_i\}'$-heart.

From this we can almost determine the exact structure of $L(E_8){\downarrow_L}$: it is
\[ P(3)^{\oplus 3}\oplus P(3^*)^{\oplus 3}\oplus P(1)^{\oplus 2}\oplus 8^{\oplus 14}\oplus A,\]
where $A$ is the module above.

We attempt to find subgroups of $\mb G$ isomorphic to $L$ with this module structure, which has composition factors on $L(E_8)$ given by
\[ 8^{14},(3,3^*)^{21},1^{10}.\]
Since $L$ stabilizes a line on $L(E_8)$, $L$ is contained in a maximal-rank or parabolic subgroup by Lemma \ref{lem:maxrankorpara}.

Suppose that $L$ is contained in a subgroup $\mb X$ of $\mb G$; we may remove any factors $A_1$ or $B_2$ from $\mb X$ as $L$ does not embed in these, and if $L$ is contained in $A_4$ then it is contained in an $A_2$-parabolic subgroup of $A_4$. In fact, we saw in the case $q=2$ above that if $V$ is an indecomposable module for $L$ with a trivial composition factor, then either it has a trivial quotient or submodule or $V=P(3)$ of dimension $16$ (and $V$ is not self-dual).

If $\mb X$ is $A_8$, then we consider the action of $L$ on $M(A_8)$, and we obtain the following table.
\begin{center}
\begin{tabular}{cc}
\hline Factors on $M(A_8)$ & Factors on $L(E_8)$
\\ \hline $3,3,3$ & $8^{25},(3,3^*)^3,1^{30}$
\\ $3,3,3^*$ or $8,1$ & $8^{11},(3,3^*)^{24},1^{16}$
\\ $3,3,1^3$ or $3,3^*,1^3$ &  $8^8,(3,3^*)^{27},1^{22}$
\\ $3,1^6$ & $8,(3,3^*)^{27},1^{78}$
\\ \hline
\end{tabular}
\end{center}
Thus $L$ is not contained in $A_8$, or any parabolic subgroup whose Levi subgroup is contained in $A_8$, eliminating the $A_7$- and the $A_1A_2A_4$-parabolics, and also $A_4A_4$ and the $A_3A_4$-parabolic (as $L$ must then lie in an $A_3A_3$-parabolic) and the $A_1A_6$-parabolic (as $L$ must stabilize a line or hyperplane on $M(A_6)$, so lie in an $A_1A_5$-parabolic, whose Levi lies inside $A_8$).

If $\mb X=E_7A_1$ then $L\leq E_7$. We see from Proposition \ref{prop:sl3ine7} below, and the fact that the composition factors of $L(E_8){\downarrow_{E_7}}$ are $M(E_7)^2,L(E_7)^\circ,1^4$, that $L$ must have at least twelve trivial composition factors on $L(E_8)$, so $L$ cannot embed in $\mb X$. Thus $L$ does not embed in $E_7A_1$ or the $E_7$-parabolic subgroup, or the $E_6A_1$-parabolic (as then it would embed in an $E_6$-parabolic).

If $\mb X=E_6A_2$, then $\mb X$ has a summand of dimension $81$ on $L(E_8)$ (the tensor product of the minimal modules), but all summands of $L(E_8){\downarrow_L}$ are of even dimension, so $L\not\leq \mb X$.

Suppose that $\mb X$ is a $D_7$-parabolic subgroup: the composition factors of $L(E_8){\downarrow_{\mb X}}$ are two copies of $M(D_7)$, one copy each of the two (dual to one another) half-spin modules, two trivial modules, and a copy of the $90$-dimensional simple part $L(D_7)^\circ$ of the Lie algebra.

If $L$ lies inside a $D_6$-parabolic subgroup of $\mb X$ then $L$ lies inside an $E_7$-parabolic, which is a contradiction. The composition factors of $L$ on $M(D_7)$ with at least one trivial factor are $8,1^6$, $(3,3^*)^2,1^2$ and $3,3^*,1^8$. In all cases there is a subgroup $L$ in $D_7$ acting semisimply on $M(D_7)$, with these factors, hence lies inside a $D_6$-parabolic subgroup. Thus no copy of $L$ with a trivial factor on $M(D_7)$ can have the correct composition factors on $L(E_8)$. Therefore $L$ cannot have a trivial factor on $M(D_7)$, so the composition factors are $8,3,3^*$. (These do yield a copy of $L$ with the correct factors on $L(E_8)$.) The possible actions of $L$, up to graph automorphism, on $M(D_8)$ are
\[ 8\oplus 3\oplus 3^*,\qquad 8\oplus (3/3^*).\]
By placing $L$ inside a $D_4D_3$ subgroup, and noting that an irreducible copy of $L$ in $D_4$ acts irreducibly on $M(D_4)$ and the two half-spin modules (if it didn't then up to graph automorphism $L$ would stabilize a line on $M(D_4)$, hence lie in a parabolic, thus cannot act irreducibly on $M(D_4)$ up to graph automorphism as the graph map permutes the parabolics), we see that the action of $L$ on the half-spin modules for $D_7$ is the tensor product of $8$ with a module, so is projective. Using trace data, the composition factors can be determined, and the action on the half-spin modules is
\[ 8^{\oplus 8}\oplus P(3)^{\oplus 2}\oplus P(3^*)^{\oplus 2}.\]

Thus the remainder of $L(E_8){\downarrow_{\mb X}}$ must, upon restriction to $L$, form copies of $8$, together with $P(3)$, $P(3^*)$, $P(1)$, and the module $A$. We may construct $L(D_7)^\circ$ by taking the exterior square of $M(D_7)$ and removing a trivial summand, and the resulting module has restriction to $L$ given by
\[ P(3)\oplus P(3^*)\oplus 8^{\oplus 4}\oplus (3,3^*/1,3,3^*/3,3^*)\oplus B,\]
where $B$ is either $3\oplus 3^*$ or $3^*/3$, depending on the action of $L$ on $M(D_7)$ given above. However, this clearly yields a contradiction, as there is no possible extension of the above module by $3$s and trivials that can make the module $P(1)^{\oplus 2}\oplus A$, which does not have a $3$ above the second socle layer. Thus $L$ cannot embed in the $D_7$-parabolic subgroup.

If $L$ embeds in a $D_5A_2$-parabolic subgroup, then it embeds in a $D_4A_2$-parabolic, and this is contained in a $D_7$-parabolic subgroup. To see this, note that the action on $M(D_5)$ must have at least two trivial summands, hence $L$ lies in a $D_4$-parabolic subgroup. Thus $L$ cannot lie in any parabolic subgroup of $\mb G$.

Finally, if $\mb X=D_8$, then $L$ must be $\mb X$-irreducible, and so the action of $L$ on $M(D_8)$ is $8^{\oplus 2}$. The action on $L(D_8)^\circ$, which is the exterior square of $M(D_8)$ with a trivial removed from top and bottom, has two trivial submodules, so as $L(E_8){\downarrow_{\mb X}}$ has structure
\[ (L(0)/L(D_8)^\circ/L(0))\oplus L(\lambda_7),\]
we see that the fixed space $L(E_8)^L$ has dimension at least $3$, a contradiction.

This proves that there is no such embedding of $L$ into $\mb G$, and therefore $H$ cannot embed in $\mb G$, as needed.

\medskip

\noindent \textbf{Cases 3 and 7}: These are left over, and are as in the proposition.

\medskip

\noindent $\boldsymbol{q=8}$: Write $x$ for an element in $H$ of order $63$, and we will use Lemma \ref{lem:allcasesstronglyimp} to show strong imprimitivity in most cases. If there are no trivial composition factors of $L(E_8){\downarrow_H}$, then up to field automorphism there are six sets of composition factors that are conspicuous for elements of order at most $21$, all of which are also conspicuous for elements of order $63$. Write
\[ 27_1=3_1\otimes 3_2\otimes 3_3,\quad 27_2=3_1\otimes 3_2\otimes 3_3^*,\quad 27_3=3_1\otimes 3_2^*\otimes 3_3,\quad 27_4=3_1^*\otimes 3_2\otimes 3_3.\]
These cases are:
\begin{small}
\[ 24_{31},24_{31}^*,\bar 9_{12},\bar 9_{12}^*,(9_{13},9_{13}^*)^2,(\bar 9_{13},\bar 9_{13}^*)^2,9_{23},9_{23}^*,\bar 9_{23},\bar 9_{23}^*,8_1^3,8_3,(3_1,3_1^*)^4,(3_2,3_2^*)^2,3_3,3_3^*,\]
\[24_{13},24_{13}^*,24_{31},24_{31}^*,(9_{13},9_{13}^*)^2,(\bar 9_{13},\bar 9_{13}^*)^2,9_{32},9_{32}^*,8_1^2,8_3^2,(3_1,3_1^*)^3,3_2,3_2^*,3_3,3_3^*,\]
\[ 27_2,27_2^*,27_4,27_4^*,24_{23},24_{23}^*,(\bar 9_{13},\bar 9_{13}^*)^2,8_1,8_2,8_3^2,(3_1,3_1^*)^2,(3_2,3_2^*)^2,\]
\[ (27_4,27_4^*)^2,(9_{12},9_{12}^*)^2,(\bar 9_{13},\bar 9_{13}^*)^2,9_{32},9_{32}^*,8_2^3,8_3,(3_1,3_1^*)^2,3_2,3_2^*,\]
\[ 27_1,27_1^*,27_4,27_4^*,24_{13},24_{13}^*,\bar 9_{12},\bar 9_{12}^*,(9_{23},9_{23}^*)^2,8_1,8_2,8_3^2,3_1,3_1^*,\]
\[ 27_2,27_2^*,27_3,27_3^*,24_{13},24_{13}^*,9_{12},9_{12}^*,(9_{32},9_{32}^*)^2,8_1,8_2,8_3^2,3_1,3_1^*.\]
\end{small}
\noindent \textbf{Case 1}: Unlike the other five cases, in this case there is no composition factor that simultaneously splits off because it has no extensions with the other factors of $L(E_8){\downarrow_H}$, and also has elements of order $189$ cubing to $x$ that preserve the eigenspaces that comprise it. By Lemma \ref{lem:semilinearfield}, since the composition factors of $L(E_8){\downarrow_H}$ are not invariant under any field automorphisms in $\Out(H)$, the $N_{\Aut^+(\mb G)}(H)$-orbits of simple submodules can only include a module and its dual.

There are 27 elements of order $189$ that stabilize the eigenspaces that comprise $3_1\oplus 3_1^*$ and nine each that stabilize the eigenspaces that comprise $3_2\oplus 3_2^*$ and $3_3\oplus 3_3^*$. If there is a $3$-dimensional submodule in $L(E_8){\downarrow_H}$ then $H$ is strongly imprimitive by Lemma \ref{lem:allcasesstronglyimp}.

In addition there are $27$ such elements that stabilize the $8_1$ factor, $81$ that stabilize $8_3$, nine that stabilize $9_{13}\oplus 9_{13}^*$ and three that stabilize $\bar 9_{13}\oplus \bar 9_{13}^*$. Thus if $H$ is strongly imprimitive then  none of these modules can lie in the socle of $L(E_8){\downarrow_H}$. We therefore may assume for a contradiction that $\soc(L(E_8){\downarrow_H})$ consists solely of a subset of $\bar 9_{12}^\pm$, $9_{23}^\pm$, $\bar 9_{23}^\pm$ and $24_{31}^\pm$. The $\cf(L(E_8){\downarrow_H})\setminus\{V^\pm\}$-radicals of $P(V)$ are as follows for $V$ one of $\bar 9_{12}$, $9_{23}$, $\bar 9_{23}$ and $24_{31}$:
\[ 3_1^*/3_2,\bar 9_{13},9_{23},\bar 9_{23}^*,24_{31}/3_1^*,3_3^*,8_3/3_2,9_{13},9_{23}^*/\bar 9_{12},\]
\[ 24_{31}^*/3_1/3_2^*/3_1/8_3,\bar 9_{12}^*,\bar 9_{13}^*/9_{23},\]
\[ 3_3,\bar 9_{12}^*,9_{13}/3_1^*,3_2^*,8_1/3_3,9_{13}^*/\bar 9_{23},\]
\[ \bar 9_{13}/3_1^*,9_{23}^*/3_2,\bar 9_{13}/3_1^*,\bar 9_{12}/3_2/3_1^*/24_{31}.\]
There are three copies of $8_1$ in $L(E_8){\downarrow_H}$, and only one in the sum of modules above. Therefore $\soc(L(E_8){\downarrow_H})$ must consist of modules other than those above, and $H$ is strongly imprimitive.

\medskip

\noindent \textbf{Cases 2, 3 and 4}: In each case the $8_3$ must split off as it has no extensions with other composition factors. It is stabilized by nine, 27 and nine elements of order $189$ cubing to $x$ respectively. Hence $H$ is strongly imprimitive by Lemmas \ref{lem:semilinearfield} and \ref{lem:allcasesstronglyimp}.

\medskip

\noindent \textbf{Case 5}: The modules $8_1$ and $8_2$ split off as summands as they have no extensions with other composition factors in $L(E_8){\downarrow_H}$. There are 729 elements of order $189$ cubing to $x$ and stabilizing the eigenspaces of $8_1$, so we are done again.

\medskip

\noindent \textbf{Case 6}: This time there are no extensions between $3_1$ and the other composition factors, so this and its dual split off as summands. There are 81 elements of order 189 cubing to $x$ and stabilizing the eigenspaces comprising $3_1\oplus 3_1^*$, so we are done as before.

\medskip

We may therefore assume that $L(E_8){\downarrow_H}$ has a trivial composition factor. Up to field automorphism, there are eleven sets of composition factors for $L(E_8){\downarrow_H}$ that are conspicuous for elements of order at most $21$ and have positive pressure.
\[ (9_{13},9_{13}^*)^8,\bar 9_{23},\bar 9_{23}^*,8_1^8,8_3,1^{14},\quad 64_{13},(9_{13},9_{13}^*)^6,(\bar 9_{23},\bar 9_{23}^*)^2,8_1^4,1^8,\]
\[ 27_2,27_2^*,(\bar 9_{12},\bar 9_{12}^*)^3,(9_{13},9_{13}^*)^3,(9_{23},9_{23}^*)^3,8_1,8_2,8_3,1^8,\] \[(27_1,27_1^*)^2,(9_{12},9_{12}^*)^2,(9_{13},9_{13}^*)^2,(9_{23},9_{23}^*)^2,8_1,8_2,8_3,1^8,\]
\[ 64_{13},27_4,27_4^*,9_{12},9_{12}^*,(9_{13},9_{13}^*)^3,(\bar 9_{23},\bar 9_{23}^*)^2,8_1^2,1^6,\] \[(9_{12},9_{12}^*)^4,(\bar 9_{12},\bar 9_{12}^*)^3,8_1^3,8_2^5,(3_1,3_1^*)^4,(3_2,3_2^*)^4,3_3,3_3^*,1^4,\]
\[ 64_{13},27_3,27_3^*,9_{12},9_{12}^*,(\bar 9_{13},\bar 9_{13}^*)^3,(9_{23},9_{23}^*)^2,(3_1,3_1^*)^2,3_2,3_2^*,1^4,\] \[24_{13},24_{13}^*,(9_{13},9_{13}^*)^2,(\bar 9_{13},\bar 9_{13}^*)^3,8_1^3,8_3^3,(3_1,3_1^*)^5,3_2,3_2^*,(3_3,3_3^*)^4,1^2,\]
\[24_{13},24_{13}^*,24_{23},24_{23}^*,(9_{13},9_{13}^*)^2,(\bar 9_{13},\bar 9_{13}^*)^2,9_{23},9_{23}^*,8_1^5,8_3,3_1,3_1^*,3_2,3_2^*,1^2,\] \[24_{12},24_{12}^*,24_{21},24_{21}^*,(9_{12},9_{12}^*)^2,(\bar 9_{12},\bar 9_{12}^*)^2,9_{13},9_{13}^*,8_1^2,8_2^4,3_1,3_1^*,3_2,3_2^*,1^2,\]
\[ 27_1,27_1^*,27_4,27_4^*,24_{12},24_{12}^*,(9_{23},9_{23}^*)^2,8_1,8_2^2,8_3^3,3_1,3_1^*,1^2.\]
The fourth, sixth and ninth cases are not conspicuous for elements of order $63$, leaving eight cases to consider.

\medskip

\noindent \textbf{Cases 1, 2, 5 and 8}: There are elements of order $189$ that cube to $x$ and have the same number of distinct eigenvalues on $L(E_8)$, so we may apply Lemma \ref{lem:allcasesstronglyimp} to obtain the result.

\medskip

\noindent \textbf{Cases 3, 10 and 11}: In these cases there are elements of order $189$ with only two or four more distinct eigenvalues on $L(E_8)$ than $x$, so will stabilize many of the composition factors of $L(E_8){\downarrow_H}$.

In the third case, there are elements stabilizing the constituent eigenspaces of $V\oplus V^*$ for each $V$ a composition factor of $L(E_8){\downarrow_H}$ except for $27_2^\pm$, and furthermore, $8_2$ has no extensions with other composition factors of $L(E_8){\downarrow_H}$, so it splits off as a summand, and $H$ is strongly imprimitive. (In fact, there are nine elements stabilizing all eigenspaces of all factors simultaneously, except for $27_2^\pm$ of course, and $8_1$.)

In the tenth case, there are elements stabilizing the constituent eigenspaces of $V\oplus V^*$ for each $V$ a composition factor of $L(E_8){\downarrow_H}$ except for $24_{21}\oplus 24_{21}^*$. However, $24_{21}^\pm$ has no extensions with other composition factors, and so these must split off as summands, proving the result.

The eleventh case is very similar to the tenth: there are elements stabilizing the constituent eigenspaces of $V\oplus V^*$ for each $V$ a composition factor of $L(E_8){\downarrow_H}$ except for $27_1^\pm$. However, both $8_1$ and $8_2$ have no extensions with other composition factors, proving that $H$ is strongly imprimitive. (In fact, there are nine elements stabilizing all eigenspaces of all factors simultaneously, except for $27_1^\pm$ of course, and $8_1$.)

\medskip

\noindent \textbf{Case 7}: The $64_{13}$ splits off as it has no extensions with other composition factors in $L(E_8){\downarrow_H}$. We prove that $H$ stabilizes a line on $L(E_8)$, so assume this is not the case.

Since $\bar 9_{13}^\pm$ has zero $1$-cohomology, the pressure of this module is $2$. Let $W$ denote the $\{9_{12}^\pm,9_{23}^\pm\}$-heart of $L(E_8){\downarrow_H}$. Since $H$ does not stabilize a line on $L(E_8)$, the module $W$ has all four trivial composition factors in it. The socle of $W$ is at most two modules from $9_{12}^\pm$, $9_{23}^\pm$ and $9_{23}^\pm$.

Suppose that $9_{12}^\pm$ does not lie in the socle of $W$. The  $\cf(L(E_8){\downarrow_H})\setminus \{9_{23}^\pm\}$-radical of $P(9_{23})$ is
\[ 27_3/\bar 9_{13}/3_1^*/3_2/3_1^*,3_2^*,9_{12}/3_1,9_{12}^*,\bar 9_{13}/1,\bar 9_{13}^*,27_3/9_{23}.\]
We immediately see that $\soc(W)$ cannot have two factors, as then a pyx for $\rad(W)$ is a sum of two of these modules (up to graph automorphism). However, $W$ has four trivial factors, and the pyx only has two.

In fact, $\soc(W)$ cannot be $9_{23}$ either: one cannot place a copy of $9_{23}^*$ on top of the radical above, and can only place $9_{23}$ again. However, then the quotient of $W/\soc(W)$ by its $\{9_{23}\}'$-radical is again a submodule of the radical above, so we still cannot place $9_{23}^*$ in it. Thus $9_{23}^*$ does not lie in $W$ at all, a clear contradiction.

Thus $9_{12}$ lies in $\soc(W)$ (up to application of a graph automorphism). However, the $\cf(L(E_8){\downarrow_H})\setminus \{9_{12}^\pm\}$-radical of $P(9_{12})$ is
\[ 1/9_{23}/9_{23}^*,27_3/1,\bar 9_{13}/9_{12}.\]
But on here we cannot place a copy of $9_{12}^*$, so the $\{9_{12}^\pm\}$-heart of $W$ is $9_{12}\oplus 9_{12}^*$. In particular, this means that $9_{12}$ cannot lie in the socle of $W$, which is a final contradiction.

Thus $H$ stabilizes a line on $L(E_8)$, and in particular is strongly imprimitive by Lemma \ref{lem:fix1space}.

\medskip

\noindent $\boldsymbol{q=16}$: Up to field automorphism, there are ten sets of composition factors that are conspicuous for elements of order at most $17$, namely
\[ 9_{12},9_{12}^*,8_1^{25},1^{30},\quad (9_{12},9_{12}^*)^8,8_1^8,8_2^3,1^{16},\quad (\bar 9_{13},\bar 9_{13}^*)^8,9_{23},9_{23}^*,8_1^8,8_3,1^{14},\]
\[ (9_{14},9_{14}^*)^8,\bar 9_{24},\bar 9_{24}^*,8_1^8,8_4,1^{14},\quad 64_{14},(9_{14},9_{14}^*)^6,(\bar 9_{24},\bar 9_{24}^*)^2,8_1^4,1^8,\]
\[27_{123},27_{123}^*,(9_{12},9_{12}^*)^3,(\bar 9_{13},\bar 9_{13}^*)^3,(9_{23},9_{23}^*)^3,8_1,8_2,8_3,1^8,\]
\[ 64_{14},27_{124},27_{124}^*,9_{12},9_{12}^*,(9_{14},9_{14}^*)^3,(\bar 9_{24},\bar 9_{24}^*)^2,8_1^2,1^6,\] \[ 64_{12}^3,9_{12},9_{12}^*,9_{23},9_{23}^*,8_1,8_2,1^4,\quad  64_{13}^3,9_{12},9_{12}^*,9_{34},9_{34}^*,8_1,8_3,1^4,\] \[ 64_{24},27_{124},27_{124}^*,(9_{12},9_{12}^*)^2,9_{14},9_{14}^*,(\bar 9_{24},\bar 9_{24}^*)^3,9_{34},9_{34}^*,1^4.\]
(Here, $27_{a,b,c}$ is the module for $H$ that is the product of $3_a$, $3_b^\pm$ and $3_c^\pm$ (the others are faithful modules for $\SL_3(16)$), so $27_{123}$ is $3_1\otimes 3_2^*\otimes 3_3$ and so on.)

Let $x$ be an element of order $85$ in $H$. Using Lemma \ref{lem:stronglyimppsl316}, if we can find more than two elements of order $255$ cubing to $x$ in $\mb G$ and stabilizing the same subspaces of $L(E_8)$ as $x$, then $H$ is strongly imprimitive.

We can easily check that there are such elements for the first nine cases, leaving only the tenth. In this final case, there are at least eight elements of order $255$ stabilizing the eigenspaces associated to each of $9_{12}^\pm$, $9_{14}^\pm$, $\bar 9_{24}^\pm$ and $9_{34}^\pm$. Thus if $\soc(L(E_8){\downarrow_H})$ consists of modules other than $64_{24}$ and $27_{124}^\pm$, then we may apply Lemma \ref{lem:stronglyimppsl316} to obtain that $H$ is strongly imprimitive. Furthermore, the module $64_{24}$ splits off as a summand because it has no extensions with other composition factors, so the socle of $L(E_8){\downarrow_H}$ must be a sum of $64_{24}$ and (up to duality) $27_{124}$.

Thus we take the $\cf(L(E_8){\downarrow_H})\setminus\{27_{124}^\pm\}$-radical of $P(27_{124})$, and this is
\[\bar 9_{24}/1,9_{14}^*,\bar 9_{24}^*,9_{34}^*/9_{12}^*,\bar 9_{24}/27_{124}.\]
Since this obviously does not have enough trivial composition factors, we must have more modules in the socle, and so $H$ is indeed strongly imprimitive, as needed.
\end{proof}

When $q=3$, the three remaining conspicuous sets of composition factors are warranted. A warrant for the first set of factors is given by a diagonal $\PSL_3(3)$ in $E_6A_2$ acting irreducibly on both $M(A_2)$ and $M(E_6)$, a warrant for the second set of factors is given by a $\PSL_3(3)$ subgroup of $D_8$ acting with factors $7,3,3^*,1^3$ on $M(D_8)$, and a warrant for the third set of factors is given by a copy of $\PSL_3(3)$ inside $A_8$ acting on $M(A_8)$ as $3\oplus 6^*$.

\medskip

For $q=4$, both remaining sets of composition factors are warranted, lying in $A_8$. For the first, act on $M(A_8)$ as $1\oplus 8_1$. For the second, $H$ does not act on $M(A_8)$, but rather $\SL_3(4)$ acts faithfully and irreducibly, which yields a representation of $\PSL_3(4)$ in $\mb G$.

\section{\texorpdfstring{$\PSU_3$}{PSU 3}}
\label{sec:su3ine8}
The next proposition proves almost all that we know about $\PSU_3(q)$. We leave open cases for $q=3,4,8$, two cases for the first two and one for the third. The second open case for $\PSU_3(4)$, that with no trivial factors, is solved in Section \ref{sec:diffpsu34} below, but at the time of writing no solutions exist for the other four sets of composition factors given here. We continue with our definition of $u$ from the start of the chapter.

\begin{proposition}\label{prop:su3ine8} Let $H\cong \PSU_3(q)$ for some $3\leq q\leq 9$ or $q=16$.
\begin{enumerate}
\item If $q=3$ then either $H$ stabilizes a line on $L(E_8)$ or the composition factors of $L(E_8){\downarrow_H}$ are one of
\[(15,15^*)^4,7^8,(6,6^*)^3,(3,3^*)^5,1^6,\quad
27,(15,15^*)^4,7^6,(6,6^*)^2,(3,3^*)^5,1^5.\]
\item If $q=4$ then either $H$ stabilizes a line on $L(E_8)$ or the composition factors of $L(E_8){\downarrow_H}$ are one of
\[ 24_{12},24_{12}^*,(9_{12},9_{12}^*)^3,(\bar 9_{12},\bar 9_{12}^*)^2,8_1^3,8_2^3,(3_1,3_1^*)^5,(3_2,3_2^*)^5,1^2,\]
\[ 24_{12},24_{12}^*,24_{21},24_{21}^*,(9_{12},9_{12}^*)^2,(\bar 9_{12},\bar 9_{12}^*)^2,8_1^2,8_2^2,(3_1,3_1^*)^4,(3_2,3_2^*)^4.\]
\item If $q=5$ then $H$ stabilizes a line on $L(E_8)$.
\item If $q=7$ then $H$ either stabilizes a line on $L(E_8)$ or is a blueprint for $L(E_8)$.
\item If $q=8$ then either $H$ is strongly imprimitive or the composition factors of $L(E_8){\downarrow_H}$ are
\[ 27,27^*,(9_{12},9_{12}^*)^3,(9_{13},9_{13}^*)^3,(9_{23},9_{23}^*)^3,8_1,8_2,8_3,1^8.\]
\item If $q=9,16$ then $H$ is strongly imprimitive.
\end{enumerate}
\end{proposition}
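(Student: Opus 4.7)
The plan is to handle the values $q \in \{3,4,5,7,8,9,16\}$ in turn, following the strategy used for $\PSL_3(q)$ in Proposition \ref{prop:sl3ine8}. For each $q$ I would first enumerate the multisets of composition factors for $L(E_8){\downarrow_H}$ that are conspicuous for the eigenvalues of semisimple elements of small order in $H$ (using the trace data for semisimple classes of $\mb G$ on $L(E_8)$ together with the Brauer character of $H$), and then for each multiset show that either $H$ stabilises a line on $L(E_8)$, $H$ is a blueprint for $L(E_8)$, or $H$ is directly strongly imprimitive via an $N_{\Aut^+(\mb G)}(H)$-orbit of subspaces jointly stabilised by a positive-dimensional subgroup, except for the multisets explicitly surviving in the statement. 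The default tool throughout is the pressure dichotomy: non-positive pressure multisets are dispatched at once by Proposition \ref{prop:pressure} together with Lemma \ref{lem:fix1space}.

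For the two largest fields the expected arguments are short and uniform. For $q=16$ the relevant torus of $\PSU_3(16)$ has order $q^2-1=255$; an element of that order admits lifts to order $765$ in $\mb G$, and I would verify the hypothesis of Lemma \ref{lem:stronglyimppsl316} (three distinct lifts simultaneously stabilising the relevant $H$-invariant subspaces) for each conspicuous multiset, expecting no survivors. For $q=9$ the same scheme runs with elements of order $80$ in $H$ lifting to order $160$ in $\mb G$ via Lemma \ref{lem:allcasesstronglyimp}(2). For $q=7$ and $q=5$ I would first eliminate non-positive-pressure cases, then for the remaining multisets split off summands having no nontrivial extensions with the rest, identify the action of $u$ on the residual summand, and use Lemma \ref{lem:genericmeansblueprint} whenever $u$ lands in a generic unipotent class of $\mb G$; I expect every multiset to be killed for $q=5$, and every positive-pressure multiset to yield the blueprint conclusion for $q=7$.

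For $q=3$ the plan mirrors the analysis of $\PSL_3(3)$ in Proposition \ref{prop:sl3ine8}: enumerate the conspicuous sets, discard the negative-pressure ones, and for each remaining case attempt to build a pyx inside the relevant projective indecomposable, either deriving a contradiction by counting trivial composition factors or recognising the case as one of the two warranted survivors displayed in the statement. For $q=4$ the approach is to restrict to a suitable maximal almost-simple subgroup $L$ of $H$ (for instance the maximal $\Alt(5)$, whose composition factor behaviour on $L(E_8)$ is controlled by earlier work) and combine the resulting constraints with a semisimple-lift argument in the style of the $\PSU_4(4)$ analysis of Proposition \ref{prop:su4ine8}, using elements of order $65$ in $H$ cubing to lifts of order $195$ in $\mb G$ and appealing to Lemma \ref{lem:allcasesstronglyimp}(4); this should leave precisely the two survivors in the statement.

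The main obstacle will be $q=8$. Here Lemma \ref{lem:allcasesstronglyimp} does not treat $\PSU_3(8)$ directly, and the largest torus available in $H$ is the Coxeter torus of order $57$, which is well inside $T(\mb G)$, so Theorem \ref{thm:largeorderss} gives nothing by itself. The hope is to prove a bespoke variant of Lemma \ref{lem:allcasesstronglyimp} for $\PSU_3(8)$ via explicit order-$171$ or order-$285$ lifts of Coxeter-torus elements and then apply it together with a careful pyx computation on the many $9$-dimensional simple modules $9_{ij}^\pm$ to rule out every conspicuous multiset except the displayed one. This last computation, analogous to Case~$7$ of the proof of Proposition \ref{prop:sl3ine8} but complicated by the wider array of $9$-, $27$- and $8$-dimensional simple factors that can appear, will be the most delicate part of the argument.
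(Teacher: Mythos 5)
Your plan is sound in outline for $q=7,9,16$ (and these parts do match the paper's arguments: pressure plus semisimplicity and generic unipotent classes for $q=7$; order-$80$/$160$ and order-$255$/$765$ lifts with Lemma \ref{lem:allcasesstronglyimp} and Lemma \ref{lem:stronglyimppsl316} for $q=9,16$), but it has a concrete gap at $q=4$. The group $\PSU_3(4)$ contains no element of order $65$: its semisimple elements have orders dividing $13$, $15$ or $5$, so the proposed ``order $65$ cubing to order $195$'' lift argument cannot even be set up, and no item of Lemma \ref{lem:allcasesstronglyimp} covers $\PSU_3(4)$. The actual proof works instead with $L\cong \Alt(5)\times 5$, the centralizer of an element $z$ of order $5$, decomposing $L(E_8)$ into $z$-eigenspaces, analysing the permutation module $P_L$ to prove that $\dim V^L$ is bounded by the number of $8$-dimensional composition factors of any $kH$-module $V$ with $V^H=0$, and then, case by case (centralizers $A_7T_1$, $A_1A_6T_1$, $A_4A_4$, $D_6T_2$), either forcing a trivial submodule or exhibiting an explicit algebraic $A_1$ subgroup $\mb Y$ stabilising the same submodules as $L'$, which gives strong imprimitivity by Theorem \ref{thm:intersectionorbit}. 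Something of this nature, not a lift lemma, is what is needed to cut the $q=4$ list down to the two displayed survivors.

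There is a second gap at $q=3$, and the $q=8$ part is, as you concede, only a hope. For $q=3$ the five hardest conspicuous sets contain \emph{every} simple module of the principal block, so the $\cf$-radicals of the projectives are the whole projective modules and the pyx-counting strategy you propose gives no information; the paper instead takes $M\cong\PSL_3(2)$, uses the structure of the permutation module $P_M$ to show that $\dim L(E_8)^M$ is at most the multiplicity of $6$ as a composition factor of $L(E_8){\downarrow_H}$, and computes $\dim L(E_8)^M$ from the possible unipotent classes ($2A_2$, $2A_2+A_1$, $2A_2+2A_1$) of the order-$3$ element of $M$, eliminating three of the five cases and leaving the two survivors in the statement. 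For $q=8$ the paper does not use any bespoke lift lemma on Coxeter-torus elements (note also that in $\PSU_3(8)$ the relevant semisimple orders are $19$ and $21$, not $57$); it again works with the centralizer $3\times\PSL_2(8)$ of an order-$3$ element, the permutation module on its cosets, the same ``fixed points bounded by the number of $8$-dimensional factors'' principle, and case-by-case analysis inside $A_8$, $D_7T_1$ and $A_2E_6$ to dispose of all positive-pressure cases except the displayed one. Without arguments of this kind your proposal does not close the $q=3$, $q=4$ or $q=8$ cases.
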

\begin{proof} $\boldsymbol{q=7}$: The fourteen sets of composition factors for $L(E_8){\downarrow_H}$ that are conspicuous for elements of order up to $24$ are
\[
8,(3,3^*)^{27},1^{78},\quad
10,10^*,8^{25},1^{28},\quad
8^8,(6,6^*)^3,(3,3^*)^{21},1^{22},\]
\[15_2,15_2^*,8^9,(6,6^*)^7,(3,3^*)^8,1^{14},\quad 
27,(15_2,15_2^*)^3,(10,10^*)^2,8,(6,6^*)^6,1^{11},\]
\[ 35,35^*,27^6,8,1^8,\quad
37,(35,35^*)^2,27,(10,10^*)^2,1^4,\]
\[27,24,24^*,(15_2,15_2^*)^3,(10,10^*)^2,8^2,6,6^*,(3,3^*)^2,1^3,\quad
27^3,(10,10^*)^5,8^8,1^3,\]
\[117,(28,28^*)^2,8^2,1^3,\quad
(15_2,15_2^*)^3,10,10^*,8^8,(6,6^*)^3,(3,3^*)^6,1^2,\]
\[ 42,42^*,37,27,21,21^*,15_1,15_1^*,10,10^*,3,3^*,1^2,\quad
71,71^*,37,27,(10,10^*)^2,1^2,\]
\[42,42^*,35,35^*,24,24^*,15_2,15_2^*,8^2.\]
As the only simple modules with non-zero $1$-cohomology have dimension $36$ (see Table \ref{t:modules7}), we see that all but the last set of composition factors has negative pressure, so that $H$ stabilizes a line on $L(E_8)$. In the last case, there are no extensions between the factors, so that $L(E_8){\downarrow_H}$ is semisimple, and $u$ acts on $L(E_8)$ with blocks $6^4,5^{10},4^{16},3^{20},2^{20},1^{10}$. Hence $u$ lies in the generic class $2A_2+2A_1$ by \cite[Table 9]{lawther1995}, so $H$ is a blueprint for $L(E_8)$ by Lemma \ref{lem:genericmeansblueprint}.

\medskip

\noindent $\boldsymbol{q=5}$: The nine conspicuous sets of composition factors for $L(E_8){\downarrow_H}$ are
\[ 10,10^*,8^{25},1^{28},\quad 63,19^6,10,10^*,8^5,1^{11},\quad 35,35^*,19^6,8^7,1^8,\]
\[ 125,19^4,8^5,1^7,\quad 63,(35,35^*)^2,(10,10^*)^2,1^5,\quad 19^3,(10,10^*)^5,8^{11},1^3,\]
\[ 63^3,(10,10^*)^2,8^2,1^3,\quad (35,35^*)^3,10,10^*,8^2,1^2,\quad 63^2,35,35^*,10,10^*,8^4.\]
The pressures of these modules are $-28$, $2$, $4$, $1$, $-4$, $3$, $0$, $-2$ and $2$ respectively.

\medskip

\noindent \textbf{Cases 1, 4, 5, 7 and 8}: In the first, fifth, seventh and eighth cases $H$ stabilizes a line on $L(E_8)$. It also does in the fourth case since $H^1(H,19)$ is $2$-dimensional and $L(E_8){\downarrow_H}$ has pressure $1$, by Proposition \ref{prop:pressure}. (See Table \ref{t:modules5}.)

\medskip

\noindent \textbf{Case 3}: Suppose that $H$ does not stabilize a line on $L(E_8)$, and let $W$ denote the $\{19\}$-heart of $L(E_8){\downarrow_H}$. As $W$ has pressure $4$, there are at most two copies of $19$ in $\soc(W)$. The $\{1,8,19,35^\pm\}$-radical of $P(19)$ is
\begin{equation} 8/19/1,1,1,8,8,8,8/19,19,19,19,35,35^*/1,1,8,8,8/19,\label{eq:19radical}\end{equation}
and the sum of two of these must be a pyx for $W$. This has enough trivial factors, but we can also take the $\{19\}'$-residual of this module, and two copies of it still must be a pyx. This residual is
\[ 19/8/19,19,19,19,35,35^*/1,1,8,8,8/19,\]
and two copies of it is clearly no longer a pyx for $W$. Thus $H$ stabilizes a line on $L(E_8)$.

\medskip

\noindent \textbf{Case 2}: Suppose that $H$ does not stabilize a line on $L(E_8)$. The module $10$ only has an extension with $63$ from the composition factors of $L(E_8){\downarrow_H}$, so either $10$ or $10^*$ is a submodule of $L(E_8){\downarrow_H}$. Let $W$ denote the $\{10^\pm,63\}'$-radical of $L(E_8){\downarrow_H}$. This needs to contain at least six trivial composition factors by Proposition \ref{prop:bottomhalf}, and since $L(E_8){\downarrow_H}$ has pressure $2$, we cannot have a subquotient $1^{\oplus 3}$ or $19^{\oplus 2}$ by Proposition \ref{prop:pressure}. Since $W$ has at least six trivial factors and at most two in each socle layer, it must have trivial factors in at least three different socle layers.

By quotienting out by any $8$s, we may assume that $\soc(W)=19$, so a pyx for $W$ is the $\{1,8,19\}$-radical of $P(19)$. This is a submodule of the module in (\ref{eq:19radical}), and this does not have trivial modules in three distinct layers. Thus $H$ stabilizes a line on $L(E_8)$.

\medskip

\noindent \textbf{Case 6}: Suppose that $H$ does not stabilize a line on $L(E_8)$. The composition factors $10$ and $10^*$ have no extensions with $1,8,19$, so $L(E_8){\downarrow_H}$ splits into two summands. Since the pressure of $L(E_8){\downarrow_H}$ is $3$ and $H^1(H,19)$ is $2$-dimensional, we cannot have $19^{\oplus 2}$ as a subquotient of $L(E_8){\downarrow_H}$. Quotient out by the $\{8,10^\pm\}$-radical of $L(E_8){\downarrow_H}$ to leave a module $W$, whose socle must be copies of $19$, and hence simply $19$. The $\{1,8,19\}$-radical of $P(19)$ is
\[ 1,1,1,8,8/19,19,19,19/1,1,8,8,8/19,\]
and again this is a pyx for $W$. However, $W$ cannot have a trivial quotient, so those three trivial quotients can be removed from this and it is still a pyx for $W$. However, $W$ has three trivial factors and the pyx has two, which is a contradiction. Thus $H$ stabilizes a line on $L(E_8)$, as claimed.

\medskip

\noindent \textbf{Case 9}: We show that this does not exist. Let $L$ denote a copy of $\PSL_2(5)$ in $H$: the restrictions of $8$, $10^\pm$, $35^\pm$ and $63$ to $L$ are the sum of a projective module, and $3$, the zero module, the zero module, and $3$ respectively. Hence if $v$ denotes an element of order $5$ in $L$ then there are at least 46 blocks of size $5$ for the action of $v$ on $L(E_8)$, and as the only modules that can be made from $3$ are $3$ and $3/3$, the action of $v$ on $L(E_8)$ is $5^{46+i},3^{6-2i},1^i$ for some $0\leq i\leq 3$. There is no class acting with these blocks in \cite[Table 9]{lawther1995} (see also Table \ref{t:unipe8p5}), hence $H$ cannot embed in $\mb G$ with these factors.

\medskip

\noindent $\boldsymbol{q=3}$: There are fifteen conspicuous sets of composition factors for $L(E_8){\downarrow_H}$:
\[ 7,(3,3^*)^{27},1^{79},\quad 7^{27},3,3^*,1^{53},\quad 7^8,(6,6^*)^3,(3,3^*)^{21},1^{30},\]
\[ 15,15^*,7^9,(6,6^*)^7,(3,3^*)^8,1^{23},\quad
27,15,15^*,7^7,(6,6^*)^6,(3,3^*)^8,1^{22},\] \[27^2,15,15^*,7^{11},(6,6^*)^5,(3,3^*)^2,1^{15},\quad
27,(15,15^*)^3,7^5,(6,6^*)^6,(3,3^*)^2,1^{12},\]
\[27^3,7^{18},(3,3^*)^5,1^{11},
\quad 27^6,15,15^*,7^3,6,6^*,(3,3^*)^2,1^{11},\]
\[(15,15^*)^3,7^{10},(6,6^*)^3,(3,3^*)^7,1^{10},\quad
27,(15,15^*)^3,7^7,(6,6^*)^6,1^{10},\]
\[27^2,(15,15^*)^3,7^5,(6,6^*)^5,1^9,\quad
 27^3,(15,15^*)^3,7^3,(6,6^*)^4,1^8,\]
\[(15,15^*)^4,7^8,(6,6^*)^3,(3,3^*)^5,1^6,\quad
27,(15,15^*)^4,7^6,(6,6^*)^2,(3,3^*)^5,1^5.\]
The pressures of the above modules are $-78$, $-26$, $-16$, $0$, $-3$, $6$, $5$, $7$, $-6$, $6$, $9$, $6$, $3$, $8$ and $5$ respectively.

\medskip

\noindent \textbf{Cases 1, 2, 3, 4, 5 and 9}: For these cases $H$ stabilizes a line on $L(E_8)$ by Proposition \ref{prop:pressure}.

\medskip

The socle layers of $P(6)$ and $P(7)$ are
\[\begin{array}{c}
6
\\1
\\7
\\1
\\6\oplus6^*\oplus15^*
\\1\oplus3
\\6\oplus7
\\1\oplus15^*
\\6
\end{array}\qquad
\begin{array}{c}
7
\\1
\\6\oplus6^*
\\1\oplus3\oplus3^*
\\7\oplus7\oplus7\oplus15\oplus15^*
\\1\oplus1\oplus3\oplus3\oplus3^*\oplus3^*
\\6\oplus6^*\oplus7\oplus7\oplus15\oplus15^*
\\1\oplus3\oplus3^*
\\7
\end{array}
\]
If $W$ denotes the $\{6^\pm,7\}$-heart of $L(E_8){\downarrow_H}$, then $W$ is a submodule of copies of $P(6)$, $P(6^*)$ and $P(7)$. Therefore, trivial composition factors can occur on the second, fourth, sixth and eighth socle layers of $W$, with the eighth layer occupied if and only if there is a projective summand.

\medskip
\noindent \textbf{Cases 11, 12 and 13}: In these cases the module $3$ does not lie in $L(E_8){\downarrow_H}$. The $\{3^\pm\}'$-radicals of $P(6)$ and $P(7)$ are
\[ 1/6,6^*/1/6,7/1,15^*/6,\qquad 7/1/6,6^*/1/7.\]
From this it is easy to see that either $H$ stabilizes a line on $L(E_8)$ or we need at least twice as many copies of $6$, $6^*$ and $7$ as trivial factors in $W$, i.e., the pressure needs to be at least the number of trivial factors. This latter statement does not hold, so $H$ stabilizes a line on $L(E_8)$ in all three cases.

\medskip

\noindent \textbf{Case 8}: We have no $6$ or $15$, so we need the $\{1,3^\pm,7\}$-radical of $P(7)$, which is 
\[ 3,3^*/7,7/1,3,3^*/7.\]
Either $H$ stabilizes a line on $L(E_8)$ or we need at least twice as many $7$s as $1$s. This latter statement is not the case, so $H$ indeed stabilizes a line on $L(E_8)$.

\medskip

We are left with the sixth, seventh, tenth, fourteenth and fifteenth cases, all of which have all modules from the principal block of $H$ as composition factors, so all $\cf(L(E_8){\downarrow_H})$-radicals are the entire projective modules and are therefore not of much use. To help the reader, we repeat them:
\[  27^2,15,15^*,7^{11},(6,6^*)^5,(3,3^*)^2,1^{15},
\quad 27,(15,15^*)^3,7^5,(6,6^*)^6,(3,3^*)^2,1^{12},\]
\[ (15,15^*)^3,7^{10},(6,6^*)^3,(3,3^*)^7,1^{10},\quad(15,15^*)^4,7^8,(6,6^*)^3,(3,3^*)^5,1^6,\]
\[27,(15,15^*)^4,7^6,(6,6^*)^2,(3,3^*)^5,1^5.\]
In these cases we let $M$ denote a copy of $\PSL_3(2)$ in $H$. All simple modules for $H$ other than $1$ and $7$ restrict projectively to $M$, and these two restrict irreducibly. The projective covers have structures $1/7/1$ and $7/1/7$, so it is easy to determine the possible actions of $M$ on $L(E_8)$ given the unipotent class to which $v\in M$ of order $3$ belongs.

We also require the permutation module $P_M$ on the cosets of $M$, which has structure
\[ 1/6,6^*/((1/7/1)\oplus 1)/6,6^*/1.\]
By Frobenius reciprocity, the spaces $L(E_8)^M$ and $\Hom_{kH}(P_M,L(E_8))$ have the same dimension, hence we are interested in the quotients of $P_M$. Assuming that $H$ does not stabilize a line on $L(E_8)$, we have quotients
\[ 1/6^\pm,\quad 1/6,6^*,\quad 1/6,6^*/1/7,\quad 1/6,6^*/1/7/1/6^\pm,\quad P_M/\soc(P_M).\]
The space of homomorphisms from $P_M$ to these quotients has dimension $1$, $1$, $1$, $2$ and $3$ respectively.

We claim that the dimension of $L(E_8)^M$ is at most the multiplicity of $6$ as a composition factor of $L(E_8){\downarrow_H}$. To see this, first note that from the structure of $kM$-modules, we see that if we quotient out a $kM$-module by a trivial submodule, the fixed-point space of it always decreases by exactly $1$.

\medskip

Let $V$ be a self-dual $kH$-module such that $V^H=0$. Let $U_1\cong 1/6,6^*/1/7$, $U_2\cong 1/6,6^*/1/7/1/6$ and $U_3$ be the quotient $P_M/\soc(P_M)$. Note that $\Ext_{kH}^1(1,U_i)=0$ and $\Ext_{kH}^1(6^\pm,U_i)=0$ for all $i=1,2,3$, so if $U_i$ is a submodule of $V$, then every submodule $1$ or $6^\pm$ of the quotient $V/U_i$ comes from a $1$ or $6^\pm$ submodule of $V$.

Consider the quotient module $V/U_i$. We see that the dimension of $(V/U_i)^M$ is $\dim(V^M)-\dim(U_i^M)$, and the number of factors $6$ in $V/U_i$ is the number of those in $V$ minus the number in $U_i$. However, if the socle of $U_i$ contains $6^*$, then there is a corresponding quotient $6$ of $V$ that may be removed as well. Thus we obtain a module $V'/U_i$ such that the dimension of $(V'/U_i)^M$ is at most the number of copies of $6$ in $V'/U_i$.

We therefore quotient out by any copies of $U_i$ (and remove any necessary $6$s from the top of $V$) to obtain a new module $V_1$, which is no longer self-dual but we may still repeat the process. If $U_4\cong 1/6^\pm$ then we can quotient out by these, and again remove $6$s from the top for every $1/6^*$ in the socle, as again $\dim(\Ext_{kH}^1(1,U_4))=\dim(\Ext_{kH}^1(6^\pm,U_4))=0$. The last remaining case is $U_5\cong 1/6,6^*$, which of course now has $\dim(\Ext_{kH}^1(1,U_5))=1$, but removing a single copy of this from the socle of $V_1$ both reduces $\dim(V_1^M)$ by $1$ and removes a copy of $6$ from $V_1$, without having to worry about any new trivial submodules. (Indeed, if a new trivial submodule appears, then there was a submodule $(1/6)\oplus (1/6^*)$ in the first place, which would have been removed at the previous stage.)

The conclusion is that one can continually remove submodules until $\dim(V^M)=0$, and at each stage remove a single $6$ as well. Thus the number of $6$s must be at least $\dim(V^M)$, as claimed.

\medskip

We therefore have that if the dimension of $L(E_8)^M$ is greater than half of the number of $6$-dimensional factors (equivalently greater than the number of copies of $6$ that are factors) then $L(E_8)^H\neq 0$, as needed. For most choices of the set of composition factors and unipotent class of $v$, this is impossible, as we will show.

\medskip

\noindent \textbf{Case 6}: Excluding those projective summands that arise from composition factors of $L(E_8){\downarrow_H}$, the composition factors of $L(E_8){\downarrow_M}$ are $7^{11},1^{15}$. The projectives that arise from composition factors have dimension $156$, so yield blocks $3^{52}$. Each copy of $7$ yields another two blocks of size $3$, so there are at least $52+22=74$ blocks of size $3$ in the action of $v$ on $L(E_8)$. Thus $v$ lies in one of $2A_2$ (acts on $L(E_8)$ with blocks $3^{78},1^{14}$), $2A_2+A_1$ (acts with blocks $3^{79},2^2,1^7$) and $2A_2+2A_1$ (acts with blocks $3^{80},2^4$), and the dimension of the fixed-point space $L(E_8)^M$ in the three cases must be $11$, $9$ and $7$ respectively.

(For example, if $v$ lies in $2A_2+A_1$ then the composition factors $7^{11},1^{15}$ must form a module on which $v$ acts with blocks $3^{27},2^2,1^7$. The two blocks of size $2$ must arise from $(7/1)\oplus (1/7)$, and the seven blocks of size $1$ arise from seven simple summands. The rest must form five projective summands. Each projective $1/7/1$ or $7/1/7$ has a single trivial composition factor not in the socle, and $1/7$ gives another trivial not in the socle, so there are six not in the socle. Thus regardless of the exact projectives involved, the dimension of $L(E_8)^M$ is always $9$, as there are fifteen trivials in total. A similar argument works for the other two classes.)

Thus there are only five copies of $6$ in $L(E_8){\downarrow_H}$, but the dimension of $L(E_8)^M$ is at least $7$. Hence by the argument above, $H$ stabilizes a line on $L(E_8)$.

\medskip

\noindent \textbf{Case 7}: This is very similar; instead of $7^{11},1^{15}$ we have $7^5,1^{12}$. If $v$ lies in class $2A_2$ or $2A_2+A_1$ then $\Hom_{kH}(V,L(E_8){\downarrow_H})$ has dimension $11$ or $9$ respectively. Notice that $v$ cannot belong to class $2A_2+2A_1$ as the $2^4$ contribution to $v$ is $(1/7)^{\oplus 2}\oplus (7/1)^{\oplus 2}$, and then we must construct a projective module from $7,1^8$, which is not possible. There are six copies of $6$ in $L(E_8){\downarrow_H}$, so $H$ must stabilize a line on $L(E_8)$.

\medskip

\noindent \textbf{Case 10}: The proof is again very similar. We have $7^{10},1^{10}$, and the dimension of $\Hom_{kH}(V,L(E_8){\downarrow_H})$ is $8$, $6$ and $4$ if $v$ lies in class $2A_2$, $2A_2+A_1$ and $2A_2+2A_1$ respectively. This time there are three copies of $6$ in $L(E_8){\downarrow_H}$, and thus $H$ stabilizes a line on $L(E_8)$ again.

\medskip

\noindent\textbf{Cases 14 and 15}: These are left over, and are stated as in the proposition.

\medskip

\noindent $\boldsymbol{q=9}$: There are, up to field automorphism, 28 sets of composition factors for $L(E_8){\downarrow_H}$ that are conspicuous for elements of order at most $20$. Letting $x$ denote an element of order $80$ in $H$, note that the eigenvalues of $x$, $x^2$ and $x^4$ on $L(E_8)$ determine the semisimple class in $\mb G$, by a computer check.

We will apply Lemma \ref{lem:allcasesstronglyimp} to prove that some cases are strongly imprimitive: for thirteen of the 28 cases, we actually find elements of order $160$ that have the same number of distinct eigenvalues on $L(E_8)$ as $x$, so $H$ is certainly strongly imprimitive in these cases.

The remaining fifteen cases are as follows:
\[ 18_{21},18_{21}^*,(9_{12},9_{12}^*)^7,7_1^8,7_2,1^{23},\quad \bar{18}_{21},\bar{18}_{21}^*,(\bar 9_{12},\bar 9_{12}^*)^7,7_1^8,7_2,1^{23},\]
\[ 21_{21},21_{21}^*,(9_{12},9_{12}^*)^3,(\bar 9_{12},\bar 9_{12}^*)^3,7_1^2,7_2,(6_1,6_1^*)^3,(3_1,3_1^*)^3,(3_2,3_2^*)^2,1^{11},\]
\[ 49,(21_{12},21_{12}^*)^3,7_1,7_2^2,(6_1,6_1^*)^3,3_1,3_1^*,1^{10},\quad 49^3,7_1^6,7_2^6,3_1,3_1^*,3_2,3_2^*,1^5,\]
\[ 45_{12},45_{12}^*,27_2,(18_{12},18_{12}^*)^2,7_1,7_2^5,(3_1,3_1^*)^2,1^5,\] \[\bar{45}_{12},\bar{45}_{12}^*,27_2,(\bar{18}_{12},\bar{18}_{12}^*)^2,7_1,7_2^5,(3_1,3_1^*)^2,1^5,\]
\[\begin{split}21_{21},21_{21}^*,18_{21},18_{21}^*,\bar{18}_{21},\bar{18}_{21}^*,&15_1,15_1^*,9_{12},9_{12}^*,\bar 9_{12},\bar 9_{12}^*,
\\&7_1^3,7_2,6_1,6_1^*,(3_1,3_1^*)^2,(3_2,3_2^*)^2,1^4,\end{split}\]
\[ \begin{split}21_{12},21_{12}^*,21_{21},21_{21}^*,18_{12},18_{12}^*,\bar{18}_{21},\bar{18}_{21}^*,&9_{12},9_{12}^*,\bar 9_{12},\bar 9_{12}^*,\\&7_1^2,7_2^2,(3_1,3_1^*)^2,(3_2,3_2^*)^2,1^4,\end{split}\]
\[ \begin{split}21_{12},21_{12}^*,21_{21},21_{21}^*,18_{21},18_{21}^*,\bar{18}_{12},\bar{18}_{12}^*,&9_{12},9_{12}^*,\bar 9_{12},\bar 9_{12}^*,\\&7_1^2,7_2^2,(3_1,3_1^*)^2,(3_2,3_2^*)^2,1^4,\end{split}\]
\[ 21_{12},21_{12}^*,\bar{18}_{12},\bar{18}_{12}^*,(15_2,15_2^*)^3,(9_{12},9_{12}^*)^2,7_2^4,(3_1,3_1^*)^2,1^4,\]
\[ 45_{12},45_{12}^*,27_2,21_{12},21_{12}^*,(\bar 9_{12},\bar 9_{12}^*)^2,7_2,(6_2,6_2^*)^3,3_1,3_1^*,1^4,\]
\[ 49,42_{12},42_{12}^*,21_{12},21_{12}^*,15_1,15_1^*,7_1^2,7_2^2,(3_1,3_1^*)^2,1^3,\]
\[ 49,45_{12},45_{12}^*,18_{12},18_{12}^*,18_{21},18_{21}^*,7_1,7_2^3,3_1,3_1^*,1^3,\]
\[49,\bar{45}_{12},\bar{45}_{12}^*,\bar{18}_{12},\bar{18}_{12}^*,\bar{18}_{21},\bar{18}_{21}^*,7_1,7_2^3,3_1,3_1^*,1^3.\]

The pressures of these are $-14$, $-14$, $-6$, $-1$, $7$, $1$, $1$, $2$, $4$, $4$, $2$, $-1$, $3$, $1$ and $1$ respectively.
\medskip

\noindent \textbf{Cases 1, 2, 3, 4 and 12}: In these cases $H$ stabilizes a line on $L(E_8)$ by Proposition \ref{prop:pressure}.

\medskip

\noindent \textbf{Cases 6, 7, 14 and 15}: In these cases the pressure is $1$. In each case, $H$ stabilizes a line on $L(E_8)$ by Corollary \ref{cor:trivialoddmult}. 

\medskip

\noindent \textbf{Cases 5, 8, 9, 10, 11 and 13}: For the other cases, the quickest way to prove the result is to find all composition factors whose constituent eigenspaces for the action of $x$ are stabilized by elements of order $160$ that square to $x$. (Since semisimple elements in $E_8$ are real, if $M$ is stabilized so is $M^*$.) We obtain the following table.

\begin{center}
\begin{tabular}{cc}
\hline Case number & Modules stabilized by an element
\\ \hline $5$ & $1$, $3_1^\pm$, $3_2^\pm$, $7_1$, $7_2$
\\ $8$ & $1$, $3_1^\pm$, $3_2^\pm$, $6_1^\pm$, $7_1$, $7_2$, $15_1^\pm$
\\ $9$ & $1$, $3_1^\pm$, $3_2^\pm$, $7_1$, $7_2$
\\ $10$ & $1$, $3_1^\pm$, $3_2^\pm$, $7_1$, $7_2$, $9_{12}^\pm$, $\bar 9_{12}^\pm$, $21_{12}^\pm$, $21_{21}^\pm$
\\ $11$ & $1$, $3_1^\pm$, $7_2$, $9_{12}^\pm$, $15_2^\pm$
\\ $13$ & $1$, $3_1^\pm$, $7_1$, $7_2$, $21_{12}^\pm$
\\\hline\end{tabular}
\end{center}
In Case $5$, there are, in fact, sixteen elements of order $160$ that stabilize all of the eigenspaces comprising $3_1^\pm$, $3_2^\pm$, $7_1$ and $7_2$, so we even obtain $N_{\Aut^+(\mb G)}(H)$-stability if one of these is a submodule of $L(E_8){\downarrow_H}$. However, in Cases $9$ and $10$ this is not the case, so we will have to be more careful.

In the other cases, if any of the modules above lie in the socle of $L(E_8){\downarrow_H}$, then $H$ is strongly imprimitive by Lemma \ref{lem:allcasesstronglyimp}.

\medskip

\noindent \textbf{Case 5}: The socle of $L(E_8){\downarrow_H}$ must be $49$ if $H$ is not strongly imprimitive. We need the $\{1,3_i^\pm,7_i\}$-radical of $P(49)$, which is
\[ 1,3_1,3_1^*,3_2,3_2^*/7_1,7_2/49.\]
Clearly we cannot produce a module with socle $49$ and six copies of $7_1$ in it, so $H$ is strongly imprimitive in this case.

\medskip

\noindent \textbf{Case 8}: The only extensions that involve $9_{12}^\pm$ or $6_1^\pm$, and other composition factors of $L(E_8){\downarrow_H}$, are $\Ext_{kH}^1(6_1,9_{12})$ (and its cognates such as $6_1^*,9_{12}^*$). Hence the factors $9_{12}$ and $6_1$ break off as a separate summand. Thus, in particular, either $6_1$ or $6_1^*$ is a submodule of $L(E_8){\downarrow_H}$, and hence $H$ is strongly imprimitive.

\medskip

\noindent \textbf{Cases 9 and 10}: We first show that $H$ is Lie imprimitive (we cannot show strong imprimitivity directly because we cannot apply Lemma \ref{lem:allcasesstronglyimp} with an $N_{\Aut^+(\mb G)}(H)$-orbit of spaces), and then use the collection $\ms X$ to show that $H$ in fact stabilizes a line on $L(E_8)$.

In Case 9, the $9$-dimensional factors must split off as summands because they have no extensions with the other factors. For the remaining summand, the socle must consist of at most one copy each of $18_{12}^\pm$, $\bar{18}_{21}^\pm$, $21_{12}^\pm$ and $21_{21}^\pm$, else we may apply Lemma \ref{lem:allcasesstronglyimp} to obtain that $H$ is Lie imprimitive.

The $\cf(L(E_8){\downarrow_H})\setminus\{V^\pm\}$-radicals of $P(V)$, for $V$ each of $18_{12}$, $\bar{18}_{21}$, $21_{12}$ and $21_{21}$, are
\[3_2/21_{21}/3_2/18_{12}\quad 3_1^*/21_{12}^*/3_1^*/\bar{18}_{21},\]
\[ 1/7_1,7_2,\bar{18}_{21}^*/1,3_1,3_1^*/21_{12},\quad 1,7_1,7_2,18_{12}/1,3_2,3_2^*/21_{21}.\]
As $H$ acts on $L(E_8)$ with eight $3$-dimensional factors we need all four of these modules (up to duality) in the socle of $L(E_8){\downarrow_H}$. However, then the $21_{21}$ appearing in the third layer of the first module cannot appear in $L(E_8){\downarrow_H}$, and we consequently lose a $3$-dimensional factor. Since we now have only seven in total, there must be another module in the socle, and $H$ is Lie imprimitive.

\medskip

Case 10 is easier. There are no extensions between the elements of the set $\{9_{12}^\pm,\bar 9_{12}^\pm,18_{21}^\pm,\bar{18}_{12}^\pm\}$ and the other composition factors of $L(E_8){\downarrow_H}$, and therefore there is a summand of the module that has none of these as composition factors. The others are all in the table above, and therefore $H$ is at least Lie imprimitive by Lemma \ref{lem:allcasesstronglyimp}.

\medskip

We now prove that in fact $H$ always stabilizes a line on $L(E_8)$, and is therefore strongly imprimitive by Lemma \ref{lem:fix1space}. We will use the fact that $H$ already lies in a member of $\ms X$.

If $H$ lies in $E_6A_2$ then $H$ stabilizes a line on $L(E_8)$, as $E_6A_2$ does (as $p=3$). Similarly, if $H$ lies in an $E_7$-parabolic subgroup then $H$ stabilizes a line on $L(E_8)$. Since $H$ does not embed in $A_1$, if $H$ lies in $A_1E_7$ then $H$ lies in $E_7$ and again stabilizes a line on $L(E_8)$.

Suppose that $H$ embeds in $A_8$. Note that it is easy to write down all composition factors of $kH$-modules of dimension at most $9$, and therefore compute the factors of $H$ on $L(E_8)$, which splits as $L(\lambda_3)$, $L(\lambda_6)=L(\lambda_3)^*$ and $L(\lambda_1+\lambda_8)$. The dimensions of the factors of $H$ on $M(A_8)$ are one of: $9$; $7,1^2$; $6,3$; $6,1^3$; $3^3$; $3^2,1^3$; $3,1^6$. In each case there are multiple possible factors, but it is easy to compute their factors on $L(E_8)$ and none matches Cases 9 or 10. (For example, if $H$ acts irreducibly on $M(A_8)$ then we obtain Case 5, if it acts as $3_1\oplus 6_2$  then we obtain Case 7, and if it acts as $3_1\oplus 3_1^*\oplus 3_2$ then we obtain Case 3.)

If $H$ lies in $A_4A_4$ then it must act as $3_i^\pm\oplus 1^{\oplus 2}$ on each $M(A_4)$, which pushes $H$ into a subgroup of type $A_2A_2$ that has the same composition factors on $L(E_8)$ as one lying in $A_8$. In particular $H$ cannot embed in $A_4A_4$ either.

If $H$ embeds in $D_8$ then there are fewer possible actions on $M(D_8)$ since it is self-dual. The dimension sets are: $7^2,1^2$; $6^2,1^4$; $3^4,1^4$; $3^2,1^{10}$. If $1^4$ lies in $M(D_8){\downarrow_H}$ then there are at least six trivial factors on $\Lambda^2(M(D_8)){\downarrow_H}$, which is a summand of $L(E_8){\downarrow_H}$, and so we cannot have those cases. The actions $7_1^{\oplus 2}\oplus 1^{\oplus 2}$ and $7_1\oplus 7_2\oplus 1^{\oplus 2}$ have $27_1$ and $49$ respectively in their exterior squares, so in fact $H$ cannot embed in $D_8$ either.

This is enough to eliminate all parabolic subgroups: $E_7$ has already been considered; for $A_1E_6$, $H$ lies in an $E_6$-parabolic, hence an $E_7$-parabolic; $H$ cannot lie in a $D_7$-parabolic as $H$ has the same factors as a subgroup $D_8$; similarly, $H$ cannot lie in $A_3A_4$- or $A_1A_2A_4$-parabolics because $H$ would have the same factors as a subgroup of $A_4A_4$; $H$ cannot lie in an $A_7$- or $A_1A_6$-parabolic subgroup as $H$ would have the same composition factors as a subgroup of $A_8$; $H$ cannot lie in an $A_2D_5$-parabolic subgroup because $H$ would have the same composition factors as a subgroup of $A_3D_5$, which is contained in $D_8$.

Suppose next that $H$ is contained in the $F_4$ maximal subgroup $\bX$. The easiest method here appears to be to check which subsets of the composition factors form a $52$-dimensional module that is conspicuous for elements of order at most $8$ for $F_4$. The only sets have dimensions $18^2,7^2,1^2$, and there are several of these. Of course, this has pressure $0$, so $H$ must stabilize a line on the $52$-dimensional summand $L(F_4)$ of $L(E_8){\downarrow_\bX}$, as needed.

The only remaining subgroup is $G_2F_4$. In characteristic $3$, this has a structure 
\[((10,0000)/(10,0001),(01,0000)/(10,0000))\oplus (00,1000).\]
(See, for example, \cite[Table 10.1]{liebeckseitz2004}.) Suppose that $H$ acts as $7_1$ on $M(G_2)$. We can consider the allowed composition factors of $M(F_4){\downarrow_H}$ by seeing which, when tensored by $7_1$, appear on our list (up to field automorphism, since we have fixed $7_1$). Indeed, only $1$ and $3_2$ are allowed, and so $H$ cannot act as $7_1$ on $M(G_2)$. The only other possibility is that $H$ acts as $3_1\oplus 3_1^*\oplus 1$, in which case $H$ stabilizes a line on $L(E_8)$, as claimed.

\medskip

\noindent \textbf{Case 11}: The only extensions between any of $9_{12}$, $15_2$ and $\bar{18}_{12}^*$ and other composition factors of $L(E_8){\downarrow_H}$ are $\Ext_{kH}^1(9_{12},15_2)$ and $\Ext_{kH}^1(9_{12},\bar{18}_{12}^*)$, both of which are $1$-dimensional. Thus, as in Case 8, either $9_{12}$ or its dual is a submodule of $L(E_8){\downarrow_H}$, and $H$ is strongly imprimitive.

\medskip

\noindent \textbf{Case 13}: If $H$ is not strongly imprimitive then the socle of $L(E_8){\downarrow_H}$ consists solely of copies of $15_1^\pm$, $42_{12}^\pm$ and $49$. Note that $15_1$ has extensions with only $42_{12}^*$ and $49$ from the composition factors of $L(E_8){\downarrow_H}$, so if $15_1^\pm$ does lie in the socle we may quotient out by it without altering whether $L(E_8){\downarrow_H}$ only has allowed modules in the socle. Similarly, if $49$ lies in the socle then it is a summand, so in constructing a pyx for $L(E_8){\downarrow_H}$, we may assume that it is a submodule of $P(42_{12})$. We therefore require the $\cf(L(E_8){\downarrow_H})\setminus\{42_{12}^\pm\}$-radical of $P(42_{12})$, which is
\[ 1/3_1^*,7_1,15_1^*/42_{12}.\]
Clearly this cannot work, as it has no $7_2$ for example, and therefore $H$ is strongly imprimitive.

This completes the proof that $H$ is always strongly imprimitive, as needed.

\medskip

\noindent $\boldsymbol{q=4}$: There are 29 conspicuous sets of composition factors for $L(E_8){\downarrow_H}$, of which fourteen have positive pressure or no trivial composition factors, eight up to automorphism of $H$. Note that---with our labelling convention from Chapter \ref{chap:labelling}---the graph automorphism acts as an automorphism of order $4$,
\[ 9_{12}\to \bar 9_{12}^*\to 9_{12}^*\to \bar 9_{12}.\]
The eight sets of composition factors are
\[ (9_{12},9_{12}^*)^8,8_1,8_2^8,(3_1,3_1^*)^2,3_2,3_2^*,1^{14},\]
\[ 24_{21},24_{21}^*,(9_{12},9_{12}^*)^3,(\bar 9_{12},\bar 9_{12}^*)^3,8_1^2,8_2,(3_1,3_1^*)^6,(3_2,3_2^*)^4,1^8,\]
\[ 64,(9_{12},9_{12}^*)^6,8_2^4,(3_1,3_1^*)^4,(3_2,3_2^*)^2,1^8,\]
\[ 64,24_{21},24_{21}^*,(9_{12},9_{12}^*)^3,\bar 9_{12},\bar 9_{12}^*,8_2^2,(3_1,3_1^*)^4,(3_2,3_2^*)^3,1^6,\]
\[ 9_{12},9_{12}^*,(\bar 9_{12},\bar 9_{12}^*)^3,8_1^8,(3_1,3_1^*)^{12},(3_2,3_2^*)^6,1^4,\]
\[ 24_{12},24_{12}^*,24_{21},24_{21}^*,(9_{12},9_{12}^*)^2,(\bar 9_{12},\bar 9_{12}^*)^2,8_1^4,8_2^4,3_1,3_1^*,3_2,3_2^*,1^4,\]
\[ 24_{12},24_{12}^*,(9_{12},9_{12}^*)^3,(\bar 9_{12},\bar 9_{12}^*)^2,8_1^3,8_2^3,(3_1,3_1^*)^5,(3_2,3_2^*)^5,1^2,\]
\[ 24_{12},24_{12}^*,24_{21},24_{21}^*,(9_{12},9_{12}^*)^2,(\bar 9_{12},\bar 9_{12}^*)^2,8_1^2,8_2^2,(3_1,3_1^*)^4,(3_2,3_2^*)^4.\]
Let $W$ denote the $\{3_i^\pm,24_{i,j}^\pm\}'$-heart of $L(E_8){\downarrow_H}$, and $W_0$ the $\{3_i^\pm,8_i,24_{i,j}^\pm\}'$-heart of $L(E_8){\downarrow_H}$. Of course, since the only simple modules with non-zero $1$-cohomology are $9_{12}^\pm$ and $\bar 9_{12}^\pm$ (see \cite{sin1992}) these contain all trivial modules, and have a trivial submodule if and only if $L(E_8){\downarrow_H}$ does. Let $L\cong \Alt(5)\times 5$ be a subgroup of $H$, and write $z$ for a central element of order $5$ in $L$.

We assume in each case that $H$ does not stabilize a line on $L(E_8)$ and derive a contradiction, or the case is left over and stated in the proposition.

\medskip

\noindent \textbf{Case 1}: The pressure is $2$, but the $\{1,3_i^\pm,8_i,9_{12}^\pm\}$-radical of $P(9_{12})$ has only two trivial composition factors. Thus since $\soc(W_0)$ can have at most two $9$s, we cannot build a pyx for $W$. Hence $H$ stabilizes a line on $L(E_8)$ by Proposition \ref{prop:pressure}.

\medskip

\noindent \textbf{Case 3}: The $64$ splits off as it is projective, and $L(E_8){\downarrow_H}$ has pressure $4$. We compute the $\{9_{12}^\pm\}'$-residual of the $\{1,3_i^\pm,8_2,9_{12}^\pm\}$-radical of $P(9_{12})$. This is
\[ 9_{12}/8_2/9_{12}^*,9_{12}^*/3_1,8_2/3_2^*,9_{12}/1,3_1,3_1^*/3_2,9_{12},9_{12}^*/1,3_1^*,8_2/9_{12}.\]
As there are only two $1$s in this module, and eight in $W_0$, we must have four $9$s in the socle of $W_0$. In particular, all eight $1$s in the sum of four modules of the form above are required, so we take the $\{1\}'$-residual of this module, as all of that module is a submodule of $W_0$. This residual is
\[ 1/9_{12},9_{12}^*/1,8_2/9_{12}.\]
In particular, this means that we need eight $9$s in the third socle layer of $W_0$, but $W_0$ only has pressure $4$. Thus $H$ stabilizes a line on $L(E_8)$ by Proposition \ref{prop:pressure}.

\medskip

\noindent \textbf{Case 4}: Again, the $64$ splits off, and this time $W_0$ has pressure $2$ and six trivial factors. The $\{8_1,24_{12}^\pm\}'$-radicals of $P(9_{12})$ and $P(\bar 9_{12})$ have five and three trivial factors respectively, so $\soc(W_0)$ cannot be simple.

If $\bar 9_{12}$ (up to duality) appears in the socle of $W_0$, then it cannot appear in the heart of it. In this case, we replace the $\{8_1,24_{12}^\pm\}'$-radical of $P(9_{12})$ by the $\{1,3_i^\pm,8_2,9_{12}^\pm\}$-radical of $P(9_{12})$, which possesses only two trivial factors (see Case 3), another contradiction. Thus $\soc(W_0)$ is (up to duality) $9_{12}\oplus 9_{12}^\pm$.

Suppose first that the socle of $W_0$ is $9_{12}^{\oplus 2}$. Thus the top of $W_0$ is $(9_{12}^*)^{\oplus 2}$, so we may take the $\{9_{12}^*\}'$-residual of the previous radical and still produce a pyx for $W_0$ (from two copies of it). This residual only contains three trivial factors, so all six of the trivial factors from the pyx lie in $W_0$. Thus we take the $\{1\}'$-residual, as in Case 3, and find a guaranteed submodule of $W_0$. This is
\[ 1/1,9_{12}/3_1^*,\bar 9_{12},\bar 9_{12}^*/3_2,3_2^*/1,3_1^*/9_{12}.\]
Of course, the subquotient $9_{12}\oplus \bar 9_{12}\oplus \bar 9_{12}^*$ of this already has pressure $3$, so since $W_0$ has pressure $2$, $H$ must stabilize a line on $L(E_8)$ by Proposition \ref{prop:pressure}. Thus $\soc(W_0)$ must be $9_{12}\oplus 9_{12}^*$.

There are five trivial composition factors in the $\{8_1,24_{12}^\pm\}'$-radical $W'$ of $P(9_{12})$, in the second, fourth, fifth, sixth and eighth socle layers. We need to prove that three of these cannot lie in $W_0$ to obtain a contradiction.

Since there are two $9_{12}^\pm$ in both the socle and the top of $W_0$, that leaves only one more in the heart of it. Thus we replace our $\{8_1,24_{12}^\pm\}'$-radical of $P(9_{12})$ by the submodule obtained by adding all copies of $1$, $3_i^\pm$, $\bar 9_{12}^\pm$ and $24_{21}^\pm$ on top of $9_{12}$, then either $9_{12}$ or $9_{12}^*$ (but not both), then $1$, $3_i^\pm$, $\bar 9_{12}^\pm$ and $24_{21}^\pm$, then the other from $9_{12}$ and $9_{12}^*$, then $1$, $3_i^\pm$, $\bar 9_{12}^\pm$ and $24_{21}^\pm$. Doing so yields a module with four trivial factors, not five. Furthermore, in both cases there is a $1^{\oplus 3}$ subquotient. Since the pyx for $\rad(W_0)$ is the sum of this and its image under the graph automorphism, there is a subquotient $1^{\oplus 6}$ in this, so we need to remove another four trivials, so two from each summand. This means we are left with four trivial factors from the two summands combined, and that is too few for $W_0$. Thus $H$ stabilizes a line on $L(E_8)$.

\medskip

\noindent \textbf{Cases 2, 5 and 6}: These cases, and Case 8, which is delayed until Section \ref{sec:diffpsu34}, require the subgroup $L$, the centralizer of an element $z$ of order $5$ in $H$. We split $L(E_8)$ according to the eigenspaces of the action of $z$, as $L(E_8){\downarrow_L}$ splits into five summands, one for each eigenvalue. Each eigenspace is then a module for the subgroup $L'\cong \Alt(5)$, so we can use the simple $k\Alt(5)$-modules $1$, $2_1$, $2_2$ and $4$ to describe it. Obviously we have to make a choice as to which eigenvalue is labelled by the fifth root of unity $\zeta$, and we do so in such a way as to make the restrictions to $L'$ as in Table \ref{t:actioneigpsu34}.
\begin{table}
\begin{center}
\begin{tabular}{cccccc}
\hline Module & $1$-eig. & $\zeta$-eig. & $\zeta^2$-eig. & $\zeta^3$-eig. & $\zeta^4$-eig.
\\\hline $1$ & $1$ &&&&
\\ $3_1$ & & $1$ & $2_1$ &&
\\ $3_1^*$ & & & &$2_1$&$1$
\\ $3_2$ & & & $1$ && $2_2$
\\ $3_2^*$ && $2_2$ & & $1$ &
\\ $8_1$ & $1/2_2/1$ & $2_1$ &&& $2_1$
\\ $8_2$ & $1/2_1/1$ & & $2_2$ & $2_2$ &
\\ $9_{12}$ & $2_2$ & $4$ & & $1$ & $2_1$
\\ $9_{12}^*$ & $2_2$ & $2_1$ & $1$& & $4$
\\ $\bar 9_{12}$ & $2_1$ & & $2_2$ & $4$ & $1$
\\ $\bar 9_{12}^*$ & $2_1$ & $1$ & $4$ & $2_2$ &
\\ $24_{12}$ & $4$ & $1/2_1/1$ & $P(2_1)$ & $2_2$ & $2_2\oplus 4$
\\ $24_{12}^*$ & $4$ & $2_2\oplus 4$ & $2_2$ & $P(2_1)$ & $1/2_1/1$
\\ $24_{21}$ & $4$ & $2_1$ & $1/2_2/1$ & $2_1\oplus 4$ & $P(2_2)$
\\ $24_{21}^*$ & $4$ & $P(2_2)$ & $2_1\oplus 4$ & $1/2_2/1$ & $2_1$
\\ \hline
\end{tabular}
\end{center}
\caption{Actions of $L'\cong \Alt(5)$ on each $\zeta^i$-eigenspace of the action of $z$}
\label{t:actioneigpsu34}
\end{table}

The permutation module $P_L$ on the cosets of $L$ is of the form
\[ \begin{split}(1,8_1,8_2/9_{12},9_{12}^*,\bar 9_{12}&,\bar 9_{12}^*/3_1,3_1^*,3_2,3_2^*/3_1,3_1^*,3_2,3_2^*
\\ &/1,1,3_1,3_1^*,3_2,3_2^*/9_{12},9_{12}^*,\bar 9_{12},\bar 9_{12}^*/1,8_1,8_2)\oplus 64,\end{split}\]
but (ignoring the $64$) the only quotient of this not involving either $8_i$ at the top is simply $1$ itself. The largest quotient not involving $8_2$ is
\[ 1,8_1/\bar 9_{12},\bar 9_{12}^*/3_2,3_2^*/1,3_1,3_1^*,\]
so the image in $W$ is a quotient of $1,8_1/\bar 9_{12},\bar 9_{12}^*$ and one may check that all such quotients have $L$-fixed points of dimension $1$.

In fact, this is generally true. One may construct \emph{all} 34 (non-zero) quotients of $P_L$ with only $9$-dimensional factors in the socle, and find that they all have $1$-dimensional $L$-fixed points. To compute all such quotients, first quotient out by the $1$, $8_1$ and $8_2$. Then choose one of the $9$-dimensional modules to quotient out by, and then also quotient out by any other simple submodules not of dimension $9$. Repeatedly do this, and at each stage it turns out that one never has two copies of the same $9$-dimensional module in the socle of the module. Thus there is an explicit finite list of such quotients (even over the algebraically closed field), and they can all be restricted to $L$. It turns out that each of these has $L$-fixed points of dimension $1$. Furthermore, every such quotient has either two or six socle layers, and if it has six then the top is always $1\oplus 8_1\oplus 8_2$, and all $8_i$ lie in the fifth or sixth socle layer.

\medskip

From this we can prove that, if $V$ is a module with $V^H=0$, then there must be at least as many $8$-dimensional composition factors in $V$ as the dimension $n$ of $V^L$, using induction on $n$ and noting that the case $n=1$ is above. If $V$ has an $8$-dimensional submodule then quotient it out: both the dimension of $V^L$ and the number of $8_i$ in $V$ decrease by $1$, so we may assume that $V$ has no $1$- or $8$-dimensional submodules. We can also remove any $3$-dimensional submodules without affecting either quantity, so we assume that the socle of $V$ has factors all of dimension $9$.

We may replace $V$ by the $kH$-submodule generated by $V^L$ without altering the truth of the statement. Thus $V$ is a (not necessarily direct) sum of $V_1,\dots,V_m$, where each $V_i$ is an image of $P_L$. Thus the top of $V$ is a sum of trivial and $8$-dimensional simple modules. Let $V'$ denote the $\{8_i\}$-residual of $V$. Since $V'$ has no trivial submodules, or $8_i$ submodules or quotients, it cannot have any $L$-fixed points. Thus $V^L$ has dimension at most that of the $L$-fixed points $(V/V')^L$ of the quotient $V/V'$. This dimension is equal to the number of $8$-dimensional factors in $V/V'$, proving the claim.

We will use this fact, that if $V^H=0$ then $\dim(V^L)$ is bounded above by the number of $8$-dimensional composition factors, in the remaining cases.

\medskip

\noindent \textbf{Case 2}: We assume that $L(E_8)^H=0$ and work towards a contradiction. We show that $L(E_8)^L$ has dimension at least $3$. The central element $z$ of $L$ has trace $3$ on $L(E_8)$, so from \cite[Table 1.16]{frey1998a} we see that the centralizer of $z$ is $A_1A_6T_1$, and this is a Levi subgroup of $\mb G$. Since $A_1A_6\leq A_8$, we can understand the action of $L'$ on $L(E_8)$ via $A_8$, and we see that $L'$ must act with factors $2_1^2,2_2,1^3$ on $M(A_8)$ in order to have the correct factors on $L(E_8)$. Including the action of $\gen z$, and noting that $\gen z$ acts trivially on the summand $L(A_6)\oplus (L(0)/L(A_1)^\circ/L(0))$ of $L(E_8){\downarrow_{A_1A_6}}$, we obtain that $L'$ acts as $2_1$ on $M(A_1)$ and with factors $2_1,2_2,1^3$ on $M(A_6)$. Since the $1$-eigenspace of the action of $z$ on $L(E_8)$ has the structure above, and we want to compute the fixed-point space $L(E_8)^L$, we need to understand the action of $L$ on $M(A_6)$, not just the factors.

From Table \ref{t:actioneigpsu34}, we see that for the principal block, excluding copies of the projective $4$, $L$ acts the same on both $L(E_8)$ and $W$ (but not $W_0$). We claim that $L$ must centralize at least a $3$-space on $L(E_8)$, hence on $W$. To see this, note that $L$ centralizes a $1$-space on $L(0)/L(A_1)^\circ/L(0)$, on which it acts as $1/2_2/1$. As $L(A_6)\oplus L(0)=M(A_6)\otimes M(A_6)^*$, the fixed-point space of $L$ on $M(A_6)\otimes M(A_6)^*$ is the same as that on $L(E_8)$, so we count that quantity.

Also, $(M(A_6)\otimes M(A_6)^*)^L\cong \Hom_{kL}(M(A_6),M(A_6))$, and this is easier to understand. The composition factors of $M(A_6){\downarrow_L}$ are $2_1,2_2,1^3$, so it has pressure $-1$, hence $L$ stabilizes a line and a hyperplane on $M(A_6)$. If all trivial composition factors lie in the socle or top then we have at least two different homomorphisms from the top to the socle, plus the identity, so we have a $3$-space of endomorphisms. Thus there is a subquotient $2_1/1/2_2$ (up to duality). In this case we can either have two trivial summands, so easily enough $kL$-homomorphisms, or we place a trivial on the $2_1/1/2_2$, to make $1/2_1/1/2_2$, but no trivial may be added to the top or socle of this, so we have a trivial summand and again a $3$-space of homomorphisms, as needed.

\medskip

We now claim that $8_1^{\oplus 2}\oplus 8_2$ is a subquotient of $W$. To see this, notice that all of them must lie in the $kH$-submodule generated by $W^L$, that this is a sum of quotients of $P_L$, and all $8_i$ must lie in the top of this submodule. This proves the result.

We see therefore that $W$ is the sum of $W_0$ and some $8$-dimensional simple summands.

We now compute a specific submodule of both $P(9_{12})$ and $P(\bar 9_{12})$. In each case, we take the sum of two submodules: the first is simply the $\{1,3_i^\pm,9_{12}^\pm,\bar 9_{12}^\pm,24_{21}^\pm\}$-radical of the projective; the second is the $\{1,3_i^\pm,9_{12}^\pm,\bar 9_{12}^\pm\}$-radical of the projective, on which we place as many copies of $8_1$ and $8_2$ as we can. On top of this sum, we place as many copies of $1$, $3_i^\pm$, $9_{12}^\pm$ and $\bar 9_{12}^\pm$ as we can, and then take the $\{9_{12}^\pm,\bar 9_{12}^\pm\}'$-residual of the resulting module. A sum of these must form a pyx for $W_0$. The two modules have dimension $211$ and in both cases all trivial and $8$-dimensional composition factors lie in either the second socle or the second top. Since there are eight trivial composition factors in $W_0$ and $W_0$ has pressure $4$, we see therefore that there must be exactly four $9$-dimensional factors in the socle (and four in the top).

In particular, this means that $8_2$, which occurs exactly once in $W$, is either a summand of $W$ or a summand of the heart of $W_0$. But now we use the fact from earlier, that any quotient of $P_L$ with $9$-dimensional factors in the socle has either two or six socle layers, and if it has six then $8_2$ lies in the fifth or sixth layer. Since this does not happen in our case, we conclude that all $8_i$ must in fact lie in the second socle \emph{and} the second radical layer of $W_0$. Thus one may construct a pyx for $W_0$ as a sum of submodules of $P(9_{12}^\pm)$ and $P(\bar 9_{12}^\pm)$, formed by taking the $\{1,3_i^\pm,9_{12}^\pm,\bar 9_{12}^\pm,24_{21}^\pm\}$-radical of $P(9_{12}^\pm)$ and $P(\bar 9_{12}^\pm)$, then add on the $8_j$ in the second layer, then add on any $9$-dimensional modules on top of this, then take the $\{9_{12}^\pm,\bar 9_{12}^\pm\}'$-residual of this module.

The two modules have the form
\[ 9_{12}/3_1^*/3_2/3_1^*/9_{12},24_{21}^*/3_1^*,\bar 9_{12},\bar 9_{12}^*/3_2,3_2^*,9_{12},9_{12}^*/1,3_1^*,8_2/9_{12},\]
\[ \bar 9_{12}/3_2/1,3_1^*,\bar 9_{12}/3_2,9_{12},9_{12}^*,24_{21}^*/3_1,3_1^*,\bar 9_{12},\bar 9_{12}^*/1,3_2,8_1/\bar 9_{12}.\]

We see that we cannot obtain a pyx for $W_0$ as no sum of at most four of these, at most three of which have socle $\bar 9_{12}^\pm$, can have eight trivial factors. This contradiction means that $H$ does indeed stabilize a line on $L(E_8)$, as needed.

\medskip

\noindent \textbf{Case 5}: We proceed as in Case 2, so again assume that $L(E_8)^H=0$, and again the trace is $3$, so $L\leq A_1A_6T_1$. This time the composition factors must be $2_1$ for $M(A_1)$ and $2_1^3,1$ for $M(A_6)$. There are only two modules for $M(A_6)$ with those factors, up to duality: semisimple and $(1/2_1)\oplus 2_1^{\oplus 2}$.

In the first case, the $1$-eigenspace of the action of $z$ on $L(E_8){\downarrow_L}$ is
\[ (1/2_2/1)^{\oplus 10}\oplus 2_1^{\oplus 6}.\]
Thus $L(E_8)^L$ has dimension $10$, and since there are only eight $8$-dimensional composition factors, we deduce that $L(E_8)^H\neq 0$, as needed.

Thus we are in the second case, where $2_1/1$ is a submodule of $M(A_6){\downarrow_L}$, and the $1$-eigenspace of $z$ on $L(E_8)$ is now
\[ (1/2_2/1)^{\oplus 5}\oplus(2_1/1/2_2/1)^{\oplus 2}\oplus (1/2_2/1/2_1)^{\oplus 2}\oplus (2_1/1/2_2/1/2_1).\]
We may also compute the other eigenspaces, at least up to duality: the module $(0,\lambda_3)$ restricts to $L'$ as
\[ 1\oplus 2_1^{\oplus 2}\oplus (2_1/1)^{\oplus 2}\oplus (2_1/1/2_2/1)^{\oplus 2}\oplus (2_1/1,2_1,2_2/1)\oplus 4,\]
the module $(1,\lambda_2)$ restricts to $L'$ as
\[ 2_1^{\oplus 4}\oplus 4^{\oplus 3}\oplus (2_1/1/2_2/1)\oplus (2_1/1/2_1,2_2/1)^{\oplus 2},\]
and $(1,\lambda_1)$ and $(0,\lambda_1)$ restrict to $L'$ as
\[ (1/2_2/1)^{\oplus 2}\oplus (1/2_2/1/2_1)\quad\text{and}\quad  2_1^{\oplus 2}\oplus (2_1/1).\]
From this we see that $2_2$ does not appear as a submodule or quotient in any eigenspace of the action of $x$, and hence none of $9_{12}^\pm$, $\bar 9_{12}^\pm$ and $3_2^\pm$ lies in the socle of $L(E_8){\downarrow_H}$. Since we have no trivial submodules by assumption, the socle consists of copies of $8_1$ and $3_1^\pm$.

From this we may determine the possible quotients of $P_L$ that can appear in $L(E_8){\downarrow_H}$. The module
\[ 1,8_1/\bar 9_{12},\bar 9_{12}^*/3_2,3_2^*/3_1,3_1^*\]
has a copy of $2_2$ appearing in the $\zeta$- and $\zeta^{-1}$-eigenspaces of $z$, so this cannot occur. Removing $3_1$ or $3_1^*$ from the socle, then any modules that may not appear in $\soc(L(E_8){\downarrow_H})$ yields modules
\[ 1,8_1/\bar 9_{12}/3_2/3_1,\quad 1,8_1/\bar 9_{12}^*/3_2^*/3_1^*,\]
and these have a copy of $2_2$ in the $\zeta^{-1}$- and $\zeta$-eigenspace respectively. Thus only $1$ and $8_1$ may be quotients of $P_L$ lying in $L(E_8){\downarrow_H}$. We see from the $1$-eigenspace that there are at most five $8_1$ summands, but unless there is a trivial submodule, exactly seven $8_1$ submodules, and therefore seven $8_1$ quotients. This requires nine copies of $8_1$, which is too many, and therefore $H$ stabilizes a line in this case.

\medskip

\noindent \textbf{Case 6}: Now we have that the trace of $z$ is $23$, and so $L\leq D_6T_2$. Again, we assume that $H$ does not stabilize a line on $L(E_8)$. The actions of $L'$ on the eigenspaces of $z$ are given below.

\begin{center}
\begin{tabular}{cc}
\hline Eigenspace & Action
\\ \hline $1$ & $4^4,2_1^8,2_2^8,1^{20}$.
\\ $\zeta$ & $4^3,2_1^9,2_2^4,1^7$
\\ $\zeta^2$ & $4^3,2_1^4,2_2^9,1^7$
\\ \hline
\end{tabular}
\end{center}
The $1$-eigenspace comprises $L(D_6)^\circ$ and four trivial factors, and the only actions of $L'$ on $M(D_6)$ with those factors are $2_1^2,2_2^2,4$, and  $2_1^4,2_2,1^2$ up to field automorphism. The former case is consistent with the other eigenspaces, which are the sum of a trivial, a copy of $M(D_6)$ and a half-spin module, whereas the trace of an element of order $5$ in $L'$ is inconsistent with the latter possibilities. Thus $L'$ acts as $4\oplus 2_1^{\oplus 2}\oplus 2_2^{\oplus 2}$ on $M(D_6)$.

We can compute the action of $D_6$ on $L(E_8)$, either by a direct calculation in Magma using $L(E_8)$ and the subsystem subgroup, or by using the unipotent class $D_6$ in \cite[Table 9]{lawther1995}, which acts on $M(D_6)$ with blocks $10,2$ and on both half-spins with blocks $14,10,6,2$. A computer calculation shows that it acts on $L(D_6)^\circ$ with blocks $16,14,10^2,8,6$, and so comparing these with the action on $L(E_8)$, we see that we must have a summand $L(0),L(0)/L(D_6)^\circ/L(0),L(0)$.

The action of $L'$ on $\Lambda^2(M(D_6))$ is
\[ P(2_1)^{\oplus 2}\oplus P(2_2)^{\oplus 2}\oplus 4^{\oplus 4}\oplus (1/2_1,2_2/1)\oplus (1/2_1/1)\oplus (1/2_2/1)\oplus 1^{\oplus 4}.\]
As we saw above, we need to add two trivial factors to this to get the contribution to the $1$-eigenspace. There are three unipotent classes in $D_6$ with blocks $2^6$ on $M(D_6)$ \cite[Table 7]{lawther2009}. Using a computer, we determine the actions on the half-spin modules, and $L(D_6)^\circ$ and its extension with the trivials.
\begin{center}
\begin{tabular}{cccccc}
\hline Class & $L(\lambda_1)$ & $L(\lambda_5)$ & $L(\lambda_6)$ & $L(\lambda_2)$ & $L(0),L(0)/L(\lambda_2)/L(0),L(0)$ 
\\ \hline $(3A_1)'$ & $2^6$&$2^{16}$&$2^{12},1^8$&$2^{29},1^6$&$2^{30},1^8$
\\ $(3A_1)''$ & $2^6$&$2^{12},1^8$&$2^{16}$&$2^{29},1^6$&$2^{30},1^8$
\\$2A_1+D_2$ & $2^6$&$2^{16}$&$2^{16}$&$2^{30},1^4$&$2^{32},1^4$
\\ \hline
\end{tabular}
\end{center}
Since $u$ (which can be chosen to lie in $L$) acts on the sum of the composition factors of $L(E_8){\downarrow_H}$ with blocks $2^{116},1^{16}$, we see that $u$ lies in class $2A_1+D_2$, and therefore the extension of $L(D_6)$ by trivials is non-split on restriction to $L$. Thus the action of $L$ on $L(0),L(0)/L(\lambda_2)/L(0),L(0)$ must be
\[ P(2_1)^{\oplus 2}\oplus P(2_2)^{\oplus 2}\oplus 4^{\oplus 4}\oplus (1/2_1/1)^{\oplus 2}\oplus (1/2_2/1)^{\oplus 2}\oplus 1^{\oplus 4}.\]
Hence $L(E_8)^L$ is $8$-dimensional, and there are at most two submodules $8_1$ and two submodules $8_2$. If $W$ is defined as in Case 2, we therefore have (at least) a $4$-space of homomorphisms from $P_L$ to $W$ whose image is not $8_i$. Since we only have eight $9$-dimensional modules, and one is needed for each such submodule (and quotient), this must be exactly what occurs.

Thus $W$ consists of four $8$-dimensional simple summands, and a module with four $9$-dimensional modules in the socle, four in the top, and other modules in between.

\medskip

We now examine the $\zeta^i$-eigenspaces of $z$ on $L(E_8)$. As $L'$ acts on $M(D_6)$ as $4\oplus 2_1^{\oplus 2}\oplus 2_2^{\oplus 2}$, it lies inside $D_2D_4$; there are two classes of subgroups $L'$ inside each of these $D_i$ acting in this way, swapped by the graph automorphism. Their actions on the half-spin modules are
\[ 2_1,\quad 2_2 \qquad \text{and}\qquad 2_1^{\oplus 2}\oplus 2_2^{\oplus 2},\quad 4\oplus 1^{\oplus 4}.\]
The half-spin modules for $D_6$ are sums of tensor products of half-spin modules for $D_2$ and $D_4$, so we get that, up to field automorphism, $L'$ acts on a half-spin module as 
\[ 4^{\oplus 2}\oplus P(2_2)\oplus (1/2_1/1)^{\oplus 2}\oplus 2_1^{\oplus 4}.\]
Adding on a copy of $M(D_6)$ and a trivial, we get the action on the $\zeta$-eigenspace, which is
\[ 4^{\oplus 3}\oplus P(2_2)\oplus (1/2_1/1)^{\oplus 2} \oplus 2_1^{\oplus 6}\oplus 2_2^{\oplus 2}\oplus 1.\]

\medskip

Examining the table at the start of this section, we see that the $P(2_2)$ and one $1/2_1/1$ come from the $24_{i,j}^\pm$, so there are likely very few extensions between modules in $L(E_8){\downarrow_H}$. The $\zeta$-eigenspace of $z$ on the module $1,8_1/\bar 9_{12}^*$ has $L'$-action $2_1/1$, and the $\zeta$-eigenspace of $z$ on the module $3_2^*/\bar 9_{12}^*$ is $2_2/1$, so there can be no extension between $3_2^*$ and $\bar 9_{12}^*$. By choosing different eigenspaces, there can be no extension between $3$s and $9$s. The $\{1,8_i,9_{12}^\pm,\bar 9_{12}^\pm,24_{i,j}^\pm\}$-radical of $P(\bar 9_{12}^*)$ is
\[ 1/\bar 9_{12},\bar 9_{12}^*/1,8_1/\bar 9_{12}^*,\]
so we can remove the top $1$ from this, and just need to decide which $9$-dimensional module we want in the third layer. However, the module $\bar 9_{12}/1,8_1/\bar 9_{12}^*$ has a trivial quotient, so we need $\bar 9_{12}^*/1,8_1/\bar 9_{12}^*$.

Thus $W$ must be the sum of $8_1^{\oplus 2}\oplus 8_2^{\oplus 2}$ and four modules of the form
\[ 9_{12}^\pm/1,8_2/9_{12}^\pm,\quad \bar 9_{12}^\pm/1,8_1/\bar 9_{12}^\pm.\]
The $3$s must split off, and there are no extensions between $24_{i,j}^\pm$ and the modules above, so they must split off, and we obtain the action
\[ \bigoplus_{i=1}^2 (9_{i,3-i}/1,8_{3-i}/9_{i,3-i})\oplus (9_{i,3-i}^*/1,8_{3-i}/9_{i,3-i}^*)\oplus  24_{i,3-i}\oplus 24_{i,3-i}^*\oplus 8_i^{\oplus 2}\oplus 3_i\oplus 3_i^*.\]
(Here, we write $9_{21}=\bar 9_{12}$ to make the expression easier to understand.) This exists inside $E_6A_2$. We show that $H$ is contained in a positive-dimensional subgroup stabilizing all $3$-dimensional summands, hence all members of an $N_{\Aut^+(\mb G)}(H)$-orbit of submodules of $L(E_8){\downarrow_H}$.

To do this, we place $L$ inside a group $A_1\times \gen z$, with the $A_1$ subgroup $\mb Y$ lying in $D_6$, in fact inside $D_2D_4$. Let $\mb Y_1$ denote an $A_1$ subgroup of $D_2$ acting on $M(D_2)$ as $L(3)$, and on the two half-spin modules as $L(1)$ and $L(2)$. Clearly $\mb Y_1$ can be chosen to contain the projection of $L'$ onto the $D_2$ factor.

Let $\mb Y_2$ denote an $A_1$ subgroup of $D_4$ acting on $M(D_4)$ as $L(1)^{\oplus 2}\oplus L(2)^{\oplus 2}$. The actions on the two half-spin modules are as $L(1)^{\oplus 2}\oplus L(2)^{\oplus 2}$ and $L(3)\oplus L(0)^{\oplus 4}$. (To see this, note that we may conjugate $\mb Y_2$ by the graph automorphism of order $3$ to lie in a $D_2$-Levi subgroup, and then it becomes clear.) Again, we may choose $\mb Y_2$ to contain the projection of $L'$ onto $D_4$.

Finally, let $\mb Y$ be a diagonal $A_1$ in $D_2D_4\leq D_6$ acting as $\mb Y_1$ on the one factor and $\mb Y_2$ on the other. It is easy to see that $\mb Y$ and $L'$ stabilize all of the same simple submodules of the $\zeta^i$-eigenspaces of $z$ for $i=1,2,3,4$, and so in particular must stabilize each $3_i^\pm$ in $L(E_8){\downarrow_H}$. This completes the proof.

\medskip

\noindent \textbf{Cases 7 and 8}: These remaining cases are as in the statement of the proposition. Case 8 is solved in Section \ref{sec:diffpsu34}, but Case 7 is unsolved.

\medskip

\noindent $\boldsymbol{q=8}$: There are, up to field automorphism, eight conspicuous sets of composition factors for $L(E_8){\downarrow_H}$:
\[ 9_{12},9_{12}^*,8_1^{25},1^{30},\quad (9_{12},9_{12}^*)^8,8_1^8,8_2^3,1^{16},\]
\[  (\bar 9_{13},\bar 9_{13}^*)^8,9_{23},9_{23}^*,8_1^8,8_3,1^{14},\quad 64_{13},(\bar 9_{13},\bar 9_{13}^*)^6,(9_{23},9_{23}^*)^2,8_1^4,1^8,\] \[27,27^*,(9_{12},9_{12}^*)^3,(\bar 9_{13},\bar 9_{13}^*)^3,(9_{23},9_{23}^*)^3,8_1,8_2,8_3,1^8,\] \[(27,27^*)^2,(9_{12},9_{12}^*)^2,(\bar 9_{13},\bar 9_{13}^*)^2,(9_{23},9_{23}^*)^2,8_1,8_2,8_3,1^8,\] 
\[64_{13},27,27^*,9_{12},9_{12}^*,(\bar 9_{13},\bar 9_{13}^*)^3,(9_{23},9_{23}^*)^2,8_1^2,1^6,\]
\[ 64_{12}^3,9_{12},9_{12}^*,9_{23},9_{23}^*,8_1,8_2,1^4.\]
Note that, with the notation from Chapter \ref{chap:labelling},  $\Ext_{kH}^1(\bar 9_{13},9_{23}^*)$ is non-zero, whereas $\Ext_{kH}^1(\bar 9_{13},9_{23})=0$.

The pressures of these are $-28$, $0$, $4$, $8$, $10$, $4$, $6$ and $0$ respectively.

\medskip

\noindent \textbf{Cases 1, 2 and 8}: These all have non-positive pressure, so $H$ stabilizes a line on $L(E_8)$ by Proposition \ref{prop:pressure}.

\medskip

\noindent \textbf{Case 3}: The $\{1,8_1,8_3,\bar 9_{13}^\pm,9_{23}^\pm\}$-radicals of $P(\bar 9_{13})$ and $P(9_{23})$ are
\[ \bar 9_{13}/8_1/\bar 9_{13}^*,\bar 9_{13}^*/1,8_1,9_{23}/\bar 9_{13},\bar
9_{13},\bar 9_{13}^*/1,8_1,9_{23}^*/\bar 9_{13},\]
\[1,\bar 9_{13}/8_1,9_{23},9_{23}^*/1,8_3,\bar 9_{13}^*/9_{23}.\]
Since $L(E_8){\downarrow_H}$ has pressure $4$, we can support at most eight trivial composition factors above the modules with non-zero $1$-cohomology, which is not enough. Thus $H$ stabilizes a line on $L(E_8)$, as claimed.

\medskip

\noindent \textbf{Cases 4, 6 and 7}: We will work in the centralizer of an element $z$ of order $3$, so we give some information about this now. Let $L=C_H(z)\cong \gen z\times \PSL_2(8)$, and note that $L'\cong \PSL_2(8)$. Any module for $L$ splits as a direct sum of three summands: the $1$-eigenspace, $\omega$-eigenspace, and $\omega^2$-eigenspace of the action of $z$, where $\omega^3=1$. We arrange the modules for $H$, $L$ and the exact power of $z$ so that the actions of $L'$ on the eigenspaces of $z$ are as in Table \ref{t:actioneigpsu38}.
\begin{table}
\begin{center}\begin{tabular}{cccc}
\hline Module & $1$-eigenspace & $\omega$-eigenspace & $\omega^2$-eigenspace
\\ \hline $1$ & $1$ &&
\\ $8_1$&$1/2_1/1$&$2_3$&$2_3$
\\ $8_2$&$1/2_2/1$&$2_1$&$2_1$
\\ $8_3$&$1/2_3/1$&$2_2$&$2_2$
\\ $9_{12}$&$2_1$&$1\oplus 4_{13}$&$2_3$
\\ $9_{23}$&$2_2$&$2_1$&$1\oplus 4_{12}$
\\ $\bar 9_{13}$&$2_3$&$2_2$&$1\oplus 4_{23}$
\\ $9_{12}^*$&$2_1$&$2_3$&$1\oplus 4_{13}$
\\ $9_{23}^*$&$2_2$&$1\oplus 4_{12}$&$2_1$
\\ $\bar 9_{13}^*$&$2_3$&$1\oplus 4_{23}$&$2_2$
\\ $27$&$4_{12}\oplus 4_{23}\oplus 4_{13}$&$1\oplus 8$&$2_1\oplus 2_2\oplus 2_3$ 
\\ $64_{13}$&$4_{23}^{\oplus 2}\oplus (1/2_1,2_3/$& $4_{23}\oplus (2_2/4_{12}/2_2)$&Same as $\omega$-eigenspace
\\ &$/1,1,4_{13}/2_1,2_3/1)$&$\oplus (2_3/1/2_1/1/2_3)$
\\\hline
\end{tabular}\end{center}
\caption{Actions of $L'\cong \PSL_2(8)$ on each $\omega^i$-eigenspace of the action of $z$}
\label{t:actioneigpsu38}
\end{table}
If the trace of $z$ on $L(E_8)$ is $-4$ then $C_{\mb G}(z)$ is the group of type $A_8$, if it is $5$ then $C_{\mb G}(z)$ is of type $E_6A_2$, and if it is $14$ then $C_{\mb G}(z)$ is of type $D_7T_1$ (see Table \ref{t:semie8}).

We also need the permutation module $P_L$ on the cosets of $L$, which has dimension $3648$, so it is too large to describe completely. We give some facts about it:
\begin{itemize}
\item $P_L$ has simple quotients precisely $1$, $8_i$, $64_{i,j}$ and $512$;
\item there is no non-trivial quotient of $P_L$ with trivial top;
\item there is no non-semisimple quotient of $P_L$ with top $8_1\oplus 8_2\oplus 8_3$.
\end{itemize}

Suppose that $V$ is a $kH$-module with composition factors of dimension at most $27$, and with $V^H=0$. We show that $\dim(V^L)$ is bounded above by the number of $8$-dimensional composition factors in $V$, proceeding by induction on the dimension of $V$. We may assume that $\dim(V^L)>0$, of course. This proof will look very similar to the proof for $q=4$.

If $V$ possesses a submodule of dimension $8$ then we may quotient it out: the dimension of the $L$-fixed points must reduce by $1$, as must the number of $8$-dimensional factors. Thus there are no $8$-dimensional quotients. We also may assume that $V$ is generated by $V^L$. Let $V'$ denote the $\{8_i\}$-residual of $V$. Since the $kH$-submodule generated by any $L$-fixed point is either $8_i$ or has a trivial and an $8_i$ quotient, $(V')^L=0$. Thus $\dim(V^L)\leq \dim((V/V')^L)$, where $V/V'$ is the quotient. As the result holds for $V/V'$, the result holds for $V$, as needed.

As a consequence, if $L(E_8){\downarrow_H}$ has no trivial submodules then in Case 6, $L(E_8)^L$ has dimension at most $3$, and in Case 7, $L(E_8)^L$ also has dimension at most $3$ (since the $64_{13}$ splits off as a summand and has a $1$-dimensional $L$-fixed space).

\medskip

\noindent \textbf{Case 4}: The module $64_{13}$ splits off as a summand as it has no extensions with other composition factors. The trace of $z$ on $L(E_8)$ is $-4$, so that $\mb X=C_{\mb G}(z)$ is $A_8$. Note that $\mb X$ acts on $L(E_8)$ as the sum of $M(\mb X)\otimes M(\mb X)^*$ (minus a trivial summand), $\Lambda^3(M(\mb X))$ and $\Lambda^3(M(\mb X)^*)$, where $M(\mb X)$ is the $9$-dimensional minimal module for $\mb X$. The summand $M(\mb X)\otimes M(\mb X)^*$ is the $1$-eigenspace for $z$.

From the composition factors of $L(E_8){\downarrow_L}$, we easily deduce that the composition factors of $V=M(\mb X){\downarrow_{L'}}$ are $4_{23},2_2^2,1$, so up to field automorphism $4_{12},2_1^2,1$. There are two modules with this structure, up to duality:
\[ 4_{12}\oplus 2_1\oplus (2_1/1),\qquad 4_{12}\oplus 2_1^{\oplus 2}\oplus 1.\]
In the former possibility, there is no $4$-dimensional simple summand of $V\otimes V^*$, but there is one in the restriction of the $1$-eigenspace of $z$ on $64_{13}$. Thus the action of $L'$ on $V$ is the second possibility, so $V{\downarrow_{L'}}$ is semisimple. Let $\mb Y$ denote the algebraic $A_1$ containing $L'$ and acting as $L(0)\oplus L(1)^{\oplus 2}\oplus L(3)$ on $V$. Using the formula
\[ \Lambda^3(A\oplus B)=\Lambda^3(A)\oplus (\Lambda^2(A)\otimes B)\oplus (A\otimes \Lambda^2(B))\oplus \Lambda^3(B),\]
we can easily compute the structure of $L(E_8){\downarrow_{\mb Y}}$, and indeed $L'$ and $\mb Y$ stabilize the same subspaces of $L(E_8)$, so that $L'$ and $H$ are blueprints for $L(E_8)$.

\medskip

\noindent \textbf{Case 6}: This time $z$ lies in the class with centralizer $D_7T_1$ (see Table \ref{t:semie8}), so $L'\leq D_7$. The action of $D_7$ on $L(E_8)$ has factors of dimensions $14$, $14$, $64$, $64$, $90$, $1$ and $1$, with the final three organized into the shape $1/90/1$. The $1/90/1$ is the $1$-eigenspace for $z$, as $z$ has trace $14$ on $L(E_8)$, and the $90$ here is a submodule of codimension $1$ in the exterior square of the natural module for $D_7$.

The possible composition factors of $M(D_7){\downarrow_{L'}}$ are 
\begin{itemize}
\item $8,2_1,2_2,2_3$,
\item $4_{12},4_{13},4_{23},1^2$, and
\item $2_1^2,2_2^2,2_3^2,1^2$.\end{itemize}
If the factors are the first possibility, then the exterior square is
\[ P(1)\oplus P(4_{12})\oplus P(4_{13})\oplus P(4_{23})\oplus 4_{12}\oplus 4_{13}\oplus 4_{23}\oplus 1^{\oplus 3}.\]
The restriction of $1/90/1$ to $L'$ must have an extra trivial summand. If the factors are the second possibility, then the exterior square is
\[\begin{split}
P(4_{12})\oplus P(4_{13})\oplus& P(4_{23})\oplus (1/2_1,2_2/1)\oplus (1/2_1,2_3/1)
\\&\oplus (1/2_2,2_3/1)\oplus 4_{12}^{\oplus 2}\oplus 4_{13}^{\oplus 2}\oplus 4_{23}^{\oplus 2}\oplus 1.\end{split}\]
The restriction of $1/90/1$ to $L'$ must have an extra trivial submodule and quotient (but the trivial summand is not there). In both possibilities therefore, $L(E_8)^L$ is at least $4$-dimensional, and we have previously shown that it can be at most $3$-dimensional without $H$ stabilizing a line on $L(E_8)$. Thus $H$ stabilizes a line on $L(E_8)$ in both of these possibilities.

In the final possibility, $M(D_7){\downarrow_{L'}}$ has $2_i$-pressure $0$, so it has a submodule (and quotient as it is self-dual) $2_i$ for each $i$. These each contribute one trivial submodule to $\Lambda^2(M(D_7)){\downarrow_{L'}}$, so we just need one further one to achieve a $4$-dimensional fixed-point space. For this, see that the alternating form that these trivial submodules correspond to is singular, with kernel of codimension $2$, and so any linear combination of them has kernel of codimension at most $6$. However, $M(D_7)$ has a non-singular symmetric, hence alternating form on it, and so $\Lambda^2(M(D_7))^{L'}$ is at least $4$-dimensional. Hence again $L(E_8)^L$ has dimension at least $4$, and so $H$ stabilizes a line on $L(E_8)$, as needed.

\medskip

\noindent \textbf{Case 7}: The $64_{13}$ must split off as a summand. The trace of $z$ on $L(E_8)$ is $5$, and hence $C_{\mb G}(z)$ is an $A_2E_6$ maximal-rank subgroup. The $kL'$-module that consists of the $1$-eigenspace of the action of $z$ on $L(E_8)$ has composition factors
\[ 4_{12}^2,4_{13}^3,4_{23}^4,2_1^6,2_2^4,2_3^8,1^{14}.\]
This is the sum of $L(A_2){\downarrow_{L'}}$ and $L(E_6){\downarrow_{L'}}$, and if the action of $L'$ on $M(A_2)$ has factors $2_i,1$, then we must subtract $1^2,2_i^2,2_{i+1}$ from these factors to get the action on $L(E_6)$. The only $i$ for which this produces a conspicuous set of composition factors is $i=3$, and $L(E_6){\downarrow_{L'}}$ has composition factors
\[ 4_{12}^2,4_{13}^3,4_{23}^4,2_1^5,2_2^4,2_3^6,1^{12}.\]
Using traces of semisimple elements, the corresponding factors on $M(E_6)$ are
\[ 4_{12},4_{23}^2,2_1,2_2^2,2_3^3,1^3.\]

We claim that we may assume that $L'$ is contained in the product of $A_2$ and the $A_5$-Levi subgroup of $E_6$. From \cite{craven2015un}, it is known that any copy of $\PSL_2(8)$ lying in $E_6$ lies inside a ($\sigma$-stable) proper positive-dimensional subgroup of $E_6$, so we can place $L'$ inside $A_2\mb X$ for some maximal $\mb X\leq E_6$. We eliminate all $\mb X$ other than the $A_5$-parabolic subgroup first.

We start with maximal-rank subgroups. It is easy to see that $\mb X\neq A_2A_2A_2$, for example because any such $L'$ in this subgroup would have to have at least nine trivial factors on $M(E_6)$. If $L'$ is contained in $\mb X=A_1A_5$ (and not $A_5$ itself), then $L'$ must act as $2_3$ on $M(A_1)$ and $4_{23}\oplus 1^{\oplus 2}$ on $M(A_5)$. This places $L'$ inside an $A_1A_3$-Levi subgroup, so inside some $A_5$-parabolic subgroup that contains it.

Now the parabolics: as there is no copy of $L'$ inside $A_2A_2A_2$, it cannot be contained in the $A_1A_2A_2$-parabolic subgroup. If $L'$ is contained in an $A_1A_4$-parabolic then the image in the Levi subgroup must be contained in $A_1A_3$, as above. This means that $L'$ lies in (up to conjugacy) the intersection of both $A_1A_3$-parabolic subgroups of $A_1A_4$, hence of $E_6$. Thus up to conjugation we may place $L'$ in an $A_5$-parabolic subgroup of $E_6$.

We are left with the $D_5$-parabolic subgroups. We must distribute the factors among the $1$-, $10$- and $16$-dimensional composition factors of the restriction of $M(E_6)$ to $D_5$. As there is no unipotent class acting with blocks $2^5$ on $M(D_5)$ (see, for example, \cite[Table 7]{lawther2009}), the $1^2$ must lie there. The only way to distribute the composition factors of $M(E_6){\downarrow_{L'}}$ among the factors of $D_5$ is $2_1,2_3^3,1^2$ for $M(D_5){\downarrow_{L'}}$ (and hence $4_{12},4_{23}^2,2_2^2$ for the half-spin module). Note that $L'$ must stabilize a line on $M(D_5)$, else there is certainly a subquotient $1/2/1$. In this case the involution in $L'$ acts projectively on $M(D_5)$, but no such involution lies in $D_5$.

Since $L'$ stabilizes a line on $M(D_5)$, it lies in either $B_4$ or a $D_4$-parabolic subgroup. By checking traces of elements of order $3$ on the two half-spin modules, we find that $L'$ cannot lie in $D_4$, so it must lie in $B_4$, whence $L'$ acts as $2_1\oplus 2_3^{\oplus 3}$ on $M(B_4)$ (which has dimension $8$ in characteristic $2$).

We see that one of the submodules $2_3$ (diagonal if necessary) must be totally isotropic, and therefore has a parabolic subgroup as a stabilizer. This places $L'$ inside a proper Levi subgroup---in particular, $B_2A_1$---of $B_4$, hence a proper Levi subgroup---in particular, $D_3A_1$---of $D_5$. Thus $L'$ lies inside the $D_3A_1$-parabolic subgroup of $E_6$, hence inside the $A_4A_1$-parabolic subgroup, which we have already considered.

Of the reductive subgroups, the $G_2$ maximal subgroup acts on $M(E_6)$ acts with factors $L(01)$, $L(10)$, $L(20)$ and $L(00)$ (of dimensions 14, 6, 6 and 1), and this is incompatible with $L'$. If $L'\leq A_2G_2$ then the only action compatible with $M(E_6)$ is acting on $M(A_2)$ with factors $2_2,1$ and on $M(G_2)$ with factors $2_1,2_3^2$. This places $L'$ inside a parabolic subgroup of $A_2G_2$, and hence inside a parabolic subgroup of $E_6$.

If $L'\leq \mb X=F_4$, then again by \cite{craven2015un} there is a proper positive-dimensional subgroup of $\mb X$ containing $L'$. If it is a parabolic then $L'$ is contained in a parabolic subgroup of $E_6$. If it is $B_4$ then $L'$ is contained in the $D_5$-Levi subgroup. If $L'\leq A_2A_2$ then $L'\leq A_2A_2A_2\leq E_6$, which we know it is not. The only remaining subgroup is $C_4$. Note that from $L(E_6){\downarrow_{L'}}$ and the fact that $L(E_6){\downarrow_{\mb X}}$ is the sum of two copies of $M(F_4)$ and one of the image of $M(F_4)$ under the graph automorphism, we may compute the factors of the image of $L'$ under the graph automorphism on $M(F_4)$, and this subgroup must be contained in $B_4$. The composition factors are $4_{13}^3,2_1^3,1^8$, a module of pressure $-5$, hence centralizes a $5$-space on $M(F_4)$. This places $L'$ inside a very small subgroup of $B_4$, certainly contained in another positive-dimensional subgroup of $F_4$, and so $L'$ is contained in another subgroup of $E_6$, as needed.

We now move inside the $A_5$-parabolic subgroup, and place $L'$ inside an $A_5$-Levi subgroup. Inside the $A_5$-parabolic, we see from above that the composition factors on $M(A_5)$ are $4_{23},2_3$, and there are (up to duality) two such modules: $4_{23}\oplus 2_3$ and $2_3/4_{23}$. As the layers of the unipotent radical of the $A_5$-parabolic subgroup are $L(0)$ and $L(00100)$, and the restrictions of these modules to $L'$ have zero $1$-cohomology, we see from Lemma \ref{lem:nonabcohomzero} that $L'$ lies in the $A_5$-Levi subgroup. In particular, we may write down the structure of $L(E_6){\downarrow_{L'}}$ exactly, as the restriction of
\[ L(0)^{\oplus 2}\oplus L(00100)^{\oplus 2}\oplus (L(10000)\otimes L(00001))\]
to $L'$. As $64_{13}$ is a summand of $L(E_8){\downarrow_H}$, the $1$-eigenspace of $z$ on this module has a summand of the form $1/2_1,2_3/1/1,4_{13}/2_1,2_3/1$, and this does not lie in $L(E_6){\downarrow_{L'}}$ if $M(A_5){\downarrow_{L'}}$ is not semisimple, we obtain that $L(E_6){\downarrow_{L'}}$ is exactly
\[\begin{split} (1/2_1,2_3/1/1,4_{13}/2_1,2_3/1)&\oplus (2_3/1/2_1/1/2_3)^{\oplus 2}
\\&\oplus (2_2/4_{12}/2_2)^{\oplus 2}\oplus (1/2_1/1)\oplus 4_{13}^{\oplus 2}\oplus 4_{23}^{\oplus 4}\oplus 1^{\oplus 2}.\end{split}\]
In addition, the action of $L'$ on $L(A_2)$ is either $(1/2_1/1)\oplus 2_3^{\oplus 2}$ or $2_3/1/2_1/1/2_3$.

This has (at least) a $4$-dimensional fixed space, and we saw earlier that if $L(E_8)^H=0$ then it must have dimension at most $3$. Thus $H$ stabilizes a line on $L(E_8)$ in this case as well.

\medskip

\noindent \textbf{Case 5}: This is as stated in the proposition.

\medskip

\noindent $\boldsymbol{q=16}$: As there are so many possible sets of composition factors, we will only consider conspicuous sets of composition factors that have positive pressure or no trivial composition factors. There are, up to outer automorphism, 28 sets of composition factors that are conspicuous for elements of order at most $17$. Let $x$ denote an element of order $255$. A computer check shows that, in each of these cases, the eigenvalues of $x$ determine the class of $x$ in $\mb G$, and the same holds for $x^3$ of order $85$. Thus we look for elements of $\mb G$ of order $765$ cubing to $x$, aiming to apply Lemma \ref{lem:allcasesstronglyimp}. For 24 of the 28 sets of composition factors we find elements of order $765$ with the same number of distinct eigenvalues on $L(E_8)$ as $x$, so $H$ is strongly imprimitive.

Thus we are left with four sets of composition factors for $L(E_8){\downarrow_H}$.
\[ 24_{13},24_{13}^*,24_{31},24_{31}^*,(9_{13},9_{13}^*)^2,(\bar 9_{13},\bar 9_{13}^*)^2,\bar 9_{14},\bar 9_{14}^*,9_{23},9_{23}^*,8_1^2,8_3^2,3_1,3_1^*,3_3,3_3^*,\]
\[27_{124},27_{124}^*,27_{421},27_{421}^*,24_{12},24_{12}^*,(9_{24},9_{24}^*)^2,\bar 9_{34},\bar 9_{34}^*,8_1,8_2^2,8_4,3_1,3_1^*,\]
\[ 27_{124},27_{124}^*,27_{314},27_{314}^*,27_{321},27_{321}^*,27_{432},27_{432}^*,8_1,8_2,8_3,8_4,\]
\[ 27_{134},27_{134}^*,27_{213},27_{213}^*,27_{324},27_{324}^*,27_{421},27_{421}^*,8_1,8_2,8_3,8_4.\]
(Here, $27_{i,j,k}$ means $3_i\otimes 3_j\otimes 3_k$ if $i<j$ and $3_i\otimes 3_j\otimes 3_k^*$ if $j<i$.)

\medskip

\noindent \textbf{Cases 3 and 4}: By \cite{sin1992}, there are no extensions between $8$- and $27$-dimensional modules, or between $8$- and $8$-dimensional modules, so in the last two cases the $8_i$ must split off as summands. Furthermore, the eigenvalues of $x$ on $8_i$, with the exception of two copies of $1$, are distinct from the eigenvalues of $x$ on all other composition factors of $L(E_8){\downarrow_H}$. Thus every root of $x$ in a maximal torus of $\mb G$ must stabilize each $8_i$ (the $1$-eigenspace of $x$ is $8$-dimensional so also preserved by every root), and hence $H$ is strongly imprimitive by Lemma \ref{lem:allcasesstronglyimp}.

\medskip

\noindent \textbf{Case 2}: The $3_1$, $3_1^*$, $8_1$, $8_2$ and $8_4$ all split off as summands, as they have no extensions with the other composition factors of $L(E_8){\downarrow_H}$. As with the previous cases, the eigenvalues of $x$ on $8_1$, apart from $1$, do not appear in the other factors, and so every element of a maximal torus containing $x$ that powers to $x$ must stabilize $8_1$. (Again, the $1$-eigenspace is $8$-dimensional so is automatically preserved.) Since the factors of $L(E_8){\downarrow_H}$ are not preserved by a field automorphism of $H$, we apply Lemma \ref{lem:semilinearfield} to see that $8_1$ is in its own $N_{\Aut^+(\mb G)}(H)$-orbit. Hence $H$ is strongly imprimitive by Lemma \ref{lem:allcasesstronglyimp}.

\medskip

\noindent \textbf{Case 1}: There are no composition factors whose eigenspaces miss all other factors, but we find exactly 729 elements of order 765 that cube to $x$ and stabilize the eigenspaces that comprise a given composition factor of $L(E_8){\downarrow_H}$. Noting that the composition factors of $L(E_8){\downarrow_H}$ are stable under an outer automorphism $\alpha$ of order $4$ of $H$, in fact we find 81 elements that cube to $x$ and stabilize the eigenspaces that make up all four simple modules in the orbit under $\alpha$ for a composition factor of $L(E_8){\downarrow_H}$. We may therefore apply Lemma \ref{lem:allcasesstronglyimp} via Lemma \ref{lem:semilinearfield} to see that $H$ is strongly imprimitive.
\end{proof}

For $q=3$, both conspicuous sets of composition factors in the statement of the proposition are warranted. A warrant for the first set of factors is given by a diagonal $\PSL_3(3)$ in $E_6A_2$ acting irreducibly on both $M(A_2)$ and $M(E_6)$. A warrant for the second set of factors is given by a copy of $\PSU_3(3)$ inside $A_8$ acting on $M(A_8)$ as $3\oplus 6^*$. (These are the same actions, but not the same composition factors, as the first and third cases for $\PSL_3(3)$ that were described at the end of the previous section.)





\section{\texorpdfstring{$\PSp_4$}{PSp 4}}

When computing with $H\cong \PSp_4(3)$, we will be able to use the subgroup $L=2^4\rtimes \Alt(5)$ (which is an extraspecial type maximal subgroup in the group $\Sp_4(3)$, or a parabolic subgroup in $\PSU_4(2)\cong \PSp_4(3)$) to great effect. Because of this, we will give some information about the group now.

The simple modules for $\PSp_4(3)$ are $1$, $5$, $10$, $14$ and $25$, together with the projective module $81$, which we will ignore. The simple modules for $L$ are $1$, $4$, $5$, $10_1$, $10_2$ and $15$, together with two $3$-dimensional modules that do not appear in restrictions except for $81$. The restrictions of the simple modules for $H$ to $L$ are
\[ 1,\quad 5,\quad 10_1,\quad 4\oplus 10_2,\quad 10_2\oplus 15,\quad 15\oplus P(10_1)\oplus P(10_2)\oplus 3_1\oplus 3_2,\]
and the projective indecomposable modules are
\[ 1/4/1,\quad 4/1/4,\quad 5/5/5,\quad 10_1/10_2/10_1,\quad 10_2/10_1/10_2,\quad 15.\]
The simple modules for $H$ restrict to $L$ semisimply (other than $81$, which is always a summand anyway as it is projective) and with few factors, and the structures of the projectives for $L$ are very easy to understand. These can be used to place significant restrictions on the structure of $L(E_8){\downarrow_H}$, which is difficult just using $H$, as there are lots of extensions between simple modules for $H$ (for example, both $5$ and $10$ have self-extensions).

If $v$ denotes an element of order $3$ in $L$, then $v$ acts on the simple modules for $L$ with block structures
\[ 1,\quad 3,1,\quad 3,1^2,\quad 3^3,1,\quad 3^3,1,\quad 3^5.\]
We continue with our definition of $u$, that it is a element of order $p$ in $H$ from the smallest conjugacy class, from the start of the chapter.

\begin{proposition}\label{prop:sp4ine8}
Let $H\cong \PSp_4(q)$ for some $q\leq 9$.
\begin{enumerate}
\item If $q=2$ then either $H$ stabilizes a line on $L(E_8)$ or the composition factors of $L(E_8){\downarrow_H}$ are \[8_1^6,8_2^6,4_1^{16},4_2^{16},1^{24}.\]
\item If $q=3,9$ then $H$ either stabilizes a line on $L(E_8)$ or is a blueprint for $L(E_8)$.
\item If $q=4,5,8$ then $H$ is strongly imprimitive.
\item If $q=7$ then $H$ is a blueprint for $L(E_8)$.
\end{enumerate}
\end{proposition}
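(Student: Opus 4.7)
\medskip

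The plan is to attack the seven values of $q$ separately, using progressively more elaborate tools as $q$ decreases. For $q=9$, the group $\PSp_4(9)$ contains semisimple elements of order $82=q^2+1$, which is not in $T(E_8)$ by Definition~\ref{defn:T(G)}; since $82\in\PGSp_4(9)\setminus \PSp_4(9)$ can occur, I would mirror the trick used for $\PSL_3(16)$ and invoke Lemma~\ref{lem:stronglyimppsl316}, which is stated precisely for $\PSp_4(9)$ with three distinct elements of order $82$. For $q=7$, where all cyclic subgroup orders lie in $T(E_8)$, I would enumerate conspicuous sets of composition factors of $L(E_8){\downarrow_H}$ for elements of order at most $13$, use Table~\ref{t:modules7} to check that almost all have non-positive pressure (so $H$ stabilises a line by Proposition~\ref{prop:pressure} and Lemma~\ref{lem:fix1space}), and for the surviving semisimple cases verify that the unipotent element $u$ lands in a generic class of \cite[Table 9]{lawther1995}, giving blueprint-ness via Lemma~\ref{lem:genericmeansblueprint}.

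For $q=8$, I would apply Lemma~\ref{lem:allcasesstronglyimp} directly: the element of order $195$ is prescribed there for $\PSp_4(8)$, so the task reduces to enumerating conspicuous composition factor sets (using elements of order up to, say, $65$) and, in each case, either observing non-positive pressure, or exhibiting an element of $\mb T$ of order $195$ cubing to a generator of the cyclic subgroup of order $65$ and stabilising the constituent eigenspaces of some submodule. For $q=4,5$, similar in spirit, I would search for conspicuous sets; for $q=4$ the subgroup $L=\PSp_4(2)$ (treated in Case~1 below and in \cite{craven2017}) restricts the composition factors strongly, and the remaining cases should yield to searching for elements of order $2(q^2+1)$ (so $34$ for $q=4$) or $q^2+q+1$ cubing to a generator of the $\F_{q^2}$-Coxeter torus and stabilising simple submodules, combined with Lemma~\ref{lem:semilinearfield} to promote Lie imprimitivity to strong imprimitivity. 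For $q=5$, I would additionally exploit centralisers of semisimple elements of small order in $\PSp_4(5)$ (which are $\SL_2(5)$-type subgroups) and read off constraints from Table~\ref{t:unipe8p5}.

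For $q=3$, the main representation-theoretic workhorse is the subgroup $L=2^4\rtimes\Alt(5)$, whose simple modules, projectives, and restrictions from $H$ are catalogued in the prose preceding the proposition. I would enumerate the conspicuous sets of composition factors for $L(E_8){\downarrow_H}$ using the module list in Table~\ref{t:modules3}, discard those of non-positive pressure via Proposition~\ref{prop:pressure}, and in each remaining case restrict to $L$: since $\soc(P(10_i))=10_i$ and $\soc(P(10_{3-i}))=10_{3-i}$ with the $10_i$ and $10_{3-i}$ linked through length-three uniserial projectives, and since simple $kH$-modules restrict to $L$ almost semisimply, the possibilities for socle series of $L(E_8){\downarrow_H}$ are very constrained. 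For the survivors, I would use the action of $v\in L$ of order $3$ (whose block structure on the simple $kL$-modules is $1$, $3,1$, $3,1^2$, $3^3,1$, $3^3,1$, $3^5$) together with Table~\ref{t:unipe8p3} to pin down the class of $u$ in $E_8$. In each surviving case either $u$ is forced into a generic class, so Lemma~\ref{lem:genericmeansblueprint} gives a blueprint, or $H$ fixes a line.

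Finally, for $q=2$ the group $H=\PSp_4(2)\cong\Sym(6)$ contains $\Alt(6)$ with index $2$. Proposition~\ref{prop:compfactorsalt6} already lists the conspicuous composition factor sets for an $\Alt(6)$ subgroup of $E_8$, of which only those invariant under the outer automorphism of $\Alt(6)$ extend to $\Sym(6)$, i.e.\ sets of factors in which the multiplicities of $8_1$ and $8_2$ are equal and those of $4_1$ and $4_2$ are equal (so $4_i$ and $8_i$ fuse into $4$- and $16$-dimensional $k\Sym(6)$-modules). Running through this shortened list and applying pressure arguments from Table~\ref{t:modules2}, together with the ``unresolved" entry $16^6,4_1^{16},4_2^{16},1^{24}$ of Proposition~\ref{prop:compfactorsalt6}, leaves exactly the set $8_1^6,8_2^6,4_1^{16},4_2^{16},1^{24}$ claimed in the statement. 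The hard part of the entire proof will be the $q=3$ case: the pressure slack is large, $H$ has many extensions between its own simple modules, and only the auxiliary subgroup $L$ really controls the module structure, so the bookkeeping of socle series compatible with both $H$-structure and $L$-restriction is where the delicate work lies.
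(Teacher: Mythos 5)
Your outline for $q=7,8,9$ is broadly in the spirit of the paper, but the heart of the proposition is $q=3,4,5$, and there your toolkit is not adequate. For $q=3$ the dichotomy you propose (``$u$ generic, hence blueprint, or a line is fixed'') is not what happens. Of the seven conspicuous sets of factors, five (e.g.\ $25^4,14^5,10^3,5^8,1^8$ and $25^3,14^6,10^3,5^{11},1^4$) have large positive pressure and are \emph{not} disposed of by pressure, socle bookkeeping over $L=2^4\rtimes\Alt(5)$, or genericity of $u$; the actual proof eliminates them by showing that $L$ itself cannot embed in $E_8$ with the corresponding factors, which requires running through every member of $\ms X$ (including central extensions inside $\Spin_{16}$, the $F_4$ and $G_2F_4$ subgroups and the torus normalizer) and, crucially, Griess's torality results for the subgroup $2^4$ --- an idea absent from your plan, and one the paper explicitly flags as unavoidable for several of these cases. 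Moreover, for the surviving positive-pressure case $25^4,14^2,10^{10},5^4$ the embedding genuinely exists and is \emph{not} handled by genericity of $u$: blueprint-ness is proved by first forcing a $5$ or $10$ into the socle via radicals of projectives, then identifying $H$ (through the restriction to $L$, which pins $L$ inside $A_4A_4$ or $D_8$) as the fixed points of the irreducible algebraic $B_2$, whose action on $L(E_8)$ coincides with that of $H$. Without these two steps your $q=3$ argument cannot close, and since the paper's $q=9$ treatment reduces four of its seven cases to exactly this $q=3$ blueprint statement, the gap propagates (your alternative via Lemma~\ref{lem:stronglyimppsl316} and elements of order $82$ would at best give strong imprimitivity, which is not the ``line or blueprint'' conclusion claimed, and still requires producing a stabilized subspace).

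For $q=4$ and $q=5$ the mechanism you invoke does not exist in the paper: Lemmas~\ref{lem:allcasesstronglyimp} and \ref{lem:stronglyimppsl316} do not cover $\PSp_4(4)$ or $\PSp_4(5)$, and elements of order $2(q^2+1)$ or $q^2+q+1$ are far inside $T(E_8)$, so finding torus roots of those orders stabilizing eigenspaces proves nothing by itself. The actual proofs decompose $L(E_8)$ into eigenspaces of elements of order $3$ or $5$ whose centralizers are $A_7T_1$, $A_2D_5T_1$, $A_4A_4$, $A_6A_1$, $A_8$ (for $q=4$) or $D_8\supseteq \SL_2(5)\circ\SL_2(5)$ (for $q=5$), construct explicit algebraic $A_1$ or $A_1A_1$ overgroups stabilizing every relevant submodule, and then apply Theorem~\ref{thm:intersectionorbit}; in two of the $q=4$ cases one must instead prove the factor set cannot occur at all (e.g.\ by comparing the possible $M(A_8)$-restrictions of $\Alt(5)$, or by an analysis of $\Alt(5)\wr 2$ inside $D_8$). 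Smaller points: for $q=2$ the $4$-dimensional $\Alt(6)$-modules do \emph{not} fuse in $\Sym(6)$ (only the $8$'s fuse into the Steinberg $16$), and pressure does not kill all the remaining $\Sym(6)$-stable sets, which is why the paper simply quotes \cite[Proposition 6.3]{craven2017}; for $q=8$ the reduction to ``exhibit an element of order $195$'' omits the radical/socle computations needed precisely in the cases where no such element stabilizes all the relevant eigenspaces; and for $q=7$ your pressure step would prove line-stabilization, not the blueprint statement actually claimed (there the sets are semisimple and one argues genericity directly).
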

\begin{proof} $\boldsymbol{q=7}$: There are three sets of composition factors for $L(E_8){\downarrow_H}$ that are conspicuous for elements of order at most $12$, namely
\[ 154,84,10,\qquad 35_1^4,14^2,10^6,5^4,\qquad 14,10^{11},5^{20},1^{24}.\]
None of the simple modules in these sets has an extension with any other, and so $L(E_8){\downarrow_H}$ must be semisimple in all three cases. The action of $u$ on $L(E_8)$ has blocks 
$7,6^6,5^{11},4^{16},3^{15},2^{14},1^{13}$ in the first case, $4^8,3^{28},2^{48},1^{36}$ in the second and $3^{14},2^{64},1^{78}$ in the third, which are the actions of the generic classes $A_3+2A_1$, $4A_1$ and $2A_1$ respectively (see \cite[Table 9]{lawther1995}). Thus $H$ is a blueprint for $L(E_8)$ by Lemma \ref{lem:genericmeansblueprint}, and this completes the proof.

\medskip

\noindent $\boldsymbol{q=5}$: There are three sets of composition factors for $L(E_8){\downarrow_H}$ that are conspicuous for elements of order at most $12$, namely
\[ 86,68^2,10^2,5,1,\qquad 35_1^4,13^2,10^6,5^4,1^2,\qquad 13,10^{11},5^{20},1^{25}.\]
The second and third cases have non-positive pressure, so $H$ stabilizes a line on $L(E_8)$ in these two cases.

In the first case, we consider the subgroup $L\cong \SL_2(5)\circ \SL_2(5)$, the derived subgroup of the centralizer of an involution $z$ in $H$. The trace of $z$ on $L(E_8)$ is $-8$, so $L\leq D_8$. The composition factors of $L$ on the $1$-eigenspace of $z$ are
\[ (4,4),(4,2)^2,(2,4)^2,(2,2),(2,0)^4,(0,2)^4,(0,0)^2.\]
(We use algebraic group notation here because we will embed $L$ into an algebraic $A_1A_1$ and so we are trying to avoid confusion.) There is a unique possibility for $M(D_8){\downarrow_L}$ whose exterior square has these factors, which is $(1,3)\oplus (3,1)$. We have an obvious action of $\mb Y\cong A_1A_1$ on $M(D_8)$ extending this action, namely $(1,3)\oplus (3,1)$. Such an $A_1A_1$ subgroup lies in $D_4D_4$, acting on $L(\lambda_1)$ for the two $D_4$ factors as $(1,3)$ and $(3,1)$ respectively. Using the traces of elements of order $3$, one proves that the actions of $L$ on the two modules $L(\lambda_3)$ and $L(\lambda_4)$ are $(1,3)$ and $(2,0)\oplus (0,4)$ when $L(\lambda_1)$ is $(1,3)$, and $(3,1)$ and $(0,2)\oplus (4,0)$ in the other case.

The $(-1)$-eigenspace for $z$ on $L(E_8)$ is a half-spin module for $D_8$, which restricts to $D_4$ as a sum of products of half-spin modules. To keep $z$ acting as $-1$ on this space, we must therefore have (with the embeddings above) the sum of $(\lambda_3,\lambda_4)$ and $(\lambda_4,\lambda_3)$. Thus we need to cosnider the tensor product of $(1,3)$ and $(0,2)\oplus (4,0)$, which is
\[ (1,1)\oplus ((1,3)/(1,5)/(1,3))\oplus ((3,3)/(5,3)/(3,3)).\]
The other tensor product is just like this but with the two copies of $A_1$ swapped. The restriction of this module to $A_1(5)A_1(5)\cong L$ is
\[ (1,1)\oplus ((1,3)/(1,1)/(1,3))\oplus ((3,3)/(1,3)/(3,3)),\]
(where we keep the notation from algebraic groups) so $L$ is not a blueprint, but every semisimple $kL$-submodule of the $(-1)$-eigenspace of $z$ on $L(E_8)$ is a $k\mb Y$-submodule.

We now consider the $1$-eigenspace of $z$, which is $\Lambda^2(M(D_8))$. The exterior square of $(1,3)\oplus (3,1)$ for $\mb Y$ is
\[\begin{split} ((0,2)/(0,6)/(0,2))&\oplus ((2,0)/(6,0)/(2,0))
\\ &\oplus (2,0)\oplus (0,2)\oplus (2,2)\oplus (2,4)^{\oplus 2}\oplus (4,2)^{\oplus 2}\oplus (4,4).\end{split}\]
The restriction of this module to $A_1(5)A_1(5)\cong L$ is
\[\begin{split} ((0,2)/(0,0),(0,2)/(0,2))&\oplus ((2,0)/(0,0),(2,0)/(2,0))
\\&\oplus (2,0)\oplus (0,2)\oplus (2,2)\oplus (2,4)^{\oplus 2}\oplus (4,2)^{\oplus 2}\oplus (4,4).\end{split}\]
Thus again all semisimple $kL$-submodules of the $1$-eigenspace of $L(E_8)$ are $k\mb Y$-submodules, and hence any submodule of $L(E_8){\downarrow_H}$ that restricts semisimply to $L$ is stabilized by $\mb Y$. The restriction of $10$ to $L$ is $(0,2)\oplus (2,0)\oplus (1,1)$, and the restriction of $68$ to $L$ is
\[ (2,0)\oplus (0,2)\oplus (1,3)\oplus (3,1)\oplus (2,4)\oplus (4,2)\oplus (3,3),\]
and so if either $10$ or $68$ lies in the socle of $L(E_8){\downarrow_H}$ then $\gen{H,\mb Y}$  is positive dimensional and not equal to $\mb G$. However, $10$ and $68$ are the only composition factors to appear more than once, so at least one of them (in fact, both), must appear in the socle, and $H$ is strongly imprimitive.

\medskip

\noindent$\boldsymbol{q=3}$: There are seven conspicuous sets of composition factors for $L(E_8){\downarrow_H}$, namely
\[ 14,10^{11},5^{20},1^{24},\quad 25^4,14^5,10^3,5^8,1^8,\quad 81,25^2,14^4,5^{11},1^6,\quad 25^3,14^6,10^3,5^{11},1^4,\]
\[ 25^5,14,10^{10},5,1^4,\quad 81,25^3,10^7,5^4,1^2,\quad 25^4,14^2,10^{10},5^4.\]
We will prove that only the first and last of these seven copies may exist (and in fact do exist, both inside $A_4A_4$, the first embedding only in one factor and the last embedding diagonally in both factors).

\medskip

\noindent \textbf{Cases 2, 3, 4, 5 and 6}:  Case 5 has negative pressure, it is fairly easy to prove directly that Case 3 stabilizes a line on $L(E_8)$ anyway, and more detailed arguments using the subgroup $2^4\rtimes \Alt(5)$ can deal with Case 2, but the others seem difficult without using this approach.

Let $L$ denote a copy of $2^4\rtimes \Alt(5)$ in $H$, with the normal subgroup $2^4$ denoted by $L_0$. The restrictions of the seven cases for $L(E_8){\downarrow_H}$ to $L$ are as follows.
\begin{center}
\begin{tabular}{cc}
\hline Case & Restriction to $L$
\\ \hline $1$ & $10_1^{11},10_2,5^{20},4,1^{24}$
\\ $2$ & $15^4,10_1^3,10_2^9,5^8,4^5,1^8$
\\ $3$ & $15^3,10_1^3,10_2^9,5^{11},4^4,3_1,3_2,1^6$
\\ $4$ & $15^3,10_1^3,10_2^9,5^{11},4^6,1^4$
\\ $5$ & $15^5,10_1^{10},10_2^6,5,4,1^4$
\\ $6$ & $15^4,10_1^{10},10_2^6,5^4,3_1,3_2,1^2$
\\ $7$ & $15^4,10_1^{10},10_2^6,5^4,4^2$
\\ \hline
\end{tabular}
\end{center}
We will show that except for the first and last cases, there is no such embedding of $L$ in $\mb G$, hence no such embedding of $H$ in $\mb G$.

We suppose first that $L$ is contained in a member of $\ms X$, so we therefore consider the members of $\ms X$, and look for all copies of $L$ inside them. Note that $L$ possesses no (perfect) central extensions by a subgroup of order $3$ or $5$ by a result of Schur (see for example \cite[(33.14)]{aschbacher}). Therefore if $L$ is contained in, for example, $A_4A_4$, then we have an action of $L$ on each $M(A_4)$, and not a central extension of $L$. Note that $1$, $3_i$ and $4$ are representations of $\Alt(5)$ and not $L$, so at least one of $M(A_4)$s must not consist of these factors, so is $5$.
\begin{center}
\begin{tabular}{ccc}
\hline First $M(A_4)$ & Second $M(A_4)$ & Action on $L(E_8)$
\\\hline $5$ & $1^5$ & $10_1^{11},10_2,5^{20},4,1^{24}$
\\ $5$ & $3_1,1^2$ & $15^6,10_1^7,10_2^5,5^2,4^2,3_1^5,1^5$
\\ $5$ & $4,1$ & $15^6,10_1^7,10_2^5,5^2,4^5,3_1,3_2,1^2$
\\ $5$ & $5$ & $15^4,10_1^{10},10_2^6,5^4,4^2$
\\ \hline
\end{tabular}
\end{center}
We see that the first and last options yield embeddings of $L$ in $E_8$ with correct sets of composition factors on $L(E_8)$, corresponding to Cases 1 and 7 respectively.

If $L$ is contained in an $E_7$-Levi subgroup then $L$ cannot have a factor $3_i$ on $M(E_7)$, as there are two copies of $M(E_7)$ in $L(E_8){\downarrow_{E_7}}$ and at most one copy of $3_i$ in $L(E_8){\downarrow_H}$. There are only two such conspicuous sets of composition factors for $M(E_7){\downarrow_L}$: $10_1^2,5^6,1^6$ and $15^2,10_2^2,1^6$. Since $1^{\oplus 3}$ and $M(E_7)^{\oplus 2}$ are submodules of $L(E_8){\downarrow_{E_7}}$, we see that we need at least $1^{15}$ in $L(E_8){\downarrow_L}$, so we must be in Case 1, which we have already seen exists.

If $L\leq A_8$ then again we consider its action on $M(A_8)$, which has dimension $9$. Again, we need a $5$, yielding three possible actions: $5,4$; $5,3_1,1$; $5,1^4$. The last one lies in $A_4$, so is as above, and the other two lie in an $A_4A_3$-parabolic, whose Levi subgroup lies in $A_4A_4$, so again they appear in the previous table.

If $L\leq E_6A_2$ acting diagonally then $L$ must act on $M(A_2)$ as $3_1$ (without loss of generality), and as $M(E_6)$ does not have dimension divisible by $5$, we need a copy of $1$, $3_i$ or $4$. Since $M(E_6)\otimes M(A_2)$ appears twice up to duality (which doesn't matter for our group) and each $3_i$ appears at most once in $L(E_8){\downarrow_H}$, we may assume that neither $3_1$ nor $3_2$ appears in $M(E_6)\otimes M(A_2)$. As $1\otimes 3_1=3_1$, $3_1\otimes 3_1=3_1\oplus P(1)$ and $4\otimes 3_1=3_2\oplus P(4)$, none of these may occur. Thus $M(E_6)$ contains $3_2^4$, but then $M(E_6)\otimes M(A_2)$ contains $4^8$, so that we have $4^{16}$ in $L(E_8){\downarrow_L}$, which is too many. Therefore we have no such $L$ in this case.

If $L$ is contained in $D_8$ then we first assume that $L$ acts on $M(D_8)$ and not a central extension of $L$. In this case we need a set of composition factors for $M(D_8){\downarrow_L}$, but then the trace of an involution in $L$ must be $0$ or $\pm 8$. There are only four such sets of composition factors for $M(D_8){\downarrow_L}$ if one renumbers the $3_i$, and these are
\[ 5^2,1^6,\quad 10_1,5,1,\quad 5^2,3_1^2,\quad 5^2,3_1,3_2.\]
The exterior square of $M(D_8)$ in the third case has $3_1^3$ as composition factors, which is incorrect. The fourth set of composition factors yields an exterior square
\[ P(4)\oplus 15^{\oplus 4}\oplus 10_1^{\oplus 3}\oplus 10_2\oplus 4\oplus 3_1\oplus 3_2.\]
The set of factors of this module is not a subset of any of the composition factors for $L(E_8){\downarrow_L}$, so this is also incorrect. The first case lies in $D_5$, and is therefore in $E_7$. Thus we have factors $10_1,5,1$ on $M(D_8)$, and the composition factors on $L(D_8)$ are $15^2,10_1^6,10_2^2,5^2$. This is compatible with Cases 6 and 7. However, the trace of an element of $L$ of order $5$ on $M(D_8)$ is $1$, and this semisimple class acts on half-spin modules with trace $-2$, and $L(D_8)$ with trace $0$, thus on $L(E_8)$ with trace $-2$. This is consistent with Case 7, and not with Case 6, on which $x$ acts with trace $3$. (The Brauer characters of these two cases differ only on $x$.)

%
On the other hand, suppose that $L$ embeds in $\HSpin_{16}$, but its preimage in $\Spin_{16}$ is a central extension. We first claim that in any (non-split) central extension $2\cdot L$ there is an element of order $4$ that squares to the central involution. In this case, we need to find an element of order $4$ in $\Spin_{16}$ whose image in $\HSpin_{16}$ has order $2$, i.e., it has trace $128$ on the half-spin module. However, there is no such conjugacy class, as can be seen by examining the list of the sixteen classes of elements of order $4$ for $D_8$. Thus this central extension cannot occur.

To see the claim, the easiest way is to construct all three central extensions of $L$ using the \texttt{SmallGroups} command (they are numbers \texttt{241002}, \texttt{241003} and \texttt{241004}) and check it manually. One can see it theoretically by noting that they are quotients of the group $2^{1+4}\rtimes (2\cdot \Alt(5))$.

If $L\leq E_7A_1$ acting diagonally, then $L$ acts on $L(A_1)\leq L(E_8)$ as $3_1$ (without loss of generality). Thus we may only be in Cases 3 and 6. We consider all conspicuous sets of composition factors for $L(E_7){\downarrow_L}$ with no $3_1$ and composition factors a subset of those of Cases 3 or 6. We obtain a single possibility,
\[15^2,10_1^2,10_2^6,5^2,4^2,3_2,1^2\]
which can arise as Case 3. However, the action on $M(E_7)$ is incorrect: of the three central extensions, only one has modules of dimension $2$ (needed to project onto $A_1$), which is $\hat L=2^4\rtimes (2\cdot \Alt(5))$. There is up to field automorphism a unique conspicuous set of composition factors for this group (that is a faithful module for $\hat L$). The tensor product of this with either $2$-dimensional module (recall that $M(E_7)\otimes M(A_1)$ is a factor of $L(E_8){\downarrow_{E_7A_1}}$) has three composition factors of dimension $15$, which means we cannot be in Case 3 (as that has only three copies of $15$ in total). Hence we can eliminate this subgroup as well.

We now finish with the positive-dimensional subgroups. Every Levi subgroup is contained in a maximal-rank subgroup above, so we need only consider $F_4$, $G_2F_4$ and any subgroup $\mb X$ such that $L\not\leq \mb X^0$.

For $\bX=F_4$, there are two possibilities for the action of $L$ on $M(F_4)$: $10_1,5^3$ and $15,10_2$. In these cases, there is no elementary abelian $2$-subgroup of order $8$ consisting of (the identity and) elements of trace $1$. By \cite{griess1991}, we see that $L_0$ is toral, and thus $L$ is contained in the normalizer of a torus. But the Weyl group of $\bX$ is soluble, and $\Alt(5)$ is not. Thus $L$ does not embed in $\bX$. 

For $\mb X=G_2F_4$, first note that $M(G_2)$ appears twice as a composition factor in $L(E_8){\downarrow_{\mb X}}$ (as we are in characteristic $3$). Therefore $3_i$ cannot appear in $M(G_2){\downarrow_L}$. However, $L$ does not embed in $G_2$, so we must act as $L/L_0\cong \Alt(5)$ on this factor, and this can only embed via $A_2$, which acts as $M(A_2)$ plus its dual (and a trivial) or $M(A_2)\otimes M(A_2)^*$ minus two trivials. Both of these restrict to $L$ to contain $3_i$, so we obtain a contradiction in this case.

Finally, we consider the normalizer of a torus. Of course, if $\mb T$ denotes the torus of $N_{\mb G}(\mb T)$ then $L\cap \mb T$ is either $L_0$ or $1$. We may use Magma to enumerate the conjugacy classes of subgroups $L$ and $L/L_0$ in $W(E_8)$ and find there is one of the former and four of the latter. All but one of them centralize at least a $2$-space on the reflection representation, so they lie in parabolic subgroups of $\mb G$. The remaining class is a diagonal $\Alt(5)$ lying in $\Sym(5)\times \Sym(5)=W(A_4A_4)$, and so $L\leq A_4A_4$ in this case.

It remains to show that $L$ is always contained in a positive-dimensional subgroup. We will do this by showing that $L_0$ is always a toral subgroup, using results of Griess \cite{griess1991}. In Cases 5 to 7, the trace of an involution in $L_0$ is $-8$, so class 2B in the notation of \cite{griess1991}. Thus from \cite[Table I]{griess1991} we see that $L_0$ is toral. For the rest, we see from \cite{griess1991} that whether $L_0$ is toral depends on the subset of $L_0$ that consists of 2A-elements, i.e., those with centralizer $E_7A_1$. This is the same subset (as the traces of elements in $L_0$ are the same) for the first four cases, and so if $L_0$ is toral in one of the cases then it is toral in all cases. However, clearly the $L_0\leq L\leq H\leq A_4\leq A_4A_4$ is toral, as $L_0$ is a torus of $H$. This completes the proof that only Cases 1 and 7 can occur. (Notice that this chimes with the case $q=5$, where these were the two cases other than the maximal $B_2$.)

\medskip

\noindent \textbf{Case 1}: The pressure of this module is $-3$, so $H$ stabilizes a line on $L(E_8)$.

\medskip

\noindent \textbf{Case 7}: We first prove that either $5$ or $10$ is a submodule of $L(E_8){\downarrow_H}$. We will then restrict to $L$ and show that the previous statement implies that $H$ is contained in a member of $\ms X$, and then check that in fact $H$ is a blueprint for $L(E_8)$.

Suppose that $L(E_8){\downarrow_H}$ has a socle consisting only of copies of $14$ and $25$. If any of the submodules of the socle are summands they may be removed, so we may assume that there is at most one $14$ in the socle, and at most two $25$s in the socle.

If $14$ lies in the socle then we need to take the $\{5,10,25\}$-radical of $P(14)$, and this is
\[ 10,25/5,10/14,\]
so obviously we need more in the socle. The $\{5,10,25\}$-radical of $P(25)$ is
\[ 25/10/10/25/10/10/5,25/5,10,25/5,10/25.\]
Together these only have eight copies of $10$, so we need $14\oplus 25^{\oplus 2}$ in the socle, but then all $25$s lie in the socle or top of $L(E_8){\downarrow_H}$. Therefore we need to take the $\{5,10\}$-radical of $P(25)$, which is
\[ 5,10/5,10/25.\]
This yields a contradiction, and so $14$ cannot be a factor of $\soc(L(E_8){\downarrow_H})$. Examining the Cartan matrix of the principal block of $H$, we find that there are nine copies of $10$ in $P(25)$, so $\soc(L(E_8){\downarrow_H})$ cannot be $25$. Thus we take the $\{5,10,14\}$-radical of $P(25)$, to yield the module
\[ 5/5,10,14/5,10/25.\]
This clearly does not have enough copies of $10$. Thus we must have either $5$ or $10$ in $\soc(L(E_8){\downarrow_H})$, as claimed.

\medskip

We now consider the restriction to $L$; as we saw above, $L$ lies in $N_{\mb G}(\mb T)$ (but then lies in $A_4A_4$), inside $A_4A_4$ or inside $D_8$.

For $A_4A_4$, we have that $L$ acts as $5$ on both copies of $M(A_4)$. Thus $L$ is determined up to conjugacy in $\mb G$, and acts as
\[ (10_1/10_2/10_1)^{\oplus 4}\oplus 15^{\oplus 4}\oplus 10_1^{\oplus 2}\oplus 10_2^{\oplus 2}\oplus 5^{\oplus 4}\oplus 4^{\oplus 2}.\]
(This is because $L$ acts on $L(A_4)$ as $10_1\oplus 10_2\oplus 4$, and on $M(A_4)\otimes \Lambda^2(M(A_4))$ as $(10_1/10_2/10_1)\oplus 15\oplus 5$.)

Of course, there is an algebraic $B_2$ subgroup acting irreducibly on $M(A_4)$ as $L(\lambda_1)$; since $L(E_8){\downarrow_{A_4A_4}}$ restricts to a diagonal $B_2$ subgroup $\mb Y$ acting in this way with two isomorphic $24$-dimensional summands, and four isomorphic $50$-dimensional summands, we just need to check the structure of these two modules. We see that 
\[ L(\lambda_1)^{\otimes 2}\cong L(0)\oplus L(2\lambda_1)\oplus L(2\lambda_2),\]
and since $\Lambda^2(L(\lambda_1))\cong L(2\lambda_2)$ (as we can see from the above formula), we have
\[ L(\lambda_1)\otimes \Lambda^2(L(\lambda_1))=L(\lambda_1)\otimes L(2\lambda_2)\cong L(\lambda_1)\oplus (L(2\lambda_2)/L(\lambda_1+2\lambda_2)/L(2\lambda_2)).\]
In terms of dimensions, this is $5\otimes 5=1\oplus 10\oplus 14$ and $5\otimes 10=5\oplus (10/25/10)$.

Comparing the decompositions for $L$ and $\mb Y$, we see that every submodule $5$ and $10_1$ is stabilized, so either $H$ is contained in a member of $\ms X$ or it does not stabilize any of these subspaces, i.e., neither $5$ nor $10$ is a submodule of $L(E_8){\downarrow_H}$, but we have already shown that this is impossible. Thus $H$ is contained in a member of $\ms X$ in this case.

Thus we may assume that $L$ is contained in $\mb X=D_8$, and above we showed that $L$ acts on $M(D_8)$ as $10_1\oplus 5\oplus 1$. This clearly lies inside $B_7$, and then inside $B_2D_5$, the stabilizer of the $5\oplus 10$ decomposition, and then inside a copy of $B_2B_2$. The diagonal $B_2$ subgroup of this is contained in $A_4A_4$ (it is subgroup 59 in \cite[Table 13]{thomas2017un}) and so $L$ is conjugate to the embedding in $A_4A_4$.

In particular, this means that $H$ is contained in a member of $\ms X$. Furthermore, since $L$ is contained in only $A_4A_4$ and $D_8$ (and the normalizer of a torus), $H$ can only be embedded in $A_4A_4$ or $D_8$. As above, we see that $H$ is the fixed points of the irreducible $B_2$ subgroup 59 from \cite[Table 10]{thomas2017un}, and the structure of this on $L(E_8)$ is easiest to see through $A_4A_4$, as we computed it above. This is also the structure of $H$, and so $H$ is a blueprint for $L(E_8)$, as needed.

\medskip

\noindent $\boldsymbol{q=9}$: Using the traces of elements only of order $41$, we find up to field automorphism exactly seven conspicuous sets of composition factors for $L(E_8){\downarrow_H}$, namely
\[ 14_1,10_1^{11},5_1^{20},1^{24},\quad 25_{12},16^8,10_1,10_2,5_1^6,5_2^6,1^{15},\quad 25_1^3,16^3,14_1^6,5_1^6,5_2^2,1,\]
\[
25_1^4,14_1^2,10_1^{10},5_1^4,\quad 64_{12}^2,50_{12},25_2,10_1,10_2^3,5_1,\quad  50_{12}^2,50_{21}^2,14_1,14_2,10_1,10_2,\]
\[ 64_{12},64_{21},50_{12},50_{21},10_1,10_2.\]

\noindent \textbf{Cases 1, 2 and 3}: In each case there is a trivial composition factor. No simple $kH$-module of dimension at most $25$ has non-zero $1$-cohomology, so in each case $H$ stabilizes a line on $L(E_8)$.

\medskip

\noindent \textbf{Cases 4, 5, 6 and 7}: In each of these cases the restriction to $L\cong \PSp_4(3)$ has composition factors $25^4,14^2,10^{10},5^4$ on $L(E_8)$. Thus $L$ is a blueprint for $L(E_8)$ by the previous proof, and so therefore is $H$.

\medskip

\noindent $\boldsymbol{q=2}$: This was obtained in \cite[Proposition 6.3]{craven2017}.

\medskip

\noindent $\boldsymbol{q=4}$: Label the simple modules $4_1$ to $4_4$ so that the graph automorphism, which has order $4$, cycles them. In particular, this means that $S^2(4_i)=1/4_{i+1}/1/4_{i+2}$ for all $i$.

We have two conjugacy classes of subgroups of $H$ isomorphic to $\Alt(5)\times \Alt(5)$. Write $y_1$ and $z_1$ for elements of orders $3$ and $5$ from the first factor of one of the classes, and $y_2$ and $z_2$ for elements of orders $3$ and $5$ from the first factor of the other class. Thus the centralizer in $H$ of $y_i$ is $\gen{y_i}\times L_i$ and the centralizer in $H$ of $z_i$ is $\gen{z_i}\times L_i$ for $L_i\cong \Alt(5)$. In particular, $C_H(y_i)'=C_H(z_i)'$.

We will normally focus on one element for each proof, whichever works best. Write $V_1$ for the $1$-eigenspace of this element, and $V_{\theta^i}$ and $V_{\zeta^i}$ for the $\theta^i$- and $\zeta^i$-eigenspace of $L(E_8)$ with respect to this element, where $\theta^3=1$ and $\zeta^5=1$. Each of these forms a module for the appropriate $L_i$.

We label the $L_i$ and the modules for them so that the restriction of $4_1$ to $L_1$ is $1^{\oplus 2}\oplus 2_1$, and the restriction of $4_2$ to $L_2$ is $1^{\oplus 2}\oplus 2_1$ (hence $4_1$ restricts to $L_2$ as $2_2^{\oplus 2}$). We also choose the $z_i$ and $\zeta$ so that $4_1$ has zero $\zeta^2$-eigenspace for $z_1$, and $4_2$ has zero $\zeta$-eigenspace for $z_2$.

Because there are many modules for $H$ and two classes of $L_i$, each with eigenspaces with respect to $y_i$ and $z_i$, we do not give complete information about the actions here, and just inform the reader as and when we use a particular piece of information.

Up to automorphism, there are eight conspicuous sets of composition factors for $L(E_8){\downarrow_H}$ with positive pressure:
\[ 16_{12}^4,4_1^{16},4_2^{16},4_3^6,1^{32},\quad 16_{14}^4,16_{24}^2,4_1^{12},4_2^9,4_3^3,4_4^8,1^{24},\]
\[ 16_{13}^4,16_{14}^2,16_{23}^2,4_1^9,4_2^4,4_3^9,4_4^4,1^{16},\quad 64_{234},16_{13}^2,16_{14}^3,16_{24},4_1^7,4_2^5,4_3^5,4_4^2,1^{12},\]
\[64_{134},16_{13}^2,16_{14}^2,16_{23},16_{24},4_1^6,4_2^4,4_3^5,4_4^4,1^{12},\]\[ 64_{134}^2,16_{12},16_{14},16_{24},4_1^5,4_2^5,4_3,4_4^4,1^{12},\]
\[ 64_{124},64_{234},16_{13}^2,16_{14},16_{23},4_1^4,4_2^2,4_3^4,4_4^2,1^8,\] \[ 64_{123},64_{134},16_{14}^2,16_{23}^2,4_1^4,4_2^2,4_3^4,4_4^2,1^8.\]
There are also three with non-positive pressure, which stabilize a line on $L(E_8)$ by Proposition \ref{prop:pressure}:
\[4_1^{32},4_2^{12},4_3,1^{68},\quad 16_{14}, 16_{34}^8,4_1^9,4_2,4_4^8,1^{32},\quad 
16_{13},16_{24}^8,4_1^8,4_2,4_3^8,4_4,1^{32}\]
Hence they are strongly imprimitive by Lemma \ref{lem:fix1space}.

\medskip

\noindent\textbf{Case 1}: The eigenspaces of $z_2$ on $L(E_8)$ have dimensions $64$, $56$, $36$, $36$ and $56$, so $z_2$ lies in the class with centralizer $A_7T_1$ from Table \ref{t:semie8}. The composition factors of $V_1$ are $2_1^{16},1^{32}$, so $L_2$ must act on $M(A_7)$ as $2_2^{\oplus 4}$. It is easy to see that the copy of $\PSL_2(4)$ inside $A_7$ acting this way is a blueprint for $L(E_8)$ with the $A_1$ subgroup of $A_7$ acting on $M(A_7)$ as $L(2)^{\oplus 4}$ stabilizing the same eigenspaces on $\Lambda^i(M(A_7))$ for $i=1,2,3$ (i.e., $L(\lambda_i)$ for $i=1,2,3,5,6,7$) and on $M(A_7)^{\otimes 2}$ (i.e., $L(\lambda_1+\lambda_7)$).

Thus $H$ is a blueprint for $L(E_8)$.

\medskip

\noindent\textbf{Case 2}: The eigenspaces of $z_1$ on $L(E_8)$ have dimensions $54,46,51,51,46$, so $z_1$ has centralizer $A_2D_5T_1$ from Table \ref{t:semie8}. The composition factors of $V_1$ are $2_1^{12},2_2^3,1^{24}$, and this is $L(D_5)\oplus L(A_2)\oplus L(T_1)$. This means that $M(D_5){\downarrow_{L_1}}$ cannot have $4\oplus 1$, $4^{\oplus 2}$ or $2_1\oplus 2_2$ in it. In fact, the only solution is that $L_1$ acts trivially on $M(A_2)$ (so $L_1$ is contained in $D_5$) and with factors $2_1^3,1^4$ on $M(D_5)$. The composition factors of $L(\lambda_4){\downarrow_{L_1}}$ are $2_2^4,4^2$, so the module is always semisimple. (We use this notation for simple modules to alert the reader as to which factor of the centralizer of $z_1$ the simple module is for.)

The action of $D_5$ on the $\zeta^i$-eigenspaces of $z_1$ are $L(0)^{\oplus 3}\oplus L(\lambda_4)^{\oplus 3}$ for $\zeta$ and $L(\lambda_1)^{\oplus 3}\oplus L(\lambda_4)$ for $\zeta^2$, each up to duality (but since $L_1$ acts the same on $L(\lambda_4)$ and $L(\lambda_5)$ we do not work out the exact distribution).

There are five possible modules for $M(D_5){\downarrow_{L_1}}$, combining various summands of the form $1/2_1/1$, $(1/2_1)\oplus (2_1/1)$, $2_1$ and $1$. In each case the obvious $A_1$ subgroup $\mb Y$ is a blueprint for $M(D_5)=L(\lambda_1)$ and $L(D_5)=L(\lambda_2)$. Hence $L_1$ and $\mb Y$ stabilize the same subspaces of $V_1$ and any subspace of $V_\zeta$ whose composition factors are only trivials and copies of $2_1$. In particular, $\mb Y$ stabilizes any submodule $4_1$ or $4_2$ in $L(E_8){\downarrow_H}$, since these have only composition factors $1$ and $2_1$ on the $1$, $\zeta$- and $\zeta^4$-eigenspaces, and $\zeta^{\pm 2}$ is not an eigenvalue of them. Furthermore, since clearly $2_2$ is not a submodule of $L(D_5)$ (as it only appears inside $2_1^{\otimes 2}=1/2_2/1$) we can have no submodule $4_3$ in $L(E_8){\downarrow_H}$.

Thus if $H$ is not Lie imprimitive then the socle and top consist of copies of $4_4$, $16_{14}$ and $16_{24}$. The $\{1,4_i,16_{14},16_{24}\}$-radical of $P(16_{24})$ is
\[ 1,1/4_1,4_3/1/4_2,4_4/16_{24},\]
and removing all quotients that cannot be quotients of $L(E_8){\downarrow_H}$ leaves just $4_4/16_{24}$.

Thus we consider the $\{1,4_i,16_{14},16_{24}\}$-radicals of $P(4_4)$ and $P(16_{14})$, and then take the $\{4_4,16_{14},16_{24}\}'$-residuals, to obtain the modules
\[ 4_4/1/4_1/1/4_2/1/4_1,4_4/1,16_{14}/4_1/1,1/4_2,4_4/1/4_1,4_4/1,16_{24}/4_4\]
and
\[ 16_{14}/4_1/1/4_2,4_4/1,1/4_1,4_3/1/4_2/1/4_1/16_{14}.\]
But we need three copies of $4_3$ in $L(E_8){\downarrow_H}$, so we need $16_{14}^{\oplus 3}$ in the socle, and another $16_{14}^{\oplus 3}$ in the top, but we only have four copies of $16_{14}$ in $L(E_8){\downarrow_H}$. Thus two must be summands, and of course we obtain a contradiction.

To see strong imprimitivity, note that the composition factors of $L(E_8){\downarrow_H}$ are not stable under any outer automorphism of $H$, so we may apply Lemma \ref{lem:semilinearfield}.

\medskip

\noindent\textbf{Case 3}: The trace of $z_2$ on $L(E_8)$ is $-2$, so $z_2$ has centralizer $A_4A_4$. The composition factors of $L(E_8){\downarrow_{A_4A_4}}$ are $(1001,0000)$ and $(0000,1001)$ for the $1$-eigenspace of $z$ (we are suppressing the `$L(-)$'), and the other four modules are
\[ (1000,0100),\quad (0100,0001),\quad (0010,1000),\quad (0001,0010).\]
These four modules can be permuted by taking duals and swapping the two $A_4$-factors, so we may assume that the first one is the restriction of the $\zeta$-eigenspace of $z$ on $L(E_8)$ to $A_4A_4$. With that assignment, the actions of $A_4A_4$ on the $\zeta$, $\zeta^2$, $\zeta^3$ and $\zeta^4$-eigenspaces must be the modules above in the same order.

The action of $L_2$ on $V_1$ has factors $2_1^8,2_2^8,1^{16}$, and the action on $V_\zeta$ has factors $4^4,2_1^9,2_2^4,1^8$. This means that there is only one possibility for the action of $L_2$ on the two modules $M(A_4)$, and that is on the first copy of $M(A_4)$ with factors $2_1^2,1$, and on the other with factors $2_2^2,1$. This yields nine possibilities for the action of $L_2$ on $L(E_8)$, as we can act as $(1/2_i)\oplus 2_i$, $(2_i/1)\oplus 2_i$, or $2_i^{\oplus 2}\oplus 1$.

Suppose that every semisimple submodule of $L(E_8){\downarrow_{L_2}}$ is stabilized by some positive-dimensional subgroup $\mb Y$ of $\mb G$. Any submodule of $L(E_8){\downarrow_H}$ that restricts semisimply to $L_2$ is therefore also stabilized by $\mb Y$. The modules $1$, $4_i$ and $16_{13}$ all restrict semisimply to $L_2$, and so if the condition holds, and any of those are submodules of $L(E_8){\downarrow_H}$, then $H$ is strongly imprimitive by Theorem \ref{thm:intersectionorbit} and Lemma \ref{lem:semilinearfield}. The remaining two composition factors are $16_{14}$ and $16_{23}$: the former has $\zeta^{\pm 1}$-eigenspace $1/2_2/1$ and others semisimple, and the latter has $\zeta^{\pm 2}$-eigenspace $1/2_1/1$ and others semisimple.

Suppose that the two copies of $M(A_4)$ are $2_1^{\oplus 2}\oplus 1$ and $(1/2_2)\oplus 2_2$ or its dual. We set $\mb Y$ to be an $A_1$ subgroup of $A_4A_4$ acting on the first module as $L(1)^{\oplus 2}\oplus L(0)$ and on the second as $(L(0)/L(2))\oplus L(2)$ or its dual. In this case, it is an easy check that all semisimple submodules of $L(E_8){\downarrow_{L_2}}$ are stabilized by $\mb Y$. There is no copy of $2_2$ in the second socle layer of one of $V_\zeta$ or $V_{\zeta^4}$ in this case (depending on which possibility for $L_2$ one takes) so $16_{14}$ cannot be a submodule of $L(E_8){\downarrow_H}$. There is no copy of $2_1$ in the second socle layer of $V_{\zeta^{\pm 2}}$ either, so $16_{23}$ cannot be a submodule of $L(E_8){\downarrow_H}$ either. Thus $H$ is strongly imprimitive. 

On the other hand, if the two copies are $(1/2_1)\oplus 2_1$ or its dual, and $2_2^{\oplus 2}\oplus 1$ then we set $\mb Y$ to act as $(L(0)/L(4))\oplus L(4)$ and $L(2)^{\oplus 2}\oplus L(0)$, and again we stabilize every semisimple submodule of $L(E_8){\downarrow_{L_2}}$. As before, there is no copy of $2_2$ in the second socle layer of $V_{\zeta^{\pm 1}}$ and no copy of $2_1$ in the second socle layer of either $V_{\zeta^2}$ or $V_{\zeta^3}$, so neither $16_{14}$ nor $16_{23}$ are submodules of $L(E_8){\downarrow_H}$. Thus $H$ is strongly imprimitive.

If both modules are semisimple then we again embed in $\mb Y$ acting as $L(1)^{\oplus 2}\oplus L(0)$ and $L(2)^{\oplus 2}\oplus L(0)$ and obtain the same conclusion.

This eliminates five possibilities for the action of $L_2$, leaving the four where neither of the two modules is semisimple. To eliminate these, we show that $L_2$ cannot embed in this fashion, by considering the centralizer of $y_2$, which is $\gen{y_2}\times L_2$. The trace of $y_2$ on $L(E_8)$ is $-4$, hence $y_2$ has centralizer $A_8$. The composition factors of $L_2$ on the $1$-eigenspace of $y_2$ are $4^8,2_1^8,2_2^8,1^{16}$, and these are consistent with the composition factors of $M(A_8){\downarrow_{L_2}}$ being either $4^2,1$ or $2_1^2,2_2^2,1$. But the former has the wrong action on the rest of $L(E_8)$. Since there is a single trivial composition factor in $M(A_8){\downarrow_{L_2}}$, we have very few possible modules: it is easy to see that one $2_1$ and one $2_2$ must split off as summands, and the rest is up to duality one of
\[ 2_1\oplus 2_2\oplus 1,\quad 2_1\oplus (1/2_2),\quad 2_2\oplus (1/2_1),\quad 1/2_1,2_2,\quad 2_1/1/2_2.\]
This yields five actions of $L_2$ on $L(E_8)$. The first three of these appear as well in the actions of $L_2$ on $L(E_8)$ via $A_4A_4$, where one of the two modules is semisimple. However, if neither module $M(A_4){\downarrow_{L_2}}$ is semisimple, then $L(E_8){\downarrow_{L_2}}$ does not match up with any of the five possibilities arising from $A_8$. Thus these four cannot yield embeddings of $L_2$, and the proof is complete.

\medskip

\noindent\textbf{Case 4}: The restriction $L(E_8){\downarrow_{L_1}}$ has composition factors $4^{20},2_1^{29},2_2^{29},1^{52}$. We note that by just checking all possibilities, there is no copy of $L_1$ inside $A_8$ that has those factors on $L(E_8)$. (We recall that the composition factors of $A_8$ on $L(E_8)$ are $M(A_8)\otimes M(A_8)^*$ (minus a trivial), $\Lambda^3(M(A_8))$ and $\Lambda^3(M(A_8)^*)$). Thus $L_1$ cannot embed in $A_8$, or in any parabolic whose Levi subgroup is contained in $A_8$.

However, the trace of $z_1$ on $L(E_8)$ is $3$, so $z_1$ has centralizer $A_6A_1$. Hence $L_1$ embeds in $A_6A_1$, which can be conjugated into $A_8$. (In fact, $L_1$ must have a trivial composition factor on $M(A_6)$ so there is a version with the same composition factors on $L(E_8)$ that lies in $A_1A_5$, directly in $A_8$.) This contradiction means that $H$ cannot embed into $E_8$ with these factors.

\medskip

\noindent\textbf{Case 5}: This time the trace of $z_2$ is $3$, so $z_2$ has centralizer $A_6A_1$. The composition factors of $V_1$ are $4^3,2_1^8,2_2^6,1^{12}$, and this yields composition factors of $L_2$ on $M(A_6)$ and $M(A_1)$ of $4,2_2,1$ and $2_2$ respectively. We therefore have three possibilities for the action of $L_2$ on $L(E_8)$ and on each eigenspace of $z_2$.

The $L_2$-actions of the composition factors of $L(E_8){\downarrow_H}$ on the eigenspaces of $z_2$ are given in Table \ref{t:L2z2actions}.
\begin{table}\begin{center}
\begin{tabular}{cccc}
\hline Module & $1$-eigenspace & $\zeta$-eigenspace & $\zeta^2$-eigenspace
\\\hline $1$ & $1$ & - & -
\\ $4_1$ & - & $2_1$ & -
\\ $4_2$ & $2_2$ & - & $1$
\\ $4_3$ & - & - & $2_2$
\\ $4_4$ & $2_1$ & $1$ & -
\\ $16_{13}$ & - & $4$ & $4$
\\ $16_{14}$ & $2_1^{\oplus 2}$ & $(1/2_2/1)$ & $2_1$ 
\\ $16_{23}$ & $2_2^{\oplus 2}$ & $2_2$ & $(1/2_1/1)$
\\ $16_{24}$ & $4$ & $1\oplus 2_2$ & $1\oplus 2_1$
\\ $64_{134}$ & $4^{\oplus 2}$ & $P(2_2)\oplus 4$ & $P(2_2)\oplus 4^{\oplus 2}$
\\ \hline
\end{tabular}
\end{center}
\caption{Actions of $L_2$ on $z_2$-eigenspaces of simple $kH$-modules.}
\label{t:L2z2actions}\end{table}
The dimensions of $V_\zeta$ and $V_{\zeta^2}$ are the same, but they are not isomorphic modules up to duality. By considering the $L_2$-composition factors we can tell them apart, and we know that the action of $A_6A_1$ on the $\zeta$-eigenspace of $L(E_8)$ is the sum of $(\lambda_3,0)$ or its dual and $(\lambda_1,1)$ or its dual, and the action of $A_6A_1$ on the $\zeta^2$-eigenspace of $L(E_8)$ is the sum of $(\lambda_1,0)$ or its dual and $(\lambda_2,1)$ or its dual.

Suppose that $L_2$ acts semisimply on $M(A_6)$. The actions of $L_2$ on $(\lambda_3,0)$ and $(\lambda_1,1)$ are
\[ \quad P(2_1)\oplus P(2_2)\oplus 4^{\oplus 3}\oplus (1/2_1,2_2/1)\oplus 1\quad \text{and}\quad P(2_1)\oplus (1/2_1/1)\oplus 2_2.\]
From this we see that $16_{14}$ cannot be a submodule of $L(E_8){\downarrow_H}$, since $1/2_2/1$ is not a submodule of $V_\zeta$.

In a similar vein to our proofs for $\PSU_3(4)$, we embed $L_2$ in $\mb Y$ acting on $M(A_6)$ as $L(6)\oplus L(2)\oplus L(0)$ and on $M(A_1)$ as $L(2)$. With these factors, $L_2$ and $\mb Y$ stabilize all simple submodules of $L(E_8){\downarrow_{L_2}}$ other than those isomorphic to $4$ that lie in $V_1$. This means that $\mb Y$ stabilizes any submodule of $L(E_8){\downarrow_H}$ isomorphic to $1$, $4_i$ or $16_{13}$. Thus either $H$ is strongly imprimitive or the socle contains some of $16_{23}$, $16_{24}$ and $64_{134}$, but each such submodule must be a summand. Since this means there is another submodule of $L(E_8){\downarrow_H}$, this proves that $H$ is strongly imprimitive.

Thus we may assume that $L_2$ acts on $M(A_6)$ as either $(1/2_1)\oplus 4$ or $(2_1/1)\oplus 4$; up to replacement of $z_2$ by $z_2^{-1}$ we choose the first one. 

We prove that there is no embedding of $\bar L_2$, the maximal subgroup $\Alt(5)\wr 2$ of $H$ containing $L_2$, with this action. We first prove that if $\bar L_2$ has these composition factors on $L(E_8)$ then it can only be embedded in $D_8$, and then that any such embedding has the previous action on $L(E_8)$.

Since $\bar L_2$ is not an almost simple group, it is contained in a member of $\ms X^\sigma$. We will use algebraic group notation for the modules for the group $\bar L_2'\cong \Alt(5)\times \Alt(5)$, writing $(0,0)$ for the trivial module, $(1,0)$, $(0,1)$, $(2,0)$ and $(0,2)$ for the four $2$-dimensional modules, and so on.

The only normal subgroups of $\bar L_2$ are $1$, $\bar L_2$ itself and $\bar L_2'$ of index $2$. If $\bar L_2$ embeds in a product $\mb X_1\mb X_2$ and there is no subgroup isomorphic to $\bar L_2$ in $\mb X_2$, then $\bar L_2'\leq \mb X_1$. We will prove that there are a few positive-dimensional subgroups that cannot contain $\bar L_2'$, and hence eliminate certain maximal subgroups from containing $\bar L_2$.

For example, there is no set of composition factors for $M(A_8){\downarrow_{\bar L_2'}}$ that yields a subgroup with the correct composition factors on $L(E_8)$, and so $\bar L_2'$ cannot be contained in any subgroup conjugate to a subgroup of $A_8$. Using the remark above as well, this eliminates the $A_7$-, $A_1A_6$- and $A_1A_2A_4$-parabolics. Since $\bar L_2$ must stabilize a line or hyperplane on $M(A_4)$, this means that the $A_4A_4$ maximal-rank subgroup, and $A_4A_3$-parabolic subgroups are also eliminated.

Suppose that $\bar L_2$ lies in $E_7$. There are $24776$ sets of composition factors for $\bar L_2$ on $M(E_7)$, fourteen of which are conspicuous. However, none of these has traces on elements of orders $3$ and $5$ that fuse to the correct classes of $E_8$, so $\bar L_2$ cannot embed in $E_7$. Hence $\bar L_2$ cannot embed in an $E_7$-parabolic either.

This is enough to eliminate the $E_6A_1$- and $D_5A_2$-parabolic subgroups, the $E_6A_2$ maximal-rank subgroup ($\bar L_2'$ must lie inside the $E_6$ factor, but then the composition factors of $L(E_8){\downarrow_{\bar L_2'}}$ must be distributed among $M(E_7)$ and $L(E_7)$ just as if it were extensible to $\bar L_2$, so cannot exist), and the $F_4G_2$ maximal subgroup. It can also eliminate the $E_7A_1$ maximal-rank subgroup, as in characteristic $2$ this is the direct product of $E_7$ and $A_1$, so any subgroup $\bar L_2$ containing $\bar L_2'$ lies entirely inside $E_7$.

The remaining maximal subgroups are the $D_7$-parabolic, the $D_8$ maximal-rank subgroup, and any maximal-rank subgroups whose component group contains an involution where $\bar L_2'$ can embed in the connected component, or $\bar L_2$ can embed in the component group. However, $\bar L_2$ cannot embed in the Weyl group $W(E_8)$ by a direct computer check.

There are four possible sets of composition factors for $M(D_8){\downarrow_{\bar L_2'}}$:
\[ (1,0),(0,1),(2,0),(0,2),(2,2)^2,\quad (2,3),(3,2),\]
\[ (0,0)^2,(1,0),(2,0),(0,1),(2,3),\quad (0,0)^2,(1,0),(0,1),(0,2),(3,2).\]
Only the first two of these can yield a copy of $\bar L_2$ rather than $\bar L_2'$; thus $\bar L_2$ cannot embed in the $D_7$-parabolic subgroup. We show that in the first two cases, $\bar L_2$ does not embed in such a way as the restriction to $L_2\times \gen{z_2}$ is consistent with the remaining possibility discussed above. 

The $L_2$-actions on the two modules $(\lambda_3,0)$ and $(\lambda_1,1)$, given the action $(1/2_1)\oplus 4$ on $M(A_6)$ and $2_2$ on $M(A_1)$, are
\[ 4^{\oplus 3}\oplus P(2_1)\oplus P(2_2)\oplus (1/2_1,2_2/1)\oplus 1\quad \text{and}\quad (2_2/1/2_1/1)\oplus P(2_1).\]

The exterior square $\Lambda^2(M(D_8))$ and $L(D_8)$ differ by the precise extensions of the trivial modules on the non-trivial factor of the Lie algebra. Thus the $\zeta^i$-eigenspaces for $i\neq 0$ of $L(D_8)$ and $\Lambda^2(M(D_8))$ are the same, so we can use this exterior square to understand the $\bar L_2'$-action on $L(E_8)$.

If $\bar L_2'$ acts on $M(D_8)$ as $(2,3)\oplus (3,2)$ then $(2,3)\otimes (3,2)$ is a summand of the $\bar L_2'$-action on the exterior square of  $M(D_8)$. The restriction to $L_2\times \gen{z_2}$ of the $k\bar L_2'$-module $(2,3)\otimes (3,2)$ has $\zeta$-eigenspace $P(2_1)^{\oplus 2}$, which is not a summand of the options for $V_\zeta$ given above.

Similarly, since $(1,0)$ and $(0,1)$ have no extensions with $(2,2)$, in the first option for the $\bar L_2'$-action on $M(D_8)$, $(1,0)\oplus (0,1)$ must be a summand. The tensor product $(1,0)\otimes (0,1)$ restricts to $L_2\times \gen {z_2}$ with $\zeta$-eigenspace simply $2_1$. This therefore must be a summand of $V_\zeta$, but of course it is not.

This deals with $\bar L_2$ being contained in $D_8$. For maximal-rank subgroups whose component group contains an involution, we have the $D_4D_4$, $A_2^4$ and $A_1^8$ maximal-rank subgroups. For each of these, their connected component lies in $D_8$, and in each case $\bar L_2'$ must be contained in $D_8$, so is one of the four options above. Notice that these maximal-rank subgroups cannot stabilize lines on $M(D_8)$, so $\bar L_2'$ must act with one of the first two actions given above; but the second was eliminated completely and the first yielded a contradiction with the action of $L_2$ on $M(A_6)$. Thus these cannot occur, and the result is proved.

\medskip

\noindent\textbf{Case 6}: The $64_{134}$ only has an extension with $16_{14}$, but there is no module of the form $64_{134}/16_{14}/64_{134}$; the $\{64_{134}\}$-heart of $L(E_8){\downarrow_H}$ is either of this form or $64_{134}^{\oplus 2}$, so it is the latter. Thus there are two summands $64_{134}$ in $L(E_8){\downarrow_H}$. Any $16$-dimensional simple submodule is a summand, since it appears with multiplicity $1$.

This time, for ease of calculation we consider the action of $y_1$ rather than $z_1$. The trace of $y_1$ on $L(E_8)$ is $-4$, so $y_1$ has centralizer $A_8$, and the composition factors on $V_1$ are $4^6,2_1^7,2_2^{11},1^{20}$, so $L_1$ acts on $M(A_8)$ with factors $2_1,2_2^3,1$. It is easy to see that any such module is the sum of $2_2^{\oplus 2}$ and one of the nine modules with composition factors $2_1,2_2,1$, as we will see in Case 8 of $\PSU_3(4)$, in Section \ref{sec:diffpsu34}. These nine modules are
\[2_1\oplus 2_2\oplus 1,\quad (1/2_1)\oplus 2_2,\quad (2_1/1)\oplus 2_2,\quad (1/2_2)\oplus 2_1,\quad (2_2/1)\oplus 2_1,\]
\[1/2_1,2_2,\quad 2_1,2_2/1,\quad 2_1/1/2_2,\quad 2_2/1/2_1.\]
However, for six of these the module $(M(A_8)\otimes M(A_8)^*){\downarrow_{L_1}}$ does not have a summand $2_2$, whereas the $1$-eigenspace of $y_1$ on $64_{134}$ has $L$-action isomorphic to $2_2^{\oplus 2}$. The remaining three modules are $2_2^{\oplus 2}$ plus one of $2_1\oplus 2_2\oplus 1$, $(1/2_1)\oplus 2_2$ and $(2_1/1)\oplus 2_2$.

In the first case, we embed $L_1$ into an algebraic $A_1$ subgroup $\mb Y$ of $A_8$ acting on $M(A_8)$ as $L(0)\oplus L(1)\oplus L(2)^{\oplus 3}$, and note that the tensor square and exterior cube of this module are
\[ (0/2/0)\oplus (0/4/0)^{\oplus 9}\oplus 5^{\oplus 6}\oplus 2^{\oplus 6}\oplus 1^{\oplus 2}\]
and
\[ (0/4/0)^{\oplus 3}\oplus (1/5/1)^{\oplus 3}\oplus 5^{\oplus 3}\oplus 6\oplus 1^{\oplus 3}\oplus 2^{\oplus 11}\oplus 0^{\oplus 4}\]
respectively, where we have removed the $L(-)$ for clarity. This is of course the action of $A_1$ on the $kL_1$-module $V_{\theta^i}$, so we see that $L_1$ and $\mb Y$ stabilize the same semisimple submodules that do not have $4$ as a composition factor of $V_{\theta^i}$ and $V_1$. Hence $\mb Y$ stabilizes any submodule of $L(E_8){\downarrow_H}$ whose restriction to $L_1$ is semisimple but does not involve $4$, for example, each $4_i$. But one of these must be a submodule (or $H$ stabilizes lines on $L(E_8)$ and so is strongly imprimitive anyway) as all other factors are multiplicity free, hence $H$ is strongly imprimitive by Theorem \ref{thm:intersectionorbit} (and Lemma \ref{lem:fix1space}).

The other possibility is that $L_1$ acts on $M(A_8)$ as $(1/2_1)\oplus 2_2^{\oplus 3}$ or its dual, in which case we embed $L_1$ into $\mb Y$ acting as $(L(0)/L(4))\oplus L(2)^{\oplus 3}$. The product of this and its dual, and the exterior cube of this module, are
\[ (4/0/8/0/4)\oplus (0/4/0)^{\oplus 9}\oplus 6^{\oplus 6}\oplus 2^{\oplus 6}\]
and
\[ (4/0/8/0/4)^{\oplus 3}\oplus (0/4/0)^{\oplus 3}\oplus (0/4)^{\oplus 3}\oplus 6^{\oplus 4}\oplus 2^{\oplus 11}\oplus 0\]
respectively. Every semisimple submodule of $V_1$ and $V_{\theta^i}$ is therefore stabilized by $\mb Y$ in this case, so again $H$ is strongly imprimitive, as before.

\medskip

\noindent\textbf{Case 7}: The trace of $z_1$ is $-2$, so has centralizer $A_4A_4$ from Table \ref{t:semie8}. The composition factors of $V_1$ are $4^6,2_1^4,2_2^4,1^8$, so $L_1$ acts on each $M(A_4)$ as $4\oplus 1$. This means that, ignoring copies of $4$, $V_1$ is isomorphic to $P(1)^{\oplus 2}$ and each $V_{\zeta^i}$ is isomorphic to \[P(1)\oplus P(2_1)\oplus P(2_2)\oplus (1/2_1,2_2/1).\]
Furthermore, both $64$-dimensional modules have no extensions with the other composition factors, so split off as summands. The action of $L_1$ on the $\zeta^i$-eigenspace of the sum $64_{124}\oplus 64_{234}$ is $P(2_1)\oplus P(2_2)\oplus 4^{\oplus 3}$, so any composition factor of $\soc(L(E_8){\downarrow_H})$ other than the $64$-dimensionals must restrict to $L_1\times \gen{z_1}$ with only trivial and $4$-dimensional simple submodules. The only simple modules for $H$ that restrict in this way are $1$ and $16_{24}$, with the latter not a composition factor of $L(E_8){\downarrow_H}$. Thus $H$ stabilizes a line on $L(E_8)$, as needed.

\medskip

\noindent\textbf{Case 8}: The trace of $y_1$ on $L(E_8)$ is $-4$, hence $y_1$ has centralizer $A_8$. The composition factors of $V_1$ are $4^4,2_1^{12},2_2^{12},1^{16}$, but of the (up to field automorphism) thirteen possible sets of composition factors for $M(A_8){\downarrow_{L_1}}$, none has these composition factors. ($2_1^2,2_2,1^3$ has the correct number of $4$s, but has $2_1^{13},2_2^{10}$.) Thus this case cannot exist.

\medskip

\noindent $\boldsymbol{q=8}$: There are far too many sets of composition factors for a module of dimension $248$, so we need to make some reductions first.

Write $1^a,4^b,16^c,64^d$ for the dimensions of the composition factors of $L(E_8){\downarrow_H}$. By considering a unipotent element of order $4$, which acts on $L(E_8)$ with at most 60 blocks of size $4$ by Table \ref{t:unipe8p4}, and acts only with blocks of size $4$ on non-trivial simple modules, we see that $a\geq 8$. By pressure we may assume that $b>a$, since $H^1(H,M)\neq 0$ if and only if $\dim(M)=4$ for simple modules by \cite{sin1992b}. Finally, the trace of a rational element of order $5$ is either $-2$ or $23$ (see Table \ref{t:semie8}), and is also $a-b+c-d$. These constraints yield $23$ solutions for $(a,b,c,d)$:
\[( 8 , 12 , 4 , 2 )\quad
( 12 , 15 , 3 , 2 )\quad
( 16 , 18 , 2 , 2 )\quad
( 20 , 21 , 1 , 2 )\quad
( 8 , 16 , 7 , 1 )\quad
( 12 , 19 , 6 , 1 )\]
\[( 16 , 22 , 5 , 1 )\quad
( 20 , 25 , 4 , 1 )\quad
( 24 , 28 , 3 , 1 )\quad
( 28 , 31 , 2 , 1 )\quad
( 32 , 34 , 1 , 1 )\quad
( 36 , 37 , 0 , 1 )\]
\[( 8 , 20 , 10 , 0 )\quad
( 12 , 23 , 9 , 0 )\quad
( 16 , 26 , 8 , 0 )\quad
( 20 , 29 , 7 , 0 )\quad
( 24 , 32 , 6 , 0 )\quad
( 28 , 35 , 5 , 0 )\]
\[( 32 , 38 , 4 , 0 )\quad
( 36 , 41 , 3 , 0 )\quad
( 40 , 44 , 2 , 0 )\quad
( 44 , 47 , 1 , 0 )\quad
( 48 , 50 , 0 , 0 )\]

We can eliminate solutions $11,12,20,21,22,23$ by taking a subgroup $\gen x$ of order $7$ stable under the graph automorphism and summing the traces of $x$, $x^2$ and $x^4$ on the simple modules. This yields an integer, which we then compare to the possible such integers from $E_8$, finding that those cases above cannot occur.

We can eliminate another one using module structure: the $\{1,4_i\}$-radical of $P(4_1)$ is
\[ 4_1/1/4_2,4_6/1,1/4_1,4_3,4_5/1,1/4_2,4_6/1/4_1,\]
where $S^2(4_i)=1/4_{i+1}/1/4_{i+2}$. Thus we again see that the 23rd and 12th cases $(48,50,0,0)$ and $(36,37,0,1)$ cannot occur, but these have already been eliminated. If $L(E_8){\downarrow_H}$ has pressure $1$, then we split $L(E_8){\downarrow_H}$ into sections by taking the $\{1,4_i\}$-radical and then any $16$s and $64$s, and then repeating this. By the above structure, we can obtain at most four trivials before we must hit either a $16$ or the top of the module, so we must have $a\leq 4(c+1)$, removing the fourth case $(20,21,1,2)$, a possibility with pressure $1$.

We check the rest of the cases, using the traces of elements of order at most $21$, and find up to automorphism eleven conspicuous sets of composition factors for $L(E_8){\downarrow_H}$ (these are all conspicuous for elements of orders $63$ and $65$ as well). These are
\begin{small}
\[ 64_{126},64_{236},16_{13}^2,16_{14},16_{23},4_1^4,4_2^2,4_3^4,4_4^2,1^8,\;\; 64_{136},64_{346},16_{14}^2,16_{15},16_{24},4_1^4,4_2^2,4_4^4,4_5^2,1^8,\]
\[ 64_{156}^2,16_{12},16_{16},16_{26},4_1^5,4_2^5,4_3,4_6^4,1^{12},\quad
64_{256}^2,16_{13},16_{16},16_{36},4_1^5,4_2,4_3^4,4_4,4_6^4,1^{12},\] \[64_{356}^2,16_{14},16_{16},16_{46},4_1^5,4_2,4_4^4,4_5,4_6^4,1^{12},\quad
64_{246}^2,16_{13},16_{15},16_{35},4_1^4,4_2,4_3^4,4_4,4_5^4,4_6,1^{12},\]
\[64_{156},16_{15}^2,16_{16}^2,16_{25},16_{26},4_1^6,4_2^4,4_3,4_5^4,4_6^4,1^{12},\quad 16_{15}^4,16_{16}^2,16_{25}^2,4_1^9,4_2^4,4_3,4_5^8,4_6^4,1^{16},\]
\[16_{14}^4,16_{15}^2,16_{24}^2,4_1^8,4_2^4,4_3,4_4^8,4_5^4,4_6,1^{16},\quad 16_{16}^4,16_{26}^2,4_1^{12},4_2^9,4_3^3,4_6^8,1^{24}\]
\[ 16_{12}^4,4_1^{16},4_2^{16},4_3^6,1^{32}.\]
\end{small}
We will use Lemma \ref{lem:allcasesstronglyimp}, so we look for elements of order $195$, powering to an element $y\in H$ of order $65$, and stabilizing certain eigenspaces of $y$ on $L(E_8)$. We employ the strategy outlined in Remark \ref{rem:howusestrongimp}.

For the eighth, tenth and eleventh cases, there are elements of order $195$ stabilizing all eigenspaces of $y$, so these cases are certainly strongly imprimitive. For the others, we give the table of which modules can have their eigenspaces stabilized by roots of $y$.

\begin{center}
\begin{tabular}{cc}
\hline Case & Modules
\\ \hline $1$ & $1$, $4_1$, $4_2$, $4_3$, $4_4$
\\ $2$ & $1$, $4_1$, $4_2$, $4_4$, $4_5$
\\ $3$ & $1$, $4_1$, $4_2$, $4_3$, $4_4$, $16_{16}$, $16_{26}$
\\ $4$ & $1$, $4_1$, $4_2$, $4_3$, $4_4$, $4_6$, $16_{16}$
\\ $5$ & $1$, $4_1$, $4_2$, $4_4$, $4_5$, $4_6$, $16_{16}$
\\ $6$ & $1$, $4_1$, $4_2$, $4_3$, $4_4$, $4_5$, $4_6$
\\ $7$ & $1$, $4_1$, $4_2$, $4_3$, $4_5$, $4_6$, $16_{15}$, $16_{16}$, $16_{25}$, $16_{26}$
\\ $9$ & $1$, $4_1$, $4_2$, $4_3$, $4_4$, $4_5$, $4_6$, $16_{14}$
\\ \hline
\end{tabular}
\end{center}

\noindent \textbf{Case 1}: We have no possible automorphisms that permute the factors, so if $H$ is not strongly imprimitive, the socle of $L(E_8){\downarrow_H}$ must consist of summands of $L(E_8){\downarrow_H}$ and $16_{13}$, so we need the $\{1,4_i\}$ radical of $P(16_{13})$. This is
\[ 1/4_2/1/4_1,4_3/16_{13}.\]
We apply Proposition \ref{prop:bottomhalf} with the set $I=\{64_{126},64_{236},16_{14},16_{23}\}$. Since each of these appears in $L(E_8){\downarrow_H}$ with multiplicity $1$, the $I$-heart is semisimple. Thus by the proposition at least four trivials appear in the $I'$-radical of $L(E_8){\downarrow_H}$. But this radical us a submodule of the module above, a clear contradiction. Thus $L(E_8){\downarrow_H}$ contains a submodule from the list in the table, and thus $H$ is strongly imprimitive, using Lemma \ref{lem:allcasesstronglyimp}.

\medskip

\noindent \textbf{Case 2}: Suppose that $H$ is not strongly imprimitive, and we aim to apply Lemma \ref{lem:allcasesstronglyimp} again. Since the composition factors are invariant under an outer automorphism swapping $4_1$ and $4_4$, we remark that there are elements of order $195$ stabilizing all eigenspaces of $y$ on $4_1\oplus 4_4$, but not for $4_2\oplus 4_5$. In addition, the $64$-dimensional factors have no extension with $16_{14}$ and therefore split off as summands. Ignoring these, the socle of $L(E_8){\downarrow_H}$ is a sum of some of $4_2\oplus 4_5$, $16_{14}$, and potentially other $16$-dimensional modules that must be summands. We take the $\{1,4_i\}$-radical of $P(16_{14})$, then add any copies of $16_{15}$ and $16_{24}$ on top, then add any copies of $1$, $4_1$, $4_2$, $4_4$ and $4_5$ on top of that, to obtain the module
\[ 1,1/4_1,4_2,4_4,4_5/1,16_{15},16_{24}/4_1,4_4/16_{14}.\]
Since this clearly does not have enough composition factors, we must have $4_2\oplus 4_5$ in the socle (we need both else one on its own is an orbit under $N_{\Aut^+(\mb G)}(H)$, so $H$ is strongly imprimitive by Theorem \ref{thm:intersectionorbit}). They of course then can only appear in the socle and top of the module.

We take the $\{1,4_1,4_4,16_{14}\}$-radical of $P(4_2)$, then add any copies of $16_{15}$ and $16_{24}$ on top, then add any copies of $1$, $4_1$, $4_4$ and $16_{14}$ on top of that, to obtain the module
\[ 4_4/1,16_{14}/4_1,4_4/1,16_{24}/4_2.\]
We need eight trivial composition factors, and we can see at most seven, so this yields a contradiction. Thus either there must be a copy of one of $1$, $4_1$ or $4_4$ in the socle of $L(E_8){\downarrow_H}$, or a copy of $4_2$ but not $4_5$ (or vice versa). In particular, $H$ is strongly imprimitive.

\medskip

\noindent \textbf{Cases 3, 4, 5 and 6}: In these cases $H$ is definitely Lie imprimitive, because all $16$-dimensional composition factors appear with multiplicity $1$, there are only extensions between modules of dimension $4^i$ and $4^{i\pm 1}$, and $L(E_8)$ is self-dual. Thus there must be a $4$-dimensional factor in $\soc(L(E_8){\downarrow_H})$. In the third, fourth and fifth cases the composition factors are not invariant under any outer automorphism, so these are also strongly imprimitive. In the sixth case, we find no elements that simultaneously stabilize the eigenspaces of all of $4_1$, $4_2$ and $4_4$, so we must use a different strategy.

Suppose that $L(E_8){\downarrow_H}$ is invariant under the field automorphism (as it needs to be for $H$ not to be strongly imprimitive). We show that $H$ in fact stabilizes a line on $L(E_8)$, so suppose that this is not true. As we must have a subquotient $16_{13}\oplus 16_{15}\oplus 16_{35}\oplus 4_2\oplus 4_4\oplus 4_6$ by Lemma \ref{lem:oddandodd}, this subquotient must form the `middle' of the module $L(E_8){\downarrow_H}$, and at least six trivial modules must appear in the $\{1,4_1,4_3,4_5\}$-radical of $L(E_8){\downarrow_H}$ by Proposition \ref{prop:bottomhalf}. (We may ignore the $64$-dimensional modules in the socle as we are interested in whether $H$ stabilizes a line.) We therefore consider the $\{1,4_1,4_3,4_5,64_{246}\}$-radical of $P(4_i)$ ($i=1,3,5$), which is simply $1/4_i$. Thus we need at least six $4$-dimensional factors in the socle to have that many trivial factors, but the pressure of the module is only $3$. Thus $H$ stabilizes a line on $L(E_8)$ by Proposition \ref{prop:pressure}, and in particular is strongly imprimitive by Lemma \ref{lem:fix1space}.

\medskip

\noindent \textbf{Case 7}: This case is very easy, since every composition factor of $\soc(L(E_8){\downarrow_H})$ is in the table apart from $64_{156}$, which must be a summand if it is in the socle. Thus $H$ is always strongly imprimitive.

\medskip

\noindent \textbf{Case 9}: We first show that $H$ is Lie imprimitive, and then that $H$ is strongly imprimitive. We need to consider the $\cf(L(E_8){\downarrow_H})$-radicals of $P(16_{15})$ and $P(16_{24})$, which are
\[ 1,1/4_1,4_3,4_4/1,16_{14}/4_2,4_4/16_{24},\quad 1,1/4_1,4_4,4_6/1,16_{14}/4_1,4_5/16_{15}.\]
We clearly cannot fit sixteen trivial composition factors above two of these modules, and so there is another factor in $\soc(L(E_8){\downarrow_H})$. Thus $H$ is Lie imprimitive, so lies inside a member of $\ms X$.

To prove strong imprimitivity, note that there are elements of order $195$ stabilizing the eigenspaces in $4_1\oplus 4_4$ and $4_3\oplus 4_6$ as well, so we may assume that the only extra modules in the socle are $4_2$ and $4_5$, else $H$ is strongly imprimitive.

We show that there is a subquotient 
\begin{equation}16_{15}^{\oplus 2}\oplus 16_{24}^{\oplus 2}\oplus 4_3\oplus 4_6\label{eq:bigsubquotient}\end{equation}
in $L(E_8){\downarrow_H}$. To see this, we take the $\{4_3,4_6,16_{15},16_{24}\}$-heart $V$ of $L(E_8){\downarrow_H}$, and we show that it is semisimple. Note that $V$ is self-dual. If $4_3$ or $4_6$ is a submodule of $V$ then it splits off as a summand, as we desire, so we may assume that $\soc(V)$, and $\top(V)$, have composition factors only copies of $16_{15}$ and $16_{24}$. We constructed above a pyx for $V$, namely a sum of the radicals of $P(16_{15})$ and $P(16_{24})$. However, the $\{16_{15},16_{24}\}'$-residuals of these modules still contain a pyx for $V$, as $\top(V)$ consists solely of copies of these modules. However, these are just $16_{15}$ and $16_{24}$, so indeed $V$ is semisimple.

We now compute a few radicals. The $\{1,4_1,4_2,4_4,4_5,16_{14}\}$-radicals of $P(4_2)$ and $P(4_5)$ are simply
\[ 1/4_1/1/4_2,\quad 1/4_4/1/4_5.\]
There can be at most four copies of $4_2$ and $4_5$ (that are not summands) in the socle of $L(E_8){\downarrow_H}$, and so the $\{1,4_1,4_2,4_4,4_5,16_{14}\}$-radical $W$ of $L(E_8){\downarrow_H}$ possesses at most two copies of $4_1$. However, the quotient module $L(E_8){\downarrow_H}/W$ must have the subquotient from (\ref{eq:bigsubquotient}) as a submodule, and the quotient by that must be isomorphic to a submodule of $W^*$, by self-duality of $L(E_8)$. We see then that there are not enough copies of $4_1$ in $W$, as there are eight in $L(E_8){\downarrow_H}$, which is a contradiction. Hence one of $4_1$, $4_3$, $4_4$, $4_6$, and $16_{14}$ is a submodule of $L(E_8){\downarrow_H}$.

Thus $H$ is strongly imprimitive in this, and all other, cases.
\end{proof}

This last remaining case of $\PSp_4(2)$ is not warranted. Two possible module actions of $H$ on $L(E_8)$ can be constructed in this case. A unipotent element $v$ of order $4$ in $H$ acts on $L(E_8)$ with Jordan blocks either $4^{56},3^8$ or $4^{60},2^4$. Assuming that $H$ does not stabilize a line on $L(E_8)$, we must have submodules of $P(4_i)$, together with copies of $8_1$ and $8_2$. The structure of $P(4_1)$ is uniserial, and is
\[ 4_1/1/4_2/1/4_1/1/4_2/1/4_1\]
and similarly for $P(4_2)$. Therefore, if the action of $v$ of order $4$ has a block of size $2$, there must be a summand $4_i/1/4_{3-i}/1/4_i$, so we see that there is a unique structure for the action of $H$ on $L(E_8)$ in each case, namely
\[ 8_1^{\oplus 6}\oplus 8_2^{\oplus 6}\oplus (4_1/1/4_2/1/4_1/1/4_2)^{\oplus 4}\oplus (4_2/1/4_1/1/4_2/1/4_1)^{\oplus 4}\]
and
\[ 8_1^{\oplus 6}\oplus 8_2^{\oplus 6}\oplus P(4_1)^{\oplus 2}\oplus P(4_2)^{\oplus 2}\oplus (4_1/1/4_2/1/4_1)^{\oplus 2}\oplus (4_2/1/4_1/1/4_2)^{\oplus 2}.\]

\section{\texorpdfstring{${}^2\!B_2$}{2B2}}

As ${}^2\!B_2(q)$ contains an element of order $q-1$, we need only consider $q=2^{2n+1}$ for $q=8,32,128,512$, as all larger Suzuki groups are blueprints as $q-1\not\in T(E_8)$. Thus here we assume that $n=1,2,3,4$ only.

\begin{proposition}\label{prop:suzine8}
Let $H\cong {}^2\!B_2(2^{2n+1})$ for some $n\leq 4$.
\begin{enumerate}
\item If $n=1$ then $H$ stabilizes a line on $L(E_8)$ or the composition factors, up to field automorphism, are one of
\[ 16_{12}^4,4_1^{16},4_2^{16},4_3^6,1^{32},\quad 16_{12}^3,16_{23}^3,4_1^{12},4_2^{12},4_3^8,1^{24},\quad 16_{12}^4,16_{23}^2,4_1^{12},4_2^{11},4_3^9,1^{24},\]
\[ 64,16_{12}^2,16_{13},16_{23}^2,4_1^8,4_2^7,4_3^7,1^{16},\quad 64^2,16_{12},16_{13},16_{23},4_1^5,4_2^5,4_3^5,1^{12}.\]
\item If $n=2,3$ then $H$ is strongly imprimitive.
\item If $n=4$ then either $H$ stabilizes a line on $L(E_8)$ or $H$ is a blueprint for $L(E_8)$.
\end{enumerate}
\end{proposition}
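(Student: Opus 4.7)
The plan is to dispose of the four cases $n=4,3,2,1$ in decreasing order of ease, since larger field sizes afford larger semisimple element orders and correspondingly stronger blueprint or orbit-stabilizer arguments. In every case the first step is the same: enumerate conspicuous sets of composition factors for $L(E_8){\downarrow_H}$ using the traces of semisimple elements, noting that by Steinberg's tensor product theorem the only simple modules of dimension at most $248$ involved are tensor products of Frobenius twists of the $4$-dimensional natural module together with the trivial, and that by Table \ref{t:modules48916} only the $4_i$ have non-zero $1$-cohomology. Any conspicuous set of non-positive pressure forces $H$ to stabilize a line on $L(E_8)$ via Proposition \ref{prop:pressure} and Lemma \ref{lem:fix1space}.

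For $n=4$ ($q=512$), $H$ contains semisimple elements of torus orders $511$, $481$ and $545$, all lying in $T(E_8)$, so Theorem \ref{thm:largeorderss} does not apply directly to $H$. The plan for each remaining positive-pressure case is the roots trick: fix $x\in H$ of order $545$ (say), check by computer that its $\mb G$-class is determined by its eigenvalues on $L(E_8)$, and then search inside a maximal torus of $\mb G$ containing $x$ for a cube-root $\hat x$ of order $3\cdot 545=1635$ with the same number of distinct eigenvalues on $L(E_8)$ as $x$. Since $1443$, $1533$, $1635\notin T(E_8)$, Theorem \ref{thm:largeorderss} then makes $\hat x$ a blueprint for $L(E_8)$, whence so are $x$ and $H$, giving the blueprint alternative. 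Failure of the roots trick to produce a sufficiently distinct cube-root would instead force enough eigenvalue coincidence to bring the pressure analysis back into play.

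For $n=2$ and $n=3$ the cases are tailored for Lemma \ref{lem:allcasesstronglyimp}, whose hypothesis list explicitly contains ${}^2\!B_2(32)$ with $x$ of order $93=3\cdot 31$ and ${}^2\!B_2(128)$ with $x$ of order $435=3\cdot 145$. After discarding non-positive-pressure cases as above, for each surviving conspicuous set one picks $y\in H$ of order $31$ (respectively $145$), checks that its $\mb G$-class is determined by its eigenvalues on $L(E_8)$, and searches in a maximal torus for a cube-root $x\in\mb G$ of order $93$ (respectively $435$) that stabilizes the eigenspaces comprising a chosen composition factor, or an $N_{\Aut^+(\mb G)}(H)$-orbit of composition factors, of $L(E_8){\downarrow_H}$; Lemma \ref{lem:allcasesstronglyimp} then delivers strong imprimitivity. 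When the composition factors are not invariant under any outer automorphism of $H$, Lemma \ref{lem:semilinearfield} upgrades Lie imprimitivity to strong imprimitivity automatically. In the remaining configurations one argues along the lines of Remark \ref{rem:howusestrongimp}, using radical computations in the projective covers of the relevant simple modules to rule out the awkward submodules from $\soc(L(E_8){\downarrow_H})$.

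For $n=1$ ($q=8$), the analysis is most delicate because $H$ has many simple modules (dimensions $1,4,16,64$) while its maximum semisimple element order is only $13$. The plan is to compute all conspicuous composition-factor sets using traces of elements of orders $5$, $7$ and $13$, apply pressure to eliminate every set with enough trivial factors, and record verbatim the five remaining sets in the statement of the proposition. The hard part is that no variant of the roots-trick or orbit-stabilizer argument available for $n\geq 2$ can be mounted here: the largest element order in $H$ is $13$, and even a cube-root $\hat x$ of order $39$ sits comfortably inside $T(E_8)$, so neither blueprint arguments via Theorem \ref{thm:largeorderss} nor Lemma \ref{lem:allcasesstronglyimp}-style strong imprimitivity arguments apply to those five sets. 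This is exactly the obstruction announced in Theorem \ref{thm:maximale8}: ${}^2\!B_2(8)$ is one of the groups acknowledged there as genuinely unresolved, and the author remarks immediately after Theorem \ref{thm:maximale8} that these methods appear insufficient to settle it.
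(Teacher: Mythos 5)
Your plan follows the paper's proof essentially step for step: pressure/conspicuous-set elimination throughout, Lemma \ref{lem:allcasesstronglyimp} applied to cube-roots of orders $93$ and $435$ for $n=2,3$ (with projective-cover radical arguments for the stubborn socle configurations), a roots-trick blueprint argument via an element order outside $T(E_8)$ for $n=4$, and leaving exactly the five listed sets unresolved for $n=1$. The only divergences are cosmetic: the paper runs the $n=4$ check on the order-$481$ torus (roots of order $1443$, with the conspicuous sets found via the subgroup ${}^2\!B_2(8)$ and eigenspace-dimension equations rather than raw trace tables) instead of $545$, and for $n=2$ the hardest case also leans on Jordan-block data for an order-$4$ unipotent element alongside the radical/pyx computations, but these are refinements within the same method.
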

\begin{proof} We first list the $23$ possible distributions of dimensions of composition factors for $L(E_8){\downarrow_H}$, based on the trace of an element of order $5$ being $-2$ or $23$, and the module having positive pressure, which we saw from the previous section:
\[( 8 , 12 , 4 , 2 )\quad
( 12 , 15 , 3 , 2 )\quad
( 16 , 18 , 2 , 2 )\quad
( 20 , 21 , 1 , 2 )\quad
( 8 , 16 , 7 , 1 )\quad
( 12 , 19 , 6 , 1 )\]
\[( 16 , 22 , 5 , 1 )\quad
( 20 , 25 , 4 , 1 )\quad
( 24 , 28 , 3 , 1 )\quad
( 28 , 31 , 2 , 1 )\quad
( 32 , 34 , 1 , 1 )\quad
( 36 , 37 , 0 , 1 )\]
\[( 8 , 20 , 10 , 0 )\quad
( 12 , 23 , 9 , 0 )\quad
( 16 , 26 , 8 , 0 )\quad
( 20 , 29 , 7 , 0 )\quad
( 24 , 32 , 6 , 0 )\quad
( 28 , 35 , 5 , 0 )\]
\[( 32 , 38 , 4 , 0 )\quad
( 36 , 41 , 3 , 0 )\quad
( 40 , 44 , 2 , 0 )\quad
( 44 , 47 , 1 , 0 )\quad
( 48 , 50 , 0 , 0 )\]

\noindent $\boldsymbol{n=1}$: We assume that the trace of an element of order $5$ is not $23$, as then it must have negative pressure. There are five remaining conspicuous sets of composition factors for $L(E_8){\downarrow_H}$ up to field automorphism:
\[ 16_{12}^4,4_1^{16},4_2^{16},4_3^6,1^{32},\quad 16_{12}^3,16_{23}^3,4_1^{12},4_2^{12},4_3^8,1^{24},\quad 16_{12}^4,16_{23}^2,4_1^{12},4_2^{11},4_3^9,1^{24},\]
\[ 64,16_{12}^2,16_{13},16_{23}^2,4_1^8,4_2^7,4_3^7,1^{16},\quad 64^2,16_{12},16_{13},16_{23},4_1^5,4_2^5,4_3^5,1^{12}.\]
We can do nothing with any of these cases.

\medskip

\noindent $\boldsymbol{n=2}$: Note that $H$ has an element $x$ of order $31$ and an element of order $25$. There are, up to field automorphism, seven sets of composition factors for $L(E_8){\downarrow_H}$ that are conspicuous for elements of orders $25$ and $31$ and have positive pressure:
\[ 64_{134},64_{234},16_{12}^2,16_{15},16_{24},4_1^4,4_2^4,4_4^2,4_5^2,1^8,\]
\[ 64_{345}^2,16_{12},16_{13},16_{23},4_1^5,4_2^4,4_3^4,4_4,4_5,1^{12},\]
\[ 64_{135}^2,16_{13},16_{14},16_{34},4_1^5,4_2,4_3^4,4_4^5,1^{12},\] \[64_{135},16_{13}^2,16_{15}^2,16_{34},16_{45},4_1^6,4_2,4_3^4,4_4^4,4_5^4,1^{12},\]
\[ 16_{13}^2,16_{15}^4,16_{45}^2,4_1^9,4_2,4_3^4,4_4^4,4_5^8,1^{16},\quad 16_{13}^4,16_{34}^2,4_1^{12},4_2^3,4_3^8,4_4^9,1^{24},\]
\[16_{14}^4,4_1^{16},4_2^6,4_4^{16},1^{32}\]
As with $\PSp_4(8)$, we use Remark \ref{rem:howusestrongimp} to yield that either $H$ is strongly imprimitive or the socle of $L(E_8){\downarrow_H}$ consists solely of a subcollection of the composition factors, which do not have elements of order $93$ stabilizing their eigenspaces. Since none of the sets of composition factors is invariant under the field automorphism, we do not need to distinguish between strong and Lie imprimitivity (plus $\sigma$-stability). 

\medskip

\noindent \textbf{Cases 6 and 7}: For these sets of composition factors, there exists an element of order $93$ in $\mb G$, powering to $x$, and stabilizing the same subspaces of $L(E_8)$ as $x$, hence $H$ is strongly imprimitive by Lemma \ref{lem:allcasesstronglyimp}.

\medskip

For the other cases, we consider elements $\hat x$ of order $93$ that cube to $x$ and stabilize the constituent eigenspaces of some composition factors. We include the number of elements of order $93$ stabilizing those eigenspaces in the table.
\begin{center}
\begin{tabular}{ccc}
\hline Case & Factors & Number of elements
\\ \hline $1$ & $4_2$ & $2$
\\ $2$ & $4_2$, $4_3$, $4_4$, $4_5$ & $8$, $8$, $8$, $8$
\\ $3$ & $4_1$, $4_2$, $4_3$, $4_4$ & $2$, $8$, $2$, $2$
\\ $4$ & $4_1$, $4_2$, $4_3$, $4_5$ & $2$, $2$, $8$, $2$
\\ $5$ & $4_1$, $4_2$, $4_3$, $4_4$, $4_5$, $16_{15}$ & $2$, $26$, $2$, $2$, $8$, $2$
\\ \hline
\end{tabular}
\end{center}

\noindent \textbf{Cases 2 and 3}: We may remove any $64$-dimensional modules from the socle of $L(E_8){\downarrow_H}$ without affecting whether the socle contains $1$- or $4$-dimensional modules. Doing this, we note that the $16$s appear with multiplicity $1$, so in fact $L(E_8){\downarrow_H}$ must have a $4$-dimensional (or trivial) submodule. This is enough to complete the proof for Case 3, using Lemma \ref{lem:allcasesstronglyimp}.

For Case 2, we may assume that the socle consists of copies of $4_1$ only (and simple summands of dimension $16$). The $\{1,4_1,4_2,4_3\}$-radical of $P(4_1)$ is
\[ 1/4_3/1/4_1.\]
By Lemma \ref{lem:oddandodd}, the $\{4_4,4_5,16_{ij}\}$-heart of $L(E_8){\downarrow_H}$ is semisimple, whence by Proposition \ref{prop:bottomhalf} there must be at least six trivial composition factors (and two copies of $4_2$) in the $\{4_4,4_5,16_{ij}\}'$-radical of $L(E_8){\downarrow_H}$. However, from the above module we see that there are at most four trivials in this, and no copies of $4_2$ at all.

Thus $\soc(L(E_8){\downarrow_H})$ must contain one of the composition factors in the table above (or a trivial), and so $H$ is strongly imprimitive.

\medskip

\noindent \textbf{Case 4}: Suppose that none of the modules in the table appears in the socle of $L(E_8){\downarrow_H}$, and that there are no trivial submodules either. The composition factors of $L(E_8){\downarrow_H}$ that appear with non-unity multiplicity and are not in the table above are $16_{13}$, $16_{15}$ and $4_4$, and by Lemma \ref{lem:oddandodd} the $\{64_{135},16_{34},16_{45},4_2\}$-heart of $L(E_8){\downarrow_H}$ is semisimple.

Let $I$ be the set $\{16_{13},16_{15},4_1,4_3,4_4,4_5,1\}$. The $I$-radicals of $P(4_4)$, $P(16_{13})$ and $P(16_{15})$ are
\[ 4_3/1/4_1/1/4_4,4_5/1,16_{15}/4_1,4_3/1,16_{13}/4_1/1/4_4,\]
\[ 4_3/1/4_1/1,1/4_1,4_4,4_5/1,16_{15}/4_1/16_{13},\]
\[ 4_3/1/4_1/1/4_4/1,1/4_1,4_3/1,16_{13}/4_1,4_5/16_{15}.\]
(In fact, one may place a copy of $16_{15}$ on top of the last one, but of course the other copy of $16_{15}$ would have to lie in the top, so we can ignore this.) There must be two copies of $4_5$ in the $I$-radical of $L(E_8){\downarrow_H}$ by Proposition \ref{prop:bottomhalf}, but in each of the modules above, each copy of $4_5$ lies directly above a copy of $16_{15}$. However, there are four copies of $4_5$ and only two of $16_{15}$, so we obtain a contradiction.

Thus $L(E_8){\downarrow_H}$ has a module in the socle from the table above (or a trivial), and $H$ is strongly imprimitive.

\medskip

\noindent \textbf{Case 5}: This is similar to, but easier than, the previous case. Now the only possible modules in the socle are $16_{13}$ and $16_{45}$, so set $I=\{16_{13},16_{15},16_{45},4_1,4_3,4_4,4_5,1\}$.

The $I\setminus\{V\}$-radicals of $P(V)$, for $V=16_{13},16_{45}$, are
\[ 4_3/1/4_1,4_3,4_5/1,1,16_{15}/4_1,4_4,4_5/1,1,16_{45}/4_1,4_4,4_5/1,16_{15}/4_1/16_{13},\]
\[ 4_3/1/4_1/1/4_4/1/4_1,4_3/1,16_{13}/4_1,4_5/1,16_{15}/4_4,4_5/16_{45}.\]
Neither of these has eight trivial composition factors, which they would need to do in order for the socle to be simple. (This uses Proposition \ref{prop:bottomhalf}, just as in Case 4.)

Thus we see that the socle cannot be simple, so in fact must be $16_{13}\oplus 16_{45}$. In this case we take the $\{1,4_1,4_3,4_4,4_5,16_{15}\}$-radicals of the two projectives, which are
\[ 4_3/1/4_1/1,1/4_1,4_4,4_5/1,16_{15}/4_1/16_{13},\quad 4_3/1/4_1,4_5/1,16_{15}/4_4,4_5/16_{45}.\]
There are only six trivials in this submodule, so we apply Proposition \ref{prop:bottomhalf}, as in Case 4, to conclude that this cannot be the socle either.

Thus $L(E_8){\downarrow_H}$ has a module in the socle from the table above, and $H$ is strongly imprimitive.

\medskip

\noindent \textbf{Case 1}: The two $64$-dimensional modules have no extensions with $16_{12}$, and so must split off as summands. We will show that $L(E_8){\downarrow_H}$ always possesses either a $1$ or a $4_2$ in the socle, so assume that this is not the case. Note that if $16_{15}$ or $16_{24}$ lie in the socle then they are summands, so we may ignore them. Let $W$ be obtained from $L(E_8){\downarrow_H}$ by removing any simple summands, so that $\soc(W)$ is a submodule of $16_{12}\oplus 4_1^{\oplus 2}\oplus 4_4\oplus 4_5$. As the pressure of $L(E_8){\downarrow_H}$ is $4$, and the $16_{12}$ must support a $4$-dimensional module above it, there are at most four factors in $\soc(W)$. To begin with, we suppose that $16_{12}$ does not lie in the socle of $W$. At each stage we will be unable to build a pyx for $W$ with enough trivial composition factors.

In turn we eliminate all possible socles, starting with $4_4$, $4_5$ and $4_4\oplus 4_5$. The next step is to eliminate $4_1$ and $4_1^{\oplus 2}$. After that we eliminate $4_1^{\oplus 2}$ plus a submodule of $4_4\oplus 4_5$. This means that the socle is $4_1$ plus a non-trivial submodule of $4_4\oplus 4_5$, and we eliminate the three possibilities there. We finally briefly consider the case where $16_{12}$ lies in the socle, and eliminate this quickly.

\medskip

We construct a submodule of $P(4_i)$ for $i=1,4,5$, by taking all copies of $1$, $16_{12}$ and $4_j$ for $j\neq i$, together with $4_1$ if $i=1$, on top of $4_i$, then any copies of $16_{15}$ and $16_{24}$, then any copies of the $1$, $16_{12}$, and so on again. Finally, we add copies of $4_i$ on top of that, and remove any quotients of the form $1$, $4_2$, $16_{15}$ and $16_{24}$.
This forms the following two modules for $i=4,5$:
\[ 4_4/1/4_1,4_2,4_4/1,1,16_{12}/4_1,4_2,4_4,4_5/1,1,16_{15}/4_1,4_2,4_4,4_5/1,1,16_{12}/4_1,4_2/1/4_4,\] \[4_1,4_5/1,16_{12}/4_1,4_2,4_4/1,16_{12},16_{24}/4_1,4_2,4_5/1,16_{15}/4_5.\]

Let $v$ denote an element of order $4$ in $H$: the action of $v$ on the sum of the composition factors of $L(E_8){\downarrow_H}$ is $4^{60},1^8$, so from Table \ref{t:unipe8p4} we see that $v$ must have Jordan blocks $4^{60},2^4$. The element $v$ acts on the first module with blocks $4^{29}$, so four of the trivial composition factors of that module cannot appear in $W$ if $4_4$ lies in the socle of $W$. Since there are eight trivial factors in $W$, this means that none of $4_4$, $4_5$ and $4_4\oplus 4_5$ is the socle of $W$.

\medskip

If there are only copies of $4_1$ in $\soc(W)$ then instead of the $\{1,4_2,16_{15},16_{24}\}$-residual of this module for $P(4_1)$, we may take the $\{4_1\}'$-residual. So we take the $\{1,4_1,4_2,4_4,4_5,16_{12}\}$-radical of $P(4_1)$, which is
\[4_1/1/4_4/1,1,1/4_1,4_2,4_4/1,1/4_1,4_2,4_4/1,16_{12}/4_1,\]
then add on copies of $16_{15}$ and $16_{24}$, then take all copies of $1$, $4_1$, $4_2$, $4_4$, $4_5$ and $16_{12}$ on top of this module, and finally take the $\{4_1\}'$-residual of this. This yields the module
\[\begin{split} 4_1/1/4_1,4_1,4_4&/1,1,16_{12},16_{12}/4_1,4_1,4_1,4_2,4_2,4_4
\\ &/1,1,16_{12},16_{15},16_{24}/4_1,4_1,4_2,4_4/1,16_{12},16_{15}/4_1.\end{split}\]
Thus the socle of $W$ cannot be $4_1$ as there are only six trivials in this. However, if $\soc(W)\cong 4_1^{\oplus 2}$ then we cannot have any copies of $4_1$ in the middle of this module, yielding a smaller module:
\[ 4_1/1/4_4/1/4_1,4_2/1/4_1,4_1,4_4/1,16_{12},16_{15}/4_1.\]
As $v$ acts projectively on this module as well, then as before two of the trivials in this module cannot appear in $W$. However, this means that $4_1^{\oplus 2}$ cannot be the socle of $W$ either.

\medskip

Thus we need both a $4_1$ and either a $4_4$ or a $4_5$ in $\soc(W)$. Suppose first that $4_1^{\oplus 2}$ lies in the socle of $W$. Similarly to above, we construct submodules of $P(4_1)$, $P(4_4)$ and $P(4_5)$, but now imposing that all copies of $4_1$ lie in the socle or top. This yields modules
\[ 4_1/1/4_4,4_5/1,1/4_1,4_2,4_2/1,16_{24}/4_1,4_1,4_2,4_4,4_5/1,16_{12},16_{15}/4_1,\]
\[ 4_4/1/4_2/1/4_4,4_5/1/4_1,4_2/1/4_4,\quad 4_1,4_5/1,16_{12}/4_2,4_4/1,16_{24}/4_1,4_2,4_5/1,16_{15}/4_5.\]

The element $v$ acts with Jordan blocks $4^{25},1$ on the first module, $4^8$ on the second and $4^{20},2,1$ on the third. Hence only two trivial composition factors may appear in $W$ from each of these. Thus to achieve eight trivials in $W$, the socle must be $4_1^{\oplus 2}\oplus 4_4\oplus 4_5$. However, in this case we have to remove the copies of $4_1$, $4_4$ and $4_5$ from the middle of the modules, and this yields the three modules
\[ 4_5/1/4_2/16_{24}/4_1,4_1,4_2,4_4,4_5/1,16_{12},16_{15}/4_1,\quad 4_4,4_5/1/4_1,4_2/1/4_4,\] \[4_1,4_5/1,16_{12}/4_2,4_4/1,16_{24}/4_1,4_2,4_5/1,16_{15}/4_5.\]
The action of $v$ on these three modules has Jordan blocks $4^{20},1^2$ for the first, $4^5,2$ for the second, and $4^{20},2,1$ for the third. Now we see that we cannot build a pyx in this case either, since $v$ must act on a submodule of this with blocks $4^{60},2^4$, and the trivial composition factors only form three blocks of size $2$. Thus there is a single $4_1$ in the socle of $W$.

\medskip

If the socle is $4_1\oplus 4_4\oplus 4_5$, then we allow copies of $4_1$ in the interior of the module, but not $4_4$ or $4_5$. Taking the $\{1,4_1,4_2,16_{12}\}$-radical of $P(4_i)$, then adding on any copies of $16_{15}$ and $16_{24}$, and then again any copies of $1$, $4_1$ and $4_2$, and finally all copies of $4_4$ and $4_5$ on that, yields three modules. The $\{4_1,4_4,4_5\}'$-residuals of them are 
\[ 4_1,4_5/1,16_{12}/4_1,4_1,4_2,4_4,4_5/1,16_{12},16_{15},16_{24}/4_1,4_1,4_2,4_4,4_5/1,16_{12},16_{15}/4_1,\]
\[ 4_4/1/4_1,4_2,4_4,4_5/1,1,16_{12}/4_1,4_2/1/4_4,\] \[ 4_1,4_5/1,16_{12}/4_1,4_2,4_4/1,16_{12},16_{24}/4_1,4_2,4_5/1,16_{15}/4_5.\]

There are ten trivial factors in the sum of these, and we need eight, so if we can prove that at least three of these factors do not lie in $W$ then we get a contradiction. Take the $\{1\}$-heart of these modules, to produce three much smaller modules. The second is self-dual and has structure
\[ 1/4_1,4_2/1,1,16_{12}/4_1,4_2/1,\]
while the first and third are isomorphic, and
\[ 1/4_2/1,16_{24}/4_2/1,\quad 1/4_2/16_{24}/4_2/1,1\]
are the structures of the module and its dual. Call these modules $A_1$ and $A_2$ respectively, so that the $\{1\}$-heart of $W$ is a submodule of $A_1\oplus A_2^{\oplus 2}$. This now looks good for a contradiction: of course, $W$ has a single $16_{24}$, and modulo projective blocks the action of $v$ of order $4$ on $A_1$ is $2^2$ and $A_2$ is $2,1$. We end up that $W$ loses a trivial factor that contributes to a block of size $2$ in $A_2$, but we need all four of these to make the action of $v$ equal to $4^{60},2^4$ on $L(E_8)$. This contradiction deals with $\soc(W)\cong 4_1\oplus 4_4\oplus 4_5$.

We next consider the socle $4_1\oplus 4_5$, so we allow copies of $4_1$ and $4_4$ in the interior of our modules. As we have done before, we obtain the following submodules of the projectives: 
\[\begin{split} 4_1/1/4_1,4_1,4_4,4_5&/1,1,1,16_{12},16_{12}/4_1,4_1,4_1,4_2,4_2,4_4,4_5
\\ &/1,1,16_{12},16_{15},16_{24}/4_1,4_1,4_2,4_4,4_5/1,16_{12},16_{15}/4_1,\end{split}\]
\[ 4_1,4_5/1,16_{12}/4_1,4_2/1,16_{12},16_{24}/4_1,4_2,4_5/1,16_{15}/4_5\]
The actions of $v$ on these two modules have blocks $4^{47},2,1$ and $4^{24},2,1$ respectively. We see that the trivial factors contribute blocks $4,2^2,1^2$, so in particular we cannot make four blocks of size $2$. Thus we again obtain a contradiction to the socle of $W$ being this module.

Finally, for a socle of $W$ of the form $4_1\oplus 4_4$, we allow copies of $4_5$ in the interior of our modules. Proceeding as before, we obtain submodules of $P(4_1)$ and $P(4_4)$, which are quite large, and contain lots of trivial composition factors, too many for our standard construction above to work. We can cut the submodules of $P(4_1)$ and $P(4_4)$ down by noting that we can take copies of modules in the following order: $\{1,4_2,4_5,16_{12}\}$; $\{4_1\}$; $\{1,4_2,4_5,16_{12}\}$; $\{16_{15},16_{24}\}$; $\{1,4_2,4_5,16_{12}\}$; $\{4_1\}$; $\{1,4_2,4_5,16_{12}\}$. On top of these, we place copies of $4_1$ and $4_4$, then take the $\{4_1,4_4\}'$-residual of these modules, to form much smaller modules:
\[ 4_1/16_{12}/4_1,4_1,4_4/1,16_{12},16_{15}/4_1,4_1,4_2,4_4/1,16_{12},16_{15}/4_1,\]
\[ 4_4/1/4_2,4_4/1,1/4_1,4_2,4_4,4_5/1,1,16_{12}/4_1,4_2/1/4_4.\]

As a pyx for $W$, the sum of these two modules has exactly eight trivial factors, so all must be present in any pyx for $W$. However, the action of $v$ on these modules has blocks $4^{29},2$ and $4^{15},2$ respectively, and therefore the action is incompatible with the actual action of $v$ on $L(E_8)$, namely $4^{60},2^4$. This final contradiction deals with all cases where the socle of $W$ consists solely of $4$-dimensional factors.

\medskip

We have therefore shown that $16_{12}$ must lie in the socle of $W$, and also in the top. Remove these: as $16_{12}$ has extensions with $4_1$ and $4_2$, potentially both of these drop into the socle of $W$ now, so quotient out by $4_2$ if it exists. If there is no trivial submodule of this new module then we obtain a contradiction, as this new module can take the place of $W$ as well. However, we have already shown above that such a module cannot exist. Hence we find a submodule $1/4_2/16_{12}$, but it is easy to show that such a module cannot exist (there is a module $1/4_1,4_2/16_{12}$ but it needs both $4_i$ in the second layer). This completes the proof that the first case yields a strongly imprimitive subgroup, and therefore all cases.

\medskip

\noindent $\boldsymbol{n=3}$: Let $x$ be an element of order $145$ in $H$, so that $x^5$ has order $29$. The traces of elements of order $145$ are not known, but those of order $29$ are. In addition, there are $48384$ elements of order $5$ with trace $-2$ in the subgroup $(C_5)^8$ of a maximal torus $\mb T$, so given an element $y$ of order $29$ in $\mb T$, there are $48384$ elements $\hat y$ of order $145$ in $\mb T$ such that $\hat y^5=y$ and $\hat y^{29}$ has trace $-2$ on $L(E_8)$. These are manageable numbers for a fixed element of order $29$, but there are far too many possible modules for $H$ of dimension $248$ for us to test them all to find the conspicuous ones.

Suppose that $L(E_8){\downarrow_H}$ does not possess a composition factor of dimension $64$. The $1$-eigenspace of $x^5$ on modules of dimension $4$ and $16$ is zero, and there are no elements of order $29$ in $\mb T$ with a $1$-eigenspace of dimension $44$ or $8$, which removes the cases $( 8 , 20 , 10 , 0 )$ and $( 44 , 47 , 1 , 0 )$ above.

For the others, we look whether, given an element $y\in \mb T$ of order $29$, there is an integral solution to the simultaneous linear equations that arise when writing the dimensions of the eigenspaces of $y$ on $L(E_8)$ and of $x^5$ on the simple modules for $H$. There are no solutions when the $1$-eigenspace of $y$ has dimensions $28$, $36$, $40$ or $48$, and so we have so far eliminated Cases 13, 18, 20, 21, 22 and 23 from the list above. For the others, very few elements $y$ have the required property: in all cases where there are no $64$, we find only ten (up to taking powers of $y$) possibilities for the conjugacy class of $y$.

We run through the $48384$ possible roots of $y$ and see if there are solutions using the same method as above, but now with $x$ rather than $x^5$. This yields exactly four solutions:
\[ 16_{47}^4,4_1^6,4_4^{16},4_7^{16},1^{32},\quad 16_{17}^2,16_{47}^4,4_1^9,4_4^{12},4_5^3,4_7^8,1^{24},\]
\[ 16_{16}^2,16_{34}^2,16_{46}^4,4_1^4,4_3^4,4_4^8,4_5,4_6^8,4_7,1^{16},\quad 16_{13}^2,16_{34}^4,16_{47}^2,4_1^4,4_3^8,4_4^9,4_5,4_7^4,1^{16}.\]
In each case, one finds eight elements of order $435$ in $\mb T$ that cube to $x$ and stabilize the same subspaces of $L(E_8)$ as $x$, whence $H$ is strongly imprimitive by Lemma \ref{lem:allcasesstronglyimp}. (In fact, it is easy to show that $H$ is actually a blueprint for $L(E_8)$ from this.)

\medskip

We now look at the cases where there is a $64$ in the composition factors. Using a similar method as above, we find exactly one set of composition factors for $L(E_8){\downarrow_H}$ that is conspicuous for elements of order $145$, and that is
\[ 64_{347},16_{13},16_{17},16_{34}^2,16_{47}^2,4_1^4,4_3^4,4_4^6,4_5,4_7^4,1^{12}.\]
Again, there are eight elements of $\mb T$ of order $435$ that cube to $x$ and stabilize the same subspaces of $L(E_8)$ as $x$, so we are again done and $H$ is strongly imprimitive.

Finally, we consider those sets of dimensions that have two $64$s. We find seven sets of composition factors that are conspicuous for an element of order $145$, which are
\[ 64_{136},64_{156},16_{23},16_{35}^2,16_{57},4_2^2,4_3^4,4_5^4,4_7^2,1^8,\] \[ 64_{256},64_{356},16_{23}^2,16_{27},16_{36},4_2^4,4_3^4,4_6^2,4_7^2,1^8,\]
\[ 64_{126}^2,16_{35},16_{36},16_{56},4_2,4_3^5,4_5^4,4_6^4,4_7,1^{12},\]
\[ 64_{347}^2,16_{14},16_{17},16_{47},4_1^5,4_4^5,4_5,4_7^4,1^{12},\]
\[ 64_{346}^2,16_{13},16_{17},16_{37},4_1^4,4_3^4,4_4,4_5,4_7^5,1^{12},\] \[ 64_{345}^2,16_{12},16_{17},16_{27},4_1^4,4_2^4,4_4,4_5,4_6,4_7^4,1^{12},\]
\[64_{135}^2,16_{25},16_{27},16_{57},4_2^5,4_4,4_5^4,4_6,4_7^4,1^{12},\]
In all but the first and sixth of these cases, there are eight elements of order $435$ in $\mb T$ that cube to $x$ and stabilize the same subspaces of $L(E_8)$, so we are done in those cases again, and $H$ is strongly imprimitive.

In the first case, there are elements of order $435$ stabilizing the constituent eigenspaces of each simple module in $L(E_8){\downarrow_H}$ but $64_{156}$, and since the socle cannot consist solely of that factor (as it must be a summand if it is a submodule), $H$ is strongly imprimitive by Lemma \ref{lem:allcasesstronglyimp}. In the sixth case, even more is true: there are elements of order $435$ stabilizing the constituent eigenspaces of each composition factor (but of course not simultaneously) and so $H$ is strongly imprimitive by Lemma \ref{lem:allcasesstronglyimp} again, in both cases via Lemma \ref{lem:semilinearfield} (as the composition factors are not stable under any outer automorphisms).

This completes the proof that $H$ is always strongly imprimitive.

\medskip

\noindent $\boldsymbol{n=4}$: We have an element $x$ of order $37$, and a subgroup $L={}^2\!B_2(8)$ of $H$, and these are the only things we can use to compute conspicuous sets of composition factors for $L(E_8){\downarrow_H}$. Our strategy is to determine which classes in $\mb G$ of elements of order $37$ are possibilities in conspicuous sets of composition factors for $L(E_8){\downarrow_H}$, by checking that there is a solution to the linear algebra problem of the various eigenspaces for the action of $x$ having the required dimensions for $x$ to belong to that class, as we range over all possible sets of composition factors for $L(E_8){\downarrow_H}$. (This is a surprisingly stringent requirement.) Given a class for $x$, we then range over all thirteen conspicuous sets of composition factors for $L(E_8){\downarrow_L}$, using elements of orders $13$, $7$ and $5$. This time we use a linear programming solver in Magma to check that there is a solution to the eigenspace dimension problem for elements of orders $37$, $13$, $7$ and $5$ simultaneously, and one with non-negative integer coefficients. This yields a few possible sets of traces, but each can yield many conspicuous sets of composition factors for $L(E_8){\downarrow_H}$. For these, we now fix an element $x'$ of order $37$ in $\mb T$ with the appropriate trace on $L(E_8)$, and then run over all (several million) elements $y'$ of order $13$ with the appropriate trace on $L(E_8)$ and check if the eigenvalues of $x'y'$ match any of the conspicuous sets of composition factors above. This process takes a long, but finite, amount of time.

Very few do. With no $64$-dimensional factor, there are five sets of factors up to field automorphism:
\[16_{16}^4,4_1^{16},4_2^6,4_6^{16},1^{32},\quad  16_{15}^4,16_{56}^2,4_1^{12},4_2^3,4_5^8,4_6^9,1^{24},\]
\[16_{13}^2,16_{17}^4,16_{67}^2,4_1^8,4_2,4_3^4,4_6^4,4_7^8,4_8,1^{16},\quad  16_{14}^2,16_{18}^4,16_{68}^2,4_1^8,4_2,4_4^4,4_6^4,4_8^8,4_9,1^{16},\]
\[ 16_{15}^2,16_{19}^4,16_{69}^2,4_1^9,4_2,4_5^4,4_6^4,4_9^8,1^{16}.\]
(The last three of these fit into the schema
\[ 16_{a,b}^4,16_{a,c}^2,16_{b,d}^2,4_a^8,4_b^8,4_c^4,4_d^4,4_{a+1},4_{b+1},1^{16}.\]
Here, $a,b,c,d$ are distinct integers between $1$ and $9$, the indices are read modulo $9$, and if (say) $a+1=b$ then $4_b$ has multiplicity $9$.)

There is one with a single $64$-dimensional factor:
\[ 64_{126},16_{12}^2,16_{17},16_{26}^2,16_{67},4_1^4,4_2^6,4_3,4_6^4,4_7^4,1^{12}.\]
There are several with two: most fit in a general schema
\[ 64_{a+4,b+4,c+4}^2,16_{a,b},16_{b,c},16_{c,a},4_a^4,4_b^4,4_c^4,4_{a-4},4_{b-4},4_{c-4},1^{12}.\]
(The same rules apply for this schema as the previous one.) This yields a set of factors conspicuous for elements of orders $7$, $13$, $37$ and $481$ for $(a,b,c)$ any such triple. Apart from these, there are up to field automorphism three more conspicuous sets of composition factors for $L(E_8){\downarrow_H}$:
\[ 64_{125},64_{257},16_{13},16_{17}^2,16_{67},4_1^4,4_3^2,4_6^2,4_7^4,1^8,\]
\[ 64_{135},64_{358},16_{14},16_{18}^2,16_{68},4_1^4,4_4^2,4_6^2,4_8^4,1^8,\]
\[ 64_{126},64_{127},16_{27},16_{36},16_{67}^2,4_2^2,4_3^2,4_6^4,4_7^4,1^8,\]
(These are related by being of the form
\[ 64_{a,b,c},64_{b,c,d},16_{a,d}^2,16_{a,b+1},16_{c+1,d},4_a^4,4_d^4,4_{b+1}^2,4_{c+1}^2,1^8,\]
as with the previous case.)

In each case, we check that among all possible elements of order $481$ that are products of a fixed element of order $37$ and all elements of order $13$ that lie in the same maximal torus and have the correct eigenvalues, every element of order $481$ with the correct eigenvalues on $L(E_8)$ has a root of order $481\cdot 3=1443\not\in T(E_8)$ with the same eigenspaces on $L(E_8)$. In other words, the element of order $481$ in $H$, and hence $H$, is a blueprint for $L(E_8)$. (We have only checked those cases with positive pressure to save time, and so we must include the option that $H$ stabilizes a line on $L(E_8)$ as well.)
\end{proof}

It doesn't seem possible, using the techniques from this paper, to resolve any of the cases for $q=8$. Even the first one, where $16_{13}$ and $16_{23}$ do not appear, cannot easily be dealt with, as the $\{1,4_i,16_{12}\}$-radicals of $P(4_1)$, $P(4_2)$ and $P(4_3)$ are all quite sizeable, with around fifteen socle layers. In addition the maximal subgroups of ${}^2\!B_2(8)$ are few: the normalizer of a Sylow $2$-subgroup, and subgroups $13\rtimes 4$, $5\rtimes 4$ and $7\rtimes 2$. The second, third and fourth of these yield no extra information than the action of a unipotent element, and it seems difficult to use the normalizer of a Sylow $2$-subgroup, which is just contained in some parabolic subgroup of $\mb G$. It seems highly likely that different techniques will be needed here.

\section{\texorpdfstring{${}^2\!G_2$}{2G2}}

In this section we simply prove that ${}^2\!G_2(3^{2n+1})'$ must stabilize a line on $L(E_8)$.

\begin{proposition} If $H\cong {}^2\!G_2(3^{2n+1})'$ for some $n\geq 0$, then $H$ stabilizes a line on $L(E_8)$.
\end{proposition}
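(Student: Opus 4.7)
The plan is to show that in every case, the composition factors of $L(E_8){\downarrow_H}$ form a module of non-positive pressure, and then invoke Proposition \ref{prop:pressure} to produce a trivial submodule. The simple $kH$-modules with non-zero 1-cohomology are severely restricted: for $q=3$ only the $7$-dimensional module (Table \ref{t:modules3}), and for $q \geq 27$ only certain Frobenius-twist tensor products involving $49$-dimensional modules, as described in \cite{sin1993} and summarized in Table \ref{t:modules48916}. Since only a handful of composition-factor types can push the pressure up, and each forces several low-cohomology or trivial factors in $L(E_8){\downarrow_H}$ to balance the dimension constraint $248 = \sum \dim(\text{factors})$, the target inequality is surprisingly accessible.

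First, for $q = 3$, where $H = {}^2G_2(3)' \cong \PSL_2(8)$, I would enumerate conspicuous composition factors of $L(E_8){\downarrow_H}$ directly using traces of semisimple elements of orders $7$ and $9$ on the short list $\{1,7,9_1,9_2,9_3\}$; the pressure is simply (\# of $7$s) $-$ (\# of $1$s), and the Jordan structure of the unipotent element of order $3$ given by Table \ref{t:unipe8p3} rules out the few candidates with positive pressure. For $q$ sufficiently large that $H$ contains a semisimple element of order exceeding $\max T(E_8) = 1189$ (explicitly $q = 3^{2n+1} \geq 2187$, using the torus orders $q \pm 3^{n+1} + 1$), Theorem \ref{thm:largeorderss} makes $H$ a blueprint for $L(E_8)$. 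The positive-dimensional subgroup $\mb X \in \ms X$ sharing invariant subspaces with $H$ must then stabilize a line on $L(E_8)$: the possibilities for $\mb X$ are constrained by having composition factors on $L(E_8)$ coarsening those of $H$, and a short walk through the Liebeck--Seitz list of maximal positive-dimensional subgroups rules out all $\mb X$ not stabilizing a line (most are eliminated because $H$ cannot embed in them with the correct factor dimensions, and the surviving options each act with a trivial summand on $L(E_8)$).

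The main obstacle will be the two intermediate cases $q = 27$ and $q = 243$. Here the maximal semisimple element orders ($37$ and $271$ respectively) lie inside $T(E_8)_{\mathrm{odd}}$, and $H$ has many non-restricted simple modules of dimensions $49, 189, 729, \ldots$ obtainable by tensoring Frobenius twists, so the trace-based enumeration is more involved. I expect to proceed in the spirit of the treatments of $\PSU_3(5)$ and $\PSp_4(3)$ earlier in this chapter: first cut down the candidate composition-factor multisets via traces of all available semisimple element orders (in particular $7, 13, 19, 37$ for $q=27$), then, for each surviving candidate with positive pressure, construct a pyx from the projective covers of the high-cohomology simple modules and show it cannot support the required number of factors with a socle avoiding the trivial module. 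Since the high-cohomology modules are individually large (dimension at least $49$), at most a few can appear as composition factors of a $248$-dimensional module, making the pyx analysis tractable and forcing pressure $\leq 0$ in every surviving case.
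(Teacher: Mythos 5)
Your plan diverges from the paper most seriously at $n\geq 1$, and that is where the genuine gaps lie. The paper handles \emph{all} $q=3^{2n+1}>3$ uniformly and quickly: the restriction of $L(E_8)$ to $L\cong{}^2\!G_2(3)$ must be one of the conspicuous sets already found for $q=3$ (only three can arise), the simple $kH$-modules of dimension at most $248$ are $1,7,27,49,189$ with known restrictions to $L$ (the $49$s restrict as $(7/7/7)\oplus 27\oplus 1$, the $189$s as $27^{\oplus 3}\oplus P(7)^{\oplus 3}$), only the $49$s have non-zero (at most $1$-dimensional) $1$-cohomology, and a simple count then gives at least as many trivial factors as $49$s, i.e.\ non-positive pressure, so Proposition \ref{prop:pressure} applies for every $n>0$. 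Your substitute for this has two problems. First, for $q\geq 2187$ the blueprint conclusion of Theorem \ref{thm:largeorderss} does not deliver the statement being proved: the proposition asserts a stabilized \emph{line}, and a blueprint overgroup $\mb X$ in characteristic $3$ typically fixes no line on $L(E_8)$ (none of $D_8$, $A_8$, $A_4A_4$, $A_2E_6$, $G_2F_4$, $E_7A_1$ does); your ``short walk'' through the maximal subgroups is unsupported and circular, since deciding which $\mb X$ can contain $H$ with coarser factors presupposes the composition factors of $L(E_8){\downarrow_H}$ that the blueprint route was meant to avoid computing. Second, your treatment of $q=27,243$ by direct trace enumeration (including elements of order $37$ or $271$, for which no eigenvalue tables exist and the roots trick would be needed) plus ad hoc pyx analysis is far heavier than necessary and is not carried far enough to see that it closes; the restriction-to-$L$ counting argument is the missing idea.

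There is also a gap at $q=3$. The conspicuous set $9_1^4,9_2^3,9_3^3,7^{22},1^4$ has pressure $18>0$, and it cannot be eliminated by the Jordan blocks of the order-$3$ element computed from composition factors alone: the associated-graded action is $3^{74},1^{26}$, which is still compatible with the classes $2A_2$, $2A_2+A_1$ and $2A_2+2A_1$ of Table \ref{t:unipe8p3}. One first needs the structural step the paper makes explicit: the Sylow $3$-subgroup of $\PSL_2(8)$ is cyclic, the only indecomposable with a trivial composition factor but no trivial submodule or quotient is $P(7)$ (with five $7$s per trivial), so if no line is stabilized then $L(E_8){\downarrow_H}$ is projective plus $7\oplus 7$ or $7/7$; only after that does a unipotent-element comparison (the paper uses the element of order $9$, acting with blocks $9^{26},7^2$ or $9^{27},5$, which occur for no class) yield the contradiction. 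Your sketch omits this structural step, so as written the positive-pressure case at $q=3$ does not close.
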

\begin{proof} $\boldsymbol{n=0}$: There are, up to field automorphism, ten conspicuous sets of composition factors for $L(E_8){\downarrow_H}$. As the Sylow $3$-subgroup of $H$ is cyclic, we can write down explicitly all indecomposable modules: the projective covers of $1$ and $7$ are
\[ P(1)=1/7/1,\qquad P(7)=7/((7/7/7)\oplus 1)/7,\]
and from this it can be seen that the only indecomposable module with a trivial composition factor but no trivial submodule or quotient is $P(7)$. Therefore, in order for $L(E_8){\downarrow_H}$ not to have a trivial submodule, there must be at least five times as many $7$s as $1$s. Up to field automorphism there is a single conspicuous set of factors with this property, namely
\[ 9_1^4,9_2^3,9_3^3,7^{22},1^4,\]
and if $H$ does not stabilize a line on $L(E_8)$, then $L(E_8){\downarrow_H}$ is the sum of a projective module and either $7\oplus 7$ or $7/7$. If $v$ denotes an element of order $9$ in $H$, then $v$ acts on $L(E_8)$ with Jordan blocks $9^{26},7^2$ or $9^{27},5$. Examining \cite[Table 9]{lawther1995}, we see that no unipotent class has either of those Jordan block structures, so $H$ stabilizes a line on $L(E_8)$ in all cases.

\medskip

\noindent $\boldsymbol{n>0}$: Of the 10 conspicuous sets of composition factors in the previous case (up to field automorphism), only three yield conspicuous sets of factors for ${}^2\!G_2(3)=\PSL_2(8).3$, namely
\[ 27^7,7^7,1^{10},\qquad 27^3,7^{23},1^6,\qquad 7^{28},1^{52}.\]
The simple modules for $H$ of dimension at most $248$ are of dimension $1$, $7$, $27$, $49=7\times 7$ and $189=7\times 27$. Those of dimension $49$ restrict to $L\cong {}^2\!G_2(3)$ as $(7/7/7)\oplus 27\oplus 1$, and those of dimension $189$ restrict to $L$ as $27^{\oplus 3}\oplus P(7)^{\oplus 3}$. Moreover, from \cite{sin1993}, we see that $\dim(H^1(H,49))\leq 1$, and $\dim(H^1(H,i))=0$ for $i=1,7,27,189$. We see that the number of trivial factors in $L(E_8){\downarrow_H}$ is at least the number of $49$s, so $H$ always stabilizes a line on $L(E_8)$. (Equality is possible, for $49^3,7^{14},1^3$.)
\end{proof}

\section{\texorpdfstring{$G_2$}{G2}}

In this section we will prove that there are no Lie primitive subgroups $G_2(q)$. In all cases apart from $q=7$ we prove that $H$ stabilizes a line on $L(E_8)$, and for $q=7$ we show that $H$ is a blueprint for $L(E_8)$. We continue with our definition of $u$ from the start of the chapter.

\begin{proposition}\label{prop:g2ine8} Let $H\cong G_2(q)'$ for $q\leq 9$.
\begin{enumerate}
\item If $q=2,3,4,5,8,9$ then $H$ stabilizes a line on $L(E_8)$.
\item If $q=5,7$ then $H$ is a blueprint for $L(E_8)$.
\end{enumerate}
\end{proposition}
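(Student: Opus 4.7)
The plan is to handle each of the values $q \in \{2,3,4,5,7,8,9\}$ in turn. For each $q$ the first step is to enumerate the conspicuous sets of composition factors of $L(E_8){\downarrow_H}$ using the trace data for semisimple classes of $E_8$ together with the Brauer characters of $H = G_2(q)'$. The dominant tool will then be pressure (Proposition \ref{prop:pressure}) combined with the $1$-cohomology data in Tables \ref{t:modules2}--\ref{t:modules7} and \ref{t:modules48916}: in most conspicuous sets, the number of composition factors with non-zero $1$-cohomology is exceeded by the number of trivial factors, which forces a trivial submodule.

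For $q=7$, the decisive additional ingredient is the existence in characteristic $7$ of an algebraic $G_2$ subgroup $\mb Y$ of $E_8$ acting irreducibly on $L(E_8)$ (this is the irreducible $G_2$ maximal subgroup of \cite{liebeckseitz2004}). When $L(E_8){\downarrow_H}$ is simple of dimension $248$, realised as $L(\lambda_1+\lambda_2)$, the subgroups $H$ and $\mb Y$ stabilise exactly the same subspaces of $L(E_8)$—namely $0$ and the whole module—so $H$ is a blueprint for $L(E_8)$. All other conspicuous sets for $q=7$ contain a trivial composition factor, and since only the $26$-dimensional simple has non-zero $1$-cohomology (Table \ref{t:modules7}), pressure is negative in each such set and Proposition \ref{prop:pressure} supplies the required trivial submodule.

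For $q=5$, the same algebraic $G_2$ exists but no longer acts irreducibly (characteristic $5$ is one of the excluded primes in \cite{liebeckseitz2004}), and a direct computation of $L(E_8){\downarrow_{\mb Y}}$ shows that $\mb Y$ itself stabilises a line on $L(E_8)$. The plan is therefore to verify that every $H \cong G_2(5) \le E_8$ is contained in such an algebraic $G_2$, which yields both the line-stabilisation statement and the blueprint statement simultaneously. Identifying each $H$ with the Frobenius fixed points of $\mb Y$ requires matching the conspicuous sets of composition factors of $L(E_8){\downarrow_H}$ against the known restriction $L(E_8){\downarrow_{\mb Y}}$; the remaining conspicuous sets, if any, are intended to be eliminated via unipotent-class data (Table \ref{t:unipe8p5}) using Lemmas \ref{lem:encapsulates} and \ref{lem:genericmeansblueprint}, which will show that the smallest unipotent class of $H$ is forced into a generic class of $E_8$.

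For $q \in \{2,3,4,8,9\}$ the plan is uniform: compute the conspicuous sets, then establish line stabilisation by pressure, exploiting the subfield containments $G_2(2) \le G_2(4) \le G_2(8)$ and $G_2(3) \le G_2(9)$ to cut down the possibilities. The principal obstacle will be $q=2$: here $G_2(2)' \cong \PSU_3(3)$ and Proposition \ref{prop:su3ine8} leaves two exceptional conspicuous sets, both of positive pressure. For the second set the warrant is $\PSU_3(3) \le A_8$; in characteristic $3$ the image of $L(A_8) = \mathfrak{sl}_9$ inside $L(E_8)$ carries a trivial composition factor coming from the centre of $\mathfrak{sl}_9$, so $H$ inherits a trivial submodule. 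For the first exceptional set, whose warrant is a diagonal $\PSL_3(3) \le E_6A_2$, the trivial composition factor of $L(E_6)/L(E_6)^\circ$ in characteristic $3$ plays the analogous role whenever the warrant actually extends to a $\PSU_3(3)\cong G_2(2)'$ embedding; the hard part will be the residual cases, which are to be dispatched by the projective-cover/pyx analysis illustrated in Section \ref{sec:example}—constructing the $\{S:H^1(H,S)\neq 0\}$-radical of $P(S)$ for each candidate simple socle $S$ and showing that no self-dual indecomposable with the required composition factors can support the requisite count of trivial factors without acquiring a trivial submodule.
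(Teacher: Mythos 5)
Your $q=7$ argument fails in two ways. First, the case on which it is built does not occur: there is no conspicuous set in which $L(E_8){\downarrow_H}$ is irreducible (by \cite{liebeckseitz2004a} the only defining-characteristic Lie-type groups acting irreducibly on $L(\mb G)^\circ$ are the three $E_6$, $p=3$ exceptions), and there is no algebraic $G_2\leq E_8$ acting irreducibly on $L(E_8)$ in characteristic $7$ -- the genuine $p=7$ phenomenon is a \emph{diagonal} $G_2$ inside $G_2F_4$, which acts with factors $77_1,38^2,26^2,14^3,1$. Second, your claim that every conspicuous set for $q=7$ containing a trivial factor has negative pressure is false for precisely that set ($26$ has non-zero $1$-cohomology and occurs twice against a single trivial factor); this is why statement (1) excludes $q=7$. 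And even where pressure is negative, a trivial submodule does not give the blueprint property demanded by statement (2). What is required, and what the correct proof does, is to split off the factors having no extensions with the others, read off the Jordan blocks of $u$ on the resulting summands, observe that no non-generic class is compatible, and apply Lemma \ref{lem:genericmeansblueprint}. The same mechanism (all four conspicuous sets for $q=5$ are forced to be semisimple for Ext reasons, so $u$ is generic) is what yields both conclusions for $q=5$; your plan of proving that every $G_2(5)$ lies in an algebraic $G_2$ is not justified by anything you set up and is not needed.

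Your treatment of the small fields also misplaces the difficulty. For $q=2$ the group $G_2(2)'\cong\PSU_3(3)$ is embedded in characteristic $2$, so the relevant simple modules are $1,6,14,32_1,32_2$ with only $6$ having non-zero $1$-cohomology; all four conspicuous sets (e.g.\ $14,6^{26},1^{78}$) have negative pressure and the case is immediate. The two unresolved $\PSU_3(3)$ sets of Proposition \ref{prop:su3ine8} live in characteristic $3$ and are irrelevant here -- if they were relevant, statement (1) for $q=2$ could not be proved at all -- so your proposed repairs via $A_8$ and $E_6A_2$ warrants in characteristic $3$ address a phantom problem. Conversely, pressure alone does not suffice for $q=3,4,8$: for $q=3$ the conspicuous set $49^3,7_1^7,7_2^7,1^3$ has pressure $3$ (since $H^1(G_2(3),49)$ is $2$-dimensional) and needs a careful analysis of submodules of $P(49)$; for $q=4$ there are several positive-pressure sets requiring pyx/radical arguments and restrictions to subgroups such as $\SL_3(4)$ and $G_2(2)'$; and for $q=8$ there is a conspicuous set with no trivial composition factors at all, which must first be eliminated by counting the Jordan blocks of an involution before any pressure argument applies. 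For $q=9$ the subfield restriction to $G_2(3)$ does work, but only in combination with $\dim H^1(G_2(9),49)\leq 1$ from \cite{sin1993} and the explicit restrictions of the $49$- and $189$-dimensional modules to $G_2(3)$.
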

\begin{proof} $\boldsymbol{q=7}$: There are four sets of composition factors for $L(E_8){\downarrow_H}$ that are conspicuous for elements of order at most $21$, namely
\[ 77_1,38^2,26^2,14^3,1,\quad 38,26^7,14,1^{14},\quad 26^3,14^5,7^{13},1^9,\quad 14,7^{26},1^{52}.\]

\noindent \textbf{Case 1}: The $77_1$ and $14$s must split off as they have no extensions with the composition factors of $L(E_8){\downarrow_H}$, so $u$ must act on $L(E_8)$ with the blocks $4^4,3^{12},2^{24},1^{19}$, among others. There is no non-generic unipotent class in $\mb G$ with these blocks in its action on $L(E_8)$ from Table \ref{t:unipe8p7}, so $u$, and hence $H$, is a blueprint for $L(E_8)$ by Lemma \ref{lem:genericmeansblueprint}.

\medskip

\noindent \textbf{Case 2}: The $14$ must split off as a summand because it has no extensions with the other composition factors of $L(E_8){\downarrow_H}$. The $\{1,26,38\}$-radicals of $P(1)$ and $P(26)$ are the modules $1/26/1$ and $26/1,38/26$ respectively. The action of $u$ on these modules has blocks $3^3,2^6,1^7$ and $4^2,3^{11},2^{18},1^{14}$, so $u$ is a generic unipotent class as it has no blocks of size $7$ (see Table \ref{t:unipe8p7}). Thus $H$ is a blueprint for $L(E_8)$.

\medskip

\noindent \textbf{Case 3}: The only extension is between $1$ and $26$, and so we have a summand $14^{\oplus 5}\oplus 7^{\oplus 13}\oplus 1^{\oplus 3}$, on which $u$ acts with blocks $3^5,2^{46},1^{57}$. Examining \cite[Table 9]{lawther1995}, we see that $u$ must come from the generic unipotent class $2A_1$. Thus $H$ is a blueprint for $L(E_8)$ by Lemma \ref{lem:genericmeansblueprint}.

\medskip

\noindent \textbf{Case 4}: This must be semisimple, and so $u$ acts on $L(E_8)$ with blocks $3,2^{56},1^{33}$. Hence, from \cite[Table 9]{lawther1995}, $u$ comes from the generic class $A_1$, hence $H$ is a blueprint for $L(E_8)$ by Lemma \ref{lem:genericmeansblueprint}.

\medskip

\noindent $\boldsymbol{q=5}$: There are four sets of composition factors for $L(E_8){\downarrow_H}$ that are conspicuous for elements of order at most $21$, namely
\[ 77_1,64^2,14^3,1,\quad 64,27^6,14,1^8,\quad 27^3,14^5,7^{13},1^6,\quad 14,7^{26},1^{52}.\]
None of these composition factors has an extension with any other, so $L(E_8){\downarrow_H}$ must be semisimple. We can therefore read off the action of $u$, and it acts on these modules with blocks 
\[4^8,3^{28},2^{48},1^{36},\quad 4^2,3^{27},2^{52},1^{55},\quad 3^{14},2^{64},1^{78},\quad 3,2^{56},1^{133}.\]
This means $u$ lies in the generic class $4A_1$, $3A_1$, $2A_1$ and $A_1$ respectively, by \cite[Table 9]{lawther1995}. Hence $H$ both stabilizes a line on $L(E_8)$, and is a blueprint for $L(E_8)$ by Lemma \ref{lem:genericmeansblueprint}, as claimed.

\medskip

\noindent $\boldsymbol{q=3}$: There are (up to graph automorphism) four conspicuous sets of composition factors for $L(E_8){\downarrow_H}$, namely
\[ 7_1^{27},7_2,1^{52},\qquad 49,27_1^6,7_1^2,7_2^2,1^9,\qquad 27_1^3,7_1^{18},7_2^5,1^6,\qquad 49^3,7_1^7,7_2^7,1^3.\]

\noindent \textbf{Cases 1, 2 and 3}: These have pressures $-52$, $-7$ and $-6$ respectively, so in each case $H$ stabilizes a line on $L(E_8)$ by Proposition \ref{prop:pressure}.

\medskip

\noindent \textbf{Case 4}: Suppose that $H$ does not stabilize a line on $L(E_8)$. The pressure of $L(E_8){\downarrow_H}$ is $3$, so there can be no $49^{\oplus 2}$ as a subquotient, and hence if $W$ denotes the $\{49\}$-heart of $L(E_8){\downarrow_H}$ then $W$ is a submodule of $P(49)$ and has three trivial factors. 

Suppose that $1,1/49$ is not a submodule of $W$; then $1/49$ is, and this has pressure $1$, so ignoring the copies of $7_i$ in $W$, we see that $W$ is of the form $49/1,1/49/1/49$, but such a module is clearly not self-dual, a contradiction. Thus (again ignoring the $7_i$) $W$ has the form $49/1/49/1,1/49$, with one of the trivials in the submodule $49/1,1/49$ being a quotient.


Let $W'$ be defined as follows:
\begin{enumerate}
\item Construct the $\{1,7_i\}$-radical of $P(49)$;
\item Take all possible copies of $49$ on top of this module (there are four);
\item Take all possible copies of $1$ on top of this module (there are three).
\end{enumerate}
The $\{7_i,49\}$-residual $\bar W$ of $W$ must lie inside $W'$. Note that $\bar W$ has a quotient of the form $1/49$, and the kernel corresponding to this quotient is $1,1/49$ with some copies of $7_i$ placed on top. This will be important later.

The socle structure of $W'$ is
\[ 1,1/1,49,49/7_1,7_2,49,49/7_1,7_1,7_2,7_2/1,1,7_1,7_2/49,\]
and the only quotients are trivial modules. The $49$ in $W$ that is not either a submodule or quotient therefore occurs on either the fourth or fifth socle layer.

Suppose that it occurs in the fourth, so that the trivial in the fifth layer of $W'$ is also in $W$. We take the $\{7_i,49\}$-residual of $\soc^5(W')$, which is
\[ 1/49/7_1,7_2/1,1,7_1,7_2/49,\]
but all trivial factors of this are quotients, a contradiction. Thus the $49$ in the middle of $W$ lies in the fifth socle layer, and the trivial lies in the sixth.

The second top $W'/\rad^2(W')$ of $W'$ has the form $(1/49)\oplus (1,1/49,49,49)$, and write $V$ for the second, indecomposable, summand. Suppose for a contradiction that $V$ possesses a submodule $V_1$ of the form $1/49$. This is not contained in a submodule of the form $1,1/49$, as then the rest of the socle forms a complement to this and $V$ is decomposable. Obviously there is some indecomposable submodule $V_2$ of the form $1/49,49$; if $V_1\cap V_2=0$ then we have a decomposition for $V$, so $V_1\cap V_2\neq 0$. Certainly $V_1\not\leq V_2$, so $V_1\cap V_2$ is just the $49$. Then $V_1+V_2$ has the form $1,1/49,49$ and is complemented by a simple submodule in the socle, another contradiction.

Thus $V$ has no submodule of the form $1/49$, and therefore the quotient module $W'/\rad^2(W')$ has a unique submodule of the form $1/49$. Letting $W''$ be the $\{1\}'$-residual of the preimage of this submodule in $W'$, we actually see that this is the $\{7_i,49\}$-residual of $\soc^5(W')$ that we saw before. However, the $49$ is in the wrong socle layer, a contradiction.
Thus $H$ stabilizes a line on $L(E_8)$.

\medskip

\noindent $\boldsymbol{q=9}$: From \cite{sin1993}, we see that the only simple modules with non-zero $1$-cohomology of dimension at most $248$ are of dimension $49$, and they have $1$-dimensional $1$-cohomology. A simple $kH$-module of dimension at most $248$ has dimension one of $1$, $7$, $27$, $49$ and $189$, and the restriction to $L\cong G_2(3)$ is either simple or $7_i\otimes 7_i$, or $7_i\otimes 27_i$. The former has factors $27_1,7_1^2,7_2,1$, and the latter has two $49$-dimensional and one $27$-dimensional factor. Thus from the conspicuous sets of factors for $L(E_8){\downarrow_L}$, we see that there are at least three trivial factors and at most three $49$-dimensional factors in $L(E_8){\downarrow_H}$. Thus $H$ has non-positive pressure, and so $H$ stabilizes a line on $L(E_8)$ by Proposition \ref{prop:pressure}.

\medskip

\noindent $\boldsymbol{q=2}$: There are four conspicuous sets of composition factors for $L(E_8){\downarrow_H}$, which are
\[ 14,6^{26},1^{78},\quad 14^8,6^{19},1^{22},\quad 32,32^*,14^7,6^{12},1^{14},\quad (32,32^*)^2,14^6,6^5,1^6;\]
each of these has negative pressure, so $H$ always stabilizes a line on $L(E_8)$ by Proposition \ref{prop:pressure}, as needed.

Note that:
\begin{itemize}
\item $6\otimes 6=1,14/6/1,14$, 
\item $6\otimes 14=(6/1/6/1/6)\oplus 64$,
\item $14\otimes 14$ is a module with factors $14^9,6^{10},1^{10}$, and
\item $6^{\otimes 3}$ is a module with factors $64^2,14^2,6^9,1^6$, where $64=32\oplus 32^*$.
\end{itemize}
These will be needed for $q=4,8$.

\medskip

\noindent $\boldsymbol{q=4}$: There are, up to field automorphism, seven conspicuous sets of composition factors for $L(E_8){\downarrow_H}$:
\[ 14_1,6_1^{26},1^{78},\quad 14_1^8,6_1^{16},6_2^3,1^{22},\quad 36^3,14_1,14_2,6_1^8,6_2^8,1^{16},\quad 64_1,14_1^7,6_1^6,6_2^6,1^{14},\]
\[ 84_{21},36^2,14_1^2,14_2,6_1^4,6_2^3,1^8,\quad 64_1^2,36,14_1^4,6_1^2,6_2^2,1^4,\quad 84_{21},64_1,36,14_1,14_2^3,6_1,1^2.\]
The pressures of these modules are $-52$, $-3$, $3$, $-2$, $2$, $1$ and $1$ respectively.

\medskip

\noindent \textbf{Cases 1, 2, 4 and 7}: In Cases 1, 2 and 4, $H$ must stabilize a line on $L(E_8)$, by Proposition \ref{prop:pressure}. Since the last case has no modules with non-zero $1$-cohomology that occur with multiplicity greater than $1$, we must have a trivial submodule here as well.

\medskip

\noindent \textbf{Case 6:} The copies of $64_1$ have no extensions with other modules, so split off as summands. Suppose that $H$ does not stabilize a line on $L(E_8)$, and let $W$ denote the $\{6_i,36\}$-heart of $L(E_8){\downarrow_H}$. As the pressure is $1$ we may assume that the socle of $W$ is either $6_1$ or $6_2$. As there is a single $36$, if one ignores any composition factors $14_1$, the structure of the module is uniserial, of the form
\[ 6_i/1/6_{3-i}/1/36/1/6_{3-i}/1/6_i.\]
The $\{1,6_i,14_1\}$-radicals of $P(6_1)$ and $P(6_2)$ are
\[ 6_1/1,14_1/6_2/1,6_2/6_1\qquad 6_2/1/6_1/1,6_1,14_1/6_2.\]
We wish to place $36$ on top of this, but in both cases $6_1\oplus 6_2$ is a quotient, so this must be removed to stay pressure $1$; hence there is a unique module of the form $1/6_{3-i}/1/6_i$. For $i=2$, two copies of $36$ may be placed on top of this module, but they fall into the second and third socle layers, and so cannot cover the trivial module. For $i=1$, two copies of $36$ may be placed on top, lying in the second and fifth socle layers, and a single trivial can be placed on top of this new module, in the sixth layer. Thus we can remove a copy of $36$ which appears as a quotient, leaving a unique module of the form
\[ 1/36/1,14_1/6_2/1/6_1.\]
A single $6_j$ can be placed on top of this module for $j=1,2$, but it falls into the fifth or second socle layer, as we saw in the original module above. Hence $L(E_8){\downarrow_H}$ must have a trivial submodule in this case.

\medskip

\noindent\textbf{Case 5:} Suppose that $H$ does not stabilize a line on $L(E_8)$. Since $L(E_8){\downarrow_H}$ has pressure $2$, no subquotient of $L(E_8){\downarrow_H}$ may have pressure greater than $2$ or less than $-2$, by Proposition \ref{prop:pressure}. Let $W$ be the $\{14_1\}'$-heart of $L(E_8){\downarrow_H}$. By Lemma \ref{lem:oddandodd} there is a subquotient $6_2\oplus 14_2\oplus 84_{21}$ in $W$. Thus there are two submodules $V_1$ and $V_2$ of $W$ such that $V_1\leq V_2$ and the quotient $V_2/V_1$ is this module. We will construct a specific submodule $V_1$ now.

Let $V$ denote the $\{6_2,14_2,84_{21}\}$-heart of $W$, which is a submodule of the quotient of $W$ by its $\{6_2,14_2,84_{21}\}'$-radical. The socle of $V$ cannot be a submodule of $14_2\oplus 84_{21}$ as then $V$ is semisimple, so there is at least one $6_2$ in $\soc(V)$. On the other hand, $\soc(V)$ cannot have $6^{\oplus 3}$ as a submodule because $L(E_8){\downarrow_H}$ has pressure $2$. If the socle of $V$ has $6_2^{\oplus 2}$ in it then $\rad(V)\cap \soc(V)$ has a single copy of $6_2$ in it. Add all copies of $1$, $6_1$, $14_1$ and $36$ on top of this that lie in $V$, and then take the preimage of this new module in $W$ to obtain $V_1$.

By construction, the quotient $W/V_1$ has socle $84_{21}\oplus 14_2\oplus 6_2$. Let $V_2$ denote the preimage in $W$ of $\soc(W/V_1)$, so that $V_2/V_1=84_{21}\oplus 14_2\oplus 6_2$. Since $W$ is self-dual, the dual of the quotient $W/V_1$ is isomorphic to a submodule of $W$, say $V_3$.

By construction of $V_1$, it has a single $6_2$ composition factor, as does $V_3$, and we must have that $V_3\leq V_1$. Since $W$ has the same composition factors as $V_2\oplus V_3$, we see that $V_1$ contains at least two copies of $6_1$, and at least one copy of $36$. Since all subquotients have pressure between $-2$ and $2$, and $V_1$ cannot have negative pressure, $V_1$ has pressure $0$. We note that $V_1$ must have exactly one copy of $36$: otherwise $V_1$ contains the entire $\{36\}$-heart of $W$, and then the $\{6_2,36,84_{21}\}'$-residual of $V_2$ must have a quotient $36$, but also $6_2\oplus 84_{21}$ as a quotient as well, hence a quotient of pressure $3$. Since $V_1$ has pressure $0$ it has no quotients $6_i$ or $36$.

\medskip

The socle of $V_1$ is one of $6_1$, $6_2$, $36$, $6_1^{\oplus 2}$, $6_1\oplus 6_2$, $6_1\oplus 36$ and $6_2\oplus 36$. If $\soc(V_1)=6_2\oplus 36$, then $V_1$ is contained in the sum of the $\{1,6_1,14_1\}$-radicals of $P(6_2)$ and $P(36)$, which are
\[ 1/6_1/1,6_1,14_1/6_2,\qquad 1/1,6_1,14_1/36.\]
This has exactly four trivial composition factors. In order to support the $1$ in the third layer of $P(36)$, we need the $6_1$ in the second layer. But then $V_1$ has a submodule $6_2\oplus (6_1/36)$, which has pressure $3$, a contradiction.

Suppose that $\soc(V_1)=36$. We take the $\{1,6_1,14_1\}$-radical of $P(36)$, as given above, then add all copies of $6_2$ on top of this as possible, and then all copies of $1$, $6_1$ and $14_1$ as possible, to create a pyx for $V_1$. We can construct a smaller pyx by removing all $6_i$ quotients. This yields a module
\[ 1,1,14_1/1,6_1,6_2,14_1/1,1,6_2/1,6_1,6_2,14_1/36.\]
Notice that this has exactly two copies of $6_1$, and $V_1$ has at least that many. Thus both of these must lie in $V_1$, and we take the $\{6_1\}'$-residual of this module to find a submodule of $V_1$. This is
\[ 6_1/1/6_1,6_2/36.\]
It has a submodule of pressure $3$, which is a contradiction.

A similar proof works for $\soc(V_1)=6_2$, with the roles of $6_2$ and $36$ reversed. So take the $\{1,6_1,14_1\}$-radical of $P(6_2)$, add copies of $36$ on top, then again all copies of $1$, $6_1$ and $14_1$. Finally, remove all quotients $6_1$ and $36$ to produce the module
\[ 1/6_1/1,1,1,36/1,6_1,6_1,14_1,36/1,6_1,14_1,36/6_2.\]
However, this has a quotient $1^{\oplus 5}$, and so we need to remove three of the trivials, and we obtain a contradiction again.

For $\soc(V_1)=6_1\oplus 36$, we need the $\{1,6_i,14_1\}$-radicals of $P(6_1)$ and $P(36)$, which are
\[ 6_1/1,14_1/6_2/1,6_2/6_1,\quad 6_1,6_2/1,1,6_1,14_1/1,6_1,6_2,14_1/1,1,6_2/1,6_1,6_2,14_1/36.\]
The second and third socle layers contribute $1^{\oplus 4}$, and the fourth and fifth another $1^{\oplus 4}$. Thus, upon removing two from each of these, we obtain exactly four trivial composition factors, the minimum needed for $V_1$.

Also, the $6_1$ and $6_2$ in the second socle layer of $P(36)$ cannot lie in $V_1$, and so the two trivials in the third layer of that module cannot exist in $V_1$. This means that the third layer of $V_1$ must be exactly $6_2$, but now the submodules of $P(36)$ and $P(6_1)$ both rely on a copy of $6_2$ lying in the third layer to cover the trivial. Thus the $6_2$ in $V_1$ must be diagonally embedded across the two modules, but then it cannot support two trivials in the fourth socle layer. This means that a trivial in the fifth layer of $P(36)$ lies in $V_1$, but we saw in the $\soc(V_1)=36$ case that the $6_1$ in the fourth layer of $P(36)$ is not allowed to lie in $V_1$, and we know the $6_2$ lies in the third layer of $V_1$. This means that the trivial cannot lie in $V_1$ after all, a contradiction.

If $\soc(V_1)=6_1^{\oplus 2}$, all copies of $6_1$ lie in $\soc(W)$ or $\top(W)$. Here the first two socle layers of $V_1$ must be $1,1/6_1,6_1$ as there is no extension between $6_i$ and $14_i$. There is no module $36/1/6_i$, so the third socle layer must be $6_2$, and by choosing summands appropriately we have $(6_2/1/6_1)\oplus (1/6_1)$. (The first summand is not unique up to isomorphism.) On top of this we must place a trivial, and get the well-defined module $1/6_2/1/6_1$. There is no uniserial module $36/1/6_2/1/6_1$ (the single $36$ we can place on top of this falls to the second socle layer), but we can place $14_1$ on top and then a $36$, to make a (again, not well-defined) module
\[ 36/1,14_1/6_2/1/6_1.\]
(It is not well defined because there is a $36$ in the second socle layer of $P(6_1)$, and we can take a diagonal $36$.) On top of this we must place a single trivial, and this does make the module well defined, and so $V_1$ must be
\[ (1/36/1,14_1/6_2/1/6_1)\oplus (1/6_1).\]
To make the module $V_2$ we must add on top of this $6_2\oplus 84_{21}$; one may place two copies of $84_{21}$ on top of the first summand (one on the second), but they fall into the second and fifth socle layers, so won't cover a trivial quotient of $V_1$. This yields a contradiction.

The next case is $\soc(V_1)=6_1\oplus 6_2$; the first two socle layers are either $1,1/6_1,6_2$ or $1,1,14_1/6_1,6_2$. There is no module $36/1/6_i$, but there is one $36/1,14_1/6_2$, so if there is no $14_1$ in the second socle layer then one cannot place a $36$ in $V_1$ at all. Thus we can construct the module
\[ (1/6_1)\oplus (36/1,14_1/6_2)\]
so far as a submodule of $V_1$, but the second summand is not well defined. We are still yet to find at least $6_1,1^2$ from $V_1$. The first summand cannot be extended further, and so the remaining factors pile on the second summand. The factor $6_1$ cannot appear in the third socle layer of the second summand as then there is a submodule $6_1\oplus (6_1,36/1,14_1/6_2)$ of $V_1$, which has pressure $3$, a contradiction. Thus $V_1$ contains the submodule
\[ 1/36/1,14_1/6_2,\]
which is well defined. On this we can place two copies of $6_1$, but they fall into the second and third socle layers, so we cannot construct an appropriate module $V_1$.

Thus $\soc(V_1)=6_1$. There is no module $1/6_2/6_1$ or $1/36/6_1$ (there is a module $1/6_2,36/6_1$, which has no non-trivial quotients). If $36/6_1$ is a submodule of $V_1$ then it is also a quotient of $W$. Removing the $6_1/36$ quotient from $W$ yields a module of pressure $0$, and if there is still a submodule $36/6_1$ then removing that yields a module of pressure $-2$ but with only one trivial submodule, which is not possible by Proposition \ref{prop:pressure}. Thus $36$ is a summand of the heart of $W$, but then there is a subquotient $6_2\oplus 84_{21}\oplus 36$ of pressure $3$, another contradiction.

Thus the second socle of $V_1$ is a submodule of $1,6_2/6_1$, and since one cannot place a $36$ on top of this in the third socle layer, the third socle must be a submodule of $6_2/1,6_2/6_1$. On top of this we must place another trivial, yielding
\[ 1/6_2/1,6_2/6_1,\]
and thus the well-defined submodule $1/6_2/1/6_1$ is a submodule of $V_1$. We have therefore identified the location of the $6_2$ in $V_1$. On top of the module $1/6_2/1/6_1$ we place all copies of $1$, $6_1$ and $14_1$, then all copies of $36$, then all copies of $1$, $6_1$ and $14_1$, to yield the module
\[ 1/6_1/1,36/6_1,36/1,1,14_1/6_1,6_2/1,36/6_1.\]
This has a quotient $1^{\oplus 3}$, so one of those trivial quotients needs to be removed, and all others must be present. This means that $V_1$ has exactly four trivial factors, and therefore exactly two copies of $6_1$.

On top of $1/6_2/1/6_1$ one may place a $14_1$, then either $6_1$ or $36$. Suppose that $36$ can be placed on top. One adds on $14_1$ and then can add two copies of $36$. On this we place all copies of $6_1$ and $1$, then remove all quotients $6_1$ and $36$, to obtain the module
\[ 1/36/1,1,14_1/6_1,6_2/1,36/6_1.\]
This has four trivial composition factors, which must therefore all be present in $V_1$. But this module has no non-trivial quotients, and $V_1$ cannot have two copies of $36$ in it.

Thus one must place the second $6_1$ on first, and then $36$. (There's no need to remove quotients $6_1$ and $36$ this time.) Doing it this way round yields the module
\[ 1,36/6_1,36/1,14_1/6_2/1,36/6_1.\]
This only has three trivial composition factors, a final contradiction.

Thus there is no option for the socle of $V_1$, and we have obtained a contradiction to $H$ not stabilizing a line on $L(E_8)$, as needed.

\medskip

\noindent \textbf{Case 3:} Suppose that $L(E_8)^H=0$. As $14_1$ and $14_2$ appear exactly once, they are in the `middle' of the module. Formally, we use Lemma \ref{lem:oddandodd} and Proposition \ref{prop:bottomhalf} with $I$ being $\{14_1,14_2\}$. So we construct the $\{1,6_i,36\}$-radical of $L(E_8){\downarrow_H}$ first, then add on $14_1$ and $14_2$, and this forms an overmodule of the $\{1,6_i,36\}$-residual of $L(E_8){\downarrow_H}$. Note that $L(E_8){\downarrow_H}$ has pressure $3$.

Suppose that $36$ lies in the socle of $L(E_8){\downarrow_H}$. The $\{1,6_i\}$-radicals of $P(36)$ and $P(6_i)$ are
\[ 6_1,6_2/1,1/6_1,6_2/1,1/1,6_1,6_2/36,\quad 6_i/1/6_{3-i}/1,6_{3-i}/6_i.\]
As at least half of the trivial composition factors need to appear in the $\{1,6_i\}$-radical of the quotient module  $L(E_8){\downarrow_H}/\soc(L(E_8){\downarrow_H})$ by Proposition \ref{prop:bottomhalf}, we need the socle to be $36\oplus 6_i\oplus 6_j$; furthermore, this means that we can afford to lose at most one trivial factor from the sum of the three modules above, but there is a subquotient $1^{\oplus 5}$ in this module (the second and third socle layers), so at least two trivials need to be excluded, a contradiction.

Thus $36$ does not lie in the socle of $L(E_8){\downarrow_H}$. Let $W$ denote the $\{36\}$-heart of $L(E_8){\downarrow_H}$. The socle of $W$ consists of copies of $36$, and if there is more than one then at least one is a summand. Suppose that $\soc(W)=36$, so that the heart of $W$ contains (among other factors) a single copy of $36$ and at most one $14_i$. In particular, there can be no extensions between them in the heart. We saw the $\{1,6_i\}$-radical of $P(36)$ before, say $V$. We have that $\Ext_{kH}^1(36,V)=0$, but this is a contradiction, since the heart of $W$ contains a copy of $36$.

Therefore $W$ has a $36$ as a direct summand, so let $W'$ denote the complement. This is a submodule of $P(36)$ again, and since $\Ext_{kH}^1(36,V)=0$, we see that $W'$ needs to contain $14_1$ or $14_2$ (or both). Let $V'$ be constructed by adding as many copies of $14_1$ and $14_2$ on top of $V$, and then again adding $1$- and $6$-dimensional modules on top. The structure of $V'$ is as follows:
\[ 6_1,6_2/1,1,6_1,6_2,14_1,14_2/1,1,6_1,6_2/1,1,6_1,6_2/1,6_1,6_2,14_1,14_2/36.\]
Suppose that the $14_i$ that appear in $W'$ lie in the second socle layer: then the heart of $W'$ has those $14$-dimensional modules as summands, so one may remove the top $36$ from $W'$, then remove the top $14_i$ (as $W'$ is self-dual, so they must be quotients), and we are left with a submodule of $V$ again, except note that $W'$ has pressure at most $2$ (since it is in a sum with $36$). Since it has pressure at most $2$, we cannot have both $6_i$ in the second socle layer of $V$, but each $6_i$ in that layer supports one trivial from the third and fifth layers; hence $W'$ must contain at most three trivial composition factors.

On the other hand, suppose that a $14_i$ appears in the fifth socle layer of $W'$, which we remind the reader is self-dual. The dual of $V'$ is
\[ 36/6_1,6_2/1,1,1,14_1,14_2/6_1,6_1,6_2,6_2/1,1,1,1/6_1,6_1,6_2,6_2,14_1,14_2,\]
and we see that this $14_i$ appears in the second radical layer of $W'$, hence in the second socle layer as $W'$ is self-dual, a contradiction.

Thus $W$ contains at most three trivial composition factors. However, the $\{1,6_i\}$-radical of $P(6_i)$ has the form
\[ 6_i/1/6_{3-i}/1,6_{3-i}/6_i,\]
whence we see that the $\{1,6_i\}$-radical of $L(E_8){\downarrow_H}$ contains at most six trivial factors, and the same holds for the quotient modulo the $\{1,6_i\}$-residual. As this yields the construction of $W$, we find that $L(E_8){\downarrow_H}$ possesses at most $6+6+3=15$ trivial factors, but it really contains $16$, a contradiction.

Thus $H$ stabilizes a line on $L(E_8)$ in all cases.

\medskip

\noindent $\boldsymbol{q=8}$: There are 27 sets of composition factors for $L(E_8){\downarrow_H}$ that are conspicuous for elements of order up to $19$. One (up to field automorphism) has no trivial factors, but has dimensions $14^2,36^2,64,84$. If $v$ denotes the involution from the larger of the two classes, then $v$ acts projectively on $36$-, $64$- and $84$-dimensional modules, and as $2^6,1^2$ on $14$-dimensional modules. Thus $v$ would have to act with at least 122 blocks of size $2$ on $L(E_8)$, which does not appear in \cite[Table 9]{lawther1995}.

Thus $L(E_8){\downarrow_H}$ has trivial factors. The remaining 24 conspicuous sets of composition factors, eight up to field automorphism, all have non-positive pressure (the modules with non-zero $1$-cohomology have dimensions $6$ and $84$, see Table \ref{t:modules48916}), so $H$ stabilizes a line on $L(E_8)$, as claimed.
\end{proof}

In the case $q=7$, we do not prove that $H$ stabilizes a line on $L(E_8)$: the second, third and fourth cases all have negative pressure, so it remains to check the first case. The only possible structure for $L(E_8){\downarrow_H}$ in which $H$ does not stabilize a line is
\[ (26/1,38/26)\oplus 38\oplus 14^{\oplus 3}\oplus 77_1;\]
this does occur in $F_4G_2$, which only for $p=7$ has a diagonal $G_2$ acting irreducibly on $M(F_4)$ and $M(G_2)$.

\newpage

\chapter{Subgroups of \texorpdfstring{$E_7$}{E7}}
\label{chap:subsine7}

Now we let $\mb G$ be the algebraic group of type $E_7$ and $H$ be a subgroup with rank at most $3$, $q\leq 9$, together with a small Ree group, a Suzuki group, $\PSL_3(16)$ or $\PSU_3(16)$. By Theorem \ref{thm:largeorderss}, the following of these are blueprints for $M(E_7)$, since they have semisimple elements of odd order greater than $75$:
\begin{enumerate}
\item $\PSL_4(q)$ for $q=4,7,8,9$;
\item $\PSU_4(q)$ for $q=8,9$;
\item $\PSp_6(q)$ for $q=4,7,8,9$;
\item $\Omega_7(q)$ for $q=7,9$;
\item $\PSL_3(16)$, $\PSU_3(16)$, $\PSL_3(9)$, $G_2(9)$;
\item ${}^2\!B_2(q)$ for $q>32$;
\item ${}^2\!G_2(q)$ for $q>27$.
\end{enumerate}
The groups $\PSL_4(2)$ and $\PSp_4(2)$ are also alternating groups, and so are dealt with in \cite{craven2017}.

We also have groups considered in \cite[Proposition 3.8]{craven2015un}, which were proved to be blueprints for $M(E_7)$:
\begin{enumerate}
\item $\PSL_4(p)$, $\PSU_4(p)$, $\PSp_4(p)$, for $p=5,7$;
\item $\PSp_6(q)$, $\Omega_7(q)$ for $q$ odd.
\end{enumerate}

In addition, if the Schur multipler of $H$ is of odd order and $H$ is a blueprint for $L(E_8)$, then $H$ is a blueprint for $L(E_7)$ and $M(E_7)$ as well: we use this to exclude $G_2(5)$ and $G_2(7)$. 

This leaves:
\begin{enumerate}
\item $\PSL_3(q)$ for $q=2,3,4,5,7,8$;
\item $\PSU_3(q)$ for $q=3,4,5,7,8,9$;
\item $\PSp_4(q)$ for $q=3,4,8,9$;
\item $G_2(q)'$ for $q=2,3,4,8$;
\item ${}^2\!G_2(q)'$ for $q=3,27$;
\item ${}^2\!B_2(q)$ for $q=8,32$;
\item $\PSL_4(3)$;
\item $\PSU_4(q)$ for $q=2,3,4$;
\item $\PSp_6(2)$.
\end{enumerate}

We therefore deal with the remaining groups in turn. Throughout this chapter, $H$ will denote one of the groups above, embedded in the simply connected form $\mb G$ of an algebraic group of type $E_7$, in characteristic $p$ dividing $q$. If $q$ is odd and the Schur multiplier of $H$ is even (for example, $\PSp_4(q)$), let $\bar H$ denote the central extension $2\cdot H$ (note that, for the groups considered, this is unique), embedded in $\mb G$ so that $Z(\bar H)=Z(\mb G)$ (so the image of $\bar H$ in the adjoint algebraic group is simple).

As with the previous three chapters, we let $u$ denote an element of order $p$ in $H$ belonging to the smallest conjugacy class.

\begin{proposition}\label{prop:sl3ine7}  Let $H\cong \PSL_3(q)$ for some $2\leq q\leq 8$.
\begin{enumerate}
\item If $q=2,3$ then $H$ stabilizes a line on either $M(E_7)$ or $L(E_7)^\circ$.
\item If $q=4$ then $H$ stabilizes a line on $M(E_7)$.
\item If $q=5$ then either $H$ is a blueprint for $M(E_7)$ or $H$ acts on $L(E_7)$ as $125\oplus 8$ and is a blueprint for $L(E_7)$.
\item If $q=7,8$ then $H$ is a blueprint for $M(E_7)$.
\end{enumerate}
\end{proposition}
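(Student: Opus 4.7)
The plan is to split on $q$ and, in each case, enumerate the conspicuous sets of composition factors for $M(E_7){\downarrow_H}$ (and, where needed, for $L(E_7)^\circ{\downarrow_H}$) from traces of semisimple elements of small order. For each such set one of three mechanisms should close the case: (a) a pressure argument (Proposition \ref{prop:pressure}) forcing a trivial submodule on $M(E_7)$ or $L(E_7)^\circ$; (b) the unipotent element $u$ of order $p$ in $H$ acts on $M(E_7)$ with the Jordan structure of a generic unipotent class, so Lemma \ref{lem:genericmeansblueprint} yields the blueprint property; or (c) a semisimple element satisfies the hypothesis of Theorem \ref{thm:largeorderss}. The overall structure closely parallels Proposition \ref{prop:sl3ine8} for $E_8$, but is much shorter because the low-dimensional modules and the threshold of Theorem \ref{thm:largeorderss} for $E_7$ are more forgiving.

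For $q=2,3,4$, the simple $kH$-modules that can appear in $M(E_7){\downarrow_H}$ (and, for $q=2,3$, in $L(E_7)^\circ{\downarrow_H}$) have almost no non-zero $1$-cohomology, as one sees from Tables \ref{t:modules2}, \ref{t:modules3} and \ref{t:modules48916}. A direct enumeration should show that every conspicuous set of composition factors has non-positive pressure on at least one of $M(E_7)$, $L(E_7)^\circ$, and then Proposition \ref{prop:pressure} produces a trivial submodule, i.e.\ a line fixed by $H$. The case $q=4$ is slightly larger, but the list of modules with non-zero cohomology (dimensions $9^{\pm}$ and a couple of tensor products) is still short enough that pressure alone handles all the surviving sets on $M(E_7)$.

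For $q=7,8$, we work instead with blueprints. For $q=8$, the Singer cyclic subgroup of $\PSL_3(8)$ has order $73>30$, so for each conspicuous action on $M(E_7)$ one computes the eigenvalue multiplicities of a generator on $M(E_7)$ and checks that the $1$-eigenspace has dimension $6$; Theorem \ref{thm:largeorderss}(iii) then gives the blueprint property. For $q=7$ the largest odd semisimple order in $H$ is only $19$, so Theorem \ref{thm:largeorderss} does not apply; instead one reads off the Jordan structure of $u$ on each conspicuous $M(E_7){\downarrow_H}$, compares it with the tables of \cite{lawther1995}, verifies that $u$ lies in a generic unipotent class of $\mb G$ for $M(E_7)$ in each case, and concludes via Lemma \ref{lem:genericmeansblueprint}. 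Reducing the conspicuous list to a handful of sets via elements of orders up to $8$ should make this check mechanical.

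The case $q=5$ is the one I expect to be the main obstacle, because it is the only case in which both branches of the conclusion can occur and the exceptional branch needs an explicit identification of the embedding. The bulk of the conspicuous sets on $M(E_7){\downarrow_H}$ will again yield a generic class for $u$ and hence a blueprint via Lemma \ref{lem:genericmeansblueprint}. The remaining case has no non-trivial action on $M(E_7)$ of the right form and instead gives $L(E_7){\downarrow_H}=125\oplus 8$, where $125$ is the Steinberg module and $8$ is the adjoint for $\SL_3$; here $H$ is forced to lie in a subsystem $A_2$ subgroup of $\mb G$ acting irreducibly on its $8$-dimensional adjoint summand, and one must verify directly that the algebraic $A_2$ stabilizes the same subspaces of $L(E_7)$ as $H$ does, so that $H$ inherits the blueprint property for $L(E_7)$ from this $A_2$. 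This last step is the one requiring genuine subgroup-structure input rather than pure representation-theoretic bookkeeping.
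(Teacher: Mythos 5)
Your skeleton (conspicuous factors, then pressure, generic unipotent classes, or large-order semisimple elements) is the paper's, but several of your case reductions do not go through as stated. For $q=2,3,4$, pressure alone is \emph{not} enough. For $q=2$ the set $(3,3^*)^9,1^2$ has pressure $16$ on $M(E_7)$ and both corresponding sets on $L(E_7)^\circ$ have pressure $4$; what saves the day is the structure of $P(3)$ in characteristic $2$ (the only indecomposable with a trivial factor but no trivial submodule or quotient is the $16$-dimensional projective), forcing a $5{:}1$ ratio of $3$-dimensional to trivial factors. For $q=3$ the sets $7^6,(3,3^*)^2,1^2$ and $15,15^*,7^2,(3,3^*)^2$ have positive pressure on $M(E_7)$, and their $L(E_7)$ counterparts $27^3,7^6,3,3^*,1^4$ and $27,(15,15^*)^2,7^4,6,6^*,1^6$ still have pressure $2$, so radical computations in $P(7)$ and $P(15)$ are needed. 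Worst, for $q=4$ the conspicuous set $(9,9^*)^2,8_1^2,1^4$ has pressure $4$ and a module $9^*/1,1/9$ exists, so no cohomological counting yields a line; the paper's treatment of this case is a genuine multi-step argument (restriction to the three classes of $\Alt(6)$ subgroups, the line stabilizers of Lemmas \ref{lem:e7linestabs} and \ref{lem:e6linestabs} pushing $\Alt(6)$ through $F_4$ into an $A_2$ overgroup, and a comparison of the resulting extensions for two different $\Alt(6)$ classes to force a contradiction). Your proposal has no substitute for any of this.

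The $q=5$ exceptional case is also gapped, and circularly so: you assert that $H$ with $L(E_7){\downarrow_H}=125\oplus 8$ ``is forced to lie in a subsystem $A_2$''. The relevant group is the \emph{maximal} $A_2$, not a subsystem one (a root $A_2$ has many more factors, including trivials, on $L(E_7)$), and more importantly the containment in any positive-dimensional subgroup is exactly what has to be proved — it is equivalent to the blueprint/imprimitivity conclusion, so it cannot be assumed. The paper instead shows the stabilizer of the $8$-space is positive dimensional: an element $x$ of order $31$ is pinned down by its trace on $L(E_7)$, eight elements of order $93$ in a maximal torus power to $x$ and stabilize the eigenspaces comprising the $8$, and then Theorem \ref{thm:classmaximalsubgroup} together with the known lists of possible Lie primitive finite overgroups (exotic locals, the cross-characteristic groups in \cite{litterickmemoir}, and the irreducible groups of \cite{liebeckseitz2004a}, none containing an element of order $31$) rules out a finite stabilizer. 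You need an argument of this kind. Finally, your $q=8$ route through Theorem \ref{thm:largeorderss}(iii) requires the order-$73$ element to centralize a $6$-space on $M(E_7)$ in every one of the $49$ conspicuous cases, which you have not verified and which is not automatic; the paper instead exhibits order-$315$ roots of an order-$63$ element with the same eigenvalue count. The $q=7$ case is fine, provided you also dispose of the non-semisimple possibility $28/28^*$ via its unipotent action, as the paper does.
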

\begin{proof} $\boldsymbol{q=7}$: The four conspicuous sets of composition factors for $M(E_7){\downarrow_H}$ are
\[ 28,28^*,\qquad 27^2,1^2,\qquad (10,10^*)^2,8^2,\qquad 8^6,1^8.\]
If $H$ acts semisimply on $M(E_7)$, then $u$ acts on the four modules as
\[7^2,6^2,5^2,4^2,3^2,2^2,1^2,\qquad 5^2,4^4,3^6,2^4,1^4, \qquad 4^4,3^6,2^8,1^6,\qquad 3^6,2^{12},1^{14},\]
and in each case the unipotent class exists and is generic by \cite[Table 7]{lawther1995}. Therefore $H$ is a blueprint for $M(E_7)$ by Lemma \ref{lem:genericmeansblueprint}. From $\Ext^1$ data, all but the first case must be semisimple. So $H$ is a blueprint in all but the first case. In this case there is a module $28/28^*$ (and its image under the graph automorphism), but the action of $u$ on it has blocks $7^7,1^7$, which is not in \cite[Table 7]{lawther1995}, so $H$ again acts semisimply on $M(E_7)$ and we are done.

\medskip

\noindent $\boldsymbol{q=5}$: There are twelve sets of composition factors for $M(E_7){\downarrow_H}$ that are conspicuous for elements of order at most $12$ (all are in fact conspicuous):
\[6,6^*,(3,3^*)^7,1^2,\quad 8^2,(6,6^*)^2,(3,3^*)^2,1^4,\quad (3,3^*)^6,1^{20},\quad 8^6,1^8,\quad 8^2,(3,3^*)^6,1^4,\]
\[10,10^*,(6,6^*)^3,\quad (10,10^*)^2,8^2,\quad 18,18^*,10,10^*,\quad 19^2,8^2,1^2,\quad 19^2,6,6^*,3,3^*.\]
\[15_2,15_2^*,10,10^*,3,3^*,\quad 15_2,15_2^*,(6,6^*)^2,1^2.\]

The first seven of these and the eleventh and twelfth are semisimple by $\Ext^1$ data, so as in characteristic $7$ consider the action of $u$. We see from this that the action of $u$ on each of the composition factors in these embeddings has no blocks of size $5$, whence $u$ is never non-generic (see \cite[Table 7]{lawther1995}), so $H$ is a blueprint for $M(E_7)$ by Lemma \ref{lem:genericmeansblueprint}.

The eighth case is either semisimple or $(18/10^*)\oplus (10/18^*)$ (up to applying the graph automorphism). In these two cases $u$ acts on $M(E_7)$ with blocks $5^2,4^6,3^4,2^4,1^2$ and $5^8,4^2,1^8$ respectively, neither of which occurs in \cite[Table 7]{lawther1995}. Thus $H$ cannot embed in $\mb G$ with these factors.

For the ninth case, the trivial factors split off, and the remaining composition factors form either $(8/19)\oplus (19/8)$ or a semisimple module. In either possibility, the action of $u$ on $M(E_7)$ would have Jordan blocks $5^2,4^4,3^6,2^4,1^4$, and so lies in the class $2A_2+A_1$ (see \cite[Table 7]{lawther1995}), which is generic. Thus $H$ is a blueprint for $M(E_7)$ by Lemma \ref{lem:genericmeansblueprint}.

For the tenth case, we switch to $L(E_7)$, where the corresponding set of composition factors for $L(E_7){\downarrow_H}$ is $125,8$, which of course must be semisimple. Letting $x$ denote an element of order $31$ in $H$, we note that the conjugacy class of $x$ is determined by its trace on $L(E_7)$. Assuming $x$ lies in a maximal torus $\mb T$ of $\mb G$, we look for elements of order $93$ that power to $x$. None stabilizes all of the eigenspaces of $x$ on $L(E_7)$, but eight stabilize all of the eigenspaces comprising the $8$. The stabilizer in $\mb G$ of the $8$-dimensional subspace of $L(E_7)$ must contain $H$ and these eight elements of order $93$, and stabilize a $125\oplus 8$ decomposition (as $L(E_7)$ is self-dual). In order to show that $H$ is a blueprint, we just need that this stabilizer is contained in a proper, positive-dimensional subgroup $\mb X$ of $\mb G$, as $\mb X$ cannot be irreducible on $L(E_7)$, hence $L(E_7){\downarrow_{\mb X}}$ must be $125\oplus 8$.

However, now Theorem \ref{thm:classmaximalsubgroup} can be used. (This argument is very similar to that of Lemma \ref{lem:allcasesstronglyimp}, but with $E_7$ instead of $E_8$.) If the stabilizer is not positive dimensional, it must be contained in an exotic $r$-local subgroup (but clearly that is not the case for this group, from the list in \cite{clss1992}) or contained in an almost simple subgroup $J$ of $\mb G$ (strictly containing $N_{\mb G}(H)$ and containing an element of order $93$), and $J$ is Lie primitive. The rest of the results from this chapter prove that $J$ cannot be Lie type in defining characteristic. From the tables in \cite{litterickmemoir} we see that $J$ cannot act as $125\oplus 8$, and those that act irreducibly on $L(E_7)$ are listed in \cite{liebeckseitz2004a}, and these are $\PSU_3(8)$, $M_{22}$, $Ru$ and $HS$. None of these contains an element of order $31$, so $J$ cannot exist.

This shows that $H$ is a blueprint for $L(E_7)$, as needed.

\medskip

\noindent $\boldsymbol{q=3}$: There are ten conspicuous sets of composition factors for $H$ on $M(E_7)$:
\[ 27^2,1^2,\quad (3,3^*)^6,1^{20},\quad 7^6,1^{14},\quad 7^2,(6,6^*)^2,(3,3^*)^2,1^6,\]
\[ 7^2,(3,3^*)^6,1^6,\quad 6,6^*,(3,3^*)^7,1^2,\quad  15,15^*,(6,6^*)^2,1^2,\]
\[ 7^6,(3,3^*)^2,1^2,\quad 7^2,(6,6^*)^3,3,3^*,\quad 15,15^*, 7^2,(3,3^*)^2.\]
The pressures of these cases are $-2$, $-20$, $-8$, $-4$, $-4$, $-2$, $0$, $4$, $2$ and $4$, so in all but the last three cases $H$ stabilizes a line on $M(E_7)$ by Proposition \ref{prop:pressure}.


\medskip

\noindent \textbf{Case 8}: We obtain the set of composition factors $27^3,7^6,3,3^*,1^4$ for $L(E_7){\downarrow_H}$, which has pressure $2$. However, the $27$s must split off (being projective), and the $\{1,3^\pm,7\}$-radical of $P(7)$ is
\[ 3,3^*/7,7/1,3,3^*/7.\]
Thus if $H$ does not stabilize a line on $L(E_7)$, the socle of the $\{3^\pm\}'$-heart of $L(E_7){\downarrow_H}$ must contain four copies of $7$, but this is impossible, either by pressure using Proposition \ref{prop:pressure}, or because then there must be summand $7$s (as there are at least four $7$s in the top as well).

\medskip

\noindent \textbf{Case 9}: Now we obtain two possible corresponding sets of composition factors for $L(E_7){\downarrow_H}$,
\[ 27,(15,15^*)^3,7,1^9,\qquad 27^2,(15,15^*)^2,7,3,3^*,1^6,\]
These have pressures $-2$ and $-1$, so $H$ stabilizes a line on $L(E_7)$ by Proposition \ref{prop:pressure}.

\medskip

\noindent \textbf{Case 10}: There is a unique set of composition factors for $L(E_7)$,
\[ 27,(15,15^*)^2,7^4,6,6^*,1^6,\]
which has pressure $2$. Suppose that $L(E_7)^H=0$. Letting $W$ denote the $\{7,15^\pm\}$-heart of $L(E_7){\downarrow_H}$, we consider the $\{1,6^\pm,7,15^\pm\}$-radicals of $P(7)$ and $P(15)$, which are
\[ 7/1/15,15^*/1/7\qquad 1/15,15^*/1/7/1/15.\]
We see that we can only support two trivials above each factor in $\soc(W)$ (as that top trivial in $P(15)$ cannot lie in $W$ as there is no $7$ or $15^\pm$ above it). Thus $\soc(W)$ must have at least three composition factors, whence it has pressure at least $3$, contradicting Proposition \ref{prop:pressure}.

Thus $H$ stabilizes a line on either $M(E_7)$ or $L(E_7)$ in all cases, as claimed.

\medskip

\noindent $\boldsymbol{q=2}$: The four conspicuous sets of composition factors for $M(E_7){\downarrow_H}$ are
\[ (3,3^*)^9,1^2,\qquad (3,3^*)^6,1^{20},\qquad 8^2,(3,3^*)^6,1^4,\qquad 8^6,1^8.\]
The $8$s are projective and so split off as summands. The projective cover $P(3)$ has structure
\[ 3/\left(1\oplus (3^*/3/3^*)\right)/3,\]
so we need five $3$-dimensional factors for every trivial composition factor in order not to stabilize a line on $M(E_7)$. Thus in the second, third and fourth cases $H$ must stabilize a line on $M(E_7)$. In the first case, we switch to the non-trivial composition factor $L(E_7)^\circ$ of the Lie algebra $L(E_7)$, where we find that there are two corresponding Brauer characters. Hence the composition factors of $L(E_7)^\circ{\downarrow_H}$ are one of
\[ 8^{11},(3,3^*)^6,1^8,\qquad 8^8,(3,3^*)^9,1^{14}.\]
Both of these clearly have trivial submodules, and so we are done.

\medskip

\noindent $\boldsymbol{q=4}$: There are, up to field automorphism, just two conspicuous sets of composition factors for $M(E_7){\downarrow_H}$:
\[ 8_1^6,1^8,\qquad (9,9^*)^2,8_1^2,1^4.\]
The first of these is semisimple and so $H$ stabilizes a line on $M(E_7)$. The rest of this proof considers the second case. This has pressure $4$, and we cannot immediately prove that it always stabilizes a line on $M(E_7)$, as there is a module $9^*/1,1/9$. Suppose that $H$ does not stabilize a line on $M(E_7)$, and let $W$ denote the $\{1,9^\pm\}$-heart of $M(E_7){\downarrow_H}$. We note that $\soc(W)$ cannot be $9$ (up to duality) as the $\{1,8_1,9^\pm\}$-radical of $P(9)$ is
\[ 1,1,1/9,9^*,9^*/1,1,8_1/9,\]
and so any pyx for $W$ must have socle either $9\oplus 9$ or $9\oplus 9^*$, and must have a submodule $W_0$, of the form 
\[ (1,1/9)\oplus (1,1/9^\pm).\]

\medskip

\noindent \textbf{Step 1}: Actions of $\Alt(6)$ subgroups and unipotent elements.

\medskip\noindent There are three classes of subgroups $\Alt(6)$ in $H$, labelled $L_1,L_2,L_3$, containing elements $v_1,v_2,v_3$ of order $4$ from three different $H$-classes. The irreducible $kL_i$-modules are $1,4_1,4_2,8_1,8_2$, and $8_1$ for $H$ restricts to $8_1$ for $L_i$. One may arrange the labellings so that the restriction of $9$ to each $L_i$ is $4_1/1/4_2$. Let $L$ denote any of the $L_i$, and $v$ the corresponding $v_i$. Since $8_1$ is a projective $kL$-module, the restriction of $M(E_7)$ to $L$ is isomorphic to the sum of $8_1^{\oplus 2}$ and $W{\downarrow_L}$.

The restriction of $W_0$ to $L$ has the form
\[ 1\oplus (1/4_1/1/4_2)\oplus 1\oplus (1/4_j/1/4_{3-j}),\]
for some $j=1,2$ according as $\soc(W)$ is $9\oplus 9$ or $9\oplus 9^*$. Note that $v$ acts on the sum of the composition factors of $M(E_7){\downarrow_H}$ with Jordan blocks $4^{12},1^8$, and on each summand of $W_0$ with blocks $4^2,2,1$, so we see from \cite[Table 7]{lawther1995} that $v$ acts on $M(E_7)$ with blocks $4^{12},2^4$.

\medskip

\noindent \textbf{Step 2}: The dimension of $W^L$ is equal to $2$.

\medskip\noindent That it is at least $2$ can be seen from the restriction of $W_0$ to $L$ above. For the other direction, note that the permutation module $P_L$ on the cosets of $L$ has structure
\[ 1/9,9^*/1,1,8_1,8_2/9,9^*/1,\]
and the quotient of $P_L$ by its $\{1,8_1,9^\pm\}$-residual is $1/9,9^*/1,1,8_1$. If $\dim(W^L)=3$ then there must be a $3$-dimensional $\Hom$-space from this module to $M(E_7){\downarrow_H}$ and also $W$ by Frobenius reciprocity, and we can of course quotient out by the socle of this as we assume that $W$ has no submodule $1$ or $8_1$. Thus there is now a $3$-dimensional $\Hom$-space from $1/9,9^*$ to $W$. The image of any of these maps in $W$ must lie inside the second socle layer of the $\{1,9^\pm\}$-radical of $W$, so $W_0$. But $W_0^L$ has dimension $2$, as we saw above. This proves that $\dim(W^L)=2$.

\medskip

\noindent \textbf{Step 3}: $L$ is contained in an $A_2$ subgroup.

\medskip\noindent Since $M(E_7)^L$ has dimension $2$, $L$ is contained in an $E_6$-parabolic subgroup or a $B_5$-type subgroup by Lemma \ref{lem:e7linestabs} (which, since $p=2$, is of type $C_5$). The composition factors of $L$ on $M(C_5)$ are $8_1,1^2$ or $4_1,4_2,1^2$ or $4_1^2,1^2$. Unless it is the last option, $M(C_5)^L$ always has dimension at least $2$, hence (since $M(C_5)/1$ is a submodule of the restriction of $M(E_7)$ to the $B_5$-type subgroup) $M(E_7)^L$ has dimension at least $3$, a contradiction. But if the factors are $4_1^2,1^2$ then the traces of elements of order $3$ on the $32$-dimensional module for $C_5$ are not consistent with it having composition factors $8_1^2,4_2^4$, so this case cannot occur.

On the other hand, suppose that $L$ is contained in an $E_6$-parabolic subgroup. By Proposition \ref{prop:sp4ine6} below $L$ stabilizes a line or hyperplane on $M(E_6)$ as well, and so the image of $L$ in the Levi lies in either $F_4$ or a $D_5$-parabolic subgroup by Lemma \ref{lem:e6linestabs}. 

We see that $L$ cannot lie in a $D_5$-parabolic subgroup: the factors on $M(D_5)$ would have to be $8_1,1^2$ or $4_1,4_2,1^2$ or $4_1^2,1^2$ (up to automorphism) and so in particular $L$ would stabilize a line on $M(D_5)$. This means that $L$ lies in $C_4$ or a $D_4$-parabolic, so in either case there is a version of $L$ with the same composition factors in $C_4$. However, in the table in the proof of Proposition \ref{prop:compfactorsalt6} we computed the actions of $\Alt(6)$ on $M(C_4)$ and the $16$-dimensional module, and none matches up with these factors.

Thus we may assume that the image $\bar L$ of $L$ inside the $E_6$-Levi subgroup lies in $F_4$. But then this $F_4$ subgroup acts on $M(E_7)$ as $1,1/M(F_4)/M(F_4)/1,1$, so $L$ has no fixed points on $M(F_4)$. By Proposition \ref{prop:sp4inf4} below $\bar L$ or its image under the graph automorphism stabilizes a line on $M(F_4)$, and so in particular $\bar L$ is not Lie primitive in $F_4$. Thus we consider the proper, positive-dimensional subgroups $\mb X$ of $F_4$. The composition factors of $\bar L$ on $M(F_4)$ are $8_1,4_1^2,4_2^2,1^2$.

We note that $\bar L$ must be $F_4$-irreducible: if $\bar L$ lies in an $A_2A_1$-parabolic then it must lie in an $A_2$-parabolic, but both of the corresponding Levi subgroups are simply connected $A_2$, and $\bar L$ lies in adjoint $A_2$ only; the $B_3$-parabolic subgroup already has four trivial composition factors, so that is not possible; the $C_3$-parabolic subgroup acts on $M(F_4)$ with $M(C_3)$ as a submodule, but of course $\bar L$ must stabilize a line on this if $\bar L\leq C_3$, so this cannot work either.

As $B_4$ stabilizes a line on $M(F_4)$, $\bar L$ cannot lie in this. We are left with $A_2A_2$ and $C_4$. From the table in the proof of Proposition \ref{prop:compfactorsalt6} we mentioned above, we see that if $\bar L$ is contained in $C_4$ then it acts irreducibly on $M(C_4)$, and this lies in an $A_2$ subgroup (subgroup $18$ in \cite[Table 10/10A]{thomas2017un}). If $\bar L$ lies in $A_2A_2$ then it must lie in a diagonal $A_2$, and we see from \cite[Table 10A]{thomas2017un} that this is the same subgroup of $F_4$. This acts on $M(F_4)$ as
\[ L(00)\oplus L(30)\oplus L(03)\oplus L(11).\]
(Note that $L(03)$ has an extension with $L(22)$, but not with $L(11)$. The judicious choice of the $A_2$ allows us not to worry about the $8$-dimensional factors, so we do not need to remove them before examining the possible structures.)

We now note that the restriction of the extension $L(00)/L(30)$ to the subgroup $\Alt(6)$ of $A_2$ is $1/4_1/1/4_2$, so that restriction induces an isomorphism on $1$-cohomology. In particular, this holds for the actions of the $A_2$ subgroup and $\bar L$ on $M(E_6)$, so every conjugacy class of subgroup $L$ in the $E_6$-parabolic with image $\bar L$ in the Levi is contained in an $A_2$ subgroup with the above action on $M(E_6)$.

\medskip

\noindent \textbf{Step 4}: Action of $L$ on $W$ and conclusion.

\medskip\noindent We actually work with the subgroup $J\cong \PSL_3(16)$ of this $A_2$. As a finite group, it is easier to construct the largest modules with certain composition factors, and it contains an element of order $93>75$ (see Theorem \ref{thm:largeorderss}). We find that the copies of $8_1$ must split off, and the $\{1,9_{12}^\pm\}$-radicals of $P(1)$ and $P(9_{12})$ are
\[ 1,1/9_{12},9_{12}^*/1,\qquad 1/9_{12}.\]
In particular, we can obtain useful information about the action of $J$, and hence $L$, on $M(E_7)$ from this. The most important of these is that there can be no $4_i$ in the fifth socle layer of $W{\downarrow_L}$.

We now consider a pyx for $W$, namely
\[ (9,9^*,9^*/1,1,8_1/9)\oplus (9^\pm,9^\mp,9^\mp/1,1,8_1/9^\pm).\]
We restrict the module $9/1,1,8_1/9$ to $L_1$, obtaining the module
\[ 1\oplus 8_1\oplus (4_1/1/4_2/1/4_1/1/4_2).\]
Clearly this contradicts the structure above, and so this $9$ cannot lie in the third layer of the pyx. We then restrict $9^*,9^*/1,1,8_1/9$ to $L$, obtaining the module
\[ 8_1\oplus (1/4_1)\oplus (4_2/1/4_1/1)\oplus (4_2/1/4_1/1/4_2).\]
Again, the $4$-dimensional factor in the fifth layer means that we should remove one of the copies of $9^*$ from the top of the module. Because it lies in the highest socle layer, there is a unique submodule of $9^*,9^*/1,1,8_1/9$ of codimension $9$ that, upon restriction to $L$, has no $4_2$ in the fifth layer.

We now develop a contradiction, proving that $H$ stabilizes a line on $M(E_7)$: if $L=L_2$ then we obtain a different submodule of codimension $9$ from the case when $L=L_1$. But the intersection of these two submodules is simply $1,1,8_1/9$, so we cannot place a copy of $9^*$ on top of this module such that it remains consistent with the known action of $L$ on $W$.

This completes the proof.

\medskip

\noindent $\boldsymbol{q=8}$: There are 49 sets of composition factors for $M(E_7){\downarrow_H}$ that are conspicuous for elements of order at most $21$. Let $x$ be an element of order $63$ in $H$. In each case, we check that for every conjugacy class of elements of order $21$ with the same eigenvalues as $x^3$ on $M(E_7)$ (there are sometimes one and sometimes two), and for which there is an element of order $63$ that powers to that class and has the same eigenvalues on $M(E_7)$ as $x$, there exists an element of order $315$ powering to the element of order $63$ and having the same number of distinct eigenvalues on $M(E_7)$ as $x$. Since $315>75$, this shows that $H$ is a blueprint for $M(E_7)$ in every case using Theorem \ref{thm:largeorderss}. 
\end{proof}

\begin{proposition} Let $H\cong \PSU_3(q)$ for some $3\leq q\leq 9$.
\begin{enumerate}
\item If $q=3$ then $H$ is strongly imprimitive.
\item If $q=4,8$ then $H$ stabilizes a line on $M(E_7)$.
\item If $q=5$ then $H$ either is a blueprint for $M(E_7)$ or stabilizes a line on $M(E_7)$, or $H$ stabilizes a unique $19$-space of $M(E_7)$, and its stabilizer is a maximal $A_2$ subgroup of $\mb G$.
\item If $q=7$ then $H$ is a blueprint for $M(E_7)$.
\item If $q=9$ then $H$ stabilizes a line on either $M(E_7)$ or $L(E_7)$.
\end{enumerate}
\end{proposition}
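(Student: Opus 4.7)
The plan is to mirror the structure of Proposition~\ref{prop:sl3ine7}: for each $q$ in turn, enumerate the conspicuous sets of composition factors for $M(E_7){\downarrow_H}$ (switching to $L(E_7)$ when the pressure there behaves better), then dispatch each case by one of three standard tools, namely (i) Proposition~\ref{prop:pressure} plus Lemma~\ref{lem:fix1space} when the pressure is non-positive, (ii) Lemma~\ref{lem:genericmeansblueprint} when the action of $u$ forces it into a generic unipotent class, or (iii) Theorem~\ref{thm:largeorderss} via the roots trick when $H$ contains a semisimple element of sufficiently large order. The simple $kH$-module dimensions and $1$-cohomologies are read from Tables~\ref{t:modules3}, \ref{t:modules5}, \ref{t:modules7} and \ref{t:modules48916}; in particular, for $\PSU_3(q)$ only a very small set of simple modules has non-zero $H^1$, which makes pressure calculations decisive in most of the sets produced.

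For $q=7,9$ I would proceed by a pure blueprint argument. In both groups $H$ has semisimple elements $x$ whose order (e.g.~of order $43$ in $\PSU_3(7)$ and of order $73$ in $\PSU_3(9)$, extended via an auxiliary torus element using the roots trick) can be lifted to an element $\hat x$ of order greater than $75$ with the same number of distinct eigenvalues on $M(E_7)$; Theorem~\ref{thm:largeorderss}(ii) then makes $x$, and hence $H$, a blueprint for $M(E_7)$. For $q=9$ a handful of low-pressure sets may need to be finished off on $L(E_7)^\circ$ by Proposition~\ref{prop:pressure}, exactly as in the corresponding $\PSL_3(9)$ argument. For $q=3$, compute the conspicuous sets of composition factors; for each of them either the pressure is non-positive (giving a trivial submodule on $M(E_7)$ or $L(E_7)^\circ$ by Proposition~\ref{prop:pressure}), or after splitting off projective summands a unipotent element $u$ of order $3$ lies in a generic class on $M(E_7)$, forcing a blueprint. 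In every remaining line-stabilization case, $C_{\mb G}(H)=Z(H)=1$ and $\PSU_3(3)$ has no subgroup of index $2$, so Lemma~\ref{lem:fix1space} upgrades line-stabilization to strong imprimitivity.

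The cases $q=4,8$ are handled identically. The only simple modules of dimension at most $56$ for $\PSU_3(4)$ and $\PSU_3(8)$ with non-zero $1$-cohomology are the $9$-dimensional twists of the natural (and, in general, at most one of them has $1$-cohomology for each outer-automorphism class), so every conspicuous set of composition factors for $M(E_7){\downarrow_H}$ has pressure either negative or equal to the multiplicity of such a $9$-dimensional factor. In the positive-pressure cases, the $\{1,9^\pm,\bar9^\pm\}$-radical of the relevant projective cover of a $9$-dimensional module turns out to be too small to absorb the required number of trivial composition factors (exactly as in the argument for $\PSL_3(4)$), forcing a trivial submodule and hence line stabilization on $M(E_7)$. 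For $q=8$ the same analysis applies verbatim, with the restriction to the $\PSU_3(4)$ subgroup providing the extra leverage needed to eliminate the larger number of candidate sets.

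The main obstacle, as for $\PSL_3(5)$, is $q=5$. After listing the conspicuous sets for $M(E_7){\downarrow_H}$, most will be semisimple or almost semisimple and yield generic unipotent actions (hence blueprints via Lemma~\ref{lem:genericmeansblueprint}) or non-positive pressure (hence line stabilization). The genuinely difficult set is the one whose factors include $19^{\oplus 2}$: there $H^1(H,19)$ is two-dimensional, so pressure gives nothing and one needs to identify the embedding directly. The plan here is to show that, after the non-blueprint cases have been eliminated, $H$ must stabilize a (unique) $19$-dimensional subspace $U\leq M(E_7)$, and then identify its stabilizer by restricting to a large-order semisimple element of $H$ whose eigenspace decomposition on $U$ is also preserved by a one-parameter torus in a diagonal $A_2$ subgroup acting on $M(E_7)$ as $L(4,1)\oplus L(1,4)$ (the maximal $A_2$ of $E_7$ in characteristic $5$). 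Showing that this $A_2$ is the full stabilizer of $U$---and that no other subgroup of $\mb G$ produces the same conspicuous factors---will be the technical heart of the argument, and is the step most likely to require an appeal to Theorem~\ref{thm:classmaximalsubgroup} to rule out spurious almost simple overgroups.
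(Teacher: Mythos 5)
Your overall framework (conspicuous sets, then pressure, genericity, or blueprints) is the right one, but several of the individual cases do not fall to the tools you assign them, and these are precisely the cases that carry the content of the proposition. For $q=3$ it is false that every positive-pressure set is killed by a generic unipotent class: the sets $15,15^*,(6,6^*)^2,1^2$ and $15,15^*,7^2,(3,3^*)^2$ survive both pressure and genericity, and the paper needs long bespoke arguments for them (a permutation-module analysis on the cosets of a $\PSL_3(2)$ and a $\GU_2(3)$ subgroup in the first case, and in the second an analysis of the centralizer $A_1A_3A_3\leq A_1D_6$ of an element of order $4$, unipotent class identification, and a dimension count inside the line stabilizers of Lemma \ref{lem:e7linestabs}; the conclusion there is strong imprimitivity via a stabilized $3$-space and Theorem \ref{thm:intersectionorbit}, not a stabilized line). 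For $q=4,8$ your mechanism is also insufficient: in the one positive-pressure set for $q=4$ the relevant radical of $P(9_{12})$ already contains the two trivial factors, so no trivial submodule is forced by that computation alone; the paper instead pins down the module $W$, enumerates the possible actions, and compares $\dim M(E_7)^{L}$ for $L\cong\Alt(5)$ against the embeddings through the $A_4A_1$ centralizer of an order-$5$ element. For $q=8$ the surviving set is handled by restricting to $\PSL_2(8)$ inside an $A_6$-Levi; note also that $\PSU_3(4)$ is \emph{not} a subgroup of $\PSU_3(8)$ (order considerations alone rule this out), so the "extra leverage" you invoke does not exist.

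Two further problems concern $q=9$, $q=7$ and $q=5$. For $q=9$ the statement to be proved is line stabilization on $M(E_7)$ or $L(E_7)$, and your proposed blueprint route does not prove it; moreover the largest semisimple element order is $73<75$, so Theorem \ref{thm:largeorderss}(ii) does not apply directly and the existence of a root of order at least $75$ with identical eigenspaces is an unverified computation (the paper simply eliminates by pressure and then passes to $L(E_7)$, where the five surviving sets have non-positive pressure or fall to a small radical computation for $P(7_1)$). Similarly for $q=7$ the maximal semisimple orders are $48$ and $43$, so your blueprint claim again rests on an unchecked lift; the paper instead observes that all eleven conspicuous sets are semisimple (no extensions) and that only $28\oplus 28^*$ could give a block of size $7$ for $u$, forcing genericity. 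Finally, for $q=5$ your plan for the set $19^2,8^2,1^2$ misidentifies the hard step: showing $H$ stabilizes a $19$-space is easy (it is forced by the module structure $19/1,1,8,8/19$ once lines are excluded); the real work is producing a positive-dimensional subgroup stabilizing it, which the paper does by pinning the structure via $\PGL_2(5)$ restrictions and unipotent blocks, placing $\SL_2(5)$ in an $E_6$-Levi, and exhibiting an explicit diagonal $A_1$ (not an $A_2$-torus) that stabilizes the $19$-space, before running through $\ms X$ to identify the stabilizer as the maximal $A_2$. Since $\PSU_3(5)$ has no semisimple elements of large order, the eigenspace/torus argument you sketch has no purchase there.
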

\begin{proof} $\boldsymbol{q=7}$: There are eleven conspicuous sets of composition factors for $M(E_7)$:
\[  8^2,(6,6^*)^2,(3,3^*)^2,1^4,\qquad 8^6,1^8,\qquad 15_2,15_2^*,(6,6^*)^2,1^2,\]
\[ 27^2,1^2,\qquad 6,6^*,(3,3^*)^7,1^2,\qquad 8^2,(3,3^*)^6,1^4, \qquad (3,3^*)^6,1^{20},\]
\[28,28^*, \qquad (10,10^*)^2,8^2,\qquad 10,10^*,(6,6^*)^3, \qquad 15_2,15_2^*,10,10^*,3,3^*\]
By considering $\Ext^1$, there are no extensions between any two modules in each decomposition, so $M(E_7){\downarrow_H}$ is always semisimple.

This means, as with $\PSL_3(7)$, we consider the element $u$. Only $28$ and $28^*$ in the composition factors above have a block of size $7$ in the action of $u$, and then we get $7^2,6^2,5^2,4^2,3^2,2^2,1^2$, as in $\PSL_3(7)$. As we need blocks of size $7$ for the action of $u$ to be non-generic (see \cite[Table 7]{lawther1995}), this means $u$ always lies in a generic conjugacy class. Hence $H$ is a blueprint for $M(E_7)$ by Lemma \ref{lem:genericmeansblueprint}, as needed.

\medskip

\noindent $\boldsymbol{q=5}$: There are three conspicuous sets of composition factors for $M(E_7){\downarrow_H}$:
\[  8^6,1^8,\qquad (10,10^*)^2,8^2,\qquad 19^2,8^2,1^2.\]
The first case is semisimple, and $u$ acts on $M(E_7)$ with blocks $3^6,2^{12},1^{14}$, which is the generic class $(3A_1)'$ by \cite[Table 7]{lawther1995}. Hence $H$ is a blueprint for $M(E_7)$ by Lemma \ref{lem:genericmeansblueprint}.

In the second case there is an extension between $10$ and $10^*$, but the $8$s must break off, and hence $u$ acts with blocks $3^2,2^4,1^2$ plus some other blocks. However, from \cite[Table 7]{lawther1995} we see that there is no non-generic class of elements of order $5$ that is compatible with this, so $u$ is generic, and again $H$ is a blueprint for $M(E_7)$.

In the third case, suppose that $H$ does not stabilize a line on $M(E_7)$: there is no module of the form $19/1/19$ or $19/1,1/19$, and so if there is a copy of $8$ in $\soc(M(E_7){\downarrow_H})$ then it is a summand, and there is at most one of these. Thus the action of $H$ on $M(E_7)$ has one of the structures
\[ (19/1,1,8/19)\oplus 8,\qquad 19/1,1,8,8/19.\]

Write $L_1$, $L_2$ and $L_3$ for representatives of the three $H$-classes of subgroups $\PGL_2(5)$, all $\Aut(H)$-conjugate, and let $L$ denote one of them. Let $v$ be a non-trivial unipotent element from $L$. The simple $kL$-modules come in pairs, two of each dimension, and we write $1^+$, $1^-$, $3^+$, $3^-$, $5^+$ and $5^-$ for them. The non-simple projectives are
\[ P(1^+)=1^+/3^-/1^+,\quad P(1^-)=1^-/3^+/1^-,\]
\[P(3^+)=3^+/1^-,3^-/3^+,\quad P(3^-)=3^-/1^+,3^+/3^-.\]
The restriction of $8$ to $L$ is $5^+\oplus 3^+$, and the restriction of $19$ to $L$ is $5^+\oplus 5^-\oplus P(1^+)\oplus 3^-\oplus 1^+$. Furthermore, the restriction of $1,1/19$ to $L$ is
\[ 5^+\oplus 5^-\oplus P(1^+)\oplus (1^+/3^-)\oplus 1^+\oplus 1^-.\]
We see that $v$ acts on $M(E_7)$ with at least eight blocks of size $5$, whence from \cite[Table 7]{lawther1995} $v$ must act on $M(E_7)$ with blocks one of
\[ 5^{10},1^6,\quad 5^{10},2^2,1^2,\quad 5^{10},3^2.\]
Combining this with the composition factors of $M(E_7){\downarrow_L}$ yields the possible actions of $L'=\PSL_2(5)$ on $M(E_7)$, which are
\begin{equation} 5^{\oplus 6}\oplus P(1)^{\oplus 2}\oplus
\begin{cases}(3/3)^{\oplus 2}\oplus 1^{\oplus 4},
\\ (1,3/3)\oplus (3/1,3)\oplus 1^{\oplus 2}
\\ (1,3/1,3)^{\oplus 2}\text{ or } P(1)\oplus (1,3/1,3)\oplus 3\text{ or } P(1)^{\oplus 2}\oplus 3^{\oplus 2}.\end{cases}
\label{eq:threeactionsofpgl}\end{equation}
There are multiple possible ways to attach $+$ and $-$ signs to the modules to obtain from this the action of $L$.

Since $1^+/3^-$ is a submodule of $M(E_7){\downarrow_H}$, the element $v$ cannot act as in the first case of (\ref{eq:threeactionsofpgl}). In the second case of (\ref{eq:threeactionsofpgl}), the presence of a submodule $1^+/3^-$ and self-duality forces the signs, and in the third case of (\ref{eq:threeactionsofpgl}) the presence of $1^+/3^-$ and self-duality, together with the fact that $(1^+)^{\oplus 2}$ is a subquotient shows that the middle and last case cannot occur. Thus the non-projective part of the action of $L$ on $M(E_7)$ is
\begin{equation} (1^+,3^+/3^-)\oplus (3^-/1^+,3^+)\oplus (1^-)^{\oplus 2}\;\;\text{or}\;\; (1^+,3^+/1^-,3^-)\oplus (1^-,3^-/1^+,3^+).\label{eq:twooptionspsu33}\end{equation}
Note that $8$ cannot be a summand in either case, since there is no summand $3^+$ in either expression. Thus the structure of $M(E_7){\downarrow_H}$ is definitely $19/1,1,8,8/19$, although such a module is not uniquely determined.

\medskip

In the first of the options in (\ref{eq:twooptionspsu33}), we see that the $1$-dimensional module contributed by each $19$ splits off as a summand. In the module $8,8,8/19$, this is not the case, and so there is a unique submodule $8,8/19$ for which the $1^-$ is a summand, and hence the submodule $V=1,1,8,8/19$ is also unique up to isomorphism.

However, replacing $L$ by a different one of the $L_i$ yields a different module $V$. We obtain three different modules $V_1$, $V_2$ and $V_3$, and the intersection of the $V_i$ is simply $1,1/19$. However, we know that we need a copy of $8$ in the heart of $M(E_7){\downarrow_H}$, and this yields a contradiction.

\medskip

In the second case from (\ref{eq:twooptionspsu33}), we see that one of the two copies of $3^+$ from the two $8$s must become a submodule of $M(E_7){\downarrow_L}$. We again consider the module $8,8,8/19$, which restricts to $L$ as
\[ (5^+)^{\oplus 4}\oplus 5^-\oplus P(1^+)\oplus (3^+/1^-)\oplus (3^+/3^-)\oplus 3^+.\]
There is therefore a unique module $8/19$ such that the $3^+$ in the restriction to $L$ splits off.

As with the previous case, there is a different extension for each $L_i$, and the sum of these extensions is a module $8,8/19$. So we obtain a unique submodule $V=1,1,8,8/19$. For this module, $\Ext_{kH}^1(19,V)$ is $1$-dimensional, and it produces a unique indecomposable module $19/1,1,8,8/19$, which must be $M(E_7){\downarrow_H}$.

\medskip

We now switch to $M=\SL_2(5)$, and note that $M(E_7){\downarrow_M}$ is
\[ 5^{\oplus 2}\oplus P(4)^{\oplus 2}\oplus P(3)^{\oplus 2}\oplus 2^{\oplus 2}\oplus 1^{\oplus 2}.\]
Since $M$ stabilizes a line on $M(E_7)$, by Lemma \ref{lem:e7linestabs} it lies in one of the five line stabilizers, but it clearly cannot lie in $q^{26}.F_4(q).(q-1)$ as that has four trivial quotients and submodules. If $M$ lies in $q^{1+32}.B_5(q).(q-1)$, then by examining traces on $M(B_5)$ we see that the composition factors of $M(B_5){\downarrow_M}$ are $4,3,3,1$. Since $M(B_5)$ is self-dual, we see that $M$ has a trivial summand on $M(B_5)$, and again $M$ would have four trivial quotients and submodules on $M(E_7)$.

We claim that $M$ lies in an $E_6$-Levi subgroup of $\mb G$, but certainly we have at least shown that $M$ lies in an $E_6$-parabolic subgroup. Notice that the projection of $M$ onto the $E_6$-Levi subgroup cannot have a trivial summand on $M(E_6)$. However, this projection of $M$ lies in the centralizer of an involution $z$, an $A_5A_1$ subgroup. The $1$-eigenspace of $z$ is $\Lambda^2(M(A_5))$, and $M$ cannot have a trivial summand on this space. The action of $M$ on $M(A_5)$ is one of $5\oplus 1$ or $3^{\oplus 2}$ or $3/3$, with the remaining possibilities having three trivial composition factors and therefore not being suitable. The first two have $\Lambda^2(M(A_5)){\downarrow_M}$ being projective and the third has a trivial summand in the restriction. Thus $M(E_6){\downarrow_M}$ has projective action on the $1$-eigenspace of $z$. In particular, $H^1(M,M(E_6))=0$, so $M$ lies in a Levi subgroup, and we have that  $M(E_6){\downarrow_M}$ is
\[ 5\oplus P(4)\oplus P(3)\oplus 2.\]

This centralizes an involution with centralizer $A_5A_1$, which acts on $M(E_6)$ as $(L(\lambda_1),L(1))\oplus (L(\lambda_4),L(0))$. Thus $L(\lambda_4)$ restricts to $M$ as $P(3)\oplus 5$, and the only possibility for $M$ embedding in $A_5$ with this action is as $5\oplus 1$. We may embed $M$ into a diagonal $A_1$ subgroup $\mb X$, acting as $L(1)$ on the $A_1$ factor and as $L(4)\oplus L(0)$ on the $A_5$ factor. This acts on $M(E_6)$ as
\[\begin{split} L(1)\otimes\left(L(0)\oplus L(4)\right)\oplus& \Lambda^2\left(L(0)\oplus L(4)\right)
\\ &=L(1)\oplus (L(3)/L(5)/L(3))\oplus L(4)\oplus (L(2)/L(6)/L(2)).\end{split}\]
The embedding of $\mb X$ into $E_7$ now stabilizes the subspace of $M(E_7){\downarrow_M}$ given by $4^{\oplus 2}\oplus 3^{\oplus 2}$, and also every diagonal subspace of $5^{\oplus 2}$. In particular, $\mb X$ stabilizes the $19$ of $M(E_7){\downarrow_H}$, which restricts to $M$ as $5\oplus 4^{\oplus 2}\oplus 3^{\oplus 2}$. Therefore $\gen{\mb X,H}$ contains $H$ and stabilizes a $19$-space, so is positive dimensional and $H$ is not Lie primitive. In fact, $H$ is strongly imprimitive by Theorem \ref{thm:intersectionorbit}.

We now examine the subgroups in $\ms X$ to check the final claim of the proposition: if $H$ lies in a maximal parabolic subgroup $\mb X$, then the dimensions of the factors show that $\mb X$ is not the $D_6$-, $A_6$- or $A_2A_4$-parabolic, and since it does not stabilize a line on $M(E_7)$ it cannot be the $E_6$-parabolic subgroup. All other maximal parabolics, upon removing an $A_1$ factor, lie in one of these four, so $H$ is not contained in a parabolic subgroup. If $H$ is contained in a reductive subgroup $\mb X$ then since $M(E_7){\downarrow_H}$ is indecomposable, $\mb X$ cannot be $D_6A_1$, $A_7$, $G_2C_3$, $A_1G_2$ or $A_1F_4$, and of course $H\not\leq A_1A_1$. This leaves $A_2$, where the factors do have dimensions $19^2,8^2,1^2$, and $H$ is the fixed points under the Frobenius map of the maximal $A_2$ in $\mb G$ (see \cite[Table 10.2]{liebeckseitz2004} for the factors).

\medskip

\noindent $\boldsymbol{q=3}$: There are eleven conspicuous sets of composition factors for $M(E_7){\downarrow_H}$, which are:
\[ 27^2,1^2,\quad (3,3^*)^6,1^{20},\quad (6,6^*)^2,7^4,1^4,\]
\[ 7^2,(6,6^*)^2,(3,3^*)^2,1^6,\quad 7^6,1^{14}, \quad 15,15^*,(6,6^*)^2,1^2,\quad 6,6^*,(3,3^*)^7,1^2,\]
\[ 7^2,(3,3^*)^6,1^6,\quad 7^6, (3,3^*)^2, 1^2,\quad 7^2,(6,6^*)^3,3,3^*,\quad 15,15^*, 7^2,(3,3^*)^2.\]
The simple modules with non-zero $1$-cohomology are $6,6^*,7$ (see Table \ref{t:modules3}), rather than $7,15,15^*$ as with $\PSL_3(3)$, so the pressures are generally higher than before. The pressures of the first eight sets of composition factors are $-2$, $-20$, $4$, $0$, $-8$, $2$, $0$ and $-4$ respectively.

\medskip

\noindent \textbf{Cases 1, 2, 4, 5, 7 and 8}: Proposition \ref{prop:pressure} shows that $H$ always stabilizes a line on $M(E_7)$ in these cases.

\medskip

\noindent \textbf{Case 3}: If the composition factors of $M(E_7){\downarrow_H}$ are $(6,6^*)^2,7^4,1^4$, then the action on $L(E_7)$ is uniquely determined, with composition factors
\[ 27^2,15,15^*,6,6^*,7^3,(3,3^*)^2,1^4;\]
as there are five composition factors with non-zero $1$-cohomology and four trivial factors, the trivials must be contained in a module of the form $7/1/6/1/7/1/6^*/1/7$ or $6^*/1/7/1/7/1/7/1/6$, with the $15,15^*$ and four $3$s connected in some way. In particular, such a module must exist as a submodule of $P(7)$ or $P(6)$, as any other submodules can be quotiented out and the structure not change. However, $P(7)$ and $P(6)$ have exactly nine socle layers, as this module does, and so this \emph{is} $P(7)$---which has dimension $162$, so not possible---or $P(6)$, which is not self-dual. Hence $H$ stabilizes a line on $L(E_7)$, as claimed. Thus $H$ is strongly imprimitive by Lemma \ref{lem:fix1space}.

\medskip

\noindent \textbf{Case 9}: For $7^6,(3,3^*)^2,1^2$, the corresponding action on $L(E_7)$ is determined uniquely, and it has factors $27^3,7^6,3,3^*,1^4$. The $27$s split off as projective summands, and the $\{1,3^\pm,7\}$-radical of $P(7)$ is
\[ 3,3^*/7,7/1,3,3^*/7.\]
Since we have four trivials, either $H$ stabilizes a line on $L(E_7)$ or we need four $7$s below them, and four $7$s above them, but this is impossible as we only have six $7$s. So $L(E_7){\downarrow_H}$ has a trivial submodule, and thus $H$ is strongly imprimitive by Lemma \ref{lem:fix1space}.

\medskip

\noindent \textbf{Case 10}: For $7^2,(6,6^*)^3,3,3^*$, we again switch to the Lie algebra, where the unique corresponding set of composition factors is $27,(15,15^*)^3,7,1^9$, thus $H$ clearly stabilizes a line on $L(E_7)$. Thus $H$ is strongly imprimitive by Lemma \ref{lem:fix1space}.

\medskip

The remaining cases are $15,15^*,(6,6^*)^2,1^2$ and $15,15^*,7^2,(3,3^*)^2$. These are much more difficult than the other cases: let $L$ and $M$ be copies of $\GU_2(3)$ and $\PSL_3(2)$ respectively, containing elements $u$ and $v$ of order $3$ respectively.
\medskip

\noindent \textbf{Case 6}: We deal with $15,15^*,(6,6^*)^2,1^2$ first. The $\{1,6^\pm,15^\pm\}$-radicals of $P(6)$ and $P(15)$ are $6/1,15^*/6$ and $6^*/15$ respectively, hence if $H$ does not stabilize a line on $M(E_7)$ then $M(E_7){\downarrow_H}$ must be
\[ (6/1,15^*/6)\oplus (6^*/1,15/6^*).\]
Note that both $u$ and $v$ act on this module with blocks $3^{18},1^2$, hence lie either in class $2A_2$ or class $2A_2+A_1$, by \cite[Table 7]{lawther1995}. The corresponding set of composition factors for $L(E_7){\downarrow_H}$ is
\[ 27,15,15^*,7^5,(6,6^*)^2,(3,3^*)^2,1^5.\]
Assume that $H$ does not stabilize a line on $L(E_7)$ either.

\medskip

There are eight non-projective irreducible modules for $L$, four of dimension $1$ and four of dimension $2$. Their projective modules are
\[ 1_1/1_2/1_1,\quad 1_2/1_1/1_2,\quad 1_3/1_3^*/1_3,\quad 1_3^*/1_3/1_3^*,\]
\[ 2_1/2_2/2_1,\quad 2_2/2_1/2_2,\quad 2_1^*/2_2^*/2_1^*,\quad 2_2^*/2_1^*/2_2^*.\]
The non-projective parts of the restrictions of the irreducible modules for $H$ to $L$ are as follows:
\begin{center}
\begin{tabular}{cc}
\hline Module & Restriction
\\ \hline $1$ & $1_1$
\\ $3$ & $1_3\oplus 2_1$
\\ $3^*$ & $1_3^*\oplus 2_1^*$
\\ $6$ & $1_2\oplus 2_1$
\\ $6^*$ & $1_2\oplus 2_1^*$
\\ $7$ & $2_2\oplus 2_2^*$
\\ $15$ & $1_3\oplus 2_2^*$
\\ $15^*$ & $1_3^*\oplus 2_2$
\\ \hline
\end{tabular}
\end{center}
From this we obtain that the non-projective parts of the composition factors of $L(E_7){\downarrow_H}$, restricted to $L$, are
\[ 1_1^5,1_2^4,(1_3,1_3^*)^3,(2_1,2_1^*)^4,(2_2,2_2^*)^6.\]
The two possible actions of $u$ on $L(E_7)$ are with blocks $3^{42},1^7$ and $3^{43},2^2$. In the second case, the $1$-dimensional factors of this restriction must assemble themselves into the module
\[ P(1_1)^{\oplus 2}\oplus P(1_2)\oplus P(1_3)\oplus P(1_3^*).\]
In the first case, one of $P(1_1)$ and $P(1_2)$ must become semisimple, and the rest of the module must stay the same. For one of the two (dual) blocks consisting of $2$-dimensional modules, in the first case the structure must be \[ P(2_2)^{\oplus 2}\oplus (2_1/2_2)^{\oplus 2},\]
or with $2_2/2_1$ instead of $2_1/2_2$ for the last two modules. In particular, $P(2_1)$ (and $P(2_1^*)$) are not submodules of $L(E_7){\downarrow_L}$.

We will use this information in the rest of the proof.

\medskip

\noindent \textbf{Step 1}: $v$ belongs to class $2A_2+A_1$.

\medskip\noindent Suppose $v$ lies in class $2A_2$. The action of $v$ on $L(E_7)$ then has blocks $2^{42},1^7$ by \cite[Table 8]{lawther1995}, and the composition factors of $L(E_7){\downarrow_H}$ are
\[27,15,15^*,7^5,(6,6^*)^2,(3,3^*)^2,1^5.\]
(From now on we ignore the $27$, since we know that splits off as a projective summand.) The modules $3^\pm$, $6^\pm$ and $15^\pm$ are all projective for $M$, so we focus on $7^5,1^5$. The projective cover $P(7)$ for $M$ is $7/1/7$, and the projective cover $P(1)$ for $M$ is $1/7/1$, so we see that the $7^5,1^5$ summand of $L(E_7){\downarrow_M}$ must arrange itself into one of two modules:
\[P(1)\oplus 7^{\oplus 4}\oplus 1^{\oplus 3}\quad \text{or} \quad P(7)\oplus 7^{\oplus 3}\oplus 1^{\oplus 4}.\]
Either way, $L(E_7)^M$ has dimension $4$.

However, in the proof of Proposition \ref{prop:su3ine8}, we proved that $\dim(L(E_7)^M)$ was at most the multiplicity of $6$ as a composition factor in $L(E_7)$, which is in this case $2$. This is a contradiction, and hence $v$ cannot lie in class $2A_2$.

\medskip

Let $U$ denote the quotient
\[ 1/6,6^*/1/7/1/6.\]
of the permutation module $P_M$ on the cosets of $M$.

\medskip

\noindent \textbf{Step 2}: $U$ is not a submodule of $L(E_7){\downarrow_H}$.

\medskip\noindent Suppose that $U$ is a submodule of $L(E_7){\downarrow_H}$, and hence $6^*$ is a quotient of $L(E_7){\downarrow_H}$. Removing this $6^*$ from the top does not affect $U$. Suppose first that $u$ belongs to class $2A_2+A_1$. The contribution to the principal block of $U{\downarrow_L}$ is $P(1_1)\oplus P(1_2)$, and if $u$ lies in class $2A_2+A_1$ then the principal block part of $L(E_7){\downarrow_L}$ is $P(1_1)^{\oplus 2}\oplus P(1_2)$. Of course there is a single $1_2$ quotient of this, which appears inside $U$ and also outside the submodule with quotient $6^*$, which is a contradiction.

If $u$ lies in class $2A_2$ however, then since $P(1_2)$ is a submodule of $U{\downarrow_L}$ we see that the principal block part of $L(E_7){\downarrow_L}$ must be
\[ P(1_1)\oplus P(1_2)\oplus 1_1^{\oplus 2}\oplus 1_2.\]
Removing the $6^*$ from the top of $L(E_7){\downarrow_H}$ to yield a module $W$, we see that $W{\downarrow_L}$ has principal block part 
\[ P(1_1)\oplus P(1_2)\oplus 1_1^{\oplus 2}.\]
Removing the $6$ from the socle of $W$ yields a summand $1_2/1_1$, but the quotient $W/6$ must be self-dual, which it clearly is not, another contradiction. Since $u$ cannot come from any class, we must have that $U\not\leq L(E_7){\downarrow_H}$.

\medskip

\noindent \textbf{Step 3}: $u$ belongs to class $2A_2$ and $(1/6,6^*/1/7)\oplus (1/6)$ is a submodule of $L(E_7){\downarrow_H}$.

\medskip\noindent If $u$ belongs to class $2A_2+A_1$, then there is a single copy of $1_2$ in the socle of $L(E_7){\downarrow_L}$, as we saw in the previous step. Hence we cannot have both $6$ and $6^*$, or two copies of $6$, in the socle of $L(E_7){\downarrow_H}$. We know that $L(E_7)^M$ has dimension $2$, hence $\Hom_{kH}(U,L(E_7))$ has dimension $2$. 

The only quotients of $U$ that can occur as submodules of $L(E_7){\downarrow_H}$ are $1/6^\pm$ and $1/6,6^*/1/7$, so we must have the sum of two of these as a submodule. The restriction of this to $L$ has two copies of $1_2$ in the socle, contradicting our choice of the class of $u$.

If $(1/6)\oplus (1/6^\pm)$ is a submodule of $L(E_7){\downarrow_H}$ though, $L$ acts with a submodule $(1_1/1_2)^{\oplus 2}$, which is impossible according to the structure given above. Thus we have, up to graph automorphism, that $(1/6,6^*/1/7)\oplus (1/6)$ is a submodule of $L(E_7){\downarrow_H}$. The restriction of this to $L$ has principal block part
\[ (1_1/1_2/1_1)\oplus (1_1/1_2)\oplus 1_2,\]
so we see that the principal block part of $L(E_7){\downarrow_L}$ must be
\[ P(1_1)\oplus P(1_2) \oplus 1_1^{\oplus 2}\oplus 1_2.\]

\medskip

\noindent \textbf{Step 4}: The permutation module $P_L$ on the cosets of $L$.

\medskip\noindent We have already proved that $(1/6,6^*/1/7)\oplus (1/6)$ (up to graph automorphism) is a submodule of $L(E_7){\downarrow_H}$, and we know that $L(E_7)^L$ is $3$-dimensional. (Recall that we are ignoring the $27$ summand.) The module $P_L$ has structure
\[ 1/7/((1/6,6^*/1)\oplus 3\oplus 3^*)/7/1\]
plus a copy of $27$, which we again ignore. Since we assume that $H$ stabilizes no lines on $L(E_7)$, we obtain a $3$-dimensional $\Hom$-space from $X$ to $L(E_7){\downarrow_H}$, where $X$ is
\[ 1/7/((1/6,6^*/1)\oplus 3\oplus 3^*)/7.\]
Suppose that $X$ is a submodule of $L(E_7){\downarrow_H}$. The principal block part of $X{\downarrow_L}$ is $P(1_1)\oplus 1_1\oplus 1_2$, so we need another quotient of $X$ to be a submodule of $L(E_7){\downarrow_H}$. Since (up to graph automorphism of $H$) $6$ is a submodule and $6^*$ a quotient, we already know where all of the $6$-dimensional modules are, so any $6^\pm$ in this other submodule must be in the socle or top. This submodule cannot contain either $3$ or $3^*$, or $1$, or $6^*$ either, for the reasons that we already have $1_3\oplus 1_3^*$ in the socle of $L(E_7){\downarrow_L}$, $H$ cannot stabilize a line on $L(E_7)$, and we have accounted for both copies of $6^*$ respectively. Thus we have a quotient of $1/7/1/6$, and therefore either
\[ \left(\vphantom{x^2}1/7/((1/6,6^*/1)\oplus 3\oplus 3^*)/7\right)\oplus \left(\vphantom{x^2}1/7/1/6\right)\]
or
\[ \left(\vphantom{x^2}1/7/((1/6,6^*/1)\oplus 3\oplus 3^*)/7\right)\oplus \left(\vphantom{x^2}1/7\right)\oplus \left(\vphantom{x^2}1/6\right)\]
is a submodule of $L(E_7){\downarrow_H}$.

In both cases, we are missing $15,15^*,7^2,6^*,3,3^*$. In order for the trivial module at the top of $X$ to stay in $L(E_7)$, it must be covered by a $7$ or $6^*$. There is no extension with submodule $X$ and quotient $7$ or $6^*$ that does this, so we must add more of the missing modules on top of $X$. We add all copies of $15$, $15^*$ and $7$ that we can, then $3$ and $3^*$, and finally $6^*$ and $7$, but this produces a module
\[ 1/7,7/1,3,3,3^*,3^*/6,6^*,7,7,15,15^*/1,3,3^*/7,\]
which clearly still has a trivial quotient. Thus $X$ cannot be a submodule of $L(E_7){\downarrow_H}$.

\medskip

Hence the possible images of a map from $X$ to $L(E_7){\downarrow_H}$ are
\[ 1/7,\quad 1/7/1/6,\quad 1/7/3^\pm,\quad 1/7/1/6,3^\pm,\quad 1/7/3,3^*,\quad 1/7/1/6,3,3^*.\]
Ignoring any $3^\pm$ in the socle, we still need a sum of three of the modules $1/7/1/6$ and $1/7$. This means we certainly need another copy of $1/7$, possibly with $3^\pm$ or $3,3^*$ underneath it, and another copy of $1/7$, either on top of the $1/6$ or to the side. If it is to the side though, we end up with a subquotient
\[ 6\oplus 6^*\oplus 6\oplus 7^{\oplus 2},\]
which has pressure $5$, so $H$ stabilizes a line on $L(E_7){\downarrow_H}$ by Proposition \ref{prop:pressure}.

Thus we have a submodule
\begin{equation} (1/6,6^*/1/7)\oplus (1/7/1/6)\oplus (1/7),\label{eq:bigsubmodulepsu33}\end{equation}
possibly with some copies of $3^\pm$ under the second and third summands.

\medskip

Let $Y$ denote the $\{3^\pm,15^\pm\}$-heart of $L(E_7){\downarrow_H}$, which is of course a self-dual module.

\medskip

\noindent \textbf{Step 5}: $Y$ contains a single trivial composition factor, and $\soc(L(E_7){\downarrow_H})$ contains exactly one $3$-dimensional submodule.

\medskip\noindent From the composition factors of $L(E_7){\downarrow_H}$, we see that there are at most two composition factors of dimension $3$ in the socle. (None can be summands as there is no summand $1_3^\pm$ in the action of $L$.)

Suppose first that there are no $3$-dimensional factors in the socle. In this case the whole of the module from (\ref{eq:bigsubmodulepsu33}) appears as a submodule of $L(E_7){\downarrow_H}$, and also $6^*\oplus 7^{\oplus 2}$ in the top. This means that $Y$ can only have composition factors $15,15^*,(3,3^*)^2$.

Hence we need to understand the $\{3^\pm,15^\pm\}$-radicals of $P(3)$ and $P(15)$. These are
\[ 3/3^*,15^*/3\quad\text{and}\quad 3^*/15.\]
The former of these is self-dual, so one cannot make a module with three socle layers, six composition factors, and two factors in the third layer, which is necessary to have the reduction to $L$ containing $P(1_3)\oplus P(1_3^*)$. This means that we must be wrong, and there is a $3$-dimensional composition factor in the socle.

Suppose that there is more than one, and therefore that all $3$-dimensional factors lie in the socle or the top. Then one must be able to put possibly $15,15^*$, and then two $7$s on top of the module in (\ref{eq:bigsubmodulepsu33}). However, on top of this module one may only place $15^*$, and then one $7$. This means that one cannot cover two of the three trivial quotients, and this yields another contradiction.

Thus there must be exactly one $3$-dimensional factor in the socle, as claimed. Since one $3^\pm$ lies below either the second or third summand in (\ref{eq:bigsubmodulepsu33}) we see that there must be a trivial factor in $Y$, and there are four trivial factors in the submodule from (\ref{eq:bigsubmodulepsu33}) not lying below any $3^\pm$, so this completes the proof.

\medskip

\noindent \textbf{Step 6}: Contradiction.

\medskip\noindent We will obtain a contradiction with the action of $M$.

First, consider the $\{7\}$-heart of $Y$. Since $Y$ contains factors $7^2,1$ and does not have a trivial submodule, this module must have shape $7/1/7$ with some other factors. There is no module $7/1/7$, so there are other factors in the $\{7\}$-heart. Take the $\{1,3^\pm,15^\pm\}$-radical of $P(7)$, place as many copies of $7$ on top as possible, then take the $\{7\}'$-residual, and this must be a pyx for the $\{7\}$-heart of $Y$. This pyx is
\[ 7,7/1,3,3^*/7,\]
so the $\{7\}$-heart is $7/1,3,3^*/7$, but this module is not uniquely determined by this structure. However, we see that the $\{7\}$-heart of $L(E_7){\downarrow_H}$---note, not the $\{7\}$-heart of $Y$---must have a subquotient $6\oplus 6^*\oplus 3\oplus 3^*$. Since $15$ only has extensions with $3^*$ and $6^*$, the upshot of this is that if one quotients out by the $\{3^\pm,6^\pm\}$-radical of $L(E_7){\downarrow_H}$, there must be some $15^\pm$ in the socle.

We now consider the action of $M$ on $L(E_7)$, aiming to see that it cannot be the sum of a projective module and $(7/1)\oplus (1/7)$, contrary to what we have already established. To prove this, we may of course quotient out by any $kH$-submodule whose restriction to $M$ is projective, and similarly to $kH$-quotients whose restriction to $M$ is projective.

Since $3^\pm$, $6^\pm$ and $15^\pm$ all restrict projectively to $M$, we may remove all of these from the top and socle, so take the $\{1,7\}$-heart of $L(E_7){\downarrow_H}$. This is a module with composition factors
\[ 7^5,6,6^*,3,3^*,1^5,\]
and we can see most of its structure in (\ref{eq:bigsubmodulepsu33}). There is a submodule $1/7/1$, on which $M$ acts projectively and so may be removed as well. We are left with a module with a submodule
\[ (1/6,6^*/1/7)\oplus (1/7),\]
on which we have to place $3\oplus 3^*$ first, and then two copies of $7$.

Notice that the $\{7\}$-heart of $Y$ is now a quotient of the remaining module above, and the restriction to $M$ of it is again projective. Thus we may remove it, and are left with an extension with quotient $7$ and submodule $(1/6,6^*/1/7)$, whose restriction to $M$ must be $(1/7)\oplus (7/1)$ plus projective. However, the unique extension, $7/1/6,6^*/1/7$, has restriction to $M$ given by $(7/1/7)\oplus 7$ plus projective, not $(1/7)\oplus (7/1)$. This contradiction proves that $H$ stabilizes a line on $L(E_7)$.

\medskip

\noindent \textbf{Case 11}: Here $H$ acts on $M(E_7)$ with composition factors $15,15^*,7^2,(3,3^*)^2$---which resemble those of the module $Y$ in the previous case---and on $L(E_7)$ with composition factors $27,(15,15^*)^2,7^2,(6,6^*)^2,3,3^*,1^2$. The element $z$ of order $4$ in $Z(L)$ has trace $0$ on $M(E_7)$ and $-3$ on $L(E_7)$. Consulting \cite[Table II]{frey2001}, for example, we see that $z$ has centralizer $A_1A_3A_3$, inside the centralizer $A_1D_6$ of $z^2$.

Let $\mb X$ be the copy of $A_1D_6$ centralizing the involution in $L'$, and $\mb Y$ the copy of $A_1A_3A_3$ centralizing $z$ in $\mb X$. The summand of $M(E_7){\downarrow_L}$ consisting of $2$-dimensional factors is the half-spin module for $D_6$, and the summand consisting of $1$- and $3$-dimensional factors is the tensor product of the two minimal modules for $D_6$ and $A_1$. Any element of $A_1$ centralizes the half-spin module. Note that the $3$-pressure of $M(E_7){\downarrow_H}$ is $0$, and so either $3$ or $3^*$ is a submodule of $M(E_7){\downarrow_H}$, say $3$ up to duality. Let $W$ denote this $3$-dimensional $kH$-submodule of $M(E_7)$. The action of $L$ on $W$ is $1_3\oplus 2_1$, so we aim to find an infinite subgroup of $\mb G$ stabilizing $W$. We do so by finding an element of $\mb X$ simultaneously stabilizing any given submodule $1_3$ of the product of minimal modules, and any given submodule $2_1$ of the half-spin module.

Of course, not all of $L$ can embed in $A_1$, and so the projection of $L'$ onto $A_1$ cannot be faithful. Thus the composition factors of $L'$ on $M(A_1)$ are $1^2$, and $L'$ acts as $1^{\oplus 2}$ (if $u\in D_6$) or $1/1$ (if $u\not\in D_6$). In particular, this means that $L'$ acts on $M(D_6)$ with composition factors $3^3,1^3$. We therefore see that $u$ acts on $M(D_6)$ with blocks either $3^3,1^3$ or $3^4$ (there is no class in $D_6$ acting with blocks $3^3,2,1$, as we see from \cite[Table 7]{lawther2009}). Therefore $u$ lies in class either $A_2+D_2$ or $2A_2$ of $D_6$ (from the same table). Of course, either $u$ lies in the $D_6$ or acts diagonally, and this yields four options for the unipotent class of $E_7$ containing $u$, using \cite[4.10]{lawther2009}: if $u\in D_6$ then we obtain $A_2+2A_1$ or $2A_2$, and if $u\not\in D_6$ then we obtain $A_2+3A_1$ or $2A_2+A_1$. The actions of $u$ on $M(E_7)$ are as follows, via \cite[Table 7]{lawther1995}.
\begin{center}
\begin{tabular}{cc}
\hline Class & Blocks on $M(E_7)$ 
\\\hline $A_2+2A_1$ & $3^{14},2^4,1^6$
\\ $A_2+3A_1$ & $3^{14},2^7$
\\ $2A_2$ and $2A_2+A_1$ & $3^{18},1^2$
\\ \hline
\end{tabular}
\end{center}
The action of $\mb Y$ on $M(D_6)$ is the sum of the two copies of $\Lambda^2(M(A_3))$ (i.e., $M(D_3)$) for the two $A_3$ factors. The half-spin module for $D_6$ restricts to $\mb Y$ as the sum of two dual modules, each a tensor product of (up to duality) the minimal modules for the two $A_3$ factors. In order for this to contain only $2$-dimensional factors, $L'$ must act on the one copy of $M(A_3)$ with factors $3,1$ and on the other with factors $2,2$. Thus $L'$ acts on the one as $3\oplus 1$ and the other as either $2^{\oplus 2}$ or $2/2$, according as $u$ projects onto the class $A_2+D_2$ or $2A_2$. Notice that $L$ is contained in $N_{\mb X}(\mb Y)$, and in fact $L$ must lie inside the subgroup of index $2$ that stabilizes the two $A_3$ factors (as there is an involution that swaps the two factors). This subgroup still induces a simultaneous graph automorphism on the two $A_3$ factors (since $\GO_6\times \GO_6\leq \GO_{12}$).

\medskip

\noindent \textbf{Step 1}: Eliminating classes $A_2+2A_1$ and $A_2+3A_1$ for $u$.

\medskip\noindent In both of these cases the projection of $u$ onto $D_6$ lies in class $A_2+D_2$, and from the action described above, we see that $L'$ must act on the two copies of $M(A_3)$ as $1\oplus 3$ and $2\oplus 2$. Embed $L'$ in a copy $\mb L$ of $A_1$ acting on the two factors in the obvious way, as $L(0)\oplus L(2)$ and $L(1)\oplus L(1)$ respectively. The exterior square of the second one (a summand of $M(D_6){\downarrow_{\mb L}}$) is $L(2)\oplus L(0)^{\oplus 3}$, so $\mb L$ centralizes the fixed-point space $M(E_7)^{L'}$. On the other hand, the tensor product of the two modules is
\[ L(1)^{\oplus 2}\oplus (L(1)/L(3)/L(1))^{\oplus 2},\]
and we see that $\mb L$ again stabilizes every $2$-space stabilized by $L'$. Thus $\mb L$ definitely stabilizes all submodules $3$ and $3^*$, and $H$ contained in a member of $\ms X$, in fact is strongly imprimitive by Theorem \ref{thm:intersectionorbit}.

\medskip

\noindent \textbf{Step 2}: Eliminating classes $2A_2$ and  $2A_2+A_1$ for $u$.

\medskip\noindent In these cases $u$ acts with Jordan blocks $3^{18},1^2$ on $M(E_7)$, and this determines the action of $L'$ as well. It almost completely determines the action of $L$, and does so if we assume that $3$ is a submodule of $M(E_7){\downarrow_H}$. The action must be
\[ P(1_3)\oplus P(1_3^*)\oplus P(2_1)\oplus P(2_1^*)\oplus P(2_2)\oplus P(2_2^*)\oplus (2_2/2_1)\oplus (2_1^*/2_2^*)\oplus 3_1^{\oplus 2}\oplus 3_2^{\oplus 2}\oplus 3_3\oplus 3_3^*.\]

Since $15$ and $15^*$ contribute $P(2_1)$ and $P(2_1^*)$ respectively to this action, and $3^\pm$ restricts to $L$ with a submodule $2_1^\pm$, there can be at most one submodule $3$ and no submodules $3^*$ of $M(E_7){\downarrow_H}$. Thus, via Theorem \ref{thm:intersectionorbit}, if the stabilizer of $W$ is positive dimensional, $H$ is strongly imprimitive.

Let $\mb M$ denote the stabilizer in $\mb G$ of the $1$-space $1_3$ (which lies in $W$), and note that $L\leq \mb M$. From Lemma \ref{lem:e7linestabs} we see that $\mb M$ is an $E_6T_1$-parabolic subgroup, an $E_6$-Levi subgroup extended by the graph automorphism, a subgroup $\mb U_{26}\cdot F_4T_1$ or a subgroup $\mb U_{1+32}\cdot B_5T_1$.

Suppose that $\mb M$ is an $E_6T_1$-parabolic subgroup. In this case, the stabilizer has dimension $106$. Then $W$ is contained in the $\mb M$-invariant hyperplane of $M(E_7)$, which has dimension $55$ (as all copies of $2_i^\pm$ are contained in this hyperplane).

To compute the dimension of the stabilizer we choose two points at random in the $55$-space, and see if they lie in the $3$-space $W$. The dimension of the variety of $3$-spaces containing the specified line is $55+55-3-3=104$, so the stabilizer of $W$ is always positive dimensional and $H$ is strongly imprimitive.

\medskip

Since there is no $\mb M$-invariant complement to the $\mb M^\circ$-fixed points of $M(E_7)$, $\mb M$ cannot be an $E_6$-Levi subgroup (extended by the graph automorphism).

\medskip

Assume that $\mb M=\mb U_{26}\cdot F_4T_1$. This group acts on $M(E_7)$ with structure $1_a^*,1_b^*/26^*/26/1_a,1_b$ (the duality is for the $T_1$ part). The composition factors of $L'$ on the $26$-dimensional module must be $3^3,2^8,1$ (as on $M(E_7)$ the factors are $3^6,2^{16},1^6$), and therefore $L'$ acts as a sum of $1\oplus 3^{\oplus 3}$ and another module with factors of dimension $2$. In particular, the action of $L'$ on $M(F_4)$ (which is the action on $\mb U_{26}$) has $1$-dimensional $1$-cohomology. This is acted upon by the $T_1$ factor with one regular orbit and one fixed point (see \cite[Lemma 3.2.15]{stewart2013}, or it is a simple calculation in our case). The fixed point is clearly the copy of $L'$ in the $F_4$ subgroup, and this has complemented $1$-spaces, like the $E_6$ case for $\mb M$ above. Thus $L'$ is (up to conjugacy) the other complement. But this cannot be normalized by the torus, so $L\leq \mb U_{26} F_4$. However, this group centralizes a $2$-space on $M(E_7)$ and $L$ does not, a contradiction.

\medskip

The final case is to suppose that $\mb M$ is a subgroup $\mb U_{1+32}\cdot B_5T_1$. The module $M(E_7){\downarrow_{\mb M}}$ has structure $1^*,11^*/32/1,11$, so the $32$-dimensional factor must be self-dual. This means that the toral summand $\mb T$ of the $B_5T_1$ subgroup of $\mb M$ acts trivially on it. Since $\mb T$ is the toral summand from the $D_6T_1$-parabolic containing $\mb M$, it acts as the same scalar on $1$ and $11$. In particular, since $\mb U_{1+32}\cdot B_5$ is the line centralizer, the $B_5T_1$ subgroup is a direct product of the two factors.

If the central involution in $L$ has trace $11$ on $M(B_5)$ then all $2$-dimensional composition factors lie in the submodule $32/1,11$ of $M(E_7){\downarrow_{\mb M}}$, which has dimension $44$. As the dimension of $\mb M$ is $89$, we see that all such $2$-spaces have a positive-dimensional stabilizer in $\mb M$---in particular, the $2_1$ inside $W$---and thus the $W$ has positive-dimensional stabilizer. Hence $H$ is strongly imprimitive.

Thus we may assume that the trace must be $-5$ (as the other class of involutions in $B_5$ have trace $8$ on $M(E_7)$). This means that the composition factors of $M(B_5){\downarrow_{L'}}$ are $3,2^4$. In particular $L'$, and hence $L$ (as all normal subgroups of $L$ contain the central involution) acts faithfully on the $32$-dimensional factor, and thus $L\mb T/\mb T\cong L$. Thus the projection of $L$ onto the $B_5$ factor is $L$ itself, and the image of $L$ in $B_5T_1$ is a subdirect product of $\GU_2(3)$ and a group of order $4$. Therefore there is a copy of $L$ inside the $B_5$ subgroup acting on $M(B_5)$ with composition factors of degrees $3,2^4$. But a non-central involution in $L$ obviously has trace $\pm 1$ on all simple modules of dimension $3$, and $0$ on all simple modules of dimension $2$. The trace of an involution in $B_5$ on $M(B_5)$ is one of $11$, $-5$ and $3$. This is a clear contradiction.

\medskip

\noindent $\boldsymbol{q=9}$: There are 34 sets of composition factors for $M(E_7){\downarrow_H}$ that are conspicuous for elements of order up to order $20$, so we eliminate some first. Only $7_i$ and $21_{i,j}^\pm$ have non-zero $1$-cohomology, so we first eliminate all sets of composition factors with non-positive pressure (and trivial factors) using Proposition \ref{prop:pressure}. Up to field and graph automorphisms, this leaves the following sets of composition factors that are conspicuous for elements of order at most $20$:
\[ 7_1^6,(3_2,3_2^*)^2,1^2,\quad 15_1,15_1^*,7_1^2,3_1,3_1^*,3_2,3_2^*,\quad 7_1^2,(6_1,6_1^*)^3,3_2,3_2^*,\] \[ 18_{21},18_{21}^*,7_1^2,3_2,3_2^*,\quad \bar{18}_{21},\bar{18}_{21}^*,7_1^2,3_2,3_2^*.\]

For each of these, we switch to the Lie algebra:
\[ 27_1^3,7_1^6,3_2,3_2^*,1^4,\quad 27_1,(15_1,15_1^*)^2,\bar 9_{12},\bar 9_{12}^*,7_1^2,6_1,6_1^*,1^2,\quad  27_1,(15_1,15_1^*)^3,7_1,1^9,\]
\[ 45_{21},45_{21}^*,27_1,7_1,7_2,1^2,\quad \bar{45}_{21},\bar{45}_{21}^*,27_1,7_1,7_2,1^2.\]
All but the first case have non-positive pressure, so $H$ stabilizes a line on $L(E_7)$.

The first case has pressure $2$. However, the $\{1,3_2^\pm,7_1,27_1\}$-radical of $P(7_1)$ is
\[ 7_1,7_1/1,3_2,3_2^*/7_1,\]
so clearly we must stabilize a line on $L(E_7)$ in this case as well, just as for the corresponding case for $\PSU_3(3)$.

Thus $H$ stabilizes a line on either $M(E_7)$ or $L(E_7)$ in all cases.

\medskip

\noindent $\boldsymbol{q=4}$: From Table \ref{t:modules48916}, we see that the only modules with non-zero $1$-cohomology are $9$-dimensional, and they always have $1$-dimensional $1$-cohomology, so we need either no trivial composition factors or more $9$-dimensional factors than trivial factors, else $H$ stabilizes a line on $M(E_7)$ by Proposition \ref{prop:pressure}. In fact, there are twenty conspicuous sets of composition factors for $M(E_7){\downarrow_H}$, eighteen of which have negative or zero pressure (and all have trivial factors), so we only have to consider two sets of factors, one up to field automorphism. Thus we reduce to considering
\[ 9_{12},9_{12}^*,\bar 9_{12},\bar 9_{12}^*,(3_1,3_1^*)^2,3_2,3_2^*,1^2.\]
(Note that the field-graph automorphisms act as a cycle 
\[9_{12}\to \bar 9_{12}^*\to 9_{12}^*\to \bar 9_{12}\]
with the labelling convention from Chapter \ref{chap:labelling}.)

Let $W$ denote the $\{3_i^\pm\}'$-heart of $M(E_7){\downarrow_H}$. Suppose first that the socle of $W$ is simple, say $9_{12}$. The $\{1,3_i^\pm,\bar 9_{12}^\pm\}$-radical of $P(9_{12})$ is
\[ 1/3_1^*,\bar 9_{12}^*,\bar 9_{12}/3_2,3_2^*/1,3_1^*/9_{12}.\]
We cannot place a copy of $9_{12}^*$ on top of this module and so we cannot build a pyx for $W$. 

Thus we need two $9$-dimensional modules in the socle, and it cannot be $9_{12}\oplus 9_{12}^*$, so (up to automorphism) it is $9_{12}\oplus \bar 9_{12}$. (To choose this we might need to replace the composition factors by their image under the field automorphism.) This means that we may remove the $\bar 9_{12}$ from the above module, and $W$ is uniquely determined up to isomorphism, as
\begin{equation} (\bar 9_{12}^*/3_2^*/1,3_1^*/9_{12})\oplus (9_{12}^*/3_1/1,3_2/\bar 9_{12}).\label{eq:psu34ine7W}\end{equation}
We cannot place a copy of $3_1$ or $3_2^*$ on top of (\ref{eq:psu34ine7W}) but we can place a single copy of $3_1^*$, and two copies of $3_2$. We obtain two modules by taking the direct sum of (\ref{eq:psu34ine7W}) with either $3_1\oplus 3_1^*$ or $3_2\oplus 3_2^*$. We obtain one more module by extending above by $3_1^*$ and below by $3_1$. Finally, we obtain three more modules by extending above by $3_2$ and below by $3_2^*$.

Let $L\cong \Alt(5)$ be a subgroup of $H$. We can consider the fixed-point space $M(E_7)^L$ for the actions above. For three of these it has dimension $4$ and for three it has dimension $5$. Those with dimension $5$ have a summand of dimension $3$.

Note that $L$ centralizes an element of order $5$ with centralizer $A_4A_1T_2$ \cite[Table II]{frey2001}. Thus $L\leq A_4A_1$, and the composition factors of $A_4A_1$ on $M(E_7)$ are given in \cite[Table 21]{thomas2016}. The best way to proceed though is to embed $A_4A_1$ in $A_6$ and take the sum of $M(A_4)$ and $M(A_1)$. The structure of $M(E_7){\downarrow_{A_6}}$ is easier, and is
\[ M(A_6)\oplus M(A_6)^*\oplus \Lambda^2(M(A_6))\oplus \Lambda^2(M(A_6))^*.\]
The composition factors of $L$ on $M(E_7)$ force the action of $L$ on $M(A_1)$ to be $2_1$, and on $M(A_4)$ to be $2_1,2_2,1$ or $4,1$, or the action of $L$ on $M(A_1)$ to be $2_2$ and on $M(A_4)$ to be $2_1^2,1$. If the action on $M(A_1)$ is $2_1$ then there are six possible module structures for $L$ on $M(A_4)$, up to duality:
\[ 4\oplus 1,\quad 2_1\oplus 2_2\oplus 1,\quad (2_1/1)\oplus 2_2,\quad (2_2/1)\oplus 2_1,\quad 2_1/1/2_2,\quad 2_1,2_2/1.\]
For each of these we can construct the corresponding action of $L$ on $M(E_7)$ and compute the fixed-point space $M(E_7)^L$: their dimensions are $6$, $10$, $7$, $8$, $5$ and $6$ respectively.

If the action on $M(A_1)$ is $2_2$ then up to duality there are two module actions for $L$ on $M(A_4)$, namely
\[ 2_1^{\oplus 2}\oplus 1,\quad (2_1/1)\oplus 2_1,\]
and the dimensions of $M(E_7)^L$ in these cases are $10$ and $7$ respectively. (They appeared in the previous list.)

Comparing to the dimension of $M(E_7)^L$ computed above, we find a unique case of interest, of action (up to graph automorphism) of $2_1\oplus (2_1/1/2_2)$. However, in this one case the precise module structures of the two actions don't match up. The action of this copy of $L$ on $M(E_7)$ is
\[ 1^{\oplus 2}\oplus 2_1^{\oplus 2}\oplus 4^{\oplus 4}\oplus (2_1/1/2_2)\oplus (2_2/1/2_1)\oplus (1/2_1,2_2/1)^{\oplus 2}\oplus (1/2_2/1/2_1)\oplus (2_1/1/2_2/1),\]
and this does not have a summand of dimension $3$.

Thus the module action for $H$ is not correct, and so $H$ stabilizes a line on $M(E_7)$, as needed.

\medskip

\noindent $\boldsymbol{q=8}$: Up to field automorphism there are four conspicuous sets of composition factors for $M(E_7){\downarrow_H}$:
\[ 8_1^6,1^8,\quad (9_{12},9_{12}^*)^2,8_1^2,1^4,\quad (9_{12},9_{12}^*)^2,8_2^2,1^4,\quad 9_{12},9_{12}^*,9_{23},9_{23}^*,\bar 9_{13},\bar 9_{13}^*,1^2.\]
The first three of these are of non-positive pressure, so $H$ stabilizes a line on $M(E_7)$ in these cases.

In the final case, we first assume that the socle of $M(E_7){\downarrow_H}$ is simple, say $9_{12}$. Taking the $\{1,9_{23}^\pm,\bar 9_{13}^\pm\}$-radical of the quotient $P(9_{12})$ then adding copies of $9_{12}^*$ on top, yields a pyx of $M(E_7){\downarrow_H}$, and this is $56$-dimensional, having structure
\[ 9_{12}^*/1,9_{23}^*,\bar 9_{13}^*/1,9_{23},\bar 9_{13}/9_{12}.\]
Thus this must be exactly $M(E_7){\downarrow_H}$. Let $L$ denote a subgroup $\PSL_2(8)$ that centralizes an element of order $3$ in $H$ with trace $-7$; then $L$ lies inside the centralizer of this element, which is $A_6T_1$, so $L\leq A_6$, and $A_6$ acts on $M(E_7)$ as
\[ L(\lambda_1)\oplus L(\lambda_2)\oplus L(\lambda_5)\oplus L(\lambda_6).\]
The restriction of the pyx above to $L$ is a sum of eight $7$-dimensional modules, but for each of them their exterior square is indecomposable, so this is not compatible with the structure above.

\medskip

An element $v\in H$ of order $4$ acts on $9_{i,j}^\pm$ and $\bar 9_{i,j}^\pm$ with Jordan blocks $4^2,1$, so on $M(E_7)$ with at least twelve blocks of size $4$. Thus we see from \cite[Table 7]{lawther1995} that $v$ acts on $M(E_7)$ with blocks $4^{12},2^4$, and in particular there is no simple summand of $M(E_7){\downarrow_H}$. So if a $9$-dimensional module lies in the socle of this module, then its dual does not.

If the socle is $9_{12}\oplus 9_{23}$ then the corresponding pyx to that above is
\[ (9_{23}^*/1,\bar 9_{13}/9_{12})\oplus (9_{12}^*/1,\bar 9_{13}^*/9_{23}),\]
again it has dimension $56$ so must be isomorphic to $M(E_7){\downarrow_H}$, and the restriction of it to $L$ has two summands of dimension $i$ for each $1\leq i\leq 7$. (The modules of dimensions $1$, $2$ and $4$ are simple, the others are not and are not self-dual.) We will try to find which of these summands lie inside $L(\lambda_1)$: it cannot restrict to $L$ as $1\oplus V$ for some $V$, for then $(V\oplus V^*)^{\oplus 2}$ appears in $M(E_7){\downarrow_L}$, which is not possible. The exterior square of the $7$-dimensional summand is indecomposable, so we are left with summands of dimensions $5$ and $2$, and of dimensions $4$ and $3$. The exterior square of the $3$-dimensional summand is also $3$-dimensional, so this is not allowed, and the exterior square of the $5$-dimensional summand is a $10$-dimensional indecomposable module, so there is no compatibility. Hence $H$ cannot embed with this socle.

If the socle is $9_{12}\oplus \bar 9_{13}^*$, however, we can place $1^{\oplus 2}\oplus 9_{23}^{\oplus 2}$ on top of this, but not $9_{23}^*$ and so this cannot be the socle. Up to automorphism these are the only options with two factors in the socle.

If the socle has three $9$-dimensional submodules, however, then the structure must have the form $9,9,9/1,1/9,9,9$, and in particular there is a uniserial module $9/1/9$, which we cannot construct. (For example, it does not appear in the submodule of $P(9_{12})$ above.)

This proves that $H$ must stabilize a line on $M(E_7)$, as needed.
\end{proof}

\begin{proposition}\label{prop:sp4ine7} Let $H\cong \PSp_4(q)$ for $q=3,4,8,9$, or ${}^2\!B_2(q)$ for $q=8,32$, and let $\bar H\cong \Sp_4(q)$ for $q=3,9$.
\begin{enumerate}
\item If $q=4,8,32$ then $H$ stabilizes a line on $M(E_7)$.
\item If $q=3$ then $H$ stabilizes a line on $M(E_7)$ and $\bar H$ does not embed in $\mb G$.
\item If $q=9$ then $H$ is a blueprint for $M(E_7)$, and $\bar H$ does not embed in $\mb G$.
\end{enumerate}
\end{proposition}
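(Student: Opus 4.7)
The plan is to dispatch the five cases ($q=3,4,8,9,32$ for $\PSp_4$, plus $q=8,32$ for ${}^2\!B_2$) using the standard toolkit: enumerate conspicuous composition factors of $M(E_7){\downarrow_H}$, then combine pressure arguments (Proposition \ref{prop:pressure}), blueprint criteria (Theorem \ref{thm:largeorderss} and Lemma \ref{lem:genericmeansblueprint}), and detailed socle-structure analysis with specific subgroups, just as in the $E_8$ analogues in Proposition \ref{prop:sp4ine8}.

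For $q=9$, the key observation is that $H=\PSp_4(9)$ contains a semisimple element $x$ of order $(q^2+1)/2 = 41 > 30$. I would first enumerate the conspicuous sets of composition factors for $M(E_7){\downarrow_H}$ using traces of elements of order at most, say, $20$, and then check in each surviving case that $x$ centralizes at least a $6$-space on $M(E_7)$; Theorem \ref{thm:largeorderss}(iii) then makes $x$, and hence $H$, a blueprint for $M(E_7)$. The $6$-space condition should hold automatically since the conspicuous sets of factors for such a small group restricted to $M(E_7)$ tend to contain several copies of $5$- or $1$-dimensional modules on which $x$ has many unit eigenvalues; any case where it fails is expected to be incompatible with a semisimple class of $\mb G$. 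For $\bar H = \Sp_4(9)$, the central involution of $\bar H$ must map into $Z(\mb G)$, forcing every composition factor of $M(E_7){\downarrow_{\bar H}}$ to be faithful; I would enumerate the faithful $\bar H$-modules of dimension at most $56$ (analogous to Table \ref{t:mods2centre} for $q=9$ via Steinberg tensor products) and check that no combination summing to $56$ is conspicuous, establishing non-embedding.

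For $q=3$, the simple $kH$-modules are $1,5,10,14,25$, with only $5$ and $14$ having non-zero $1$-cohomology (Table \ref{t:modules3}). I would compute conspicuous factor sets for $M(E_7){\downarrow_H}$ with elements of order up to $12$; the vast majority should have non-positive pressure, giving line stabilization by Proposition \ref{prop:pressure}. For any surviving cases of positive pressure, the radicals of $P(5)$ and $P(14)$ computed in the analogous $E_8$ argument, together with restriction to $L = 2^4\rtimes\Alt(5)$ and its convenient module theory, should close out the analysis as in Proposition \ref{prop:sp4ine8}. For $\bar H = \Sp_4(3)$, the faithful simple modules have dimensions $4, 16, 40$ (Table \ref{t:mods2centre}); the only dimension partitions of $56$ are $40+16$, $40+4^4$, $16^3+4^2$, $16^2+4^6$, $16+4^{10}$, and $4^{14}$, and each must be checked for conspicuousness against traces of semisimple classes; I expect all to fail, confirming non-embedding.

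For $q=4,8,32$ and ${}^2\!B_2(q)$ with $q=8,32$, every simple $kH$-module with non-zero $1$-cohomology and dimension at most $56$ is $4$-dimensional (Table \ref{t:modules48916}), so the pressure of any conspicuous set is at most (number of $4$-dimensional factors) minus (number of trivials). I expect every conspicuous set of composition factors on $M(E_7){\downarrow_H}$ to have non-positive pressure, giving line stabilization via Proposition \ref{prop:pressure}; the enumeration of such sets is constrained by the trace values of elements of order $q-1$, $q+1$, or (for Suzuki groups) $q\pm\sqrt{2q}+1$, which are large relative to $56$. The main obstacle is most likely the $q=3$ case: the positive-pressure conspicuous sets of $\PSp_4(3)$ tend to be numerous, and the detailed module structure analysis that worked in $E_8$ may need careful adaptation since $M(E_7)$ has dimension $56$ rather than the $L(E_8)$ dimension of $248$, leaving less room for pyx-based contradictions. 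A secondary difficulty is the explicit enumeration for $\bar H = \Sp_4(3)$, where we must rule out all six dimension partitions, requiring a short but careful computer check.
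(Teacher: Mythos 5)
Your outline for $H$ itself at $q=3$ and $q=9$ is essentially the paper's proof: at $q=3$ there are exactly two conspicuous sets ($10^2,5^6,1^6$ and $25^2,1^6$), of pressures $0$ and $-6$, so pressure plus self-duality of $M(E_7)$ already finishes; at $q=9$ the paper likewise lists the (three) conspicuous sets and notes the order-$41$ element fixes at least an $8$-space, invoking Theorem \ref{thm:largeorderss}(iii) exactly as you propose. Where you genuinely diverge is in the even-characteristic cases and in ruling out $\bar H$. For $q$ even the paper does not enumerate conspicuous sets at all: it uses a \emph{rational} element of order $5$ in $H$, whose unique rational $E_7$-class has trace $6$ on $M(E_7)$; since this element has trace $1,-1,1$ on the $1$-, $4$- and $16$-dimensional simples and at most three $16$'s fit in $56$, one gets at least three more trivials than $4$'s, hence negative pressure uniformly for $\PSp_4(4)$, $\PSp_4(8)$, ${}^2\!B_2(8)$ and ${}^2\!B_2(32)$ in one stroke. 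Your plan of enumerating conspicuous sets constrained by large-order tori would reach the same conclusion (your ``expectation'' of non-positive pressure is exactly what the order-$5$ argument proves), but at the cost of needing eigenvalue data for elements of orders like $41$, $63$, $65$ via the roots trick, which the paper's argument avoids entirely. For $\bar H$ the paper is also slicker: at $q=3$ a non-central involution $t$ satisfies $t\sim zt$, so it has trace $0$ on every faithful module, contradicting the traces $\pm 8$ of involutions of $\mb G$ on $M(E_7)$; and $q=9$ is then immediate because $\Sp_4(9)\geq \Sp_4(3)$ with the same centre, so no new enumeration is needed. Your partition-of-$56$ checks for $\Sp_4(3)$ (the six partitions you list are the right ones) and your module enumeration for $\Sp_4(9)$ would also succeed -- the involution trace obstruction is what makes every such set fail conspicuousness -- but for $\Sp_4(9)$ the list of faithful modules of dimension at most $56$ is larger than the $q=3$ table suggests (e.g.\ twisted tensor products such as $4\otimes 5^{(3)}$ of dimension $20$ and $4\otimes 14^{(3)}$ of dimension $56$), so the containment argument is both shorter and safer.
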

\begin{proof} $\boldsymbol{q=3}$: If $H$ embeds in $\mb G$ then there are two conspicuous sets of composition factors for $M(E_7){\downarrow_H}$, and they are $10^2,5^6,1^6$ and $25^2,1^6$. The pressures of these are $0$ and $-6$ respectively, so $H$ stabilizes a line on $M(E_7)$ by Proposition \ref{prop:pressure}. As seen with the case $q=5,7$ in \cite[Proposition 3.8]{craven2015un}, $\bar H$ cannot embed in $\mb G$ with the centres coinciding, because all factors of $M(E_7){\downarrow_H}$ are faithful, but the trace of a non-central involution on faithful modules is $0$, and the trace of an involution in $\mb G$ is $\pm 8$, a contradiction.

\medskip

\noindent $\boldsymbol{q=9}$: we need only consider $H\cong \PSp_4(9)$ embedding in $\mb G$ and not the double cover because $\Sp_4(3)$ does not embed with $Z(\bar H)=Z(\mb G)$. Using traces of elements of order up $8$, we find up to field automorphism three conspicuous sets of composition factors:
\[ 10_1^2,5_1^6,1^6,\quad 25_1^2,1^6,\quad 16^2,5_1^2,5_2^2,1^4.\]
For each of these, an element of order $41$ in $H$ centralizes at least an $8$-space on $M(E_7)$, so by Theorem \ref{thm:largeorderss}, $H$ is a blueprint for $M(E_7)$.

\medskip

\noindent $\boldsymbol{q}$\textbf{ even}: Note that in all cases $H$ contains a rational element $x$ of order $5$. There is a unique rational class of semisimple elements of order $5$ in $E_7$, with trace $6$ on $M(E_7)$. Note that the trace of $x$ on modules of dimension $1$, $4$ and $16$ is $1$, $-1$ and $1$ respectively: as there are at most three $16$-dimensional composition factors in $M(E_7){\downarrow_H}$ (as $\dim(M(E_7))=56$), there must be at least three more trivial factors than $4$-dimensional ones. As the pressure is the number of $4$-dimensional factors minus the number of trivial factors (see \cite{sin1992b}), the pressure must be negative. Thus $H$ stabilizes a line on $M(E_7)$.
\end{proof}

\begin{proposition}\label{prop:g2ine7} Let $H\cong G_2(q)'$ for $q=2,3,4,8$ or ${}^2\!G_2(q)'$ for $q=3,27$.
\begin{enumerate}
\item If $q=2$ or $H\cong {}^2\!G_2(3)'$ then $H$ stabilizes a line on either $M(E_7)$ or $L(E_7)^\circ$.
\item If $q=3,8,27$ but $H$ is not ${}^2\!G_2(3)'$, then $H$ is a blueprint for $M(E_7)$.
\item If $q=4$ then $H$ stabilizes a line on either $M(E_7)$ or $L(E_7)$, or is a blueprint for $M(E_7)$.
\end{enumerate}
\end{proposition}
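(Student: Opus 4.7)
The plan is to handle each isomorphism type of $H$ in turn, computing the conspicuous sets of composition factors on $M(E_7){\downarrow_H}$ using the character tables and fusion data, and then for each such set either force line stabilization on $M(E_7)$ (or $L(E_7)^\circ$), verify that an appropriate unipotent element belongs to a generic class (hence blueprint via Lemma~\ref{lem:genericmeansblueprint}), or find a semisimple element whose eigenspaces on $M(E_7)$ are preserved by an infinite subgroup of $\mb T$ via the roots trick, giving a blueprint via Theorem~\ref{thm:largeorderss}.

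First I would dispose of ${}^2\!G_2(3)'\cong \PSL_2(8)$ and $G_2(2)'\cong \PSU_3(3)$ by listing their conspicuous sets of factors on $M(E_7)$. For $G_2(2)$ the simple modules are $1,6,14,32,32^*$, and most conspicuous sets will have negative or zero pressure (only $6,14,32^\pm$ carry $1$-cohomology); in the few remaining positive-pressure cases one switches to $L(E_7)^\circ$ where the corresponding composition factors are forced, giving a trivial submodule by Proposition~\ref{prop:pressure}. The same switch-to-$L(E_7)^\circ$ trick handles ${}^2\!G_2(3)'$, using the small list of simple modules $1,7,9,9,9$ together with the fact that the Sylow $3$-subgroup is cyclic so every indecomposable with a trivial factor is easily described.

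Next I would turn to the blueprint cases $G_2(3)$, $G_2(8)$, ${}^2\!G_2(27)$. For $G_2(3)$, the conspicuous sets on $M(E_7)$ are short; most of them are semisimple (few nonzero Ext between the simple modules) and the Jordan block data of an element $u$ of order $3$ matches a generic class from \cite[Table 7]{lawther1995}, so Lemma~\ref{lem:genericmeansblueprint} applies. For $G_2(8)$ I would take an element $x$ of order $73=\Phi_3(8)$ or $57=\Phi_6(8)$, argue that its $\mb G$-class is determined by its eigenvalues on $M(E_7)$, then exhibit a root of order $o(x)\cdot r$ (for a suitable small prime $r$) in a maximal torus with the same number of distinct eigenvalues on $M(E_7)$ as $x$; this pushes the order beyond $317$ so Theorem~\ref{thm:largeorderss} makes $x$, and hence $G_2(8)$, a blueprint. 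The same roots-trick idea handles ${}^2\!G_2(27)$, with $x$ the element of order $37$ and a cube root.

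Finally, the main obstacle is $G_2(4)$ in characteristic $2$, where the field is small, the element orders in $H$ (at most $21$) are far too small to apply Theorem~\ref{thm:largeorderss} directly, and the graph automorphism of $F_4$-type phenomena must be watched. Here I would enumerate the conspicuous sets of factors on $M(E_7)$, eliminate all cases of non-positive pressure by Proposition~\ref{prop:pressure} (using Table~\ref{t:modules48916} for the $1$-cohomology), and then for each remaining case assume no line is fixed and analyse the module structure through the subgroup $L\cong G_2(2)$, whose action on $M(E_7)$ is already understood. Either the submodule structure forces a contradiction (giving a line on $M(E_7)$), or we promote the argument to $L(E_7)$, or the eigenspace data of a chosen semisimple element of order $15$ or $21$ is preserved by a $1$-dimensional torus (via the roots trick, searching for roots of order $15\cdot r$ or $21\cdot r$ in a maximal torus whose eigenvalues on $M(E_7)$ coincide with those of the original element), yielding a blueprint and completing the proposition.
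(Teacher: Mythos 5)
Your outline agrees with the paper for $q=2$ and ${}^2\!G_2(3)'$ (pressure arguments plus the switch to $L(E_7)^\circ$ for the one genuinely positive-pressure case is exactly what is done), modulo a factual slip: for $G_2(2)'$ only the $6$-dimensional module has nonzero $1$-cohomology, not $14$ or $32^\pm$, so the pressures are lower than you think. But for the remaining cases you diverge from the paper and, more importantly, leave the hardest step unproved. For $q=4$ the paper reduces (via the restriction to $G_2(2)'$) to the single conspicuous set $36,6_1^2,6_2,1^2$, shows that if no line is fixed then $M(E_7){\downarrow_H}$ has a uniquely determined submodule $1/6_2/1/6_1$, and then works not through $G_2(2)$ but through $L\cong \SL_3(4)\leq H$, which lies in the $A_5A_2$ centralizer of an order-$3$ element: when $M(A_5){\downarrow_L}$ is semisimple one exhibits an explicit algebraic $A_2$ overgroup stabilizing the same subspaces of $M(E_7)$ (whence the blueprint alternative), and when it is not, a Jordan-block argument comparing two classes of order-$4$ elements forces the extension $3_2^*/3_1$ to split. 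Your proposal for this case is an unresolved trichotomy ("contradiction, or promote to $L(E_7)$, or roots trick on elements of order $15$ or $21$"), and the blueprint alternative in the statement cannot be reached by any of these without substantial new work: you give no reason to believe roots of order $>30$ (with a fixed $6$-space) or of odd order $\geq 75$ with matching eigenvalues on $M(E_7)$ exist for these small-order elements, and the paper evidently did not find such a shortcut. This is a genuine gap, not a presentational one.

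The blueprint cases also rest on unverified computations where the paper has simpler arguments. For $G_2(8)$ the paper just observes $\PSL_3(8)\leq G_2(8)$ and invokes Proposition~\ref{prop:sl3ine7}; your "root of order $o(x)\cdot r>317$" would anyway only give a blueprint for $L(E_7)$ via part (1) of Theorem~\ref{thm:largeorderss}, not for $M(E_7)$ as claimed, so you would need instead a root of odd order $\geq 75$ (e.g.\ $219$) with the same eigenvalues on $M(E_7)$ — possible, but asserted rather than checked. For ${}^2\!G_2(27)$ no roots trick is needed: in the only hard case ($7^8$) every semisimple element is real and so centralizes an $8$-space on $M(E_7)$, and the element of order $37>30$ gives the blueprint directly by Theorem~\ref{thm:largeorderss}(3). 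For $G_2(3)$ your claim that the $7^8$-type case ($7_1^6,7_2^2$ is the only conspicuous one) is semisimple with a generic unipotent element is unsubstantiated — you would need to control $\Ext^1$ between $7_1$ and $7_2$ and check genericity against Lawther's tables — whereas the paper sidesteps this by showing the order-$13$ element lies in $F_4$ and has a square root of order $26$ in $F_4$ with the same thirteen eigenvalues on $M(F_4)$, hence is a blueprint there and for $M(E_7)$. So the overall architecture is reasonable, but the $q=4$ case (and, to a lesser extent, the $G_2(3)$ case) is missing the key idea that makes the proof go through.
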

\begin{proof} $\boldsymbol{3\mid q}$: Letting $H\cong \PSL_2(8)={}^2\!G_2(3)'$, the conspicuous sets of composition factors for $M(E_7){\downarrow_H}$ are
\[7^8,\qquad 7^6,1^{14},\qquad 9_1^2,9_2^2,9_3^2,1^2,\qquad 9_1^5,7,1^4,\qquad 9_1^2,9_2^2,7^2,1^6,\qquad 9_1,9_2^2,9_3^3,1^2.\]
As $H^1(H,7)$ is $1$-dimensional and $9_i$ is projective, we see that in all but the first case $H$ must stabilize a line on $M(E_7)$ by Proposition \ref{prop:pressure}, as the pressures are $8$, $-8$, $-2$, $-3$, $-4$ and $-2$ respectively. For the first case, the composition factors of $L(E_7){\downarrow_H}$ are $9_1^3,9_2^3,9_3^3,7^7,1^3$, and as the only indecomposable module with a trivial composition factor and no trivial submodule or quotient is
\[ P(7)=7/\left((7/7/7)\oplus 1\right)/7,\]
we see that $H$ must stabilize a line on $L(E_7)$.

We must also deal with $H\cong G_2(3)$ and $H\cong {}^2\!G_2(27)$. Both contain $L={}^2\!G_2(3)$, which has composition factors on $M(E_7)$ one of
\[ 27^2,1^2,\qquad 7^6,1^{14},\qquad 7^8.\]
The modules for $H$ of dimension at most $56$ have dimensions $1$, $7$, $27$ and $49$: the first three restrict irreducibly to $L$, and the fourth restricts as $7^{\otimes 2}\cong 27\oplus (7/7/7)\oplus 1$. Thus the composition factors of $M(E_7){\downarrow_H}$ must have the same dimensions as the factors of $M(E_7){\downarrow_L}$. 

The first case is a blueprint for $M(E_7)$ as it lies in an $E_6$-Levi subgroup. The second is a blueprint for $M(E_7)$ as it is semisimple, so $u$ has at least fourteen blocks of size $1$ in its action on $M(E_7)$, hence by \cite[Table 7]{lawther1995} lies in a generic class (using Lemma \ref{lem:genericmeansblueprint}).

For the third case, any semisimple element in $H$ must centralize an $8$-space on $M(E_7)$, since semisimple elements of $H$ are real. As ${}^2\!G_2(27)$ contains an element of order $37>30$, it is a blueprint for $M(E_7)$ by Theorem \ref{thm:largeorderss}. For the other group $H\cong G_2(3)$, first note that, up to graph automorphism, the only conspicuous set of composition factors for $M(E_7){\downarrow_H}$ consisting only of $7$-dimensional modules is $7_1^6,7_2^2$. Then $H$ contains an element $x$ of order $13$, and its trace on $M(E_7)$ shows that it lies in $F_4$. Furthermore, there exist elements of order $26$ in $F_4$ that square to $x$ and have the same number (thirteen) of distinct eigenvalues on $M(F_4)$. Thus $x\in F_4$ is a blueprint for $M(F_4)$ by Theorem \ref{thm:largeorderss}, and hence also for $M(E_7)$ as well. Thus so is $H$. This completes the proof.

\medskip

\noindent $\boldsymbol{q=2}$: The conspicuous sets of composition factors for $M(E_7){\downarrow_H}$ are
\[ 6^9,1^2,\qquad 6^6,1^{20},\qquad 14^2,6^4,1^4.\]
The second and third cases have pressures $-14$ and $0$ respectively, so both stabilize lines on $M(E_7)$ by Proposition \ref{prop:pressure}, as needed. For the first case, the corresponding factors on $L(E_7)^\circ$ are $14^8,6,1^{14}$. Clearly $H$ has at least twelve trivial summands on $L(E_7)^\circ{\downarrow_H}$, as needed.

\medskip

\noindent $\boldsymbol{q=4}$: We see from Table \ref{t:modules48916} that the modules with non-zero $1$-cohomology of dimension at most $56$ are $6_1$, $6_2$ and $36$, the last of which restricts to the subgroup $G_2(2)'$ of $H$ with factors $14^2,6,1^2$. Thus if the restriction of $M(E_7)$ to $G_2(2)'$ is either the first or second case above, then $H$ must stabilize a line on $L(E_7)$ and $M(E_7)$ respectively, by Proposition \ref{prop:pressure}. For the third case, it either has pressure $0$ or has composition factors of dimension $36,6^3,1^2$ and has pressure $2$.

There is a unique (up to field automorphism) such conspicuous set of composition factors for $M(E_7){\downarrow_H}$, which is $36,6_1^2,6_2,1^2$. Suppose that $H$ does not stabilize a line on $M(E_7)$, so that $M(E_7){\downarrow_H}$ is a submodule of $P(6_1)$, perhaps plus summands $6_2$ and $36$. Since there is no module $36/1/6_1$, we must have a submodule $1/6_2/1/6_1$ of $M(E_7){\downarrow_H}$, which is unique up to isomorphism.

Let $L$ denote a copy of $\SL_3(4)$ in $H$. It centralizes an element of order $3$ that acts with trace $2$ on $M(E_7)$ and $-2$ on $L(E_7)$, which means it has centralizer $\mb X=A_5A_2$ (see \cite[Table II]{frey2001}), so $L\leq \mb X$. The composition factors of $M(E_7){\downarrow_L}$ are
\[ 9_{12},9_{12}^*,\bar 9_{12},\bar 9_{12}^*,(3_1,3_1^*)^2,3_2,3_2^*,1^2.\]

Since $Z(L)$ acts faithfully on both $M(A_5)$ and $M(A_2)$, we see that the restrictions $M(A_5){\downarrow_L}$ and $M(A_2){\downarrow_L}$ have factors only of dimension $3$. If $M(A_2){\downarrow_L}=3_1$ then $M(A_5){\downarrow_L}$ has factors either $3_1,3_2$ or $3_1,3_2^*$, and if $M(A_2){\downarrow_L}=3_2$ then $M(A_5){\downarrow_L}$ has factors $3_1,3_1^*$.

If the restriction $M(A_5){\downarrow_L}$ is semisimple then there is an obvious $A_2$ subgroup $\mb Y$ containing $L$, and as $\mb X$ acts on $M(E_7)$ as
\[ (\lambda_1,\lambda_1)\oplus (\lambda_1,\lambda_1)^*\oplus (\lambda_3,0),\]
one can compute immediately the actions of $L$ and $\mb Y$ on these three modules. By placing $L$ inside $\mb Y$, acting on $M(A_2)$ as $L(10)$ and on $M(A_5)$ as either $L(10)\oplus L(20)$ or $L(10)\oplus L(02)$, or on $M(A_2)$ as $L(20)$ and on $M(A_5)$ as $L(10)\oplus L(01)$, we see that $L$ and $\mb Y$ stabilize the same subspaces of $M(A_5)\otimes M(A_2)$ and its dual. As for any modules $M,M'$, 
\[ \Lambda^3(M\oplus M')\cong \Lambda^3(M)\oplus (\Lambda^2(M)\otimes M')\oplus (M\otimes \Lambda^2(M'))\oplus \Lambda^3(M'),\]
we compute that $\mb Y$ acts on $\Lambda^3(M(A_5))$ as
\[ L(0)^{\oplus 2}\oplus L(12)\oplus L(21),\qquad L(0)^{\oplus 2}\oplus L(30)\oplus L(03),\qquad L(0)^{\oplus 2}\oplus L(10)^{\otimes 2}\oplus L(01)^{\otimes 2}.\]
In all cases, $L$ and $\mb Y$ stabilize the same subspaces of $M(E_7)$, so $L$, and $H$, are blueprints for $M(E_7)$.

Thus we may assume that $L$ does not act semisimply on $M(A_5)$, and hence the factors of $M(A_5){\downarrow_L}$ are $3_1,3_2^*$, as there are no extensions between the other factors. First, note that the module $1/6_2/1/6_1$ for $H$ restricts to $L$ with two trivial submodules, hence $M(E_7){\downarrow_L}$ has two trivial summands, not just composition factors. Since $\dim(\Ext_{kL}^1(3_2^*,3_1))=2$, there is a module $3_2^*,3_2^*/3_1$, so $3_2^*/3_1$ is not defined uniquely up to isomorphism. Let $v$ denote an element of order $4$ in $L$, which must act on $M(E_7)$ with at least two blocks of size $1$, as $M(E_7){\downarrow_L}$ has two trivial summands. Then $v$ acts on $3_1$ with a single block of size $3$, hence on $3_1\oplus 3_2^*$ with blocks $3^2$, and on $3_2^*/3_1$ with blocks either $3^2$ or $4,2$. The exterior cube of the blocks $4,2$ has block structure $4^4,2^2$, which is incompatible with there being two trivial summands in $M(E_7){\downarrow_H}$. Thus $v$ acts on $3_2^*/3_1$ with blocks $3^2$, and there is a unique such indecomposable module. However, if $v$ came from a different class of elements of order $4$ then this yields a different extension $3_2^*/3_1$ on which $v$ acts with blocks $3^2$. Thus the only extension on which two different choices of $v$ both act correctly is the split extension $3_2^*\oplus 3_1$.

Thus $H$ is a blueprint for $M(E_7)$ or stabilizes a line on $M(E_7)$, as needed.

\medskip

\noindent $\boldsymbol{q=8}$: Since $\PSL_3(8)$ is a blueprint for $M(E_7)$ by Proposition \ref{prop:sl3ine7}, and $G_2(8)$ contains $\PSL_3(8)$, we are done.
\end{proof}

\begin{proposition}\label{prop:sl4ine7} If $H\cong \PSL_4(3)$ then $H$ is a blueprint for $M(E_7)$. If $\bar H\cong \SL_4(3)$ then $\bar H$ does not embed in $\mb G$.
\end{proposition}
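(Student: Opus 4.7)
My plan for the first assertion is to enumerate the conspicuous sets of composition factors for $M(E_7){\downarrow_H}$, using the traces of semisimple elements of $H$ on $M(E_7)$ and the power-map constraints from the semisimple classes of $\mb G$. The simple $kH$-modules of dimension at most $56$ are those of dimensions $1$, $6$, $10^\pm$, $15$, $19$, $44$ and $45^\pm$ (see Table \ref{t:modules3}), and because $M(E_7)$ is self-dual, the multiplicities of $10$ and $10^*$ (resp.\ of $45$ and $45^*$) must agree. I expect this, together with the constraints coming from the traces of elements of order up to $13$, to leave only a handful of conspicuous dimension patterns.

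For each surviving conspicuous set I will first consult the $\Ext^1$-quiver of $H$ between the relevant simple modules: only $19$ and $44$ have non-zero $1$-cohomology (so the usual pressure obstruction is rather mild), and the extensions amongst the other factors are few. In most cases this forces $M(E_7){\downarrow_H}$ to be semisimple. I then compute the Jordan block structure of the element $u$ of order $3$ from the smallest unipotent class of $H$ on $M(E_7)$ and compare it with \cite[Table 7]{lawther1995}; provided $u$ lands in a generic unipotent class of $\mb G$, Lemma \ref{lem:genericmeansblueprint} immediately gives that $H$ is a blueprint for $M(E_7)$. For any residual case where the action cannot be forced to be semisimple, I will use the fact that $\PSp_4(3)\le H$ stabilizes a line on $M(E_7)$ by Proposition \ref{prop:sp4ine7} to rule out the remaining module structures, and then re-apply Lemma \ref{lem:genericmeansblueprint}. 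The main technical hurdle here is ensuring that no conspicuous action produces a non-generic unipotent class; the block structure tables from earlier chapters should suffice.

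For the second assertion, the plan exploits the fact that if $\bar H=\SL_4(3)$ embeds in $\mb G$ with $Z(\bar H)=Z(\mb G)$, then every composition factor of $M(E_7){\downarrow_{\bar H}}$ is faithful for $\bar H$. By Table \ref{t:mods2centre} the faithful simple $k\bar H$-modules of dimension at most $56$ have dimensions $4$, $4^*$, $16$ and $16^*$, and none is self-dual. Self-duality of $M(E_7)$ then forces equal multiplicities in each dual pair, and solving $4a+16b=56$ under this constraint leaves only the multiplicity patterns $(a,b)=(14,0)$ and $(a,b)=(6,2)$.

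The key step is a trace obstruction. The non-central involutions in $\bar H$ form a single conjugacy class with eigenvalues $\{1,1,-1,-1\}$ on the natural module $M(A_3)$, hence have trace $0$ on both $4$ and $4^*$. The $16$-dimensional faithful modules are (Frobenius twists of) $4\otimes 4^\pm$ with a trivial constituent accounted for in a different block, so any such involution also has trace $0$ on $16$ and $16^*$. Hence on $M(E_7)$ the trace of a non-central involution of $\bar H$ is $0$ in either multiplicity pattern. On the other hand, every involution of $\mb G$ acts on $M(E_7)$ with trace $\pm 8$ (the two non-central classes have centralizers of type $A_1D_6$ and $A_7$; see \cite[Table II]{frey2001}). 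This contradiction rules out both multiplicity patterns, completing the proof that $\bar H$ does not embed in $\mb G$.
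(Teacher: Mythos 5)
Your first assertion is handled exactly as in the paper: there the two conspicuous sets for $M(E_7){\downarrow_H}$ turn out to be $10,10^*,6^6$ and $15^2,6^4,1^2$, in both cases there are no extensions between the factors, so the action is semisimple, and $u$ acts with Jordan blocks $3^2,2^{16},1^{18}$, i.e.\ lies in the generic class $2A_1$, so Lemma \ref{lem:genericmeansblueprint} applies. Your plan is the same route (your fallback via $\PSp_4(3)$ stabilizing a line is never needed). For the second assertion the paper is shorter: it just observes that $\Sp_4(3)\leq \SL_4(3)$ with the centres coinciding, and quotes Proposition \ref{prop:sp4ine7}, whose proof is exactly the trace obstruction you use (all composition factors faithful, non-central involutions of trace $0$ there, versus trace $\pm 8$ for non-central involutions of $\mb G$ on $M(E_7)$). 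So your argument re-proves the obstruction directly for $\SL_4(3)$ rather than inheriting it from the symplectic subgroup; the underlying idea is identical.

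One step of your justification is wrong, although the conclusion survives. The faithful $16$-dimensional modules for $\SL_4(3)$ are not (Frobenius twists of) $4\otimes 4^{\pm}$: the centre acts trivially on both $4\otimes 4$ and $4\otimes 4^*$, so neither is faithful, and over the prime field a Frobenius twist changes nothing for the finite group; the faithful $16$s are $L(\lambda_1+\lambda_2)$ and its dual, constituents of $4\otimes\Lambda^2(4)$ and its dual. The trace you want is still $0$, and the clean way to see it (which also makes your case division $(a,b)=(14,0)$, $(6,2)$ unnecessary) is to note that a non-central involution $t=\mathrm{diag}(1,1,-1,-1)$ is conjugate in $\SL_4(3)$ to $tz$, since the block swap interchanging the two eigenspaces is an even permutation matrix and so has determinant $1$; hence the Brauer character of $t$ vanishes on every module on which $z$ acts as $-1$. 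This is precisely the argument of \cite[Proposition 3.8]{craven2015un} that the paper invokes through Proposition \ref{prop:sp4ine7}. With that one repair your proof of the second assertion is correct.
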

\begin{proof} The two conspicuous sets of composition factors for $M(E_7){\downarrow_H}$ are
\[ 10,10^*,6^6,\qquad 15^2,6^4,1^2.\]
In both cases there are no extensions between the factors so $M(E_7){\downarrow_H}$ is semisimple, and $u$ acts on these with blocks $3^2,2^{16},1^{18}$. This is the generic class $2A_1$, and so $H$ is a blueprint for $M(E_7)$. Since there is no embedding of $\Sp_4(3)$ into $\mb G$ with centres coinciding, the same holds for $\SL_4(3)$.
\end{proof}

For $H\cong \PSU_4(q)$, we know we need to consider $q=2,3,4$.

\begin{proposition}\label{prop:su4ine7} Let $H\cong \PSU_4(q)$ for $q=2,3,4$, and let $\bar H\cong 2\cdot \PSU_4(3)$.
\begin{enumerate}
\item If $q=2,4$ then $H$ stabilizes a line on either $M(E_7)$ or $L(E_7)^\circ$.
\item If $q=3$ then $H$ does not embed in $\mb G$ and $\bar H$ is a blueprint for $M(E_7)$.
\end{enumerate}
\end{proposition}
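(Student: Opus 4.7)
The approach follows the standard strategy used throughout this chapter: enumerate the conspicuous sets of composition factors for $M(E_7){\downarrow_H}$ (switching to $L(E_7)^\circ$ when necessary), apply pressure arguments via Proposition~\ref{prop:pressure} and Lemma~\ref{lem:fix1space}, and for the surviving cases appeal to the blueprint machinery of Lemma~\ref{lem:genericmeansblueprint} or restrict to a carefully chosen subgroup.

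For $q=3$ the analysis splits according to whether the centre of the cover is in play. For $H\cong\PSU_4(3)$ the relevant simple modules have dimensions $1,15,19,45^\pm,69$ (Table~\ref{t:modules3}); the very few linear combinations summing to $56$ can be tested against the semisimple classes of $E_7$, and I expect none of them to be conspicuous, so $H$ does not embed in $\mb G$. For $\bar H=2{\cdot}\PSU_4(3)=\SU_4(3)$ with $Z(\bar H)=Z(\mb G)$, the faithful simple modules have dimensions $6,10^\pm,44$ (Table~\ref{t:mods2centre}); the possible sums to $56$ are very short (most naturally $10,10^*,6^6$ and $44,6,6$), and these are exactly the analogues of the conspicuous sets appearing for $\PSL_4(3)$ in Proposition~\ref{prop:sl4ine7}. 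As in that proof, $\Ext^1$ between the composition factors should vanish, forcing $M(E_7){\downarrow_{\bar H}}$ to be semisimple, whereupon $u$ acts with Jordan blocks matching a generic class in \cite[Table~7]{lawther1995}. Lemma~\ref{lem:genericmeansblueprint} then gives the blueprint conclusion.

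For $q=2$ the relevant simple modules have dimensions in $\{1,4^\pm,6,14,20^\pm\}$, and only $4^\pm$ have non-zero $1$-cohomology on the natural module by Table~\ref{t:modules2}. I would enumerate the conspicuous sets for $M(E_7){\downarrow_H}$ (noting that $64$ is too large to appear in a module of dimension $56$ and $14$ has trivial $1$-cohomology here) and compute the pressure; I expect most candidates to have non-positive pressure, and the remainder to be handled either by noting that $4^\pm$ must split off as summands because of extension constraints, or by transferring to $L(E_7)^\circ$, where $14$ acquires non-zero $1$-cohomology and the new multiplicities usually force non-positive pressure in the opposite direction. The $q=4$ case runs in parallel: now $H$ has many more simple modules (with $1$-cohomology data in Table~\ref{t:modules48916}), but one can cut down the list of conspicuous composition factors drastically by restricting to the subgroup $L\cong\PSU_4(2)$ and using the dimensions of $V{\downarrow_L}$ for each simple $kH$-module $V$, exactly as was done in the analogous $E_8$ argument (Proposition~\ref{prop:su4ine8}).

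The main obstacle I anticipate is the positive-pressure cases for $q=4$: the $1$-cohomology is non-trivial on several simple modules simultaneously and the pyx constructions produce somewhat large modules, as witnessed in the $E_8$ analysis. However, $M(E_7)$ is only $56$-dimensional so the relevant $\cf(M(E_7){\downarrow_H})$-radicals of projectives will be much smaller than their counterparts for $L(E_8)$, and I expect that after restriction to $L\cong \PSU_4(2)$ (whose behaviour on $M(E_7)$ is controlled by the $q=2$ analysis and which must itself stabilize a line on $M(E_7)$ or $L(E_7)^\circ$), every positive-pressure candidate for $H$ is forced to stabilize a line on one of these two modules, completing the proof.
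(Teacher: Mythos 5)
Your overall strategy---enumerate conspicuous sets, kill them by pressure via Proposition~\ref{prop:pressure}, switch to $L(E_7)^\circ$ when there are no trivial factors on $M(E_7)$, and for $\bar H$ force semisimplicity (or enough splitting) so that $u$ lands in a generic class and Lemma~\ref{lem:genericmeansblueprint} applies---is exactly the paper's. Three points of divergence are worth flagging. First, for $q=3$ the paper disposes of $H\cong\PSU_4(3)$ in one line: it does not embed in $E_8$ by Proposition~\ref{prop:su4ine8}, hence not in $E_7$. Your plan to re-enumerate $56$-dimensional combinations of $1,15,19,45^\pm$ and ``expect'' none to be conspicuous is an unverified computation, and note that non-existence of an embedding does not by itself guarantee the absence of conspicuous sets, so your route could in principle require further work (e.g.\ passing to $L(E_7)$) that the citation avoids. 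Second, your cohomology bookkeeping at $q=2$ is off: by Table~\ref{t:modules2}, $H^1(\PSU_4(2),14)$ is $1$-dimensional, not zero, and in any case the $1$-cohomology of a simple $kH$-module is intrinsic---it does not ``appear'' when you pass from $M(E_7)$ to $L(E_7)^\circ$; only the multiset of composition factors changes. This slip happens to be harmless here: the three conspicuous sets are $6^8,4,4^*$, $14^2,6^4,1^4$ and $6^2,(4,4^*)^4,1^{12}$; the last two have negative pressure on $M(E_7)$, and the first has factors $14^8,6,1^{14}$ on $L(E_7)^\circ$, again of negative pressure. Third, at $q=4$ none of the positive-pressure difficulties you anticipate arise: up to field automorphism there are only four conspicuous sets, two of negative pressure on $M(E_7)$ and two whose $L(E_7)^\circ$-factors ($36,(\bar{16}_{12},\bar{16}_{12}^*)^2,14_1,14_2,1^4$ and $14_1^8,6_2,1^{14}$, see Table~\ref{t:modules48916}) have negative pressure, so neither the restriction to $\PSU_4(2)$ nor any pyx construction is needed. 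Finally, for $\bar H$ the paper finds only $10,10^*,6^6$ conspicuous (your tentative $44,6^2$ does not occur), and it needs only that the $6$s split off---giving blocks at least $2^{12},1^2$ for $u$---rather than full semisimplicity, which is your argument in slightly stronger form than necessary.
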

\begin{proof} $\boldsymbol{q=3}$: As $H$ does not embed in $E_8$ by Proposition \ref{prop:su4ine8}, $H$ does not embed in $E_7$ either. If $\bar H=2\cdot H$ embeds in $\mb G$ with $Z(\bar H)=Z(\mb G)$, then the only conspicuous set of composition factors for $M(E_7){\downarrow_H}$ is $10,10^*,6^6$, and the $6$s must split off as summands. This means that $u$ acts with blocks at least $2^{12},1^2$. Hence this element must come from a generic class, as we see from \cite[Table 7]{lawther1995}, so that $H$ is a blueprint for $M(E_7)$ by Lemma \ref{lem:genericmeansblueprint}.

\medskip

\noindent $\boldsymbol{q=2}$: There are three conspicuous sets of composition factors for $M(E_7){\downarrow_H}$:
\[ 6^8,4,4^*,\quad 14^2,6^4,1^4,\quad 6^2,(4,4^*)^4,1^{12}.\]
The pressures of the second and third modules are $-2$ and $-4$ respectively, so in both cases $H$ stabilizes lines on $M(E_7)$ by Proposition \ref{prop:pressure}. In the first case, we switch to $L(E_7)^\circ$, which has composition factors $14^8,6,1^{14}$, so clearly $H$ stabilizes a line on $L(E_7)^\circ$.

\medskip

\noindent $\boldsymbol{q=4}$: Up to field automorphism, there are four conspicuous sets of composition factors for $M(E_7){\downarrow_H}$:
\[14_1^2,6_1^4,1^4,\quad 16_{12},16_{12}^*,6_1^2,6_2^2,\quad 6_1^8,4_2,4_2^*,\quad 6_1^2,(4_1,4_1^*)^4,1^{12}.\]
The pressures of the first and last sets of factors are $-2$ and $-12$, so $H$ stabilizes lines in these cases by Proposition \ref{prop:pressure}. In the second and third cases we switch to $L(E_7)^\circ$, where we find composition factors of
\[ 36,(\bar{16}_{12},\bar{16}_{12}^*)^2,14_1,14_2,1^4,\quad 14_1^8,6_2,1^{14}\]
respectively. The pressures of these are $-2$ and $-6$ respectively, so $H$ stabilizes a line on $L(E_7)^\circ$, as claimed.
\end{proof}

For $H\cong \PSp_6(q)$, we know from above that we need only check $q=2$.

\begin{proposition} If $H\cong \PSp_6(2)$ then $H$ stabilizes a line on either $M(E_7)$ or $L(E_7)^\circ$.
\end{proposition}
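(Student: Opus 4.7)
The plan is to follow the same template used for the other rank-$3$ groups in this chapter: first enumerate the conspicuous sets of composition factors for $M(E_7){\downarrow_H}$ using traces of semisimple elements of small order (orders $3,5,7,9$ suffice for $\Sp_6(2)$), and then deal with each set in turn. Since $\dim M(E_7)=56$ and the simple $kH$-modules of dimension at most $56$ have dimensions $1,6,8,14,48$, the list of candidate dimension-sets is short, and from Table \ref{t:modules2} the only simple $kH$-modules of dimension at most $132$ with non-zero $1$-cohomology are $6$ and $48$. Consequently the pressure of any $kH$-module equals the number of $6$- and $48$-dimensional composition factors minus the number of trivial factors.

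For every conspicuous set of composition factors for $M(E_7){\downarrow_H}$ of non-positive pressure, Proposition \ref{prop:pressure} immediately yields a trivial submodule of $M(E_7){\downarrow_H}$, and we are done. This should dispose of the vast majority of cases, including the semisimple-looking possibilities such as $8^7$, $14^4$, $48,8$, $6^9,1^2$, and the like. For each remaining conspicuous set of composition factors on $M(E_7)$ with positive pressure, I would compute, using the Brauer characters of the corresponding semisimple classes of $\mb G$, the (forced) set of composition factors for $L(E_7)^\circ{\downarrow_H}$; since $L(E_7)^\circ$ has dimension $133$ and a rather different decomposition of trivial composition factors, it is typical (as in Propositions \ref{prop:sl3ine7} and \ref{prop:su4ine7}) for the pressure on $L(E_7)^\circ$ to be non-positive even when the pressure on $M(E_7)$ was positive, again closing the case by Proposition \ref{prop:pressure}.

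For any case that survives both passes I would resort to direct module-theoretic analysis, exactly as for the harder cases of $\PSp_6$ inside $E_8$ treated earlier. Concretely, assuming $H$ stabilizes no line on $M(E_7)$ or $L(E_7)^\circ$, I would form the $\{6,48\}$-heart $W$ of the relevant module and construct a pyx for it from the $\{1,6,8,14,48\}$-radical of $P(6)$ and $P(48)$ (whose structures can be read off directly or computed in Magma, just as in Case 3 of Proposition \ref{prop:sp8ine8} for $\PSp_8(2)$). Counting trivial composition factors in these radicals should either give an immediate contradiction, or reduce to a small number of candidate module structures that can be eliminated by Jordan-block data for a non-trivial unipotent element $u$ via \cite[Table 7]{lawther1995} and Lemma \ref{lem:encapsulates}; if necessary, restriction to the subgroup $\PSU_4(2)\leq H$ and invocation of Proposition \ref{prop:su4ine7} pins down the structure further.

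The main obstacle I foresee is a set of composition factors of the form $48,6,1^2$ (or a similar `self-dual with one $48$ and one $6$' configuration) on $M(E_7)$: this has pressure $0$ (so only gives a line or hyperplane which is what we want anyway, so actually fine) or pressure $+1$ and would force $W$ to have socle $6$ or $48$ and to sit inside the modest $\{1,6,8,14\}$-radical of $P(48)$. In that situation the pyx argument is tight, and I expect to need the restriction to $\PSU_4(2)$, together with the constraint that every self-dual module $V$ with $S^2(V)^H=0$ cannot embed $H$ into $D_n$ (Lemma \ref{lem:orthogfixpoint}), to rule out the remaining possibilities. After this the conclusion that $H$ stabilizes a line on $M(E_7)$ or $L(E_7)^\circ$ follows.
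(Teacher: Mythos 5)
Your plan is essentially the paper's proof: there are exactly three conspicuous sets of composition factors on $M(E_7)$, namely $8^4,6^2,1^{12}$, $14^2,6^4,1^4$ and $8,6^8$; the first two have pressures $-10$ and $0$, giving a line on $M(E_7)$, and for the third (which has no trivial factors) one switches to $L(E_7)^\circ$, where the factors are $14^8,6,1^{14}$ of pressure $-13$. None of the contingencies you anticipate (pyx arguments, unipotent block data, restriction to $\PSU_4(2)$) is needed, since no positive-pressure case with trivial composition factors survives the two pressure passes.
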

\begin{proof} There are three conspicuous sets of composition factors for $M(E_7){\downarrow_H}$:
\[ 8^4,6^2,1^{12},\quad 14^2,6^4,1^4,\quad 8,6^8.\]
The first two cases have pressures $-10$ and $0$ respectively, so $H$ stabilizes a line on $M(E_7)$ in both cases by Proposition \ref{prop:pressure}. In the third case, we switch to the Lie algebra $L(E_7)^\circ$, on which $H$ must act with composition factors
\[ 14^8,6,1^{14}.\]
Clearly $H$ stabilizes a line on $L(E_7)^\circ$ as it has pressure $-13$.
\end{proof}

\newpage

\chapter{Subgroups of \texorpdfstring{$E_6$}{E6}}
\label{chap:subsine6}

Now we let $\mb G$ be a simply connected algebraic group of type $E_6$ and $H$ be a subgroup of rank at most $3$, $q\leq 9$, together with a small Ree group, a Suzuki group, $\PSL_3(16)$ or $\PSU_3(16)$. By embedding $\mb G$ inside $E_7$, if the Schur multiplier of $H$ is not divisible by $3$ and $H$ is a blueprint for $M(E_7)$ or $L(E_7)$, then it is a blueprint for $M(E_6)\oplus M(E_6)^*$ or $L(E_6)$ respectively. We may also use Theorem \ref{thm:largeorderss} to eliminate $H$ with semisimple elements of odd order greater than $75$, as with the previous chapter. We can also eliminate any subgroup containing a real element of order at least $19$. These conditions together eliminate many possibilities for $H$, leaving the following:
\begin{enumerate}
\item $\PSL_3(q)$ for $q=2,3,4,7$;
\item $\PSU_3(q)$ for $q=3,4,5,8,9$;
\item $\PSp_4(q)$ for $q=3,4$;
\item $G_2(2)'$;
\item ${}^2\!G_2(3)'$;
\item ${}^2\!B_2(8)$;
\item $\PSU_4(q)$ for $q=2,4$;
\item $\PSp_6(2)$.
\end{enumerate}
Note that if $H$ is $\PSL_3(q)$, $\PSU_3(q)$, $G_2(q)$ or ${}^2\!G_2(q)$ for $q$ a power of $3$, then $H$ can act irreducibly on $M(E_6)$, but this action is self-dual. This means that $H$ stabilizes diagonal submodules of $M(E_6)\oplus M(E_6)^*$ that are not stabilized by $\mb G$. Thus the statement that $H$ is a blueprint for $M(E_6)\oplus M(E_6)^*$ actually shows that $H$ is not Lie primitive. This gives a proof that all of these groups are not Lie primitive, hence strongly imprimitive, except for $\PSL_3(3)$, $\PSU_3(3)$ and ${}^2\!G_2(3)'$. These are the three left out of the proof in \cite{liebeckseitz2004a}, where a different proof of imprimitivity for the other subgroups is given.

In this chapter we will eliminate all of the cases above except for $\PSL_3(3)$, $\PSU_3(3)$ and ${}^2\!G_2(3)'$, the first two acting irreducibly on $M(E_6)$, and the third acting as the sum of three modules that are permuted by ${}^2\!G_2(3)$, so that that group (if ${}^2\!G_2(3)'$ extends to it) acts irreducibly on $M(E_6)$. These three cases will be dealt with in Chapter \ref{chap:trilinear}.

As with the previous chapters, in this chapter $u$ denotes an element of order $p$ in $H$ belonging to the smallest conjugacy class.

\medskip

Our first proposition leaves open the possibility of a Lie primitive copy of $\PSL_3(3)$ acting irreducibly on $M(E_6)$, as mentioned above. See Section \ref{sec:irredpsl33} for how to prove strong imprimitivity in this case.

\begin{proposition}\label{prop:sl3ine6} Let $H\cong \PSL_3(q)$ for $q=2,3,4,7$, and let $\bar H\cong \SL_3(q)$ for $q=4,7$.
\begin{enumerate}
\item If $q=2$ then $H$ stabilizes a line or hyperplane on either $M(E_6)$ or $L(E_6)$.
\item If $q=3$ then $H$ stabilizes a line on either $M(E_6)$ or $L(E_6)^\circ$, or is a blueprint for $M(E_6)\oplus M(E_6)^*$, or acts irreducibly on $M(E_6)$.
\item If $q=4$ then $H$ stabilizes a line or hyperplane on $M(E_6)$, and $\bar H$ stabilizes a line on $L(E_6)$.
\item If $q=7$ then $H$ and $\bar H$ are blueprints for $M(E_6)\oplus M(E_6)^*$.
\end{enumerate}
\end{proposition}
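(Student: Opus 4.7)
The plan is to apply the standard two-step framework. For each $q$, and for $\bar H$ when $3\mid q-1$, I first enumerate the conspicuous sets of composition factors for $M(E_6){\downarrow_H}$ using the traces of elements of small order, then for each such set try in turn: Proposition \ref{prop:pressure} on $M(E_6)$ or its dual (giving a stabilized line or hyperplane), Proposition \ref{prop:pressure} on the non-trivial adjoint constituent $L(E_6)^\circ$ (giving a stabilized line on $L(E_6)$), and finally Lemma \ref{lem:genericmeansblueprint}, verifying from the Jordan block structure of $u$ on $M(E_6)$ (read off the composition factors) and \cite[Table 5]{lawther1995} that $u$ lies in a generic unipotent class of $E_6$. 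For $\bar H$ the faithful simple modules we need are listed in Table \ref{t:mods3centre}.

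The case $q=7$ is the cleanest. From Table \ref{t:modules7}, every simple $kH$-module of dimension at most $27$ has zero $1$-cohomology, so pressure is useless; but for the same reason every conspicuous set forces $M(E_6){\downarrow_H}$ to be semisimple. Reading the Jordan blocks of $u$ off the composition factors, I expect them to match a generic class in \cite[Table 5]{lawther1995} in every case, and Lemma \ref{lem:genericmeansblueprint} then gives blueprint status for $M(E_6)\oplus M(E_6)^*$. The argument for $\bar H=\SL_3(7)$ is identical, using the faithful modules of dimensions $3,6,15_1,15_2,21,24$ from Table \ref{t:mods3centre}, with the Jordan blocks of $u$ on each being straightforwardly computable.

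For $q=2$ and $q=4$ the simple modules of small dimension are few, and the conspicuous factorisations correspondingly limited. Here I expect pressure on $M(E_6)$, or on its dual (to pick up a stabilized hyperplane rather than a line), to settle nearly every case; the few exceptions should succumb to a pressure argument on $L(E_6)^\circ$, using the structures of the relevant projective covers $P(3)$, $P(4_1)$, $P(8_1)$, $P(9)$, etc., already computed in Chapter \ref{chap:rank2ine8}. For $\bar H=\SL_3(4)$, every non-trivial composition factor of $L(E_6)$ lifts from $\PSL_3(4)$ because $Z(\bar H)=Z(\mb G)$ acts trivially on $L(E_6)$, so the possible factors come from Table \ref{t:modules48916}; the short list that remains after matching traces should yield to pressure on $L(E_6)^\circ$.

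The main obstacle is $q=3$. Here $\dim M(E_6)=27$ coincides with the dimension of the Steinberg module for $\PSL_3(3)$, so $H$ can act irreducibly, and the remaining conspicuous sets mix the non-trivial $1$-cohomology modules $7$, $15$ and $15^*$ (see Table \ref{t:modules3}) with trivial factors, producing small positive pressures. For negative pressure Proposition \ref{prop:pressure} is immediate; for the borderline cases I will follow the strategy of Cases~$6$--$11$ of Proposition \ref{prop:sl3ine7}, switching to $L(E_6)^\circ$ and showing via the explicit structures of the $\{1,7,15^{\pm}\}$-radicals of $P(7)$ and $P(15)$ that no self-dual $kH$-module with the prescribed composition factors can sit inside a pyx of the required shape without producing a trivial submodule. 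The single case $M(E_6){\downarrow_H}=27$ irreducible is precisely the exception flagged both by Theorem \ref{thm:largeorderss} and by the statement of the proposition, and is deferred to Chapter \ref{chap:trilinear}, where the trilinear form on $M(E_6)$ is analysed directly.
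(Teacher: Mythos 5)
Your overall framework (conspicuous factors, pressure on $M(E_6)$ or $L(E_6)^\circ$, radical/pyx analysis, genericity of $u$ via Lemma \ref{lem:genericmeansblueprint}) is the right one, but it does not cover the hardest case, which is $q=4$ for $H$ itself. There the surviving conspicuous set is $9,9^*,8_1,1$: it has positive pressure, a non-split module $1,1/9$ exists, so no pressure argument on $M(E_6)$ or its dual yields a line or hyperplane, and nothing in "the few exceptions should succumb to a pressure argument on $L(E_6)^\circ$" replaces what is actually needed. The paper's proof uses a genuinely different idea: restrict to representatives $L_1,L_2,L_3$ of the three $H$-classes of $\PSL_3(2)$ subgroups, invoke the $q=2$ part of the proposition to get an $L_i$-fixed line or hyperplane on $M(E_6)$, note that if $H$ fixed no line the module would have to be $9^*/1,8_1/9$ with the fixed line inside a submodule $1/9$, and then observe that the extension $1/9$ which splits on restriction to $L_1$ is not the one which splits on restriction to $L_2$, forcing $H$ itself to stabilize a line (up to duality). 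Without this (or some substitute), part (3) of the statement is not proved.

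A second, smaller gap: several conspicuous sets must be \emph{excluded} rather than shown imprimitive, and pressure cannot do this because they have no (or too few) trivial factors. For $q=2$ the set $8,(3,3^*)^3,1$ forces $M(E_6){\downarrow_H}\cong 8\oplus P(3)\oplus 3$ and an order-$4$ element then acts with blocks $4^6,3$, absent from \cite[Table 5]{lawther1995}; for $q=3$ the set $7^3,3,3^*$ is killed by forcing an order-$3$ element into class $3^9$ and contradicting the $\{3^\pm,7\}$-radical structure; for $\bar H=\SL_3(4)$ the set $24_{21},3_1$ again gives blocks $4^6,3$. Your plan uses Lawther's tables only to certify genericity for the blueprint conclusion, so you would need to add these non-existence checks (note $7^3,3,3^*$ and $24_{21},3_1$ have no trivial factor, so "line on the minimal module" is simply unavailable). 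Finally, for $q=7$ the inference "zero $1$-cohomology, so every conspicuous set is semisimple" is not valid as stated: vanishing $H^1$ only controls extensions with the trivial module, and semisimplicity must come from computing $\Ext^1_{kH}$ between the actual non-trivial factors (the paper does this for $\bar H$, and for $H$ it simply quotes Proposition \ref{prop:sl3ine7}); the conclusion is correct, but the reasoning needs that repair.
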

\begin{proof} $\boldsymbol{q=7}$: From Proposition \ref{prop:sl3ine7} we see that $H$ is a blueprint for $M(E_6)$, and so we only consider $\bar H$. There are three sets of composition factors for $M(E_6){\downarrow_{\bar H}}$ that are conspicuous for elements of order at most $8$:
\[ 6,3^7,\quad 6^3,3^3,\quad 15_2,6^2.\]
The second of these is not conspicuous for elements of order $16$. The other two must be semisimple, with $u$ acting on $M(E_6)$ with blocks $3,2^8,1^8$ (class $2A_1$) and $4,3^4,2^4,1^3$ (class $A_2+A_1$) respectively, both of which are generic (see \cite[Table 5]{lawther1995}). This completes the proof.

\medskip

\noindent $\boldsymbol{q=3}$: The conspicuous sets of composition factors for $M(E_6){\downarrow_H}$ are
\[ 27,\quad 15,6^2,\quad 7^3,3,3^*,\quad 7^3,1^6,\quad 7,6,6^*,3,3^*,1^2,\]
\[ 7,(3,3^*)^3,1^2,\quad 6,3^7,\quad (3,3^*)^3,1^9.\]

\noindent \textbf{Cases 4, 5, 6 and 8}: The cases with a trivial composition factor all have negative pressure, so $H$ stabilizes a line on $M(E_6)$ in all cases by Proposition \ref{prop:pressure}.

\medskip

\noindent \textbf{Case 2}: It is easiest to switch to the Lie algebra $L(E_6)^\circ$, of dimension $77$, where the corresponding set of composition factors is
\[ 27,7^5,(3,3^*)^2,1^3.\]
As we saw in Proposition \ref{prop:sl3ine8}, the $\{1,3^\pm,7\}$-radical of $P(7)$ is $3,3^*/7,7/1,3,3^*/7$, so we need at least twice as many $7$s as $1$s in order not to stabilize a line on $L(E_6)^\circ$, which we do not have.

\medskip

\noindent \textbf{Case 3}: There is an extension between $7$ and $3^\pm$ but none between $3$-dimensional modules. Let $v$ be from a unipotent class of elements of order $3$ lying in $\PSL_2(3)\leq H$; then $v$ acts on $3$ as simply $3$, and on $7$ as $3^2,1$. So $v$ acts on $M(E_6)$ with at least eight blocks of size $3$, hence on $M(E_6)$ as $3^9$ by \cite[Table 5]{lawther1995}. However, the $\{3^\pm,7\}$-radicals of $P(7)$ and $P(3)$ are $7/3,3^*/7$ and $3^*/7/3$. Thus there must be a subquotient $7\oplus 7$, and this contradicts the action of $v$. Therefore $H$ cannot embed with these factors.

\medskip

\noindent \textbf{Case 7}: This case must be semisimple, and then $u$ acts with Jordan blocks $3,2^8,1^8$ on $M(E_6)$, which is the generic class $2A_1$.

\medskip

\noindent \textbf{Case 1}: This leaves the irreducible case, as claimed.

\medskip

\noindent $\boldsymbol{q=2}$: There are four conspicuous sets of composition factors, namely
\[ 8^3,1^3,\quad 8,(3,3^*)^3,1,\quad (3,3^*)^3,1^9,\quad 3^8,3^*.\]
The projective cover $P(3)$ has structure
\[ 3/(1\oplus (3^*/3/3^*))/3,\]
so in order to have a trivial composition factor but no trivial submodule or quotient we need a copy of $P(3)$ (or $P(3^*)$). Thus the first and third cases yield stabilized lines, and the second case must be, up to duality,
\[ 8\oplus P(3)\oplus 3.\]
The action of $v\in H$ of order $4$ on this module has blocks $4^6,3$, but this is not in \cite[Table 5]{lawther1995}, a contradiction.

For the fourth case, we switch to the Lie algebra, and note that the corresponding set of factors on $L(E_6)$ is $8^8,1^{14}$, which is semisimple and therefore $H$ stabilizes a line on $L(E_6)$.

\medskip

\noindent $\boldsymbol{q=4}$: There are two conspicuous sets of composition factors for $M(E_6){\downarrow_H}$ up to field automorphism, namely
\[ 8_1^3,1^3,\quad 9,9^*,8_1,1.\]
The first case must centralize a $3$-space on $M(E_6)$, but the second case need not stabilize a line on $M(E_6)$, as there is a module $1,1/9$, so we consider this case. There are three $H$-conjugacy classes of subgroups isomorphic to $\PSL_3(2)$, with representatives $L_1$, $L_2$ and $L_3$. Note that the restriction of $M(E_6)$ to each $L_i$ has composition factors $8,(3,3^*)^3,1$ in the second case. We have proved that each $L_i$ stabilizes a line or hyperplane on $M(E_6)$, so we may assume that at least two of them stabilize a line on $M(E_6)$, say $L_1$ and $L_2$. This yields maps from the permutation modules $P_{L_i}$ to $M(E_6){\downarrow_H}$.

There is no module $9^*/1/9$ (there is one of the form $9^*/1,1/9$, but with no trivial quotients), so our module must have the form $9^*/1,8_1/9$. Hence the $L_i$-fixed line must lie in the submodule $1/9$. There is a unique extension $1/9$ that splits on restriction to $L_1$, but is not the same as the extension that splits on restriction to $L_2$. Thus the only consistent option is that $H$ itself stabilizes a line on $M(E_6)$ (up to duality).

If we have $\bar H\cong \SL_3(4)$, there are up to field automorphism and duality three conspicuous sets of composition factors for $M(E_6){\downarrow_{\bar H}}$, which are
\[ 3_1^8,3_2^*,\quad \bar 9_{12}^*,(3_1,3_2^*)^3,\quad 24_{21},3_1.\]
(Here, all modules come from the same faithful $2$-block.)

For the first case, the corresponding factors for $L(E_6){\downarrow_{\bar H}}$ are $8_1^8,1^{14}$, which is semisimple, so $\bar H$ stabilizes a line on $L(E_6)$.

For the second case, the corresponding factors for $L(E_6){\downarrow_{\bar H}}$ are
\[ (9,9^*)^3,8_1,8_2,1^8.\]
The $\{1,9^\pm\}$-radical of $P(9)$ is $9^*/1,1/9$, and on this we may add one copy each of $8_1$ and $8_2$, falling into the second socle layer. On this we add as many copies of $9$, $9^*$ and $1$ as we can, and we obtain a module
\[ 1,1,1,1,1,1/9,9,9^*,9^*,9^*/1,1,8_1,8_2/9.\]
Of course, all of the extra trivials are quotients, so we need as many $9$s as $1$s in order not to stabilize a line on $L(E_6)$, which we do not have. Thus $\bar H$ stabilizes a line on $L(E_6)$.

In the third case, an element $v$ of order $4$ acts on $24_{21}\oplus 3_1$ as $4^6,3$, and so $\bar H$ cannot embed with these composition factors since this action does not appear in \cite[Table 5]{lawther1995}.
\end{proof}

If $H\cong \PSL_3(3)$ acts irreducibly on $M(E_6)$ then the composition factors of $L(E_6)^\circ{\downarrow_H}$ are 
\[ 15,15^*,7^3,6,6^*,(3,3^*)^2,1^2.\]
Since the actual copy of $A_2$ lying in $G_2$ does not lie in a subgroup containing a maximal torus, this $A_2$ cannot stabilize a line on $L(E_6)^\circ$ by Lemma \ref{lem:maxrankorpara}. We prove in Section \ref{sec:irredpsl33} that all such $H$ lie in copies of $G_2(3)$ in $E_6$.

\begin{proposition}\label{prop:su3ine6} Let $H\cong \PSU_3(q)$ for $q=3,4,5,8,9$, and let $\bar H\cong \SU_3(q)$ for $q=5,8$.
\begin{enumerate}
\item If $q=3$ then $H$ stabilizes a line on $M(E_6)$, is a blueprint for $M(E_6)\oplus M(E_6)^*$, stabilizes a line on $L(E_6)^\circ$, or acts irreducibly on $M(E_6)$.
\item If $q=4$ then $H$ either stabilizes a line or hyperplane on $M(E_6)$ or $L(E_6)$, or is a blueprint for $M(E_6)\oplus M(E_6)^*$.
\item If $q=5,8$ then $H$ and $\bar H$ are blueprints for $M(E_6)\oplus M(E_6)^*$, and $\bar H$ stabilizes a line on $L(E_6)$.
\item If $q=9$ then either $H$ is a blueprint for $M(E_6)\oplus M(E_6)^*$ or $H$ stabilizes a line on $L(E_6)^\circ$. If $H$ acts irreducibly on $M(E_6)$ then $H$ is a blueprint for $L(E_6)^\circ$.
\end{enumerate}
\end{proposition}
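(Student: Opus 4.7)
The plan is to treat each of the four values of $q$ separately using the standard toolkit of this memoir: compute all conspicuous sets of composition factors for $M(E_6){\downarrow_H}$ from traces of small-order semisimple elements, then for each set apply either a pressure argument (Proposition \ref{prop:pressure}), a unipotent blueprint argument (Lemma \ref{lem:genericmeansblueprint}), or a direct pyx/radical computation. I would also exploit the embedding $\mb G\leq E_7$ throughout to transfer blueprint results already established in Proposition \ref{prop:su3ine7} wherever the Schur multiplier permits, switching to $L(E_6)^\circ$ whenever the analysis on $M(E_6){\downarrow_H}$ is inconclusive.

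For $q=3$ I would enumerate the (eight) conspicuous sets of composition factors. Sets with sufficiently many trivial factors and few modules of nonzero $H^1$ have non-positive pressure, giving line stabilization on $M(E_6)$. Cases where $M(E_6){\downarrow_H}$ resists pressure analysis are handled by switching to $L(E_6)^\circ$ and using that the $\{1,3^\pm,7\}$-radical of $P(7)$ is $3,3^*/7,7/1,3,3^*/7$, forcing the needed ratio of $7$s to trivials exactly as in Case~2 of Proposition \ref{prop:sl3ine6}. Semisimple cases constrain the unipotent element $u$ to a generic class of \cite[Table 5]{lawther1995}, whence Lemma \ref{lem:genericmeansblueprint} delivers the blueprint conclusion. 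The irreducible case $M(E_6){\downarrow_H}\cong 27$ lies outside the representation-theoretic reach and is deferred to Chapter \ref{chap:trilinear}. For $q=4$, the delicate conspicuous set is $9,9^*,8_1,1$, in which an indecomposable $9^*/1,8_1/9$ is a priori possible; following the pattern of the $q=4$ case of Proposition \ref{prop:sl3ine6}, I would restrict to each of the three $H$-conjugacy classes of $\PSU_3(3)$-subgroups $L_i$ and use Frobenius reciprocity with the permutation modules $P_{L_i}$, observing that the unique extension $1/9$ splitting on $L_1$ differs from the one splitting on $L_2$, which forces $H$ itself to stabilize a line on $M(E_6)$ up to duality.

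For $q=5,8$ my plan is to invoke Proposition \ref{prop:su3ine7}: these groups are already blueprints for $M(E_7)$, and the blueprint property descends to $M(E_6)\oplus M(E_6)^*$ via $\mb G\leq E_7$. For the triple cover $\bar H=\SU_3(q)$ (relevant since $3\mid(q+1)$ for these $q$) the centre-coincidence condition must be verified directly, after which the short list of conspicuous sets for $M(E_6){\downarrow_{\bar H}}$ admits the same blueprint conclusion from semisimple elements of large enough odd order, and the $L(E_6)$ statement for $\bar H$ follows from a straightforward pressure check. For $q=9$ I would combine an element of order $73=\Phi_6(9)$ with the roots trick inside a maximal torus of $\mb G$ to exhibit explicit blueprint witnesses for the generic conspicuous sets on $M(E_6)\oplus M(E_6)^*$; where this fails, the corresponding factors on $L(E_6)^\circ$ have sufficiently non-positive pressure to force a line stabilization via Proposition \ref{prop:pressure}. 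The irreducible case is handled separately by identifying the (necessarily unique up to conjugacy) $\PSU_3(9)$-subgroup acting irreducibly on the $27$-dimensional module $M(E_6)$ as lying inside an enclosing $G_2(9)$, and inheriting the blueprint property on $L(E_6)^\circ$ from that subgroup via a direct element-order argument.

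The main obstacle I anticipate is the $q=9$ analysis: the natural semisimple element order $73$ sits just below the threshold $75$ of Theorem \ref{thm:largeorderss}, and $80=q^2-1$ is even, so neither bound applies wholesale and each conspicuous set must be verified individually via explicit torus-element calculations. The $q=4$ argument via Frobenius reciprocity on the three classes of $\PSU_3(3)$-subgroups is the next most intricate piece, while $q=3$ is routine modulo the irreducible case left for Chapter \ref{chap:trilinear}.
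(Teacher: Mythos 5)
Your $q=3$ outline broadly matches the paper, but the plan breaks down at several concrete points. The most serious is $q=5,8$: you claim these groups ``are already blueprints for $M(E_7)$'' and that this descends to $M(E_6)\oplus M(E_6)^*$. Neither half holds. The $\PSU_3$ proposition of Chapter \ref{chap:subsine7} gives, for $q=5$, only the trichotomy blueprint for $M(E_7)$, \emph{or} a stabilized line on $M(E_7)$, \emph{or} a stabilized $19$-space whose stabilizer is the maximal $A_2$; and for $q=8$ it gives only a stabilized line on $M(E_7)$ -- so there is no blueprint statement to descend. Moreover, the descent principle used at the start of Chapter \ref{chap:subsine6} requires the Schur multiplier of $H$ to be prime to $3$, which fails for $\PSU_3(5)$ and $\PSU_3(8)$ (their multipliers are divisible by $3$); this is exactly why these $q$, together with $\bar H=\SU_3(q)$, survive onto the $E_6$ list, and the $E_7$ analysis says nothing about $3\cdot\PSU_3(q)$ inside $E_7$. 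The paper instead argues directly: for $q=5$ both conspicuous sets force (near-)semisimple actions and generic unipotent classes (Lemma \ref{lem:genericmeansblueprint}); for $q=8$ nine of the eleven sets are dual-closed, so a real semisimple element of order $21>18$ applies Theorem \ref{thm:largeorderss}(5), and the last set is handled by an explicit $A_1\leq A_5$ computation; $\bar H$ is treated via unipotent classes ($q=5$) and pressure on $L(E_6)$ ($q=8$), not via ``semisimple elements of large enough odd order,'' which $\SU_3(5)$ simply does not possess.

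For $q=4$ your Frobenius-reciprocity step restricts to ``the three classes of $\PSU_3(3)$-subgroups'' of $\PSU_3(4)$, but $\PSU_3(3)$ is not a subgroup of $\PSU_3(4)$ (its order is divisible by $7$ and $27$, the latter's is not), so that argument cannot even start; the paper reads the conclusion off the radicals of $P(9_{12})$ in a couple of lines. You also never address the sixteen conspicuous sets with no trivial composition factors (for instance $9_{12},\bar 9_{12},3_1^2,3_2^*$ and $24_{21},3_1$), which is precisely where the blueprint alternative in part (2) arises (via the $A_2A_2A_2$ subgroup) and where one set must be shown not to exist. For $q=9$ you overlook Theorem \ref{thm:largeorderss}(5): an element of order $73$ with real trace is already a blueprint for $M(E_6)\oplus M(E_6)^*$, which is how the paper disposes of sixteen sets including the irreducible one (the $L(E_6)^\circ$ blueprint there is quoted from Liebeck--Seitz, not proved via a $G_2(9)$ containment as you suggest). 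Your fallback that the remaining cases have ``non-positive pressure on $L(E_6)^\circ$'' is false for $15_1,6_1^2$, whose $L(E_6)^\circ$ factors $27_1,7_1^5,(3_2,3_2^*)^2,1^3$ have pressure $2$ and need a radical computation in $P(7_1)$, and it is inapplicable to $21_{12},6_1$, which has no trivial factors at all and is handled by an eigenvalue-separation argument on $M(E_6)$ using an element of order $80$.
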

\begin{proof} $\boldsymbol{q=5}$: There are two conspicuous sets of composition factors for $M(E_6){\downarrow_H}$: $8^3,1^3$ and $19,8$. The first of these is semisimple, so $H$ stabilizes a line on $M(E_6)$, and $u$ acts on $M(E_6)$ with blocks $3^3,2^6,1^6$. This is the generic class $3A_1$ (see \cite[table 5]{lawther1995}), so $H$ is a blueprint for $M(E_6)\oplus M(E_6)^*$ by Lemma \ref{lem:genericmeansblueprint}. The second of these will be the maximal $A_2$ subgroup; note that $\Ext_{kH}^1(8,19)$ is $3$-dimensional, so we cannot easily construct possible modules for $M(E_6){\downarrow_H}$.

The corresponding factors on $L(E_6)$ are $35,35^*,8$, with an extension between $8$ and $35$. Therefore $L(E_6){\downarrow_H}$ is either semisimple or (up to automorphism) $35^*/8/35$. In the first case, $u$ acts on $L(E_6)$ with blocks $5^8,4^2,3^5,2^6,1^3$, class $2A_2+A_1$, which is not generic for $L(E_6)$ but is generic for $M(E_6)$, so that $H$ is a blueprint for $M(E_6)\oplus M(E_6)^*$. The second case, a module $35^*/8/35$, does not actually exist. Therefore $H$ is a blueprint for $M(E_6)\oplus M(E_6)^*$, as needed.

There are two conspicuous sets of composition factors for $M(E_6){\downarrow_{\bar H}}$: $6,3^7$ and $15_2,6^2$. Both of these are semisimple, and $u$ acts on them with blocks $3,2^8,1^8$ and $4,3^4,2^4,1^3$ respectively, which are the generic classes $2A_1$ and $A_2+A_1$ respectively. Thus $\bar H$ is a blueprint for $M(E_6)\oplus M(E_6)^*$ by Lemma \ref{lem:genericmeansblueprint}.

\medskip

\noindent $\boldsymbol{q=3}$:  Up to duality there are eight conspicuous sets of composition factors for $M(E_6){\downarrow_H}$:
\[ (3,3^*)^3,1^9,\quad 7^3,1^6,\quad 7,(3,3^*)^3,1^2,\quad 7,6,6^*,3,3^*,1^2,\]
\[ 6,3^7,\quad 7^3,3,3^*,\quad 15,6^2,\quad 27.\]

\noindent \textbf{Cases 1, 2 and 3}: The first three have negative pressure, so $H$ stabilizes a line on $M(E_6)$ by Proposition \ref{prop:pressure}.

\medskip

\noindent \textbf{Case 4}: This has pressure $1$, so quotienting out by any $3^\pm$ in the socle, if $H$ does not stabilize a line on $M(E_6)$ then the socle is one of $7$, $6$ or $6^*$. The $\{1,3^\pm,6^\pm\}$-radical of $P(7)$ is
\[ 1/6,6^*/1,3,3^*/7,\]
so it cannot be $7$. There is a module $6^*/1/7/1/6$, and neither $3$ nor $3^*$ may attach to this module anywhere, so either $H$ stabilizes a line or hyperplane on $M(E_6)$ or $H$ acts on $M(E_6)$ as one of
\[ (6^*/1/7/1/6)\oplus 3\oplus 3^*\quad (6^*/1/7/1/6)\oplus (3^*/3)\]
up to swapping modules with their duals. The action of $u$ on these modules has blocks $3^6,2^2,1^5$ and $3^6,2^3,1^3$ respectively, with the former not appearing in \cite[Table 5]{lawther1995}, and the latter being class $A_2+A_1$, which acts on $L(E_6)$ with factors $3^{22},2^2,1^8$.

Thus we are in the latter situation. Since $H$ stabilizes a $3$-dimensional subspace, it lies inside a proper positive-dimensional subgroup of $\mb G$ by \cite[Lemma 1.4]{craven2017}, but we require strong imprimitivity. Thus we go through the list of members of $\ms X$, first checking which have module structures that are compatible with the module structure of $M(E_6){\downarrow_H}$.

For the reductive subgroups, $H$ cannot lie in $A_2A_2A_2$ as this has three summands of dimension $9$. If $H$ lies in $A_5A_1$ then $H$ lies in $A_5$, hence in the $A_5$-parabolic subgroup. If $H$ lies in $F_4$ then $H$ stabilizes a line on $M(E_6)$, which it does not. The $G_2$ and $C_4$ subgroups do not intersect the unipotent class $A_2+A_1$ by \cite[Tables 31 and 32]{lawther2009}. This leaves $A_2G_2$.

Of the parabolic subgroups, the $D_5$-parabolic subgroups stabilize a line or hyperplane on $M(E_6)$, so $H$ is not contained in them. If $H$ is contained in an $A_4A_1$-parabolic subgroup then $H$ is contained in an $A_4$-parabolic subgroup, hence in the $D_5$-parabolic subgroup. If $H$ is contained in the $A_2A_2A_1$-parabolic then it lies in the $A_2A_2$-parabolic, hence in the $A_5$-parabolic subgroup. This leaves the $A_5$-parabolic subgroup. This acts on $M(E_6)$ as $6/15/6$, with the top and bottom $6$ being isomorphic and not self-dual. This is therefore not compatible with the module structure either, as $H$ would have to act irreducibly on $M(A_5)$.

Thus we are left with $\mb X\cong A_2G_2$. Here the decomposition $M(E_6){\downarrow_{\mb X}}$ is as the sum of a $21$- and $6$-dimensional module, fitting the structure above, but with the $6$-dimensional module being the symmetric square of a $3$-dimensional module for $A_2$. This can only restrict to $H$ as $1^{\oplus 6}$ or $6^\pm$, not $3^\mp/3^\pm$. Thus $H$ cannot embed with this module structure.

(There is a copy of $H$ with these factors in the $A_2A_2A_2$, $F_4$ and $A_2G_2$ subgroups, but in each case $H$ stabilizes a line on $M(E_6)$.)

\medskip

\noindent \textbf{Case 5}: This must be semisimple and $u$ acts on $M(E_6)$ with blocks $3,2^8,1^8$, whence $u$ lies in the generic class $2A_1$ and $H$ is a blueprint for $M(E_6)\oplus M(E_6)^*$ by Lemma \ref{lem:genericmeansblueprint}.

\medskip

\noindent \textbf{Case 6}: There is no corresponding set of composition factors on $L(E_6)$, so this case cannot exist.

\medskip

\noindent \textbf{Case 7}: We switch to $L(E_6)^\circ$, the composition factors being 
$27,7^5,(3,3^*)^2,1^3$, which has pressure $2$. Of course, the $27$ splits off, and the $\{1,3^\pm,7\}$-radical of $P(7)$ is
\[ 3,3^*/7,7/1,3,3^*/7,\]
so we need at least three $7$s in the socle (ignoring $3,3^*$) to cover the three trivials. Therefore $H$ stabilizes a line on $L(E_6)^\circ$.

\medskip

\noindent \textbf{Case 8}: This is the irreducible case, as for $\PSL_3(3)$.

\medskip

\noindent $\boldsymbol{q=9}$: There are 36 sets of composition factors for $M(E_6){\downarrow_H}$ that are conspicuous for elements of order at most $16$. If $M(E_6){\downarrow_H}$ has real trace of an element of order $73$ in $H$, for example if $M(E_6){\downarrow_H}$ has the same composition factors as its dual, then the element, and hence $H$, is a blueprint for $M(E_6)\oplus M(E_6)^*$ by Theorem \ref{thm:largeorderss}. (This includes the irreducible case. The proof that in this case $H$ is a blueprint for $L(E_6)^\circ$ is given in \cite[Lemma 4.15]{liebeckseitz2004a}.) This is the case for sixteen of the sets of composition factors, leaving ten up to duality, five up to field automorphism:
\[ 6_1,3_1^7,\quad 9_{12}^*,3_1^3,3_2^3,\quad 9_{12},\bar 9_{12},6_1,3_1,\quad 15_1,6_1^2,\quad 21_{12},6_1.\]
The first two of these must be semisimple, and $u$ acts in both cases with blocks $3,2^8,1^8$, which belongs to the generic class $2A_1$. For the third case, only $6_1$ and $9_{12}$ have a non-split extension, so $M(E_6){\downarrow_H}$ is either semisimple or up to automorphism $(6_1/9_{12})\oplus \bar 9_{12}\oplus 3_1$, and $u$ acts in these two possibilities with blocks $3^3,2^6,1^6$ and $3^4,2^5,1^5$ respectively. The second does not appear in \cite[Table 5]{lawther1995}, and the first is class $3A_1$, which is generic. Thus $H$ is a blueprint for $M(E_6)\oplus M(E_6)^*$ in these cases by Lemma \ref{lem:genericmeansblueprint}.

For $15_1,6_1^2$, the corresponding factors on $L(E_6)^\circ$ are
\[ 27_1,7_1^5,(3_2,3_2^*)^2,1^3.\]
The $27_1$ has no extensions with the other factors so must break off, and the $\{7_1,3_2^\pm,1\}$-radical of $P(7_1)$ is
\[ 7_1,7_1/1,3_2,3_2^*/7_1.\]
Thus we need at least twice as many copies of $7_1$ as $1$s in order for $H$ not to stabilize a line on $L(E_6)^\circ$, which we do not have.

In the final case, let $x$ denote an element of order $80$ in $H$. There are 24 distinct eigenvalues for the action of $x$ on $M(E_6)$, and $6_1$ (and $21_{12}$) are sums of eigenspaces (i.e., the eigenvalues of $x$ on $6_1$ and $21_{12}$ are disjoint), hence any semisimple element of $\mb G$ powering to $x$ preserves $6_1$ and $21_{12}$. Thus $H$ is a blueprint for $M(E_6)$, placing $H$ inside a positive-dimensional subgroup $\mb X$ of $\mb G$ stabilizing the same subspaces of $M(E_6)$. Since no composition factor of $M(E_6){\downarrow_H}$ is self-dual, the same is true for $M(E_6){\downarrow_{\mb X}}$, whence $H$ and $\mb X$ stabilize the same subspaces of $M(E_6)\oplus M(E_6)^*$ as well. Thus $H$ is a blueprint for this module, as required.

\medskip

\noindent $\boldsymbol{q=4}$: The conspicuous sets of composition factors for $M(E_6){\downarrow_H}$ split into two groups: those with a trivial factor and those without. Those with a trivial factor are those of $M(F_4){\downarrow_H}$ with a trivial added, and of those, only $9_{12},9_{12}^*,8_1,1$ and $9_{12},9_{12}^*,8_2,1$ have non-negative pressure. Thus in the others $H$ stabilizes a line on $M(E_6)$. In the remaining cases, however, the $\cf(M(E_6){\downarrow_H})$-radicals of $P(9_{12})$ are
\[ 1/9_{12}\quad \text{and}\quad 9_{12}^*/1,8_2/9_{12},\]
and both of these have trivial quotients. Thus if $H$ has a trivial factor on $M(E_6)$, it stabilizes a line or hyperplane on it.

From now we assume that $H$ has no trivial composition factors on $M(E_6)$. There are 16 such sets of composition factors, four up to field and graph automorphism:
\[ 3_1^8,3_2^*,\quad 9_{12}^*,3_1^3,(3_2^*)^3,\quad 9_{12},\bar 9_{12},3_1^2,3_2^*,\quad 24_{21},3_1.\]
Since $H$ stabilizes a $3$-space in all cases $H$ always lies inside a member of $\ms X$ by \cite[Lemma 1.4]{craven2017}, but we need stability under $\sigma$ and also $N_{\Aut^+(\mb G)}(H)$. Switching to $L(E_6)$, the first three sets have corresponding factors
\[ 8_1^8,1^{14},\quad (\bar 9_{12},\bar 9_{12}^*)^3,8_1,8_2,1^8,\quad 24_{21},24_{21}^*,8_1^2,8_2,3_2,3_2^*,\]
and the fourth has no corresponding set. In the first two cases therefore, $H$ stabilizes lines on $L(E_6)$, so we consider the third set. The two $9$-dimensional modules in $M(E_6){\downarrow_H}$ split off as summands, so we consider the summand comprising the $3$-dimensionals. Since $\Ext_{kH}^1(3_1,3_1)=0$ and there is an extension between $3_1$ and $3_2^*$, we have a module $3_1/3_2^*/3_1$, so $M(E_6){\downarrow_H}$ has a summand one of the following three modules, up to duality:
\[ 3_1^{\oplus 2}\oplus 3_2^*,\quad 3_1\oplus (3_2^*/3_1),\quad 3_1/3_2^*/3_1.\]
The action of $v\in H$ of order $4$ on $M(E_6)$ is $4^4,3^3,1^2$ and $4^5,3,2,1^2$ and $4^6,1^3$ respectively. Only the last of these appears in \cite[Table 3]{lawther1995}, so $H$ acts like this on $M(E_6)$.

Now we use the fact that $H$ is not Lie primitive to prove that $H$ is strongly imprimitive. The only connected members of $\ms X$ acting with composition factors of dimensions compatible with $9^2,3^3$ are $A_2A_2A_2$, $A_2G_2$ and $G_2$. In the second and third cases, the $A_2A_2$ subgroup of $A_2G_2$, and the $A_2$ subgroup of $G_2$, also lie in $A_2A_2A_2$ by \cite[Table 11A]{thomas2017un}, so we assume that $H$ is contained in $\mb X=A_2A_2A_2$. This group acts on $M(E_6)$ as the sum of three $9$-dimensional modules, with tensor products $(10,01,00)$, $(00,10,01)$ and $(01,00,10)$. If $H$ maps into the first and second factors as $3_1$ and the third as $3_2$, then $H$ acts on $M(E_6)$ in the required way, and all other permissible embeddings are obtained from this by dualizing and permutations. Thus we see that $H$ and the corresponding $A_2$ subgroup acting as $(10,10,20)$ stabilize the same subspaces of $M(E_6)\oplus M(E_6)^*$, whence $H$ is a blueprint for $M(E_6)\oplus M(E_6)^*$, as needed.

\medskip

\noindent $\boldsymbol{q=8}$: There are eleven conspicuous sets of composition factors for $M(E_6){\downarrow_H}$, nine of which have the same composition factors as their duals, and hence are blueprints for $M(E_6)\oplus M(E_6)^*$ for the same reason as for $q=9$, using a semisimple element of order $21$. The remaining set of composition factors (up to automorphism) is
\[ 9_{12},9_{31},9_{23}^*.\]
If $L$ denotes a copy of $\PSL_2(8)$ in $H$, then $L$ centralizes an element of order $3$ whose eigenvalues on $M(E_6)$ have multiplicities $6$, $6$ and $15$, i.e., $L$ lies in the $A_5$-Levi subgroup. The action of $L$ on $M(E_6)$ is semisimple, with factors
\[ 4_{12},4_{13},4_{23},2_1^2,2_2^2,2_3^2,1^3.\]
Since $M(A_5)$ appears twice in $M(E_6){\downarrow_{A_5}}$, we see that $L$ acts as $2_1\oplus 2_2\oplus 2_3$ on $M(A_5)$, whence $L$ lies in the positive-dimensional $A_1$ subgroup $\mb X$ contained in $A_5$ acting as $L(1)\oplus L(2)\oplus L(4)$ on $M(A_5)$. The action of $\mb X$ on $\Lambda^2(M(A_5))$ is easy to compute, and is
\[ L(0)^{\oplus 3}\oplus L(3)\oplus L(5)\oplus L(6),\]
whence $L$, and therefore $H$, are blueprints for $M(E_6)\oplus M(E_6)^*$.

\medskip

If we have $\bar H$ instead, then there are four conspicuous sets of composition factors for elements of order at most $19$, up to field automorphism:
\[ 3_1^8,3_2^*,\quad 9_{13},3_1^3,3_3^3,\quad 24_{31},3_1,\quad 24_{32},3_1.\]
The third and fourth cases require that an involution in $\bar H$ acts with blocks $2^{13},1$ on $M(E_6)$, which does not appear in \cite[Table 5]{lawther1995}. The first case has composition factors $8_1^8,1^{14}$ on $L(E_6)$, of pressure $-14$, so $\bar H$ stabilizes a line on $L(E_6)$ by Proposition \ref{prop:pressure}. The second case has composition factors $(9_{31},9_{31}^*)^3,8_1,8_3,1^8$, which has pressure $-2$, so $\bar H$ stabilizes a line on $L(E_6)$ in this case as well.
\end{proof}

As with $\PSL_3(3)$, if $H$ acts irreducibly on $M(E_6)$ then its composition factors on $L(E_6)^\circ$ are 
\[ 15,15^*,7^3,6,6^*,(3,3^*)^2,1^2.\]
Again, the actual $H$ acting irreducibly on $M(E_6)$ cannot stabilize a line on $L(E_6)^\circ$. This case is dealt with in Section \ref{sec:irredpsu33}, where as with $\PSL_3(3)$, we prove that all such $H$ lie in copies of $G_2(3)$ in $E_6$.

\begin{proposition}\label{prop:sp4ine6} If $H\cong \PSp_4(q)'$ for $q=2,3,4$, or $H\cong {}^2\!B_2(q)$ for $q=8$, then $H$ stabilizes a line or hyperplane on $M(E_6)$. If $\bar H\cong 3\cdot \PSp_4(2)'$ then $\bar H$ stabilizes a line on $L(E_6)$.
\end{proposition}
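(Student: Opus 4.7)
\medskip

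The plan is to proceed case-by-case, in each case first enumerating the conspicuous sets of composition factors of $M(E_6){\downarrow_H}$ using traces of semisimple elements of small order, and then attempting to eliminate each one either by a pressure argument (Proposition \ref{prop:pressure}), by restriction to a positive-dimensional subgroup, or by analysing the available pyxes for $M(E_6){\downarrow_H}$ and its dual. Because $M(E_6)$ is not self-dual, stabilising a hyperplane on $M(E_6)$ is the same as stabilising a line on $M(E_6)^*$, so in every case it suffices to show that the pressure of either $M(E_6){\downarrow_H}$ or $M(E_6)^*{\downarrow_H}$ is non-positive, or that every pyx built from the $\cf(M(E_6){\downarrow_H})$-radical of a projective cover of a composition factor with non-zero $1$-cohomology has either too few factors or a forced trivial submodule or quotient.

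For $\boldsymbol{q=2}$ ($H\cong\Alt(6)$), only $4_1$ and $4_2$ have non-zero $1$-cohomology, so the conspicuous sets of factors for $M(E_6){\downarrow_H}$ (taken from the restrictions to $\Alt(6)$ of the cases in Proposition \ref{prop:compfactorsalt6} truncated to dimension $27$) will all have non-positive pressure on $M(E_6)$ or on $M(E_6)^*$; one then concludes via Proposition \ref{prop:pressure}. For $\boldsymbol{q=3}$ ($H\cong \PSp_4(3)$), the only modules with non-zero $1$-cohomology of dimension at most $27$ are $5$ and $14$ (each with one-dimensional $H^1$, by Table \ref{t:modules3}); there are only a handful of conspicuous candidate factor sets such as $25,1^2$, $14,10,1^3$, $14,5^2,1^3$, $10^2,5,1^2$, and each either has non-positive pressure directly, or else the relevant $\{1,5,14\}$-radical of the projective cover is short enough that the trivials cannot be covered without a trivial submodule appearing. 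For $\boldsymbol{q=4}$ the conspicuous possibilities are visible from the restriction to the subgroup $\PSp_4(2)'\cong\Alt(6)$ and essentially the same pressure/pyx analysis works, using the fact (from Table \ref{t:modules48916}) that the only non-zero $1$-cohomology below dimension $27$ comes from the $4$-dimensional modules. For $\boldsymbol{q=8}$ ($H\cong {}^2\!B_2(8)$), every simple module of dimension at most $27$ has dimension in $\{1,4_i,16_{ij}\}$, with $1$-cohomology only on the $4_i$; there are very few conspicuous sets, and in each case the pressure on $M(E_6){\downarrow_H}$ or $M(E_6)^*{\downarrow_H}$ will be $\leq 0$.

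The hardest part will be any residual positive-pressure case for $\PSp_4(4)$ or ${}^2\!B_2(8)$ that does not fall to pressure directly. If such a case survives, the strategy is the familiar one used several times above: build the $\{1,4_i\}$-radical of $P(4_j)$ (and of $P(1)$) using the known structures of these projective covers for ${}^2\!B_2$-type groups from \cite{sin1992b}, add on top of it all permissible copies of the remaining composition factors in the prescribed order, and observe that the resulting pyx either does not admit enough trivial composition factors in its interior, or else forces a trivial quotient. One can also cross-check using the action of a unipotent element of order $4$ against the list of unipotent classes of $E_6$ in \cite[Table 5]{lawther1995} in characteristic $2$, which severely restricts how many blocks of each size can occur.

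Finally, for $\bar H\cong 3\cdot\PSp_4(2)'$ embedded with $Z(\bar H)=Z(\mb G)$, every composition factor of $M(E_6){\downarrow_{\bar H}}$ must be faithful, hence (in characteristic $2$) drawn from the $3$-dimensional, $9$-dimensional and $15$-dimensional faithful simples; one enumerates the short list of conspicuous such sets (all of total dimension $27$), then passes to $L(E_6){\downarrow_{\bar H}}$, whose composition factors are determined by traces of semisimple classes of $\mb G$ on $L(E_6)$. These restrictions are built from the unfaithful simples for $\bar H$, and in each case the pressure of $L(E_6){\downarrow_{\bar H}}$ turns out to be non-positive (using the cohomology data of Table \ref{t:modules2} for $\PSp_4(2)'\cong\Alt(6)$), so that $\bar H$ stabilises a line on $L(E_6)$ by Proposition \ref{prop:pressure}. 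The main obstacle to this whole plan is purely combinatorial: ensuring that one has actually listed all conspicuous sets of composition factors and correctly identified the non-trivial extensions between simple $kH$-modules in each case, which in practice is carried out by computer.
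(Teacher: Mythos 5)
Your treatment of the $M(E_6)$ statements is essentially the paper's own strategy: conspicuous sets of composition factors, pressure, and a projective-radical computation for the one genuinely positive-pressure case, which occurs at $q=3$ with factors $10,5^3,1^2$ (there the $\{1,5,10\}$-radical of $P(5)$ is $5,5/1,5,10/5$, so the putative uniserial piece $5/1/5/1/5$ cannot exist). For $q$ even the paper short-circuits the enumeration you propose by a uniform trace argument worth knowing: for $q=4,8$ the group contains a \emph{rational} element of order $5$, whose trace on $M(E_6)$ is $2$ while its traces on the $1$-, $4$- and $16$-dimensional simples are $1$, $-1$, $1$; writing the factor dimensions as $1^a,4^b,16^c$ forces $b+3c=5$ and pressure $c-2<0$ since $c\leq 1$, so a line is stabilized with no case analysis at all. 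For $q=2$ the order-$5$ classes of $\Alt(6)$ are only real, not rational, and the paper simply quotes the earlier $\Alt(6)$ result of \cite{craven2017} (with the line-or-hyperplane correction); your plan of re-deriving the conspicuous sets and using pressure would also work, but note that Proposition \ref{prop:compfactorsalt6} concerns $L(E_8)$ and cannot just be ``truncated'' to give $M(E_6)$ data.

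The genuine gap is in the $\bar H\cong 3\cdot\PSp_4(2)'$ part. First, a small slip: in characteristic $2$ the faithful simples have dimensions $3,3,9$ only (Table \ref{t:mods3centre}); there is no faithful $15$. More seriously, your key assertion that the composition factors of $L(E_6){\downarrow_{\bar H}}$ ``are determined by traces of semisimple classes of $\mb G$ on $L(E_6)$'' fails: for the conspicuous set $9,3_1^3,3_2^3$ on $M(E_6)$ the corresponding Brauer character on $L(E_6)$ is \emph{not} uniquely determined, so you cannot even write down the factor sets whose pressures you claim are non-positive, and you have given no reason why the remaining conspicuous sets (such as $3_1^6,3_2^3$ and $9^2,3_1^2,3_2$) lead to non-positive pressure on $L(E_6)$. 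The paper resolves both problems with a structural step you are missing: every conspicuous set stabilizes a $3$-space on $M(E_6)$, so $\bar H$ lies in a member of $\ms X$ whose relevant central extension must be non-split, forcing $\bar H\leq A_2A_2\leq A_5$; this simultaneously shows that only the sets $3_1^8,3_2$ and $9,3_1^3,3_2^3$ actually occur and pins down $L(E_6){\downarrow_{\bar H}}$ in the ambiguous case (as $8_1,8_2,4_1^6,4_2^6,1^{14}$, of pressure $-2$), after which the pressure argument closes the proof. Without some such input your final step cannot be carried out as written.
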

\begin{proof} $\boldsymbol{q}$\textbf{ even}: The proof is exactly the same as for Proposition \ref{prop:sp4ine7}, but this time the trace of a rational element of order $5$ is $2$ on $M(E_6)$. If $H\cong \Alt(6)$ then by \cite[Proposition 6.3]{craven2017} we see that $H$ stabilizes a line or hyperplane on $M(E_6)$.

(Note that the proof of the result in \cite{craven2017} does yield this statement, but the claim in \cite[Proposition 6.3]{craven2017} is erroneously that $H$ stabilizes a line on $M(E_6)$, rather than a line \emph{or hyperplane}. Of course, in uses of the result this distinction is usually irrelevant. Moreover, with slightly more work one obtains the original result: if $H$ stabilizes a hyperplane but not a line then $H$ lies inside a $D_5$-parabolic subgroup by Lemma \ref{lem:e6linestabs}, which acts on $M(E_6)$ as $1/16/10$. Thus $H$ cannot stabilize a line on the self-dual module $10=M(D_5)$. But this is clearly impossible, as there is no positive-pressure $10$-dimensional module for $H$.)

In the case of $\bar H$, in each faithful block there are three simple modules, $3_1$, $3_2$ and $9$, labelled so that $3_i\otimes 3_i^*=1\oplus 8_i$, where $8_i$ is one of the two factors into which the Steinberg module splits on restriction to $\Sp_4(2)'$. As in \cite{craven2017} we find up to automorphism four conspicuous sets of composition factors for $M(E_6){\downarrow_{\bar H}}$, namely
\[ 3_1^8,3_2,\quad 3_1^6,3_2^3,\quad 9,3_1^3,3_2^3,\quad 9^2,3_1^2,3_2.\]
Notice that each of these stabilizes a $3$-space on $M(E_6)$, hence lies in a member of $\ms X$ by \cite[Lemma 1.4]{craven2017}, but is not necessarily strongly imprimitive. We go through the members of $\ms X$, and prove that the second and fourth cases do not occur.

Suppose that $\bar H\leq \mb X$ for some $\mb X\in \ms X$. Since $\bar H$ is a non-split extension by the centre, so must $\mb X$ be, so $\mb X$ must be either the $A_5$-parabolic ($\bar H$ cannot map into $A_1$, and $H$ cannot embed in $G_2$, so $A_5A_1$ and $A_2G_2$ need not be considered) or $A_2A_2A_2$. In order to embed $\bar H$ into $A_2A_2A_2$ in the correct manner, the action of $Z(\bar H)$ on each of the three modules $M(A_2)$ must be different. In particular, it must act trivially on one of them, so $\bar H\leq A_2A_2\leq A_5$, and it suffices to check this subgroup. The action of $\bar H$ on $M(A_5)$ has factors either $3_1^2$ or $3_1,3_2$. In the first possibility $M(E_6){\downarrow_{\bar H}}$ has composition factors $3_1^8,3_2$, and in the second possibility $M(E_6){\downarrow_{\bar H}}$ has composition factors $9,3_1^3,3_2^3$, so these are the only two cases that may occur.

In the first case there is a unique possible set of composition factors for $L(E_6){\downarrow_{\bar H}}$, namely $8_1^8,1^{14}$, and clearly $\bar H$ stabilizes a line on $L(E_6)$ the module has pressure $-14$. In the third case the corresponding Brauer character for $L(E_6)$ is not uniquely defined, so we use the fact that $H$ embeds in $A_5$. This has a unique corresponding set of composition factors on $L(E_6)$, and they are $8_1,8_2,4_1^6,4_2^6,1^{14}$. This set of factors has pressure $-2$ so $\bar H$ stabilizes a line on $L(E_6)$, as claimed.

\medskip

\noindent $\boldsymbol{q=3}$: From the composition factors for $M(E_7){\downarrow_H}$, we see that the only conspicuous sets of composition factors for $M(E_6){\downarrow_H}$ are $10,5^3,1^2$ and $25,1^2$. The latter case must be semisimple, but an element from the largest two classes of elements of order $3$ acts on $25$ with blocks $3^8,1$, hence on $M(E_6)$ with blocks $3^8,1^3$. This action does not appear in \cite[Table 5]{lawther1995}, hence $H$ does not embed with these factors.

The first case has pressure $1$, so if $H$ does not stabilize a line or hyperplane on $M(E_6)$, then ignoring the $10$ the module structure would have to be $5/1/5/1/5$. However, the $\{1,5,10\}$-radical of $P(5)$ has the form
\[ 5,5/1,5,10/5.\]
Thus $H$ stabilizes a line or hyperplane on $M(E_6)$, as needed.
\end{proof}

\begin{proposition}\label{prop:g2ine6} Let $H\cong G_2(q)'$ for $q=2$ or ${}^2\!G_2(q)'$ for $q=3$. 
\begin{enumerate}
\item If $q=2$ then $H$ stabilizes a line or hyperplane on $M(E_6)$ or $L(E_6)$, or $H$ stabilizes a unique $14$-space in $L(E_6)$ and its stabilizer is a maximal $G_2$, and $N_{\mb G}(H)=G_2(2)$ is a blueprint for $L(E_6)$.
\item If $q=3$ then either $H$ stabilizes a line on $M(E_6)$, or $H$ acts on $M(E_6)$ and $L(E_6)^\circ$ as
\[ 9_1\oplus 9_2\oplus 9_3,\quad\text{and}\quad 9_1\oplus 9_2\oplus 9_3\oplus P(7)\oplus (7/7)\]
respectively.
\end{enumerate}
\end{proposition}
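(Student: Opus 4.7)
The plan is to handle both parts by first computing conspicuous sets of composition factors for $M(E_6){\downarrow_H}$ via character theory, applying pressure arguments from Proposition \ref{prop:pressure} to eliminate most of them, and then analyzing the survivors with module-theoretic methods.

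For part (1), with $H\cong G_2(2)'$ in characteristic $2$, I would enumerate the conspicuous composition-factor sets for $M(E_6){\downarrow_H}$ using the simple $kH$-modules $1, 6, 14, 32_1, 32_2$ of Table \ref{t:modules2}, only the $6$-dimensional of which has non-zero $1$-cohomology. By Proposition \ref{prop:pressure} all cases of non-positive pressure give a stabilized line on $M(E_6)$. For surviving positive-pressure cases with trivial factors I would construct the $\cf(M(E_6){\downarrow_H})$-radical of $P(6)$ and show either a line or hyperplane is stabilized, much as in Proposition \ref{prop:sl3ine6}, passing to $L(E_6)$ when there are no trivials. The remaining case, with $H$ acting irreducibly on $M(E_6)$ as the restriction of $L(20)$, should force $H$ inside the maximal $G_2$ subgroup; then $L(E_6){\downarrow_H}$ is the restriction of $L(01)\oplus L(20)$ and decomposes as $14\oplus 32_1\oplus 32_2$. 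Uniqueness of the stabilized $14$-space follows because $14$ occurs with multiplicity one while $32_1$ and $32_2$ each have dimension $32$. For the blueprint claim on $N_{\mb G}(H)=G_2(2)$, I would observe that $G_2(2)$ sits inside the algebraic $G_2$; the three summands of $L(E_6){\downarrow_H}$ are preserved by $G_2$ and each is simple for $G_2(2)$, so the two groups stabilize identical subspaces.

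For part (2), with $H\cong {}^2\!G_2(3)'\cong \PSL_2(8)$ in characteristic $3$, I would perform the analogous analysis using the simple $kH$-modules $1, 7, 9_1, 9_2, 9_3$ of Table \ref{t:modules3}, where only $7$ carries a (one-dimensional) $1$-cohomology and the $9_i$ are projective. After trace-based enumeration of conspicuous sets for $M(E_6){\downarrow_H}$, pressure eliminates most cases with trivial factors. For the low-pressure survivors I would use the structure of $P(7)$ (computable from $\Ext$-data, of dimension $36$) to constrain the $\{1,7\}$-radicals, and combine this with the Jordan-block structure of unipotent elements of order $3$ from \cite[Table 5]{lawther1995} to force either a stabilized line or the semisimple decomposition $9_1\oplus 9_2\oplus 9_3$. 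In the latter case, the composition factors of $L(E_6)^\circ{\downarrow_H}$ can be read off from Brauer character data; self-duality and projectivity of each $9_i$ then pin the structure down to $9_1\oplus 9_2\oplus 9_3\oplus P(7)\oplus (7/7)$, with the non-split extension $7/7$ distinguished from $7\oplus 7$ by counting Jordan blocks of an element of order $3$.

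The main obstacle, common to both parts, is this final structural determination after pressure has done its work. For part (1), the delicate step is proving uniqueness of the $14$-space on $L(E_6)$ and showing that $G_2(2)$ and the algebraic $G_2$ share subspace stabilizers, which depends on the precise decomposition of $L(01)\oplus L(20)$ restricted to the finite group in characteristic $2$. For part (2), the hard point is distinguishing the non-split $7/7$ from $7\oplus 7$ on $L(E_6)^\circ$, which hinges on a careful count of Jordan blocks of size $3$ for a regular unipotent element of $H$ against the unipotent-class data for $E_6$.
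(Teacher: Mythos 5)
Your treatment of part (2) is essentially the paper's: trace enumeration, pressure to kill the sets with trivial factors, and then unipotent Jordan-block data to pin down $L(E_6)^\circ{\downarrow_H}$ in the case $9_1\oplus 9_2\oplus 9_3$ (the paper uses the element of order $9$, which acts as $9^3$ on $M(E_6)$, hence lies in class $E_6(a_1)$ and acts on $L(E_6)$ with blocks $9^8,6$; note that the relevant distinction between $7/7$ and $7\oplus 7$ is blocks $9,5$ versus $7^2$, not blocks of size $3$). That part is fine.

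Part (1) has a genuine gap. In characteristic $2$ the simple $kH$-modules for $H\cong G_2(2)'$ have dimensions $1,6,14,32,32$ (Table \ref{t:modules2}), so $H$ cannot act irreducibly on the $27$-dimensional module $M(E_6)$; there is no "restriction of $L(20)$" case. The two conspicuous sets are $6^3,1^9$ and $14,6^2,1$, and the whole difficulty is the second one: it \emph{does} have a trivial factor and positive pressure, yet your claim that a radical computation for $P(6)$ will always produce a stabilized line or hyperplane is false — there are self-dual structures of shape $6/1,14/6$ (and relatives) with no trivial submodule or quotient, and this configuration genuinely occurs inside the maximal $G_2$, so it cannot be eliminated. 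This is exactly the case that produces the exceptional conclusion of the proposition, and your proposal offers no mechanism for it. The paper handles it by: ruling out certain structures using the Jordan blocks of an element of order $8$ against \cite[Table 5]{lawther1995}; restricting to $L\cong\PSL_3(2)$ and showing $M(E_6){\downarrow_L}\cong 1\oplus(3/3^*/3)\oplus(3^*/3/3^*)\oplus 8$; using the line stabilizers of $M(E_6)$ (Lemma \ref{lem:e6linestabs}) and the $F_4$ analysis of Proposition \ref{prop:sl3inf4} to place $L$ in a diagonal $A_2\leq A_2A_2\leq F_4$ stabilizing the $L$-submodule $3\oplus 3^*$, whence $\gen{H,\mb X}$ is proper and positive-dimensional; and then passing to $L(E_6)$, where the factors $32_1,32_2,14$ force any maximal positive-dimensional overgroup to be the maximal $G_2$ (since $H\not\leq F_4$), giving the unique $14$-space and the blueprint statement. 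Also note that your justification of the blueprint claim is not quite right as stated: the algebraic $G_2$ does not stabilize $32_1$ and $32_2$ separately (it acts with factors of dimensions $64,14$); the point is that $N_{\mb G}(H)=G_2(2)$ fuses $32_1$ and $32_2$, so it and $G_2$ stabilize the same subspaces.
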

\begin{proof} $\boldsymbol{q=2}$: There are two conspicuous sets of composition factors for $M(E_6){\downarrow_H}$, namely $6^3,1^9$ and $14,6^2,1$. In the first case $H$ must stabilize a line on $M(E_6)$ by Proposition \ref{prop:pressure} (since it has pressure $-6$), so we focus on the second case. Let $v$ have order $8$ in $H$. Suppose that $H$ does not stabilize a line or hyperplane on $M(E_6)$. If $14$ is a summand then the structure must be $(6/1/6)\oplus 14$, and this module is uniquely determined. The element $v$ acts on this module with blocks $8^2,6,4,1$, which does not appear in \cite[Table 5]{lawther1995}, hence $14$ is not a summand of $M(E_6){\downarrow_H}$. In particular, this means that up to duality we may assume that the socle of $M(E_6){\downarrow_H}$ is $6$.

Either there is a submodule $14/6$ or there is not. If there is, we have a submodule $1,14/6$, on which $6$ must be placed. If there is not then we have a submodule $6/1/6$, on which $14$ must be placed. In each case we can place two copies on top, and we thus obtain two modules,
\[ 6,6/1,14/6,\quad 14/6/1,14/6,\]
one of which must be a pyx for $M(E_6){\downarrow_H}$. The intersection of these two submodules of $P(6)$ is a module $6/1,14/6$ with a $14$ quotient, on which an element $v\in H$ of order $8$ acts with blocks $8^2,6,4,1$. This does not appear in \cite[Table 5]{lawther1995}, and thus this module is not $M(E_6){\downarrow_H}$. (But we have not yet excluded any of the other modules.)

Let $L$ denote a copy of $\PSL_3(2)$ inside $H$. The restrictions of $6$ and $14$ to $L$ are $3\oplus 3^*$ and $3\oplus 3^*\oplus 8$ respectively, so ignoring simple projectives they have the same restriction. The restriction of $6/1/6$ to $L$ has structure
\[ 1\oplus (3^*/3)\oplus (3/3^*),\]
and the restriction of $14/6/1,14/6$ to $L$ has structure
\[ 1\oplus (3/3^*/3)\oplus (3^*/3/3^*)\oplus 3\oplus 3^*\oplus 8^{\oplus 2}.\]
This means that there is a unique extension with quotient $14$ and submodule $6/1/6$ that splits on restriction to $L$, and it turns out that this is the module $6/1,14/6$ eliminated above. Thus whichever of the $kH$-modules $14/6/1/6$ we have (such a module is not unique up to isomorphism), its restriction to $L$ is always 
\begin{equation} 1\oplus (3/3^*/3)\oplus (3^*/3/3^*)\oplus 8.\label{eq:me6restoL}\end{equation}
Up to changing the number of $8$s involved, the exact same statements hold for $1,14/6$ and $6,6/1,14/6$. Thus if $H$ does not stabilize a line or hyperplane on $M(E_6)$ then we may assume that $M(E_6){\downarrow_L}$ has the structure given in (\ref{eq:me6restoL}). 

Either by Proposition \ref{prop:sl3inf4} below, or by computing the composition factors on $L(F_4)$, we see that the image of $L$ under the graph automorphism of $F_4$ stabilizes a line on $M(F_4)$, and in fact acts with composition factors $8^3,1^2$. Certainly the image under the graph automorphism cannot lie in a maximal parabolic subgroup of $F_4$, hence neither can $L$. Thus $L$ lies in one of $B_4$ (which stabilizes a line on $M(F_4)$), $C_4$ (and then, as in the proof of Proposition \ref{prop:sl3ine7}, we find that $L$ lies in an irreducible $A_2$ subgroup lying in $A_2A_2$ from \cite[Table 10/10A]{thomas2017un}) or $A_2A_2$.

Thus $L$ is contained in $A_2A_2$, and obviously a diagonal $A_2$ subgroup of $A_2A_2$. It must be embedded as either $(10,10)$ or $(01,01)$, but one of those acts on $M(F_4)$ with factors $8^3,1^2$ and the other with factors $8,(3,3^*)^3$, so it is of course the latter. The action is then
\[ L(11)\oplus (L(10)/L(02)/L(10))\oplus (L(01)/L(20)/L(01)),\]
and note that we must have that the stabilizer of the $kL$-submodule $3\oplus 3^*$ contains this diagonal $A_2$ subgroup $\mb X$. In particular, this means that $H$ is contained in a positive-dimensional subgroup of $E_6$, namely $\langle H,\mb X\rangle$.

\medskip

This proves that $H$ is not Lie primitive, but since $M(E_6)$ is not graph-stable, we need to work a bit harder to prove strong imprimitivity. It is easier to switch to $L(E_6)$ at this point. Let $\mb Y$ denote a maximal, positive-dimensional subgroup of $E_6$ containing $H$. The action of $H$ on $L(E_6)$ is one of two sets of composition factors:
\[ 32_1,32_2,14,\qquad 14^2,6^7,1^8.\]
In the second case $H$ has pressure $-1$, so $H$ stabilizes a line on $L(E_6)$, so we may assume that $H$ acts as in the first case.

Note that $\mb Y$ cannot act irreducibly on $L(E_6)$, and $H$ acts with composition factors of dimensions $32,32,14$, so if $\mb Y$ acts with three composition factors then they need to be of those dimensions. As $H\not\leq F_4$, we must have that $\mb Y$ is a maximal $G_2$, which acts with dimensions $64,14$, and $N_{\mb Y}(H)=G_2(2)$ is a blueprint for $L(E_6)$. Also, $H$ is contained in an $N_{\Aut^+(\mb G)}(H)$-stable, positive-dimensional subgroup of $\mb G$, namely the stabilizer $\mb Y$ of the $14$-space in $L(E_6)$.

\medskip

\noindent $\boldsymbol{q=3}$: The conspicuous sets of composition factors for $M(E_6){\downarrow_H}$ are
\[ 7^3,1^6,\quad 9_1,9_2,9_3,\quad 9_1,9_2,7,1^2.\]
The first and third sets of composition factors of $H$ on $M(E_6)$ have negative pressure, hence $H$ stabilizes a line on $M(E_6)$ by Proposition \ref{prop:pressure}. For the second set, the action of $H$ on $L(E_6)^\circ$ has composition factors
\[ 9_1,9_2,9_3,7^7,1,\]
and since an element $v\in H$ of order $9$ acts on $M(E_6)$ as $9^3$, it comes from class $E_6(a_1)$, so acts on $L(E_6)$ with blocks $9^8,6$ (see \cite[Tables 5 and 6]{lawther1995}). Hence the action on $L(E_6)^\circ$ is
\[ 9_1\oplus 9_2\oplus 9_3\oplus P(7)\oplus (7/7),\]
as claimed in the proposition.
\end{proof}

The case of ${}^2\!G_2(3)'$ acting as $9_1\oplus 9_2\oplus 9_3$ on $M(E_6)$  was proved to lie in the maximal $G_2$ subgroup in \cite[Theorem 29.3]{aschbacherE6Vun}, using the trilinear form on $M(E_6)$. It appears that pure representation-theoretic methods are not powerful enough to attack this. We will give another proof of this, still using the trilinear form, in Section \ref{sec:irred2g23}.

\begin{proposition}If $H\cong \PSU_4(q)$ for $q=2,4$, then $H$ stabilizes a line on $M(E_6)$.
\end{proposition}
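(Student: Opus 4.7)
The plan is to follow the same template used for the other rank-$3$ subgroups in this chapter: enumerate the conspicuous sets of composition factors for $M(E_6){\downarrow_H}$ using traces of semisimple elements of small order, eliminate most cases by a pressure argument via Proposition~\ref{prop:pressure} (so that $H$ stabilizes a line on $M(E_6)$ and hence is strongly imprimitive by Lemma~\ref{lem:fix1space}), and handle the small number of positive-pressure survivors by either switching to $L(E_6)$ or building a pyx inside the relevant projective covers.

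For $\boldsymbol{q=2}$, the simple $kH$-modules of dimension at most $27$ are $1,4,4^*,6,14,20,20^*$, and from Table~\ref{t:modules2} the ones with non-zero $1$-cohomology are exactly $4,4^*,14$. I would first list all conspicuous sets of composition factors for $M(E_6){\downarrow_H}$ using elements of orders $3,4,5,6,9,12$; the dimension constraint $27=\dim M(E_6)$ already severely restricts the possibilities. Any set containing a $14$ together with at least one trivial factor will typically have negative pressure because $M(E_6)$ is not large enough to fit the necessary $4$- and $4^*$-dimensional factors, and Proposition~\ref{prop:pressure} disposes of these. The positive-pressure survivors will be modules with composition factors drawn mostly from $\{4,4^*,6\}$, such as $(4,4^*)^3,1^3$ or $4^6,1^3$ and their near relatives. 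For each of these I would switch to the Lie algebra: using the fact that $M(E_6)\otimes M(E_6)^*$ contains $L(E_6)$ minus some trivials, the corresponding sets of factors for $L(E_6){\downarrow_H}$ can be computed and typically have negative pressure, yielding the conclusion. Alternatively, for $(4,4^*)^a,1^b$ with $a>b$, the relevant $\{1,4,4^*\}$-radicals of $P(4)$ and $P(4^*)$ can be written down explicitly and shown to be too small to accommodate the required trivial factors without producing a trivial submodule or quotient.

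For $\boldsymbol{q=4}$, the analysis is essentially the same but with more modules to keep track of: from Table~\ref{t:modules48916} the simple $kH$-modules with non-zero $1$-cohomology in the relevant range are $14_i$ and $24_{i,i-1}^\pm$. The advantage is that $\dim(24_{i,i-1}^\pm)=24$ already fills most of $M(E_6)$, so any conspicuous set involving a $24$-dimensional factor must take the shape $24_{i,i-1}^\pm, 3_j, \dots$ (or similar), and there is no room for multiple factors with non-zero $1$-cohomology except via $14_i$'s. After eliminating the many non-positive-pressure cases by Proposition~\ref{prop:pressure}, I would treat the remaining cases exactly as in the $\PSU_4(4)$ argument in Proposition~\ref{prop:su4ine7}: restrict to the subgroup $L\cong\PSU_4(2)$ of $H$, and use the fact (proved in the previous case) that $L$ stabilizes a line on $M(E_6)$. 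By carefully computing which $kL$-structures for $M(E_6){\downarrow_L}$ are compatible with each candidate $kH$-structure, and noting that simple modules $9_{ij}^\pm$, $16_{ij}^\pm$, etc.\ for $H$ restrict semisimply to $L$ with particular factors, one can rule out positive-pressure extensions as in the $E_7$ analogue.

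The main obstacle is likely the handful of positive-pressure survivors with composition factors concentrated in low dimensions (in the $q=2$ case, sets of the form $(4,4^*)^a,6^b,1^c$ with $a>c$), because these do not yield immediately to a pressure argument and the relevant $\Ext^1$ groups between small modules for $\PSU_4(2)$ are non-trivial in several directions. For these I expect the cleanest route to be computing the $\{1,4,4^*\}$-radicals of $P(1)$, $P(4)$ and $P(4^*)$ explicitly inside $P(\soc(M(E_6){\downarrow_H}))$, showing that no self-dual completion to a $27$-dimensional module without a trivial submodule is possible; the argument is parallel to, but simpler than, the pyx construction used for $\PSL_3(3)$ in Proposition~\ref{prop:sl3ine6} and $\PSU_3(3)$ in Proposition~\ref{prop:su3ine6}, exploiting the fact that the relevant projective covers for $\PSU_4(2)$ have rather small Loewy length in the non-principal block.
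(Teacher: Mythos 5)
Your overall template (conspicuous sets, then pressure via Proposition~\ref{prop:pressure}) is the same as the paper's, but your proposal stops short of the one computation that actually decides the matter, and your guesses about what that computation produces are off. For $q=2$ there are exactly two conspicuous sets of composition factors for $M(E_6){\downarrow_H}$, namely $14,6^2,1$ and $6,(4,4^*)^2,1^5$, of pressures $0$ and $-1$; for $q=4$ the same two shapes occur ($14_1,6_1^2,1$ and $6_1,(4_1,4_1^*)^2,1^5$), with the added simplification that $\Ext^1_{kH}(1,4_1^\pm)=0$ so the trivials split off. None of the sets you anticipate as "positive-pressure survivors" — $(4,4^*)^a,6^b,1^c$ with $a>c$, $4^6,1^3$, or anything involving $24$-, $9$- or $16$-dimensional factors for $q=4$ — is conspicuous, so the auxiliary machinery you plan (switching to $L(E_6)$, pyxes in $P(4)$, restriction to $\PSU_4(2)$) is never needed. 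Leaving the enumeration as "I would first list\dots" and then arguing only about hypothetical outcomes means the proof is not actually carried out.

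There is also a genuine gap in how you would conclude even in the cases you do foresee: $M(E_6)$ is not self-dual, so a pressure-$0$ set such as $14,6^2,1$ only yields via Proposition~\ref{prop:pressure} a stabilized line \emph{or hyperplane}, not the line the statement asserts. The paper closes this by observing that the factors $14,6^2,1$ are incompatible with the $D_5$-parabolic action $1/16/10$ on $M(E_6)$ from Lemma~\ref{lem:e6linestabs}, forcing the hyperplane-stabilizing alternative into $F_4$, which does stabilize a line. Your write-up treats pressure as delivering a line directly (and then invokes Lemma~\ref{lem:fix1space}, which is not what the statement asks for), so this disambiguation step is missing and would need to be supplied.
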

\begin{proof} $\boldsymbol{q=2}$: As we would guess from the conspicuous sets of factors for $M(E_7){\downarrow_H}$, there are two conspicuous sets of factors for $M(E_6){\downarrow_H}$:
\[ 14,6^2,1,\quad 6,(4,4^*)^2,1^5.\]
The first set of factors has pressure $0$, so $H$ stabilizes a line or hyperplane on $M(E_6)$ by Proposition \ref{prop:pressure}. However, the factors do not fit with the factors of the $D_5$-parabolic subgroup in Lemma \ref{lem:e6linestabs}, so $H\leq F_4$ and so stabilizes a line on $M(E_6)$. The second has pressure $-1$, so stabilizes a line on $M(E_6)$.

\medskip

\noindent $\boldsymbol{q=4}$: Again, there are two conspicuous sets of composition factors for $M(E_6){\downarrow_H}$ up to field automorphism:
\[ 14_1,6_1^2,1,\quad 6_1,(4_1,4_1^*)^2,1^5.\]
The same pressure statements apply this time, except now $\Ext_{kH}^1(1,4_1^\pm)$ is zero, so the trivial factors even split off as summands in both cases.
\end{proof}

\begin{proposition} If $H\cong \PSp_6(2)$ is a subgroup of $E_6$, then $H$ stabilizes a line on $M(E_6)$ or $L(E_6)$.
\end{proposition}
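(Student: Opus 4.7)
The plan is to mimic the analysis of $\PSp_6(2)\leq E_7$ in Chapter \ref{chap:subsine7}, but in the smaller ambient module. Since $q=2$, by Table \ref{t:modules2} the simple $kH$-modules of dimension at most $27$ are $1$, $6$, $8$, and $14$; among these only $6$ has nonzero $1$-cohomology, and it is $1$-dimensional. Consequently the pressure of $M(E_6){\downarrow_H}$ is simply the number of $6$-dimensional composition factors minus the number of trivial composition factors, and an analogous remark holds on $L(E_6)$ once we include the $48$-dimensional modules (which also have $1$-dimensional $H^1$).

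First I would enumerate the conspicuous sets of composition factors for $M(E_6){\downarrow_H}$ by matching traces of semisimple elements of $H$ of orders $3,5,7,9,15$ against the known eigenvalues of semisimple classes of $E_6$ on $M(E_6)$. Since $\dim M(E_6)=27$ is small and the only allowed composition factor dimensions are $1,6,8,14$, the list of conspicuous multisets should be very short (expected candidates include $14,6^2,1$, $8,6^2,1^7$, $6^3,1^9$, and $6,1^{21}$). Any set with non-positive pressure and a trivial composition factor gives a line stabilised on $M(E_6)$ by Proposition \ref{prop:pressure}, which should dispose of all but (at most) the case $14,6^2,1$.

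For each remaining set I would compute the corresponding multiset of composition factors for $L(E_6){\downarrow_H}$ via traces (recall $\dim L(E_6)=78$ in characteristic $2$, since there is no $\delta_{p,3}$ correction here), and then apply Proposition \ref{prop:pressure} a second time. Given that in the analogous $E_7$ proof the set $14^2,6^4,1^4$ already forced a line on $M(E_7)$ via pressure $0$, and that $L(E_6){\downarrow_H}$ must accommodate many composition factors whose $H^1$ vanishes, I expect the corresponding $L(E_6)$-pressure to be non-positive.

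The main obstacle will be whether some conspicuous set retains positive pressure on both $M(E_6)$ and $L(E_6)$; the most likely candidate is $14,6^2,1$. For that case, I would construct the $\{1,6,8,14\}$-radical of $P(6)$ (and, switching to $L(E_6)$, the $\{1,6,8,14,48\}$-radical of $P(48)$) using the Cartan matrix of the principal $2$-block of $\PSp_6(2)$, and show that no pyx built with socle containing only modules of zero $H^1$ can support all of the trivial composition factors present. This is exactly the style of pyx argument used in Cases 3 and 5 of Proposition \ref{prop:sp8ine8} and in the $\PSp_6(2)$ analysis in Chapter \ref{chap:subsine7}; the smaller dimensions here should make it strictly easier.
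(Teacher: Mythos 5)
Your reduction to the single hard case $14,6^2,1$ on $M(E_6)$ is essentially right (the other conspicuous set is in fact $8^2,6,1^5$, of pressure $-4$), but your plan breaks down exactly where you flag the obstacle. The corresponding composition factors on $L(E_6)$ are $14^2,8^2,6^5,1^4$, and by Table \ref{t:modules2} only $6$ (and $48$, which does not occur here) has non-zero $1$-cohomology, so the $L(E_6)$-pressure is $+1$, not non-positive; Proposition \ref{prop:pressure} cannot be applied a second time. Your fallback pyx argument does not close the gap either: with pressure $1$ the socle (assuming no trivial submodule) is a single $6$, and the $\{1,6,8,14\}$-radical of $P(6)$ genuinely supports configurations carrying all four trivial factors with no trivial submodule or quotient --- indeed the paper exhibits two surviving candidate structures, $6/1,14/6/1,8/6/1,8/6/1,14/6$ and $6/1,8/6/1,14/6/1,14/6/1,8/6$ --- so no contradiction can be extracted from the projective alone. (Your proposed radical of $P(48)$ is also beside the point, since $48$ is not a composition factor of $L(E_6){\downarrow_H}$.)

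The missing idea is external input from a subgroup. The paper restricts to $L\cong G_2(2)\leq \PSp_6(2)$: the factors of $L(E_6){\downarrow_L}$ become $14^2,6^7,1^8$, which has negative pressure, so $L$ stabilizes a line on $L(E_6)$ by (the proof of) Proposition \ref{prop:g2ine6}. Frobenius reciprocity then gives a non-zero homomorphism from the permutation module $P_L=1_L{\uparrow^H}$, whose structure $1,8/6/14/6/1/48/1/6/14/6/1,8$ is incompatible with the first candidate structure and forces $W=1,8/6$ to be a submodule of $L(E_6){\downarrow_H}$ in the second; finally $\Ext_{kH}^1(6,W)$ is one-dimensional but the unique extension $6/1,8/6$ has a trivial quotient, a contradiction. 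Without some such additional constraint (a subgroup known to fix a line, plus the permutation-module structure), your argument stalls at the two surviving structures, so as written the proposal has a genuine gap.
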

\begin{proof} There are two conspicuous sets of composition factors for $M(E_6){\downarrow_H}$, namely $8^2,6,1^5$, of pressure $-4$, hence $H$ stabilizes a line on $M(E_6)$ by Proposition \ref{prop:pressure}, and $14,6^2,1$, which has pressure $1$.

For the second we switch to the Lie algebra $L(E_6)$, where the corresponding set of composition factors is $14^2,8^2,6^5,1^4$. From Proposition \ref{prop:g2ine6} (in particular its proof) above we see that in this case the subgroup $L\cong G_2(2)$ of $H$ stabilizes a line on $L(E_6)$, whence there is a (non-zero) homomorphism from the permutation module $P_L$ to $L(E_6){\downarrow_H}$. Assume that $H$ does not stabilize a line on $L(E_6)$.

Since there is no module $6/1/6$, and $L(E_6){\downarrow_H}$ has pressure $1$, we see that there are two possible structures for the module:
\[ 6/1,14/6/1,8/6/1,8/6/1,14/6,\quad 6/1,8/6/1,14/6/1,14/6/1,8/6.\]
The module $P_L$ has structure
\[ 1,8/6/14/6/1/48/1/6/14/6/1,8,\]
and so we see that the second structure must be the correct one, and the quotient $W=1,8/6$ is a submodule of $L(E_6){\downarrow_H}$. But now $\Ext_{kH}^1(6,W)$ is $1$-dimensional, but the module $6/1,8/6$ has a trivial quotient, which is a contradiction.

This completes the proof.\end{proof}

\newpage

\chapter{Subgroups of \texorpdfstring{$F_4$}{F4}}
\label{chap:subsinf4}

Now we let $\mb G$ be the algebraic group of type $F_4$ and $H$ be a subgroup of rank $2$, $q\leq 9$, together with a small Ree group, a Suzuki group, $\PSL_3(16)$ or $\PSU_3(16)$. We can exclude all groups where $p$ is odd and where $H$ contains a semisimple element of order at least $19$, and where $p=2$ and $H$ contains a semisimple element of order at least $57$ by Theorem \ref{thm:largeorderss}. If $p$ is odd then any subgroup that is a blueprint for $M(E_6)$ is a blueprint for $M(F_4)$, so those cases eliminated in the previous chapter stay eliminated here. The remaining ones are as follows.
\begin{enumerate}
\item $\PSL_3(q)$ for $q=2,3,4$;
\item $\PSU_3(q)$ for $q=3,4,8$;
\item $\PSp_4(q)$ for $q=3,4$;
\item $G_2(q)'$ for $q=2,4$;
\item ${}^2\!B_2(q)$ for $q=8,32$;
\item ${}^2\!G_2(3)'$.
\end{enumerate}

In characteristic $2$ we might prove that either $H$ or its image under the graph automorphism stabilizes a line on $M(F_4)$, and then use Lemma \ref{lem:fix1space}. Note that we can also apply Lemma \ref{lem:F4ignoregraph}, so if we can show that $H$ is a blueprint for $M(F_4)$ and the composition factors of $H$ on $M(F_4)$ and the quotient $L(F_4)/M(F_4)$ have different dimensions, then we are done.

As with all of the previous chapters, here $u$ denotes an element of order $p$ in $H$ belonging to the smallest conjugacy class.

\begin{proposition}\label{prop:sl3inf4} Let $H\cong \PSL_3(q)$ for $2\leq q\leq 4$.
\begin{enumerate}
\item If $q=2,4$ then $H$ stabilizes a line or hyperplane on $L(F_4)$.
\item If $q=3$ then $H$ either stabilizes a line on $M(F_4)^\circ$ or is a blueprint for $M(F_4)^\circ$.
\end{enumerate}
\end{proposition}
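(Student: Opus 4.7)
The plan is to treat the three values of $q$ separately, since the characteristic and the relevant modules differ, but in each case the strategy is to enumerate the conspicuous sets of composition factors of either $L(F_4){\downarrow_H}$ (for $q=2,4$) or $M(F_4)^\circ{\downarrow_H}$ (for $q=3$), dispatch most of them by a pressure calculation using Proposition \ref{prop:pressure}, and handle the remaining few by either a blueprint argument (Lemma \ref{lem:genericmeansblueprint}) or a direct analysis of projective covers.

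For $q=2,4$ I would target $L(F_4)$, of dimension $52$, rather than the minimal module, because the pressure statement in Proposition \ref{prop:pressure}(2)(a) directly yields a trivial submodule or quotient (hence a line or hyperplane) on the self-dual adjoint module. Using traces of semisimple elements of order $3,7$ (and for $q=4$ also orders $5,9,15,21$) I would list the conspicuous sets of composition factors; by Tables \ref{t:modules2} and \ref{t:modules48916} the only simple $kH$-modules of dimension at most $52$ with non-zero $1$-cohomology are $3,3^*$ (each with $1$-dimensional $\Ext^1$ to the trivial) for $q=2$ and $9,9^*$ (each with $2$-dimensional $\Ext^1$) for $q=4$. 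In the large majority of conspicuous cases the pressure will be non-positive and Proposition \ref{prop:pressure} closes the case. For any leftover sets of positive pressure, one builds a pyx by taking the $\{1,3,3^*,8\}$-radical (respectively $\{1,9,9^*,8_i\}$-radical) of $P(3)$, $P(3^*)$ (respectively $P(9)$, $P(9^*)$) in $kH$, and one checks by counting composition factors of each type in this pyx that no such module of the required dimension can avoid a trivial submodule or quotient.

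For $q=3$ one works with $M(F_4)^\circ$ of dimension $25$. I would enumerate the conspicuous sets of composition factors via traces of elements of order $2,4,8,13$ in $H$, noting from Table \ref{t:modules3} that the simple $\mathbb{F}_3 \PSL_3(3)$-modules of dimension at most $25$ with non-zero $1$-cohomology are precisely $7$, $15$ and $15^*$. For each conspicuous set I would first compute pressure; any set of non-positive pressure produces a line stabilizer directly by Proposition \ref{prop:pressure}. For the remaining sets, I would examine the Jordan block structure of the unipotent element $u$ of $H$ on $M(F_4)^\circ$ using Table \ref{t:unipotentF4}: when the blocks match those of a generic class in \cite{lawther1995}, Lemma \ref{lem:genericmeansblueprint} gives $H$ as a blueprint for $M(F_4)^\circ$. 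Any sets that remain are attacked by constructing the $\{1,3,3^*,6,6^*,7,15,15^*\}$-radicals of $P(7)$, $P(15)$ and $P(15^*)$ (the non-projective indecomposable covers needed) and observing that no submodule of the corresponding pyx has the correct composition factors and no trivial quotient simultaneously.

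The hardest step will almost certainly be in the $q=3$ case, when the pressure is positive but the unipotent element falls into a non-generic class: here the blueprint route via Theorem \ref{thm:largeorderss} is unavailable because $\PSL_3(3)$ has semisimple elements only up to order $13<18$, and the module analysis has to juggle several simple modules of non-zero cohomology (both $7$ and $15^{\pm}$), so cohomology-free reductions do not simplify things much. The main payoff of keeping $M(F_4)^\circ$ self-dual throughout is that any residual module one builds must be self-dual, which eliminates most asymmetric candidates and leaves only a small handful that can be discarded by a direct look at the possible structures of submodules of the $\{7,15,15^*\}$-radicals described above.
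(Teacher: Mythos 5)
Your route for $q=2,4$ — working directly with the $52$-dimensional module $L(F_4)$ — has a genuine gap at $q=4$. The interesting conspicuous set there restricts $M(F_4)$ to $H$ with factors $9,9^*,8_1$ (this is realized, e.g.\ diagonally in the maximal-rank $A_2\tilde A_2$), and the other $26$-dimensional composition factor of $L(F_4)$ then restricts with factors $8_j^3,1^2$; since $9^\pm$ each have $2$-dimensional $1$-cohomology (Table \ref{t:modules48916}), the resulting factor set on $L(F_4)$ has pressure $2$, so Proposition \ref{prop:pressure} does not apply, and your proposed counting argument in a pyx built from $P(9)$, $P(9^*)$ cannot close the case: a module such as $(9^*/1,1/9)\oplus 8_i\oplus 8_j^{\oplus 3}$ has exactly the right dimension and composition factors and has no trivial submodule or quotient — indeed the paper itself exhibits the module $9^*/1,1/9$ in its $E_6$ analysis of $\PSL_3(4)$. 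To exclude such configurations you would have to use information that is not visible to a pure $kH$-module count, namely that $L(F_4)$ has a $\bG$-filtration whose two subquotients are $M(F_4)$ and its image under the graph automorphism (one checks that the displayed module admits no submodule with factors $\{9,9^*,8\}$ or $\{8^3,1^2\}$), or else avoid $L(F_4)$ altogether. The paper does the latter: it lists the conspicuous sets on $M(F_4)$ only, observes that the graph automorphism swaps the set $9,9^*,8_1$ with a set of pressure $-2$ (by comparing traces of the two rational classes of elements of order $7$, resp.\ order $15$), so that $H$ \emph{or its graph image} stabilizes a line on $M(F_4)$, and then Lemma \ref{lem:fix1space} converts this into a line or hyperplane on $L(F_4)$. (A smaller point: $L(F_4)$ is not self-dual when $p=2$, contrary to your parenthetical; this does not affect the line-or-hyperplane conclusion from Proposition \ref{prop:pressure}, but it matters if you lean on self-duality elsewhere.) For $q=2$ your $L(F_4)$ count does happen to work, since every hidden trivial forces a copy of $P(3)$ or $P(3^*)$ and there are too few $3$-dimensional factors, but it is more work than the paper's two-line graph-automorphism argument.

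For $q=3$ your plan is essentially the paper's: pressure kills the two sets containing trivial factors, and the remaining two sets ($7,6,6^*,3,3^*$ and $7,(3,3^*)^3$) are handled by forcing the unipotent element into a generic class and invoking Lemma \ref{lem:genericmeansblueprint}. One correction to your description of the residual step: those two sets have \emph{no} trivial composition factors, so searching for trivial submodules or quotients in radicals of $P(7)$, $P(15)$, $P(15^*)$ is vacuous and the modules $15^\pm$ do not even occur. What is actually needed is to constrain the possible module structures using the radicals of $P(3)$ and $P(6)$ (e.g.\ $3^*/7/3$ forces four $3$-dimensional summands in the last case, giving $u$ at least blocks $2^4,1^4$), and then to compare the resulting Jordan block patterns with the non-generic actions $3^7,2^2$ and $3^8,1$ in Table \ref{t:unipotentF4} to conclude genericity.
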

\begin{proof} $\boldsymbol{q=2}$: There are three conspicuous sets of composition factors for $M(F_4){\downarrow_H}$, namely
\[ (3,3^*)^3,1^8,\quad 8^3,1^2,\quad 8,(3,3^*)^3.\]
The pressures of the first two are $-2$, so $H$ stabilizes a line on $M(F_4)$ by Proposition \ref{prop:pressure}. An element of order $7$ in $H$ acts with trace $5$ in the first two cases and $-2$ in the third. The graph automorphism swaps the two rational classes of elements of order $7$, and so if $H$ acts as in the third case on $M(F_4)$, the image of $H$ under the graph automorphism acts as in the first or second cases on $M(F_4)$. Thus either $H$ or its image under the graph automorphism stabilizes a line on $M(F_4)$, and $H$ stabilizes a line or hyperplane on $L(F_4)$ by Lemma \ref{lem:fix1space}.

\medskip

\noindent $\boldsymbol{q=4}$: There are, up to field automorphism, only two conspicuous sets of composition factors: $8_1^3,1^2$ and $9,9^*,8_1$. Again the first case has pressure $-2$, and so $H$ stabilizes a line on $M(F_4)$, and again (this time up to field automorphism) the graph automorphism swaps the two classes. (This is because the graph automorphism does not fix any classes for $q=2$.) As for $q=2$ we find that $H$ stabilizes a line or hyperplane on $L(F_4)$, as needed.

\medskip

\noindent $\boldsymbol{q=3}$: There are four conspicuous sets of composition factors for the action of $H$ on the $25$-dimensional module $M(F_4)^\circ$, given by
\[ (3,3^*)^3,1^7,\quad 7^3,1^4,\quad 7,6,6^*,3,3^*,\quad 7,(3,3^*)^3.\]

\noindent \textbf{Cases 1 and 2}: These have pressures $-7$ and $-1$ respectively, so that $H$ stabilizes a line on $M(F_4)^\circ$ by Proposition \ref{prop:pressure}.

\medskip

\noindent \textbf{Case 3}: The only classes of elements of order $3$ that are non-generic for $M(F_4)^\circ$ are $A_2+\tilde A_1$, $\tilde A_2$ and $\tilde A_2+A_1$, which on $M(F_4)^\circ$ act as $3^7,2^2$ or $3^8,1$ by Table \ref{t:unipotentF4}. Since $u$ acts on $3$ as $2,1$, on $6$ as $3,2,1$, and on $7$ as $3,2^2$, we see that if $H$ is not a blueprint for $M(F_4)^\circ$, then there are no simple summands of $M(F_4)^\circ{\downarrow_H}$. Thus the socle is, up to duality, one of: $3$; $6$; $3\oplus 6$; and $3\oplus 6^*$ (note that $\Ext_{kH}^1(3,6)=0$ but $\Ext_{kH}^1(3,6^*)\cong k$). In the first case we consider the $\{3^*,6^\pm,7\}$-radical of $P(3)$, which is
\[ 6/3^*/6^*,7/3,\]
and it is clearly not self-dual. The dual of this is the $\{3^\pm,6,7\}$-radical of $P(6^*)$, so we cannot have a simple socle. If the socle of $M(F_4)^\circ{\downarrow_H}$ is $3\oplus 6$, we take the $\{3^*,6^*,7\}$-radical of $P(3)\oplus P(6)$, then take the $\{7\}$-residual, to obtain
\[ (3^*/6^*,7/3)\oplus (3^*,6^*/6).\]
This has one too many copies of $6^*$, and $u$ acts on it with blocks $3^7,2^4,1^5$, so we cannot get at least seven blocks of size $3$ in the action of $u$ on $M(F_4)^\circ{\downarrow_H}$ this way. Similarly, we do the same with socle $3\oplus 6^*$ to obtain the module
\[ 3^*/6,7/3,6^*,\]
and $u$ acts on this with blocks $3^6,1^7$. This contradiction proves that $u$ is generic on $M(F_4)^\circ$.

\medskip

\noindent \textbf{Case 4}: The $\{3^\pm,7\}$-radical of $P(3)$ is $3^*/7/3$, so we see that at least four $3$-dimensional factors must split off as summands in $M(F_4)^\circ{\downarrow_H}$; since $u$ acts on $3$ with blocks $2,1$, this means that we have blocks at least $2^4,1^4$ in the action of $u$ on $M(F_4)^\circ$, and this is enough to assure that $u$ is generic from \cite[Table 3]{lawther1995}.
\end{proof}

\begin{proposition}\label{prop:su3inf4} Let $H\cong \PSU_3(q)$ for $q=3,4,8$.
\begin{enumerate}
\item If $q=3$ then $H$ stabilizes a line on either $M(F_4)^\circ$ or $L(F_4)$.
\item If $q=4$ then $H$ stabilizes a line or hyperplane on $L(F_4)$ or either $H$ or its image under a graph automorphism is a blueprint for $M(F_4)$ and is not graph-stable.
\item If $q=8$ then either $H$ stabilizes a line or hyperplane on $L(F_4)$ or the stabilizer of all $H$-invariant $8$-spaces on the composition factors of $L(F_4)$ is positive dimensional, and hence $H$ is strongly imprimitive.
\end{enumerate}
\end{proposition}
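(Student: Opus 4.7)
The plan is to proceed case by case on $q$, closely mirroring the strategy of Proposition~\ref{prop:sl3inf4}. In each case, I would first enumerate the conspicuous sets of composition factors for $M(F_4)^\circ{\downarrow_H}$ using traces of semisimple elements of $H$ against the known character values of small semisimple classes of $F_4$, and then apply Proposition~\ref{prop:pressure} (together with Table~\ref{t:modules3} or \ref{t:modules48916}) to dispose of cases whose pressure on $M(F_4)^\circ$ is non-positive.

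For $q=3$, the only simple $k\PSU_3(3)$-modules with non-zero $1$-cohomology are $6$, $6^*$ and $7$, each with $1$-dimensional $\mathrm{Ext}^1$ against the trivial module. After listing the conspicuous sets (the list will be very similar to that for $\PSL_3(3)$ in Proposition~\ref{prop:sl3inf4}), pressure will kill any case without enough $6$- or $7$-dimensional factors. For the residual positive-pressure cases, I would compute the $\{3^\pm,6^\pm,7\}$-radicals of the relevant projective covers $P(3)$, $P(6)$ and $P(7)$, noting that these radicals are small, and force either a line fixed on $M(F_4)^\circ$ or, by transferring to the corresponding set of composition factors on $L(F_4)$ (which is $M(F_4)\oplus M(F_4)^\tau$ modulo the graph automorphism; here $p=3$ is outside the exceptional characteristic, so $L(F_4) = M(F_4)^\circ \oplus \text{extension}$), obtain a fixed line on $L(F_4)$ by pressure again. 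The remaining case, where $u$ would need to act non-generically, can be ruled out by checking against Table~\ref{t:unipotentF4} and then invoking Lemma~\ref{lem:genericmeansblueprint}.

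For $q=4$, in characteristic $2$ the simple modules of small dimension with non-zero $1$-cohomology are the four $9$-dimensional modules (Table~\ref{t:modules48916}). Computing conspicuous sets for $M(F_4){\downarrow_H}$ (dimension $26$) up to field automorphism will give a short list; the low-dimensional and low-pressure cases are handled by Proposition~\ref{prop:pressure} applied to $L(F_4)$ (recalling that $L(F_4)$ has factors $L(\lambda_1)$ and $L(\lambda_4)$, both of dimension $26$ in characteristic $2$). For cases that survive, I would invoke Lemma~\ref{lem:F4ignoregraph}: when the composition factor dimensions on $M(F_4)=L(\lambda_4)$ and on $L(F_4)/M(F_4)=L(\lambda_1)$ are different (which is forced whenever the restriction is not graph-invariant), no graph automorphism can normalise $H$, so it suffices to prove that $H$ is a blueprint for $M(F_4)$. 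This last step can be done by producing, for an element $x \in H$ of maximal semisimple order (e.g.\ order $15$), enough roots in a maximal torus of $F_4$ with the same eigenspace decomposition on $M(F_4)$; alternatively, one can restrict to the centraliser-of-element-of-order-$5$ subgroup $L \cong \Alt(5)$ and embed $L$ into a positive-dimensional $A_1$ subgroup, as was done for the analogous case of $\PSU_3(4)$ in $E_6$.

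For $q=8$, the situation parallels the $E_8$ analysis of Section~\ref{sec:su3ine8}, except we now have to work with $M(F_4)$ and $L(F_4)$ rather than $L(E_8)$. After enumerating conspicuous sets (using elements of orders up to $63$), negative-pressure cases fall to Proposition~\ref{prop:pressure} applied to $L(F_4)$. For positive-pressure cases, the strategy is to take an element $x\in H$ of order $63$ (or a sizeable divisor) and search inside a maximal torus $\mb T$ of $\mb G$ for elements whose cube (or appropriate power) is $x$ and which stabilise each of the $8$-dimensional eigenspaces of $x$ on $M(F_4)$ that comprise the various composition factors; the existence of such roots will show that the joint stabiliser of every $H$-invariant $8$-space on $L(F_4)$ is positive dimensional, giving strong imprimitivity via Theorem~\ref{thm:intersectionorbit}. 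The main obstacle I anticipate is the $q=8$ case: as with $\PSU_3(8)$ in $E_8$, there may be a single irreducible-like configuration where no individual root of $x$ simultaneously fixes all required eigenspaces, and there one has to split the analysis by first showing Lie imprimitivity via a subgroup $L = \PSL_2(8)$ lying in the centraliser of an element of order $3$, then running the analysis through $L$ to force a positive-dimensional overgroup; this is where the conclusion is weakened to a statement about orbits of $8$-spaces rather than individual lines.
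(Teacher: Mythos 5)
For $q=3$ and $q=4$ your plan is essentially the paper's proof: list the conspicuous sets of factors, dispose of the non-positive-pressure ones by Proposition \ref{prop:pressure} (switching to $L(F_4)$ where necessary), and for $q=4$ combine Lemma \ref{lem:F4ignoregraph} with a blueprint argument for $M(F_4)$ based on an element of order $15$ (the paper exhibits an element of order $75$ with the same eigenvalues). Two slips here are harmless but should be corrected: in characteristic $3$ the module $L(F_4)$ is \emph{not} assembled from $M(F_4)$ and a graph twist --- that is the $p=2$ phenomenon, and there is no graph automorphism for $p=3$; one simply computes the $52$-dimensional Brauer characters, and in fact all four conspicuous sets for $q=3$ are killed by pressure on $M(F_4)^\circ$ or on $L(F_4)$, so your ``remaining non-generic case'' never arises (which matters, because a blueprint conclusion would not prove part (1) as stated, which only allows line stabilization).

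The genuine gap is in $q=8$. A small point first: $\PSU_3(8)$ has no semisimple element of order $63$ (the largest semisimple orders are $19$ and $21$), but this is immaterial since orders $19,21>18$ already make $H$ a blueprint for $M(F_4)$ by Theorem \ref{thm:largeorderss}, with no root-hunting required. The substantive problem is your assertion that torus elements stabilising the $8$-dimensional sum of $x$-eigenspaces \emph{on $M(F_4)$} force the joint stabiliser of \emph{every} $H$-invariant $8$-space on the composition factors of $L(F_4)$ to be positive dimensional. In the one surviving case the factors on $M(F_4)=L(\lambda_4)$ are $9_{12},9_{12}^*,8_2$, and this case is graph-stable up to field automorphism, so the other composition factor $L(\lambda_1)$ of $L(F_4)$ also carries an $H$-invariant $8$-space; Theorem \ref{thm:intersectionorbit} needs the whole $N_{\Aut^+(\mb G)}(H)$-orbit, including that second $8$-space, to have a positive-dimensional common stabiliser, and eigenspace control on $L(\lambda_4)$ says nothing a priori about $L(\lambda_1)$. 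The paper closes this by observing that the only positive-dimensional subgroup whose composition factors are compatible with both $H$ and its graph image is $A_2A_2$, which acts semisimply with factors of dimensions $8,9,9$ on $M(F_4)$ and hence stabilises an $8$-space on each factor of $L(F_4)$; you need either this identification or an explicit eigenvalue check on $L(\lambda_1)$ for your torus elements. Your fallback via $\PSL_2(8)$ inside the centraliser of an element of order $3$ is imported from the $E_8$ analysis and does not address this point.
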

\begin{proof} $\boldsymbol{q=3}$: There are four conspicuous sets of composition factors for $M(F_4)^\circ{\downarrow_H}$:
\[ (3,3^*)^3,1^7,\quad 7^3,1^4,\quad 7,(3,3^*)^3,\quad 7,6,6^*,3,3^*.\]
The first two have negative pressure, so $H$ stabilizes a line on $M(F_4)^\circ$ by Proposition \ref{prop:pressure}. For the third and fourth cases we switch to $L(F_4)$, and see that $H$ acts with composition factors $7,(6,6^*)^3,1^9$ and $15,15^*,7^2,3,3^*,1^2$ respectively, which have pressures $-2$ and $0$ respectively. Thus $H$ stabilizes a line on $L(F_4)$ in both cases.

\medskip

\noindent $\boldsymbol{q=4}$: There are, up to field automorphism, six conspicuous sets of composition factors for $M(F_4){\downarrow_H}$:
\[ (3_1,3_1^*)^3,1^8,\quad 8_1^3,1^2,\quad 8_1,(3_1,3_1^*)^3,\quad 8_1,(3_1,3_1^*)^2,3_2,3_2^*,\]
\[ 9_{12},9_{12}^*,8_1,\quad \bar 9_{12},\bar 9_{12}^*,8_1.\]

The graph automorphism swaps (up to field automorphism) the first and third cases, the second and fifth, and the fourth and sixth sets of factors. As there are no extensions between the factors, the first two cases are semisimple and so $H$ stabilizes a line or hyperplane on $M(F_4)$, hence either $H$ stabilizes a line on $L(F_4)$ or we may assume that we have either the fourth or sixth case.

In the sixth case, let $x$ denote an element of order $15$ in $H$. There is no element of order $45$ in $\mb G$ cubing to $x$ and having the same eigenspaces on $M(F_4)$ as $x$, but there is an element of order $75=5\times 15$ having the same eigenvalues as $x$, hence $x$ and $H$ are blueprints for $M(F_4)$. Furthermore, we apply Lemma \ref{lem:F4ignoregraph} to see that this is enough to force $\sigma$-stability and $N_{\Aut^+(\mb G)}(H)$-stability, as needed.

\medskip

\noindent $\boldsymbol{q=8}$: Up to field automorphism there are three conspicuous sets of composition factors for $M(F_4){\downarrow_H}$, namely
\[ 8_1^3,1^2,\quad 9_{12},9_{12}^*,8_1,\quad 9_{12},9_{12}^*,8_2.\]
Up to field automorphism, the first and second are swapped by the graph automorphism, so as the first case stabilizes a line on $M(F_4)$, in both cases $H$ stabilizes a line or hyperplane on $L(F_4)$ by Lemma \ref{lem:fix1space}. Thus we consider the third case, which is left invariant (up to field automorphism) by the graph. Since $H$ contains elements of orders $19$ and $21$, $H$ is a blueprint for $M(F_4)$ by Theorem \ref{thm:largeorderss}. The only positive-dimensional subgroup with compatible composition factors for both $H$ and its image under the graph is $\mb X=A_2A_2$, and of course this acts semisimply with composition factors of dimension $8$, $9$ and $9$ on $M(F_4)$. Thus $\mb X$ stabilizes one $8$-dimensional submodule on each composition factor of $L(F_4)$, as does $H$. This completes the proof that $H$ is strongly imprimitive, using Theorem \ref{thm:intersectionorbit}.
%
%
\end{proof}

\begin{proposition}\label{prop:sp4inf4} Let $H\cong \PSp_4(q)'$ for $q=2,3,4$, or ${}^2\!B_2(q)$ for $q=8,32$.
\begin{enumerate}
\item If $q=2$ then $H$ stabilizes a line or hyperplane on $L(F_4)$.
\item If $q=4,8,32$ then $H$ stabilizes a line on $M(F_4)$.
\item If $q=3$ then $H$ does not embed in $\mb G$.
\end{enumerate}
\end{proposition}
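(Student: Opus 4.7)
I will treat the three parts separately, but in each case the guiding strategy is to compute conspicuous sets of composition factors for $M(F_4){\downarrow_H}$ (and, where needed, for $L(F_4){\downarrow_H}$) and then use pressure arguments together with the graph automorphism of $F_4$ in characteristic $2$.

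\emph{Part (3), $q=3$.} The simple $k\PSp_4(3)$-modules of dimension at most $25$ have dimensions $1, 5, 10, 14, 25$. First I would enumerate the (small number of) multisets with these entries summing to $25$ and whose $1$-dimensional multiplicity is consistent with the trace of the $2$-central involution of $H$ on $M(F_4)^\circ$; such an involution lies in an $F_4$-class with trace $-6$, $2$ or $10$ on $M(F_4)^\circ$. Then for each candidate I would apply the power-map test using rational semisimple elements of orders $2$, $4$ and $5$ in $H$, whose eigenvalues on the simple modules are entirely explicit and whose traces on $M(F_4)^\circ$ are constrained to those of known semisimple classes of $F_4$. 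I expect every candidate multiset to fail at one of these tests, and so conclude that $H$ does not embed in $\mb G$. (A backup is the observation that any conspicuous multiset would have to be compatible with a conspicuous multiset on $L(F_4)$; because $H$ has a very small number of rational semisimple classes and $F_4$ has few rational classes, the joint test is very stringent.)

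\emph{Part (1), $q=2$.} Here $H\cong \Alt(6)$ and the simple modules are $1, 4_1, 4_2, 8_1, 8_2$. I would enumerate the conspicuous sets of composition factors for $M(F_4){\downarrow_H}$ (these are short, and most were essentially tabulated in the proof of Proposition \ref{prop:compfactorsalt6}, where a very similar analysis was done for $D_8$ and $C_4$ subgroups). For each such set, compute the pressure: only $4_1$, $4_2$ and $1$ have non-zero $1$-cohomology, so in most cases the pressure is non-positive and $H$ stabilizes a line or hyperplane on $M(F_4)$ by Proposition \ref{prop:pressure}. For the remaining sets I would exploit the fact that the graph automorphism of $F_4$ interchanges the two rational classes of elements of order $5$ (and of order $7$, if relevant), and permutes the $4_i$ and the $8_i$. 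This means that every conspicuous set of positive pressure pairs with one of the semisimple sets under the graph, so either $H$ or its image $H^\tau$ under the graph automorphism of $\mb G$ stabilizes a line on $M(F_4)$. In either case $H$ stabilizes a line or hyperplane on $L(F_4)$ by Lemma \ref{lem:fix1space}, as claimed.

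\emph{Part (2), $q=4, 8, 32$.} This is the rational-order-$5$ pressure argument used in Proposition \ref{prop:sp4ine7}. For each of these groups $H$ contains a rational element $x$ of order $5$. From Table \ref{t:modules48916} the only simple $kH$-modules of dimension at most $26$ with non-zero $1$-cohomology are the $4$-dimensional modules $4_i$. If $M(F_4){\downarrow_H}$ has composition factors with multiplicities $a, b, c$ of simple modules of dimensions $1, 4, 16$ respectively, then $a + 4b + 16c = 26$ and (using the rational eigenvalues $1, -1, 1$ of $x$ on $1$, $4_i$, $16_{ij}$, since $x$ acts fixed-point-freely on the non-trivial simple modules except for the trivial contribution from the Steinberg-like pieces) the trace of $x$ on $M(F_4)$ equals $a - b + c$. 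The trace of a rational order-$5$ element of $\mb G$ on $M(F_4)$ is a short list of integers, and in every case $a - b \geq 0$ (equivalently $a \geq b$), so the pressure is non-positive. Thus $H$ stabilizes a line on $M(F_4)$ by Proposition \ref{prop:pressure}.

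\emph{Main obstacle.} The hardest step is likely the $q=2$ case, because it requires a case-by-case analysis of several conspicuous sets of positive pressure and a careful use of the graph automorphism; checking that each such set really is swapped (up to $H$-conjugacy) with a non-positive-pressure set under the graph may require the explicit trace data from Table \ref{t:unipotentF4} and the $\Alt(6)$-module tables in the proof of Proposition \ref{prop:compfactorsalt6}, rather than just dimension counts. The $q=3$ case is also delicate: one must confirm that no conspicuous set survives the joint power-map check on both $M(F_4)^\circ$ and $L(F_4)$, and this will probably be carried out by computer as in Chapter \ref{chap:techniques}.
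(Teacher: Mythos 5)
Your parts (1) and (2) are essentially the paper's argument: for even $q$ the rational order-$5$ trace computation is exactly the proof of Proposition \ref{prop:sp4ine7} transported to $M(F_4)$ (where the relevant trace is $1$), and for $q=2$ the paper simply cites \cite[Proposition 6.3]{craven2017} for the statement that $H$ or its image under the graph automorphism stabilizes a line on $M(F_4)$ and then applies Lemma \ref{lem:fix1space}; your plan amounts to reproving that citation, which is acceptable. (In part (2), in the case where a $16$-dimensional factor occurs the pressure is $0$ rather than negative, so you should add that $M(F_4)$ is self-dual to upgrade ``line or hyperplane'' to ``line''.)

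Part (3) contains a genuine gap. Your strategy is to eliminate every candidate multiset by conspicuousness and power-map tests, and you ``expect every candidate multiset to fail''. It does not: the multiset $10,5^3$ on $M(F_4)^\circ$ is conspicuous --- indeed it even lifts to an actual self-dual module $5/5,10/5$ on which the unipotent element $u$ acts with the admissible Jordan blocks $3^6,1^7$ --- so no purely character-theoretic test (even run jointly with $L(F_4)$) can dispose of it. The paper instead reduces to the two possibilities $10,5^3$ and $25$ by restricting the $E_6$ analysis of Proposition \ref{prop:sp4ine6} (which already rules out $25$), observes that no positive-dimensional subgroup of $F_4$ has all composition factor dimensions divisible by $5$, and then argues with module structures: for each possible shape of $M(F_4)^\circ{\downarrow_H}$ (semisimple, $10\oplus(5/5)\oplus 5$, or the $\{5,10\}$-radical $5/5,10/5$ of $P(5)$), the element $u$ either acts with Jordan blocks that do not occur for any unipotent class of $F_4$ or lies in a generic class, so that $H$ is a blueprint for $M(F_4)^\circ$ and hence lies in a proper positive-dimensional subgroup, contradicting the divisibility observation. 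Without a structural argument of this kind, your part (3) does not go through.
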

\begin{proof} $\boldsymbol{q}$\textbf{ even}: The proof is exactly the same as for Proposition \ref{prop:sp4ine7}, but this time the trace of a rational element of order $5$ is $1$ on $M(F_4)$.

If $q=2$ then we prove that $H$ or its image under the graph automorphism stabilizes a line on $M(F_4)$ in \cite[Proposition 6.3]{craven2017}, hence $H$ stabilizes a line or hyperplane on $L(F_4)$ via Lemma \ref{lem:fix1space}.

\medskip

\noindent $\boldsymbol{q=3}$: The composition factors of $M(F_4)^\circ{\downarrow_H}$ must be $10,5^3$ or $25$, from the composition factors for $M(E_6){\downarrow_H}$ in Proposition \ref{prop:sp4ine6}, but we already proved there that the second case cannot occur.

Note that there are no positive-dimensional subgroups of $\mb G$ whose composition factors are all multiples of $5$, so $H$ does not embed in $\mb G$ if we prove that $H$ cannot be Lie primitive. If $M(F_4)^\circ{\downarrow_H}$ is semisimple, then $u$ acts on $M(F_4)^\circ$ with blocks $3,2^8,1^6$, so lies in the generic class $\tilde A_1$. Thus it is not semisimple. If $10$ is a submodule then it is a summand, and the $\{5\}$-radical of $P(5)$ is $5/5$, so we must have $10\oplus (5/5)\oplus 5$. Then $u$ acts on $M(F_4)^\circ$ with blocks $3^3,2^5,1^6$, which is not a valid unipotent class. Thus $10$ is not a submodule or quotient of $M(F_4)^\circ{\downarrow_H}$, and we consider the $\{5,10\}$-radical of $P(5)$. This is a self-dual module $5/5,10/5$, so this is $M(F_4)^\circ{\downarrow_H}$. In this case $u$ acts with Jordan blocks $3^6,1^7$, so lies in the generic class $A_2$. (In this case a different unipotent element actually has an invalid action, but we do not need this.) This completes the proof that $H$ does not embed in $\mb G$.
\end{proof}

\begin{proposition} Let $H\cong G_2(q)'$ for $q=2,4$, or ${}^2\!G_2(q)'$ for $q=3$.
\begin{enumerate}
\item If $q=2,4$ then $H$ stabilizes a line or hyperplane on $L(F_4)$.
\item If $q=3$ then $H$ stabilizes a line on $M(F_4)^\circ$ or an $N_{\Aut^+(\mb G)}(H)$-orbit of $H$-invariant $9$-spaces on $M(F_4)^\circ$ that has a positive-dimensional stabilizer, and hence is strongly imprimitive.
\end{enumerate}
\end{proposition}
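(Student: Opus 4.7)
The plan is to handle the three cases $q=2$, $q=4$, and $q=3$ separately, with the first being immediate from earlier work in this chapter and the last carrying essentially all the content. For $q=2$, I will use the exceptional isomorphism $G_2(2)' \cong \PSU_3(3)$. By the $q=3$ case of Proposition \ref{prop:su3inf4}, $H$ stabilizes a line on either $M(F_4)^\circ$ or $L(F_4)$. Since $p=2$ gives $M(F_4) = M(F_4)^\circ$, Lemma \ref{lem:fix1space} converts a stabilized line on $M(F_4)$ into a line or hyperplane on $L(F_4)$, which is the conclusion.

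For $q=4$, I will enumerate the conspicuous sets of composition factors of $M(F_4){\downarrow_H}$ using traces of semisimple elements of small order against the semisimple classes of $F_4$. By Table \ref{t:modules48916}, the only simple $kH$-modules of dimension at most $26$ with nonzero $1$-cohomology are the $6$-dimensional ones, and consequently once the multiplicity of the trivial composition factor is large enough the pressure is non-positive and Proposition \ref{prop:pressure} yields a stabilized line on $M(F_4)$. For conspicuous sets where this pressure bound fails, I will invoke the graph automorphism of $F_4$ in characteristic $2$, together with Lemma \ref{lem:F4ignoregraph}: if the set in question is not graph-stable then the graph swap lands inside a set of negative pressure, so that either $H$ or its graph image stabilizes a line on $M(F_4)$; in either case Lemma \ref{lem:fix1space} delivers a line or hyperplane on $L(F_4)$.

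For $q=3$, the group $H\cong{}^2\!G_2(3)'\cong \PSL_2(8)$ has defining-characteristic simple modules $1,\mb 7,9_1,9_2,9_3$, with the three $9$-dimensional modules projective and only the $7$ carrying nonzero $1$-cohomology. I will enumerate the conspicuous sets of composition factors for $M(F_4)^\circ{\downarrow_H}$ (of dimension $25$) using the element orders $2,3,7,9$. For any set in which a trivial composition factor appears, the multiplicity of $7$ cannot exceed the multiplicity of $1$ by dimension counting, so the pressure is at most $0$ and Proposition \ref{prop:pressure} forces $H$ to stabilize a line on $M(F_4)^\circ$. The only sets left are therefore of the shape $9_i \oplus 9_j \oplus 7$, in which both $9$-dimensional summands are projective and split off, furnishing two $H$-invariant $9$-spaces on $M(F_4)^\circ$. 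The task is then to show that the $N_{\Aut^+(\mb G)}(H)$-orbit of these $9$-spaces has a positive-dimensional simultaneous stabilizer. The natural candidate is the maximal-rank subsystem subgroup $\mb X$ of type $A_2\tilde A_2$: in characteristic $3$ its component group swaps the two $A_2$-factors and its composition on $M(F_4)^\circ$ splits naturally into $9$-dimensional tensor-type pieces, into which $H$ embeds via its (essentially unique up to Frobenius twist) $3$-local structure so that each $9_i$ occurring is realized as an $\mb X$-summand; the field automorphism of $H$ cycling $9_1,9_2,9_3$ is then induced by an element of $N_{\mb G}(\mb X)$, and Theorem \ref{thm:intersectionorbit} gives strong imprimitivity.

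The main obstacle will be this last step: confirming that $H$ really does embed in $\mb X$ with the anticipated decomposition, and that the orbit stabilizer (not just the individual point stabilizer of a single $9$-space) is positive-dimensional and properly contained in $\mb G$. The analogous case of ${}^2\!G_2(3)'$ acting with factors $9_1\oplus 9_2\oplus 9_3$ on $M(E_6)$, treated in Proposition \ref{prop:g2ine6}, required the trilinear-form argument of Chapter \ref{chap:trilinear}; I anticipate that here the asymmetry (only two of the three Frobenius-twisted $9$-dimensional simples appear in $M(F_4)^\circ$) makes a direct representation-theoretic analysis possible inside $A_2\tilde A_2$, but at worst one could import the trilinear machinery of that chapter, since $F_4$ is the stabilizer of a line on $M(E_6)$ and ${}^2\!G_2(3)'$-actions on $M(E_6)$ have already been classified there.
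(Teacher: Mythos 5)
There are two genuine gaps. First, your $q=2$ argument conflates characteristics: the isomorphism $G_2(2)'\cong\PSU_3(3)$ is an abstract one, but Proposition \ref{prop:su3inf4} concerns $\PSU_3(3)$ embedded in $F_4$ over a field of characteristic $3$, whereas here $H\cong G_2(2)'$ sits in $F_4$ in characteristic $2$; the module theory is entirely different (in characteristic $2$ the relevant simple $kH$-modules are $1,6,14,32_1,32_2$, not $1,3^\pm,6^\pm,7,\dots$), so the appeal transfers nothing. The correct argument is short but must be run in characteristic $2$: the conspicuous sets of factors on $M(F_4)$ are $6^3,1^8$ and $14,6^2$, they are swapped by the graph automorphism, the first visibly has a stabilized line, and Lemma \ref{lem:fix1space} then gives a line or hyperplane on $L(F_4)$ in both cases. (Your $q=4$ plan is workable but vaguer than necessary; restricting to the subgroup $G_2(2)$, whose factors on $M(F_4)$ are already pinned down, forces the factors of $H$ to have dimensions $6^3,1^8$ up to the graph automorphism and finishes the case immediately.)

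Second, and more seriously, the key step of your $q=3$ case fails: $H\cong{}^2\!G_2(3)'\cong\PSL_2(8)$ cannot embed in the subsystem subgroup $A_2\tilde A_2$ at all, since a simple group embedding there must project faithfully to one $A_2$ factor, i.e.\ admit a faithful $3$-dimensional projective representation in characteristic $3$; but $\PSL_2(8)$ has trivial Schur multiplier and its smallest nontrivial module in characteristic $3$ has dimension $7$. So there is no candidate positive-dimensional stabilizer along the route you propose. The working argument is different: in the case $9_1,9_2,7$ the module is semisimple, an element $v$ of order $9$ acts on $M(F_4)^\circ$ with blocks $9^2,7$, hence lies in class $F_4(a_1)$, which is class $D_5$ of $E_6$ acting on $M(E_6)$ with blocks $9^2,7,1^2$; therefore $H$ acts on $M(E_6)$ as $9_1\oplus 9_2\oplus 7\oplus 1^{\oplus 2}$ and lies in the (positive-dimensional) intersection of two line stabilizers of $E_6$, one of which is $F_4$. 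Comparing factor dimensions then forces $H\leq B_4$ acting irreducibly on $M(B_4)$, and $B_4$ stabilizes one of the two $9$-spaces. Finally, your orbit bookkeeping is inverted: since the factors $9_1,9_2,7$ are \emph{not} stable under the outer (field) automorphism of $H$, Lemma \ref{lem:semilinearfield} shows no element of $N_{\Aut^+(\mb G)}(H)$ can permute the $9$-spaces nontrivially, so each single $9$-space is its own $N_{\Aut^+(\mb G)}(H)$-orbit and Theorem \ref{thm:intersectionorbit} applies; you instead wanted the automorphism cycling the $9_i$ to be induced inside $\mb G$, which is precisely what cannot happen.
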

\begin{proof} $\boldsymbol{q=2}$: The conspicuous sets of composition factors for $M(F_4){\downarrow_H}$ are $6^3,1^8$ and $14,6^2$, and these are swapped by the graph automorphism. Since $H$ clearly stabilizes a line on $M(F_4)$ in the first case, $H$ stabilizes a line or hyperplane on $L(F_4)$ in both cases by Lemma \ref{lem:fix1space}.

\medskip

\noindent $\boldsymbol{q=4}$: Let $L=G_2(2)$ be contained in $H$. We see that, up to graph automorphism, we may assume that the composition factors of $M(F_4){\downarrow_L}$ are $6^3,1^8$. Since the only modules of dimension at most $26$ for $H$ have dimensions $1$, $6$ and $14$, we see that $M(F_4){\downarrow_H}$ has composition factors with dimensions $6^3,1^8$, and again $H$ stabilizes a line on $M(F_4)$ up to graph automorphism, i.e., $H$ stabilizes a line on $L(F_4)$.

\medskip

\noindent $\boldsymbol{q=3}$: Here the conspicuous sets of composition factors for $M(F_4)^\circ{\downarrow_H}$ are $7^3,1^4$ and $9_1,9_2,7$ (up to field automorphism). In the first case $H$ stabilizes a line on $M(F_4)^\circ$ as it has pressure $-1$, by Proposition \ref{prop:pressure}, so we assume we are in the second case. We see that $M(F_4)^\circ{\downarrow_H}$ must be semisimple. If $v$ denotes an element of order $9$ in $H$, then $v$ acts on $M(F_4)^\circ$ with blocks $9^2,7$, so lies in class $F_4(a_1)$, which is class $D_5$ in $E_6$. This class acts on $M(E_6)$ with blocks $9^2,7,1^2$, and so $H$ must embed in $E_6$ acting on $M(E_6)$ as
\[9_1\oplus 9_2\oplus 7\oplus 1^{\oplus 2}.\]
Thus $H$ is contained in the intersection of two line stabilizers in $E_6$, one of which is $\mb G$. The intersection is $\mb G\cap\mb X$ for $\mb X$ equal to $F_4$ or a $D_5$-parabolic subgroup, of dimensions $52$ and $62$ respectively, and in either case $\mb G\cap \mb X$ is positive dimensional and contains $H$, so $H$ is contained in a member of $\ms X$, the maximal positive-dimensional subgroups of $F_4$.

Since $H$ lies inside a positive-dimensional subgroup of $\mb G$, and acts on $M(F_4)^\circ$ with composition factors of dimensions $9,9,7$, we see that $H$ can only lie in $B_4$, acting irreducibly on $M(B_4)$. (Such an embedding does exist.) Since the composition factors of $M(F_4)^\circ{\downarrow_H}$ are not stable, each of the two $9$-spaces forms an $N_{\Aut^+(\mb G)}(H)$-orbit of subspaces by Lemma \ref{lem:semilinearfield}. One of these is stabilized by $B_4$, so we may apply Theorem \ref{thm:intersectionorbit} to see that $H$ is strongly imprimitive.
\end{proof}

\newpage

\chapter{Difficult cases}
\label{chap:diff}

This chapter considers three cases that have been postponed from earlier chapters: $\PSL_5(2)$, $\PSL_3(5)$ and $\PSU_3(4)$, all in $E_8$.

\section{\texorpdfstring{$\PSL_5(2)\leq E_8$}{PSL(5,2) < E8}}
\label{sec:difl52}
This proof is probably the hardest in this article, and so deserves this special section all on its own. Because of the many possible paths through to a positive result, there may well be a much shorter and conceptually easier proof, but I have not found it.

\medskip

Let $H\cong \PSL_5(2)$. Proposition \ref{prop:sl5ine8} showed that either $H$ stabilizes a line on $L(E_8)$, or the composition factors of $H$ on $L(E_8)$ are
\[ 40_1,40_1^*,40_2,40_2^*,24^2,10,10^*,(5,5^*)^2.\]
We will prove that $H$ is always strongly imprimitive. We start with some preliminary information, and then proceed to the proof.

The non-split extensions between these simple $kH$-modules are, up to duality and graph automorphism,
\[ (5,10^*),\quad (5,40_1),\quad (5,40_2^*),\quad (10,10^*),\quad (10,40_1),\quad (24,40_1),\quad (24,40_2),\]
and in each case $\Ext^1$ is $1$-dimensional.

Write $L$ for a $\PSL_4(2)$ subgroup of $H$, and write $M$ for the subgroup $3\times \PSL_3(2)$ of $H$. Let $z$ denote a central element of order $3$ in $M$, let $\theta$ denote a primitive cube root of unity. Let $u$, $v$ and $w$ denote unipotent elements of $H$ acting on the natural module with blocks $3,1^2$ and $4,1$ and $5$ respectively. \textbf{Note that this differs from our previous convention for $u$.} We may conjugate $u$ to lie in $M$, so that we may split the blocks of $u$ among the eigenspaces of $z$, i.e., consider the action of $uz$ on $L(E_8)$. Since $z$ has trace $14$ on $L(E_8)$, it lies in the class whose centralizer in $D_7T_1$. By computing traces, we determine that the composition factors of $M(D_7){\downarrow_M}$ are $(3,3^*)^2,1^2$, and any such module has a trivial submodule.

We will write the $1$- and $\theta$-eigenspace of a module $V$ to mean the $1$- and $\theta$-eigenspace of the action of $z$ on $V$. If no module is mentioned we mean the eigenspace of $L(E_8)$, but we will usually state it.

The restrictions to $L$ and $M$ of the simple $kH$-modules that we consider are as follows:

\begin{center}\begin{tabular}{cccc}
\hline Module & Restriction to $L$ & $1$-eigenspace & $\theta$-eigenspace
\\ \hline $5$ & $4\oplus 1$ & $3$ & $1$
\\ $5^*$ & $4^*\oplus 1$ & $3^*$ & $1$
\\ $10$ & $4^*\oplus 6$ & $1\oplus 3$ & $3^*$
\\ $10^*$ & $4\oplus 6$ & $1\oplus 3^*$ & $3$ 
\\ $24$ & $(1/14/1)\oplus 4\oplus 4^*$ & $1^{\oplus 2}\oplus 8$ & $1\oplus 3\oplus 3^*$
\\ $40_1$ & $6\oplus 14\oplus 20^*$ & $(3/3^*/3)\oplus 3^*$ & $3\oplus 3^*\oplus 8$
\\ $40_1^*$ & $6\oplus 14\oplus 20$ & $(3^*/3/3^*)\oplus 3$ & $3\oplus 3^*\oplus 8$
\\ $40_2$ & $(6/4/6)\oplus 4\oplus 20$ & $3^{\oplus 2}\oplus 8$ & $(3^*/3/3^*)\oplus 1\oplus 3$
\\ $40_2^*$ & $(6/4/6)\oplus 4^*\oplus 20^*$ & $(3^*)^{\oplus 2}\oplus 8$ & $(3/3^*/3)\oplus 1\oplus 3^*$
\\ \hline
\end{tabular}\end{center}

\medskip

This proof takes ten steps. The first is to understand the placement of the $14$s, and to prove that there are no submodules $40_1$ and $40_1^*$ of $L(E_8){\downarrow_H}$.

The second analyses the actions of $M$ and $u$ on $L(E_8)$. In this step we construct the actual copy of $H$ that does embed in $E_8$ (inside $A_4A_4$), and show that if $u$ belongs to class $2A_2$ then $H$ is strongly imprimitive. Furthermore, we determine the possible $E_8$-classes for $u$, and how their Jordan blocks distribute amongst the $1$- and $\theta$-eigenspaces of $z$. This information is relied upon heavily in later steps.

Step 3 proves that there must be a subquotient $40_2\oplus 40_2^*$ in $L(E_8){\downarrow_H}$, and Steps 4 and 5 prove that there is a subquotient $40_1\oplus 40_1^*$. Steps 6 and 7 combine these, and prove that $40_1\oplus 40_1^*\oplus 40_2\oplus 40_2^*$ is in fact a subquotient. Step 8 proves that the copies of $24$ do not lie above and below this subquotient in a particular configuration, and therefore $24^{\oplus 2}$ is a subquotient. Finally, Steps 9 and 10 provide the final contradiction.

\medskip

\noindent \textbf{Step 1}: Every composition factor $14$ in $L(E_8){\downarrow_L}$ lies in a subquotient $1/14/1$. In particular, every $kL$-submodule of $L(E_8)$ with a composition factor $14$ has a $14/1$ subquotient. There is no submodule $40_1^\pm$ of $L(E_8){\downarrow_H}$.

\medskip\noindent The subgroup $L$ normalizes a $2$-subgroup of $H$, hence lies inside a parabolic subgroup of $\mb G$. The composition factors of $L(E_8){\downarrow_L}$ are
\[ (20,20^*)^2,14^4,6^8,(4,4^*)^7,1^8,\]
and from \cite[Propositions 4.1 and 4.2]{craven2017} we see that $L$ cannot lie in an $E_7$-parabolic subgroup. Since $L$ cannot lie in an $A_1$ or $A_2$ subgroup, if $L$ lies in any parabolic it must lie in either an $A_7$-, $A_3A_4$- or $D_7$-parabolic subgroup.

If $L$ lies in an $A_7$-parabolic subgroup, the composition factors on $M(A_7)$ are either $4,4$ or $4,4^*$. Either way, $L$ lies in an $A_3A_3$-parabolic subgroup of $A_7$, hence in an $A_3A_4$-parabolic subgroup of $\bG$. If it lies in a $D_7$-parabolic subgroup then the composition factors of $L$ on $M(D_7)$ are $6,4,4^*$, and this lies in a $D_3A_3$-parabolic subgroup, which of course lies in an $A_3A_4$-parabolic subgroup. Hence we may assume that $L$ lies in an $A_3A_4$-parabolic subgroup. In this case the action on $M(A_3)$ must be $4$ (up to automorphism of $L$) and on $M(A_4)$ it must be $1\oplus 4^\pm$.

Recall that the $A_7$-parabolic subgroup of $E_8$ acts on $L(E_8)$ with seven layers. We do not need the precise structure here, but the layers are $\Lambda^i(M(A_8))$ for $i=1,2,3$, their duals, and $M(A_8)\otimes M(A_8)^*$.

If $L$ acts on $M(A_7)$ as $4\oplus 4^*$, then the first four layers of the action are
\[ 4\oplus 4^*,\quad (1/14/1)\oplus 6^{\oplus 2},\quad 20\oplus 20^*\oplus (4\oplus 4^*)^{\oplus 2},\quad (6/4/6)\oplus (6/4^*/6)\oplus (1/14/1)^{\oplus 2}\]
and then the duals of the third to first respectively. If $L$ acts on $M(A_7)$ as $4\oplus 4$, however, then the layers of the action are
\[ 4\oplus 4,\quad (6/4/6)\oplus 6^{\oplus 2},\quad (20^*)^{\oplus 2}\oplus (4^*)^{\oplus 4},\quad (1/14/1)^{\oplus 4},\]
and then the duals of the third to the first. The result is the same: $14$ cannot be a submodule of $L(E_8){\downarrow_L}$.

If $L$ is contained in an $A_3A_4$-parabolic then a similar situation unfolds, but now there are more layers (see, for example, \cite[Table 22]{thomas2016}). The factors include $(101,0000)$ and $(000,1001)$, which as in the previous case must yield a summand $1/14/1$ in both cases. There are also factors $(100,1000)$ and its dual. If the action on $M(A_4)$ is $1\oplus 4^*$ then these two modules for $A_3A_4$ restrict to $L$ with a summand $1/14/1$. There are also factors $(100,0010)$ and its dual. If the action on $M(A_4)$ is $1\oplus 4$ then there is a summand $4^*$ in the restriction of  $L(0010)$ to $L$, hence again there is a summand $1/14/1$ in the $L$-action on $(100,0010)$ and its dual. Either way, we again see four separate subquotients $1/14/1$. Thus the first statement always holds.

\medskip

From the previous table, $40_1$ restricts to $L$ as $6\oplus 14\oplus 20$, so $40_1^\pm$ cannot be a submodule of $L(E_8){\downarrow_H}$.

\medskip

Recall that we have chosen to lie in the centralizer of $z$, which is $D_7T_1$. Since $u$ is unipotent, $u\in D_7$.

\medskip

\noindent \textbf{Step 2}: If $u$ lies in class $2A_2$ of $D_7$ then $H$ is strongly imprimitive. Furthermore, $u$ can only lie in one of the following classes of $E_8$:
\[ 2A_2,\;\; A_3+A_1,\;\; A_3+2A_1,\;\; A_3+A_2,\;\; A_3+A_2^{(2)},\;\;2A_3,\;\; D_4(a_1)+A_1,\;\; D_4(a_1)+A_2.\]

\medskip\noindent The composition factors of $M(D_7){\downarrow_M}$ are $(3,3^*)^2,1^2$, and $u$ acts on $3^\pm$ with a single Jordan block. Since the only indecomposable module with a trivial composition factor and no trivial submodule or quotient is $P(3)$, which has dimension $16$, we see that $M$ stabilizes a line and hyperplane on $M(D_7)$.

As $u$ acts on $3$ with a block of size $3$, if $M(D_7){\downarrow_M}$ is semisimple then $u$ acts on $M(D_7)$ with blocks $3^4,1^2$. Note that $u$ acts on $3/3^*$ with blocks $4,2$, and on $3/1$ with a single block $4$. If the trivial modules are summands then $u$ can only act on $M(D_7)$ with blocks one of $3^4,1^2$ and $4,3^2,2,1^2$ and $4^2,2^2,1^2$ (as $3/3^*/3/3^*$ does not support a symmetric form: to see this, notice that $4^3,1^2$ is not an acceptable unipotent action on $M(D_7)$). On the other hand, if the trivials are not summands then there must be two blocks of size $4$, so $u$ must act as either $4^2,3^2$ or $4^3,2$.

Using \cite{lawther2009} (including Table 8 from there to find the classes of $D_7$), we see that the corresponding classes of $E_8$ are exactly those described in the statement.

\medskip

Now suppose that $u$ lies in class $2A_2$. Since $M$ is contained in the $A_2A_2$-Levi subgroup of $D_7$, it is easier to compute the action of $M$ on $L(E_8)$, and find a suitable positive-dimensional subgroup containing $M$, by placing $M$ inside the $A_8$ maximal-rank subgroup, acting on $L(E_8)$ as the sum of $M(A_8)\otimes M(A_8)^*$ (minus a trivial summand), $\Lambda^3(M(A_8))$ and its dual. The action of $M$ on $M(A_8)$ is either $3\oplus 3\oplus 1^{\oplus 3}$ or $3\oplus 3^*\oplus 1^{\oplus 3}$ (both give the same action on $L(E_8)$) and this action is a sum of modules of the form: $3$; $3^*$; $1$; $3\otimes 3$; $3\otimes 3^*$; $3^*\otimes 3^*$. (Recall that
\[ \Lambda^3(A\oplus B)\cong \Lambda^3(A)\oplus \Lambda^3(B)\oplus (A\otimes \Lambda^2(B))\oplus (\Lambda^2(A)\otimes B),\]
and also $\Lambda^3(3)=1$ and $\Lambda^2(3)=3^*$.)

Let $\mb Y$ denote the algebraic $A_2$ subgroup of $A_8$ acting as $L(10)\oplus L(10)\oplus L(0)^{\oplus 3}$ or $L(10)\oplus L(01)\oplus L(0)^{\oplus 3}$, depending on the action of $M$ on $M(A_8)$. We see that $\mb Y$ contains $M$, and the action of $\mb Y$ on $L(E_8)$ is again a sum of copies of $L(10)$, $L(01)$, $L(0)$, $L(10)\otimes L(10)$, $L(10)\otimes L(01)$ and $L(01)\otimes L(01)$. It is well known that
\[ L(10)\otimes L(10)=L(01)/L(20)/L(01),\quad L(10)\otimes L(01)\cong L(0)\oplus L(11),\]
so that $\mb Y$ stabilizes any semisimple $kM$-submodule of $L(E_8)$.

\medskip

If $u$ lies in class $2A_2$ of $D_7$, then $u$ acts on $M(D_7)$ with blocks $3^4,1^2$, i.e., $M(D_7){\downarrow_M}$ is semisimple. The unipotent class $2A_2$ of $D_7$ is contained in class $2A_2$ of $E_8$ by \cite[4.13]{lawther2009}, so $u$ acts on $L(E_8)$ with blocks $4^{28},3^{36},1^{28}$. This is the same as the action of $u$ on the sum of the composition factors of $L(E_8){\downarrow_H}$, so any extension between composition factors in that module must split on restriction to $\gen u$. Thus the following extensions are not allowed in $L(E_8){\downarrow_H}$:
\[ 10^*/5,\quad 10^*/10,\quad 10/40_1,\quad 5/40_2^*,\quad 24/40_1,\quad 24/40_2.\]
Thus $40_2^\pm$, $24$ and $10^\pm$ must all be summands of $L(E_8){\downarrow_H}$, and the only possibility compatible with the previous step is
\[ (5/40_1/5)\oplus (5^*/40_1^*/5^*)\oplus 40_2\oplus 40_2^*\oplus 24^{\oplus 2}\oplus 10\oplus 10^*.\]
As the restriction of $5$ and $5^*$, and $10$ and $10^*$, to $M$ are semisimple, they are all stabilized by $\mb Y$ as well, and so therefore $H$ is strongly imprimitive by Theorem \ref{thm:intersectionorbit}.

\medskip

We also wish to understand the action of $u$ on the eigenspaces of $z$. Since the $1$-eigenspace of $z$ has the form $1/L(D_7)^\circ/1$, and the $\theta^i$-eigenspaces are the sums of $M(D_7)$ and a half-spin module $HS(D_7)$, we are interested in the actions of unipotent classes of $D_7$ on $M(D_7)$, $1/L(D_7)^\circ/1$ and $HS(D_7)$, together with the action on $L(E_8)$ and which class of $E_8$ contains the unipotent class of $D_7$. These are given in Table \ref{t:classesofd7psl52}.

\begin{table}
\begin{center}\begin{footnotesize}\begin{tabular}{cccccc}
\hline Class in $D_7$ & Class in $E_8$ & Act. $M(D_7)$ & On $1/L(D_7)^\circ/1$ & On $HS(D_7)$ & On $L(E_8)$
\\ \hline $2A_2$&$2A_2$&$3^4,1^2$&$4^{12},3^{12},1^8$&$4^8,3^8,1^8$&$4^{28},3^{36},1^{28}$
\\$A_3+A_1$&$A_3+A_1$&$4^2,2^2,1^2$&$4^{18},2^8,1^4$&$4^{12},2^6,1^4$&$4^{46},2^{24},1^{16}$
\\$A_3+D_2$&$A_3+2A_1$&$4^2,2^2,1^2$&$4^{18},2^{10}$&$4^{12},2^8$&$4^{46},2^{30},1^4$
\\ $A_2+D_3$&$A_3+A_2$&$4,3^2,2,1^2$&$4^{16},3^6,2^4,1^2$&$4^{16}$&$4^{50},3^{10},2^6,1^6$
\\$D_3+A_2^{(2)}$&$A_3+A_2^{(2)}$&$4^2,2^2,1^2$&$4^{18},3^2,2^6,1^2$&$4^{16}$&$4^{54},3^2,2^{10},1^6$
\\$A_3+D_3$&$2A_3$&$4^3,2$&$4^{22},2^2$&$4^{16}$&$4^{60},2^4$
\\$D_4(a_1)+A_1$&$D_4(a_1)+A_1$&$4^2,2^2,1^2$&$4^{18},3^2,2^6,1^2$&$4^{16}$&$4^{54},3^2,2^{10},1^6$
\\$D_4(a_1)+A_2$&$D_4(a_1)+A_2$&$4^2,3^2$&$4^{20},3^4$&$4^{16}$&$4^{56},3^8$
\\ \hline
\end{tabular}\end{footnotesize}\end{center}\caption{Possible unipotent classes for $u$ and their actions on the $1$- and $\theta$-eigenspaces of $L(E_8)$}\label{t:classesofd7psl52}\end{table}

From this we see that if either $24$ or $10\oplus 10^*$ is a summand of $L(E_8){\downarrow_H}$, then $u$ (which acts on $24$ with blocks $4^2,3^4,1^4$ and on $10$ with blocks $3^3,1$) lies either in class $2A_2$---and then $H$ is strongly imprimitive---or $A_3+A_2$, so we may assume the latter case. We also see that $24^{\oplus 2}$ cannot be a summand of $L(E_8){\downarrow_H}$ if $H$ is not strongly imprimitive. Moreover, $40_2$ cannot be a summand since $u$ acts on $40_2\oplus 40_2^*$ with blocks $4^{12},3^8,1^8$, and so $u$ lies in class $2A_2$, so $H$ is strongly imprimitive. If $40_1^\pm$ is a summand of $L(E_8){\downarrow_H}$ then $14$ is a summand of $L(E_8){\downarrow_L}$, contradicting Step 1. Finally, if $5\oplus 5^*$ is a summand of $L(E_8){\downarrow_H}$ then the class of $w$ needs to have at least 20 blocks of size $8$ (the projective parts of all composition factors) and two blocks of size $5$ (from the two summands), together with a block of size at least $7$ (coming from the $10$). The only option for that is class $A_6$, acting with blocks $8^{22},7^6,5^4,3^2,1^4$.

\medskip

\noindent \textbf{Step 3}: The $\{40_2^\pm\}$-heart of $L(E_8){\downarrow_H}$ is $40_2\oplus 40_2^*$.

\medskip\noindent We examine the $\{40_2^\pm\}$-heart of $L(E_8){\downarrow_H}$, aiming to show that it is simply $40_2\oplus 40_2^*$. If it is not, then the socle of it must be (up to automorphism) $40_2$. We compute the $\{5^\pm,10^\pm,24,40_1^\pm\}$-radical of $P(40_2)$, add on top any copies of $40_2^*$ that we can, then take the $\{40_2^*\}'$-residual, which yields the self-dual module $M_1$ with structure
\[ 40_2^*/5/40_1/24/40_1^*/5^*,24/40_2.\]
Thus the $\{40_2^\pm\}$-heart of $L(E_8){\downarrow_H}$ is either $M_1$ (up to graph automorphism of $H$) or $40_2\oplus 40_2^*$; suppose it is $M_1$. The element $w$ of order $8$ in $H$ acts on $M_1$ with blocks $8^{26},5^2$; thus from Table \ref{t:unipe8p8} we see that $M_1$ cannot be a summand of $L(E_8){\downarrow_H}$ as there is no unipotent class that acts with blocks of size $5$ and at least $26$ blocks of size $8$.

Consider the quotient by the $\{40_2^\pm\}'$-radical $N_1$ of $L(E_8){\downarrow_H}$: this cannot be $M_1$ either, as then $M_1$ is a quotient, so is a submodule, and the kernel of the map onto $M_1$ must complement $M_1$ in $L(E_8)$, contradicting the fact that $M_1$ is not a summand. One cannot place $10^\pm$ or $5$ on top of $M_1$, but there is a single non-split extension with $5^*$, yielding a module $M_1'$ with structure
\[ 40_2^*/5/40_1/5^*,24/40_1^*/5^*,24/40_2.\]
If $M_1'$ is the quotient of $L(E_8){\downarrow_H}$ by $N_1$ then $N_1$ has composition factors $5,10,10^*$. If $10$ or $10^*$ is not a submodule of $N_1$ then it must have structure $10/10^*/5$. Therefore we need to be able to place  $10$ underneath $M_1'$, and then $10^*$ underneath that, else one of the $10$-dimensional factors must become a quotient, a contradiction.

There is a unique extension first placing $10$ underneath and then $10^*$, yielding a module $M_1''$ with structure
\[ 40_2^*/5/40_1/5^*,24/40_1^*/5^*,10,24/10^*,40_2.\]
However, $w$ acts on $M_1''$ with blocks $8^{27},7,5^2,4^2,2$. There is no unipotent class in Table \ref{t:unipe8p8} whose action encapsulates this, so this is not a subquotient of $L(E_8){\downarrow_H}$ by Lemma \ref{lem:encapsulates}. Hence $10$ or $10^*$ is a submodule of $L(E_8){\downarrow_H}$.

We therefore take the $\{10^\pm\}'$-heart $M_2$ of $L(E_8){\downarrow_H}$, which has $5$ as a submodule, and $M_1'$ as a quotient by this submodule. There is a unique non-split such extension, a self-dual module, and $w$ acts on it with blocks $8^{26},6^2,4^2$, so $M_2$ cannot be a summand of $L(E_8){\downarrow_H}$ either. Thus the quotient by the $\{10^\pm\}$-radical must also have a single copy of $10^\pm$ in it. Both $10$ and $10^*$ have an extension with $M_2$, so we need to examine each.

We obtain two modules, with socle structures
\[  40_2^*/5,10/40_1/5^*,24/40_1^*/5^*,24/5,40_2\;\text{and}\; 40_2^*/5/40_1/5^*,24/40_1^*/5^*,10^*,24/5,40_2.\]

The former may be excluded because $v$ acts on it with blocks $4^{59},2$ and on $10$ with blocks $4^2,2$, so on $L(E_8)$ with at least $61$ blocks of size $4$, contrary to Table \ref{t:unipe8p4}.

To exclude the latter, we first note that $w$ acts on this module with blocks $8^{27},6,5,4^2,3$. We then place as many copies of $10$ on the bottom of it as we can, which is two: this produces a module with structure
\[ 40_2^*/5/40_1/5^*,24/40_1^*/5^*,10^*,24/5,10,10,40_2,\]
on which $w$ acts with blocks $8^{28},7,5^2,4^3,3,2$. The action of $w$ on $L(E_8)$ must be encapsulated by this, and encapsulate the previous action by Lemma \ref{lem:encapsulates}, but no class in Table \ref{t:unipe8p8} has this property.

We have finally concluded that $M_1$ is not a subquotient of $L(E_8){\downarrow_H}$, so that the $\{40_2^\pm\}$-heart of $L(E_8){\downarrow_H}$ is $40_2\oplus 40_2^*$, as claimed.

\medskip

\noindent \textbf{Step 4}: If $40_1^*/24/40_1$ or $40_1/24/40_1^*$ is a subquotient of $L(E_8){\downarrow_H}$ then $H$ is strongly imprimitive.

\medskip\noindent By applying the graph automorphism if necessary we may assume that $N_1\cong 40_1^*/24/40_1$ is a subquotient of $L(E_8){\downarrow_H}$. There are no extensions of $N_1$ involving $40_2^\pm$, and the $\{40_2^\pm\}$-heart of $L(E_8){\downarrow_H}$ is $40_2\oplus 40_2^*$, so the $\{5^\pm,10^\pm\}'$-heart $N_2$ of $L(E_8){\downarrow_H}$ must be isomorphic to
\[ (40_1^*/24/40_1)\oplus 40_2\oplus 40_2^*\oplus 24.\]
Since the $24$ has no extensions with $5^\pm$ or $10^\pm$, it is a summand of $L(E_8){\downarrow_H}$, so from the end of Step 2, either $H$ is strongly imprimitive or $u$ belongs to class $A_3+A_2$, with $\theta$-eigenspace action $4^{17},3^2,2,1^2$. However, the action of $u$ on the $\theta$-eigenspace of $N_2$ above has blocks $4^{11},3^4,2^3,1^6$, which is not encapsulated by this. This contradiction completes the proof of the step.

\medskip

\noindent \textbf{Step 5}: The $\{40_1^\pm\}$-heart of $L(E_8){\downarrow_H}$ is semisimple.

\medskip\noindent Let $M_1$ denote the $\{40_1^\pm\}$-heart of $L(E_8){\downarrow_H}$, which by applying the graph automorphism of $H$ if necessary we may assume has socle $40_1$ (if it is not $40_1\oplus 40_1^*$). We construct the $\{5^\pm,10^\pm,24,40_2^\pm\}$-radical of the quotient module $P(40_1)/40_1$, then place on top any copies of $40_1^*$ that we can, to obtain a module with structure
\begin{equation} (40_1^*/24)\oplus (5,40_1^*/5^*,10^*,40_2^*/5,10),\label{eq:step5psl52}\end{equation}
with $5$ and $40_2^*$ being quotients. From this we see that if the $\{40_1^\pm\}$-heart is not semisimple, it is (up to graph automorphism of $H$) one of
\[40_1^*/24/40_1,\quad 40_1^*/5^*,10^*/5,10/40_1,\quad 40_1^*/5^*,10^*/5,10,24/40_1,\]
where the last module is not well defined up to isomorphism. The first of these has already been eliminated in the previous step.

Suppose that we are in the second possibility: by Step 1, $40_1^*$ cannot be a quotient of $L(E_8){\downarrow_H}$, and thus the quotient by the $\{40_1\}'$-radical must contain this module, but be strictly larger. Adding as many copies of $5^\pm$, $24$ and $40_2^\pm$ on top of this module as we can, we obtain the module
\[ 24/5,40_1^*/5^*,10^*,40_2^*/5,10,24/40_1.\]
The $14$ in the restriction of the $40_1^*$ to $L$ is still a quotient of this module, so we obtain a contradiction for this case.

\medskip

For the third possibility, $M_1$ is a module of the form $40_1^*/5^*,10^*/5,10,24/40_1$. As before, this cannot be a quotient of $L(E_8){\downarrow_H}$, but we only have a single copy of $24$ (which cannot have any extensions with this module, as in the previous step), $40_2^\pm$ and $5^\pm$ that are not already part of $M_1$. The $\{40_1^\pm,24\}$-heart of $L(E_8){\downarrow_H}$ must be the sum of $M_1$ and $24$, and so the $\{40_1^\pm\}'$-radical of $L(E_8){\downarrow_H}$ must contain the $24$.

While $M_1$ is not determined uniquely up to isomorphism by the socle structure, the quotient $M_1'=M_1/\soc(M_1)$ is. Therefore we may attempt to add copies of $40_2^\pm$ and $5^\pm$ on top of this module to form a quotient of $L(E_8){\downarrow_H}$. However, having done this we obtain a module
\[ 5,40_1^*/5^*,10^*,40_2,40_2^*,40_2^*/5,10,24,\]
which still has a $40_1^*$ quotient, hence we obtain a contradiction again. Thus the $\{40_1^\pm\}$-heart is $40_1\oplus 40_1^*$, as claimed.

\medskip

\noindent \textbf{Step 6}: The $\{5^\pm,10^\pm,24\}'$-heart of $L(E_8){\downarrow_H}$ is not $(40_2^*/5/40_1)\oplus (40_1^*/5^*/40_2)$.

\medskip\noindent Suppose to the contrary that the $\{5^\pm,10^\pm,24\}'$-heart is this module, denoted $M_1$. (This is unique up to isomorphism, as we will see in the next step.) We first claim that there must be a submodule $5^\pm$ in $L(E_8){\downarrow_H}$. Since $24$ has no extensions with $5^\pm$ or $10^\pm$, we may ignore that. Thus assuming the claim is false, $10$ or $10^*$ is a submodule of $L(E_8){\downarrow_H}$, so we take the $\{10^\pm,24\}'$-heart of $L(E_8){\downarrow_H}$, which is the module $M_1$ with $5$ and $5^*$ attached in some way. Since $\Ext_{kH}^1(M_1,5)\cong k$ and $\Ext_{kH}^1(M_1,5^*)=0$, there is a unique way to extend $M_1$ by $5$ and $5^*$, namely
\[ (40_2^*/5/40_1/5)\oplus (5^*/40_1^*/5^*/40_2).\]
One can place a copy of $10^*$ under this module, but it does not go below the $5$, so this remains a submodule. Of course, if the $\{10^\pm,24\}'$-heart is $M_1\oplus 5\oplus 5^*$ instead of an indecomposable module then one must have either $5$ or $5^*$ as a submodule, and so the claim holds.

\medskip

Write $M_2$ for the $\{5^\pm,24\}'$-heart of $L(E_8){\downarrow_H}$. Either the $10$ and $10^*$ extend $M_1$, or $M_2$ is the sum of $M_1$ and a module with factors $10$ and $10^*$.

If $M_2\cong M_1\oplus 10\oplus 10^*$ then we must place a copy of $5$ below this to lie under the $40_1$, else the $14$ is a submodule on restriction to $L$ and we obtain a contradiction by Step 1. There are two such modules, and for each there is a unique self-dual module $M_2'$ that can be obtained by placing $5^*$ on top:
\[\begin{split} (40_2^*/5/40_1/5)\oplus (5^*/40_1^*/5^*/40_2)\oplus& 10\oplus 10^*\quad \text{and}
\\ &(40_2^*/5/10^*,40_1/5)\oplus (5^*/40_1^*/5^*/10,40_2).\end{split}\]
In both cases the action of $u$ on the $1$-eigenspace of $M_2'$ is $4^{12},3^4,2^4,1^4$, and since each $24$ contributes $4^2$ to this eigenspace ($M$ acts on the $1$-eigenspace of $24$ as $1^{\oplus 2}\oplus 8$), the class to which $u$ belongs must have $1$-eigenspace encapsulating $4^{16},3^4,2^4,1^4$. This eliminates $2A_2$, $A_3+A_1$, $A_3+2A_1$, $2A_3$ and $D_4(a_1)+A_2$ from consideration (see Table \ref{t:classesofd7psl52}). The remaining classes all act on the $\theta$-eigenspace with exactly 22 blocks.

In both cases the module $M_2'$ has a $3$-dimensional $\Ext^1$ with $24$, and the $\theta$-eigenspace of this extended module has $u$-action $4^{14},3^5,2^4,1^6$, so 29 blocks in total. Since the $\theta$-eigenspace of $u$ on $24$ has blocks $3^2,1$, removing two copies of $24$ from this module can remove at most six blocks, so any extension by a $24$ has at least 23 blocks. This proves that $u$ cannot belong to the remaining classes. Thus $M_2$ cannot have this structure.

The argument is very similar if $M_2\cong M_1\oplus (10^\mp/10^\pm)$. The same analysis yields two possibilities for $M_2'$ for each of $10/10^*$ and $10^*/10$:
\[ (40_2^*/5/40_1/5)\oplus (5^*/40_1^*/5^*/40_2)\oplus (10^\mp/10^\pm),\quad\text{and}\] \[5^*,40_2^*/5,10,40_1^*/5^*,10^*,40_1/5,40_2\quad\text{or}\quad 5^*,40_2^*/5,40_1^*/5^*,10^*,40_1/5,10,40_2.\]
The action on the $1$-eigenspace is $4^{12},3^4,2^4,1^4$, as before. Again, there is a $3$-dimensional $\Ext^1$ with $24$, and the $\theta$-eigenspace is slightly different, with blocks $4^{15},3^3,2^5,1^6$, but still $29$ blocks, so the conclusion is still the same, that $M_2$ cannot have this form.

\medskip

Thus $M_2$ is obtained from $M_1$ by extending by $10$ and $10^*$ in some way, rather than a direct sum. We have that $\Ext_{kH}^1(M_1,10)=0$, so if $M_1$ is extended by $10$ and $10^*$, it must be $10^*$ on the bottom and $10$ on the top. There is a unique extension involving $10^*$, and under this we must place $5$, else the $40_1$ becomes a submodule of $L(E_8){\downarrow_H}$ modulo the $24$, contrary to Step 1.

There is a $2$-dimensional $\Ext^1$ with $5$, yielding the module
\[ (40_2^*/5/40_1/5)\oplus (40_1^*/5^*,10^*/5,40_2).\]
There are, up to isomorphism, three possible quotients of this by $5$, depending on whether the $5$ is in the first summand, the second, or diagonal. We cannot remove the one under the first summand as that contradicts Step 1, as we have seen before.

Write $M_2'$ for the extension by a single $5$ that lies in $L(E_8){\downarrow_H}$, and suppose that the $5$ in $M_2'$ lies entirely below the $40_1$. In particular, $10^*$ is a submodule of $L(E_8){\downarrow_H}$, so $10$ is a quotient and we can add $5^*$ on top of $M_2'$. There is a unique such module,
\[ (40_2^*/5/40_1/5)\oplus (5^*/40_1^*/5^*/10^*,40_2),\]
but it is possible to place two copies of $10$ on top of this module, one on top of the $40_1$ and one on top of the $10^*$. Since they lie in different summands, we again obtain three (rather than infinitely many) non-isomorphic extensions by $10$, one of which is clearly not self-dual. Write $M_2''$ for whichever of these extensions is a subquotient of $L(E_8){\downarrow_H}$.

For both possibilities for $M_2''$, the element $u$ acts on the $1$-eigenspace of $M_2''$ with blocks $4^{14},3^2,2^4,1^2$. The two $24$s contribute four blocks of size $4$ to this, to yield $4^{18},3^2,2^4,1^2$. The only unipotent classes to encapsulate this action are $A_3+A_2^{(2)}$ and $D_4(a_1)+A_1$, both with action on the $1$-eigenspace of $L(E_8)$ having blocks $4^{18},3^2,2^6,1^2$. In particular, any extension of $M_2''$ involving a $24$ cannot produce any new blocks of size $4$.

The dimension of $\Ext_{kH}^1(M_2'',24)$ is $3$, but in this full extension, $u$ acts on the $1$-eigenspace with $21$ blocks of size $4$. Therefore there is a unique $2$-dimensional subspace of this space consisting of extensions that do not produce any new blocks of size $4$. The action of $u$ on the $\theta$-eigenspace of this module is $4^{14},3^2,2^6,1^4$, which is 26 blocks. Removing one copy of 24 can remove at most three blocks, so $u$ acts with at least 23 blocks on any such quotient. But $u$ acts with only 22 blocks on $L(E_8)$ if $u$ comes from one of the two remaining classes, a clear contradiction. Thus we obtain a contradiction to either structure for $M_2''$, and hence for the particular structure of $M_2'$.

\medskip

Thus the $5$ we added on to make $M_2'$ must be diagonal. We add $5^*/10$ on top of $M_2'$ and $24^2$ as well, and the $1$-eigenspaces of these modules have five blocks of size $4$ in them. Together with the $1$-eigenspace of $M_2'$, which is $4^{12},3^3,2^3,1^2$, this means that we must encapsulate $4^{17},3^3,2^3,1^2$, so again we are forced into $u$ lying in the class $A_3+A_2^{(2)}$ or $D_4(a_1)+A_1$. These two classes act on the $1$- and $\theta$-eigenspaces with blocks $4^{18},3^2,2^6,1^2$ and $4^{18},2^2,1^2$ respectively.

We go back a step now, and consider the $\{5^\pm,10^\pm\}'$-heart $M_3$ of $L(E_8){\downarrow_H}$. This is obtained from $M_1$ by adding a copy of $24$ above and below $M_1$. The space $\Ext_{kH}^1(24,M_1)$ is $3$-dimensional, with the full extension given by
\[ (40_2^*/5,24/40_1)\oplus (24/40_1^*/5^*,24/40_2),\]
but the space $\Ext_{kH}^1(24,M_2')$ is only $2$-dimensional. The missing copy of $24$ is the one in the fourth socle layer, so $M_3$ cannot have that copy in it either.

There are, up to isomorphism, three modules that are extensions with quotient $24$ and submodule $M_1$, and these are
\[ (40_2^*/5,24/40_1)\oplus (40_1^*/5^*/40_2),\quad (40_2^*/5/40_1)\oplus (40_1^*/5^*,24/40_2),\]
\[40_1^*,40_2^*/5,5^*,24/40_1,40_2.\]
Each of these has a $2$-dimensional $\Ext^1$-space with $24$: in the first two cases, there is a unique self-dual module that is an extension with submodule $24$, which must be $M_3$, whereas in the third case all but two possible extensions (of the `$|k|+1$' many) are self-dual, so there are many possibilities for $M_3$. However, the action of $u$ on the $1$- and $\theta$-eigenspaces of these are the same.

Note that we obtain $L(E_8){\downarrow_H}$ from $M_3$ by adding $10^*/5$ on the bottom and $5^*/10$ on the top; $u$ acts on the $1$-eigenspace of each of these with blocks $4,2,1$.

In the first possibility, $M_3$ has structure
\[ (40_2^*/5,24/40_1)\oplus (40_1^*/5^*/24,40_2),\]
and there is a unique extension with submodule $10^*$, it lying in the socle of the second summand. The $\theta$-eigenspace of this larger module has $u$-action $4^{12},3^3,2^6,1^4$, and the remaining modules $5$, $5^*$ and $10$ contribute $1$, $1$ and $3,1$ to this eigenspace respectively. Thus, in particular, $u$ cannot act on the $\theta$-eigenspace of $L(E_8)$ with more than sixteen blocks of size $4$, contradicting $u$ lying in one of the classes above.

In the second possibility, $M_3$ has structure
\[ (40_2^*/5/24,40_1)\oplus (40_1^*/5^*,24/40_2),\]
and $u$ acts on the $1$-eigenspace of this with blocks $4^{16},3^4,1^2$. Adding in the two blocks of size $4$ from the rest of $L(E_8){\downarrow_H}$, we obtain $4^{18},3^4,1^2$, which is not encapsulated by the action of $u$ on the $1$-eigenspace of $L(E_8){\downarrow_H}$, so this case cannot occur either.

Thus the third case occurs: the action of $u$ on the $1$-eigenspace of the module above has blocks $4^{13},3^4,2,1^2$. On this we add $5^*/10$, $10^*/5$ and $24$, which adds one, one and two blocks of size $4$ to this eigenspace respectively. Thus $u$ acts on $L(E_8)$ with blocks encapsulating $4^{17},3^4,2,1^2$, but this has $21$ blocks of size at least $3$, whereas $u$ must act with twenty blocks of size at least $3$. This contradiction proves that $M_2'$ cannot be either possibility, and this completes the proof of the step.

\medskip

\noindent \textbf{Step 7}: The $\{5^\pm,10^\pm,24\}'$-heart of $L(E_8){\downarrow_H}$ is $40_1\oplus 40_1^*\oplus 40_2\oplus 40_2^*$. 

\medskip\noindent If both $40_i$ and $40_i^*$ appear in the socle of the $\{5^\pm,10^\pm,24\}'$-heart $M_1$ of $L(E_8){\downarrow_H}$ then both $40_i^\pm$ are summands of $M_1$, hence $M_1$ is the sum of the $40_i^\pm$ and the $\{40_{3-i}^\pm\}$-heart, which is $40_{3-i}\oplus 40_{3-i}^*$. In particular, this means that $M_1$ is semisimple or (up to graph automorphism) the socle of $M_1$ is either $40_1\oplus 40_2$ or $40_1\oplus 40_2^*$. If the socle is $40_1\oplus 40_2^*$ then we construct the $\{5^\pm,10^\pm,24,40_1^*,40_2\}$-radical of $P(40_2^*)$, which is
\[ 5,24/40_2^*.\]
Hence the $40_2^*$ splits as a summand of $M_1$, and therefore $40_2$ must also lie in the socle, which is a contradiction. Thus if our conclusion does not hold then the socle of $M_1$ is $40_1\oplus 40_2$, so we assume that from now on. The $\{5^\pm,10^\pm,24\}$-radicals of $P(40_i)$, then with any copies of $40_1^*$ and $40_2^*$ placed on top, then with any modules not $40_1^*$ or $40_2^*$ removed from the top, are
\[ 40_1^*/5^*,10^*,40_1^*,40_2^*/5,10,24/40_1,\quad 40_1^*/5^*/40_2.\]
(The former of these can be seen from the radical in (\ref{eq:step5psl52}).) A pyx for $M_1$ is the sum of these two modules. Notice that $M_1$ must contain the $40_2^*$, and hence contain the $5$ in the second socle layer.

\medskip

Suppose that $40_1^*$ lies in the fourth socle layer of $M_1$: then $M_1$ must have the socle structure
\[ 40_1^*/5^*,10^*,40_2^*/5,10,(24)/40_1,40_2,\]
where the $24$ might or might not be present. Although this module is not unique up to isomorphism, the submodule $M_1'$ whose quotient is $40_1^*/5^*$ is unique up to isomorphism, and is
\[ (10^*,40_2^*/5,10,(24)/40_1)\oplus 40_2.\]
Since $M_1$ has $40_1$ as a submodule, and this cannot be a submodule of $L(E_8){\downarrow_H}$ by Step 1, we must be able to place $5$ or $24$ below this. (The only modules in $L(E_8){\downarrow_H}$ but not in $M_1$ are $5$, $5^*$, and one or two copies of $24$.) The $24$s are irrelevant though, since upon restriction to $L$ there is still a submodule $14$. Hence we must be able to place a copy of $5$ below $M_1'$ in such a way as to stop the $14$ being a submodule of the restriction to $L$.

The space $\Ext_{kH}^1(M_1',5)$ is $1$-dimensional, but the unique non-split extension has structure
\[(10^*,40_2^*/5,10,(24)/5,40_1)\oplus 40_2.\]
This proves that $M_1$ cannot have this form, and in particular the $40_1^*$ lies in the third socle layer of $M_1$.

Thus $M_1$ is a submodule of
\[ (40_1^*,40_2^*/5,24/40_1)\oplus (40_1^*/5^*/40_2).\]
The $40_1^*$ cannot lie entirely on the second summand by Step 6 or on the first by self-duality, and since the two summands have no composition factor in common, the diagonal submodule is unique up to isomorphism. The action of $u$ on the $\theta$-eigenspace is $4^{12},2^6,1^3$. Adding the copy of $24$, which must be a summand of $L(E_8){\downarrow_H}$ and on whose $\theta$-eigenspace $u$ acts with blocks $3^2,1$, we obtain a $u$-action with blocks $4^{12},3^2,2^6,1^4$. Since $24$ is a summand, from the end of Step 2 we see that $u$ must belong to class $2A_2$ or $A_3+A_2$, acting with blocks $4^8,3^{12},1^{10}$ or $4^{17},3^2,2,1^2$ respectively. Neither of these actions encapsulates the block structure above. This is a contradiction, and so $M_1$ cannot have this form either. This completes the proof of the step.

\medskip

\noindent \textbf{Step 8}: The $\{24\}$-heart of $L(E_8){\downarrow_H}$ is $24^{\oplus 2}$.

\medskip\noindent Let $M_1$ denote the $\{24\}$-heart of $L(E_8){\downarrow_H}$, and suppose that $M_1$ is not $24^{\oplus 2}$. Thus $M_1$ must be a module that has $24$ in the socle and top, and some $40_i^\pm$ in the heart. There is exactly one copy of $24$ that may be placed on top of $40_1,40_1^*,40_2,40_2^*/24$, and this yields a self-dual module
\[ 24/40_1,40_1^*,40_2,40_2^*/24.\]
Thus we assume from now on that this module is $M_1$. The restriction of $M_1$ to $L$ has a submodule $14^{\oplus 2}$, so we need two copies of $5^\pm$ below $M_1$ in order to comply with Step 1.

If $10^\pm$ is a submodule of $L(E_8){\downarrow_H}$, then we add a copy of $5$ and $5^*$ above and below $M_1$ to form the $\{10^\pm\}'$-heart $M_2$, due to the conclusion of Step 1. There is a unique such module,
\[ 5,5^*,24/40_1,40_1^*,40_2,40_2^*/5,5^*,24,\]
and $u$ acts on this module with blocks $4^{44},3^{14},1^{10}$. From Table \ref{t:classesofd7psl52}, the only block structure that encapsulates this is $4^{50},3^{10},2^6,1^6$, and this comes from $A_3+A_2$. We only have $10$ and $10^*$ to add on, and $M_2$ cannot be a summand of $L(E_8){\downarrow_H}$ from comparison of the $u$-actions, so $10^\pm$ must attach to $M_2$ to form $L(E_8){\downarrow_H}$.

The action of $u$ on the $1$-eigenspaces of $M_2$ and $L(E_8)$ are $4^{16},3^6,1^2$ and $4^{16},3^6,2^4,1^2$ respectively, and $u$ acts on $10$ with blocks $3^3,1$, and $1$-eigenspace $3,1$. We see that the number of blocks of sizes $3$ and $4$ remain the same, so there is an extension of $1^2$ by $3$ that is encapsulated by $2^4,1^2$, which is clearly impossible.

Thus we were incorrect in assuming that $10^\pm$ is a submodule, and hence it cannot be. Write $M_3$ for the $\{5^\pm\}'$-heart of $L(E_8){\downarrow_H}$, which is obtained from $M_1$ by adding a submodule $10$ and a quotient $10^*$. (We may choose this as $M_1$ is invariant under the graph automorphism.) If the $10$ and $10^*$ form a separate summand we have the possibilities that $M_3\cong M_1\oplus 10\oplus 10^*$ and $M_3\cong M_1\oplus (10^*/10)$. Since both $40_1$ and $40_1^*$ are submodules of $M_3$ (ignoring the $24$) we need both $5$ and $5^*$ as submodules (and also quotients) of $L(E_8){\downarrow_H}$, by Step 1.

\medskip

In the first case the actions of $u$ on the $1$- and $\theta$-eigenspaces of $M_3$ are $4^{16},3^4,1^4$ and $4^{14},3^6$ respectively, and we see that the only unipotent class that encapsulates both is $D_4(a_1)+A_2$, so $u$ acts on the $\theta$-eigenspace with blocks $4^{18},3^2$. Comparing this and the $\theta$-eigenspace of $M_3$, we see that no extension involving $M_3$ and the $5^\pm$ can split on restriction to the $\theta$-eigenspace of $\gen u$. 

Note that $\Ext_{kH}^1(5,M_3)$ is $2$-dimensional, and $u$ acts on the $\theta$-eigenspace of this extension with blocks $4^{14},3^6,1^2$. But this means any such extension splits upon restriction, contrary to what is needed.

\medskip

In the second case the actions of $u$ on the $1$- and $\theta$-eigenspaces of $M_3$ are $4^{16},3^4,1^4$ and $4^{15},3^4,2$ respectively, and we see that the only unipotent classes that encapsulate both are $A_3+A_2$, $2A_3$ and $D_4(a_1)+A_2$. We have that $\Ext_{kH}^1(5,M_3)$ is again $2$-dimensional, and the action of $u$ on the $1$-eigenspace of the extension of $M_3$ by $5^{\oplus 2}$ is $4^{17},3^4,2,1^4$, so any extension of $M_3$ with submodule $5$ has $25$ blocks in the action of $u$ on its $1$-eigenspace. This eliminates $2A_3$ and $D_4(a_1)+A_2$, leaving just $A_3+A_2$. Thus $u$ acts on the $1$-eigenspace of $L(E_8)$ as $4^{16},3^6,2^4,1^2$, whereas the action of $u$ on $1$-eigenspace of $M_3$ has blocks $4^{16},3^4,1^4$.

There are two ways to derive a contradiction: the first is to note that there must be a unique extension that does not have an extra block of size $4$, and that is the one where the $5$ is placed on top of $M_3$. Alternatively, recall that $u$ acts on the $1$-eigenspace of $5$ with a single block of size $3$. There is no way to add four blocks of size $3$ to the action for $M_3$ to produce the action for $L(E_8)$.

\medskip

Thus the $10$ and $10^*$ in $M_3$ are attached to $M_1$, and we assume that $10$ is a submodule of $M_3$ by applying the graph automorphism. There is a unique such non-split extension $M_1'$, with structure
\[ 24/40_1,40_1^*,40_2,40_2^*/10,24.\]
One may place two copies of $10^*$ on top of this, one into the second socle layer forming a submodule $10^*/10$, and one in the third socle layer above $40_1^*$. The module $M_3$ is a potentially diagonal submodule of these two copies of $10^*$ (definitely not the one solely on top of the $10$, as this would not be self-dual), and we will compute the action of $u$ on its $1$- and $\theta$-eigenspaces.

The action of $M$ on the $1$-eigenspace of $M_1'$ is
\[ 8^{\oplus 4}\oplus (1/3,3^*/1)^{\oplus 2}\oplus (3/3^*/3)\oplus (3^*/3/3^*)\oplus (3^*/3)\oplus 3\oplus 1,\]
and on the $\theta$-eigenspace is
\[ 8^{\oplus 2}\oplus P(3)\oplus P(3^*)\oplus (1/3,3^*/1)\oplus (3/3^*)\oplus 3\oplus 3^*\oplus 3^*.\]
The $1$-eigenspace of $10$ is $1\oplus 3^*$ and the $\theta$-eigenspace is $3$: the extension by $10^*$ contributing the $10^*/10$ submodule splits on restriction to $M$, so we obtain summands $1\oplus 3^*$ in the first and $3$ in the second. The other copy of $10^*$ places the $3^*$ on top of the summand $3$ with the $1$ splitting off in the $1$-eigenspace, and places the $3$ on top of a summand $3^*$ in the $\theta$-eigenspace.

Thus any extension other than the one contributing the submodule $10^*/10$ yields an extension $1\oplus (3^*/3)$ in the $1$-eigenspace and $3/3^*$ in the $\theta$-eigenspace. The actions of $u$ on the $1$- and $\theta$-eigenspaces of $M_1'$ are $4^{17},3,2,1^3$ and $4^{15},3^3,2$ respectively, so on the $1$- and $\theta$-eigenspaces of $M_3$ the actions must be $4^{18},2^2,1^4$ and $4^{16},3^2,2^2$ respectively. 

Looking at Table \ref{t:classesofd7psl52}, the $1$-eigenspace action is not encapsulated by $2A_2$ or $A_3+A_2$, and the $\theta$-eigenspace is not encapsulated by $A_3+A_1$ or $A_3+2A_1$. We must eliminate the remaining four.

To eliminate $A_3+A_2^{(2)}$ and $D_4(a_1)+A_1$, which have the same action on the $1$-eigenspace, namely $4^{18},3^2,2^6,1^2$, note that we must add four blocks of size $3$ (one from each $5^\pm$) to $4^{18},2^2,1^4$, and this is not possible. To see this, notice that the socle of the extended module is the sum of the socles of the submodule and quotient, so this may be removed without loss of generality, and now we are adding four blocks of size $2$ and we need to increase the number of blocks by six, which is impossible. Thus $u$ lies in class $2A_3$ or $D_4(a_1)+A_2$, each with exactly $24$ blocks in the $1$-eigenspace of $L(E_8)$, the same as $u$ has on $M_3$, and $20$ blocks in the $\theta$-eigenspace.

\medskip

In order for $10$ not to be a submodule of $L(E_8){\downarrow_H}$, there must be a submodule $5^*$. If both copies of $5^*$ are submodules then this module is determined uniquely, because $\Ext_{kH}^1(M_1',5^*)$ is $2$-dimensional (and because neither of the remaining classes for $u$ is compatible with a summand $5^\pm$). This module, which has the form
\[ 24/40_1/10,40_1^*,40_2,40_2^*/5^*,5^*,24,\]
has $\theta$-eigenspace $4^{16},3^2,2,1$, with 21 blocks. This cannot be encapsulated by the two remaining classes for $u$, which have 20 blocks. Thus there must be a $5^*$ in both the socle and top.

The dimension of $\Ext_{kH}^1(5^*,M_1')$ is $2$, so we may place two copies of both $10^*$ and $5^*$ on top of $M_1'$. The action of $u$ on the $1$-eigenspaces of the module
\[5^*,10^*,24/5^*,10^*,40_1,40_1^*,40_2,40_2^*/10,24\]
is $4^{18},3,2^2,1^5$. Removing a copy of $5^*\oplus 10^*$ removes $3^2,1$ from the action of $u$ on the $1$-eigenspace, i.e., at most three blocks. Thus there are at least $25$ blocks in the action of $u$ on the $1$-eigenspace of $L(E_8)$. But the remaining two classes have 24 blocks.

This is a final contradiction, so $10^\pm$ is neither a submodule nor not a submodule, and thus $M_1$ cannot have the structure assumed. Thus $M_1$ is semisimple, as claimed.

\medskip

\noindent \textbf{Step 9}: There is a single $5$-dimensional composition factor of $\soc(L(E_8){\downarrow_H})$.

\medskip\noindent Since the $\{5^\pm\}$-pressure of $L(E_8){\downarrow_H}$ is $2$ and there is a subquotient of $\{5^\pm\}$-pressure $4$, namely the sum of the $40_i^\pm$, the subgroup $H$ stabilizes at least one $5$-space on $L(E_8)$ by Proposition \ref{prop:pressure}. Thus assume that there are two or more.

Let $M_1$ denote the $\{5^\pm,10^\pm\}'$-heart of $L(E_8){\downarrow_H}$, and let $M_2$ denote the $\{5^\pm\}'$-heart of $L(E_8){\downarrow_H}$. We know that $M_1$ has a subquotient $40_1\oplus 40_1^*\oplus 40_2\oplus 40_2^*$, with two copies of $24$ attached somehow. We also know from Step 2 that they do not split off as summands of $M_1$, for then they would be summands of $L(E_8){\downarrow_H}$.

Thus there is a single submodule $24$ of $M_1$, and some of the $40_i^\pm$ are submodules. From Step 8, the $\{24\}$-heart is $24^{\oplus 2}$, and so any $40_i^\pm$ that is not a submodule is a quotient, and therefore there is a submodule $M_1^{(1)}$ consisting of the submodule $24$ and all $40_i^\pm$ that are not submodules of $M_1$. The dual of $M_1^{(1)}$ must be a quotient of $M_1$, and has a $24$ quotient and no other composition factors in common. We see that $M_1$ must in fact be the direct sum of $M_1^{(1)}$ and its dual $M_1^{(2)}$, and possibly some summands $40_i^\pm$. This leads, up to graph automorphism, to four possibilities: $M_1^{(1)}$ contains $40_1$, $40_2$, $40_1\oplus 40_2$, and $40_1^*\oplus 40_2$.

Suppose that $M_1$ is
\[ (40_1/24)\oplus (24/40_1^*)\oplus 40_2\oplus 40_2^*;\]
the action of $u$ on the $\theta$-eigenspace of this module has blocks $4^{10},3^6,2^2,1^6$. From Table \ref{t:classesofd7psl52}, we see that no class can encapsulate this, and so this cannot be the structure of $M_1$.

The other three possibilities for $M_1$ are
\[ 40_1\oplus 40_1^*\oplus (40_2/24)\oplus (24/40_2^*),\quad (40_1,40_2/24)\oplus (24/40_1^*,40_2^*),\]
\[(40_1^*,40_2/24)\oplus (24/40_1,40_2^*);\]
in each of these cases, $u$ acts on the $1$-eigenspace of $M_1$ with blocks $4^{16},3^2,1^2$ and on the $\theta$-eigenspace with blocks $4^{14},3^4$.

In each case, $w$ acts on $M_1$ with blocks $8^{24},5^2,3^2$, so $5$ cannot be a summand of $L(E_8){\downarrow_H}$ by the remark at the end of Step 2. In particular, this means that $L(E_8){\downarrow_H}$ has exactly two $5$-dimensional factors in the socle, so that $M_2$ contains no copy of $5^\pm$.

\medskip

If $M_2$ is the sum of $M_1$ and $10\oplus 10^*$ then in all three possibilities above $u$ acts on the $\theta$-eigenspace of $M_2$ with blocks $4^{14},3^6$. In each possibility $\Ext_{kH}^1(M_2,5)$ is $2$-dimensional. Placing both copies of $5$ on the bottom of $M_2$ yields a module whose $\theta$-eigenspace has $u$-action $4^{14},3^6,1^2$, so any extension of $M_2$ with submodule $5$ has blocks $4^{14},3^6,1$. There is no unipotent class that encapsulates this action, as seen in Table \ref{t:classesofd7psl52}, so we can exclude this structure for $M_2$.

If $M_2$ is the sum of $M_1$ and $10^*/10$, or the sum of $M_1$ and $10/10^*$, then there are six possibilities for $M_2$, and for each of them the proof is the same. Notice that $5$ and $5^*$ must both be submodules of $L(E_8){\downarrow_H}$ as we need to place a copy of $5$ below $40_1$ and a copy of $5^*$ below $40_1^*$, because of Step 1. The action of $u$ on the $1$-eigenspace of $M_2$ is $4^{16},3^4,1^4$ and on the $\theta$-eigenspace is $4^{15},3^4,2$. The space $\Ext_{kH}^1(M_2,5)$ is $2$-dimensional and $u$ acts on the $\theta$-eigenspace of this extension with blocks $4^{15},3^4,2,1^2$, hence on any extension of $M_2$ with submodule $5$ with blocks $4^{15},3^4,2,1$, so $21$ blocks in total. Examining Table \ref{t:classesofd7psl52}, the only class that encapsulates these is $A_3+A_2$, acting on the $1$-eigenspace of $L(E_8)$ with blocks $4^{16},3^6,2^4,1^2$. In particular, when we add the $5$ onto $M_2$, we may make no more blocks of size $4$ in the $1$-eigenspace of $u$. This restriction picks out a unique extension of $M_2$ with submodule $5$, and then a unique extension inside of the $2$-dimensional space of extensions obtained by placing $5^*$ on top of the resulting module. In each case, this results in one of the following modules, depending on $M_1$:
\[ (40_1/5)\oplus (5^*/40_1^*)\oplus (24/40_2)\oplus (40_2^*/24)\oplus (10^\pm/10^\mp),\]
\[ (40_1,40_2/5,24)\oplus (5^*,24/40_1^*,40_2^*)\oplus (10^\pm/10^\mp),\]
\[ (40_1^*,40_2/5^*,24)\oplus (5,24/40_1,40_2^*)\oplus (10^\pm/10^\mp).\]
The action of $u$ on the $\theta$-eigenspace of each of these modules has blocks $4^{15},3^4,2,1^2$.

We now must place a copy of $5^*$ below these modules; in each case there are three potential copies, and placing all of them below we obtain a $u$-action on the $\theta$-eigenspace of $4^{15},3^4,2,1^5$, so any extension by $5^*$ must have action $4^{15},3^4,2,1^3$, which is not encapsulated by the class $A_3+A_2$. This contradiction proves that $M_2$ cannot have this form.

\medskip

Thus $M_2$ is obtained from $M_1$ via a non-split extension involving $10$ and $10^*$. In all three possibilities $\Ext_{kH}^1(M_1,10^\pm)$ is $1$-dimensional, as is $\Ext_{kH}^1(M_1,5)$, and $\Ext_{kH}^1(M_1,5^*)$ is $2$-dimensional.

First we assume that there is a submodule $10^\pm$ in $L(E_8){\downarrow_H}$, so that the $\{5^\pm,10^\pm\}$-radical of $L(E_8){\downarrow_H}$ is semisimple. It is either $5\oplus 5^*\oplus 10^\pm$ or $5^*\oplus 5^*\oplus 10^\pm$. Since there are four possibilities for this radical, and three for $M_1$, there are twelve possibilities in total, but the same proof works for all of these. Since $\Ext_{kH}^1(M_1,10^\pm)$, $\Ext_{kH}^1(M_1,5)$ and $\Ext_{kH}^1(M_1,5^*)$ are $1$-, $1$- and $2$-dimensional respectively, we may form an extension $M_3'$ of $M_1$ with submodule $10^\pm\oplus 5\oplus 5^*\oplus 5^*$, and the $\{5^\pm,10^\pm\}$-residual $M_3$ of $L(E_8){\downarrow_H}$ is the quotient of $M_3'$ by a single $5$ or $5^*$. The twelve possibilities for $M_3$ become six for $M_3'$.

In all six possibilities, $u$ acts on the $1$-eigenspace of $M_3'$ with blocks $4^{17},3^4,2,1^3$ and on the $\theta$-eigenspace with blocks $4^{15},3^3,2,1^3$. If we remove a single $5$ or $5^*$ from the socle, we remove a block of size $3$ from the $1$-eigenspace and a block of size $1$ from the $\theta$-eigenspace. Thus $M_3$ has at least $21$ blocks in its $\theta$-eigenspace, and at least twenty blocks of size at least $3$ in its $1$-eigenspace: the only classes that encapsulate these actions of $u$ are $A_3+A_2$, $A_3+A_2^{(2)}$ and $D_4(a_1)+A_1$, whence $u$ acts on the $1$-eigenspace of $L(E_8)$ with blocks $4^{16},3^6,2^6,1^2$ or $4^{18},3^2,2^6,1^2$.

To obtain a conclusion, note that $u$ acts on the $1$-eigenspace of the extension of $M_1$ by $10$ with blocks $4^{17},3,2,1^3$ (already eliminating the first possible action), on $10\oplus 5$ with blocks $4^{17},3^2,2,1^3$, and on the $1$-eigenspace of the extension of $M_1$ by $10\oplus 5^*\oplus 5^*$ with blocks $4^{17},3^3,2,1^3$, so in fact the action of $u$ on the $1$-eigenspace of $M_3$ has blocks $4^{17},3^3,2,1^3$. This is $23$ blocks in total, five short of the number of blocks needed for $L(E_8)$. However, the $1$-eigenspace of the quotient $L(E_8){\downarrow_H}/M_3$ (which is the sum of $10^\mp$ and two copies of $5$ or $5^*$) has $u$-action $3^3,1$, which is only four blocks in total. This contradiction means that the $\{5^\pm,10^\pm\}$-radical cannot be semisimple, i.e., $10^\pm$ is not a submodule of $L(E_8){\downarrow_H}$.

\medskip

Hence, the $\{5^\pm,10^\pm\}$-radical must contain a summand $10^\pm/5^\mp$, with the remaining summand being either $5$ or $5^*$. Since $u$ always acts on the $1$-eigenspace of $M_1$ with blocks $4^{16},3^2,1^2$ and on the $1$-eigenspace of $10^\pm/5^\mp$ with blocks $4,2,1$, the class containing $u$ must have $1$-eigenspace encapsulating $4^{18},3^2,1^2$. Since we have $5$ and $5^*$ left to add, one must be able to add two more blocks of size $3$ to this, so there must be either more than eighteen blocks of size $4$ or more than two blocks of size $3$. This leaves only $2A_3$ and $D_4(a_1)+A_2$.

We have to be careful now because the actions \emph{almost} work. If the radical is $5\oplus (10^*/5)$, then we take $M_1$, add the single copy of $10^*$ below, add the two copies of $5$ below, add the two copies of $10$ above, and then the three copies of $5^*$ above that. (These are the full extensions in all cases.) This constructs a module $M_4$ that is a pyx for $L(E_8){\downarrow_H}$, in fact an extension of $L(E_8){\downarrow_H}$ with submodule $5^*/10$; $u$ acts on the $1$- and $\theta$-eigenspaces of $M_4$ with blocks $4^{23},2^3,1$ and $4^{18},3^3,1$ respectively. The former does not encapsulate the $1$-eigenspace of $u$ on $L(E_8)$ if $u$ lies in class $D_4(a_1)+A_2$, and the latter does not encapsulate the $\theta$-eigenspace of $u$ on $L(E_8)$ if $u$ lies in class $2A_3$, so we obtain a contradiction.

\medskip

Suppose that the $\{5^\pm,10^\pm\}$-radical of $L(E_8){\downarrow_H}$ is $5^*\oplus (10/5^*)$. If $M_1$ is the third possibility then one cannot place $10$ below $M_1$ and $5^*$ below the $10$, so we may exclude this case. In the other two possibilities, we place $10$ below $M_1$, then three copies of $5^*$ below that, then two copies of $10^*$ above $M_1$, then four copies of $5$ above that, to create a pyx $M_5$ for $L(E_8){\downarrow_H}$. To obtain $L(E_8){\downarrow_H}$ from $M_5$, we must remove a copy of $5^*$ from the bottom of it and $5\oplus (5/10^*)$ from the top of it. The action of $u$ on the $1$-eigenspace of $M_5$ has blocks $4^{23},3^2,2^3,1$, and on the $\theta$-eigenspace it has blocks $4^{18},3^3,1^3$, which does not encapsulate the class $2A_3$, so $u$ must lie in $D_4(a_1)+A_2$. This class acts on the $1$-eigenspace of $L(E_8)$ with blocks $4^{20},3^4$. Note that removing the quotient $5/10^*$ from $M_5$ must remove a block of size $4$ from the action of $u$ on the $1$-eigenspace, and that leaves $u$ acting with exactly $24$ blocks of size at least $3$. Thus the $5$ that is removed from the top, and the $5^*$ that is removed from the bottom of $M_5$, cannot alter the number of blocks of size at least $3$ in this action. However, removing all three copies of $5$ from the bottom of $M_5$ removes three blocks of size at least $3$ from the action of $u$, so removing one must always remove one. Thus two copies of $5^*$ cannot lie in the socle.

\medskip

The third case is where the radical is $5\oplus (10/5^*)$. In this case $\Ext_{kH}^1(M_1,5)$ is $1$-dimensional, so we may place a single $5$ below, and a single $5^*$ above, $M_1$, to make a new module $M_6$, which is a subquotient of $L(E_8){\downarrow_H}$. The presence of the $5^*$ in the top of $M_6$ means that $\Ext_{kH}^1(M_6,10)$ has dimension $2$ rather than $1$, so we add both copies of $10$ to the bottom of $M_6$, then three copies of $5^*$. One of the copies of $10$ remains a submodule of this extension (for the last possibility of $M_1$ both do, so we already obtain a contradiction), so we may remove it as we can identify which $10$ belongs in $L(E_8){\downarrow_H}$. To obtain a subquotient of $L(E_8){\downarrow_H}$ from this module, we must remove two copies of $5^*$, so two blocks of size $2$ from the $\theta$-eigenspace. The $\theta$-eigenspace of this module has blocks $4^{16},3^2,2,1^4$, so the $\theta$-eigenspace of the subquotient has at least 21 blocks in it. However, the $\theta$-eigenspace of $u$ on $L(E_8)$ must have 20 blocks, since we have shown that $u$ lies in class $2A_3$ or $D_4(a_1)+A_2$, both of which act with 20 blocks. This contradiction eliminates this possibility for the radical as well.

\medskip

We are left with $5^*\oplus (10^*/5)$ as the socle. We follow exactly the same strategy. We write out the previous paragraph with altered text in italics.

In this case $\Ext_{kH}^1(M_1,5^*)$ is \emph{now} $2$-dimensional, so we may place \emph{two copies of} $5^*$ below, and \emph{two copies of} $5$ above, $M_1$, to make a new module $M_7$, which is \emph{not quite} a subquotient of $L(E_8){\downarrow_H}$. The presence of the \emph{two copies of} $5$ in the top of $M_7$ means that $\Ext_{kH}^1(M_7,10^*)$ \emph{now} has dimension $3$ rather than $1$, so we add \emph{all three} copies of $10^*$ to the bottom of $M_7$, then \emph{two} copies of $5$. \emph{Two} of the copies of $10^*$ remain a submodule of this extension (\emph{including in the third case}), so we may remove \emph{them} as we can identify which $10^*$ belongs in $L(E_8){\downarrow_H}$. To obtain a subquotient of $L(E_8){\downarrow_H}$ from this module, we must remove \emph{one copy of $5^*$ and two copies of $5$}, so \emph{three} blocks of size $2$ from the $\theta$-eigenspace. The $\theta$-eigenspace of this module \emph{now} has blocks $4^{16},3^2,2,1^5$, so the $\theta$-eigenspace of the subquotient has at least 21 blocks in it. However, the $\theta$-eigenspace of $u$ on $L(E_8)$ must have 20 blocks, since we have shown that $u$ lies in class $2A_3$ or $D_4(a_1)+A_2$, both of which act with 20 blocks. This contradiction eliminates this possibility for the radical as well.

\medskip

Having eliminated all four possible radicals, the proof of the step is complete.

\medskip

\noindent \textbf{Step 10}: The final contradiction.

\medskip\noindent From the previous steps, inside $L(E_8){\downarrow_H}$ we have a submodule $10^\pm$, a single $5$ or $5^*$ in the socle, and two $5$-dimensional factors in the socle modulo the $10^\pm$. We also have three possible modules $M_1$, the $\{5^\pm,10^\pm\}'$-heart of $L(E_8){\downarrow_H}$. Since $10^\pm$ is a submodule, the $\{10^\pm\}'$-heart of $L(E_8){\downarrow_H}$ must have both $5$ and $5^*$ as a submodule, as we have seen in previous steps. In all cases, $\Ext_{kH}^1(M_1,5)$ is $1$-dimensional, as we saw in Step 9, and $\Ext_{kH}^1(M_1,5^*)$ is $2$-dimensional. Construct the module $M_1''$ in all three cases by extending by $5\oplus 5^*\oplus 5^*$.

By Step 9, there can be only one $5^\pm$ in the socle of $L(E_8){\downarrow_H}$, but there must be two $5$-dimensional factors below the $40_1^\pm$ to comply with Step 1. Thus the $10^\pm$ that is a submodule must cover one of the three copies of $5^\pm$ we have placed under $M_1$. However, in all cases $\Ext_{kH}^1(M_1,10^\pm)=\Ext_{kH}^1(M_1'',10^\pm)$, so no copy of $10^\pm$ can be placed underneath any of the $5^\pm$. 

This final contradiction means that $u$ does indeed lie in class $2A_2$, the action of $H$ on $L(E_8)$ is as described in Step 2 (this can be found diagonally in $A_4A_4$) and $H$ is strongly imprimitive, as needed. Thus we have the following.

\begin{proposition}\label{prop:sl52inE8} Let $H\cong \PSL_5(2)$. If $H$ is a subgroup of $\mb G\cong E_8$ in characteristic $2$, then $H$ is strongly imprimitive.
\end{proposition}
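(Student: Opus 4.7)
The plan is to begin from Proposition \ref{prop:sl5ine8}, which reduces us to the case where $L(E_8){\downarrow_H}$ has composition factors
\[ 40_1,40_1^*,40_2,40_2^*,24^2,10,10^*,(5,5^*)^2,\]
and to show that any such embedding is strongly imprimitive. The two key subgroups I would exploit are $L\cong \PSL_4(2)$ (a parabolic-type subgroup) and $M\cong 3\times \PSL_3(2)$, where the central element $z\in M$ of order $3$ has trace $14$ on $L(E_8)$ and thus forces $M\leq C_{\mb G}(z)=D_7T_1$. Together with a unipotent element $u\in M$ of order $3$ (from the class acting on the natural $\PSL_5(2)$-module as blocks $3,1^2$), these give a rich set of constraints: $z$ partitions $L(E_8)$ into its $1$- and $\theta^{\pm 1}$-eigenspaces, and the actions of $u$ on these eigenspaces must be consistent with the Jordan block structures listed in Table \ref{t:unipe8p4} via Lemma \ref{lem:encapsulates}.

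The first substantive step is to bound where the $14$-dimensional $kL$-composition factors can sit. Since $L$ cannot lie in an $E_7$-parabolic by \cite[Propositions 4.1 and 4.2]{craven2017}, its containment in $\mb G$ is forced (up to elimination of small Levi factors) into an $A_3A_4$-parabolic, and working through the layers of $L(E_8)$ restricted to that parabolic shows every $14$ appears in a subquotient $1/14/1$. In particular, neither $40_1^\pm$ can sit in the socle of $L(E_8){\downarrow_H}$. Next I would determine which $E_8$-classes $u$ can belong to, by noting that the composition factors of $M(D_7){\downarrow_M}$ are $(3,3^*)^2,1^2$ and that $u$ acts on each $3^\pm$ with a single Jordan block; running through the possible module structures of $M(D_7){\downarrow_M}$ gives a short list of classes, and moreover the key observation: if $u$ belongs to the class $2A_2$, then by placing $M$ inside the $A_8$ maximal-rank subgroup (via $C_{\mb G}(z)\leq A_8$), the obvious diagonal $A_2$ subgroup stabilizes every semisimple $kM$-submodule of $L(E_8)$, forcing the entire structure of $L(E_8){\downarrow_H}$ and giving strong imprimitivity via Theorem \ref{thm:intersectionorbit}.

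With this framework set up, the main body of the argument is a sequence of heart computations designed to force $u$ into class $2A_2$. Concretely, I would show in order: the $\{40_2^\pm\}$-heart is $40_2\oplus 40_2^*$ (by constructing a pyx inside $P(40_2)$ and ruling out non-semisimple possibilities via the action of $w$ of order $8$ together with Table \ref{t:unipe8p8}); a subquotient $40_1^*/24/40_1$ would force strong imprimitivity (by comparing $u$-eigenspace dimensions against the allowed classes); the $\{40_1^\pm\}$-heart is $40_1\oplus 40_1^*$; and then using these, the $\{5^\pm,10^\pm,24\}'$-heart must in fact be $40_1\oplus 40_1^*\oplus 40_2\oplus 40_2^*$. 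Finally, the $\{24\}$-heart is semisimple $24^{\oplus 2}$, and $\soc(L(E_8){\downarrow_H})$ contains exactly one $5$-dimensional composition factor. Combining these structural constraints with the extension data $\Ext^1_{kH}(M_1,5^\pm)$ and $\Ext^1_{kH}(M_1,10^\pm)$ derives a final contradiction: any way of attaching the $5^\pm$ and $10^\pm$ to the semisimple core produces a pyx whose Jordan block decomposition on the $1$- or $\theta$-eigenspace of $z$ cannot be encapsulated (in the sense of Lemma \ref{lem:encapsulates}) by any admissible $E_8$-unipotent class from the list derived in the second step.

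The hard part, and where essentially all the length of the argument is concentrated, is the long chain of heart computations (especially Steps 5--9 in the sketch): each individual pyx is straightforward, but one must rule out a surprising number of near-miss module structures, and the ruling-out mechanism has to alternate between (a) restriction to $L$ to detect forbidden $14/1$ subquotients, (b) the action of $u$ on the two $z$-eigenspaces, compared with Table \ref{t:classesofd7psl52}, and (c) the action of $w$ on the whole module, compared with Table \ref{t:unipe8p8}. No single tool suffices, and the bookkeeping of which constraints have been used at which stage is the genuine obstacle; there does not seem to be a shortcut that avoids the case-by-case analysis, because the $\Ext^1$-quiver among the simple $kH$-modules is rich enough that many superficially distinct module structures all have the correct composition factors and must be eliminated individually.
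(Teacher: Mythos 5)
Your proposal follows essentially the same route as the paper's own ten-step argument: the same subgroups $L\cong\PSL_4(2)$ and $M\cong 3\times\PSL_3(2)$ with $z$ of trace $14$, the same reduction of every $14$ to a $1/14/1$ subquotient, the same dichotomy on the class of $u$ (the $2A_2$ case handled by a diagonal $A_2$ inside $A_8$ and Theorem \ref{thm:intersectionorbit}, the remaining classes tracked through their eigenspace actions), and the same chain of heart/pyx computations forcing $40_1\oplus 40_1^*\oplus 40_2\oplus 40_2^*$ and $24^{\oplus 2}$ as subquotients before the final $\Ext$-based contradiction on attaching the $5^\pm$ and $10^\pm$. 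Two small slips to fix in a write-up: $u$, acting with blocks $3,1^2$ on the natural module, is unipotent of order $4$, not $3$; and $M$ is conjugated into $A_8$ via the $A_2A_2$-Levi subgroup of $D_7$, not because $C_{\mb G}(z)\leq A_8$ (that centralizer is $D_7T_1$).
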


\section{\texorpdfstring{$\PSL_3(5)\leq E_8$}{PSL(3,5) < E8}}
\label{sec:diffpsl35}

The case in Proposition \ref{prop:sl3ine8} left over for $q=5$ is where the composition factors on $L(E_8){\downarrow_H}$ are
\[ 39,39^*,35,35^*,18,18^*,15_2,15_2^*,8^2,6,6^*,3,3^*.\]
The proof of this will take some time (although it is easier than the last one!), so we proceed in steps. We start with a few radicals: we take the $\cf(L(E_8){\downarrow_H})\setminus\{V^\pm\}$-radicals of $P(V)$, for $V=3,15_2,18,35,39$:
\begin{equation}\begin{gathered}
35^*/6^*,8,8,15_2^*/18^*,39,39^*/3,\quad 18/3^*,6,8/35,39^*/15_2,\\ 15_2/35,39^*/3^*,6,8/18,\quad  3^*,6,8/18,39^*/6,15_2/35,\quad 18^*,35^*/3,3^*,6^*,8,15_2^*/39.\end{gathered}\label{eq:fiveradicalspsl35}\end{equation}
We will now make a few specific observations that will be useful in the balance of the proof.

\medskip

\noindent\textbf{Step 1}: The socle of $L(E_8){\downarrow_H}$ contains a module not of dimension $3$, $15$ or $18$.

\medskip\noindent Assume this is not the case, and note that if we have two factors of $\soc(L(E_8){\downarrow_H})$ of the same dimension then they are both summands. We remove any simple summands from $L(E_8){\downarrow_H}$ to make a module $W$, whose socle has at most one factor each of dimension $3$, $15$ and $18$. From these radicals we see that the socle of $W$ cannot be simple, as there are not enough $6$-dimensional factors in the radicals above.

If $3^\pm$ lies in the socle then the $35^\mp$ in the top of the radical in (\ref{eq:fiveradicalspsl35}) cannot occur in any pyx for $W$. Thus in order to have both $35$ and $35^*$ in $W$, the socle of $W$ contains $15_2\oplus 18^*$ or its dual. Note that the third radical above is obtained from the second radical by taking duals and then the graph automorphism.

If $W$ has socle $15_2\oplus 18^*$ then $W$, and indeed $L(E_8){\downarrow_H}$, is the sum of the second radical and its dual. However, an element $u$ from the smallest class of elements of $H$ of order $5$ acts on this module with Jordan blocks $5^{40},4^{10},1^8$, which does not appear in \cite[Table 9]{lawther1995} (see also Table \ref{t:unipe8p5}), so the socle of $W$ cannot have at most two composition factors.

Thus the socle of $W$ contains three factors, one each of dimensions $3$, $15$ and $18$. We consider the $\{6^\pm,8,35^\pm,39^\pm\}$-radicals of the $P(V)$ for $V=3,15_2,18$, yielding
\[ 8/39,39^*/3,\quad 6,8/35,39^*/15_2,\quad 35/6,8/18;\]
then place as many copies of $3^\pm$, $15_2^\pm$ and $18^\pm$ (except for $V$ itself) on top of these modules, yielding
\[ 8,15_2^*/18^*,39,39^*/3,\quad 3^*,6,8/35,39^*/15_2,\quad 35/3^*,6,8/18;\]
and then remove all quotients that are simple modules other than $3^\pm$, $15_2^\pm$ and $18^\pm$, to obtain the modules
\[ 15_2^*/18^*,39/3,\quad 3^*/39^*/15_2,\quad 3^*/18.\]
As there is no $35^\pm$ in these, we obtain a final contradiction. (For the second module one may also add on a copy of $18$, but this will extend the $3^*$ that we added on, and so the $3^*$ would not be in the top.)

\medskip

\noindent \textbf{Step 2}: The centralizer of an element of order $4$ in $H$.

\medskip\noindent Let $z$ be an element of order $4$ in $H$ with centralizer $L\cong \GL_2(5)$. The element has trace $0$ on $L(E_8)$, so lies in the class with centralizer $\mb X=D_3D_5\leq D_8$ (see \cite[Table 1.16]{frey1998a} or Table \ref{t:semie8}). The action of $D_3D_5$ on $M(D_8)$ is as $M(D_3)\oplus M(D_5)$, where $M(D_3)$ is $6$-dimensional.

The eigenspaces of $z$ on $L(E_8)$ are modules for $\mb X$, and it is easy to see that the $1$-eigenspace of $z$ is $\Lambda^2(M(D_3))\oplus \Lambda^2(M(D_5))$, the $(-1)$-eigenspace is $M(D_3)\otimes M(D_5)$, and the $(\pm\I)$-eigenspaces are the tensor products of half-spin modules for $D_3$ and $D_5$ (up to duality, and we will see that this doesn't matter in our case).

Since $L\leq C_{\mb G}(z)=\mb X$, we see that each of these eigenspaces is a module for $L$. The simple modules for $L$ that have $z$ acting as $\pm 1$ have dimensions $1$, $3$ and $5$, two of each dimension, and will be labelled $1^+$, $1^-$, $3^+$, $3^-$, $5^+$ and $5^-$, so that
\[ P(1^+)=1^+/3^+/1^+,\qquad P(3^+)=3^+/1^+,3^-/3^+.\]
(There is such a labelling for both sets of modules.) Similarly, the simple modules for $L$ that have $z$ acting as $\pm \I$ have dimensions $2$ and $4$, two of each dimension, and will be labelled $2^+$, $2^-$, $4^+$ and $4^-$, so that 
\[ P(4^+)=4^+/2^+/4^+,\qquad P(2^+)=2^+/4^+,2^-/2^+.\]
(Again, there is such a labelling.) The choice of which is $+$ and which is $-$ is irrelevant as long as it is consistent, and we choose our labellings so that the $L$-action on the simple modules for $H$ is as follows.
\begin{center}
\begin{tabular}{ccccc}
\hline Module & $1$-eigenspace & $(-1)$-eigenspace & $\I$-eigenspace & $-\I$-eigenspace
\\ \hline $3$ & - & $1^+$ & $2^+$ & -
\\ $3^*$ & - & $1^-$ & - & $2^+$
\\ $6$ & $1^-$ & $3^-$ & $2^+$ & -
\\ $6^*$ & $1^-$ & $3^+$ & - & $2^+$
\\ $8$ & $1^+\oplus 3^-$ & - & $2^-$ & $2^-$
\\ $15_2$ & $3^+$ & $1^+\oplus 3^-$ & $2^+\oplus 4^-$ & $2^-$
\\ $15_2^*$ & $3^+$ & $1^-\oplus 3^+$ & $2^-$ & $2^+\oplus 4^-$
\\ $18$ & $3^+$ & $5^-$ & $4^-$ & $2^+\oplus 4^+$
\\ $18^*$ & $3^+$ & $5^+$ & $2^+\oplus 4^+$ & $4^-$
\\ $35$ & $3^-\oplus 5^-$ & $1^-\oplus 3^+\oplus 5^-$ & $2^-\oplus 4^+$ & $P(4^+)\oplus 2^+$
\\ $35^*$ & $3^-\oplus 5^-$ & $1^+\oplus 3^-\oplus 5^+$ & $P(4^+)\oplus 2^+$ & $2^-\oplus 4^+$
\\ $39$ & $3^+\oplus 3^-\oplus 5^+$ & $3^-\oplus 5^-$ & $P(4^-)$ & $2^-\oplus 4^+\oplus 4^-$
\\ $39^*$ & $3^+\oplus 3^-\oplus 5^+$ & $3^+\oplus 5^+$ & $2^-\oplus 4^+\oplus 4^-$ & $P(4^-)$ 
\\ \hline
\end{tabular}
\end{center}

We now determine the possible embeddings of $L'$ into $D_3$ and $D_5$: this yields actions on $M(D_3)$, $M(D_5)$ and the corresponding half-spin modules, and these can be used to produce the actions of $L'$ on each eigenspace of $z$.

The composition factors of $L'$ on $L(E_8)$ are 
\[ 5^{10},4^{20},3^{20},2^{24},1^{10},\]
and this yields two possible sets of composition factors for $M(D_8){\downarrow_{L'}}$, which are $5^2,3,1^3$ and $3^5,1$. Splitting these among $M(D_3)$ and $M(D_5)$, we find two candidates for these that have the correct composition factors on the $1$-eigenspace of $z$, namely $5^4,3^{12},1^4$. These are in the following table, including the composition factors on the half-spin modules, which are determined uniquely by the traces of elements on the minimal modules.
\begin{center}\begin{tabular}{ccccc}
\hline Case & $M(D_3)$ & $M(D_5)$ & $L(010)$ & $L(00010)$
\\ \hline $1$ & $5,1$ & $5,3,1^2$ & $4$ & $5^2,3^2$
\\ $2$ & $3^2$ & $3^3,1$ & $3,1$ & $4^2,2^4$
\\ \hline
\end{tabular}\end{center}

\noindent \textbf{Step 3}: Case 1 with $M(D_5)$ not semisimple.

\medskip\noindent Suppose that we are in Case 1, and $M(D_5)$ is not semisimple. This means that $L'$ acts on $M(D_3)$ as $5\oplus 1$ and on $M(D_5)$ as $5\oplus P(1)$, and on $L(00010)$ as $5^{\oplus 2}\oplus (3/3)$. The actions of $L'$ on the eigenspaces of $z$ are as follows: on the $1$-, $(-1)$- and $(\pm\I)$-eigenspaces $L'$ acts as
\[ P(3)^{\oplus 4}\oplus 5^{\oplus 4},\quad P(1)^{\oplus 2}\oplus P(3)^{\oplus 2}\oplus 5^{\oplus 6},\quad P(4)^{\oplus 4}\oplus P(2)^{\oplus 2}\oplus (2/2)\]
respectively. There is a unique extension of each of the first two modules to $L$ (that have the correct composition factors), namely
\[ P(3^+)^{\oplus 2}\oplus P(3^-)^{\oplus 2}\oplus (5^+\oplus 5^-)^{\oplus 2},\]
and
\[ P(1^+)\oplus P(1^-)\oplus P(3^+)\oplus P(3^-)\oplus (5^+\oplus 5^-)^{\oplus 3}.\]
The action of $L$ on the $(\pm\I)$-eigenspace has composition factors $(4^+,4^-)^5,(2^+,2^-)^6$, and there are two possible extensions to $L$:
\[ P(4^+)^{\oplus 2}\oplus P(4^-)^{\oplus 2}\oplus P(2^+)\oplus P(2^-)\oplus (2^+/2^-),\]
\[ P(4^+)^{\oplus 2}\oplus P(4^-)^{\oplus 2}\oplus P(2^+)\oplus P(2^-)\oplus (2^-/2^+).\]
(Of course, the $\I$-eigenspace is the dual of the $(-\I)$-eigenspace.)

From this we first see that $6$, $6^*$ and $8$ cannot lie in the socle of $L(E_8){\downarrow_H}$, and second that there are no simple summands of $L(E_8){\downarrow_H}$. By Step 1, we know that something other than $3$, $15_2$ and $18$ lies in the socle of $L(E_8){\downarrow_H}$, so either $39$ or $35$ (up to duality).

Suppose that $39$ lies in the socle of $L(E_8){\downarrow_H}$. The modules $35$ and $39^*$ contribute $P(4^+)\oplus P(4^-)$ to the $(-\I)$-eigenspace, and so no other module with $4^+$ or $4^-$ as a submodule of its $(-\I)$-eigenspace can lie in the socle of $L(E_8){\downarrow_{H}}$. This means that none of $15_2^*$, $18$, $18^*$ and $35^*$ can lie in the socle. Similarly, using the $(-1)$-eigenspace we can also exclude $15_2$. This leaves $3$, $3^*$ and $35$ as possibilities for other composition factors of $\soc(L(E_8){\downarrow_H})$.

As we saw from the radical in (\ref{eq:fiveradicalspsl35}), the socle cannot simply be $39$. Assuming it does lie in the socle of $L(E_8){\downarrow_H}$, we are interested in the $\cf(L(E_8){\downarrow_H})\setminus\{V^\pm,39^\pm\}$-radical of $P(V)$, for $V$ one of $3$, $3^*$, $35$ and $39$, (i.e., submodules of the radicals in (\ref{eq:fiveradicalspsl35})) which are
\[ 18^*/3,\quad 18/3^*,\quad 6/18/6,15_2/35, \quad 18^*,35^*/3,3^*,6^*,8,15_2^*/39\]
There is only one copy of $8$ in the sum of these, so the socle of $L(E_8){\downarrow_H}$ cannot be a subset of $\{3,3^*,35,39\}$ containing $39$, i.e., $39^\pm$ does not lie in $\soc(L(E_8){\downarrow_H})$.

Thus $\soc(L(E_8){\downarrow_H})$ must contain $35$, as all other options are exhausted. The other factors of the socle are some of $3$, $15_2$ and $18$ (as we have excluded $6^\pm$, $8$ and $39^\pm$, the $(-1)$-eigenspace excludes $3^*$ and $15_2^*$, and the $\I$-eigenspace excludes $18^*$). Again, we take the radicals of $P(V)$ but exclude $35^\pm$, and this time we obtain
\[ 6^*,8,8,15_2^*/18^*,39,39^*/3,\quad 3^*,8/39^*/15_2,\]
\[ 39^*/3^*,6,8/18,\quad 3^*,6,8/18,39^*/6,15_2/35,\]
where we repeat the case $V=35$ for the reader's convenience. Only the first contains $6^*$, so $3$ must lie in $\soc(L(E_8){\downarrow_H})$. This means that $15_2$ cannot lie in the socle, as otherwise the $(-1)$-eigenspace would have two copies of $1^+$ as submodules. It also means any composition factors of the first module that lie underneath $6^*$ also must be present. The submodule generated by $6^*$ is
\[ 6^*/18^*,39/3,\]
and so in particular $18$ cannot lie in the socle either.

The sum of the two modules corresponding to $3$ and $35$ contains all composition factors needed, but all $8$s are quotients, i.e., will be quotients of $\rad(L(E_8){\downarrow_H})$. However, they are not submodules of $L(E_8){\downarrow_H}/\soc(L(E_8){\downarrow_H})$, so neither of them can exist in $L(E_8){\downarrow_H}$, which yields a contradiction. 
%
Thus this case cannot occur.

\medskip

\noindent\textbf{Step 4}: Case 2 where $M(D_5){\downarrow_{L'}}=3/1,3/3$.

\medskip\noindent Suppose that we are in Case 2, and that $L'$ acts on $M(D_5)$ as $3/1,3/3$. In this case $W=M(D_3){\downarrow_{L'}}$ is either $3\oplus 3$ or $3/3$. Since the exterior square of $P(3)=3/1,3/3$ is $P(3)^{\oplus 3}\oplus 5^{\oplus 3}$, we see that the $1$-eigenspace of the action of $z$ has one of three possible $L$-structures:
\[(5^+\oplus 5^-)^{\oplus 2} \oplus \begin{cases} P(3^+)\oplus P(3^-)\oplus P(3^+)\oplus 3^+\oplus 3^-\oplus 3^-\oplus 1^-& W=3\oplus 3,\\
P(3^+)\oplus P(3^-)\oplus P(3^-)\oplus 3^+\oplus 3^-\oplus 3^+\oplus 1^+& W=3\oplus 3,\\
 P(3^+)^{\oplus 2}\oplus P(3^-)^{\oplus 2}&W=3/3,\end{cases}\]
and in both cases the action on the $(-1)$-eigenspace is as the previous case, 
\[ P(1^+)\oplus P(1^-)\oplus P(3^+)\oplus P(3^-)\oplus (5^+\oplus 5^-)^{\oplus 3}.\]
We also consider the action of $L'$ on the half-spin modules.

For the module $L(010)$ if $L'$ acts semisimply on $M(D_3)$ then it acts as $3\oplus 1$ on $L(010)$, and (up to duality) as $3/1$ if it acts as $3/3$ on $M(D_3)$. On the other hand, the action on $L(00010)$ has composition factors $4^2,2^4$ and $u$ acts with blocks $5^3,1$ by Table \ref{t:unipd5}; the only option is (up to duality) $P(2)\oplus (2/4)$.

We want to take the tensor product of the two modules $L(010)$ and $L(00010)$: the tensor product of $P(2)$ with $3/1$ or $3\oplus 1$ is $P(4)^{\oplus 2}\oplus P(2)^{\oplus 2}$. For the rest of the module, we need the tensor product of $2/4$ with $1/3$, with $3/1$ and with $1\oplus 3$, which are
\[ P(2)\oplus P(4)\oplus 4,\qquad P(4)^{\oplus 2}\oplus (2/2),\qquad P(4)\oplus (2/4)\oplus (2,4/2),\]
respectively.

As every simple $kH$-module in $L(E_8){\downarrow_H}$ has a $2$-dimensional summand on some eigenspace of $z$, we see that there are no simple summands of $L(E_8){\downarrow_H}$, as in Step 3. If $8$ is a submodule then it must be a summand, since both $18/8$ and $39/8$ (which are the only two composition factors that $8$ has an extension with up to duality) fail to have a trivial summand in their $1$-eigenspace. But $8$ cannot be a summand, so $8$ is not a submodule in this case either. However, $6^\pm$ could in this case be a submodule. By Step 1 a module other than $3^\pm$, $15_2^\pm$ and $18^\pm$ lies in the socle.

Suppose that $39$ lies in the socle of $L(E_8){\downarrow_H}$. In all three options for the $(\pm\I)$-eigenspaces, there are exactly four $4$s in the socle and four in the top. As in Step 3, this therefore excludes any module with a $4$ in the $(-\I)$-eigenspace from being a submodule of $L(E_8){\downarrow_H}$, i.e., $15_2^*$, $18$, $18^*$ and $35^*$. The single $3^-$ in the socle of the $(-1)$-eigenspace precludes $6$ and $15_2$ from being in the socle of $L(E_8){\downarrow_H}$ as well. This leaves $3$, $3^*$, $6^*$ and $35$ as possibilities for other composition factors of $\soc(L(E_8){\downarrow_H})$.

In Step 3 we dealt with all of these except for $6^*$, so we may assume now that $6^*$ also lies in the socle. Thus we need the $\cf(L(E_8){\downarrow_H})\setminus\{V^\pm,6^\pm,39^\pm\}$-radical of $P(V)$, for $V$ one of $3$, $3^*$, $6^*$, $35$ and $39$. These are
\[ 18^*/3,\quad 18/3^*,\quad 18^*,35^*/6^*,\quad 15_2/35, \quad 3,3^*,8,15_2^*/39.\]
Again, there is a single $8$, and so this cannot occur.

If $\soc(L(E_8){\downarrow_H})$ contains $35$ (but not $39$ or $39^*$), then the same analysis as in Step 3 excludes $3^*$, $15_2^*$ and $18^*$ from the socle, and $6^*$ can be excluded via the $(-1)$-eigenspace. This leaves $3$, $6$, $15_2$ and $18$. Using Step 3, we may assume that $6$ also lies in the socle. The appropriate radicals for this case are
\[ 8,8,15_2^*/18^*,39,39^*/3,\quad 3^*,8/18,39^*/6,\quad 3^*,8/39^*/15_2,\quad 3^*,8/18,\quad 15_2/35.\]
Only the first module contains $39$, so $3$ must also lie in the socle. From the $(-1)$-eigenspace, one cannot have both $3$ and $15_2$ in the socle at the same time. Thus the $15_2^*$ in the third layer of $P(3)$ must lie in $L(E_8){\downarrow_H}$. The submodule generated by this factor is $15_2^*/39/3$, so there is also an extension $39^*/15_2$ in $L(E_8){\downarrow_H}$. But this does not appear anywhere in these modules, a contradiction.

Thus neither $39^\pm$ nor $35^\pm$ lies in the socle. We must therefore have $6$ (up to duality) in the socle. This has $3^-$ in the $(-1)$-eigenspace, so $15_2$ cannot also lie in the socle. This leaves $3$, $3^*$, $15_2^*$, $18$ and $18^*$. The appropriate radicals for the corresponding projectives are
\[ 8,8,15_2^*/18^*,39,39^*/3,\quad 8,8,15_2/18,39,39^*/3^*,
\quad 3^*,8,15_2/18,35,39^*/6,\]
\[ 3,8/35^*,39/15_2^*,\quad 3^*,8/18,\quad 3,8/18^*.\]
Since $35^*$ appears only once, we must have $15_2^*$ in the socle of $L(E_8){\downarrow_H}$. The presence of $15_2^*$ in the socle means that $3^*$ cannot be there, from the $(-1)$-eigenspace. From the $1$-eigenspace, we see that neither $18$ nor $18^*$ can be in the socle as well. But now the only module with $18^*$ in is the first, so the socle must be $3\oplus 6\oplus 15_2^*$.

Thus we add any copies of $3^*$, $6^*$ and $15_2$ on top of the modules corresponding to $3$, $6$ and $15_2^*$. However, as we said before, $8$ only has extensions with $18^\pm$ and $39^\pm$, so all $8$s in these modules will remain quotients. Since this is not allowed, we obtain a contradiction.

%
%
%
%
%
%
%
%

\medskip

\noindent \textbf{Step 5}: Case 2 where $M(D_5){\downarrow_{L'}}=(3/3)\oplus 3\oplus 1$.

\medskip\noindent In this case, $\Lambda^2(M(D_5){\downarrow_{L'}})$, which is a summand of the $1$-eigenspace of the action of $z$, is of the form
\[ P(3)\oplus 5^{\oplus 3}\oplus (3/3)\oplus (1,3/1,3)\oplus 3^{\oplus 2}.\]
Whether $M(D_3)$ is $3\oplus 3$ or $3/3$, we don't obtain another $1,3/1,3$ in the $1$-eigenspace of $z$. However, the action of $L$ rather than $L'$ now produces a contradiction: the action of $L$ on this summand is either $1^+,3^-/1^-,3^+$ or $1^-,3^+/1^+,3^-$, but neither of these is self-dual, so $L(E_8){\downarrow_L}$ is not self-dual, a contradiction since $L(E_8)$ is self-dual.

\medskip

\noindent\textbf{Step 6}: Conclusion for Case 2 in non-semisimple case.

\medskip\noindent Since the only self-dual modules with composition factors $3^3,1$ are $P(3)$, $(3/3)\oplus 3\oplus 1$ and $3^{\oplus 3}\oplus 1$ we have that $M(D_5){\downarrow_{L'}}$ is semisimple. Thus $M(D_3){\downarrow_{L'}}=3/3$ and $M(D_5){\downarrow_{L'}}=3^{\oplus 3}\oplus 1$. Thus the action of $L'$ on the $1$-eigenspace of $z$ on $L(E_8)$ is
\[ P(3)\oplus 5^{\oplus 4}\oplus 3^{\oplus 9}\oplus 1^{\oplus 3}.\]
The action of $L'$ on the half-spin modules $L(010)$ and $L(00010)$ are (up to duality) $3/1$ and $4^{\oplus 2}\oplus 2^{\oplus 4}$, and we have
\[ (3/1)\otimes 2=2,4/2,\quad (3/1)\otimes 4=P(4)\oplus (2/4).\]
In particular, $2^-$ is not a summand of the $L'$-action on the $(+\I)$-eigenspace of $z$ on $L(E_8)$, hence $8$ is not a summand of $L(E_8){\downarrow_H}$.

The module $8$ has an extension only with $18,18^*,39,39^*$ from our composition factors. The $1$-eigenspaces of $z$ on $39/8$ and $18/8$ have an $L'$-action containing the summand $3^+/1^+,3^-$, so since there is at most one such subquotient of this in the $1$-eigenspace of $L(E_8){\downarrow_{L'}}$ (and then only if the $P(3)$ is a $P(3^+)$) we must have at least one $8$ summand in $L(E_8){\downarrow_H}$. This is a contradiction, so this possibility cannot occur.

\medskip

\noindent \textbf{Step 7}: $M(D_8){\downarrow_{L'}}$ semisimple and conclusion.

\medskip\noindent The remaining case is where $M(D_8){\downarrow_{L'}}$ is semisimple, which of course splits into two cases, depending on the actions of $L'$ on $M(D_3)$ and $M(D_5)$. In Case 1, let $\mb Y$ be an algebraic $A_1$ subgroup of $D_3D_5$ acting on $M(D_3)$ as $L(4)\oplus L(0)$ and on $M(D_5)$ as $L(4)\oplus L(2)\oplus L(0)^{\oplus 2}$, so containing $L'$ and centralizing $z$. It is easy to compute the action of $\mb Y$ on the $(\pm1)$-eigenspaces of $z$, and these are
\begin{align*} \Lambda^2(M(D_5){\downarrow_{\mb Y}})\oplus \Lambda^2(M(D_3){\downarrow_{\mb Y}})&\cong \left((2/6/2)^{\oplus 2}\oplus 4^{\oplus 3}\oplus 2^{\oplus 3}\oplus 0\right)\oplus \left((2/6/2)\oplus 4\right)
\\ &\cong (2/6/2)^{\oplus 3}\oplus 4^{\oplus 4}\oplus 2^{\oplus 3}\oplus 0
\end{align*}
for the $1$-eigenspace (where we suppress the notation `$L(-)$'), and
\[ M(D_5){\downarrow_{\mb Y}}\otimes M(D_3){\downarrow_{\mb Y}}=(2/6/2)^{\oplus 2}\oplus (0/8/0)\oplus 4^{\oplus 5}\oplus 2\oplus 0^{\oplus 2}.\]
The corresponding actions of $\mb Y$ on the half-spin modules are $L(3)$ and $L(4)^{\oplus 2}\oplus L(2)^{\oplus 2}$, and the tensor product of these is
\[ (3/5/3)^{\oplus 4}\oplus (1/7/1)^{\oplus 2}\oplus 1^{\oplus 2}.\]
If $W$ is a simple submodule of one of the eigenspaces of $z$ on $L(E_8){\downarrow_{L'}}$, then $W$ is stabilized by $\mb Y$ unless $W$ has dimension $5$ and we are considering the $(-1)$-eigenspace, where one of the $5$s in the action of $L'$ lies inside the summand $L(0)/L(8)/L(0)$. Thus we immediately see that if $L(E_8){\downarrow_H}$ possesses a submodule $3^\pm$, $6^\pm$, $8$ or $15_2^\pm$ then this is stabilized by $\mb Y$, and $\gen{H,\mb Y}\neq \mb G$, so $H$ is not Lie primitive. If there is an $18^\pm$ in the socle of $L(E_8){\downarrow_H}$, then we need to be a bit more careful. As $\mb Y$ can be embedded in $D_5$ in such a way that the automorphism inducing $\GL_2(5)$ on $\gen{L',Z}$ also normalizes $\mb Y$, the $+/-$-labelling of the $5$s from $L'$ can be extended to the labelling of $L(4)$s, so there are (say) three with $+$ and two with $-$. This means that if $18^*$ is a submodule then it is stabilized by $\mb Y$, and if $18^*$ is a quotient then the kernel of the map $L(E_8){\downarrow_H}\to 18^*$ is stabilized by $\mb Y$. Either way $H$ is not Lie primitive again.

\medskip

We will show the same statement in Case 2, but this is easier. Now the subgroup $\mb Y$ acts on $M(D_3)$ as $L(2)^{\oplus 2}$ and on $M(D_5)$ as $L(2)^{\oplus 3}\oplus L(0)$. This yields actions on the $(\pm1)$-eigenspaces of $z$ of
\[ \Lambda^2(M(D_5){\downarrow_{\mb Y}})\oplus \Lambda^2(M(D_3){\downarrow_{\mb Y}})=L(4)^{\oplus 4}\oplus L(2)^{\oplus 12}\oplus L(0)^{\oplus 4}\]
and
\[ M(D_5){\downarrow_{\mb Y}}\otimes M(D_3){\downarrow_{\mb Y}}=L(4)^{\oplus 6}\oplus L(2)^{\oplus 8}\oplus L(0)^{\oplus 4},\]
so $L'$ is in fact a blueprint for the $(\pm 1)$-eigenspaces of the action of $z$. For the half-spin modules, the action of $\mb Y$ is $L(2)\oplus L(0)$ and $L(3)^{\oplus 2}\oplus L(1)^{\oplus 4}$, and the tensor product of these is
\[ (L(3)/L(5)/L(3))^{\oplus 2}\oplus L(3)^{\oplus 6}\oplus L(1)^{\oplus 10}.\]
In this case, every simple submodule of $L(E_8){\downarrow_{L'}}$ is stabilized by $\mb Y$, and therefore any copy of $3^\pm$, $6^\pm$, $8$, $15_2^\pm$ or $18^\pm$ in the socle of $L(E_8){\downarrow_H}$ is stabilized by $\mb Y$.

\medskip

Thus either $H$ is Lie imprimitive or the socle of $L(E_8){\downarrow_H}$ must only contain copies of $39^\pm$ and $35^\pm$, as all other modules restrict semisimply to $L'$. It cannot be simple, as we saw with the radicals at the start of the proof, and it cannot be $35^*\oplus 39$ as there is no $15_2$ in the sum of the corresponding radicals. Thus the socle must be $35\oplus 39$. But the $\{3^\pm,6^\pm,8,15_2^\pm,18^\pm\}$-radical of $P(35)$ is
\[ 6/18/6,15_2/35\]
and this, together with the submodule of $P(39)$, only possesses a single $8$, a contradiction. Thus there is another factor in the socle of $L(E_8){\downarrow_H}$, and therefore $H$ is Lie imprimitive. It must also be strongly imprimitive, since either an automorphism of $H$ stabilizes this factor of the socle or it interchanges it with another that is stabilized by the same positive-dimensional subgroup above.

This completes the proof of the following proposition.

\begin{proposition}\label{prop:sl35inE8} Let $H\cong \PSL_3(5)$. If $H$ is a subgroup of $\mb G\cong E_8$ in characteristic $5$, then $H$ is strongly imprimitive.
\end{proposition}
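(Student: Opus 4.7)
The plan is to rule out, case by case, every possible structure for $L(E_8){\downarrow_H}$ with the given composition factors, eventually reducing to a situation where $H$ is visibly contained in a positive-dimensional subgroup. By Proposition \ref{prop:sl3ine8} only the single set of composition factors
\[ 39,39^*,35,35^*,18,18^*,15_2,15_2^*,8^2,6,6^*,3,3^*\]
remains. First I would compute, for each $V\in\{3,15_2,18,35,39\}$, the $\cf(L(E_8){\downarrow_H})\setminus\{V^\pm\}$-radical of $P(V)$; these five modules have very few $6$-dimensional and $8$-dimensional factors, and this scarcity will force the socle of $L(E_8){\downarrow_H}$ to contain a module \emph{not} of dimension $3$, $15$, or $18$. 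This is the first real step: an enumeration of the possible socles, showing that at most one composition factor of each dimension can occur as a non-summand in the socle, and using the above radicals to eliminate all configurations supported only on $\{3^\pm,15_2^\pm,18^\pm\}$.

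Next I would exploit an involution $z\in H$ of order $4$, whose centralizer is $L\cong\GL_2(5)$. Since $z$ has trace $0$ on $L(E_8)$, it lies in the class with centralizer $D_3D_5$; the four eigenspaces of $z$ on $L(E_8)$ are then explicit $\mb X$-modules (exterior squares of the minimal modules on the $\pm1$-eigenspaces, tensor products of minimal modules or half-spin modules on the others). I would tabulate, for each simple $kH$-module appearing in $L(E_8){\downarrow_H}$, its decomposition into $z$-eigenspaces as a $kL$-module. This table is the main computational engine: once we know the $L$-action on every eigenspace, the rigidity of projective covers for $L$ gives strong constraints on which composition factors can sit in the socle or as summands.

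The heart of the argument would be a case analysis of the embedding of $L'\cong \PSL_2(5)$ into $D_3D_5$. Using the known composition factors $5^{10},4^{20},3^{20},2^{24},1^{10}$ of $L(E_8){\downarrow_{L'}}$, only two candidate pairs of actions on $M(D_3)\oplus M(D_5)$ arise. For each of these, and for each choice of indecomposable extension structure on $M(D_3)$ and $M(D_5)$, I would compute the $L'$-action on each eigenspace of $z$ and then the $L$-action (using the $+/-$ labelling of simples forced by self-duality of $L(E_8)$). In the non-semisimple subcases I expect to derive direct contradictions from Jordan block data of a unipotent $u\in L'$ against Table \ref{t:unipe8p5}, or from the fact that certain extension patterns force non-self-dual restrictions $L(E_8){\downarrow_L}$.

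The hard part will be the final, semisimple case where $M(D_8){\downarrow_{L'}}$ is completely reducible. Here I would embed $L'$ into an algebraic $A_1$ subgroup $\mb Y\leq D_3D_5$ acting with the same composition factors on $M(D_3)$ and $M(D_5)$, and verify by direct computation of $\Lambda^2$, tensor products, and half-spin tensor products that every simple $kL'$-submodule of $L(E_8)$ lying in the restrictions of modules of dimension at most $18$ is actually stabilized by $\mb Y$. The expected payoff is that any socle constituent of $L(E_8){\downarrow_H}$ other than $35^\pm$ or $39^\pm$ is $\mb Y$-stable, forcing $\langle H,\mb Y\rangle$ to be proper in $\mb G$. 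A quick check using the radicals from the first step rules out $\soc(L(E_8){\downarrow_H})\subseteq\{35^\pm,39^\pm\}$ (via the single $8$ in the sum of relevant radicals), yielding Lie imprimitivity of $H$. Strong imprimitivity then follows from Theorem \ref{thm:intersectionorbit} applied to the $N_{\Aut^+(\mb G)}(H)$-orbit of the stabilized subspace, as any outer automorphism of $H$ either fixes the factor or permutes it with one stabilized by the same $\mb Y$.
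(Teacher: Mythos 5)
Your proposal follows essentially the same route as the paper's proof: the radical computations forcing a socle constituent outside $\{3^\pm,15_2^\pm,18^\pm\}$, the analysis via the order-$4$ element $z$ with centralizer $\GL_2(5)$ inside $D_3D_5$ and the eigenspace tables, the elimination of the non-semisimple embeddings of $L'$, and the final semisimple case handled by an algebraic $A_1$ subgroup $\mb Y$ stabilizing the relevant socle constituents, with the single $8$ in the radicals ruling out a socle supported on $35^\pm,39^\pm$ and Theorem \ref{thm:intersectionorbit} giving strong imprimitivity. The only minor difference is that the paper disposes of the non-semisimple subcases mostly by eigenspace/self-duality arguments (with one care needed for $5$- and $18$-dimensional submodules in Case 1 of the semisimple step) rather than unipotent Jordan blocks, but this does not change the substance of the argument.
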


\section{\texorpdfstring{$\PSU_3(4)\leq E_8$}{PSU(3,4) < E8}}
\label{sec:diffpsu34}

Let $H\cong \PSU_3(4)$. We left open two possible sets of composition factors for $L(E_8){\downarrow_H}$ in Section \ref{sec:su3ine8}. One we still cannot deal with, but the other, Case 8 from that section, can be solved, and we do so now. The composition factors of $L(E_8){\downarrow_H}$ are
\[ 24_{12},24_{12}^*,24_{21},24_{21}^*,(9_{12},9_{12}^*)^2,(\bar 9_{12},\bar 9_{12}^*)^2,8_1^2,8_2^2,(3_1,3_1^*)^4,(3_2,3_2^*)^4,\]
and we will prove that $H$ is always strongly imprimitive.

The proof of this is long, and involves 81 cases. We proceed step by step, with some preliminaries first to set the stage. Recall that $z$ has order $5$ in $H$, and that $L$ is the centralizer of $z$ in $H$, which is $5\times \Alt(5)$. The trace of $z$ is $-2$, and so $L\leq \mb X=A_4A_4$. The principal block contribution of $L(E_8){\downarrow_L}$ has composition factors $4^4,2_1^6,2_2^6,1^8$, and this requires the composition factors of $L'$ on both $M(A_4)$ to be $2_1,2_2,1$.

Since there is no trivial composition factor in $L(E_8){\downarrow_H}$, any fixed point in $L(E_8){\downarrow_L}$ must come from a submodule $8_i$, as there are no other quotients of the permutation module $P_L$ with no trivial quotient. (We saw the structure of $P_L$ in the proof of Proposition \ref{prop:su3ine8}.) Furthermore, there are nine isomorphism types of module for $M(A_4){\downarrow_{L'}}$, which are the following:
\[2_1\oplus 2_2\oplus 1,\quad (1/2_1)\oplus 2_2,\quad (2_1/1)\oplus 2_2,\quad (1/2_2)\oplus 2_1,\quad (2_2/1)\oplus 2_1,\]
\[ 1/2_1,2_2,\quad 2_1,2_2/1,\quad 2_1/1/2_2,\quad 2_2/1/2_1.\]
We call these modules $1$ to $9$ respectively. In each case, the summands of $L(A_4){\downarrow_{L'}}$ are $4$, $2_i$, $1/2_i/1$ or $P(2_i)$, so the only summands with a trivial submodule or quotient are $1/2_i/1$.

Let $\zeta$ be a primitive $5$th root of unity, and write $V_{\zeta^i}$ for the $\zeta^i$-eigenspace of the action of $z$ on $L(E_8)$, viewed as a module for $L'$. If there are two summands $1/2_i/1$ in $V_1$ then there must be two submodules $8_i$, hence both are summands. To see this, if there is a single submodule $1/2_i/1$ in $V_1$ then there must be a submodule $8_i$, and upon removing it there is no trivial quotient of $V_1$, hence no $8_i$ quotient. This proves that every submodule $8_i$ of $L(E_8){\downarrow_H}$ is a summand.

The composition factors of $L(E_8){\downarrow_{\mb X}}$ are $(1001,0000)$ and $(0000,1001)$ for the $1$-eigenspace of $z$ (we are suppressing the `$L(-)$'), and the other four modules are
\[ (1000,0100),\quad (0100,0001),\quad (0010,1000),\quad (0001,0010).\]
These four modules can be permuted by taking duals and swapping the two $A_4$-factors, so we may assume that the first one is the restriction of the $\zeta$-eigenspace of $z$ on $L(E_8)$ to $\mb X$. With that assignment, the actions of $\mb X$ on the $\zeta$, $\zeta^2$, $\zeta^3$ and $\zeta^4$-eigenspaces must be the modules above in the same order.

The restrictions of the simple $kH$-modules to $L$ are in Table \ref{t:actioneigpsu34again}, which is the same as Table \ref{t:actioneigpsu34}, but has been reproduced here for the reader's benefit.
\begin{table}
\begin{center}
\begin{tabular}{cccccc}
\hline Module & $1$-eig. & $\zeta$-eig. & $\zeta^2$-eig. & $\zeta^3$-eig. & $\zeta^4$-eig.
\\\hline $1$ & $1$ &&&&
\\ $3_1$ & & $1$ & $2_1$ &&
\\ $3_1^*$ & & & &$2_1$&$1$
\\ $3_2$ & & & $1$ && $2_2$
\\ $3_2^*$ && $2_2$ & & $1$ &
\\ $8_1$ & $1/2_2/1$ & $2_1$ &&& $2_1$
\\ $8_2$ & $1/2_1/1$ & & $2_2$ & $2_2$ &
\\ $9_{12}$ & $2_2$ & $4$ & & $1$ & $2_1$
\\ $9_{12}^*$ & $2_2$ & $2_1$ & $1$& & $4$
\\ $\bar 9_{12}$ & $2_1$ & & $2_2$ & $4$ & $1$
\\ $\bar 9_{12}^*$ & $2_1$ & $1$ & $4$ & $2_2$ &
\\ $24_{12}$ & $4$ & $1/2_1/1$ & $P(2_1)$ & $2_2$ & $2_2\oplus 4$
\\ $24_{12}^*$ & $4$ & $2_2\oplus 4$ & $2_2$ & $P(2_1)$ & $1/2_1/1$
\\ $24_{21}$ & $4$ & $2_1$ & $1/2_2/1$ & $2_1\oplus 4$ & $P(2_2)$
\\ $24_{21}^*$ & $4$ & $P(2_2)$ & $2_1\oplus 4$ & $1/2_2/1$ & $2_1$
\\ \hline
\end{tabular}
\end{center}
\caption{Actions of $L'\cong \Alt(5)$ on each $\zeta^i$-eigenspace of the action of $z$}
\label{t:actioneigpsu34again}
\end{table}

Let $M_1$ be the action of $L'$ on the first $M(A_4)$, and $M_2$ be the action of $L'$ on the second $M(A_4)$. A priori we have nine options for each of $M_1$ and $M_2$, so $81$ possible options for the action of $L$ on $L(E_8)$ (some might be isomorphic). We write $(i,j)$ to mean that $M_1$ is module $i$ from the list above, and $M_2$ is module $j$.

Replacing $M_1$ by $M_2$ and $M_2$ by $M_1^*$ yields a cycle of length four on the $kL'$-modules $V_{\zeta^i}$, and this corresponds to taking the field automorphism of $H$ that maps $3_1\mapsto 3_2\mapsto 3_1^*\mapsto 3_2^*$. Since this operation permutes the possible modules $L(E_8){\downarrow_H}$, we may consider a single representative from each orbit under this action. It is easy to see that the action of this $4$-cycle on the $81$ pairs $(i,j)$ has twenty cycles of length $4$ and a single fixed point $(1,1)$. The particular orbit representatives that we use (to make our presentation easier) are
\[ (1,1),\;(2,2),\;(4,4),\;(8,1),\;(8,2),\;(8,3),\;(8,4),\;(8,5),\;(1,6),\;(2,4),\;(2,5),\]
\[(2,6),\;(2,7),\;(1,4),\;(1,2),\;(4,6),\;(4,7),\;(6,6),\;(6,9),\;(8,8),\;(6,8).\]
(The ordering chosen here is the order in which we solve the cases.) All but the last one are easy to exclude, and the majority of the proof will be on Case $(6,8)$.

We are also interested in the $\{9_{12}^\pm,\bar 9_{12}^\pm\}$-heart $W_0$ of $L(E_8){\downarrow_H}$. For this we take the $\{1\}'$-radical of $P(9_{12})$, then take its $\{9_{12}^\pm,\bar 9_{12}^\pm\}'$-residual. This has structure
\[ 9_{12}/3_1^*/3_2,9_{12}^*/3_1,3_1^*,8_2/3_2^*,9_{12},24_{21}^*/3_1,3_1^*/3_2,3_2^*,9_{12}^*/3_1^*,8_2/9_{12}.\]
This has three copies of $9_{12}$ in it, and we must have exactly two in $W_0$. The $\{9_{12}\}'$-residual of it has simple top, and so we cannot have the $9_{12}$ in the highest socle layer. Removing this and again taking the $\{9_{12}^\pm,\bar 9_{12}^\pm\}$-residual yields the module
\begin{equation} 9_{12}^*/3_1,8_2/3_2^*,9_{12}/3_1,3_1^*/3_2,9_{12}^*/3_1^*,8_2/9_{12}.\label{eq:912radical}\end{equation}
Notice that this has no $8_1$, no $\bar 9_{12}^\pm$ and no $24_{i,j}^\pm$. The $\{9_{12}^\pm,\bar 9_{12}^\pm\}$-heart of $L(E_8){\downarrow_H}$ must be a submodule of the sum of the four cognates of this module, so in particular it has no copies of $24_{i,j}^\pm$ in it.

The equivalent module with socle $\bar 9_{12}$ is
\[ \bar 9_{12}^*/3_2^*,8_1/3_1^*,\bar 9_{12}/3_2,3_2^*/3_1,\bar 9_{12}^*/3_2,8_1/\bar 9_{12}.\]
Notice that $W_0$ is the sum of its $\{9_{12}^\pm\}'$- and $\{\bar 9_{12}^\pm\}'$-radicals, so we may just focus on one, say $9_{12}$. If $W_0$ does not have a summand the whole of the above module, then the socle must contain either $9_{12}\oplus 9_{12}^*$ or $9_{12}^{\oplus 2}$, and in neither case is there a $9$-dimensional factor in the heart of the module. Taking the $\{3_i^\pm,8_2\}$-radical (and the $9_{12}$ in the socle) of the module above yields
\[ 3_1^*/3_2/3_1^*,8_2/9_{12}.\]
One may place a $9_{12}$ in the fifth socle layer, on top of the $3$s but not the $8_2$, or a $9_{12}^*$ in the third socle layer, on top of the $8_2$ but not any of the $3$s. Moreover, since $W_0$ is self-dual, if we see the uniserial module
\[ 9_{12}/3_1^*/3_2/3_1^*/9_{12}\]
as a subquotient then we must also have its dual.

Thus we see that the number of $3$s in this module is either six or none, and therefore the number of $3$s in $W_0$ is twelve, six or none.

\medskip

\noindent \textbf{Step 1}: Case $(1,1)$.

\medskip\noindent The first case to deal with is where $L$ acts semisimply on both $M_i$. In this case $V_1$ is
\[ (1/2_1/1)^{\oplus 2}\oplus (1/2_2/1)^{\oplus 2}\oplus 4^{\oplus 4}\oplus 2_1^{\oplus 4}\oplus 2_2^{\oplus 4}.\]
This means that all $8$-dimensional factors of $L(E_8){\downarrow_H}$ are summands, and with that restriction any $248$-dimensional module with the correct composition factors yields this module. The $V_{\zeta^i}$ for $i=1,2,3,4$ are isomorphic, and are
\[ P(2_1)\oplus P(2_2)\oplus 4^{\oplus 3}\oplus (1/2_1/1)\oplus (1/2_2/1)\oplus 2_1^{\oplus 3}\oplus 2_2^{\oplus 3}\oplus 1^{\oplus 2}.\]

Let $\mb Y$ be the $A_1$ subgroup of $A_4A_4$ acting on each $M(A_4)$ as $L(0)\oplus L(1)\oplus L(2)$. The action of $\mb Y$ on the $1$-eigenspace is
\[ (0/2/0)^{\oplus 2}\oplus (0/4/0)^{\oplus 2}\oplus 1^{\oplus 4}\oplus 2^{\oplus 4}\oplus 3^{\oplus 4}.\]
(Here, as often before, we have removed the `$L(-)$'.) The subgroup $\mb Y$ contains $L'$, and we see that the $\{1\}'$-residual as a $kL'$-module and the $\{L(0)\}'$-residual as a $k\mb Y$-module are the same subspace of $L(E_8)$. Inside this subspace, $L'$ and $\mb Y$ stabilize the same subspaces, and so $\mb Y$ stabilizes the $1$-eigenspace of $z$ on any submodule $8_i$.

The action of $\mb Y$ on each $\zeta^i$-eigenspace is also the same, and is
\[ (2/0/4/0/2)\oplus (1/5/1)\oplus (0/2/0)\oplus (0/4/0)\oplus 3^{\oplus 3}\oplus 1^{\oplus 3}\oplus 2^{\oplus 3}\oplus 0^{\oplus 2}.\]
Comparing these two modules, we see that every semisimple $kL$-submodule of $V_\zeta$ is also a semisimple $k\mb Y$-module. We can see that $\mb Y$ and $L'$ do not stabilize exactly the same subspaces, since there is a composition factor $L(5)$ for example, but $\mb Y$ will stabilize any submodule $8_i$ of $L(E_8){\downarrow_H}$.

Thus $\mb Y$ stabilizes all $8$-dimensional simple submodules of $L(E_8){\downarrow_H}$, so $H$ is strongly imprimitive by Theorem \ref{thm:intersectionorbit}.

\medskip

\noindent \textbf{Step 2}: Cases $(2,2)$ and $(4,4)$.

\medskip\noindent We deal with $(4,4)$ first. In the case $M_1\cong M_2\cong (1/2_2)\oplus 2_1$, we see that $L(E_8){\downarrow_H}$ has a summand $8_1^{\oplus 2}$, but has no submodule $8_2$. As in the previous case, we will show that these are stabilized by an $A_1$-subgroup of $A_4A_4$, and therefore $H$ is strongly imprimitive.

Let $\mb Y$ act on each $M(A_4)$ as $(L(0)/L(2))\oplus L(1)$, so that $\mb Y$ contains $L'$. The action of $\mb Y$ on the $1$-eigenspace of $z$ on $L(E_8)$ is
\[ (2/0/4/0/2)^{\oplus 2}\oplus (0/2/0)^{\oplus 2}\oplus 3^{\oplus 4}\oplus 1^{\oplus 4}.\]
As in the previous case, every $kL'$-submodule of the form $1/2_2/1$ is the restriction of a $k\mb Y$-submodule of the form $0/2/0$, so the $1$-eigenspace of $z$ on any $kH$-submodule $8_1$ of $L(E_8){\downarrow_H}$ is stabilized by $\mb Y$.

The $\zeta$-eigenspace of the actions of $L'$ and $\mb Y$ are
\[ P(2_1)\oplus P(2_2)^{\oplus 2}\oplus 4^{\oplus 3}\oplus (1/2_2/1)\oplus (1/2_2) \oplus 2_1^{\oplus 3}\oplus 1\]
and
\[(1/5/1)\oplus (2/0/4/0/2)^{\oplus 2}\oplus 3^{\oplus 3}\oplus (0/2/0)\oplus (0/2)\oplus 1^{\oplus 3}\oplus 0.\]
We see that every $kL'$-submodule of the $\zeta$-eigenspace, such that either it or whose quotient by it is simple, is stabilized by $\mb Y$. Hence, every simple $kL'$-submodule of the $\zeta$- and $\zeta^4$-eigenspaces is stabilized by $\mb Y$, and so therefore is any submodule $8_1$ of $L(E_8){\downarrow_H}$. This proves that $H$ is strongly imprimitive.

For the case $(2,2)$, where $8_2^{\oplus 2}$ is a summand of $L(E_8){\downarrow_H}$, we apply a single Frobenius automorphism to the case $(4,4)$. This means that $M_1$ and $M_2$ are both $(1/2_1)\oplus 2_2$, and $\mb Y$ acts on both $M(A_4)$ as $(L(0)/L(4))\oplus L(2)$. (Note that this embedding works, whereas the more obvious overgroup of $L'$, acting as $(L(0)/L(1))\oplus L(2)$, has a different submodule structure.)

As $\mb Y$ is simply a Frobenius twist of the previous case, the same argument holds for the $1$-eigenspace, so we now consider the $\zeta^2$-eigenspaces of $L'$ and $\mb Y$. These are
\[ P(2_1)\oplus P(2_2)\oplus 4^{\oplus 3}\oplus (1/2_2/1,2_1/1,2_1)\oplus (1/2_1/1)\oplus (2_1/1)\oplus 2_2^{\oplus 3}\]
(the dual of $2_1/1,1/2_1,2_2/1$ is one of these summands) and
\[ (4/0/4/0/4)\oplus (2/10/2)\oplus 6^{\oplus 3}\oplus (0/8/0,4/0,4)\oplus  (0/4/0)\oplus (4/0)\oplus 2^{\oplus 3}.\]
Clearly every simple submodule of the $L'$ action is stabilized by $\mb Y$, and for simple quotients, one needs only to understand the module $0/8/0,4/0,4$, which is the dual of $4/0,0/4,8/0$, so every simple quotient is also stabilized. Therefore in particular each submodule $8_2$ of $L(E_8){\downarrow_H}$ is stabilized by $\mb Y$, and $H$ is strongly imprimitive again.

\medskip

\noindent \textbf{Step 3}: Cases $(8,1)$, $(8,2)$, $(8,3)$, $(8,4)$, $(8,5)$.

\medskip\noindent We are in the case $M_1\cong 2_1/1/2_2$. The exterior square of $M_1$ is $4\oplus (1/2_1,2_2/1)$, and the tensor product of this with the dual of any of our nine modules above is
\[ P(2_1)^{\oplus 2}\oplus P(2_2)^{\oplus 2}\oplus 4^{\oplus 3}\oplus (1/2_1,2_2/1).\]
In particular it has no summand $2_i$. Since this is the $\zeta$-eigenspace, we see that $L(E_8){\downarrow_H}$ can have no simple summand whose $\zeta^2$-eigenspace is $2_2$, for example $8_2$.

The tensor products $M_1\otimes \Lambda^2(M_2)$ (i.e., the $\zeta$-eigenspace) do depend on the isomorphism type of $M_2$, but again in all cases there is no summand $2_i$. Hence there can be no summand $8_i$ in $L(E_8){\downarrow_H}$ in this case, so in particular $M_2$ cannot have a summand $2_i$ for $i=1,2$. However, in the first five possibilities for $M_2$ we do indeed have a summand $1/2_i/1$ for some $i$ in the $1$-eigenspace.

This yields a contradiction.

\medskip

\noindent \textbf{Step 4}: Case $(1,6)$.

\medskip\noindent Since $M_1$ is semisimple, we have both $1/2_i/1$ in the tensor product $M_1\otimes M_1^*$. Thus $8_1\oplus 8_2$ is a summand of $L(E_8){\downarrow_H}$. 

However, the tensor product of $\Lambda^2(M_1)$ with $M_2^*$ does not have a $2$-dimensional summand. This contradicts the fact that $8_2$ is a summand. Thus we obtain a contradiction in this case.

\medskip

\noindent \textbf{Step 5}: Cases $(2,4)$, $(2,5)$, $(2,6)$ and $(2,7)$, and Case $(1,4)$.

\medskip\noindent Let $M_1\cong (1/2_1)\oplus 2_2$. When $M_2$ is either $2_1,2_2/1$ or $1/2_1,2_2$, $V_{\zeta^2}$ has no $2$-dimensional summand. Since $M_1$ has a summand $2_2$, there must be a summand $8_2$ in $L(E_8){\downarrow_H}$, and hence a summand $2_2$ in $V_{\zeta^2}$, which yields a contradiction. A similar statement holds when $M_2\cong (1/2_2)\oplus 2_1$ or its dual, so $(2,4)$ and $(2,5)$. Now we expect to have both an $8_1$ summand and an $8_2$ summand, but in both cases the only $2$-dimensional summand of $M_1\otimes \Lambda^2(M_2)$, which is $V_\zeta$, is $2_2$, whereas we need a $2_1$ coming from the restriction of the $8_1$. Thus these two pairs cannot occur. We also can exclude $M_1$ semisimple and $M_2$ being $(1/2_2)\oplus 2_1$, so pair $(1,4)$, since in this case we obtain no $2_2$ summand in $V_{\zeta^2}$, but we should have a summand $8_2$.

\medskip

\noindent \textbf{Step 6}: Case $(1,2)$.

\medskip\noindent Let $M_1\cong 1\oplus 2_1\oplus 2_2$ and $M_2\cong (1/2_1)\oplus 2_2$. The module $V_{\zeta^2}$ is
\[ P(2_1)\oplus P(2_2)\oplus 4^{\oplus 3}\oplus (1/2_2/1/2_1)\oplus (1/2_1/1)\oplus (2_1/1)^{\oplus 2}\oplus 2_2^{\oplus 3},\]
and the $P(2_1)$ is contributed by the $24_{12}$, so can be ignored. The composition factor $24_{21}$ cannot be a submodule of $L(E_8){\downarrow_H}$ since there is no submodule $1/2_2/1$ in the module above. However, it must be a quotient: the $L$-action of the $\zeta^2$-eigenspace of the module $3_1/24_{21}$ is $2_1/1/2_2/1$, which is not a subquotient of $V_{\zeta^2}$ (excluding the $P(2_1)$). But this is the only extension between $24_{21}$ and another simple $kH$-module, we see that $24_{21}$ is a quotient of $L(E_8){\downarrow_H}$. Hence $24_{21}^*$ is a submodule. However, $24_{21}^*$ contributes a $2_1$ to $V_{\zeta^2}$, and having ignored the $P(2_1)$, there is a single submodule $2_1$ of $V_{\zeta^2}$. Therefore in the quotient module $L(E_8){\downarrow_H}/24_{21}^*$, the $1/2_2/1/2_1$ in the $\zeta^2$-eigenspace of the $L$-action becomes a $1/2_2/1$; i.e., the extension $24_{21}/3_1$ is not a quotient of $L(E_8){\downarrow_H}$, and therefore $24_{21}$ is a summand. But this is a contradiction as $24_{21}{\downarrow_L}$ is not a summand of $L(E_8){\downarrow_L}$ from the description of $V_{\zeta^2}$.

\medskip

\noindent \textbf{Step 7}: Cases $(4,6)$, $(4,7)$, $(6,6)$ and $(6,9)$.

\medskip\noindent These cases proceed very similarly to the previous one: for $(4,7)$, $V_\zeta$ has structure
\[ P(2_1)\oplus P(2_2)\oplus 4^{\oplus 3}\oplus (2_2/1,1/2_1,2_2/1)\oplus (1/2_1,2_2)\oplus (2_1/1/2_2/1)\oplus 2_1,\]
and we may ignore the $P(2_2)$ as it lies inside $24_{21}^*$. This time the lack of a subquotient $1/2_1/1/2_2$ means that $24_{12}$ must be a submodule, hence $24_{12}^*$ a quotient, and this quotient must contain the only $2_2$ in the top of $V_\zeta$. We obtain a contradiction in the same way as the previous case.

For $(4,6)$, $(6,6)$ and $(6,9)$, we highlight what needs to be changed in the previous proof. The module $V_{\zeta^2}$ is
\[ P(2_1)\oplus P(2_2)\oplus (1/2_2/1/2_1)\oplus (2_1,2_2/1)\oplus (1/2_1/1,2_1,2_2/1,2_2)\oplus 4^{\oplus 3},\]
\[ P(2_1)\oplus P(2_2)\oplus (1/2_2/1,2_1,2_2/1,2_1)\oplus (1/2_1/1,2_1,2_2/1,2_2)\oplus 4^{\oplus 3}\]
and
\[ P(2_1)\oplus P(2_2)^{\oplus 2}\oplus 4^{\oplus 3}\oplus (1/2_1,2_2/1,1/2_1,2_2)\oplus (2_1/1)\]
respectively. We ignore the $P(2_1)$ this time, and there is no subquotient $2_1/1/2_2/1$ means $24_{21}$ is a quotient, so $24_{21}^*$ is a submodule, and this removes the (unique) $2_1$ in the socle of the above module. Again, a contradiction ensues.

\medskip

\noindent \textbf{Step 8}: Case $(8,8)$.

\medskip\noindent In this case both $M_1$ and $M_2$ are $2_1/1/2_2$, and all $V_{\zeta^i}$ for $i=1,2,3,4$ are isomorphic to
\[ 4^{\oplus 3}\oplus P(2_1)^{\oplus 2}\oplus P(2_2)^{\oplus 2}\oplus (1/2_1,2_2/1),\]
with the last summand being self-dual. Clearly $24_{i,j}^\pm$ is not a submodule of $L(E_8){\downarrow_H}$ as $1/2_i/1$ is not a submodule or quotient of this, and as the $\{9_{12}^\pm,\bar 9_{12}^\pm\}$-heart of $L(E_8){\downarrow_H}$ cannot contain all the $3$-dimensionals (or the $24$-dimensionals) but does contain all $8_i$, we need a module of dimension $3$ in the socle. We assume that it is $3_1$, but the same proof works in all cases with a change of eigenspace of $z$. In this case, we remove one $P(2_1)$ from $V_{\zeta^2}$ as it lies inside $24_{12}$, and see that there is now a unique $2_1$ in the socle of $V_{\zeta^2}$, so the quotient by this $3_1$ does not have a subquotient $1/2_2/1/2_1$. Thus $24_{21}$ is a submodule of the quotient, so $24_{21}/3_1$ is a submodule of $L(E_8){\downarrow_H}$. In particular, $3_1^*/24_{21}^*$ is a quotient of $L(E_8){\downarrow_H}$---call the kernel of this quotient $W$---and the $\zeta$-eigenspace of $z$ on this module has $L'$-action $2_1\oplus 4$, i.e., the unique $2_1$ quotient in $V_{\zeta^2}$ comes from this quotient module. But now we argue as in the case $(4,7)$, and see that $24_{21}$ must be a quotient of the submodule $W$, hence a quotient of $L(E_8){\downarrow_H}$, which is a contradiction.

\medskip

\noindent \textbf{Step 9}: Case $(6,8)$ and conclusion.

\medskip\noindent This time $V_\zeta$ is as in the previous case,\[ P(2_1)^{\oplus 2}\oplus P(2_2)^{\oplus 2}\oplus (1/2_1,2_2/1)\oplus 4^{\oplus 3},\]
but $V_{\zeta^2}$ is now
\[ P(2_1)^{\oplus 2}\oplus P(2_2)\oplus (1/2_1,2_2/1,1/2_1,2_2)\oplus (2_2/1)\oplus 4^{\oplus 3}.\]
Notice that this means that each $V_{\zeta^i}$ for $i=1,2,3,4$ has a unique trivial submodule.

As with the previous case, we cannot have a submodule $24_{12}^\pm$ or $24_{21}$, or $8_1$ or $8_2$. By the same argument as the previous case, we cannot have a submodule that is $3_2^\pm$ either. However, there could be a submodule $24_{21}^*$.

Let $W$ denote the $\{9_{12}^\pm,\bar 9_{12}^\pm\}'$-radical of $L(E_8){\downarrow_H}$. (This is non-zero since we know that the socle cannot consist solely of $9$s from the start of this proof.) Note that $W$ must contain at least one $3_1^\pm$ and one $3_2^\pm$, and at least one $24_{12}^\pm$ and at least one $24_{21}^\pm$.

For some possible socles we cannot even build a radical that works. For others we will need to (try to) add $W_0$ on top to produce a contradiction.

We now work based on the socle of $W$. Since $V_{\zeta^i}^{L'}$ is $1$-dimensional, we see that the socle of $W$ is one of
\[ 24_{21}^*,\quad 24_{21}^*\oplus 3_1,\quad 24_{21}^*\oplus 3_1\oplus 3_1^*,\quad 24_{21}^*\oplus 3_1^*,\quad 3_1,\quad 3_1^*,\quad 3_1\oplus 3_1^*.\]

\noindent \textbf{The case where $\soc(W)=24_{21}^*$.}

\medskip \noindent The $\{3_i^\pm,24_{i,j}^\pm\}$-radical of $P(24_{21}^*)$ is
\[ 3_1^*/3_2/3_1^*/24_{21}^*.\]
We need more modules in $\soc(W)$, as there is only one $24$-dimensional factor in this radical. Thus $24_{21}^*$ cannot be the socle.

\medskip

\noindent \textbf{The cases where $\soc(W)=24_{21}^*\oplus 3_1$ and $24_{21}^*\oplus 3_1\oplus 3_1^*$.}

\medskip \noindent Suppose that the socle of $W$ contains $3_1\oplus 24_{21}^*$. The $\zeta^2$-eigenspace of this module has $L$-action $2_1^{\oplus 2}$, and therefore in the quotient module $L(E_8){\downarrow_H}/(3_1\oplus 24_{21}^*)$ we find no subquotient $1/2_2/1/2_1$ in the $\zeta^2$-eigenspace. But now this means---as we have seen several times up to this point---that the $24_{21}$ quotient is a summand of $L(E_8){\downarrow_H}/(3_1\oplus 24_{21}^*)$, hence a summand of $L(E_8){\downarrow_H}$, which is a contradiction.

\medskip

\noindent \textbf{The case where $\soc(W)=24_{21}^*\oplus 3_1^*$.}

\medskip \noindent We construct the $\{3_i^\pm,24_{12}^\pm\}$-radical of $P(3_1^*)$, which is
\[ 3_2^*/3_1,3_1^*,24_{12}/3_2,3_2^*/3_1^*.\]
We must have that $24_{12}$ factor in $W$, and this requires the $3_2^*$ in the second socle layer of $P(3_1^*)$. However, the submodule $3_1^*/3_2/3_1^*$ of this radical has a $\zeta^4$-eigenspace $1/2_2/1$, and this is not a submodule of $V_{\zeta^4}$ (not a quotient of $V_{\zeta}=V_{\zeta^4}^*$). Hence that $3_1^*$ in the third layer, hence the $3_2^*$ in the fourth layer, cannot lie in $W$.

Thus we have a guaranteed submodule $W'$ of $W$, $(24_{12}/3_2^*/3_1^*)\oplus 24_{21}^*$, and a pyx for $W$,
\[ (3_1,24_{12}/3_2,3_2^*/3_1^*)\oplus (3_1^*/3_2/3_1^*/24_{21}^*).\]
Note that the $\zeta^3$- and $\zeta^4$-eigenspaces of $W'$ have a trivial submodule, hence there can be no others in $L(E_8){\downarrow_H}$. In particular, this means that $9_{12}$ and $\bar 9_{12}$ cannot be submodules of $L(E_8){\downarrow_H}$.

It is not possible to place a copy of $9_{12}^*$ on top of the pyx for $W$, and so any copy of $9_{12}^*$ in the socle of $W_0$ becomes a copy in the socle of $L(E_8){\downarrow_H}$. Therefore there is at most one copy of $9_{12}^*$ in the socle of $W_0$.

If there is a copy of $9_{12}$ in the socle of $W_0$, then it must lie above $W$. It is possible to place two copies of $9_{12}$ on top of the pyx for $W$. Taking the $\{9_{12}\}'$-radicals of them, and including the $24_{12}$, we obtain the module
\[ 24_{12}/3_2^*,9_{12}/3_1^*,\quad 9_{12}/3_1^*/3_2/3_1^*/24_{21}^*.\]
The $9_{12}$ cannot lie solely in the first module as that $9_{12}$ furnishes the $\zeta^3$-eigenspace with another fixed point. Thus either the $9_{12}$ lies solely over the second module or is diagonally embedded.

One cannot extend the second copy of $9_{12}$ by $3_1^*/3_2/3_1^*$, and hence $W_0$ cannot contain a summand as in (\ref{eq:912radical}). In particular, this means that the socle of $W_0$ cannot be just $9_{12}$, and must contain both $9_{12}$ and $9_{12}^*$, or just $9_{12}^*$.

If $\soc(W_0)=9_{12}^*$ then there must be a submodule $3_2^*/3_1/9_{12}^*$ (by applying the appropriate automorphism to the module in (\ref{eq:912radical})). Adding as many copies of $3_1$ and then $3_2^*$ on top of the pyx for $W$ as possible, we manage no copies of $3_2^*$ at all. Thus $L(E_8){\downarrow_H}$ must have a submodule
\[ (24_{12}/3_2^*/3_1^*)\oplus 24_{21}^*\oplus (3_2^*/3_1/9_{12}^*).\]
The $\zeta^3$-eigenspace of this has two $L'$-fixed points though: one from the second summand and one from the third.

Thus $\soc(W_0)$ must in fact contain $9_{12}\oplus 9_{12}^*$. Note that, in the pyx for $W$, the copy of $3_2$ in the second socle layer of the summand with socle $3_1^*$ cannot occur in $W$, because that contributes another $L'$-fixed point to $V_{\zeta^2}$. So we now have a precise description of $W$, namely
\[ (24_{12}/3_2^*/3_1)\oplus (3_1^*/3_2/3_1^*/24_{21}^*).\]
Together with $9_{12}^*$, this provides the trivial submodule for $V_{\zeta^2}$, $V_{\zeta^3}$ and $V_{\zeta^4}$.

On $W$ we may place one copy of $\bar 9_{12}$, but the trivial from its $\zeta^4$-eigenspace falls into the socle of $V_{\zeta^4}$, so this cannot occur. We may also place one copy of $\bar 9_{12}^*$, and again the trivial from its $\zeta$-eigenspace falls into the socle of $V_\zeta$. In particular, this means that the socle of $W_0$ must be exactly $9_{12}\oplus 9_{12}^*\oplus \bar 9_{12}^*$.

\medskip

We now show that the $\bar 9_{12}^*$ must actually lie in the socle of $L(E_8){\downarrow_H}$. Suppose not: we add on top of $W$ the $\bar 9_{12}^*$, then the $3_2^*$ then the $3_1^*$ that is a submodule of $W_0$. Doing this with all possible copies of the modules yields
\[ (3_1^*/3_2^*/\bar 9_{12}^*,24_{12}/3_2^*/3_1^*)\oplus (3_1^*/3_2/3_1^*/24_{21}^*).\]
However, the first summand of this has $\zeta^4$-eigenspace
\[ (1/2_2/1)\oplus 4,\]
which is not a submodule of $V_{\zeta^4}$.

We also reach a contradiction if $\bar 9_{12}^*$ is a submodule. Then again we take the module $W\oplus \bar 9_{12}^*$, and then add on top the unique copy of $3_2^*$ and then the unique copy of $3_1^*$, and this now forms the module
\[ (24_{12}/3_2^*/3_1^*)\oplus (3_1^*/3_2/3_1^*/24_{21}^*)\oplus (3_1^*/3_2^*/\bar 9_{12}^*).\]
Both the first and third summand have a trivial submodule for the $L'$-action on the $\zeta^4$-eigenspace, so this cannot be a submodule of $L(E_8){\downarrow_H}$ either.

We have finally reached a complete contradiction.

\medskip

\noindent \textbf{The case where $\soc(W)=3_1$.}

\medskip \noindent The $\{3_i^\pm,24_{i,j}^\pm\}$-radical $W'$ of $P(3_1)$ is
\[3_1/3_2,3_2^*,24_{21}/3_1,3_1,3_1^*,24_{12}^*/3_2,3_2^*,24_{21}/3_1.\]
Since $W$ is a submodule of this, the $24_{12}^*$ and one of the copies of $24_{21}$ in the module above lie in $W$. Since $W'$ has simple top and two copies of $24_{21}$, the $3_1$ in the fifth socle layer of $W'$ does not lie in $W$.

We also cannot have the submodule $3_1/3_2^*/3_1$ of $W'$ in $W$ as it has a $\zeta$-eigenspace $1/2_2/1$, which isn't a submodule of $V_\zeta$. This module is a submodule of the minimal submodule of $W'$ supporting the $3_2$ in the fourth socle layer, hence that does not lie in $W$ either, and the same holds for the $24_{21}$ in the fourth socle layer as well. We may therefore replace $W'$ by the module
\[ 3_2^*/3_1,3_1^*,24_{12}^*/3_2,3_2^*,24_{21}/3_1.\]
One cannot place $9_{12}$ on top of this module, and can place a unique copy of $9_{12}^*$ on top of this module, namely
\[ 3_2^*/3_1,3_1^*,24_{12}^*/3_2,3_2^*,9_{12}^*,24_{21}/3_1.\]
This has a $2$-dimensional $L'$-fixed space on its $\zeta^2$-eigenspace: one from the $9_{12}^*$ and one from the sum of the $3_2$ and $24_{21}$. The $3_2$ supports the $24_{12}^*$, so needs to be in $W$, and so we cannot have a $9_{12}^*$ in the socle of $W_0$. Thus the socle of $W_0$ must contain exactly $9_{12}$ and modules $\bar 9_{12}^\pm$. In particular, $9_{12}$ lies in the socle of $L(E_8){\downarrow_H}$.

Since there is a single $9_{12}$ and no $9_{12}^*$ in $\soc(W_0)$, we must have that the $\{9_{12}^\pm\}$-heart of $W_0$ is the full module from (\ref{eq:912radical})
\[ 9_{12}^*/3_1,8_2/3_2^*,9_{12}/3_1,3_1^*/3_2,9_{12}^*/3_1^*,8_2/9_{12}.\]
We build this module from the ground up, on top of the module $W\oplus 9_{12}$. We add all copies of $3_1^*$, then $3_2$, then $3_1^*$, then $9_{12}$, on top of our pyx for $W$, but this does not affect the pyx at all, as there are no such extensions. Thus $L(E_8){\downarrow_H}$ contains a submodule
\[ (24_{12}^*/3_2,24_{21}/3_1)\oplus (3_1^*/3_2/3_1^*/9_{12}).\]
Both of these summands have an $L'$-fixed point on the $\zeta^2$-eigenspace, and so we obtain a contradiction in this case.

\medskip

\noindent \textbf{The case where $\soc(W)=3_1^*$.}

\medskip \noindent Notice that in the previous proof, every statement also holds if we apply a field automorphism, i.e., replace every composition factor of every module by its dual. This swaps the $\zeta$- and $\zeta^4$-eigenspaces, but neither of these has a submodule $1/2_2/1$, and swaps the $\zeta^2$- and $\zeta^3$-eigenspaces, but neither of these has $1^{\oplus 2}$ as a submodule. Thus the exact proof above still works.

\medskip

\noindent \textbf{The case where $\soc(W)=3_1\oplus 3_1^*$.}

\medskip \noindent The presence of $3_1\oplus 3_1^*$ as a submodule means that neither $\bar 9_{12}$ nor $\bar 9_{12}^*$ lies in $\soc(L(E_8){\downarrow_H})$, as we would again have two trivial submodules on the $\zeta^{\pm 1}$-eigenspace of $z$. However, (at least) one of them must lie in the socle of $W_0$, so we have to be able to place one of them on top of $W$. We will show that this is impossible in both cases, providing the final contradiction. We first must restrict the structure of $W$.

The module $W$ is a submodule of $W'$, which is 
\[ (3_2^*/3_1,3_1^*,24_{12}^*/3_2,3_2^*,24_{21}/3_1)\oplus (3_2/3_1,3_1^*,24_{12}/3_2,3_2^*,24_{21}^*/3_1^*).\]
We will assume that $24_{12}^*$ lies in $W$, as either $24_{12}$ or $24_{12}^*$ must do so, and the same proof works with all factors replaced by their duals in the other case. Thus $W$ must contain 
\[(24_{12}^*/3_2,24_{21}/3_1)\oplus 3_1^*\quad\text{or}\quad (24_{12}^*/3_2/3_1)\oplus (24_{21}^*/3_1^*).\]
Each summand of $W'$ can serve as the submodule for an extension with quotient either $\bar 9_{12}$ or $\bar 9_{12}^*$. We deal with $\bar 9_{12}$ first, and note that to support the $\bar 9_{12}$ on top, each summand of $W'$ needs the $3_2$ in the second socle layer.

We examine the $\zeta^4$-eigenspaces, which for $\bar 9_{12}$ is simply $1$, and for the two modules above are
\[ P(2_2)\oplus (1/2_1/1/2_1)\oplus 1\quad\text{and}\quad (1/2_1/1/2_2)\oplus (2_1/1)\]
respectively. Adding on the $3_2$ to the second summand in each case yields
\[ P(2_2)\oplus (1/2_1/1/2_1)\oplus (2_2/1)\quad\text{and}\quad (1/2_1/1/2_2)\oplus (2_1,2_2/1).\]
On each of these four modules we must add one or two trivial modules, but we cannot have two trivial submodules in total, and we cannot have a submodule $1/2_i/1$ for $i=1,2$. Thus we must be in the last case, where we have a submodule $1/2_1,2_2/1$. However, adding both copies of $\bar 9_{12}$ on top of this module yields the $\zeta^4$-eigenspace $(1/2_1/1/2_2)\oplus (1/2_1,2_2/1)\oplus 1$, but the second module has dual $1/2_2/1,2_1$, so this is not the right module. Therefore $\bar 9_{12}$ does not lie in the socle of $W_0$.

\medskip

For $\bar 9_{12}^*$, we may place two copies of $\bar 9_{12}^*$ on top of $W'$; for ease of computation we take for $W'$ the module
\[ (3_2,3_2^*/3_1,3_1,3_1^*,24_{12}^*/3_2,3_2^*,24_{21}/3_1)\oplus (3_2/3_1,3_1^*,3_1^*/3_2,3_2^*,24_{21}^*/3_1^*),\]
(i.e., the $\{3_i^\pm,24_{12}^*,24_{21}^\pm\}$-radical of $P(3_1)\oplus P(3_1^*)$) a module that definitely contains $W$. On this we may place two copies of $\bar 9_{12}^*$, one in the third socle layer of each summand. On this we may place two copies of $3_2^*$, one in the fourth layer of each summand. However, we cannot place any copies of $3_1^*$ on this module, so there can be no submodule $3_1^*/3_2^*/\bar 9_{12}^*$ of $W_0$.

From our work on $W_0$ at the start of this proof, this means that we must have two copies of $\bar 9_{12}^*$ in $\soc(W_0)$, hence both $\bar 9_{12}^*$ that we placed on top of our model for $W'$. We take the $\{3_i^\pm\}$-residual of this to produce a module
\[ (\bar 9_{12}^*,24_{12}^*/3_2,3_2^*,24_{21}/3_1)\oplus (\bar 9_{12}^*/3_2^*,24_{21}^*/3_1^*).\]
This is not necessarily a submodule of $L(E_8){\downarrow_H}$, but if we remove either $24_{21}$ or $24_{21}^*$ then it is. Doing the former yields a module whose $\zeta^3$-eigenspace has two trivial submodules as a $kL'$-module, and doing the latter yields a module whose $\zeta^3$-eigenspace has three $2_1$ submodules as a $kL'$-module, neither of which lies in $V_{\zeta^3}$ from the description above.

Thus we cannot find a configuration that is consistent with the $V_{\zeta^i}$, and therefore we obtain a contradiction, completing the proof.

\medskip

This final case eliminates all $21$ orbit representatives, and therefore the set of composition factors.

\newpage

\chapter{The trilinear form for \texorpdfstring{$E_6$}{E6}}
\label{chap:trilinear}
This chapter concentrates on the case where $\mb G$ is of type $E_6$ in characteristic $3$, although our methods apply to other characteristics. In this chapter we complete the proof that every copy of $\PSL_3(3)$, $\PSU_3(3)$ and $\PSL_2(8)\cong {}^2\!G_2(3)'$ in characteristic $3$ is strongly imprimitive. From the results of Chapter \ref{chap:subsine6}, we are left with $\PSL_3(3)$ and $\PSU_3(3)$ acting irreducibly on $M(E_6)$, and $\PSL_2(8)$ acting as a sum of three non-isomorphic $9$-dimensional modules on $M(E_6)$. (In the third case, the normalizer acts irreducibly on $M(E_6)$.)

This chapter uses completely different methods from the previous chapters, to prove that every such subgroup $H$ is contained in a subgroup $G_2(3)$ acting irreducibly on $M(E_6)$. Note that $S^3(M(E_6))^{\mb G}$ is $1$-dimensional, and this yields a symmetric trilinear form on $\mb G$, as is well known. Use of the trilinear form to study $\mb G$ and its subgroups was the focus of Aschbacher's series of papers \cite{aschbacherE6Vun,aschbacher1987,aschbacher1988,aschbacher1990a,aschbacher1990b}, and was also used in Magaard's thesis on $F_4$ \cite{magaardphd}. It also appears a little in work of Cohen--Wales \cite{cohenwales1997} on Lie primitive subgroups of $E_6(\mathbb{C})$.

We will not define the trilinear form explicitly, because we do not deal with a fixed group $E_6$ but several conjugates of it, in order to give a `nice' description of certain subgroups.

\section{Using the symmetric form}
\label{sec:usingtrilinear}

Suppose that we are given a $27$-dimensional vector space $V$ over an algebraically closed field $k$, and a symmetric trilinear form $f(-,-,-)$ whose symmetry group $\mb G$ is (the simply connected form of) $E_6$. An element $x\in \GL(V)$ lies in $\mb G$ if and only if, for all $u,v,w\in V$,
\[ f(ux,vx,wx)-f(u,v,w)=0.\]
Let $H$ be a finite group that we are attempting to embed in $\mb G$, and view $H$ as a subgroup of $\mb Y=\GL(V)$. If $g\in \mb Y$ and $h\in H$, then $h^g$ lies in $\mb G$ if and only if, for all $u,v,w\in V$,
\[ f(ug^{-1}hg,vg^{-1}hg,wg^{-1}hg)-f(u,v,w)=0.\]
By replacing $u,v,w$ by $ug,vg,wg$ we see that $h^g$ lies in $\mb G$ if and only if, for all $u,v,w\in V$,
\begin{equation} f(uhg,vhg,whg)-f(ug,vg,wg)=0.\label{eq:symform}\end{equation}
This version has the benefit of not having to invert $g$. In particular, given a basis $\{v_1,\dots,v_{27}\}$ for $V$, $g$ may be written as a $27\times 27$-matrix, and since $g$ is not inverted, some of the entries of the matrix can be chosen to be parameters from $k$. If this is done, then (\ref{eq:symform}) becomes a polynomial equation in those parameters of degree at most $3$.

Now suppose that $L$ is a subgroup of $H$ contained in $\mb G$, and we wish to classify the elements of the set $\mc H$ of all conjugates of $H$ via elements of $\mb Y$ that lie in $\mb G$ and still contain $L$. In order to proceed we will assume that all subgroups of $H$ isomorphic to $L$ are $H$-conjugate to $L$. This will always hold for the pairs $(H,L)$ that we consider here and in later work, so it is no restriction.

Let $g\in \mb Y$ be such that $H^g\leq \mb G$ and $L\leq H^g$. Thus $L^{g^{-1}}\leq H$, so since $L^{g^{-1}}$ and $L$ are $H$-conjugate by assumption, there exists $y\in H$ such that $L^{g^{-1}}=L^y$, i.e., $L=L^{yg}$. Clearly $H^g=H^{yg}$, and so if we are attempting to classify elements of $\mc H$, it suffices to consider $g\in N_{\mb Y}(L)$.

While elements of $C_{\mb Y}(L)$ can be written, with respect to an appropriate basis, as matrices with entries $0$ or parameters from $k$, it is less easy to do this for $N_{\mb Y}(L)$. One special case of our situation is where all automorphisms of $L$ induced by elements of $\mb Y$ are also induced by elements of $N_{\mb Y}(H)$, or in other words,
\[ N_{\mb Y}(L)=C_{\mb Y}(L)\cdot N_{N_{\mb Y}(H)}(L).\]
If this is the case then when classifying elements of $\mc H$ it suffices to consider $g\in C_{\mb Y}(L)$. Of the three groups we consider---$\PSL_3(3)$, $\PSU_3(3)$ and ${}^2\!G_2(3)'$---two of these have this extra property. Of course, if $g_1$ and $g_2$ are elements of $N_{\mb Y}(L)$ such that $g_1g_2^{-1}$ lies in $N_{\mb Y}(H)$, then $H^{g_1}=H^{g_2}$ as well, so when constructing $\mc H$ it suffices to consider cosets via the normalizer of $H$.

Our strategy is therefore as follows:
\begin{enumerate}
\item Identify a subgroup $L$ of our subgroup $H\leq \mb Y$;
\item Determine the number of $\mb G$-conjugacy classes of subgroups $L$ with the appropriate action on $V$ (perhaps the action on $L(E_6)$ is also necessary to exclude some $\mb G$-classes, but not in the cases here);
\item Construct a basis of $V$ so that one may write elements of $C_{\mb Y}(L)$ as (reasonably nice) matrices with entries parameters from the field $k$;
\item if $H=\langle L,h\rangle$, and $g\in C_{\mb Y}(L)$ (with parameters), and $u,v,w$ are basis elements of $V$, produce $3654=27\cdot28\cdot29/(3\cdot2\cdot1)$ equations
\[ f(uhg,vhg,whg)-f(ug,vg,wg)=0;\]
\item solve these equations (modulo $C_{\mb Y}(H)$) to yield all elements of $\mc H$.
\end{enumerate}
If we have to consider $N_{\mb Y}(L)$ rather than $C_{\mb Y}(L)$, as we do for $\PSL_3(3)$ in Section \ref{sec:irredpsl33}, then we have to alter this method slightly, and perform it once for every coset of $C_{\mb Y}(L)\cdot N_{N_{\mb Y}(H)}(L)$ in $N_{\mb Y}(L)$. However, if $\Aut_{\mb G}(L)$ covers these cosets then we can avoid this extra work, as all such sets $\mc H$ are related by $N_{\mb G}(L)$-conjugacy.

To solve the systems of cubic equations, we can either work directly with the equations themselves, or simply use Gr\"obner bases to quickly yield all solutions. (We do both of these in the supplementary materials.)

\medskip

We now introduce the groups $H$ and $L$ that we consider in this chapter. Let $H$ be one of the groups $\PSL_3(3)$, $\PSU_3(3)$ and ${}^2\!G_2(3)'=\PSL_2(8)$ acting as mentioned at the start of this chapter. We need the subgroup $L$, its action on $V$ and its normalizer in $\mb Y$, together with the normalizer of $H$ in $\mb Y$.

\medskip

If $H\cong \PSL_3(3)$, then $L\cong 13\rtimes 3$, the normalizer of a Sylow $13$-subgroup of $H$, unique up to $H$-conjugacy. The simple $kL$-modules are $1_1,1_2,1_2^*,3_1,3_1^*,3_2,3_2^*$, and the $3_i^\pm$ are projective. The action of $L$ on $M(E_6)$ is, up to isomorphism,
\[ (1_1/1_1/1_1)\oplus (3_1\oplus 3_1^*)^{\oplus 2}\oplus (3_2\oplus 3_2^*)^{\oplus 2}.\]
The centralizer of this in $\mb Y$ is easy to describe: it is
\[ k^\times \times k^+\times k^+\times \GL_2(k)\times \GL_2(k)\times \GL_2(k)\times \GL_2(k).\]
(The centralizer of $1/1/1$ in $\GL_3(k)$ is $k^\times \times k^+\times k^+$, and the centralizer of each $3_i^\pm\oplus 3_i^\pm$ in $\GL_6(k)$ is isomorphic to $\GL_2(k)$.)
The normalizer of $L$ in $\mb Y$ must normalize the $13$, and so in fact $N_{\mb Y}(L)$ is simply $C_\mb Y(L)\times (13\rtimes 12)$. The normalizer $N_{\mb Y}(H)$ is $k^\times \times H.2$, where the subgroup $k^\times$ is the scalar matrices, so $|\Aut_{\mb Y}(L):\Aut_{N_{\mb Y}(H)}(L)|=2$.

\medskip

If $H\cong \PSU_3(3)$, then $L\cong 4^2\rtimes \Sym(3)$, the normalizer of a `maximal $\Phi_2$-torus' in $H$ (in the language of generic groups of Lie type). This is again unique up to conjugacy in $H$. The simple $kL$-modules are $1_1,1_2,3_1,3_2,3_3^\pm,3_4^\pm,6$. The action of $L$ on $M(E_6)$ is 
\[ (1_1/1_2/1_1)\oplus 3_1\oplus 3_2\oplus 3_3\oplus 3_3^*\oplus 6^{\oplus 2}.\]
As with the previous group, the centralizer of this in $\mb Y$ is easy to describe, and is 
\[ (k^\times \times k^+)\times k^\times\times k^\times\times k^\times\times k^\times\times \GL_2(k).\]
(The contribution from $1_1/1_2/1_1$ is $k^\times \times k^+$, each $3$-dimensional summand contributes $k^\times$, and the contribution from $6^{\oplus 2}$ is $\GL_2(k)$.)

The automizer of $L$ in $\mb Y$ has $L$ as a subgroup of index $2$, as can be checked in Magma as $|\Aut(L):\Inn(L)|=2$ and of course $L\cong\Inn(L)$. Thus $N_{\mb Y}(L)=C_{\mb Y}(L)\times L.2$.  The normalizer $N_{\mb Y}(H)$ is $k^\times \times H.2$, where the subgroup $k^\times$ is the scalar matrices, so $\Aut_{\mb Y}(L)=\Aut_{N_{\mb Y}(H)}(L)$.

\medskip

If $H\cong \PSL_2(8)$, then $L\cong 2^3\rtimes 7$, the normalizer of a Sylow $2$-subgroup of $H$. This is also unique up to conjugacy in $H$. The simple $kL$-modules are $1_i$ for $i=1,..,7$ and $7$. The action of $L$ on $M(E_6)$ is
\[ 7^{\oplus 3}\oplus \bigoplus_{i=2}^7 1_i.\]
The centralizer of this in $\GL_{27}(k)$ is easy to describe, and is
\[\GL_3(k)\times k^\times\times k^\times\times k^\times\times k^\times\times k^\times\times k^\times.\]
Since $\Aut(L)\cong (2^3\rtimes 7)\rtimes 3$, we see that $N_{\mb Y}(L)=C_{\mb Y}(L)\times L.3$. Similarly,
\[ C_{\mb Y}(H)=k^\times \times k^\times \times k^\times,\quad N_{\mb Y}(H)=C_{\mb Y}(H)\times H.3.\]
Thus again $\Aut_{\mb Y}(L)=\Aut_{N_{\mb Y}(H)}(L)$.

\medskip

Our next task is to prove that $L$ is unique up to $\mb G$-conjugacy, or in one case conjugacy in $\Aut(\mb G)$, i.e., allowing the graph automorphism.

\begin{lemma}\label{lem:Lconjugacy} Let $H$ be one of the subgroups $\PSL_3(3)$, $\PSU_3(3)$ and ${}^2\!G_2(3)'$ of the algebraic subgroup $G_2$ of $\mb G=E_6$. Let $L$ be a subgroup $13\rtimes 3$, $4^2\rtimes\Sym(3)$ and $2^3\rtimes 7$ of $H$ respectively. If $\bar L$ is a subgroup of $\mb G$ isomorphic to $L$, and the actions of $L$ and $\bar L$ on $M(E_6)$ are isomorphic, then $L$ and $\bar L$ are conjugate in $\mb G$ for the first two possibilities for $L$, and conjugate in $\mb G$ extended by the graph automorphism in the third case.
\end{lemma}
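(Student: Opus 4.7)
The plan is to handle each of the three cases by the same template: pick a distinguished subgroup $T$ of $L$ whose $\mb G$-conjugacy class is pinned down by the $T$-restriction of $M(E_6)$, and then show that any extension of $T$ to a subgroup isomorphic to $L$ with the prescribed module action is conjugate via $N_{\mb G}(T)$ (or via $N_{\mb G}(T)$ together with the graph automorphism in the third case). In all three cases, $T$ will be a maximal abelian normal subgroup consisting of semisimple elements, so that we can read off the character of $T$ on $M(E_6)$ directly from the listed decomposition.

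For $L\cong 13\rtimes 3$, first take $T=\langle x\rangle$ of order $13$. Since $13$ is coprime to $p=3$, the element $x$ is semisimple; the given module decomposition forces $x$ to have eigenvalues on $M(E_6)$ equal to $1$ with multiplicity $3$ and each primitive $13$th root of unity with multiplicity $2$. A check against the list of semisimple classes of $E_6$ of order $13$ shows only one class has this eigenvalue pattern, so $T$ is determined up to $\mb G$-conjugacy. The complementary order-$3$ element $y$ satisfies $y^{-1}xy=x^{3^{\pm 1}}$ and is unipotent (as $p=3$); its restriction to the $3$-dimensional $x$-fixed subspace of $M(E_6)$ is a single Jordan block (this is the content of the summand $1_1/1_1/1_1$), so $y$ belongs to a unique unipotent class of the reductive subgroup $C_{\mb G}(x)$. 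A counting argument using the Weyl group $N_{\mb G}(T)/C_{\mb G}(T)\leq \Aut(C_{13})$ then shows every choice of complement is $N_{\mb G}(T)$-conjugate.

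For $L\cong 4^2\rtimes \Sym(3)$, take $T\cong 4^2$, which is a semisimple elementary abelian $2$-subgroup (noting $p=3$); $T$ lies in a maximal torus $\mb T$ of $\mb G$. The listed $kL$-module decomposition, restricted to $T$, yields a precise multiset of $T$-characters on $M(E_6)$, and this multiset determines the embedding $T\hookrightarrow \mb T$ up to the Weyl group of $\mb G$, hence $T$ up to $\mb G$-conjugacy. The quotient $L/T\cong\Sym(3)$ acts on $T$ by a specified faithful action, which fixes its image in $N_{\mb G}(T)/C_{\mb G}(T)$ up to conjugation; and finally complements to $T$ inside the preimage of this $\Sym(3)$ are $C_{\mb G}(T)$-conjugate by Lemma~\ref{lem:nonabcohomzero}, since $C_{\mb G}(T)$ is a torus (hence connected abelian) and the relevant $H^1$ vanishes. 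For $L\cong 2^3\rtimes 7$ the argument is parallel, taking $T$ to be the Sylow $7$-subgroup (semisimple since $p=3$): its character on $M(E_6)$ determines its $\mb G$-class, the elementary abelian $2$-group normalised by $T$ lies in $C_{\mb G}(T)$ with a specified $C_7$-action, and complements are again classified by a vanishing cohomology argument. The reason this third case requires the graph automorphism is that the three composition factors $9_1,9_2,9_3$ of $M(E_6){\downarrow_H}$ play symmetric roles not under $\mb G$ alone but only after adjoining the graph automorphism; concretely, two a priori distinct $\mb G$-classes of copies of $L$ with the prescribed module action can occur, and they are swapped by the graph.

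The main obstacle will be the first step in each case, namely verifying that the module action on $M(E_6)$ really does determine the $\mb G$-conjugacy class of the distinguished subgroup $T$. This reduces to a finite inspection of semisimple classes of $E_6$ of orders $13$, $4$ (for the full rank-two torus piece of $4^2$), and $7$ respectively, using standard tables of characters of semisimple elements on $M(E_6)$. The extension step in each case is then a routine cohomological computation, and the graph-automorphism subtlety in the third case is handled by noting that the two potential $\mb G$-classes of $L$ are interchanged by the outer graph involution.
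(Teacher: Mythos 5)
Your overall template (pin down a distinguished subgroup $T\leq L$ up to $\mb G$-conjugacy from its action on $M(E_6)$, then classify the extensions to $L$ inside $N_{\mb G}(T)$) is in the right spirit for the first two cases, but the steps you give for the fusion of the extensions do not work as stated. In the $13\rtimes 3$ case, $x$ is regular semisimple, so $C_{\mb G}(x)$ is a maximal torus; your element $y$ satisfies $x^y=x^{3}\neq x$, so $y\notin C_{\mb G}(x)$, and a torus contains no nontrivial unipotent elements in any case — the sentence ``$y$ belongs to a unique unipotent class of the reductive subgroup $C_{\mb G}(x)$'' is therefore vacuous, and the subsequent ``counting argument using the Weyl group'' is exactly the point that needs proving: the candidates for $y$ fill torus cosets in $N_{\mb G}(\mb T)$, and showing all resulting subgroups $13\rtimes 3$ with the prescribed action are conjugate is a genuine computation (the paper does it by enumerating such subgroups inside $13^6\rtimes W(E_6)$ and checking conjugacy by machine). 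In the $4^2\rtimes\Sym(3)$ case your cohomological step misuses Lemma~\ref{lem:nonabcohomzero}, which is about a finite $p$-group filtered by elementary abelian layers, not about a torus; moreover the asserted vanishing of $H^1(\Sym(3),C_{\mb G}(T))$ is neither automatic nor checked — for instance $H^1(C_2,k^\times)$ with trivial action is $\mu_2\cong\mathbb{Z}/2$ in characteristic $3$ — and $C_{\mb G}(T)$ need not even be connected (only $C_{\mb G}(T)^\circ$ is shown to be a maximal torus, via the $6$-dimensional fixed space on $L(E_6)$). You also assert, without verification, that the multiset of $T$-weights on $M(E_6)$ determines the embedding $4^2\hookrightarrow\mb T$ up to the Weyl group; the paper avoids this claim and instead enumerates the full subgroups $4^2\rtimes\Sym(3)$ in the torus normalizer.

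The third case is where the proposal breaks most seriously. The normal $2^3$ cannot lie in $C_{\mb G}(T)$ for $T$ the Sylow $7$-subgroup: the $7$ acts faithfully on the $2^3$, so the two claims ``lies in $C_{\mb G}(T)$'' and ``with a specified $C_7$-action'' are incompatible. More importantly, your mechanism ``complements are again classified by a vanishing cohomology argument'' would, if it worked, yield a single $\mb G$-class, whereas the statement being proved (and the truth) is that there are two classes, fused only by the graph automorphism; you acknowledge the two classes but your argument never produces them or explains where the fusion obstruction lives. The paper instead locates $L$ structurally: it lies in a $D_5T_1$-Levi by the line-stabilizer Lemma~\ref{lem:e6linestabs} (being a $3'$-group stabilizing but not centralizing a line), then in $B_3T_2$, and finally is identified as a diagonal subgroup of the maximal $A_2G_2$; the count of exactly two classes, swapped by the graph automorphism, comes from the fact that the automizer in $A_2$ of the order-$7$ subgroup does not contain the inverting involution, which is induced only by the graph automorphism of $A_2$. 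Some structural input of this kind is needed to detect the factor of two, and it is precisely what your sketch omits.
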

\begin{proof} We start with $13\rtimes 3$. If $M(E_6){\downarrow_H}$ is irreducible, then an element $x$ of order $13$ in $H$ acts with trace $1$, and is regular semisimple. Hence the centralizer of $x$ is simply a torus of $\mb G$. Inside $N_{\mb G}(\mb T)$, one sees that all powers of $x$ are conjugate, and hence $N_{\mb G}(\gen x)$ is contained in $N_{\mb G}(\mb T)$ as well. But clearly $L$ is contained in $N_{\mb G}(\gen x)$, so is contained in $N_{\mb G}(\mb T)$. The fastest way to proceed now is simply to check that there is a single conjugacy class of subgroups $L$ with the $13$ regular and the $3$ acting with blocks $3^9$ in the group $13^6\rtimes W(E_6)$.

(If verifying this in Magma, the best way to compute this is to find the two classes of elements of order $3$ in $W(E_6)$ that act projectively on $M(E_6)$, and then take power-conjugate presentations of the  two groups $13^4\rtimes 3$ with \texttt{PCGroup}. Then one easily enumerates all groups $13\rtimes 3$ and checks the Brauer character of an element of order $13$. Finally, check that all candidates are conjugate in $26^4\cdot W(E_6)$.)

\medskip

Next, we consider $L\cong 4^2\rtimes\Sym(3)$. First, the $4^2$ subgroup $L_0$ is toral as any rank-$2$ abelian subgroup of a semisimple algebraic group is \cite[Proposition 2.13(vi)]{griess1991}. From the composition factors of $H$ on $L(E_6)$ we see that the restriction of $L(E_6)$ to $L_0$ has a $6$-dimensional centralizer. Hence $C_{\mb G}(L_0)^\circ$ is a maximal torus of $\mb G$. In particular, any element of $\mb G$ that normalizes $L_0$ normalizes $C_{\mb G}(L_0)$, hence normalizes $C_{\mb G}(L_0)^\circ$, and so $L$ is contained in the normalizer of a torus. One may again use Magma to enumerate such subgroups in the normalizer of a torus and check that they are all conjugate. (This is done in the supplementary materials.)

\medskip

Suppose that $L\cong 2^3\rtimes 7$, so that $L$ acts on $M(E_6)$ with the structure
\[ 1_2\oplus 1_3\oplus 1_4\oplus 1_5\oplus 1_6\oplus 1_7\oplus 7^{\oplus 3}.\]
Since $L$ stabilizes, but does not centralize, a line on $M(E_6)$, and $L$ is a $3'$-group, we see that $L$ lies in a $D_5T_1$-Levi subgroup, but not inside the $D_5$, by Lemma \ref{lem:e6linestabs}.

Since $D_5$ acts on $M(E_6)$ with composition factors of dimensions $1$, $16$ and $10$, we see that the projection $L_1$ of $L$ onto $D_5$ acts on $10$ (semisimply) with factors of dimensions $1$, $1$, $1$ and $7$, whence it lies inside $B_3T_1$. Thus $L$ lies inside a $B_3T_2$ subgroup of the $D_4T_2$-Levi subgroup. Indeed, there is a unique conjugacy class of subgroups isomorphic to $L$ in $B_3$, necessarily irreducible on $M(B_3)$. (This can even be seen inside $\SO_7(3)$, although note that $\Omega_7(3)$ possesses two classes.)

Then we have a diagonal subgroup. Note that the image of $L$ in $B_3$ in fact lies in $G_2$ (and there is a unique class of subgroups of order $56$ in $G_2(3)$), with the subgroup $2^3$ being an exotic $2$-local subgroup of $G_2$ (with normalizer $2^3\cdot \PSL_3(2)$). The centralizer of $G_2$ is an $A_2$, and this forms the $A_2G_2$ maximal subgroup of $\mb G$.

Thus $L$ is diagonally embedded in $A_2G_2$. The projection onto $A_2$ is as $7$, and the action on $S^2(M(A_2)^*)$ is as the sum of all non-trivial simple modules. This can be seen as the action of $A_2G_2$ on $M(E_6)$ is
\[ (02,00)\oplus (10,10).\]
As is easy to compute, there is a unique subgroup of order $7$ in $A_2$ with this property.

Conjugacy classes of diagonal subgroups are determined by the automizers of the projections onto each side. The group $L$ is normalized by an element of order $3$ in $G_2$ (this is the whole of $\Out(L)$), as is the $7$ in $A_2$, but the latter is not normalized by an element of order $2$ (the graph automorphism of $A_2$ induces this). Thus there are two conjugacy classes of diagonal subgroups, swapped by the graph automorphism of $\mb G$ (which induces the graph automorphism of $A_2$).
\end{proof}

Once we have understood the embeddings of $H$, i.e., the set $\mc H$, we will prove that each element of $\mc H$ is contained inside a $G_2(3)$ subgroup of $\mb G$ that is contained in a $G_2$ subgroup. Thus $H$ is Lie imprimitive, but we need strongly imprimitive. In all cases, $N_{\mb G}(H)$ is irreducible on $M(E_6)$, and thus is contained in a unique $G_2$ subgroup $\mb X$. If $\phi\in \Aut^+(\mb G)$ normalizes $H$ then it necessarily normalizes $\mb X$, whence $\mb X$ is $N_{\Aut^+(\mb G)}(H)$-stable, and $N_{\mb G}(H)$ and $H$ are strongly imprimitive.

\section{The case \texorpdfstring{$\PSU_3(3)$}{PSU(3,3)}}
\label{sec:irredpsu33}

Now we set $H\cong \PSU_3(3)$ and $L\cong 4^2\rtimes \Sym(3)$. As we saw in the previous section, overgroups of $L$ in $\mc H $ are labelled by cosets of $C_{\mb Y}(H)$ in $C_{\mb Y}(L)$. Since $H$ acts irreducibly, the first centralizer is simply $k^\times$. The second centralizer has ten parameters, as we saw in the previous section, labelled $a,b,c_1,\dots,c_4,m_1,\dots,m_4$. The parameters $m_1,m_2,m_3,m_4$ label the entries of a matrix in $\GL_2(k)$, and so satisfy $m_1m_4\neq m_2m_3$. The parameters $a$ and the $c_i$ are all non-zero, and $b$ can be any element of $k$. The precise matrices we use appear in the supplementary materials. We choose a copy of $L$ to make the structure of $C_{\mb Y}(L)$ clear, and then conjugate $L$ into $E_6$, in particular into the normalizer of a torus. This means that we can pull the trilinear form back to our copy of $L$, and then choose an element of $H$ not in $L$ to test coefficients of the symmetric trilinear form, as in Section \ref{sec:usingtrilinear}.

Using Gr\"obner bases it is very easy to find the two solutions to the resulting equations. (All variables are fixed in terms of $m_4$ except for $m_2$, which satisfies a quadratic, yielding the two solutions.) Thus there are exactly two subgroups $H$ of $\mb G$ containing a given subgroup $L$. In the supplementary materials we also provide a direct proof using specific equations, although that proof is long-winded.

We find exactly two subgroups $H$ containing $L$. We then check that the unique $G_2(3)$ subgroup of $\mb Y$ containing each $\PSU_3(3)$ is also contained in $E_6$, by showing that it leaves the form invariant. This completes the proof of the following.

\begin{proposition} If $H\cong \PSU_3(3)$ acts irreducibly on $M(E_6)$, then $H$ is contained in a copy of $G_2(3)$ inside $\mb G$ and $H$ is strongly imprimitive.
\end{proposition}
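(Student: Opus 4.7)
The plan is to apply the general strategy of Section \ref{sec:usingtrilinear} with $H \cong \PSU_3(3)$ and $L \cong 4^2 \rtimes \Sym(3)$, which by Lemma \ref{lem:Lconjugacy} is unique up to $\mb G$-conjugacy with the prescribed action on $M(E_6)$. Because $\Aut_{\mb Y}(L) = \Aut_{N_{\mb Y}(H)}(L)$, the remark after the five-step strategy lets me enumerate the set $\mc H$ of conjugates of $H$ containing a fixed $L$ by working with $g \in C_{\mb Y}(L)$ modulo $C_{\mb Y}(H) = k^\times$ (the scalars, since $H$ is irreducible on $M(E_6)$).

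First I would fix a concrete copy of $L$ in $\mb Y = \GL(V)$, with basis adapted to the decomposition
\[ M(E_6){\downarrow_L} \;\cong\; (1_1/1_2/1_1) \oplus 3_1 \oplus 3_2 \oplus 3_3 \oplus 3_3^* \oplus 6^{\oplus 2}, \]
so that $C_{\mb Y}(L) \cong (k^\times \times k^+) \times (k^\times)^4 \times \GL_2(k)$ is transparently parametrised by ten scalars $a, b, c_1, \dots, c_4, m_1, \dots, m_4$. I then conjugate this copy of $L$ into the normaliser of a maximal torus of $\mb G$, as in the proof of Lemma \ref{lem:Lconjugacy}, which pulls the $\mb G$-invariant symmetric trilinear form $f$ back to an explicit form on the chosen basis. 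This one-off construction is the whole point at which genuine care is needed; after it, everything reduces to polynomial algebra.

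Next I pick $h \in H \setminus L$ with $H = \langle L, h\rangle$. Writing $g \in C_{\mb Y}(L)$ as the generic matrix in the ten parameters, the condition $h^g \in \mb G$ becomes the polynomial system
\[ f(uhg, vhg, whg) - f(ug, vg, wg) = 0 \]
as $u, v, w$ range over the $3654$ unordered triples of basis vectors, yielding equations of degree at most three in $a, b, c_i, m_j$. Together with the open conditions $a c_1 c_2 c_3 c_4 (m_1 m_4 - m_2 m_3) \neq 0$, a Gr\"obner basis computation in $k[a, b, c_1, \dots, c_4, m_1, m_2, m_3, m_4]$ should cut the solution variety down to a finite set modulo $C_{\mb Y}(H) \cong k^\times$. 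The expected outcome is exactly two solutions, with $a, b, c_1, \dots, c_4, m_1, m_3$ rational in $m_4$ and $m_2$ satisfying a quadratic over $k$.

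Having listed the two conjugates of $H$ in $\mc H$, I would verify for each of them that the unique $G_2(3)$ subgroup of $\mb Y$ containing it (coming from the maximal embedding $\PSU_3(3) < G_2(3)$) also preserves $f$, by checking the analogous finitely many trilinear equations on a generating pair for $G_2(3)$. This places $H$ inside a $G_2 \leq \mb G$. Strong imprimitivity then follows from the observation at the end of Section \ref{sec:usingtrilinear}: since $N_{\mb G}(H)$ acts irreducibly on $M(E_6)$, the positive-dimensional $G_2$ containing it is uniquely determined by any of its $H$-invariant subspaces, hence is automatically $N_{\Aut^+(\mb G)}(H)$-stable, and Theorem \ref{thm:intersectionorbit} applies. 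The main obstacle is bookkeeping: realising $L$ inside $\mb G$ as an explicit $27 \times 27$ matrix group so that both $C_{\mb Y}(L)$ and the pulled-back form $f$ are sparse enough for the Gr\"obner computation to terminate cleanly; once that is achieved, both the enumeration of $\mc H$ and the $G_2(3)$ verification are mechanical.
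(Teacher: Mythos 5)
Your proposal is correct and follows essentially the same route as the paper: fixing $L\cong 4^2\rtimes\Sym(3)$ (unique up to $\mb G$-conjugacy by Lemma \ref{lem:Lconjugacy}), parametrising $C_{\mb Y}(L)$ by the ten parameters, solving the trilinear-form equations via Gr\"obner bases to find the two conjugates of $H$, checking the $G_2(3)$ overgroup preserves the form, and deducing strong imprimitivity from the irreducibility of $N_{\mb G}(H)$ on $M(E_6)$ as at the end of Section \ref{sec:usingtrilinear}. No gaps to report.
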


Strong imprimitivity was shown at the end of Section \ref{sec:usingtrilinear}.

\section{The case \texorpdfstring{${}^2\!G_2(3)'$}{2G2(3)'}}
\label{sec:irred2g23}

Now we set $H\cong \PSL_2(8)$ and $L\cong 2^3\rtimes 7$. As we saw in Section \ref{sec:usingtrilinear}, overgroups of (one of the two options for) $L$ in $\mc H$ are labelled by cosets of $C_{\mb Y}(H)$ in $C_{\mb Y}(L)$. We also saw there that $C_{\mb Y}(H)$ is simply three copies of $k^\times$, and $C_{\mb Y}(L)$ has fifteen parameters. We label these $a,b,c,d,e,f,m_1,\dots,m_9$. The parameters $m_1,\dots,m_9$ label the entries of a matrix in $\GL_3(k)$, and so satisfy the determinant condition. The parameters $a,b,c,d,e,f$ are all non-zero. We perform the same analysis as in Section \ref{sec:irredpsu33}. Again, we use Gr\"obner bases for an easy proof, and a painstaking elimination using the equations directly as a fast, but difficult, second proof.

The precise matrices we use appear in the supplementary materials. We choose a copy of $L$ to make the structure of $C_{\mb Y}(L)$ clear and easy to use, and then conjugate $L$ into $E_6$, in particular into the subgroup $G_2$, as constructed in \cite{testerman1989}. This means that we can pull the trilinear form back to our copy of $L$, and then choose an element of $H$ not in $L$ to test coefficients of the symmetric trilinear form, as in Section \ref{sec:usingtrilinear}. Of course, we may choose $H$ to also lie in $G_2$, so that the identity matrix conjugates $H$ into our copy of $\mb G$.

As we saw in Section \ref{sec:usingtrilinear}, $L$ is unique up to conjugacy even in the finite group $E_6(q)$, so choosing $L$ to lie in the split Levi subgroup $D_4T_2$, any copy of $L$ has centralizer in $E_6(q)$ of order $(q-1)^2$. (Since $p=3$, $Z(\mb G)=1$, so this centralizer intersects $C_{\mb Y}(H)$ trivially.) On the other hand, of the maximal positive-dimensional subgroups of $\mb G$, $H$ only lies in the $G_2$ subgroup, whence its centralizer in $E_6(q)$ is trivial. Therefore we see at least $(q-1)^2$ different, but $E_6(q)$-conjugate, subgroups $H$ containing $L$. We prove in the supplementary materials that the set $\mc H$ contains exactly $(q-1)^2$ elements in it, and therefore they are all $E_6(q)$-conjugate.

In the supplementary materials we confirm this theoretical calculation. We apply the same method as above to the trivial element of $H$ rather than an element of $H\setminus L$. In other words, we use the trilinear form to determine when a generic element of $C_{\mb Y}(L)$ lies in $\mb G$. Indeed, doing so yields exactly $(q-1)^2$ elements, and modulo $C_{\mb Y}(H)$ they are the same as the elements that conjugate $H$ into $\mb G$ found above. This confirms that all elements of $\mc H$ are conjugate in the finite group $E_6(q)$, and hence are contained in copies of $G_2(3)$.

\begin{proposition} If $H\cong \PSL_2(8)$ acts as the sum of three $9$-dimensional simple modules on $M(E_6)$, then $H$ is contained in a copy of $G_2(3)$ inside $\mb G$ and is strongly imprimitive.
\end{proposition}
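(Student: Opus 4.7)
The plan is to apply directly the strategy laid out in Section \ref{sec:usingtrilinear}, using the subgroup $L\cong 2^3\rtimes 7$ of $H$ identified there. By Lemma \ref{lem:Lconjugacy} every such $L$ in $\mb G$ is conjugate (up to the graph automorphism) to the copy obtained by intersecting the $G_2$ subgroup with $\mb G$, so it suffices to fix one $L$ and classify the set $\mc H$ of $\mb Y$-conjugates of $H$ which contain $L$ and preserve the trilinear form $f$. As remarked at the end of Section \ref{sec:usingtrilinear}, we have $\Aut_{\mb Y}(L)=\Aut_{N_{\mb Y}(H)}(L)$, so $\mc H$ is labelled by cosets of $C_{\mb Y}(H)\cong (k^\times)^3$ in $C_{\mb Y}(L)$, which is a $15$-parameter group of the shape $\GL_3(k)\times (k^\times)^6$, with parameters $m_1,\dots,m_9$ and $a,b,c,d,e,f$ as described in the chapter.

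First I would fix an explicit copy of $L$ inside the testerman $G_2$ subgroup of $\mb G$ and write down $C_{\mb Y}(L)$ as an explicit family of matrices with these fifteen parameters, so that the trilinear form $f$ can be pulled back to a polynomial expression in them. Picking any element $h\in H\setminus L$ (for example, a generator of a subgroup $7$ inside $\PSL_2(8)$ together with $L$ generates $H$), the conditions (\ref{eq:symform}) for $h^g$ to preserve $f$ become $3654$ polynomial equations in $m_i,a,\dots,f$ of degree at most $3$, forming an ideal $I$ in the coordinate ring of $C_{\mb Y}(L)$. I would then compute a Gr\"obner basis of $I$ (with the nonzero-ness conditions on the parameters inverted by adjoining dummy variables in the standard way), and read off the solution set.

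The theoretical count predicts what we should see: the copy of $L$ sitting in the split $D_4T_2$-Levi subgroup has centralizer of order $(q-1)^2$ in $E_6(q)$, and the only maximal positive-dimensional overgroup of $H$ is the $G_2$ subgroup, inside which $C_{E_6(q)}(H)=1$. Hence there must be at least $(q-1)^2$ $E_6(q)$-conjugate subgroups $H$ containing a given $L$, and I would show that the Gr\"obner basis computation produces \emph{exactly} $(q-1)^2$ solutions modulo $C_{\mb Y}(H)$. Consequently every such $H$ is $E_6(q)$-conjugate to the reference copy inside $G_2$, hence is contained in a $G_2(3)$ subgroup of $\mb G$. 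The main obstacle is purely computational: the polynomial system has fifteen variables and thousands of cubic equations, and one has to be careful to quotient by $C_{\mb Y}(H)$ and impose the nonvanishing of the torus parameters to avoid spurious solutions; the supplementary Magma files exist precisely to carry this out (and a second, more painful, direct elimination is included as a check).

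Finally, strong imprimitivity follows from the remark at the end of Section \ref{sec:usingtrilinear}: since $N_{\mb G}(H)$ acts irreducibly on $M(E_6)$, it is contained in a unique $G_2$ subgroup $\mb X$ of $\mb G$, and any $\phi\in\Aut^+(\mb G)$ normalizing $H$ must then normalize $\mb X$. Thus $\mb X$ is $N_{\Aut^+(\mb G)}(H)$-stable, and applying Theorem \ref{thm:intersectionorbit} (or directly the strong imprimitivity argument of Chapter \ref{chap:subgroupstructure}) shows that $H$ is strongly imprimitive, completing the proof.
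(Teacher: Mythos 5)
Your proposal is correct and follows essentially the same route as the paper: fix the copy of $L\cong 2^3\rtimes 7$ inside the $G_2$ subgroup, parametrize $C_{\mb Y}(L)$ by the fifteen parameters, solve the cubic system from the trilinear form (by Gr\"obner bases, with a direct elimination as a check), match the exact count of $(q-1)^2$ solutions modulo $C_{\mb Y}(H)$ against the theoretical lower bound coming from $|C_{E_6(q)}(L)|=(q-1)^2$ and $C_{E_6(q)}(H)=1$, and conclude all members of $\mc H$ are $E_6(q)$-conjugate into $G_2(3)$, with strong imprimitivity following from the uniqueness of the $G_2$ overgroup. The only cosmetic difference is that the paper's supplementary verification also runs the same computation with the identity element of $H$ (counting elements of $C_{\mb Y}(L)$ lying in $\mb G$) to cross-check the count, but this does not change the argument.
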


Strong imprimitivity was shown at the end of Section \ref{sec:usingtrilinear}.

(This result also appears in \cite[Theorem 29.3]{aschbacherE6Vun}.)

\section{The case \texorpdfstring{$\PSL_3(3)$}{PSL(3,3)}}
\label{sec:irredpsl33}

Now we set $H\cong \PSL_3(3)$ and $L\cong 13\rtimes 3$. As we saw in Section \ref{sec:usingtrilinear}, overgroups of $L$ in $\mc H$ are not labelled by cosets of $C_{\mb Y}(H)$ in $C_{\mb Y}(L)$, because there are $\mb Y$-automorphisms of $L$ that do not come from $\mb Y$-automorphisms of $H$. However, $N_{\mb G}(L)$ induces the whole of $\Aut(L)$ (as mentioned in the proof of Lemma \ref{lem:Lconjugacy}). Thus if $\mc H'$ denotes the $C_{\mb Y}(L)$-conjugates of $H$ that lie in $\mb G$, then all $N_{\mb Y}(L)$-conjugates of $H$ that lie in $\mb G$ are contained in subgroups $G_2(3)$ if and only if the same holds for $\mc H'$.

As we saw in Section \ref{sec:usingtrilinear}, $C_{\mb Y}(H)=k^\times$ and $C_{\mb Y}(L)$ is $19$-dimensional. It now appears difficult to work directly with the equations to determine the set $\mc H'$. In the supplementary materials we use Gr\"obner bases to easily deal with the relations.

We also provide a proof that works directly with the equations, but it uses a subtly different method, that we give a sketch of because it might be of independent interest. The space of $H$-invariant symmetric trilinear forms on $V$ is $3$-dimensional, with basis $\{f_1,f_2,f_3\}$. If $g$ is an element of $C_{\mb Y}(L)$ and $H=\langle L,h\rangle$, then we wish to classify those $g$ such that the space of $H^g$-invariant symmetric trilinear forms on $V$ contains the form determining $\mb G$. Equivalently, we can think of $g$ as a change-of-basis matrix, so we fix $h$ and allow the basis to change, to see when we can push the form of $\mb G$ into the $3$-space of $H$-invariant forms.

Thus we let $g$ be an element of $\mb Y$, and compute the trilinear form on $(ug,vg,wg)$ for $u,v,w$ in $V$. We then compare that value with the value of an arbitrary form in the $3$-space, $\alpha f_1+\beta f_2+\gamma f_3$ for some $\alpha,\beta,\gamma\in k$; if the two values coincide for all $u,v,w$ then $h^{g^{-1}}$ lies in $\mb G$.

\medskip

Both of these methods yield exactly two subgroups $H$ containing $L$ (under the action of $C_{\mb Y}(L)$, so four under the action of $N_{\mb Y}(L)$). We then check that the unique subgroup $G_2(3)$ of $\mb Y$ containing each of these conjugates of $H$ is also contained in $\mb G$, by showing that it leaves the form invariant. We have the following.

\begin{proposition} If $H\cong \PSL_3(3)$ acts irreducibly on $M(E_6)$, then $H$ is contained in a copy of $G_2(3)$ inside $\mb G$ and $H$ is strongly imprimitive.
\end{proposition}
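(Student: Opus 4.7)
The plan is to follow the same trilinear-form strategy used for $\PSU_3(3)$ and ${}^2\!G_2(3)'$ in the previous two sections, but with an extra reduction at the outset to handle the fact that $\Aut_{\mb Y}(L)\neq \Aut_{N_{\mb Y}(H)}(L)$ in this case. First, take $L\cong 13\rtimes 3$ to be a fixed copy inside a standard $G_2(3)\leq G_2\leq \mb G$, which by Lemma \ref{lem:Lconjugacy} is unique up to $\mb G$-conjugacy. The observation from Section \ref{sec:usingtrilinear} is that classifying $\mc H$ reduces to studying conjugates $H^g$ for $g\in N_{\mb Y}(L)$, and I would note that since $N_{\mb G}(L)$ already induces the full $\Aut(L)$ (as is visible inside $G_2(3)$, where $N_{G_2(3)}(L)$ has order $13\cdot 12$), every $N_{\mb Y}(L)$-orbit on $\mc H$ meets the $C_{\mb Y}(L)$-orbit of a standard $H$. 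Thus it suffices to enumerate the subset $\mc H'\subseteq\mc H$ of $C_{\mb Y}(L)$-conjugates of the fixed $H$ that lie in $\mb G$, and then show each such conjugate lies in a $G_2(3)$.

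Second, parameterise $C_{\mb Y}(L)$ explicitly using the decomposition from Section \ref{sec:usingtrilinear}: $C_{\mb Y}(L)\cong k^\times\times k^+\times k^+\times \GL_2(k)^{\times 4}$, giving a $19$-dimensional parameter variety. Pick $h\in H\setminus L$ (for instance an element of order $8$ or $3$ generating $H$ together with $L$), pull back the trilinear form $f$ from the standard embedding of $\mb G$ in $\mb Y$, and write down the $3654$ polynomial equations
\[
f(u\cdot hg,\ v\cdot hg,\ w\cdot hg)-f(ug,vg,wg)=0
\]
for $u,v,w$ ranging over a basis of $V$, where $g$ carries the $19$ parameters. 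Modulo the scalar centre $C_{\mb Y}(H)=k^\times$ and the defining $\GL_2$ determinant conditions, the expected conclusion is that this system has exactly two solutions (corresponding to $\mc H'$ containing two conjugates of $H$; under the full $N_{\mb Y}(L)$ action these become four).

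Third, solve the system. Working directly with the cubic equations looks painful, so the practical approach is a Gr\"obner basis computation (as done for $\PSU_3(3)$ and ${}^2\!G_2(3)'$ in the supplementary materials): fix a monomial order, reduce, and read off that the radical ideal has precisely the two expected solutions. For each solution, construct the corresponding conjugate $H^g$, build the unique $G_2(3)\leq \mb Y$ containing $H^g$ (which exists because $H^g$ acts irreducibly on the $7$-dimensional heart of a $G_2$-module structure on $V$, making its $G_2$-overgroup unique inside $\mb Y$), and verify by direct computation that this $G_2(3)$ preserves $f$, hence lies in $\mb G$. Once this is done, each $H$ in $\mc H$ sits inside a subgroup $G_2(3)\leq G_2\leq \mb G$, so $H$ is Lie imprimitive, and strong imprimitivity then follows from the remark at the end of Section \ref{sec:usingtrilinear}: $N_{\mb G}(H)$ is irreducible on $M(E_6)$, so the unique $G_2$ containing $N_{\mb G}(H)$ is automatically stable under $N_{\Aut^+(\mb G)}(H)$.

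The main obstacle, and the reason this proof is postponed to its own section, is purely computational rather than conceptual: the parameter space for $N_{\mb Y}(L)$ is larger and less rigid than for the other two groups (six $\GL_2$ blocks plus a unipotent block from the $1/1/1$ summand), so some care is needed in choosing a basis that makes $C_{\mb Y}(L)$ block-diagonal and in picking the test element $h\in H$ so that the resulting cubic system is tractable. The alternative approach sketched for the supplementary materials, viewing $g$ as a change of basis and looking for when the defining form of $\mb G$ lies in the $3$-dimensional space of $H$-invariant trilinear forms on $V$, is conceptually cleaner and gives an independent verification.
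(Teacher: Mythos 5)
Your overall strategy is the paper's: reduce to $C_{\mb Y}(L)$-conjugates of a standard $H$, parameterise the $19$-dimensional centralizer $C_{\mb Y}(L)$, impose the $3654$ cubic conditions coming from a test element $h\in H\setminus L$, solve by Gr\"obner bases to find exactly two solutions modulo $C_{\mb Y}(H)=k^\times$ (four under $N_{\mb Y}(L)$), check that the $G_2(3)$ containing each conjugate preserves the form, and deduce strong imprimitivity from the irreducibility of $N_{\mb G}(H)$ on $M(E_6)$ as at the end of Section \ref{sec:usingtrilinear}. The alternative formulation you sketch (asking when the defining form of $\mb G$ lands in the $3$-space of $H$-invariant symmetric trilinear forms) is also exactly the second method the paper relegates to the supplementary materials.

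The one genuine problem is your justification of the key reduction step. Because $|\Aut_{\mb Y}(L):\Aut_{N_{\mb Y}(H)}(L)|=2$ here, you must know that $\Aut_{\mb G}(L)$ is all of $\Aut(L)\cong 13\rtimes 12$ before you may restrict attention to $C_{\mb Y}(L)$-conjugates; otherwise your enumeration only covers one of the two relevant cosets and you would have to run the whole Gr\"obner computation again for a second coset representative. Your stated reason — that this is ``visible inside $G_2(3)$, where $N_{G_2(3)}(L)$ has order $13\cdot 12$'' — is false: the normalizer of $L=13\rtimes 3$ in $G_2(3)$ is $13\rtimes 6$ of order $78$ (it is contained in the Sylow $13$-normalizer, which is the $\Phi_3$-torus normalizer $13\rtimes 6$), and since $C_{G_2(3)}(L)=1$ this induces only an index-$2$ subgroup of $\Aut(L)$. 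The correct source of the fact is the ambient $E_6$: the element of order $13$ is regular semisimple, so $L\leq N_{\mb G}(\mb T)$, and the computation inside $13^6\rtimes W(E_6)$ in the proof of Lemma \ref{lem:Lconjugacy} shows all generators of the $13$ are conjugate there, whence $N_{\mb G}(L)$ induces the whole of $\Aut(L)$. With that substitution your argument goes through. A minor further point: your reason for uniqueness of the $G_2(3)$ overgroup (``irreducible on the $7$-dimensional heart of a $G_2$-module structure on $V$'') does not make sense, since $V$ restricts irreducibly to $G_2$ in characteristic $3$; what is actually needed, and what the paper does, is simply to exhibit the $G_2(3)$ containing each solution and verify directly that it leaves the form invariant.
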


Strong imprimitivity was shown at the end of Section \ref{sec:usingtrilinear}.

\backmatter
\newcommand{\etalchar}[1]{$^{#1}$}
\providecommand{\bysame}{\leavevmode\hbox to3em{\hrulefill}\thinspace}
\providecommand{\MR}{\relax\ifhmode\unskip\space\fi MR }
\providecommand{\MRhref}[2]{%
  \href{http://www.ams.org/mathscinet-getitem?mr=#1}{#2}
}
\providecommand{\href}[2]{#2}

\bibliographystyle{amsplain}
\bibliography{references}

\providecommand{\bysame}{\leavevmode\hbox to3em{\hrulefill}\thinspace}
\providecommand{\MR}{\relax\ifhmode\unskip\space\fi MR }
\providecommand{\MRhref}[2]{%
  \href{http://www.ams.org/mathscinet-getitem?mr=#1}{#2}
}
\providecommand{\href}[2]{#2}
\begin{thebibliography}{10}

\bibitem{aschbacherE6Vun}
Michael Aschbacher, \emph{The maximal subgroups of ${E}_6$}, preprint, 170pp.

\bibitem{aschbacher1987}
\bysame, \emph{The 27-dimensional module for {$E_6$}, {I}}, Invent. Math.
  \textbf{89} (1987), 159--195.

\bibitem{aschbacher1988}
\bysame, \emph{The 27-dimensional module for {$E_6$}, {II}}, J. London Math.
  Soc. (2) \textbf{37} (1988), 275--293.

\bibitem{aschbacher1990a}
\bysame, \emph{The 27-dimensional module for {$E_6$}, {III}}, Trans. Amer.
  Math. Soc. \textbf{321} (1990), 45--84.

\bibitem{aschbacher1990b}
\bysame, \emph{The 27-dimensional module for {$E_6$}, {IV}}, J. Algebra
  \textbf{131} (1990), 23--39.

\bibitem{aschbacher}
\bysame, \emph{Finite group theory}, second ed., Cambridge Studies in Advanced
  Mathematics, vol.~10, Cambridge University Press, Cambridge, 2000.

\bibitem{borovik1989}
Alexandre Borovik, \emph{Structure of finite subgroups of simple algebraic
  groups}, Algebra and Logic \textbf{28} (1989), 163--182, transl. from Algebra
  i Logika, \textbf{28} (1989), 249--279.

\bibitem{bourbakilie2}
Nicolas Bourbaki, \emph{Lie groups and {L}ie algebras. {C}hapters 4--6},
  Elements of Mathematics, Springer-Verlag, Berlin, 2002, translated from the
  1968 French original by Andrew Pressley.

\bibitem{burnesstesterman2019}
Timothy Burness and Donna Testerman, \emph{${A}_1$-type subgroups containing
  regular unipotent elements}, Forum Math. Sigma \textbf{7} (2019), e12.

\bibitem{carterfinite}
Roger Carter, \emph{Finite groups of {L}ie type}, John Wiley \& Sons, New York,
  1985.

\bibitem{clss1992}
Arjeh Cohen, Martin Liebeck, Jan Saxl, and Gary Seitz, \emph{The local maximal
  subgroups of exceptional groups of {L}ie type, finite and algebraic}, Proc.
  London Math. Soc. (3) \textbf{64} (1992), 21--48.

\bibitem{cohenwales1997}
Arjeh Cohen and David Wales, \emph{Finite subgroups of {$F_4(\mathbb{C})$} and
  {$E_6(\mathbb{C})$}}, Proc.\ London Math. Soc. (3) \textbf{74} (1997),
  105--150.

\bibitem{atlas}
John Conway, Robert Curtis, Simon Norton, Richard Parker, and Robert Wilson,
  \emph{Atlas of finite groups}, Oxford University Press, Eynsham, 1985.

\bibitem{cooperstein1981}
Bruce Cooperstein, \emph{Maximal subgroups of {$G_2(2^n)$}}, J. Algebra
  \textbf{70} (1981), 23--36.

\bibitem{craven2015un}
David~A.\ Craven, \emph{Maximal ${\PSL_2}$ subgroups of exceptional groups of
  {L}ie type}, Mem. Amer. Math. Soc., to appear.

\bibitem{craven2017}
\bysame, \emph{Alternating subgroups of exceptional groups of {L}ie type},
  Proc. Lond. Math. Soc. \textbf{115} (2017), 449--501.

\bibitem{cst2022}
David~A.\ Craven, David Stewart, and Adam Thomas, \emph{A new maximal subgroup
  of ${E}_8$ in characteristic $3$}, Trans. Amer. Math. Soc. \textbf{150}
  (2022), 1435--1448.

\bibitem{frey1998a}
Darrin Frey, \emph{Conjugacy of {$Alt_5$} and {$SL(2,5)$} subgroups of
  {$E_8(\mathbb{C})$}}, Mem. Amer. Math. Soc. \textbf{133} (1998), no.~634,
  viii+162pp.

\bibitem{frey2001}
\bysame, \emph{Conjugacy of {$Alt_5$} and {$SL(2,5)$} subgroups of
  {$E_7(\mathbb{C})$}}, J. Group Theory \textbf{4} (2001), 277--323.

\bibitem{griess1991}
Robert Griess, \emph{Elementary abelian $p$-subgroups of algebraic groups},
  Geom. Dedicata \textbf{39} (1991), 253--305.

\bibitem{jonesparshall1976}
Wayne Jones and Brian Parshall, \emph{On the cohomology of finite groups of
  {L}ie type}, Proceedings of the Conference on Finite Groups (William Scott
  and Fletcher Gross, eds.), Academic Press, 1976, pp.~313--328.

\bibitem{kleidman1988}
Peter Kleidman, \emph{The maximal subgroups of the {C}hevalley groups
  {$G_2(q)$} with {$q$} odd, the {R}ee groups {$^2G_2(q)$}, and their
  automorphism groups}, J.\ Algebra \textbf{117} (1988), 30--71.

\bibitem{lawther1995}
Ross Lawther, \emph{Jordan block sizes of unipotent elements in exceptional
  algebraic groups}, Comm.\ Algebra \textbf{23} (1995), 4125--4156.

\bibitem{lawther2009}
\bysame, \emph{Unipotent classes in maximal subgroups of exceptional algebraic
  groups}, J.\ Algebra \textbf{322} (2009), 270--293.

\bibitem{lawther2014}
\bysame, \emph{Sublattices generated by root differences}, J.\ Algebra
  \textbf{412} (2014), 255--263.

\bibitem{liebeckmartinshalev2005}
Martin Liebeck, Benjamin Martin, and Aner Shalev, \emph{On conjugacy classes of
  maximal subgroups of finite simple groups, and a related zeta function}, Duke
  Math. J. \textbf{128} (2005), 541--557.

\bibitem{liebecksaxl1987}
Martin Liebeck and Jan Saxl, \emph{On the orders of maximal subgroups of the
  finite exceptional groups of {L}ie type}, Proc.\ London Math.\ Soc. (3)
  \textbf{55} (1987), 299--330.

\bibitem{liebecksaxlseitz1992}
Martin Liebeck, Jan Saxl, and Gary Seitz, \emph{Subgroups of maximal rank in
  finite exceptional groups of {L}ie type}, Proc.\ London Math.\ Soc. (3)
  \textbf{65} (1992), 297--325.

\bibitem{lst1996}
Martin Liebeck, Jan Saxl, and Donna Testerman, \emph{Simple subgroups of large
  rank in groups of {L}ie type}, Proc.\ London Math.\ Soc. (3) \textbf{72}
  (1996), 425--457.

\bibitem{liebeckseitz1990}
Martin Liebeck and Gary Seitz, \emph{Maximal subgroups of exceptional groups of
  {L}ie type, finite and algebraic}, Geom. Dedicata \textbf{35} (1990),
  353--387.

\bibitem{liebeckseitz1998}
\bysame, \emph{On the subgroup structure of exceptional groups of {L}ie type},
  Trans.\ Amer.\ Math.\ Soc. \textbf{350} (1998), 3409--3482.

\bibitem{liebeckseitz1999}
\bysame, \emph{On finite subgroups of exceptional algebraic groups}, J.\ reine
  angew. Math. \textbf{515} (1999), 25--72.

\bibitem{liebeckseitz2004}
\bysame, \emph{The maximal subgroups of positive dimension in exceptional
  algebraic groups}, Mem.\ Amer.\ Math.\ Soc. \textbf{169} (2004), no.~802,
  vi+227.

\bibitem{liebeckseitz2004a}
\bysame, \emph{Subgroups of exceptional algebraic groups which are irreducible
  on an adjoint or minimal module}, J. Group Theory \textbf{7} (2004),
  347--372.

\bibitem{litterickmemoir}
Alastair Litterick, \emph{On non-generic finite subgroups of exceptional
  algebraic groups}, Mem.\ Amer.\ Math.\ Soc. \textbf{253} (2018), no.~1207,
  vi+156.

\bibitem{magaardphd}
Kay Magaard, \emph{The maximal subgroups of the {C}hevalley groups {${F}_4(F)$}
  where {${F}$} is a finite or algebraically closed field of characteristic
  $\neq$ 2,3}, Ph.D. thesis, California Institute of Technology, 1990.

\bibitem{seitz1991}
Gary Seitz, \emph{Maximal subgroups of exceptional algebraic groups}, Mem.\
  Amer.\ Math.\ Soc. \textbf{90} (1991), no.~441, iv+197.

\bibitem{sin1992}
Peter Sin, \emph{Extensions of simple modules for {${\rm SL}_3(2^n)$} and
  {${\rm SU}_3(2^n)$}}, Proc. London Math. Soc. \textbf{65} (1992), 265--296.

\bibitem{sin1992b}
\bysame, \emph{Extensions of simple modules for {${\rm Sp}_4(2^n)$} and {${\rm
  Suz}(2^m)$}}, Bull. London Math. Soc. \textbf{24} (1992), 159--164.

\bibitem{sin1993}
\bysame, \emph{Extensions of simple modules for {$G_2(3^n)$} and
  {${}^2G_2(3^m)$}}, Proc. London Math. Soc. \textbf{66} (1993), 327--357.

\bibitem{stewart2013}
David Stewart, \emph{The reductive subgroups of {$F_4$}}, Mem.\ Amer.\ Math.\
  Soc. \textbf{223} (2013), no.~1049, vi+88.

\bibitem{testerman1989}
Donna Testerman, \emph{A construction of certain maximal subgroups of the
  algebraic groups {$E_6$} and {$F_4$}}, J. Algebra \textbf{122} (1989),
  299--322.

\bibitem{thomas2017un}
Adam Thomas, \emph{The irreducible subgroups of exceptional algebraic groups},
  Mem. Amer. Math. Soc., to appear.

\bibitem{thomas2016}
\bysame, \emph{Irreducible ${A}_1$ subgroups of exceptional algebraic groups},
  J. Algebra \textbf{447} (2016), 240--296.

\bibitem{volklein1989}
Helmut V{\"o}lklein, \emph{$1$-cohomology of {C}hevalley groups}, J. Algebra
  \textbf{127} (1989), 353--372.

\end{thebibliography}

\end{document}